\newcommand{\Z}{\mathbb{Z}}
\newcommand{\R}{\mathbb{R}}
\newtheorem{thm}{Theorem}[section]
\newtheorem{lem}[thm]{Lemma}
\newtheorem{prop}[thm]{Proposition}
\theoremstyle{definition}
\newtheorem{defn}[thm]{Definition}
\newtheorem{exam}[thm]{Example}
\newtheorem{que}[thm]{Question}
\theoremstyle{remark}
\newtheorem{rmk}[thm]{Remark}
\numberwithin{equation}{section}
\numberwithin{equation}{section} \makeatletter
\begin{document}

\title[Satellite knots and immersed Heegaard Floer homology]{Satellite knots and immersed Heegaard Floer homology}

\author[Wenzhao Chen]{Wenzhao Chen}
\address {Department of Mathematics, University of British Columbia.\newline \it{E-mail address:} \tt{chenwzhao@math.ubc.ca}}

\author[Jonathan Hanselman]{Jonathan Hanselman}
\address {Department of Mathematics, Princeton University.\newline \it{E-mail address:} \tt{jh66@princeton.edu}}

\begin{abstract}
We describe a new method for computing the $UV = 0$ knot Floer complex of a satellite knot given the $UV = 0$ knot Floer complex 
for the companion and a doubly pointed bordered Heegaard diagram for the pattern, showing that the complex for the satellite can be computed from an immersed doubly pointed Heegaard diagram obtained from the Heegaard diagram for the pattern by overlaying the immersed curve representing the complex for the companion. This method streamlines the usual bordered Floer method of tensoring with a bimodule associated to the pattern by giving an immersed curve interpretation of that pairing, and computing the module from the immersed diagram is often easier than computing the relevant bordered bimodule. In particular, for (1,1) patterns the resulting immersed diagram is genus one, and thus the computation is combinatorial. For (1,1) patterns this generalizes previous work of the first author which showed that such immersed Heegaard diagram computes the $V=0$ knot Floer complex of the satellite. As a key technical step, which is of independent interest, we extend the construction of a bigraded complex from a doubly pointed Heegaard diagram and of an extended type D structure from a torus-boundary bordered Heegaard diagram to allow Heegaard diagrams containing an immersed alpha curve.
\end{abstract}

\maketitle
\tableofcontents
\section{Introduction}
The study of knot Floer chain complexes of satellite knots has many applications. For instance, computation of knot-Floer concordance invariants of satellite knots is instrumental in establishing a host of results in knot concordance, like  \cite{Cochran2013,Hom2015,MR3589337,Hedden2016, Ozsvath2017, Hom2021, Dai2021, Hom2022}. To further understand the behavior of knot Floer chain complexes under satellite operations,
the current paper introduces an immersed-curve technique to compute knot Floer chain complexes of satellite knots. This method subsumes most of the previous results in this direction, including  \cite{Hedden2005,MR2372849,MR2511910,VanCott2010, MR3134023,MR3217622, HLV2014, MR3589337,Hanselman2023,Chen2019}. This technique is derived from an immersed Heegaard Floer theory that is developed in this paper, which is built on the work by the second author, Rasmussen, and Watson  \cite{hanselman2016bordered}.
  
\subsection{Satellite knots and immersed curves} 
Knot Floer homology was introduced by Ozsv\'ath and Szab\'o and independently by Rasmussen \cite{MR2065507,MR2704683}. Recall that any knot can be encoded by a doubly-pointed Heegaard diagram, which is a closed oriented surface with two sets of embedded circles and two base points. Knot Floer theory, using the machinery of  Lagrangian Floer theory, associates a bigraded chain complex over $\mathbb{F}[U,V]$ to such a double pointed Heegaard diagram, and the bigraded chain homotopy type of this chain complex is an invariant of the isotopy type of the knot. The literature studies various versions of the knot Floer chain complex obtained by setting the ground ring to be a suitable quotient ring of $\mathbb{F}[U,V]$; throughout this paper we will consider the complex defined over the ground ring $\mathcal{R}=\mathbb{F}[U,V]/UV$. The knot Floer chain complex over $\mathcal{R}$ of a knot $K$ in the 3-sphere is equivalent to the bordered Floer invariant of the knot complement $S^3\backslash \nu(K)$, and it was shown in \cite{hanselman2016bordered} that this is equivalent to an immersed multicurve in the punctured torus decorated with local systems. The punctured torus we refer to here is a torus with a single puncture and a parametrization allowing us to identify it with the boundary of the knot complement with a chosen basepoint.

A satellite knot is obtained by gluing a solid torus that contains a knot $P$ (called the \textit{pattern knot}) to complement of a knot $K$ in the 3-sphere (called the \textit{companion knot}) in a compatible way, after which the glued-up manifold is a 3-sphere and the pattern knot $P$ gives rise to the satellite knot $P(K)$ in the 3-sphere. Just as knots in closed 3-manifolds are encoded by doubly-pointed Heegaard diagrams, a pattern knot in the solid torus can be represented by a doubly-pointed bordered Heegaard diagram, which is an oriented surface of some genus $g$ with one boundary component, together with two base points and a suitable collection of $g$ $\beta$-curves, $g-1$ $\alpha$-curves, and two $\alpha$ arcs.

Our technique involves constucting an immersed doubly pointed Heegaard diagram by combining a doubly pointed bordered Heegaard diagram for the pattern $P$ with the immersed curve associated with the companion $K$. More precisely, we fill in the boundary of the bordered Heegaard diagram for $P$ and remove the two $\alpha$ arcs and then add the immersed curve for $K$ to the diagram by identifying the punctured torus containing $K$ with a neighborhood of the now filled in boundary and the $\alpha$ arcs in a way dictated by the given parametrizations. The resulting diagram is just like a standard genus $g$ doubly pointed Heegaard diagram except that one of the $\alpha$ curves, which are usually embedded, is now replaced with a decorated immersed multicurve. See the top row of Figure \ref{Figure, cabling example} and \ref{Figure, proof strategy} for examples of immersed doubly pointed diagrams constructed in this way.

Our main theorem asserts that this diagram can be used to compute the knot Floer complex over $\mathcal{R}$ of $P(K)$. We state the main theorem below, with technical inputs referenced in the remark afterwards.

\begin{restatable}{thm}{MainTheorem}\label{Theorem, main theorem}
Let $\mathcal{H}_{w,z}$ be a doubly-pointed bordered Heegaard diagram for a pattern knot $P$, and let $\alpha_K$ be the immersed multicurve associated to a companion knot $K$. Let $\mathcal{H}_{w,z}(\alpha_{K})$ be the immersed doubly-pointed Heegaard diagram obtained by pairing $\mathcal{H}_{w,z}$ and $\alpha_K$, in which $\alpha_K$ is put in a z-passable position. Then the knot Floer chain complex $CFK_\mathcal{R}(\mathcal{H}_{w,z}(\alpha_{K}),\mathfrak{d})$ defined using $\mathcal{H}_{w,z}(\alpha_{K})$ and a generic choice of auxiliary data $\mathfrak{d}$ is bi-graded homotopy equivalent to the knot Floer chain complex of the satellite knot $P(K)$ over $\mathcal{R}$, where $\mathcal{R}=\mathbb{F}[U,V]/UV$. 
\end{restatable}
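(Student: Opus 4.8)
The plan is to view the immersed diagram $\mathcal{H}_{w,z}(\alpha_{K})$ as a geometric model for the bordered-Floer pairing that computes satellite knot Floer homology, and then to establish a diagram-level pairing theorem by a neck-stretching argument in the spirit of the bordered pairing theorem, adapted to immersed alpha curves and to the ground ring $\mathcal{R}$. Recall that over $\mathcal{R}$ the knot Floer complex of $P(K)$ is computed by tensoring an extended, doubly pointed type-$A$ structure $A(\mathcal{H}_{w,z})$ read off from the pattern diagram with the extended type-$D$ invariant $D(K)$ of the companion exterior $S^{3}\setminus\nu(K)$; this is the usual bordered computation of $CFK$ of a satellite, in its $UV=0$ form. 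By the work of the second author, Rasmussen and Watson \cite{hanselman2016bordered}, together with the refinement recorded earlier in the paper, $D(K)$ is encoded by the decorated immersed multicurve $\alpha_{K}$: there is a torus-boundary bordered diagram carrying $\alpha_{K}$ as an immersed alpha curve whose extended type-$D$ structure, defined via the construction extended to immersed diagrams in this paper, is homotopy equivalent to $D(K)$.

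The first main step is the diagram-level pairing theorem. Writing $\mathcal{H}_{w,z}(\alpha_{K})$ as the union of $\mathcal{H}_{w,z}$ and the torus region carrying $\alpha_{K}$ along the curve produced by the filling-in procedure, I would stretch the neck along that separating curve. Holomorphic curves in $\mathcal{H}_{w,z}(\alpha_{K})$ then degenerate into matched pairs: a curve on the pattern side contributing an operation of $A(\mathcal{H}_{w,z})$, and a curve on the $\alpha_{K}$ side contributing an operation of the type-$D$ structure of $\alpha_{K}$; conversely such pairs glue back up. This identifies $CFK_{\mathcal{R}}(\mathcal{H}_{w,z}(\alpha_{K}),\mathfrak{d})$ with $A(\mathcal{H}_{w,z})\boxtimes D(\alpha_{K})$ up to homotopy equivalence. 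Here the hypothesis that $\alpha_{K}$ lies in $z$-passable position is precisely what controls the interaction of the degenerating curves with the basepoint $z$: it guarantees that no boundary degeneration sweeps across $z$, so that the $\mathcal{R}$-coefficients, including the powers of $U$ and $V$ and the relation $UV=0$, match on the two sides of the correspondence. Combining the pairing theorem with the two identifications above gives
\[
CFK_{\mathcal{R}}(\mathcal{H}_{w,z}(\alpha_{K}),\mathfrak{d}) \ \simeq\ A(\mathcal{H}_{w,z})\boxtimes D(\alpha_{K}) \ \simeq\ A(\mathcal{H}_{w,z})\boxtimes D(K) \ \simeq\ CFK_{\mathcal{R}}(P(K)).
\]

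To upgrade this to a bigraded statement I would track the Maslov and Alexander gradings through each equivalence — the bigrading on the immersed-diagram complex supplied by the extended construction, the shift under neck-stretching, and the grading on $D(K)$ carried by $\alpha_{K}$ — each of which is additive under the relevant gluing or box-tensor operation, so that after normalizing the usual additive constants the composite equivalence is bigraded, as claimed; the geometry near the separating curve is illustrated in Figure \ref{Figure, proof strategy}.

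I expect the main obstacle to be the diagram-level pairing theorem in the presence of self-intersections of $\alpha_{K}$. Self-intersection points create a priori new boundary degenerations, and transversality and gluing become delicate near them, so one must check that the moduli spaces counted by $CFK_{\mathcal{R}}(\mathcal{H}_{w,z}(\alpha_{K}),\mathfrak{d})$, and their limits under neck-stretching, behave as in the embedded setting; this is exactly where the constructions extended to immersed diagrams do the real analytic work, and where the $z$-passable hypothesis must be used in an essential way. A secondary difficulty is verifying that the decorated immersed curve recovers the type-$D$ invariant with full $\mathcal{R}$-coefficients rather than only its hat-flavored reduction, which relies on the refinement of the immersed-curve invariant established earlier in the paper.
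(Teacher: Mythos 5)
Your overall architecture---identify the immersed-diagram complex with a box tensor product by degenerating the diagram, then identify the algebraic factors with the bordered invariants of the pattern and companion---is reasonable in outline, but the single step you lean on, the ``diagram-level pairing theorem'' $CFK_{\mathcal{R}}(\mathcal{H}_{w,z}(\alpha_K))\simeq A(\mathcal{H}_{w,z})\boxtimes D(\alpha_K)$ over the full ring $\mathcal{R}$, presupposes two things that do not exist off the shelf and whose construction is where essentially all of the difficulty lives. First, there is no ``extended, doubly pointed type-$A$ structure $A(\mathcal{H}_{w,z})$'' for a general pattern diagram: the standard bordered computation (\cite[Theorem 11.19]{LOT18}) gives only $gCFK^-(P(K))\cong CFA^-(\mathcal{H}_{w,z})\boxtimes\widehat{CFD}(S^3\setminus\nu(K))$, i.e.\ the $V=0$ specialization, not ``the usual computation in its $UV=0$ form.'' To see both $U$ and $V$ the type-$A$ side must count curves crossing the $z$-basepoint, which sits against the boundary, so it must be enlarged to a curved $A_\infty$-module over the weakly extended algebra $\tilde{\mathcal{A}}$ with infinitely many operations involving $\rho_0$; establishing its structure equations and the corresponding extended pairing theorem for an arbitrary pattern is a substantial project in itself. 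The paper deliberately avoids this: the only extended type-$A$ module it ever uses is $\widetilde{CFA}(\mathcal{H}_{id})$ for the identity pattern, and all of the pattern's complexity is pushed to the type-$D$/$DA$ side.

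Second, the neck-stretching you propose is not the standard bordered degeneration. In $\mathcal{H}_{w,z}(\alpha_K)$ the circle you would stretch along is crossed by the immersed multicurve $\alpha_K$ itself (it runs between the neighborhood of the filled-in boundary and the neighborhoods of the former $\alpha$-arcs), so the Lagrangian being cut through the neck is the immersed one; controlling limits of holomorphic curves whose $\alpha$-boundary runs through the neck, including boundary degenerations with corners at self-intersection points, is exactly the analysis the paper's route is engineered to sidestep. The actual proof instead (i) converts $\mathcal{H}_{w,z}$ into an arced bordered diagram $\mathcal{H}_{X(P)}$ by stabilizing and drilling, (ii) proves $\widehat{CFD}(\mathcal{H}_{X(P)}(\alpha_K))\cong\widehat{CFDA}(\mathcal{H}_{X(P)})\boxtimes\widehat{CFD}(\alpha_K)$ combinatorially using nice diagrams, with no degeneration near $\alpha_K$, (iii) upgrades to the weakly extended type-$D$ structure via uniqueness of extensions, (iv) applies a neck-stretching pairing theorem only against $\widetilde{CFA}(\mathcal{H}_{id})$, where the neck is far from the immersed curve, and (v) relates $\mathcal{H}_{id}\cup\mathcal{H}_{X(P)}(\alpha_K)$ to $\mathcal{H}_{w,z}(\alpha_K)$ by handleslides and destabilizations, using invariance of the immersed-diagram complex. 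Your grading discussion is likewise too optimistic: the gradings are not simply ``additive under gluing'' here, since the index formula on an immersed diagram acquires a self-intersection correction term $s(\partial_{\alpha_{im}}B)$, and matching the resulting bigrading with the bordered grading package requires a separate argument (the paper reduces it to the genus-one case via a cropping-and-extending procedure). If you want your more direct route, you would need to construct the extended type-$A$ invariant of a general pattern diagram and prove an extended pairing theorem with an immersed Lagrangian crossing the neck; neither is supplied by existing machinery.
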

\begin{rmk}
The paring operation for constructing $\mathcal{H}_{w,z}(\alpha_K)$ is defined in Section \ref{Subsection, pairing construction}. The knot Floer chain complex of an immersed doubly-pointed Heegaard diagram is defined in Section \ref{Section, knot Floer chain complex of immersed Heegaard diagrams}. While the definition of the Heegaard Floer theory with immersed Heegaard diagrams is similar to that in the usual setup, it is complicated by the appearance of boundary degenerations.
The $z$-passable condition on $\alpha_K$ is a diagrammatic condition used to handle boundary degenerations; it is specified in Definition \ref{Definition, z passable} and can be arranged easily via finger moves as in Example \ref{Example, obtaning immersed curve admissibly equivalent to z-adj curve}. Moreover, the $z$-passable condition is not required when $\mathcal{H}_{w,z}$ is a genus-one diagram; see Theorem \ref{Theorem, pairing theorem restricted to genus-one patterns}. The proof of Theorem \ref{Theorem, main theorem} is separated into two stages: we first prove the ungraded version in Section \ref{Subsection, proof of the main theorem, ungraded version} and then the gradings are addressed in Section  \ref{Subsection, gradings in the main theorem}. The gradings can be combinatorially computed using an index formula established in Section \ref{Section, embedded index formula}; also see Definition \ref{Definition, w- and z- gradings}. 
\end{rmk}

\begin{figure}[htb!]
	\centering{
		\includegraphics[scale=0.5]{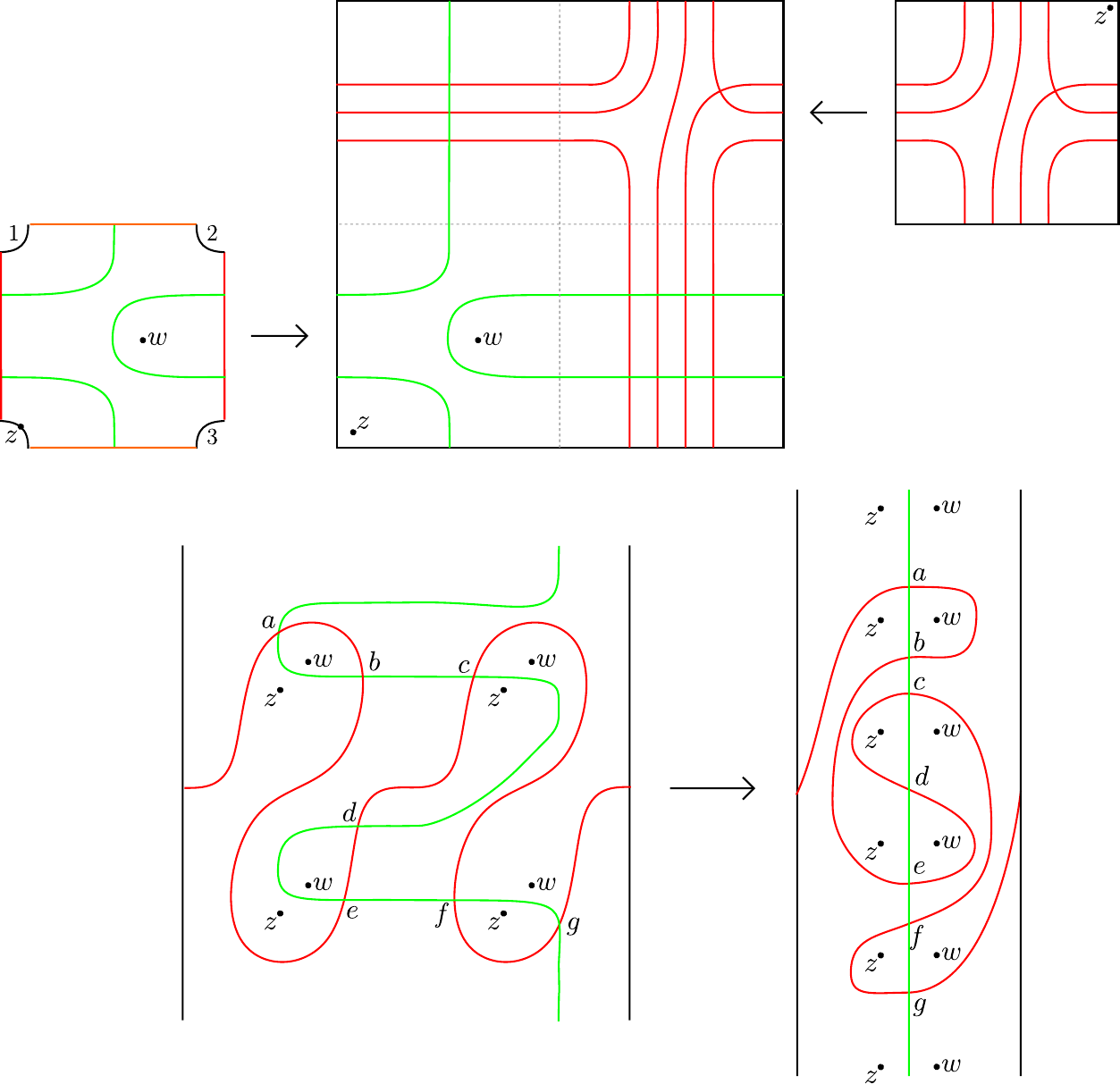}
		\caption{Obtaining the immersed curve for the $(2,1)$-cable of the trefoil $T_{2,3}$. The top row shows the pairing diagram $\mathcal{H}_{w,z}(\alpha_K)$ obtained by merging a doubly-pointed bordered Heegaard diagram $\mathcal{H}_{w,z}$ for the $(2,1)$-cable pattern and the immersed curve $\alpha_{K}$ for the trefoil $T_{2,3}$. The bottom row shows a lift of this diagram to a suitable covering space and a planar transform that sends the lift of the immersed curve for $T_{2,3}$ to that of $(T_{2,3})_{2,1}$.}
		\label{Figure, cabling example}
	}
\end{figure}
Theorem \ref{Theorem, main theorem} is especially useful when the pattern knot is a (1,1) pattern, meaning that it admits a genus one doubly pointed bordered Heegaard diagram $\mathcal{H}_{w,z}$. This is because in this setting the immersed doubly pointed Heegaard diagram $\mathcal{H}_{w,z}(\alpha_K)$ is genus one, and the complex $CFK_\mathcal{R}(\mathcal{H}_{w,z}(\alpha_{K}),\mathfrak{d})$ for such a diagram is straightforward to compute even in the presence of immersed curves; it only requires counting bigons which can be done combinatorially. An example is shown in Figure \ref{Figure, cabling example}, where we compute the knot Floer complex $CFK_\mathcal{R}$ of the $(2,1)$ cable of the trefoil $T_{2,3}$. The top row of the figure gives the pairing diagram, formed from a doubly pointed bordered Heegaard diagram $\mathcal{H}_{w,z}$ for the cable knot and the immersed curve $\alpha_K$ associated with $T_{2,3}$. The bottom left shows the curves lifted to an appropriate covering space, after a homotopy putting them in minimal position. There are seven generators, labeled as in the figure, and it is straightforward to count the bigons that cover only $w$ or only $z$ and see that the differential in $CFK_\mathcal{R}((T_{2,3})_{2,1})$ is given by
$$\partial a = 0, \quad \partial b = U a + V^2 e, \quad \partial c = V d, \quad \partial e = U d, \quad \partial d = 0, \partial f = U^2 c + V g, \quad \partial g = 0.$$
The Alexander grading changes by one decreases when traveling along the $\beta$ curve each time the $\beta$ curve crosses the short arc connecting the $z$ and $w$ basepoints, increasing if $z$ is on the left and decreasing if $z$ is on the right, so we have
$$A(a) = 2, \quad A(b) = A(c) = 1, \quad A(d)=0, \quad A(e)=A(f)=-1, \quad A(g) = -2.$$
Relative Maslov gradings can also be computed from the diagram, with the absolute grading fixed by the normalization $M(a) = 0$.

In the case of (1,1) patterns, the first author proved a weaker version of Theorem \ref{Theorem, main theorem} in \cite[Theorem 1.2]{Chen2019}, where the knot Floer chain complexes are only defined over $\mathbb{F}[U]\cong\mathbb{F}[U,V]/V$. Recall that the complex over $\mathbb{F}[U]$ does not count any disks covering the $z$ basepoint, while the complex over $\mathcal{R}$ allows disks to cover either basepoint as long as they do not cover both. \cite[Theorem 1.2]{Chen2019} can be used to recover the $\tau$-invariant formula for cable knots, Mazur satellites, and Whitehead doubles \cite{MR2372849, MR3217622,  MR3589337}.  Theorem \ref{Theorem, main theorem} generalizes this earlier result by showing that the same process recovers the complex over $\mathcal{R}$, which carries strictly more information. In particular, this version of the knot Floer complex allows one to compute the $\epsilon$-invariant introduced by Hom \cite{MR3217622} and infinitely many concordance homomorphisms $\phi_j$ ($j\in\mathbb{Z}^+$) defined by Dai-Hom-Stoffregen-Truong \cite{Dai2021}. For example, we use Theorem \ref{Theorem, main theorem} to recover and generalize the $\tau$ and $\epsilon$ formulas for cables from \cite[Theorem 2]{MR3217622} in Section \ref{sec:one-bridge-tau} and to recover the $\tau$ and $\epsilon$ formulas for Mazur patterns from \cite[Theorem 1.4]{MR3589337} in Section \ref{sec:mazur}.

The computation described above is even easier for a certain family of (1,1) patterns. In \cite[Theorem 1]{Hanselman2023}, the second author and Watson showed that the immersed multicurve of a cable knot can be obtained from that of the companion knot via a planar transform after lifting the immersed multicurves to an appropriate covering space of the marked torus. In fact, we show in Theorem \ref{Theorem, satellite knots with one-bridge braid patterns} that the same procedure works for a broader family of (1,1) patterns called 1-bridge braids (see Definition \ref{def:one-bridge-braid}). In addition to cables this class of patterns contains all Berge-Gabai knots. These patterns are specified by three integers and are denoted $B(p,q,b)$. We let $K_{p,q,b}$ denote the satellite of the companion knot $K$ with pattern $B(p,q,b)$. In Section \ref{sec:1-bridge-braids-curves} we define a diffeomorphism $f_{p,q,b}$ of $\R^2$ taking the integer lattice $\Z^2$ to itself and show that this transformation computes the immersed curve for $K_{p,q,b}$ from that of $K$.

 \begin{restatable}{thm}{PlanarTransformTheorem}\label{Theorem, satellite knots with one-bridge braid patterns}
 	Let $\gamma_K$ and $\gamma_{K_{p,q,b}}$ be the immersed multicurve associated with $K$ and $K_{p,q,b}$ respectively. Let $\tilde{\gamma}_K$ and $\tilde{\gamma}_{K_{p,q,b}}$ be the lifts of $\gamma_K$ and $\gamma_{K_{p,q,b}}$ to $\tilde{T}_{\bullet}$ respectively. Then $\tilde{\gamma}_{K_{p,q,b}}$ is homotopic to $f_{p,q,b}(\tilde{\gamma}_K)$.
 \end{restatable}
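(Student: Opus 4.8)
The plan is to deduce the theorem from the pairing theorem and then carry out an explicit computation in the covering space $\tilde{T}_{\bullet}$, generalizing the argument used for cables in \cite{Hanselman2023}. Since every $1$-bridge braid $B(p,q,b)$ of Definition \ref{def:one-bridge-braid} is a $(1,1)$ pattern, it admits a genus-one doubly-pointed bordered Heegaard diagram $\mathcal{H}_{w,z}=\mathcal{H}_{w,z}^{p,q,b}$, and the first step is to write this diagram down explicitly: a once-punctured torus carrying a single $\beta$ curve together with the two $\alpha$-arcs, with the homotopy class of $\beta$ relative to the arcs and the basepoints $w,z$ encoding the three parameters (roughly, $p$ the winding number, $q$ the twisting, $b$ the bridge width). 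By Theorem \ref{Theorem, main theorem}---in fact by its genus-one refinement, Theorem \ref{Theorem, pairing theorem restricted to genus-one patterns}, so that no $z$-passability hypothesis is needed---the genus-one immersed doubly-pointed diagram $\mathcal{H}_{w,z}(\alpha_K)$ computes $CFK_\mathcal{R}(K_{p,q,b})$, and hence, through the correspondence of \cite{hanselman2016bordered} as extended to immersed diagrams in this paper, it determines the decorated immersed multicurve $\gamma_{K_{p,q,b}}$.

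The second step is to make this determination geometric. After putting $\mathcal{H}_{w,z}(\alpha_K)$ in minimal position and lifting to $\tilde{T}_{\bullet}$, the lift of $\alpha_K$ is exactly $\tilde{\gamma}_K$, threaded through the periodic configuration obtained by lifting the fixed pattern data (the $\beta$ curve and the $w$--$z$ arc), and $\tilde{\gamma}_{K_{p,q,b}}$ is what one reads off after simplifying this picture. It therefore suffices to produce the diffeomorphism $f_{p,q,b}$ of $\mathbb{R}^2$ constructed in Section \ref{sec:1-bridge-braids-curves} as (the time-one map of) an ambient isotopy of $\tilde{T}_{\bullet}$ that straightens the lifted $1$-bridge-braid template, and to check that this isotopy carries the lifted companion curve $\tilde{\gamma}_K$ onto the simplified form of the lifted diagram curve, i.e., onto $\tilde{\gamma}_{K_{p,q,b}}$. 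Concretely, I would cut $\tilde{T}_{\bullet}$ along the lifts of the $\beta$ curve and the $w$--$z$ arcs, straighten the resulting arcs of $\tilde{\gamma}_K$ canonically inside each fundamental piece, and reassemble to recognize $f_{p,q,b}(\tilde{\gamma}_K)$; the point that makes such an $f_{p,q,b}$ exist is precisely that, for a $1$-bridge braid, the lifted template is a piecewise-linear shear governed by $p$, $q$, and $b$. Since $f_{p,q,b}$ is a diffeomorphism it transports the local system on $\tilde{\gamma}_K$ along, and one verifies that this matches the local system produced by the diagram, so that the decorated curves agree.

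The main obstacle I anticipate is the bookkeeping: correctly identifying the lifted $1$-bridge-braid template and verifying that the explicit map $f_{p,q,b}$ of Section \ref{sec:1-bridge-braids-curves} reproduces it on the nose, in particular tracking the effect of the bridge parameter $b$ and the placement of the $z$ basepoint, so that the operation extracting the immersed curve commutes with $f_{p,q,b}$ and no spurious intersections or local-system twists appear. The cable case is already \cite[Theorem 1]{Hanselman2023}, so the genuinely new content is the $b$-dependence and the general braiding; once the template and the conventions relating the covering space $\tilde{T}_{\bullet}$, the parametrization of the pattern solid torus, and the definition of $f_{p,q,b}$ are fixed consistently, the remaining verification is a finite, if intricate, check.
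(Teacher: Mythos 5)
Your proposal is correct and follows essentially the same route as the paper: fix an explicit genus-one diagram for $B(p,q,b)$ (the paper parametrizes these by a slope $m$ with $B(p;m)=B(p,q,b)$), invoke Theorem \ref{Theorem, pairing theorem restricted to genus-one patterns}, lift to $\tilde{T}_\bullet$, and realize $f_{p,q,b}$ as the plane transformation that straightens the lifted $\beta$ curve by sliding the basepoints along the braid template. The only stylistic difference is at the final identification: rather than directly checking that the straightened diagram curve equals $\tilde{\gamma}_{K_{p,q,b}}$, the paper concludes by noting that pairing with the vertical line $\{0\}\times\R$ is injective on homotopy classes of such curves, so the two curves must agree since they determine the same complex.
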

 
We demonstrate how this result is obtained from Theorem \ref{Theorem, main theorem}, in the example of $(2,1)$ cabling the trefoil, in the bottom row of Figure \ref{Figure, cabling example}.  Note that the $(2,1)$ cable pattern is the 1-bridge braid $B(2,1,0)$. On the left is the diagram $\mathcal{H}_{w,z}(\alpha_K)$, which by Theorem \ref{Theorem, main theorem} computes the complex $CFK_\mathcal{R}((T_{2,3})_{2,1})$, lifted to an appropriate covering space (specifically $(\R/p\Z)\times \R$, where $p$ is the winding number). There is a homotopy that pulls the the $\beta$ curve coming from $\mathcal{H}_{w,z}$ straight to a vertical line, sliding the basepoints and the $\alpha$ curve along the way, and rescales to obtain a different covering space of the marked torus (namely $(\R/\Z)\times \R$). This homotopy does not change the Floer complex associated with the diagram, so the new diagram still computes $CFK_\mathcal{R}((T_{2,3})_{2,1})$, and since the $\beta$ curve is vertical and passes through each pair of basepoints twice, the $\alpha$ curve in this diagram is precisely the immersed curve representing $CFK_\mathcal{R}((T_{2,3})_{2,1})$. The homotopy that pulls the $\beta$ curve to the vertical line is precisely the planar transformation $f_{2,1,0}$. We note that in the special case of cables (for which $b=0$), the transformation $f_{p,q,b}$ agrees with the planar transformation $f_{p,q}$ described in \cite[Theorem 1]{Hanselman2023}.

We will use Theorem \ref{Theorem, satellite knots with one-bridge braid patterns} to derive formulas for  $\epsilon$ and $\tau$ for 1-bridge braid satellites in Theorem \ref{thm:epsilon-for-one-bridge-braids} and Theorem \ref{thm:tau-for-one-bridge-braids}, generalizing similar formulas for cables. We also determine the precise criteria for a 1-bridge braid satellite to be an L-space knot in Theorem \ref{Theorem, L space knot criterion}; this unifies and generalizes similar results known for cables and Berge-Gabai knots \cite{Hom2011, HLV2014}.

\subsection{Immersed Heegaard Floer theory} The underlying machinery for proving Theorem \ref{Theorem, main theorem} is the bordered Heegaard Floer theory introduced by Lipshitz-Ozsv\'ath-Thurston \cite{LOT18,Lipshitz2015}. The new input is an immersed Heegaard Floer theory that we develop in this paper in which we allow Heegaard diagrams with an immersed multicurve in place of one $\alpha$ curve. We closely follow the construction of bordered invariants in \cite{LOT18}, highlighting the points at which more care is needed in this broader setting.

Bordered Heegaard Floer theory is a toolkit to compute Heegaard Floer invariants of manifolds that arise from gluing in terms of a set of relative invariants for manifolds with boundaries. In the simplest setting, assume $Y_1$ and $Y_2$ are two oriented 3-manifolds with parametrized boundary such that $\partial Y_1$ is identified with an oriented parametrized surface $\mathcal{F}$ and $\partial Y_2$ is identified with $-\mathcal{F}$ the orientation reversal of $\mathcal{F}$, and let $Y=Y_1\cup_\mathcal{F}Y_2$. Up to a  suitable notion of homotopy equivalence, the bordered Heegaard Floer theory associates to $Y_1$ a graded $A^\infty$-module $\widehat{CFA}(Y_1)$ (called type $A$ module) and associates to $Y_2$ a graded differential module $\widehat{CFD}(Y_2)$ (called type D module). Moreover, there is a box-tensor product operation $\widehat{CFA}(Y_1)\boxtimes \widehat{CFD}(Y_2)$ which produces a chain complex that is graded homotopy equivalent to the hat-version Heegaard Floer chain complex $\widehat{CF}(Y)$ of the glued-up manifold. The second author, Rasmussen, and Watson introduced an immersed-curve technique for working with these invariants for manifolds with torus boundary \cite{hanselman2016bordered}. When $\mathcal{F}$ mentioned above is a parametrized torus $T^2$, $\widehat{CFA}(Y_1)$ and $\widehat{CFD}(Y_2)$ are equivalent to immersed multicurves $\gamma_1$ and $\gamma_2$ (decorated with local systems) in the parametrized torus $T^2$ away from a marked point $z$. Moreover, the Lagrangian Floer chain complex $\widehat{CF}(T^2\backslash \{z\},\gamma_1,\gamma_2)$ is homotopy equivalent to $\widehat{CFA}(Y_1)\boxtimes \widehat{CFD}(Y_2)$, which is in turn homotopic equivalent to $\widehat{CF}(Y)$.

The bordered Heegaard Floer theory also contains a package to deal with the situation of gluing a 3-manifold $M_1$ with two parametrized boundary components $-\mathcal{F}_1$ and $\mathcal{F}_2$ to a 3-manifold $M_2$ with $\partial M_2=-\mathcal{F}_2$. It associates to $M_1$ a type $DA$ bimodule $\widehat{CFDA}(M_1)$ up to a suitable equivalence, and there is a box-tensor product $\widehat{CFDA}(M_1)\boxtimes \widehat{CFD}(M_2)$ resulting in a type D module that is homotopy equivalent to $\widehat{CFD}(M_1\cup_{\mathcal{F}_2} M_2)$. In this paper, we introduce an immersed-Heegaard-diagram
approach to recapture this bimodule pairing when the manifold boundaries are tori.

Recall that we can encode a manifold $M_1$ whose boundary are two parametrized tori by some arced bordered Heegaard diagram $\mathcal{H}_{M_1}$ (from which the type $DA$ bimodule is defined). Let $\alpha_{M_2}$ be the immersed multicurve for an oriented 3-manifold $M_2$ with a single torus boundary component. In Section \ref{Subsection, pairing construction}, we give a pairing construction that merges such an arced bordered Heegaard diagram $\mathcal{H}_{M_1}$ and an immersed multicurve $\alpha_{M_2}$ to obtain an immersed bordered Heegaard diagram $\mathcal{H}_{M_1}(\alpha_{M_2})$; see Figure \ref{Figure, proof strategy} for a schematic example of pairing an arced bordered diagram and an immersed curve. Extending the original way of defining type D modules from (non-immersed) bordered Heegaard diagrams, we define type D modules for a class of immersed bordered Heegaard diagrams that contain such pairing diagrams, and we prove the following theorem in Section \ref{Subsection, first pairing theorem}. 
\begin{restatable}{thm}{FirstPairingTheorem}\label{Theorem, paring theorem via nice diagrams}
	Let $\mathcal{H}^a$ be a left provincially admissible arced bordered Heegaard diagram, and let $\alpha_{im}$ be a $z$-adjacent immersed multicurve. Then $$\widehat{CFD}(\mathcal{H}^a({\alpha_{im}}))\cong \widehat{CFDA}(\mathcal{H}^a)\boxtimes \widehat{CFD}(\alpha_{im}).$$
\end{restatable}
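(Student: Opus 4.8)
The plan is to prove this as an honest isomorphism of type $D$ structures by the \emph{nice diagram} method — computing Floer differentials combinatorially \`a la Sarkar--Wang, in the bordered incarnation of Lipshitz--Ozsv\'ath--Thurston — rather than by a neck-stretching degeneration. Since $\widehat{CFDA}(\mathcal{H}^a)$, $\widehat{CFD}(\alpha_{im})$, and (by the invariance established earlier in this paper) $\widehat{CFD}(\mathcal{H}^a(\alpha_{im}))$ each depend only on the relevant equivalence class of the input, I would first replace $\mathcal{H}^a$ and $\alpha_{im}$ by convenient representatives — keeping $\alpha_{im}$ $z$-adjacent throughout — chosen so that \emph{both} sides of the asserted isomorphism are read directly off the diagrams. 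Write $\partial_R$ for the boundary of $\mathcal{H}^a$ that gets filled in and decorated by $\alpha_{im}$ and $\partial_L$ for the surviving one.

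\emph{Step 1: reduction to nice diagrams.} Run the Sarkar--Wang algorithm on $\mathcal{H}^a$ relative to $\partial_L$, $\partial_R$, and the basepoint $z$ — always achievable — so that every region disjoint from all three is a bigon or a square; the hypothesis that $\mathcal{H}^a$ is left provincially admissible is what keeps the resulting bordered invariants well defined, with the relevant counts finite. At the same time, put $\alpha_{im}$ into the standard $z$-adjacent position inside the once-punctured-torus neighborhood of $\partial_R$ cut out by the two $\partial_R$-arcs — arranged by finger moves as in the examples referenced in the remark — so that $\widehat{CFD}(\alpha_{im})$ is literally the type $D$ structure whose generators are the crossings of $\alpha_{im}$ with the standard reference arc in $T^2_\bullet$ and whose coefficient maps are traced by the arcs of $\alpha_{im}$ between consecutive crossings, and so that the pairing diagram $\mathcal{H}^a(\alpha_{im})$ obtained after filling $\partial_R$ and overlaying $\alpha_{im}$ is again nice: every region disjoint from $\partial_L$ and $z$ is a bigon or a square.

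\emph{Step 2: match generators and differentials.} A generator of $\widehat{CFD}(\mathcal{H}^a(\alpha_{im}))$ occupies all interior $\alpha$-circles and one $\partial_L$-arc, leaving a single $\beta$-circle to meet $\alpha_{im}$ at one point; remembering which $\partial_R$-arc that $\beta$-circle used in $\mathcal{H}^a$ exhibits this as a generator of $\widehat{CFDA}(\mathcal{H}^a)$ tensored over the $\partial_R$-idempotents with a generator of $\widehat{CFD}(\alpha_{im})$. Since $\mathcal{H}^a(\alpha_{im})$ is nice, its type $D$ differential counts embedded empty bigons and squares (the provincial part) together with embedded half-empty regions abutting $\partial_L$ (the $\partial_L$-algebra part). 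Each such region restricts over the $\mathcal{H}^a$ part to a domain whose $\partial_R$-boundary spells a Reeb-chord word, while the complementary arcs of $\alpha_{im}$ in the glued torus are exactly those enumerated by iterating $\delta^1$ of $\widehat{CFD}(\alpha_{im})$; conversely a term of the box-tensor differential — a $\widehat{CFDA}$-operation whose $\partial_R$-inputs are fed by $\widehat{CFD}(\alpha_{im})$ and whose output lies in the $\partial_L$-algebra — reassembles uniquely into such a region. Matching the two enumerations term by term, with signs irrelevant over $\mathbb{F}$, gives $\widehat{CFD}(\mathcal{H}^a(\alpha_{im}))\cong\widehat{CFDA}(\mathcal{H}^a)\boxtimes\widehat{CFD}(\alpha_{im})$; finiteness of the box tensor product follows from admissibility of $\mathcal{H}^a$.

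The hard part, and the reason the $z$-adjacency hypothesis is present, is ruling out \emph{boundary degenerations} in Step 2. An embedded $\alpha$-curve bounds nothing, but the immersed multicurve $\alpha_{im}$ bounds sub-regions of its own, so a priori $\mathcal{H}^a(\alpha_{im})$ could carry rigid positive-index disks whose boundary runs out along $\alpha_{im}$ and returns, sweeping such a region, and these would inject spurious terms absent from the box-tensor differential. The $z$-adjacent position is designed precisely so that every region $\alpha_{im}$ could bound in this way must contain the basepoint $z$ and hence is not counted; the real work is to push the index/area estimate underlying the usual nice-diagram argument through the immersed Heegaard surface and conclude that, once $\alpha_{im}$ is $z$-adjacent, the only rigid contributions are the combinatorial bigons, squares, and half-empty boundary regions of Step 2. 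A secondary, more clerical, point is the interface bookkeeping at $\partial_R$ — verifying that the Reeb-chord words appearing on $\mathcal{H}^a$-side boundaries coincide with the algebra elements produced by iterated $\delta^1$ of $\widehat{CFD}(\alpha_{im})$, with matching idempotents and multiplicities — so that the term-by-term comparison really is a bijection.
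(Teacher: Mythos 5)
Your strategy is essentially the paper's: reduce to a nice diagram, identify generators via the collapsing correspondence, and match differentials term by term, with $z$-adjacency doing the work of excluding spurious contributions from regions bounded by $\alpha_{im}$. One step in your Step 1, however, would fail as literally stated: after overlaying $\alpha_{im}$ you claim the pairing diagram $\mathcal{H}^a(\alpha_{im})$ is \emph{again nice}. It generally is not — the self-intersections of $\alpha_{im}$ create regions with corners on $\alpha_{im}$ alone, which need not be bigons or squares, and no amount of finger moves within the $z$-adjacent isotopy class removes them. The paper sidesteps this: it only makes $\mathcal{H}^a$ nice, and then proves (via the collapsing map) that any positive domain $B$ in the pairing diagram with $n_z(B)=0$ satisfies $e(B)=e(B')+\tfrac{1}{2}|\overrightarrow{\rho}(\partial_{\alpha_{im}}B)|\geq 0$, where $B'$ is the collapsed domain in the nice diagram $\mathcal{H}^a$. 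Feeding $e(B)\geq 0$ into the index formula $\mathrm{ind}=g-\chi(S)+2e(B)+|\overrightarrow{\sigma}|$ forces $\chi(S)\in\{g,g-1\}$ and hence forces index-one curves to be bigons or rectangles — which is exactly the combinatorial conclusion you wanted, obtained without niceness of the pairing diagram itself. This Euler-measure transfer, together with the verification that correctly oriented elementary arcs of $\partial_{\alpha_{im}}B$ spell precisely the Reeb-chord words fed into $\widehat{CFDA}(\mathcal{H}^a)$ by iterated $\delta^1$ of $\widehat{CFD}(\alpha_{im})$ (this is where $z$-adjacency and positivity force the correct orientations), is the "hard part" you deferred, so you should treat it as the core of the argument rather than a clerical afterthought; the local-system bookkeeping is then routine.
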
 

\begin{rmk}
Similar to the $z$-passable condition in Theorem \ref{Theorem, main theorem}, the $z$-adjacency is a diagrammatic condition that is used to handle boundary degeneration in immersed Heegaard Floer theory; it is defined in Section \ref{Subsection, z-adjacency} and can be easily achieved via finger moves.  
\end{rmk}
Among manifolds with torus boundary, of particular interests to us are knot complements. By the results in \cite[Section 11.4-11.5]{LOT18}, the knot Floer chain complex  $\mathcal{CFK}_\mathcal{R}(J)$ of any knot $J\subset S^3$ is equivalent to the type D module $\widehat{CFD}(S^3\backslash \nu(J))$ of the knot complement. (Consequently, $\mathcal{CFK}_\mathcal{R}(J)$ is equivalent to an immersed-multicurve in a marked torus.)

More concretely, the current state of bordered Floer theory recovers certain versions of knot Floer chain complex of a knot from the type D module of its complement as follows. Note that a knot $J$ may be obtained from the knot complement $S^3\backslash \nu(J) $ by gluing in the solid torus containing the \textit{identity pattern knot}, which is the core of the solid torus. Let $\mathcal{H}_{id}$ denote the standard doubly-pointed Heegaard diagram for the identity pattern knot; see Figure \ref{Figure, proof strategy}. In \cite{LOT18}, an $A^{\infty}$-module $CFA^-(\mathcal{H}_{id})$ is associated to $\mathcal{H}_{id}$, and it is shown that $$gCFK^-(J)\cong CFA^-(\mathcal{H}_{id})\boxtimes \widehat{CFD}(S^3\backslash \nu(J)),$$ where $gCFK^-(-)$ denotes the version of knot Floer chain complex over $\mathbb{F}[U]$ \cite[Theorem 11.9]{LOT18}. To recover knot Floer complexes over the larger ground ring $\mathcal{R}=\mathbb{F}[U,V]/UV$, we use a stronger pairing theorem which occurred implicitly in \cite{Hanselman2022} (and even more implicitly in \cite{LOT18}): There are suitable extensions $\widetilde{CFA}(\mathcal{H}_{id})$ and $\widetilde{CFD}(S^3\backslash \nu(J))$ of ${CFA^-}(\mathcal{H}_{id})$ and $\widehat{CFD}(S^3\backslash \nu(J))$ respectively such that $$CFK_{\mathcal{R}}(J)\cong \widetilde{CFA}(\mathcal{H}_{id})\boxtimes \widetilde{CFD}(S^3\backslash \nu(J)).$$

We provide an immersed-Heegaard-diagram approach to recapture the above pairing theorem as well. In Section \ref{Section, bordered invariants of immersed Heegaard diagrams}, we define the so-called weakly extended type D structures $\widetilde{CFD}(-)$ of a certain class of immersed bordered Heegaard diagrams that contains pairing diagrams $\mathcal{H}_{M_1}(\alpha_{M_2})$ mentioned earlier. In Section \ref{Section, knot Floer chain complex of immersed Heegaard diagrams}, we define knot Floer chain complexes of a class of immersed doubly-pointed Heegaard diagrams that includes any diagram $\mathcal{H}_{id}\cup \mathcal{H}_{im}$ obtained by gluing $\mathcal{H}_{id}$ and an immersed bordered Heegaard diagram $\mathcal{H}_{im}$.  Moreover, we prove the following theorem in Section \ref{Section, the second pairing theorem}.

\begin{restatable}{thm}{SecondPairingTheorem}\label{Theorem, 2nd pairing theorem}
	Let $\mathcal{H}_{im}$ be an unobstructed, bi-admissible immersed bordered Heegaard diagram, and let $\mathcal{H}_{id}$ be the standard bordered Heegaard diagram for the identity pattern. Then  
	$$CFK_{\mathcal{R}}(\mathcal{H}_{id}\cup \mathcal{H}_{im})\cong \widetilde{CFA}(\mathcal{H}_{id})\boxtimes \widetilde{CFD}(\mathcal{H}_{im}).$$
\end{restatable}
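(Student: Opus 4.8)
The plan is to reduce Theorem \ref{Theorem, 2nd pairing theorem} to Theorem \ref{Theorem, paring theorem via nice diagrams} together with the known "small model" computations for the identity pattern, by analyzing the pairing $CFK_{\mathcal{R}}(\mathcal{H}_{id}\cup\mathcal{H}_{im})$ as a Lagrangian Floer complex on an immersed doubly-pointed diagram. Recall that $\widetilde{CFD}(\mathcal{H}_{im})$ is the weakly extended type D structure defined in Section \ref{Section, bordered invariants of immersed Heegaard diagrams}; its underlying type D structure is (up to the weak extension) the immersed-curve invariant $\widehat{CFD}(\alpha_{M_2})$ paired through $\widehat{CFDA}$ of whatever arced diagram $\mathcal{H}_{im}$ refines, so Theorem \ref{Theorem, paring theorem via nice diagrams} already identifies the non-extended part. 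The content of the present theorem is that (i) the extra $V$-power information recorded on the $\mathcal{R}$-side matches the weak extension on the type D side, and (ii) the pairing with $\widetilde{CFA}(\mathcal{H}_{id})$ is computed by the holomorphic curves in the glued diagram $\mathcal{H}_{id}\cup\mathcal{H}_{im}$.

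First I would set up the glued diagram carefully: $\mathcal{H}_{id}$ is genus one with the core-of-the-solid-torus knot, and gluing it to $\mathcal{H}_{im}$ along the parametrized torus boundary produces the immersed doubly-pointed diagram $\mathcal{H}_{id}\cup\mathcal{H}_{im}$, whose Floer complex $CFK_{\mathcal{R}}$ was defined in Section \ref{Section, knot Floer chain complex of immersed Heegaard diagrams}. Next I would invoke the pairing-theorem machinery of \cite{LOT18} in the extended ($\mathcal{R}$-coefficient) form: a neck-stretching / degeneration argument along the gluing torus matches holomorphic curves in $\mathcal{H}_{id}\cup\mathcal{H}_{im}$ with pairs consisting of curves in $\mathcal{H}_{id}$ and curves in $\mathcal{H}_{im}$ with matching asymptotics, exactly as in the proof of \cite[Theorem 11.9]{LOT18}, but now keeping track of both basepoints. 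Since $\mathcal{H}_{id}$ is the standard small diagram for the identity pattern, the curves on that side are completely understood and contribute precisely the structure maps of $\widetilde{CFA}(\mathcal{H}_{id})$; the curves on the $\mathcal{H}_{im}$ side, read off through the idempotents and the algebra action, are precisely the operations $\delta^1$ of $\widetilde{CFD}(\mathcal{H}_{im})$. Assembling these gives a chain isomorphism $CFK_{\mathcal{R}}(\mathcal{H}_{id}\cup\mathcal{H}_{im})\cong\widetilde{CFA}(\mathcal{H}_{id})\boxtimes\widetilde{CFD}(\mathcal{H}_{im})$ respecting the $U$- and $V$-actions.

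The two technical points requiring the hypotheses are where I expect the real work to lie. The \emph{unobstructedness} of $\mathcal{H}_{im}$ is needed so that the immersed $\alpha$-curve admits no disk bubbles / boundary degenerations that would obstruct the Floer differential from squaring to zero or would produce spurious terms in the degeneration count; I would verify that $z$-adjacency (or the weaker condition packaged into "unobstructed") rules out exactly the boundary degenerations enumerated in Section \ref{Subsection, z-adjacency}, so that the standard gluing analysis goes through unchanged. The \emph{bi-admissibility} guarantees that all the relevant moduli spaces are finite, so the sums defining both sides converge over $\mathcal{R}$. The main obstacle, I expect, is bookkeeping the $V$-powers through the degeneration: in the $UV=0$ setting a broken curve may split its $z$-multiplicity between the two pieces, and I must check that the weak extension on $\widetilde{CFD}(\mathcal{H}_{im})$ — which is precisely the device introduced in \cite{Hanselman2022} to record this — is compatible with the immersed-curve pairing, i.e., that the definition of $\widetilde{CFD}$ for immersed bordered diagrams in Section \ref{Section, bordered invariants of immersed Heegaard diagrams} was made so that this matching holds on the nose. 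Once that compatibility is in hand, combined with Theorem \ref{Theorem, paring theorem via nice diagrams} for the underlying (non-extended) structures, the theorem follows.
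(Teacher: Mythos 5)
Your proposal matches the paper's proof in essentially all respects: one first checks that unobstructedness and bi-admissibility of $\mathcal{H}_{im}$ pass to the glued diagram (so $CFK_{\mathcal{R}}(\mathcal{H}_{id}\cup\mathcal{H}_{im})$ and the box tensor product are both defined and finite), and then runs the standard neck-stretching/time-dilation pairing argument of \cite{LOT18}, which carries over verbatim because unobstructedness rules out boundary degenerations. The one point where the paper is slicker than your plan: the ``$V$-power bookkeeping'' you flag as the main obstacle is dispatched in one line by observing that the $\mathcal{R}$-differential only counts curves crossing at most one of $w$, $z$, and both basepoints of $\mathcal{H}_{id}$ sit adjacent to the gluing boundary, so the $w\leftrightarrow z$ symmetry reduces the whole statement to two applications of the hat-version pairing theorem rather than requiring a separate matching of the extension data through the degeneration.
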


\begin{figure}[htb!]
	\centering{
		\includegraphics[scale=0.5]{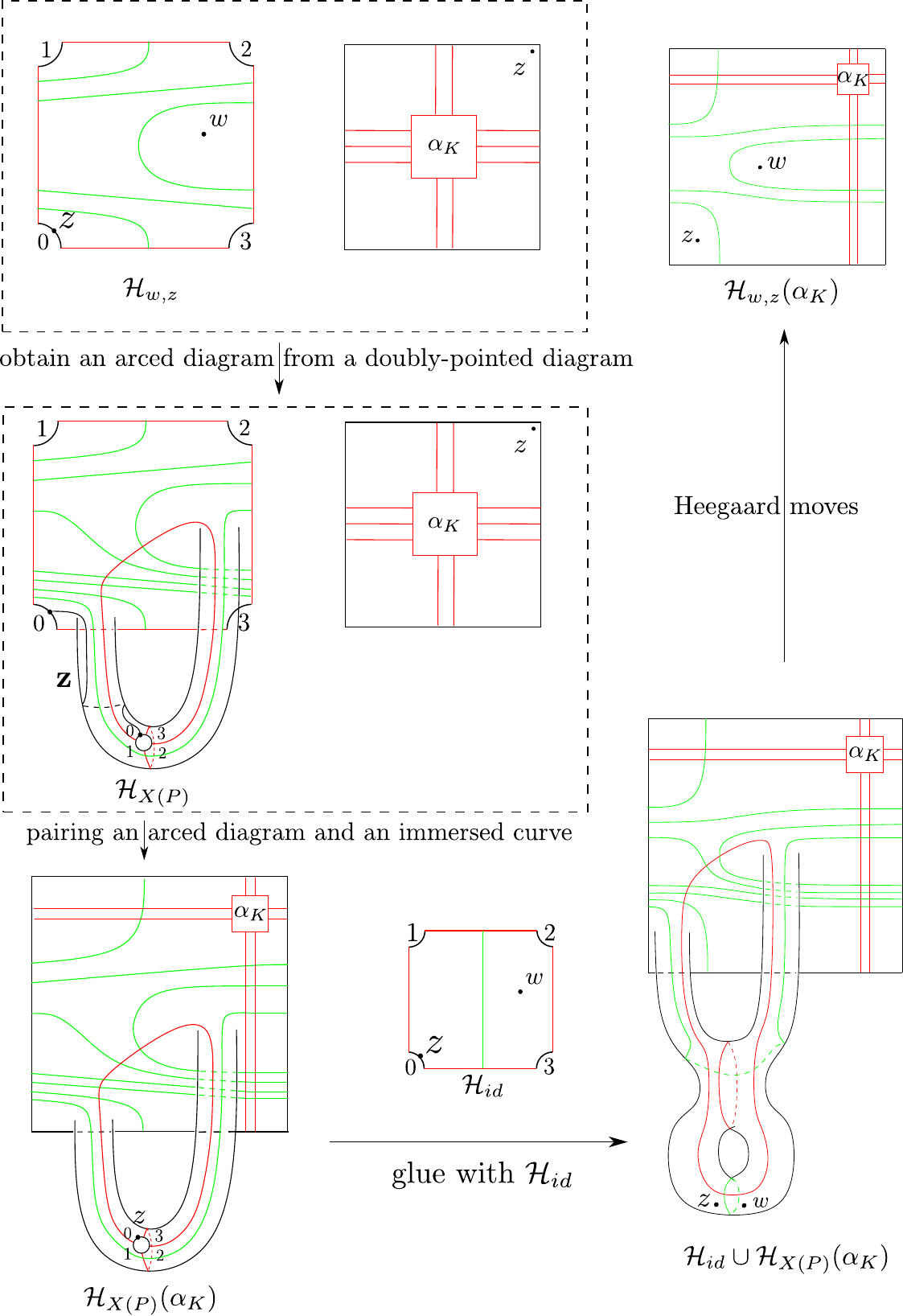}
		\caption{The pairing diagram $\mathcal{H}_{w,z}(\alpha_{K})$ can alternatively be obtained in the following three steps: First, pair an arced bordered Heegaard diagram $\mathcal{H}_{X(P)}$ obtained from $\mathcal{H}_{w,z}$ with the immersed curve $\alpha_K$; secondly, construct a closed doubly-pointed immersed Heegaard diagram $\mathcal{H}_{id}\cup \mathcal{H}_{X(P)}$; third, apply Heegaard moves to $\mathcal{H}_{id}\cup \mathcal{H}_{X(P)}$ to get $\mathcal{H}_{w,z}(\alpha_{K})$.}
		\label{Figure, proof strategy}
	}
\end{figure}
  
\subsection{Strategy to prove the main theorem}
The above theorems are used to compute the knot Floer chain complex of satellite knots as follows.  

First, the knot complement of a satellite knot $P(K)$ decomposes into the union of two 3-manifolds along a torus: the exterior $X(P)=(S^1\times D^2)\backslash \nu(P)$ of the pattern knot and the complement $S^3\backslash \nu(K)$ of the companion knot. Therefore, $$\widehat{CFD}(S^3\backslash \nu(P(K)))\cong \widehat{CFDA}(X(P))\boxtimes \widehat{CFD}(S^3\backslash \nu(K)),$$  
and hence we can apply Theorem \ref{Theorem, paring theorem via nice diagrams} to compute $\widehat{CFD}(S^3\backslash \nu(P(K)))$. More concretely, given a doubly-pointed bordered diagram $\mathcal{H}_{w,z}$ for the pattern knot $P$, one can apply a standard stabilization-and-drilling procedure to obtain an arced bordered Heegaard diagram $\mathcal{H}_{X(P)}$ for $X(P)$, which is then paired with the immersed multicurve $\alpha_{K}$ for $K$ to obtain an immersed bordered Heegaard diagram $\mathcal{H}_{X(P)}(\alpha_{K})$. The type D module $\widehat{CFD}(\mathcal{H}_{X(P)}(\alpha_{K}))$ is then homotopy equivalent to a type D module of $S^3\backslash \nu(P(K))$ by Theorem \ref{Theorem, paring theorem via nice diagrams}.

Second, one can define a weakly extended type D module $\widetilde{CFD}(\mathcal{H}_{X(P)}(\alpha_{K}))$ of the pairing diagram $\mathcal{H}_{X(P)}(\alpha_{K})$. As mentioned above, the underlying (hat-version) type D module $\widehat{CFD}(\mathcal{H}_{X(P)}(\alpha_{K}))$ defined using the same diagram is homotopy equivalent to a type D module of $S^3\backslash \nu(P(K))$. Since extensions of type D modules are unique up to homotopy, $\widetilde{CFD}(\mathcal{H}_{X(P)}(\alpha_{K}))$ is homotopy equivalent to a weakly extended type D module of $S^3\backslash \nu(P(K))$. Now Theorem \ref{Theorem, 2nd pairing theorem} implies that the knot Floer chain complex $CFK_\mathcal{R}(\mathcal{H}_{id}\cup \mathcal{H}_{X(P)}(\alpha_K))$ is homotopy equivalent to the knot Floer chain complex of $P(K)$. 

Finally, the immersed doubly-pointed Heegaard diagram $\mathcal{H}_{w,z}(\alpha_K)$ can be obtained from $\mathcal{H}_{id}\cup \mathcal{H}_{X(P)}(\alpha_K)$ via Heegaard moves. We show knot Floer chain complexes defined from immersed doubly-pointed Heegaard diagrams that differ by Heegaard moves are homotopy
equivalent, and hence $CFK_\mathcal{R}(\mathcal{H}_{w,z}(\alpha_K))$ is homotopic equivalent to a knot Floer chain complex of $P(K)$.

See Figure \ref{Figure, proof strategy} for an illustration of the operations on Heegaard diagrams involved in the strategy of the proof.

\subsection{Further discussions}
\subsubsection{Immersed Heegaard diagrams}
The work presented in this paper opens a new avenue for studying Heegaard Floer homology using immersed Heegaard diagrams. While the results in this paper already demonstrate this strategy can be useful useful for studying satellite operations, many questions remain that are worthy of further study. For example, a natural question is whether $CFD$ can be defined for a more general class of immersed Heegaard diagrams in which more than one $\alpha$ and/or $\beta$ curve may be immersed, rather than just for a single $\alpha$ curve as in the present setting. We expect this is possible but the the technical difficulties will be greater.

As a special case, one could consider doubly pointed genus one immersed Heegaard diagrams in which both the $\alpha$ and $\beta$ curve are allowed to be immersed. In this case there are no technical difficulties in defining a Floer complex from such a diagram, as the construction is combinatorial. We expect that this class of diagrams will be useful for studying satellite knots with arbitrary patterns, so that it will not be necessary to restrict to $(1,1)$ patterns to perform computations in a genus one surface. More precisely, an arbitrary pattern should give rise to an immersed (1,1)-diagram curve which can be used to recover the action of the satellite operation on knot Floer complexes, and an analog of Theorem \ref{Theorem, main theorem} should hold so that pairing the immersed (1,1)-diagram for the satellite with the immersed curve for the companion computes the knot Floer complex of a satellite. This will be explored in future work.

A related question concerns immersed diagrams for bimodules in bordered Floer theory. Stabilizing a (1,1) diagram gives a genus two arced bordered diagram for the complement of the pattern knot, which gives rise to a bordered Floer bimodule. In analogy to immersed (1,1) diagrams, we could consider arced bordered diagram with an immersed $\beta$ curve. We could ask which bimodules can be represented by such diagrams and if these diagrams are useful in determining how bimodules act on immersed curves. Just as modules over the torus algebra correspond to (decorated) immersed curves in the punctured torus $T_\bullet$, it is expected that bimodules are related to immersed surfaces in $T_\bullet\times T_\bullet$. It may be that arced bordered diagrams with immersed curves are helpful in understanding this connection.

\subsubsection{Pattern detection}
In another direction, we can ask if the nice behavior demonstrated for one-bridge braid patterns extends to any other patterns. Recall that given two patterns $P_1$ and $P_2$, we define the composition $P_1\circ P_2$ to be the pattern knot so that $(P_1\circ P_2) (K)$ is $P_1(P_2(K))$ for any companion knot $K$. Theorem \ref{Theorem, satellite knots with one-bridge braid patterns} implies that one-bridge-braid patterns and their compositions act as planar transforms on the (lifts of) immersed curves of companion knots. We wonder if this property characterize these pattern knots.
\begin{que}
	Are one-bridge braid patterns and their compositions the only pattern knots that induce satellite operations that act as planar transforms on immersed curves in the marked torus?
\end{que}
More generally, one can ask the following question.
\begin{que}
	 Which pattern knots are detected by the bordered Floer bimodule?
\end{que}
 Pattern knot detection is closely related to the pursuit of understanding which links are detected by Heegaard Floer homology, as a pattern knot $P$ is uniquely determined by the link $L_P$ in $S^3$ consisting of $P$ and the meridian of the solid torus containing $P$. For example, detection of $(n,1)$-cable patterns would follow from the corresponding link detection result on $T_{2,2n}$ by Binns-Martin \cite[Theorem 3.2]{Binns2020}. Note that the bimodule of a pattern knot complement is stronger than the knot/link Floer homology group of $L_P$, so it is also natural to wonder if one can detect patterns using bimodules that are not seen by the link detection results.

\subsection{Organization} 
The rest of the paper can be divided into two parts. 

The first part includes Section \ref{Section, bordered invariants of immersed Heegaard diagrams} to Section \ref{Section, pairing theorem section} and establishes the immersed Heegaard Floer theory outlined in the introduction. Section \ref{Section, bordered invariants of immersed Heegaard diagrams} defines bordered Heegaard Floer invariants of immersed bordered Heegaard diagrams. Section \ref{Section, knot Floer chain complex of immersed Heegaard diagrams} defines knot Floer chain complexes of immersed doubly-pointed Heegaard diagrams. Section \ref{Section, pairing theorem section} introduces the pairing constructions and proves the corresponding pairing theorems, i.e., Theorem \ref{Theorem, paring theorem via nice diagrams} and Theorem \ref{Theorem, 2nd pairing theorem}.  

The second part concerns satellite knot and includes Section \ref{Section, satellite knot pairing} and Section \ref{Section, examples}.
Section \ref{Section, satellite knot pairing} proves Theorem \ref{Theorem, main theorem} by applying the machinery established in the previous sections. Section \ref{Section, examples} applies Theorem \ref{Theorem, main theorem} to study satellite knots with $(1,1)$ patterns, in which we remove the 
$z$-passable assumption and analyze satellites with one-bridge-braid patterns and the Mazur pattern in detail. 

\subsection*{Acknowledgment} The authors would like to thank Robert Lipshitz, Adam Levine, and Liam Watson for helpful conversations while this work was in progress. The first author was partially supported by the Max Planck Institute for Mathematics and the Pacific Institute for the Mathematical Sciences during the preparation of this work; the research and findings may not reflect those of the institutes. The second author was partially supported by NSF grant DMS-2105501.

\section{Bordered Floer invariants of immersed Heegaard diagrams}\label{Section, bordered invariants of immersed Heegaard diagrams}
This section aims to define (weakly extended) type D structures\footnote{Here, weakly extended type D structures are same as type D structures with generalized coefficient maps appearing in \cite[Chapter 11.6]{LOT18}, and we call them weak since they are a quotient of the extended type D structures defined in \cite{hanselman2016bordered}.} of a certain class of immersed bordered Heegaard diagrams. Even though the Lagrangians are possibly immersed, we can still define such structures by counting holomorphic curves with smooth boundaries. The main technical complication compared to the embedded-Heegaard-diagram case are the possible appearance of boundary degeneration, which interferes with the usual proof that the differential squares to a desired element. Our method to deal with this issue is to employ a key advantage of Heegaard Floer homology: The boundary degenerations can be controlled by imposing certain diagrammatic conditions on the Heegaard diagrams (in Section \ref{Subsection, immersed bordered Heegaard diagram}), and in Section \ref{Section, pairing theorem section} we show these conditions are achievable via perturbing the $\alpha$ curves on the Heegaard diagrams that we are interested in. 

This section can be viewed as modifying \cite[Chapter 5 and Chapter 11.6]{LOT18} and \cite[Errata]{LIPSHITZ2008} to our setting. Indeed, the local results on holomorphic curves such as transversality and gluing results carry over without changes. The main differences are that (1) the embedded index formula is different (see Section \ref{Section, embedded index formula}) and that (2) we need a parity count of boundary degenerations with corners at the self-intersection points of the immersed Lagrangian (see Section \ref{Section, ends of moduli spaces of 0-p curves}--\ref{Subsection, ends of moduli space of 1P curves}). We also add in more details on counting holomorphic curves with a single interior puncture as sketched in \cite[Errata]{LIPSHITZ2008}. The counting of such curves also appeared in \cite{Hanselman2022}, and the more general case where multiple interior punctures are allowed is studied in-depth in the recent work by Lipshitz-Ozsv\'ath-Thurston on defining a $HF^{-}$ bordered invariant \cite{Lipshitz2023}; our analysis of the degeneration of one-punctured curves at east infinity is extracted from \cite{Hanselman2022}.

\subsection{Immersed bordered Heegaard diagrams}\label{Subsection, immersed bordered Heegaard diagram}

\begin{defn}
A \textit{local system} over a manifold $M$ consists of a vector bundle over $M$ and a parallel transport of the vector bundle. A trivial local system is a local system where the vector bundle is of rank $1$.
\end{defn}
\begin{defn}
An \emph{immersed bordered Heegaard diagram} is a quadruple $\mathcal{H}=(\bar{\Sigma},\bm{\bar{\alpha}},\bm{\beta},z)$, where 
\begin{itemize}
\item[(1)]$\bar{\Sigma}$ is a compact, oriented surface of genus $g$ with one boundary component;
\item[(2)]$\bm{\bar{\alpha}}=\bm{\bar{\alpha}}^a\cup \bm{\alpha}^c$. Here $\bm{\bar{\alpha}}^a=\{\bar{\alpha_1}^a,\bar{\alpha_2}^a\}$ is a set of two properly embedded arcs in $\bar{\Sigma}$, and $\bm{\alpha}^c=\{\alpha_1,\ldots,\alpha_{g-2},\alpha_{g-1}\}$, where $\alpha_1,\ldots,\alpha_{g-2}$ are embedded circles in the interior of $\bar{\Sigma}$ and $\alpha_{g-1}=\{\alpha^0_{im},\ldots,\alpha^n_{im}\}$ is an immersed multicurve with a local system in the interior of $\bar{\Sigma}$. We require that $\alpha_{im}^0$ has a trivial local system and that $\{\bar{\alpha_1^a},\bar{\alpha_2^a},\alpha_1,\ldots,\alpha_{g-2},\alpha_{im}^0\}$ are pairwise disjoint and homologically independent in $H_1(\bar{\Sigma},\partial{\bar{\Sigma}};\mathbb{Z})$. We also require the curves $\alpha_{im}^1,\ldots,\alpha_{im}^{n}$ are homologically trivial in $H_1(\bar{\Sigma},\mathbb{Z})$\footnote{To maintain this property, we will not consider handleslides of homologically-trivial curves over other embedded $\alpha$-curves.};
\item[(3)]$\bm{\beta}=\{\beta_1,\ldots,\beta_g\}$ consists of $g$ pairwise disjoint, homologically independent circles embedded in the interior of $\bar{\Sigma}$;
\item[(4)]A base point $z\in\partial\bar{\Sigma}\backslash\partial\bar{\bm{\alpha}}^a$.
\end{itemize}
\end{defn}

\begin{rmk}
	We also denote $\alpha_{g-1}$ by $\alpha_{im}$ and call $\alpha_{im}^0$ the \emph{distinguished component} of $\alpha_{im}$. Note if $\alpha_{im}$ is embedded and consists of only the distinguished component, then the immersed bordered Heegaard diagram is just an ordinary bordered Heegaard diagram representing a bordered 3-manifold with torus boundary. 
\end{rmk}
\begin{rmk}
One can define immersed bordered Heegaard diagrams that have more than one immersed multicurves. We content ourselves with the one-immersed-multicurve setting, for such diagrams occur naturally in applications and we would also like to avoid the more tedious notations incurred by allowing more immersed multicurves. One can also work with immersed bordered diagrams that generalizes regular bordered diagrams for 3-manifolds with higher-genus or multi-component boundaries. Again, we avoid such cases out of conciseness.  
\end{rmk}
We need to impose additional conditions on immersed bordered diagrams to define (weakly extended) type D structures. We give some terminology before stating the conditions. Let $\{\mathcal{D}_i\}$ be the closures of the regions in $\bar{\Sigma}\backslash(\bm{\bar{\alpha}}\cup\bm{\beta})$. A \emph{domain} $B$ is a formal linear combination of the $\mathcal{D}_i$'s, i.e., an object of the form $\sum_{i}n_i\mathcal{D}_i$ for $n_i\in\mathbb{Z}$, and the coefficient $n_i$ is called the multiplicity of $B$ at $\mathcal{D}_i$. Given a point $p\in\bar{\Sigma}\backslash(\bm{\bar{\alpha}}\cup\bm{\beta})$, $n_p(B)$ denotes the multiplicity of $B$ at the region containing $p$. A domain is called positive if $n_i\geq 0$ for all $i$. Note that a domain $B$ specifies an element $[B]$ in $H_2(\bar{\Sigma},\partial\bar{\Sigma}\cup\bm{\beta}\cup\bm{\bar{\alpha}};\mathbb{Z})$. Let $l:\amalg S^1\rightarrow \partial\bar{\Sigma}\cup\bm{\beta}\cup\bm{\bar{\alpha}}\subset\bar{\Sigma}$ be an oriented multicurve; note that this multicurve is not necessarily immersed, it can have corners at intersections of $\bar\alpha$ with $\beta$, intersections of $\bar\alpha$ with $\partial \bar\Sigma$, or self-intersections of $\alpha_{im}$. A domain $B$ is said to be bounded by $l$ if $\partial [B]=[l]$ in $H_1(\partial\bar{\Sigma}\cup\bm{\beta}\cup\bm{\bar{\alpha}};\mathbb{Z})$. A domain $B$ is called a \emph{periodic domain} if it is bounded by a (possibly empty) loop in $\partial\bar{\Sigma}\cup\bar{\bm{\alpha}}^a$ together with some copies of the $\beta$-circles and the $\alpha$-circles, where we allow at most one component of $\alpha_{im}$ to appear in $\partial B$. 

We will need some language to keep track of when the boundaries of domains include corners at self-intersection points of $\alpha_{i}$. Note that, ignoring the local systems, we can identify $\alpha_{im}=\{\alpha^0_{im},\ldots,\alpha^n_{im}\}$ as the image of an immersion $f_{im}:\amalg^{n+1} S^1 \rightarrow \bar{\Sigma}$.
\begin{defn}\label{Defition, stay-on-track or one-cornered curves}
A closed curve $l:S^1\rightarrow \alpha_{im}\subset\bar{\Sigma}$ is said to be \emph{stay-on-track} or \emph{zero-cornered} if it lifts to a map $\tilde{l}:S^1\rightarrow S^1\subset\amalg S^1$ such that $f_{im}\circ \tilde{l}=l$. Note that this is nearly the same as saying the curve $l$ is immersed; the difference is that stay-on-track paths can stop and change directions along $\alpha_{im}$. A curve $l:S^1\rightarrow \alpha_{im}$ is said to be \emph{$n$-cornered} if there exists $n$-points $\xi_1, \ldots \xi_n$ in $S^1$ dividing $S^1$ into arcs $a_1, \ldots, a_n$ (ordered cyclically, with indices understood mod $n$) such that $l|_{a_i}$ lifts through $f_{im}$ for each $i$, but $l|_{a_i\cup a_{i+1}}$ does not. Note that $l$ maps each $\xi_i$ to some self-intersection point $q_i$ of $\alpha_{im}$ and makes a sharp turn at $q_i$; we refer to this as a corner of the curve $l$. We define an arc to be either stay-on-track or $n$-cornered similarly.
\end{defn}
\begin{figure}[htb!]
	\centering{
		\includegraphics[scale=0.4]{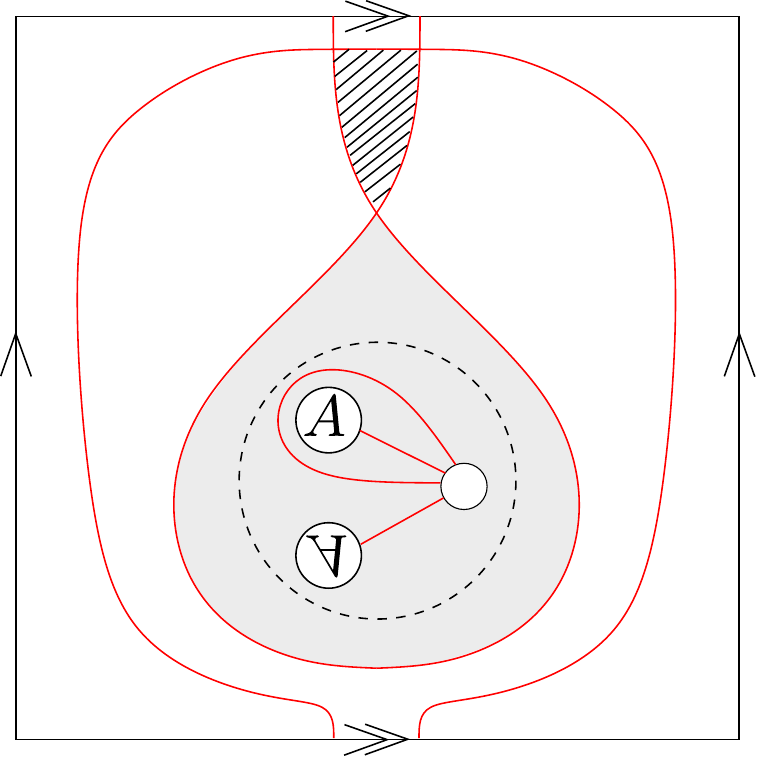}
		\caption{Examples of a $3$-cornered curve and a stabilized teardrop on a genus-2 immersed bordered Heegaard diagram, where the $\beta$ curves are omitted. The $3$-cornered curve is the boundary of the shaded triangular region. The highlighted region is a stabilized teardrop, where the dashed circle is the separating curve.}
		\label{Figure, an n-cornered curve and a stabilized teardrop}
	}
\end{figure}
See Figure \ref{Figure, an n-cornered curve and a stabilized teardrop} for an example of a $3$-cornered curve. Next, we define a class of domains on immersed bordered Heegaard diagrams. 
\begin{defn}
A domain $B$ on a genus $g$ immersed bordered Heegaard diagram is called a \emph{stabilized teardrop} if it satisfies the following conditions:
\begin{itemize}
\item[(1)]$B$ is a positive domain bounded by $\partial\bar{\Sigma}$ (with induced boundary orientation) and a one-cornered subloop of $\alpha_{im}$. (In particular, $B$ is a formal linear combination of regions of $\bar{\Sigma}\backslash\bm{\bar{\alpha}}$.)
\item[(2)]There exists a separating curve $C$ of $\bar{\Sigma}$ which does not intersect $\bm{\bar{\alpha}}$, and the local multiplicity of $B$ on the region containing $C$ is $1$. 
\item[(3)]Surgery on $(\bar{\Sigma},\bm{\bar{\alpha}})$ along $C$ produces two oriented surfaces with $\alpha$-curves: $(E_1,\bar{\bm{\alpha}}^a,\alpha_1,\ldots,\alpha_{g-1})$,  where $E_1$ is a genus-$(g-1)$ with one boundary component, and $(E_2,\alpha_{im})$, where $E_2$ a genus-one surface. The domain $B$ gives rise to two domains $B_1$ and $B_2$ on $E_1$ and $E_2$ respectively, such that $[B_1]=[E_1]$ and $B_2$ is an immersed teardrop in $E_2$ bounded by a one-cornered subloop of $\alpha_{im}$. (Here, we also allow teardrops with concave corner.)
\end{itemize}
\end{defn}
A pictorial example of a stabilized teardrop is shown in Figure \ref{Figure, an n-cornered curve and a stabilized teardrop}. For convenience, we introduce the following terminology.
\begin{defn}
A domain $B$ is said to be an \emph{$n$-cornered $\alpha$-bounded domain} if it is bounded by (possibly trivial) loops contained in $\partial\bar{\Sigma}\cup \bar{\bm{\alpha}}^a\cup \alpha_{1}^c\cup\ldots\cup \alpha_{g-2}^c$ and an $n$-cornered loop contained in some connected component of $\alpha_{im}$. 
\end{defn}
The condition on bordered diagrams needed to deal with boundary degenerations is the following. 
\begin{defn}\label{Definition, unobstructedness}
Given an immersed bordered Heegaard diagram $\mathcal{H}=(\bar{\Sigma},\bm{\bar{\alpha}},\bm{\beta},z)$, $\bm{\bar{\alpha}}$ is called \emph{unobstructed} if 
\begin{itemize}
\item[(1)]there are no positive zero- or one-cornered $\alpha$-bounded domains $B$ with $n_z(B)=0$, 
\item[(2)]the only positive zero-cornered $\alpha$-bounded domain $B$ with $n_z(B)=1$ is $[\Sigma]$,
\item[(3)]any positive one-cornered $\alpha$-bounded domain $B$ with $n_z(B)=1$ is a stabilized teardrop, and
\item[(4)]any positive two-cornered $\alpha$-bounded domain $B$ with $n_z(B)=0$ is a bigon.
\end{itemize}
Abusing the terminology, we also say an immersed Heegaard diagram is unobstructed if its $\bm{\bar{\alpha}}$ is so.
\end{defn} 

By a \textit{Reeb chord} in $(\partial\bar{\Sigma}, \partial \bar{\bm{\alpha}^a})$, we mean an oriented chord on $\partial\bar{\Sigma}$ whose endpoints are on $\partial \bar{\bm{\alpha}^a}$ and whose orientation is induced from that of $\partial\bar{\Sigma}$. When defining type D structures, it will be convenient to use Reeb chords in $(-\partial\bar{\Sigma}, \partial \bar{\bm{\alpha}^a})$. Let $\rho_0, \rho_1, \ldots,\rho_{3}$ denote the Reeb chords corresponding to the four arcs in $-\partial\bar{\Sigma}\backslash \partial\bar{\bm{\alpha}^a}$, where $\rho_0$ contains the base point $z$ and the sub-index increases according to the orientation of $-\partial\bar{\Sigma}$. We call these four Reeb chords the \textit{elementary Reeb chords}. Other Reeb chords in $(-\partial\bar{\Sigma}, \partial \bar{\bm{\alpha}^a})$ can be obtained by concatenation of the four elementary Reeb chords. For example, $\rho_{12}$ denotes the concatenation of $\rho_1$ and $\rho_2$. We use the notation $-\rho_I$ to indicate the orientation reversal of $\rho_I$, where $I$ is a string of words in $\{0,1,2,3\}$ that permits concatenation; note that $-\rho_I$ is a Reeb chord in $(\partial\bar{\Sigma}, \partial \bar{\bm{\alpha}^a})$.

 We shall need two types of admissibility on the bordered Heegaard diagrams: the first one is needed for defining the (extended) type D structures, and the second one is needed for the paring operation studied in Section \ref{Section, the second pairing theorem}. Given a domain $B$, denote by $n_{-\rho_i}(B)$ ($i=0,1,2,3$) the local multiplicity of $B$ in the region containing the Reeb chord $-\rho_i$. (In particular, $n_{-\rho_0}(B)=n_z(B)$). A periodic domain $B$ is called \emph{provincial} if $n_{-\rho_i}(B)=0$ for all $i=0,1,2,3$. 
\begin{defn}\label{Definition, provinical admissibility}
An immersed bordered Heegaard diagram is \emph{provincially admissible} if all non-trivial provincial periodic domains have both positive and negative local multiplicities.
\end{defn}

\begin{defn}\label{Definition, bi-admissibility}
An immersed bordered Heegaard diagram is \emph{bi-admissible} if any non-trivial periodic domain $B$ satisfying $n_{-\rho_0}(B)=n_{-\rho_1}(B)=0$ or $n_{-\rho_2}(B)=n_{-\rho_3}(B)=0$ has both positive and negative local multiplicities.
\end{defn}

Note that bi-admissibility implies provincial admissibility. 
\subsection{Moduli spaces of stay-on-track holomorphic curves} \label{subsection, defining moduli spaces}
In this subsection, we set up the moduli spaces that we use to define the (weakly extended) type D structures. Roughly, the moduli spaces consist of pseudo-holomorphic curves in $\Sigma\times [0,1]\times \mathbb{R}$. We define them by modifying the corresponding definitions in Section 5.2 of \cite{LOT18} with two main differences: The first one is a new constraint on the boundaries with respect to $\alpha_{im}$, and the second one is that we include pseudo-holomorphic curves with a single interior puncture for the sake of defining a weakly extended type D structure. 

\subsubsection{Definition of moduli spaces of holomorphic curves} 
\begin{defn}\label{Definition, decorated sources}
A \emph{decorated source $S^\triangleright$ of type $0$-P} is a smooth Riemann surface $S$ with boundary such that
\begin{itemize}
\item[(1)]it has boundary punctures and has no interior punctures, 
\item[(2)]there is a labeling of each puncture by $+$, $-$, or $e$, and
\item[(3)]there is a labeling of each $e$ puncture by a Reeb chord on the boundary of the immersed bordered Heegaard diagram. 
\end{itemize}
A \emph{decorated source $S^\triangleright$ of type $1$-P} is a smooth Riemann surface $S$ with boundary such that 
\begin{itemize}
\item[(1)]it has boundary punctures and a single interior puncture,
\item[(2)]there is a labeling of each boundary puncture by $+$ or $-$, and
\item[(3)]there is a labeling of the interior puncture by $e$.
\end{itemize}
By a decorated source, we mean it is either a decorated source of type $0$-P or a decorated source of type $1$-P.
\end{defn}

Let $\mathcal{H}=(\bar{\Sigma},\bm{\bar{\alpha}},\bm{\beta},z)$ be an immersed bordered Heegaard diagram. Let $\Sigma$ denote the interior of $\bar{\Sigma}$. Equip the target manifold $\Sigma\times [0,1]\times \mathbb{R}$ with an admissible almost complex structure $J$ as in Definition 5.1 of \cite{LOT18}. Let $\pi_{\mathbb{D}}$, $\pi_\Sigma$, $s$, and $t$ denote the canonical projection maps from $\Sigma\times [0,1]\times \mathbb{R}$ to $[0,1]\times \mathbb{R}$, $\Sigma$, $[0,1]$, and $\mathbb{R}$ respectively. We will count maps $$u:(S,\partial S)\rightarrow (\Sigma\times [0,1]\times \mathbb{R}, \bm{\beta}\times\{0\}\times \mathbb{R}, \bm{\bar{\alpha}} \times \{1\} \times \mathbb{R})$$ from decorated sources to the target manifold satisfying the following conditions:
\begin{itemize}
\item[(M-1)] $u$ is $(j,J)$-holomorphic, where $j$ is a complex structure on the surface $S$. 
\item[(M-2)] $u$ is proper.
\item[(M-3)] $u$ extends to a proper map $u_{\bar{e}}: S_{\bar{e}} \rightarrow \Sigma_{\bar{e}}\times[0,1]\times \mathbb{R}$, where $S_{\bar{e}}$ and $\Sigma_{\bar{e}}$ are surfaces obtained by filling in the corresponding east puncture(s).
\item[(M-4)] the map $u_{\bar{e}}$ has finite energy in the sense in \cite{BEH03}.
\item[(M-5)] $\pi_{\mathbb{D}}\circ u_{\bar{e}}$ is a $g$-fold branched cover.
\item[(M-6)] $t\circ u$ approaches $\infty$ at $+$ punctures.
\item[(M-7)] $t\circ u$ approaches $-\infty$ at $-$ punctures.
\item[(M-8)] $\pi_{\Sigma} \circ u$ approaches the labeled Reeb chord at a boundary $e$ punctures.
\item[{(M-9)}] $\pi_{\Sigma} \circ u$ covers each of the regions next to $\bar{e}\in \Sigma_{\bar{e}}$ at most once.
\item[(M-10)](Strong boundary monotonicity) For each $t\in\mathbb{R}$ , each of $u^{-1}(\beta_i\times\{0\}\times \{t\})$ $(i=1,\ldots,g)$ and $u^{-1}(\alpha^c_{i}\times\{1\}\times \{t\})$ $(i=1,\ldots,g-1)$ consists of exactly one point, and $u^{-1}(\alpha_i^a\times\{1\}\times \{t\})$ $(i=1,2)$ consists of at most one point\footnote{In \cite{LOT18}, a weak boundary monotonicity condition was introduced as well as the strong boundary monotonicity. When restricting to torus-boundary bordered manifolds, however, these two conditions are equivalent. So, we only state the strong boundary monotonicity condition here.}.

\item[{(M-11)}](Stay-on-track boundary condition) Let $A$ be the boundary component of $S$ that is mapped to the $\alpha_{im}\times\{1\}\times \mathbb{R}$. Then $\pi_{\Sigma}\circ u|_A$ is stay-on-track.
\end{itemize}   

\begin{rmk}
Only (M-9) and (M-11) are different from the corresponding conditions in \cite{LOT18}. We impose (M-9) since we aim to define an extended type D structure, which will not need holomorphic curves covering the boundary regions multiple times.
\end{rmk}

Given an immersed (bordered) Heegaard diagram, generators and homology classes of maps connecting generators are defined similarly as in the embedded-$\alpha$-curve case.
 
\begin{defn}\label{Definition, holomorphic curve in Sigma*[0,1]* R}
Let $\bm{x}$ and $\bm{y}$ be two generators and let $B\in\tilde{\pi}_2(\bm{x},\bm{y})$ be a homology class connecting  $\bm{x}$ to $\bm{y}$. $\widetilde{\mathcal{M}}^B(\bm{x},\bm{y};S^{\triangleright})$ is defined to be the moduli space of holomorphic curves with decorated source $S^{\triangleright}$, satisfying (M-1)-(M-11), asymptotic to $\bm{x}$ at $-\infty$ and $\bm{y}$ at $+\infty$, and inducing the homology class $B$. 
\end{defn}

Let $E(S^\triangleright)$ be the set of east punctures of $S^\triangleright$ lying on the boundary. Let $\widetilde{ev}\colon \widetilde{\mathcal{M}}^B(\bm{x},\bm{y};S^{\triangleright})\rightarrow \mathbb{R}^{|E(S^\triangleright)|}$ be the evaluation map given by the values of $t\circ u_{\bar{e}}$ at the east punctures; the values are called heights of the east punctures.

\begin{defn}
Let $P=\{P_i\}$ be a partition of $E$. Then $$\widetilde{\mathcal{M}}^B(\bm{x},\bm{y};S^{\triangleright};P)\coloneqq \widetilde{ev}^{-1}(\Delta_{P}),$$ where $\Delta_P:=\{(x_p)\in \mathbb R^{|E|}\mid x_p=x_q\text{ if }p,q\in P_i \text{ for some i} \}$.
\end{defn}

\begin{defn} 
Let $\overrightarrow{P}=(P_1,\ldots,P_k)$ be an ordered partition $E$, and let $P$ denote the corresponding underlying unordered partition. Define $\widetilde{\mathcal{M}}^B(\bm{x},\bm{y};S^{\triangleright};\overrightarrow{P})$ to be $\{u\in\widetilde{\mathcal{M}}^B(\bm{x},\bm{y};S^{\triangleright};P)\mid t\circ u (p)< t\circ u(q) \text{ for }p\in P_i, \text{ } q\in P_{i'},\text{ and }i< i'\}.$
\end{defn}

There is an $\mathbb{R}$-action on the above moduli spaces given by translations along the $\mathbb{R}$-coordinate of $\Sigma\times[0,1]\times \mathbb{R}$. \textit{The reduced moduli spaces} are the  quotient of the relevant moduli spaces by the $\mathbb{R}$-action; they are denoted by $\mathcal{M}^B(\bm{x},\bm{y};S^{\triangleright})$, $\mathcal{M}^B(\bm{x},\bm{y};S^{\triangleright};{P})$, and $\mathcal{M}^B(\bm{x},\bm{y};S^{\triangleright};\overrightarrow{P})$, respectively. The evaluation maps $\widetilde{ev}$ induce maps $ev$ from the reduced moduli spaces to $\mathbb{R}^{|E(S^\triangleright)|}/\mathbb{R}$, which record the relative heights between boundary east punctures.\\

\paragraph*{\textbf{Notation.}} When we need to distinguish moduli spaces of 0-P holomorphic curves without east punctures and moduli spaces of 1-P holomorphi curves, we will use the notation $\widetilde{\mathcal{M}}^B(\bm{x},\bm{y};S^{\triangleright};U)$ to emphasize the source $S^\triangleright$ is of type 1-P. 

\subsubsection{Regularity and the expected dimension}
\begin{prop}\label{Proposition, regularity of the moduli spaces}
For a generic admissible almost complex structure on $\Sigma\times [0,1]\times \mathbb{R}$, the moduli space $\widetilde{\mathcal{M}}^B(\bm{x},\bm{y};S^{\triangleright};P)$ is transversally cut out.
\end{prop}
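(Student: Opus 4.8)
The plan is to adapt the proof of the corresponding transversality statement in the embedded setting, i.e.\ the argument of \cite[Chapter 5]{LOT18} (which in turn follows the cylindrical reformulation of \cite{LIPSHITZ2008}), and to observe that the only genuinely new features of our moduli spaces — the stay-on-track boundary condition (M-11) along the immersed curve $\alpha_{im}$, the multiplicity bound (M-9) at east infinity, and the presence of a single interior puncture for $1$-P sources — do not interfere with the standard Sard--Smale argument. Thus the proof is structured as: (i) build a universal moduli space over the space of admissible almost complex structures, (ii) show it is a Banach manifold by proving the full linearization is surjective, (iii) apply Sard--Smale to conclude that generic $J$ is a regular value.

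For step (i) I would form the universal moduli space $\widetilde{\mathcal{M}}^B(\bm{x},\bm{y};S^{\triangleright};P)^{\mathrm{univ}}$ fibered over the space $\mathcal{J}$ of admissible almost complex structures on $\Sigma\times[0,1]\times\mathbb{R}$, using the usual Sobolev completions of the space of maps and a Floer-type $C^\infty_\epsilon$ completion of $\mathcal{J}$ so that Sard--Smale applies; the partition constraint $P$ is imposed by pulling back the diagonal $\Delta_P$ under the height evaluation map $\widetilde{ev}$. The Fredholm theory for the linearized $\bar\partial$-operator — ellipticity, the Fredholm property, and the index computation — is local near the boundary and near the punctures, and is essentially unchanged from \cite[Chapter 5]{LOT18} and \cite[Errata]{LIPSHITZ2008}: near a point of the boundary component $A$ that maps to $\alpha_{im}\times\{1\}\times\mathbb{R}$, the stay-on-track condition means the boundary lifts through the immersion $f_{im}$, so on a small neighborhood the problem is identical to one with an embedded totally real boundary condition; the analysis near the interior puncture of a $1$-P source and near east infinity is exactly the one sketched in \cite[Errata]{LIPSHITZ2008} and carried out in \cite{Hanselman2022}. (The relevant index here is the parametrized index of $u$ itself, so the difference between the immersed and embedded \emph{embedded} index formulas is irrelevant to this proposition.)

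The heart of the matter is surjectivity of the full linearization $D_u\bar\partial_J \oplus (\text{variation in }J)$ at every pair $(u,J)$ in the universal moduli space. I would follow the standard route: first show $u$ has an interior injective point — a point $z_0$ in the interior of $S$ with $u^{-1}(u(z_0))=\{z_0\}$ at which $d(\pi_\Sigma\circ u)$ is an isomorphism and which lies over a region of $\Sigma$ away from $\partial\bar{\Sigma}$ where $J$ is forced to be split. As in the cylindrical setting, such a point exists because, although $\pi_{\mathbb{D}}\circ u$ is a $g$-fold branched cover by (M-5), the $g$ sheets project to distinct points of $\Sigma$ over a generic point of $[0,1]\times\mathbb{R}$; the immersed boundary is irrelevant to this, $z_0$ being interior. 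Then the cokernel of $D_u\bar\partial_J$ is killed by perturbing $J$ through admissible deformations supported near $\pi_\Sigma(u(z_0))$, via the usual unique-continuation (Carleman/Aronszajn) argument. The stay-on-track condition does not obstruct this and is preserved under deformation, since among maps with smooth boundary it is an open and closed condition. Finally, transversality of the partitioned moduli space — that $ev$ is transverse to $\Delta_P$ — follows as in \cite[Chapter 5]{LOT18} from the freedom to translate the height of each east puncture independently using the $\mathbb{R}$-translation on the strip-like end attached there.

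I expect the main obstacle to be the interaction between the restricted class of admissible almost complex structures — which are $\mathbb{R}$-invariant and make $\pi_{\mathbb{D}}$ holomorphic, so a perturbation of $J$ near a point $p\in\Sigma$ is inevitably ``smeared'' over the whole fiber $\{p\}\times[0,1]\times\mathbb{R}$ — and the need to localize the perturbation so that it sees only the sheet of $u$ through the injective point $z_0$. This is precisely the delicate point handled in \cite[Chapter 5]{LOT18}, and I would import their resolution, verifying only that the stay-on-track immersed boundary component (which contributes nothing near an interior injective point) leaves the argument intact. A secondary point needing attention is to rule out ``trivial'' or entirely multiply covered components of $u$ that would lack an injective point; as in \cite[Chapter 5]{LOT18} this is excluded by the constraints in the definition of the moduli space — positivity of $B$, strong boundary monotonicity (M-10), and the structure of the decorated sources $S^{\triangleright}$.
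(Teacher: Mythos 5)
Your proposal is correct and follows essentially the same route as the paper, which simply invokes \cite[Proposition 5.6]{LOT18} and observes that the immersed boundary condition is invisible once one passes to the pulled-back bundle over $S$ in the linearization — exactly the point you make about the stay-on-track boundary lifting through $f_{im}$ locally. Your write-up merely unpacks the standard Sard--Smale argument from \cite{LOT18} in more detail than the paper does.
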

\begin{proof}
See Proposition 5.6 of \cite{LOT18}. We point out that the $\alpha$ curves being immersed does not affect the usual proof. When analyzing the linearization of the $\bar{\partial}$-operator in the standard proof of such a result, one would be working with a pull-back bundle over $S$, on which one will not see the immersed boundary condition anymore.   
\end{proof}

\begin{prop}\label{Proposition, first index formula}
Let $B\in \widetilde{\pi}_2(\bm{x},\bm{y})$. Let $g$ denote the genus of the bordered Heegaard diagram. Let $\chi(\cdot)$ and $e(\cdot)$ denote the Euler number and Euler measure, respectively.
\begin{itemize}
\item[(1)]Let $S_0^{\triangleright}$ be a decorated source of 0-P. Then the expected dimension of the moduli space $\widetilde{\mathcal{M}}^B(\bm{x},\bm{y};S_0^{\triangleright};P)$ is 
$$\text{ind}(B,S_0,P)=g-\chi(S_0)+2e(B)+|P|.$$
\item[(2)]Let $S_1^{\triangleright}$ be a decorated source of 1-P. Then the expected dimension of the moduli space $\widetilde{\mathcal{M}}^B(\bm{x},\bm{y};S_1^{\triangleright};U)$ is 
$$\text{ind}(B,S_1)=g-\chi(S_1)+2e(B)+1.$$
\end{itemize}
\end{prop}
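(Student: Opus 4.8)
\textbf{Proof proposal for Proposition \ref{Proposition, first index formula}.}

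The plan is to follow the proof of the index formula in \cite[Section 5.6]{LOT18} verbatim for the $0$-P case and then adapt it to account for the single interior puncture in the $1$-P case. The starting point is the general Riemann--Roch / index computation for the linearized $\bar\partial$-operator on a decorated source $S$ mapping into $\Sigma\times[0,1]\times\mathbb{R}$. As noted in the proof of Proposition \ref{Proposition, regularity of the moduli spaces}, the immersed boundary condition along $\alpha_{im}$ is invisible to the linearized operator: one pulls back the relevant bundle along $u$ and the Lagrangian boundary condition one sees is the pullback of $T\alpha_{im}$, which is a perfectly good (embedded, totally real) subbundle over the boundary arc $A$ of $S$. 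Hence the local analytic input — Fredholm index of the pulled-back problem, the contribution of the branched cover structure of $\pi_{\mathbb{D}}\circ u$, and the matching/gluing of the $\Sigma$- and $\mathbb{D}$-factors — is identical to \cite{LOT18}. First I would recall, for the $0$-P case, that the expected dimension decomposes as $\operatorname{ind}(B,S_0,P)=g-\chi(S_0)+2e(B)+|P|$, exactly as in \cite[Formula (5.8)]{LOT18} (our $|P|$ plays the role of their count of partition blocks, i.e.\ the number of independent height constraints). Since none of the ingredients of that derivation — the $g$-fold branched cover condition (M-5), the Euler measure term coming from $\pi_\Sigma\circ u$, and the height constraints imposed by $P$ — are altered by allowing $\alpha_{im}$ to be immersed or by imposing (M-9) and (M-11), the $0$-P formula carries over word for word.

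For the $1$-P case, the one new feature is the interior puncture labeled $e$, at which $\pi_{\mathbb{D}}\circ u$ limits to a point on the east boundary of $\mathbb{D}$ (rather than to a Reeb chord as at a boundary $e$ puncture). The approach is to compute the Fredholm index of the extended map $u_{\bar e}:S_{\bar e}\to\Sigma_{\bar e}\times[0,1]\times\mathbb{R}$ obtained by filling in the interior puncture, and then account for the difference between the moduli space of $u_{\bar e}$ and the moduli space of $u$. Filling in an interior puncture in the source costs $-2$ in Euler characteristic offset relative to a boundary puncture, but the interior puncture also carries the asymptotic (evaluation-type) data of the $t$-height of the east puncture, which is a free $\mathbb{R}$-parameter; carefully, this yields the stated correction of $+1$ relative to the naive count, giving $\operatorname{ind}(B,S_1)=g-\chi(S_1)+2e(B)+1$. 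Concretely I would run the same decomposition as in the $0$-P case — writing the index as the sum of a $\pi_\Sigma$-contribution and a $\pi_{\mathbb{D}}$-contribution, matched along $S$ — but now with the $\pi_{\mathbb{D}}$-factor being a holomorphic map to $\mathbb{D}$ with one interior marked point asymptotic to $\bar e\in\partial\Sigma_{\bar e}$, and use the corresponding Riemann--Roch count. This is exactly the computation sketched in \cite[Errata]{LIPSHITZ2008} and carried out in \cite{Hanselman2022}; I would reproduce it in the detail promised in the introduction to this section, taking care that the $e(B)$ term is still computed from the same domain $B$ (the interior puncture does not change $[B]\in H_2$) and that the single interior $e$ puncture contributes the $+1$ in place of a $|P|$-type term (a $1$-P source here has no boundary $e$ punctures in the cases we count, cf.\ the Notation paragraph, so there is no partition data).

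The main obstacle I expect is bookkeeping the interior-puncture contribution correctly: pinning down exactly why the net effect is $+1$ and not, say, $-1$ or $+2$. This requires being precise about three things simultaneously — (i) the jump in $\chi(S)$ versus $\chi(S_{\bar e})$ when the interior puncture is filled, (ii) the fact that the asymptotic behavior at an interior east puncture is a point-constraint in the $\Sigma$-direction but comes with a free height parameter in the $\mathbb{R}$-direction, and (iii) ensuring no double counting between the $\pi_\Sigma$- and $\pi_{\mathbb{D}}$-contributions at the filled point. The cleanest way to avoid errors is to first establish the formula for the extended map $u_{\bar e}$ using the standard closed-string-type Riemann--Roch with one interior marked point (where the literature is unambiguous), and only then translate back to $u$ by subtracting the contribution of the removed neighborhood of the puncture; this reduces the novel part of the argument to a single local model computation that I would present explicitly. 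All remaining steps — additivity of the index under the $\pi_\Sigma$/$\pi_{\mathbb{D}}$ splitting, the identification of the $2e(B)$ term with the Euler measure of the domain, and the transversality needed to identify the Fredholm index with the actual dimension — are unchanged from \cite{LOT18} and will be cited rather than reproved.
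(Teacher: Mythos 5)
Your proposal is correct and follows essentially the same route as the paper: part (1) is a direct citation of \cite[Proposition 5.8]{LOT18} together with the observation that the immersed boundary condition is invisible to the linearized $\bar\partial$-operator, and part (2) is handled by filling in the interior puncture via the removable singularity theorem, identifying $1$-P curves with curves in $\Sigma_{\bar e}\times[0,1]\times\mathbb{R}$ meeting $\{e\}\times[0,1]\times\mathbb{R}$ once, and invoking the closed-diagram index formula of \cite[Section 4.1]{MR2240908}. The only caveat is that your heuristic ``$-2$ offset'' remark is not quite right (filling an interior puncture raises $\chi(S)$ by $1$ while boundary punctures leave $\chi$ unchanged, and the remaining discrepancy is absorbed by the change in Euler measure of the regions meeting $e$), but since you explicitly flag this bookkeeping as the step to pin down via the extended map $u_{\bar e}$, the plan is sound.
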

\begin{proof}
For (1) see Proposition 5.8 of \cite{LOT18}. For the same reason mentioned in the proof of the previous proposition, our $\alpha$-curves being immersed does not affect the proof. 

For (2), note that using the removable singularity theorem we can identify holomorphic curves in $\widetilde{\mathcal{M}}^B(\bm{x},\bm{y};S_1^{\triangleright};U)$ with holomorphic curves in $\Sigma_{\bar{e}}\times [0,1]\times \mathbb{R}$ that intersects $\{e\}\times[0,1]\times \mathbb{R}$ geometrically once. The formula then follows from the index formula in the close-Heegaard-diagram case given in Section 4.1 of \cite{MR2240908}. (Again, $\alpha_{im}$ being immersed does not affect the formula).

\end{proof}
\begin{rmk}
If we regard the interior puncture of a 1-P holomorphic curve is asymptotic to a single closed Reeb orbit, denoted by $U$, then (1) and (2) in the above proposition may be unified using the formula in (1).
\end{rmk}

\subsection{Compactification}
The moduli spaces defined in Section \ref{subsection, defining moduli spaces} admit compactification similar to that in \cite{LOT18}. The overall idea is that a sequence of holomorphic curves in $\Sigma\times[0,1]\times \mathbb{R}$ may converge to a holomorphic building in $\Sigma\times[0,1]\times \mathbb{R}$ together with some holomorphic curves in the east-$\infty$ attaching to it; such nodal holomorphic objects are called \emph{holomorphic combs}. In our setup, the degeneration in the east-$\infty$ is the same as those when the $\alpha$-curves are embedded; we recollect the relevant material (with straightforward modifications to accommodate for 1-P holomorphic curves) in Subsection \ref{subsubsection, holomorphic curves in east infinty}. However, the immersed $\alpha$ curves do complicate the situation. For example, a limit holomorphic building in $\Sigma\times[0,1]\times \mathbb{R}$ may have corners at self-intersection points of $\alpha_{im}$. We will give a precise description of this phenomenon in Subsection \ref{subsubsection, holomorphic combs}.

\subsubsection{Holomorphic curves in the end at east-infinity}\label{subsubsection, holomorphic curves in east infinty} Let $Z$ denote the oriented boundary $\partial\bar{\Sigma}$ of the bordered Heegaard surface. We define the moduli spaces of holomorphic curves in the east end $\mathbb{R}\times Z \times [0,1]\times \mathbb{R}$. They host possible degenerations of the limits of holomorphic curves at east-$\infty$. Since the closed $\alpha$ curves do not approach the cylindrical end at east-$\infty$, these moduli spaces are not affected by the closed $\alpha$ curves being immersed and their definition is the same as the usual embedded case. We first specify the sources of the holomorphic curves.
\begin{defn}
A \emph{bi-decorated source} $T^\diamond$ is a smooth Riemann surface $T$ with boundary such that 
\begin{itemize}
\item[(1)]it has boundary punctures and at most one interior puncture,
\item[(2)]the boundary punctures are labeled by $e$ or $w$,  
\item[(3)]the interior puncture, if exits, is labeled by $e$ and,
\item[(4)]the boundary punctures are also labeled by Reeb chords.
\end{itemize} 
\end{defn}

Equip $\mathbb{R}\times Z\times[0,1]\times \mathbb{R}$ with a split almost complex structure $J=j_{\mathbb{R}\times Z}\times j_\mathbb{D}$. The four points $\textbf{a}=\partial{\bm{\alpha}^a}$ on $Z$ give rise to four Lagrangians $\mathbb{R}\times \textbf{a}\times\{1\}\times\mathbb{R}$.
\begin{defn}
Given a bi-decorated source $T^\diamond$, define $\widetilde{\mathcal{N}}(T^\diamond)$ to be the moduli spaces of maps $v:(T,\partial T)\rightarrow (\mathbb{R}\times Z\times[0,1]\times \mathbb{R},\mathbb{R}\times \textbf{a}\times\{1\}\times\mathbb{R})$ satisfying the following conditions:
\begin{itemize}
\item[(N-1)] $v$ is $(j,J)$-holomorphic with respect to some complex structure $j$ on $T$.
\item[(N-2)] $v$ is proper.
\item[(N-3)] Let $T_{\bar{e}}$ and $(\mathbb{R}\times Z)_{\bar{e}}$ denote the spaces obtained from $T$ and $\mathbb{R}\times Z$ by filling in the east punctures. Then $v$ extends to a proper map $v_{\bar{e}}:T_{\bar{e}}\rightarrow (\mathbb{R}\times Z)_{\bar{e}}\times [0,1]\times \mathbb{R}$ such that $\pi_\Sigma\circ v_{\bar{e}}(e)=e$.\footnote{This condition excludes mapping the interior puncture end to the west infinity end.}  
\item[(N-4)]At each boundary $w$ puncture, $\pi_{\Sigma}\circ v$ approaches the  corresponding Reeb chords in $-\infty\times Z$ that labels $w$.
\item[(N-5)]At each boundary $e$ puncture, $\pi_{\Sigma}\circ v$ approaches the corresponding Reeb chords in $\infty\times Z$ that labels $e$.
\end{itemize}
\end{defn}

We have the following proposition regarding the regularity of the moduli spaces.

\begin{prop}[Proposition 5.16 of \cite{LOT18}]\label{Proposition, regularity for holomorphic curves in R times Z times [0,1] times R}
If all components of a bi-decorated source $T^\diamond$ are topological disks (possibly with an interior puncture), then $\widetilde{\mathcal{N}}(T^\diamond)$ is transversally cut out for any split almost complex structure on $\mathbb{R}\times Z\times[0,1]\times \mathbb{R}$. 
\end{prop}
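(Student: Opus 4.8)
The statement to prove is Proposition~5.16's analogue: if all components of a bi-decorated source $T^\diamond$ are topological disks (possibly with one interior puncture), then $\widetilde{\mathcal{N}}(T^\diamond)$ is transversally cut out for any split almost complex structure on $\mathbb{R}\times Z\times[0,1]\times\mathbb{R}$. Since the target splits as $j_{\mathbb{R}\times Z}\times j_{\mathbb{D}}$ and the closed $\alpha$-curves never enter this cylindrical region, the immersedness of $\alpha_{im}$ plays no role here; the entire content is to reduce to the embedded case of LOT.

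\textbf{Proof proposal.} The plan is to follow the proof of Proposition~5.16 of \cite{LOT18} essentially verbatim, since nothing in the east-infinity region involves the immersed $\alpha$-curves. First I would use the product structure of the split almost complex structure $J=j_{\mathbb{R}\times Z}\times j_{\mathbb{D}}$ to observe that a map $v\in\widetilde{\mathcal{N}}(T^\diamond)$ decomposes as a pair $(v_{\mathbb{R}\times Z},v_{\mathbb{D}})=(\pi_{\mathbb{R}\times Z}\circ v,\ \pi_{\mathbb{D}}\circ v)$, each holomorphic with respect to the corresponding factor, and that the linearized $\bar\partial$-operator splits accordingly as a direct sum $D_{v_{\mathbb{R}\times Z}}\oplus D_{v_{\mathbb{D}}}$ acting on the corresponding pullback bundles. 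Thus it suffices to prove surjectivity of each summand separately.

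Next, for the $\mathbb{D}$-factor, $v_{\mathbb{D}}$ is a holomorphic map from a disk (with punctures, possibly one interior puncture filled in) to $[0,1]\times\mathbb{R}$ with boundary on $\{0\}\times\mathbb{R}$ and $\{1\}\times\mathbb{R}$; such maps into a strip are automatically regular by an open-mapping / positivity-of-intersections argument (or by the explicit description in \cite{LOT18}), and the interior-puncture case contributes one extra marked point asymptotic to $\{e\}$ which does not affect transversality of the $\mathbb{D}$-component since $\pi_{\mathbb{D}}\circ v_{\bar e}(e)\in[0,1]\times\mathbb{R}$ is a regular point. For the $\mathbb{R}\times Z$ factor, one uses that each component of $T$ is a disk, so $v_{\mathbb{R}\times Z}$ restricted to each component is a holomorphic disk in $\mathbb{R}\times Z$ (a cylinder) with boundary on the Lagrangians $\mathbb{R}\times\mathbf{a}$; here regularity is again automatic because holomorphic disks in a surface with totally real boundary and nonnegative index components are cut out transversally—this is the standard fact that $\bar\partial$ on a bundle pair over a disk with appropriate boundary Maslov index is surjective, applied componentwise. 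The interior puncture, when present, is asymptotic at the filled-in point $\bar e$ to the point $e\in(\mathbb{R}\times Z)_{\bar e}$; since $(N\text{-}3)$ forces $\pi_\Sigma\circ v_{\bar e}(e)=e$ with $e$ a smooth interior point of the surface, this asymptotic constraint is transverse and costs exactly the expected amount of dimension.

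\textbf{Main obstacle.} The genuinely delicate point—mild as it is—is handling the single interior puncture, since \cite{LOT18} Proposition~5.16 is stated for sources without interior punctures; I would need to invoke the analysis of one-punctured curves as in \cite[Errata]{LIPSHITZ2008} and \cite{Hanselman2022}, observing that near the interior puncture the relevant Fredholm theory is that of a closed-surface holomorphic map asymptotic to a single point (via removable singularity, once filled in, the puncture becomes an ordinary interior marked point), so the transversality statement reduces to the standard somewhere-injective / unobstructed-point analysis. One then checks that the evaluation map at the filled puncture is a submersion onto the relevant stratum, which holds because $\pi_{\mathbb D}\circ v_{\bar e}$ is a submersion there and because the $\mathbb{R}\times Z$-component can be perturbed freely within its (automatically regular) deformation class. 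Apart from this bookkeeping, the proof is a direct citation of \cite[Proposition 5.16]{LOT18} together with the product-structure decomposition, and requires no new ideas specific to the immersed setting.
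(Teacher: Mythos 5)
Your proposal is correct and matches the paper's treatment, which is simply a citation of \cite[Proposition 5.16]{LOT18}: the immersed $\alpha$-curves never reach east infinity, so the split-structure decomposition and automatic regularity for disk components go through verbatim, and the interior-puncture case is handled exactly as you describe by filling in the puncture and treating the asymptotic condition $\pi_\Sigma\circ v_{\bar e}(e)=e$ as a marked-point constraint. One small correction: at east infinity the only Lagrangians are $\mathbb{R}\times\mathbf{a}\times\{1\}\times\mathbb{R}$, so the boundary of $\pi_{\mathbb{D}}\circ v$ lies entirely in $\{1\}\times\mathbb{R}$ (not on both $\{0\}\times\mathbb{R}$ and $\{1\}\times\mathbb{R}$), whence $\pi_{\mathbb{D}}\circ v$ is \emph{constant} on each component by the open mapping theorem — this is what reduces the whole problem to the $\mathbb{R}\times Z$ factor, as the paper notes immediately after the proposition.
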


The heights of $v\in \widetilde{\mathcal{N}}(T^\diamond)$ at east or west boundary punctures induce evaluation functions $\widetilde{ev}_e$ and $\widetilde{ev}_w$. Given partitions $P_e$ and $P_w$ of the boundary east and west punctures, one defines $\widetilde{\mathcal{N}}(T^\diamond; P_e;P_w)$ in an obvious way. One defines the reduced moduli space $\mathcal{N}$ by taking the quotient of the $\mathbb{R}\times \mathbb{R}$-action induced by translations in both $\mathbb{R}$-directions in $\mathbb{R}\times \mathcal{Z}\times [0,1] \times \mathbb{R}$. The evaluation maps $\widetilde{ev}_e$ and $\widetilde{ev}_w$ also descend to $\mathcal{N}$, taking values in $\mathbb{R}^{|E(T^\diamond)|}/\mathbb{R}$ and $\mathbb{R}^{|W(T^\diamond)|}/\mathbb{R}$, respectively.

 Given $u\in \mathcal{N}(T^\diamond)$, the open mapping theorem implies that the map $\pi_\mathbb{D}\circ u$ is constant on connected components of $T^\diamond$ (taking values in $\{1\}\times \mathbb{R}$). So, the map $u$ is determined by its projection $\pi_\Sigma\circ u$ and $t$-coordinates on connected components of $T$. Of primary interest to us are the following three types of holomorphic curves.
\begin{itemize}
\item[(Join curve)]A \emph{join component} of a bi-decorated source is a topological disk with three boundary punctures, and the punctures are labeled by $(e,\sigma)$, $(w,\sigma_1)$, and $(w,\sigma_2)$ counter-clockwise, where the Reeb chords satisfy the relation $\sigma=\sigma_1\uplus\sigma_2$ (here $\uplus$ denotes concatenation of Reeb chords). A \emph{trivial component} of a bi-decorated source is a topological disk with two boundary punctures, one $w$ puncture and one $e$ puncture, and both are labeled by the same Reeb chord. Holomorphic maps from a join component or a trivial component to $\mathbb{R}\times Z \times [0,1]\times \mathbb{R}$ exist and are unique up to translations. A \emph{join curve} is a holomorphic curve with a bi-decorated source consisting of a single join component and possibly some trivial components.
\item[(Split curve)]A \emph{split component} of a bi-decorated source is a topological disk with three boundary punctures, where the punctures are counter-clockwisely labeled by $(e,\sigma_1)$, $(e,\sigma_2)$ and $(w,\sigma)$ with $\sigma=\sigma_1\uplus\sigma_2$. Holomorphic maps from a split component to $\mathbb{R}\times Z \times [0,1]\times \mathbb{R}$ exist and are unique up to translations. A \emph{split curve} is a holomorphic curve with a bi-decorated source consisting of one or more split components and possibly some trivial components.  
\item[(Orbit curve)]An \emph{orbit component} of a bi-decorated source is a topological disk with a single boundary $w$ puncture labeled by $\sigma\in\{\ -\rho_{0123},-\rho_{1230},-\rho_{2301},-\rho_{3012}\}$ and a single interior $e$ puncture. Holomorphic maps from an orbit component to $\mathbb{R}\times Z \times [0,1]\times \mathbb{R}$ exist and are unique up to translations. An \emph{orbit curve} is holomorphic curve with a bi-decorated source consisting of a single orbit component and possibly some trivial components.
\end{itemize}

\subsubsection{Compactification by Holomorphic combs}\label{subsubsection, holomorphic combs}
We describe holomorphic combs in this subsection. We begin with a description of nodal holomorphic curves. 
\begin{defn}
A \emph{nodal decorated source} $S^\triangleright$ is a decorated source together with a set of unordered pairs of marked points $D=\{\{\overline{d}_1,\underline{d}_1\},\{\overline{d}_2,\underline{d}_2\},\ldots,\{\overline{d}_k,\underline{d}_k\}\}$. The points in $D$ are called nodes. 
\end{defn}

\begin{defn}
Let $\bm{x}$ and $\bm{y}$ be generators and let $B\in\widetilde{\pi}_2(\bm{x},\bm{y})$. Let $S^\triangleright$ be a nodal decorated source. Let $S_i$ be the components of $S\backslash\{\text{nodes}\}$. Then a nodal holomorphic curve $u$ with source $S^\triangleright$ in the homology class of $B$ is a continuous map $$u:(S,\partial S)\rightarrow (\Sigma\times [0,1]\times \mathbb{R}, \beta\times\{0\}\times \mathbb{R}, \alpha \times \{1\} \times \mathbb{R})$$ such that 
\begin{itemize}
\item[(1)] the restriction of $u$ to each $S_i$ is a map satisfying condition (M-1)-(M-11) except for (M-5), 
\item[(2)] $\lim_{p\rightarrow \overline{d}_{i}}u(p)=\lim_{p\rightarrow \underline{d}_{i}}u(p)$ for every pair of nodes, 
\item[(3)] $u$ is asymptotic to $\bm{x}$ at $-\infty$ and $\bm{y}$ at $\infty$, and 
\item[(4)] $u$ induces the homology class specified by $B$.
\end{itemize}
\end{defn}

The nodes in a nodal source $S^\triangleright$ induce punctures on the connected components $S_i$ of $S\backslash\{\text{nodes}\}$; they can be interior punctures as well as boundary punctures.  Note that $u|_{S_i}$ extends across these punctures continuously. We further divide the boundary punctures induced by the nodes into two types.
\begin{defn}\label{Definition, type I type II node}
Let $u$ be a nodal holomorphic curve. Let $d$ be a boundary puncture induced by a node on a component $S_i$ of the nodal Riemann surface. Let $l_1$ and $l_2$ denote the components of $\partial S_i$ adjacent to $d$. If the path $\pi_{\Sigma}\circ u|_{l_1\cup\{d\}\cup l_2}$ is stay-on-track in the sense of (M-11), then we say $d$ is a \emph{type I puncture}; otherwise, we say $d$ is a \emph{type II puncture}.    
\end{defn}

There are only type I punctures when the attaching curves are embedded. Type II punctures naturally appear in our setup since we have an immersed $\alpha$ curve. 

One can still define the evaluation map from the space of nodal holomorphic maps to $\mathbb{R}^{|E(S^\triangleright)|}$, where the value at a nodal holomorphic curve is the heights of the east punctures. 
\begin{defn}
A \emph{holomorphic story} is an ordered $(k+1)$-tuple $(u,v_{1},\ldots,v_{k})$ for some $k\geq 0$ such that 
\begin{itemize}
\item[(1)]$u$ is a (possibly nodal) holomorphic curve in $\Sigma\times [0,1]\times\mathbb{R}$,
\item[(2)]each $v_i$ ($i=1,2,\ldots,k$) is a holomorphic curve in $\mathbb{R}\times \mathcal{Z}\times [0,1]\times \mathbb{R}$,
\item[(3)]the boundary east punctures of $u$ match up with the boundary west punctures of $v_1$ (i.e., the two sets of punctures are identified by a one-to-one map such that both the Reeb-chord labels and the relative heights are the same under this one-to-one correspondence), and
\item[(4)]the boundary east punctures of $v_i$ match up with the boundary west punctures of $v_{i+1}$ for $i=1,2,\ldots,k-1$.
\end{itemize}
\end{defn}

\begin{defn}\label{Definition, holomorphic comb}
Let $N\geq 1$ be an integer, and let $\bm{x}$ and $\bm{y}$ be two generators. A \emph{holomorphic comb} of height $N$ connecting $\bm{x}$ to $\bm{y}$ is a sequence of holomorphic stories $(u_i,v_{i,1},\ldots,v_{i,k_i})$, $i=1,2,\ldots , N$, such that $u_i$ is a (possibly nodal) stable curve in $\mathcal{M}^{B_i}(\bm{x}_i,{\bm{x}_{i+1}};S_i^\triangleright)$ for some generators ${\bm{x}_1},\ldots,{\bm{x}_{N+1}}$ such that $\bm{x}_1=\bm{x}$ and $\bm{x}_{N+1}=\bm{y}$. 
\end{defn}

Given a holomorphic comb, the underlying (nodal) decorated sources and bi-decorated sources can be glued up and deformed in an obvious way to give a smooth decorated source; it is called the \emph{preglued source} of the holomorphic comb.

\begin{defn}
Given generators $\bm{x}$ and $\bm{y}$, and a homology class $B\in \tilde{\pi}_2(\bm{x},\bm{y})$. $\overline{\overline{\mathcal{M}}}^B(\textbf{x},\textbf{y};S^\triangleright)$ is defined to be the space of all holomorphic combs with preglued source $S^\triangleright$, in the homology class of $B$, and connecting $\bm{x}$ to $\bm{y}$. $\overline{\mathcal{M}}^B(\bm{x},\bm{y};S^\triangleright)$ is defined to be the closure of $\mathcal{M}^B(\bm{x},\bm{y};S^\triangleright)$ in $\overline{\overline{\mathcal{M}}}^B(\bm{x},\bm{y};S^\triangleright)$. $\overline{\mathcal{M}}^B(\bm{x},\bm{y};S^\triangleright; P)$ and $\overline{\mathcal{M}}^B(\bm{x},\bm{y};S^\triangleright; \overrightarrow{P})$ are defined to be the closure of $\mathcal{M}^B(\bm{x},\bm{y};S^\triangleright; P)$ and $\mathcal{M}^B(\bm{x},\bm{y};S^\triangleright; \overrightarrow{P}$ in $\overline{\mathcal{M}}^B(\bm{x},\bm{y};S^\triangleright)$ respectively.
\end{defn}

The compactness result is stated below. 
\begin{prop}\label{Proposition, compactness}
The moduli space $\overline{\mathcal{M}}^B(\bm{x},\bm{y};S^\triangleright)$ is compact. The same statement holds for $\overline{\mathcal{M}}^B(\bm{x},\bm{y};S^\triangleright; P)$ and $\overline{\mathcal{M}}^B(\bm{x},\bm{y};S^\triangleright; \overrightarrow{P})$.
\end{prop}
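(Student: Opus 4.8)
The plan is to follow the standard SFT/Gromov compactness argument of Lipshitz--Ozsv\'ath--Thurston, keeping careful track of where the immersed $\alpha$ curve changes the picture. Since $\overline{\mathcal{M}}^B$ is by definition the closure of $\mathcal{M}^B$ inside the space $\overline{\overline{\mathcal{M}}}^B$ of all holomorphic combs with fixed preglued source, and closed subsets of compact spaces are compact, it suffices to show that $\overline{\overline{\mathcal{M}}}^B(\bm{x},\bm{y};S^\triangleright)$ is compact --- or more precisely, that any sequence of holomorphic curves in $\mathcal{M}^B(\bm{x},\bm{y};S^\triangleright)$ has a subsequence converging to a holomorphic comb. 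The three ingredients are: (i) an \emph{a priori} energy bound, so that the Bourgeois--Eliashberg--Hofer--Wysocki--Zehnder compactness theorem applies; (ii) identification of the limiting object as a holomorphic comb in the sense of Definition \ref{Definition, holomorphic comb}; and (iii) verification that the limit has the correct preglued source and lies in the right homology class.

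First I would establish the energy bound. The relevant energy is controlled by the homology class $B$ together with the Reeb chords labeling the east punctures of $S^\triangleright$; since these are fixed along the sequence, the energy is uniformly bounded, exactly as in \cite[Section 5.3]{LOT18}. Then I would invoke the SFT-type compactness theorem (as in \cite{BEH03}, which is already cited in (M-4)) to extract a subsequence converging in the Gromov--Hofer sense to a holomorphic building: a multi-story curve in $\Sigma\times[0,1]\times\mathbb{R}$ (the levels $u_1,\ldots,u_N$) together with, at each level, a possibly non-trivial curve at east infinity in $\mathbb{R}\times Z\times[0,1]\times\mathbb{R}$ (the $v_{i,j}$), along with possible nodal degenerations coming from bubbling or from components of the source shrinking. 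The splitting into levels along the $\mathbb{R}$-direction and the escape of curve components into the east end are precisely the phenomena catalogued in Subsection \ref{subsubsection, holomorphic curves in east infinty} and \ref{subsubsection, holomorphic combs}, and conditions (M-2)--(M-9) and (N-1)--(N-5) are preserved in the limit by the usual removable-singularity and elliptic-regularity arguments.

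The point that genuinely requires care --- and this is where I expect the main obstacle to be --- is the boundary condition along $\alpha_{im}$ and the behavior of the strong boundary monotonicity condition (M-10) and the stay-on-track condition (M-11) in the limit. Because $\alpha_{im}$ is only immersed, a sequence of curves whose boundary is stay-on-track on $\alpha_{im}$ can degenerate so that the limiting boundary passes through a self-intersection point of $\alpha_{im}$ and turns a corner there; this is exactly the type II puncture of Definition \ref{Definition, type I type II node}. One must check that such corners only arise at nodes of the limiting nodal curve (so the limit is still a legitimate nodal holomorphic curve as defined), and that no other pathology --- for instance, the boundary of a limit component running onto an $\alpha$-arc segment without being asymptotic to a Reeb chord, or monotonicity failing --- can occur. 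The argument is local: near a point of $\partial S$ mapping to $\alpha_{im}\times\{1\}\times\mathbb{R}$, lift to the normalization of $\alpha_{im}$ (the source $\amalg^{n+1}S^1$ of $f_{im}$), where the boundary condition becomes the embedded one, and apply the $C^\infty_{loc}$ convergence away from the bubbling/breaking locus to conclude that the limit is stay-on-track except at finitely many points, each of which is a node. Strong boundary monotonicity in the limit follows as in \cite{LOT18} since it is a closed condition once one works on the normalization.

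Finally I would check that the limiting holomorphic comb has the correct preglued source and homology class. The preglued source of the limit is obtained from the (nodal) sources of the levels and the bi-decorated sources at east infinity by the gluing/deformation described just before the definition of $\overline{\overline{\mathcal{M}}}^B$; by the topological-type control in SFT compactness this agrees with $S^\triangleright$, so the limit lies in $\overline{\overline{\mathcal{M}}}^B(\bm{x},\bm{y};S^\triangleright)$, and since each $u_i$ is a stable limit the comb is genuinely of the required form. Additivity of the homology class under gluing, together with the fact that east-infinity curves carry trivial $\Sigma$-homology, shows that the total class is $B$. The statements for $\overline{\mathcal{M}}^B(\bm{x},\bm{y};S^\triangleright;P)$ and $\overline{\mathcal{M}}^B(\bm{x},\bm{y};S^\triangleright;\overrightarrow{P})$ then follow immediately: these are by definition closures of subsets of $\mathcal{M}^B(\bm{x},\bm{y};S^\triangleright)$ inside the now-established compact space $\overline{\mathcal{M}}^B(\bm{x},\bm{y};S^\triangleright)$, hence closed subsets of a compact space, hence compact.
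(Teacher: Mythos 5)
Your proposal is correct and follows essentially the same route the paper takes: the paper omits the proof, remarking only that the argument of \cite[Proposition 5.24]{LOT18} adapts easily, and your sketch is precisely that adaptation, with the one genuinely new point (boundary corners at self-intersections of $\alpha_{im}$ arising only as type II nodes in the limit) correctly identified and handled by lifting to the normalization of $\alpha_{im}$. The reduction of the $P$ and $\overrightarrow{P}$ cases to closed subsets of the compact space $\overline{\mathcal{M}}^B(\bm{x},\bm{y};S^\triangleright)$ is also exactly how these are defined in the paper, so nothing further is needed.
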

We omit the proof of the above proposition and remark that the proof of \cite[Proposition 5.24]{LOT18} adapts to our setup easily. 

\subsection{Gluing results}
As the regularity results, gluing results in pseudo-holomorphic curve theory are proved by analyzing the $\bar{\partial}$-operator over certain section space of some pull-back bundles over the underlying source surfaces. In particular, having immersed $\alpha$ curves does not affect the proof of such results. We hence recall the following results that we shall need without giving the proof. 

\begin{prop}[Proposition 5.30 of \cite{LOT18}]
Let $(u_1,u_2)$ be a two-story holomorphic building with $u_1\in \mathcal{M}^{B_1}(\bm{x},\bm{y};S_1^\triangleright;P_1)$ and $u_2\in \mathcal{M}^{B_2}(\bm{y},\bm{z};S_2^\triangleright;P_2)$. Assume the moduli spaces are transversally cut out. Then for sufficiently small neighborhood $U_i$ of $u_i$ ($i=1,2$), there is neighborhood of $(u_1,u_2)$ in $\overline{\mathcal{M}}^{B_1+B_2}(\bm{x},\bm{z};S_1^\triangleright\natural S_2^\triangleright; P_1\cup P_2)$ homeomorphic to $U_1\times U_2 \times [0,1)$.
\end{prop}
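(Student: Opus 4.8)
The plan is to follow the standard pre-gluing and Newton-iteration scheme used to prove \cite[Proposition 5.30]{LOT18} (which itself adapts the gluing theory of \cite{BEH03} to the cylindrical setting), checking at each step that the immersedness of $\alpha_{im}$ plays no role. The key structural observation is that, since $u_1$ is asymptotic to $\bm{y}$ at $+\infty$ and $u_2$ to $\bm{y}$ at $-\infty$, the two stories are glued along the $\mathbb{R}$-factor of $\Sigma\times[0,1]\times\mathbb{R}$ (the $t$-direction), so there is a single gluing parameter — the length $R$ of the neck produced — and this will be the $[0,1)$-coordinate, with $R=\infty$ (the point $0$) corresponding to the broken building. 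The east punctures of $S_1^\triangleright$ and $S_2^\triangleright$ are not glued to one another: they are simply retained on the preglued source $S_1^\triangleright\natural S_2^\triangleright$, and $P_1\cup P_2$ is the resulting partition of $E(S_1^\triangleright\natural S_2^\triangleright)$; the matching condition for the building already forces the relative heights to agree, so nothing extra is needed there.

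First I would carry out the pre-gluing. For $R$ sufficiently large, truncate (representatives of) $u_1$ and $u_2$ at $t=\pm R$, identify the truncated ends using the exponential convergence to the constant strip $\bm{y}\times[0,1]$, and splice with a cutoff to obtain an approximately $J$-holomorphic map $u_R$ from the preglued source, whose error $\bar{\partial}u_R$ is supported in the neck and decays exponentially in $R$. Because the ends being glued are modeled on $\bm{y}\times[0,1]\times\mathbb{R}$, exactly as in the embedded case, the asymptotic operators are nondegenerate and the usual exponential-decay estimates hold verbatim.

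Second I would set up the functional-analytic framework: weighted Sobolev completions of the sections of $u_R^*T(\Sigma\times[0,1]\times\mathbb{R})$ with totally real boundary conditions along $\bm{\beta}$ and $\bm{\bar{\alpha}}$, together with the linearized operator $D_{u_R}$. This is the one place to be explicit about the immersed curve: after pulling back to a bundle over the source $S$, the boundary condition along the arc mapping to $\alpha_{im}\times\{1\}\times\mathbb{R}$ is an ordinary totally real condition — the self-intersections of $\alpha_{im}$ in the target are invisible on $S$ — and since $u_1,u_2$ (hence $u_R$) satisfy the stay-on-track condition (M-11), the relevant boundary arc lifts to the normalization of $\alpha_{im}$, so the gluing creates no corner. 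Transversality of $u_1$ and $u_2$ (Proposition~\ref{Proposition, regularity of the moduli spaces}) gives surjectivity of $D_{u_1}$ and $D_{u_2}$; splicing their right inverses across the neck produces uniformly bounded right inverses $Q_R$ of $D_{u_R}$. Combined with the quadratic estimate on the nonlinear part of $\bar{\partial}$, the contraction mapping principle yields, for $R\gg 0$, a unique small section $\xi_R$ with $\exp_{u_R}(\xi_R)$ honestly $J$-holomorphic; letting $u_1$ range over $U_1$, $u_2$ over $U_2$, and $R$ over $(R_0,\infty)$ gives a continuous injection into $\mathcal{M}^{B_1+B_2}(\bm{x},\bm{z};S_1^\triangleright\natural S_2^\triangleright;P_1\cup P_2)$ with open image.

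Finally I would establish surjectivity of this parametrization near $(u_1,u_2)$: any sequence in $\mathcal{M}^{B_1+B_2}(\ldots)$ Gromov-converging to $(u_1,u_2)$ must, by the compactness result (Proposition~\ref{Proposition, compactness}) together with the local uniqueness just proved, eventually lie in the image of the gluing map; reparametrizing the neck length by $R\mapsto e^{-R}$ then identifies a neighborhood of $(u_1,u_2)$ in $\overline{\mathcal{M}}^{B_1+B_2}(\bm{x},\bm{z};S_1^\triangleright\natural S_2^\triangleright;P_1\cup P_2)$ with $U_1\times U_2\times[0,1)$. The main technical obstacle is, as always, the uniform invertibility of $D_{u_R}$ as $R\to\infty$ — producing right inverses $Q_R$ with $R$-independent operator norm by controlling the low eigenvalues near the neck — but since the ends being glued are precisely the strips $\bm{y}\times[0,1]\times\mathbb{R}$ of \cite{LOT18}, this step is identical to the embedded case and $\alpha_{im}$ being immersed introduces no new difficulty.
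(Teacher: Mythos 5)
Your proposal is correct and matches the paper's treatment: the paper does not reprove this statement but simply invokes \cite[Proposition 5.30]{LOT18}, noting—exactly as you do—that the gluing analysis takes place on a pull-back bundle over the source surface, where the immersed boundary condition along $\alpha_{im}$ is an ordinary totally real condition and the stay-on-track condition (M-11) prevents any corner from appearing at the glued ends. Your fleshed-out pre-gluing/Newton-iteration scheme is the standard argument being cited, so there is nothing further to reconcile.
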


\begin{defn}
	A holomorphic comb is said to be \emph{simple} if it only has a single story and it is of the form $(u,v_1)$, where $u$ is a non-nodal holomorphic curve.
\end{defn}

\begin{prop}[Proposition 5.31 of \cite{LOT18}]\label{Proposition, gluing holomorphic curves and east ends}
Let $(u,v)$ be a simple holomorphic comb with $u\in \mathcal{M}^{B}(\bm{x},\bm{y};S^\triangleright)$ and $v\in \mathcal{N}(T^\diamond;P_e)$. Let $m$ denote the number of east punctures of $S^\triangleright$. Assume the moduli spaces are transversally cut out, and the evaluation maps $ev:\mathcal{M}^{B}(\bm{x},\bm{y};S^\triangleright)\rightarrow \mathbb{R}^m/\mathbb{R}$ and $ev_w: \mathcal{N}(T^\diamond;P_e)\rightarrow \mathbb{R}^m/\mathbb{R}$ are transverse at $(u,v)$. Then for sufficiently small neighborhood $U_u$ of $u$ and $U_v$ of $v$, there is a neighborhood of $(u,v)$ in $\overline{\mathcal{M}}^{B}(\bm{x},\bm{y};S^\triangleright\natural T^\diamond; P_e)$ homeomorphic to $U_u\times_{ev} U_v \times [0,1)$.
\end{prop}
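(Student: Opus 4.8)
The plan is to follow the proof of \cite[Proposition 5.31]{LOT18} almost verbatim: as with the regularity and index statements above, the immersedness of $\alpha_{im}$ is invisible to the relevant analysis once one passes to pull-back bundles over the source, and the punctures matched here --- east punctures --- are disjoint from $\alpha_{im}$ altogether. First I would carry out the pregluing. Given the simple comb $(u,v)$ with its identified east punctures of $S^\triangleright$ and west punctures of $T^\diamond$, a point of the fiber product $U_u\times_{ev}U_v$, and a large gluing parameter $R$, splice $v$ --- translated suitably in the $\mathbb{R}$-factor of $\mathbb{R}\times Z$ --- onto $u$ along necks of length comparable to $R$ at the matched punctures, obtaining an approximately $J$-holomorphic map $u_R$ with preglued source $S^\triangleright\natural T^\diamond$ and partition $P_e$. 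The hypothesis that $ev$ and $ev_w$ are transverse at $(u,v)$ is precisely what makes the heights of the east punctures of the two pieces simultaneously matchable, so that the gluing data is parametrized by $U_u\times_{ev}U_v$; this is also where the expected dimension of the glued moduli space comes from. One estimates $\|\bar\partial u_R\|\to 0$ as $R\to\infty$ in a suitable weighted Sobolev norm, using exponential convergence of $u$ and $v$ to their asymptotic Reeb chords at east infinity.

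Next I would establish uniform invertibility of the linearization. Using that $\mathcal{M}^{B}(\bm{x},\bm{y};S^\triangleright)$ and $\mathcal{N}(T^\diamond;P_e)$ are transversally cut out, together with the transversality of the evaluation maps, one patches approximate right inverses from the two pieces across the neck to produce a right inverse for $D_{u_R}$ with norm bounded uniformly in $R$; as in the proof of Proposition \ref{Proposition, regularity of the moduli spaces}, on the pull-back bundle the boundary condition along $\alpha_{im}$ is an ordinary totally-real condition, so the standard estimates apply unchanged. A Newton--Picard iteration then perturbs $u_R$ to a genuine holomorphic curve $\tilde u_R\in\mathcal{M}^{B}(\bm{x},\bm{y};S^\triangleright\natural T^\diamond;P_e)$ depending continuously on $R$ and on the point of $U_u\times_{ev}U_v$. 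One checks conditions (M-1)--(M-11) for $\tilde u_R$: the analytic conditions and strong boundary monotonicity are standard; (M-5) and (M-9) descend from the corresponding properties of $u$ and $v$, since $n_z$ and the local multiplicities near $\bar{e}$ are unaffected by gluing; and (M-11) holds because pregluing leaves the portion of the boundary on $\alpha_{im}\times\{1\}\times\mathbb{R}$ untouched (the comb $v$ meets only the embedded Lagrangians $\mathbb{R}\times\textbf{a}\times\{1\}\times\mathbb{R}$), while the iteration takes place within sections of the pull-back bundle, whose boundary line subbundle along $\alpha_{im}$ is pulled back from the single smooth strand traversed by $\partial u$ --- so the perturbed boundary cannot leave that strand.

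Finally, taking $e^{-R}\in(0,1)$ as a coordinate with $0$ corresponding to the broken configuration $(u,v)$, the constructions above assemble into a continuous injective map $U_u\times_{ev}U_v\times[0,1)\to\overline{\mathcal{M}}^{B}(\bm{x},\bm{y};S^\triangleright\natural T^\diamond;P_e)$, with injectivity and openness coming from the uniqueness clause of the implicit function theorem. Surjectivity onto a neighborhood is the standard contradiction argument: a sequence of honest curves degenerating to $(u,v)$ in $\overline{\overline{\mathcal{M}}}$ would, by exponential decay estimates along the forming neck, eventually coincide with glued curves $\tilde u_{R_n}$, while Proposition \ref{Proposition, compactness} rules out any other limiting configuration.

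The one point needing genuine (if modest) care in the immersed setting is the verification that $\tilde u_R$ develops no corner at a self-intersection point of $\alpha_{im}$ inside the neck region --- i.e.\ that gluing creates no ``type II'' behavior. Since the matched punctures are east punctures disjoint from $\alpha_{im}$, this reduces to the observation already made, and the remainder is a line-by-line transcription of \cite[Section 5.6]{LOT18}.
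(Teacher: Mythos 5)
Your proposal is correct and takes essentially the same approach as the paper, which states this result without proof, deferring to \cite[Proposition 5.31]{LOT18} on the grounds that the gluing analysis takes place on pull-back bundles over the source where the immersed boundary condition is an ordinary totally-real condition --- precisely the justification you give. Your additional observation that the matched east punctures are disjoint from $\alpha_{im}$, so no type II corners can be created in the neck, is the right point to flag and is consistent with the paper's treatment.
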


\subsection{Degeneration of moduli spaces}\label{Section, degeneration of moduli space}
This subsection provides constraints on the degeneration in 1-dimensional moduli spaces using the index formulas and strong boundary monotonicity. The results here are simpler than the corresponding results in Section 5.6 of \cite{LOT18} since we restrict to the torus-boundary case. However, as mentioned earlier, nodal curves with corners at self-intersection points of $\alpha_{im}$ may occur in the compactification; we defer the further analysis of these degenerations to later subsections. Readers who wish to skip the details in this subsection are referred to Definition \ref{Definition, boundary degeneration} and Proposition \ref{Proposition, summary of degeneration of moduli spaces} for a quick summary.

The index formulas in Proposition \ref{Proposition, first index formula} lead to the following constraints on the ends of moduli spaces of 0-P curves.
\begin{prop}[cf.\ Proposition 5.43 of \cite{LOT18}]\label{Proposition, first constraint on the boundary of 1-dimensional moduli space}
Let $B\in \tilde{\pi}_2(\bm{x},\bm{y})$, let $S^\triangleright$ be a decorated source of 0-P, and let $P$ be a discrete partition of the east punctures of $S^\triangleright$. Suppose that $Ind(B,S^{\triangleright},P)=2$. Then for a generic almost complex structure $J$,
every holomorphic comb in $\partial\overline{\mathcal{M}}^{B}(\bm{x},\bm{y};S^{\triangleright};P)=\overline{\mathcal{M}}^{B}(\bm{x},\bm{y};S^{\triangleright};P)-\mathcal{M}^{B}(\bm{x},\bm{y};S^{\triangleright};P)$ has one of the following forms:
\begin{itemize}
\item[(1)]a two-story holomorphic building $(u_1, u_2)$;
\item[(2)]a simple holomorphic comb $(u; v)$ where $v$ is a join curve;
\item[(3)]a simple holomorphic comb $(u; v)$ where $v$ is a split curve with a single split component; 
\item[(4)]a nodal holomorphic comb, obtained by degenerating some arcs with ends on $\partial S$.
\end{itemize}
\end{prop}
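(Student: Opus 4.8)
The plan is to imitate the proof of Proposition 5.43 of \cite{LOT18}, tracking carefully the one new feature: the immersed curve $\alpha_{im}$ allows nodal degenerations with corners at its self-intersection points. First I would invoke SFT compactness (Proposition \ref{Proposition, compactness}): any sequence in $\mathcal{M}^{B}(\bm{x},\bm{y};S^{\triangleright};P)$ with no convergent subsequence in the interior limits to a holomorphic comb in $\partial\overline{\mathcal{M}}^{B}(\bm{x},\bm{y};S^{\triangleright};P)$. The possible degenerations are of three essential types: (a) breaking along the $\mathbb{R}$-direction into a multi-story building; (b) bubbling off curves at east $\infty$, producing a genuine comb $(u;v_1,\dots,v_k)$; and (c) the source surface degenerating, producing nodes (either interior nodes from sphere or index-bubbles in $\Sigma\times[0,1]\times\mathbb{R}$, or boundary nodes from arcs with endpoints on $\partial S$ collapsing). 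The claim is that under $\text{ind}(B,S^{\triangleright},P)=2$ and generic $J$, only the four listed possibilities survive.

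The key step is the index accounting. Since the total moduli space has expected dimension $1$ after quotienting by $\mathbb{R}$, each stratum of the boundary has expected dimension $0$, so I would use the additivity of the index under the relevant gluings together with the transversality Propositions \ref{Proposition, regularity of the moduli spaces} and \ref{Proposition, regularity for holomorphic curves in R times Z times [0,1] times R} to rule out strata of negative expected dimension and to pin down the combinatorics of the ones that remain. For a two-story building $(u_1,u_2)$ the indices add, each piece is nonconstant hence has index $\geq 1$, forcing each to have index exactly $1$: this is (1). For a simple comb $(u;v)$ with $v$ living at east infinity, the only curves at east infinity contributing to a codimension-one degeneration — given that $P$ is discrete and the target boundary is a torus — are join curves and single-component split curves (orbit curves are excluded here because a closed Reeb orbit bubbling off would be a codimension-two phenomenon for a $0$-P source, or more precisely would force the wrong parity/index; this is exactly where the torus-boundary simplification over \cite{LOT18} helps): this gives (2) and (3). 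Finally, interior sphere bubbles and disk bubbles with boundary on a single $\beta$ or on an embedded $\alpha^c$ are excluded by positivity of the symplectic area form paired against $\pi_\Sigma$ and by genericity of $J$ exactly as in \cite{LOT18}; what is left is the degeneration of arcs of $\partial S$ with both endpoints on $\partial S$, yielding a nodal holomorphic comb, which is (4).

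The main obstacle — and the point where the immersed setting genuinely departs from \cite{LOT18} — is case (4): when $\alpha_{im}$ is immersed, an arc of $\partial S$ mapped near $\alpha_{im}$ can shrink to a node sitting over a self-intersection point of $\alpha_{im}$, i.e. the resulting boundary node may be of type II in the sense of Definition \ref{Definition, type I type II node}. I would handle this by observing that such a degeneration does not violate strong boundary monotonicity (M-10) — the monotonicity condition only constrains intersections with the $\alpha^c$ and $\beta$ curves and the $\alpha^a$ arcs, not the passage through a self-intersection of $\alpha_{im}$ — so it cannot be excluded on those grounds, and it genuinely occurs; hence it must be listed, and I would simply record it as item (4) rather than attempt to rule it out. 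The more delicate counting of precisely when such type-II nodal combs contribute (the parity count of stabilized teardrops and one-cornered $\alpha$-bounded domains governed by the unobstructedness hypothesis of Definition \ref{Definition, unobstructedness}) is deferred to the later subsections on ends of moduli spaces of $0$-P and $1$-P curves, as flagged in the text; here the only task is the clean enumeration of the possible shapes of a codimension-one degeneration, and the proof closes by noting that every remaining possibility compatible with the index constraint and genericity is on the list.
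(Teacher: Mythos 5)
Your proposal is correct and follows essentially the same route as the paper, which simply runs the argument of Proposition 5.43 of \cite{LOT18} (compactness, transversality, index additivity, gluing) and observes that the discrete partition $P$ together with the torus-boundary restriction eliminates the shuffle-curve and multi-component split-curve ends. Your identification of the type-II nodal degenerations at self-intersections of $\alpha_{im}$ as the genuinely new phenomenon, to be recorded under item (4) and analyzed only in the later subsections on boundary degenerations, matches exactly how the paper organizes the argument.
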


\begin{proof}
This proposition is proved the same way as Proposition 5.43 of \cite{LOT18}: It is a consequence of compactness, transversality, index formula, and gluing results. Note that our statement is simpler as we restrict to a discrete partition $P$: the shuffle-curve end and the multi-component split-curve end that appear in \cite{LOT18} do not occur in our setting. 
\end{proof}
For moduli spaces of 1-P curves, we have the following proposition.
\begin{prop}\label{Proposition, first constraints on degeneration of 1-P curves}
Let $B\in \tilde{\pi}_2(\bm{x},\bm{y})$ and let $S^\triangleright$ be a decorated surface of type 1-P. Suppose $Ind(B,S^\triangleright)=2$. Then for a generic almost complex structure $J$, every holomorphic comb in $\partial\overline{\mathcal{M}}^{B}(\bm{x},\bm{y};S^{\triangleright};U)$ has one of the following forms:
\begin{itemize}
\item[(1)]a two-story holomorphic building $(u_1,u_2)$;
\item[(2)]a simple holomorphic comb $(u; v)$ where $v$ is an orbit curve;
\item[(3)]a nodal holomorphic comb, obtained by degenerating some arcs with boundary on $\partial S$.
\end{itemize}
\end{prop}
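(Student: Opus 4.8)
The plan is to adapt, essentially verbatim, the argument used for Proposition~\ref{Proposition, first constraint on the boundary of 1-dimensional moduli space} (which follows \cite[Proposition 5.43]{LOT18}), now keeping track of the single interior puncture of a $1$-P source. The structural ingredients are the same: compactness of $\overline{\mathcal{M}}^{B}(\bm{x},\bm{y};S^{\triangleright};U)$ (Proposition~\ref{Proposition, compactness}), transversality for generic $J$ (Proposition~\ref{Proposition, regularity of the moduli spaces}), part (2) of the index formula of Proposition~\ref{Proposition, first index formula}, strong boundary monotonicity (M-10), and the gluing results of Section~\ref{subsection, defining moduli spaces} (the analogues of \cite[Propositions 5.30 and 5.31]{LOT18}). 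Since $Ind(B,S^{\triangleright})=2$, the reduced moduli space $\mathcal{M}^{B}(\bm{x},\bm{y};S^{\triangleright};U)$ is a $1$-manifold for generic $J$, so every element of $\partial\overline{\mathcal{M}}^{B}(\bm{x},\bm{y};S^{\triangleright};U)$ is a holomorphic comb all of whose constituent curves are rigid modulo the relevant translation actions; an index/dimension count then restricts the total codimension of the degeneration to one, and it remains to enumerate the codimension-one ends.

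The feature that shortens the list relative to the $0$-P case is that, by Definition~\ref{Definition, decorated sources}, a decorated source of type $1$-P has no boundary east punctures: its only east puncture is the interior one, so $E(S^{\triangleright})=\emptyset$. Hence there are no boundary Reeb chords available to split or join, and no join curve, split curve, or shuffle-type end can occur. The only remaining way for the curve to escape toward east infinity is for the interior puncture to run off: in the limit a piece breaks off into $\mathbb{R}\times Z\times[0,1]\times\mathbb{R}$ carrying the interior puncture, while the surviving curve $u$ in $\Sigma\times[0,1]\times\mathbb{R}$ loses the interior puncture and instead gains a boundary east puncture labeled by one of the length-four Reeb chords $-\rho_{0123},-\rho_{1230},-\rho_{2301},-\rho_{3012}$ (up to orientation). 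Using the removable-singularity identification of $1$-P curves with curves in $\Sigma_{\bar e}\times[0,1]\times\mathbb{R}$ meeting $\{e\}\times[0,1]\times\mathbb{R}$ once (as in the proof of Proposition~\ref{Proposition, first index formula}), the analysis of the degeneration of one-punctured curves at east infinity — carried out in \cite{Hanselman2022}, with a more general treatment in \cite{Lipshitz2023} and \cite[Errata]{LIPSHITZ2008} — identifies the broken-off piece with an orbit curve, whose bi-decorated source is a single orbit component together with possibly some trivial components. An orbit component is a disk with one interior puncture, so this curve is transversally cut out by Proposition~\ref{Proposition, regularity for holomorphic curves in R times Z times [0,1] times R} and is rigid up to translation; since there is only one interior puncture, at most one orbit component appears. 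By the index formula this degeneration has codimension one, forcing $u$ to be a rigid $0$-P curve, and the gluing result for simple holomorphic combs (Proposition~\ref{Proposition, gluing holomorphic curves and east ends}) then confirms that this end is exactly case~(2) and that no further east-infinity pieces are attached.

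For the remaining codimension-one ends the discussion is as in the embedded case. If the comb has more than one story then, since each nontrivial story has index at least one and the indices add to two, there are exactly two stories $(u_1,u_2)$, each of index one and with no east-infinity pieces, the interior puncture lying on exactly one of them; this is case~(1). Otherwise the source degenerates, and collapsing arcs with endpoints on $\partial S$ produces a nodal holomorphic comb — case~(3) — with the usual arguments ruling out sphere bubbles in $\Sigma\times[0,1]\times\mathbb{R}$ and the branched-cover condition (M-5) ruling out disk bubbles. We stress that case~(3) subsumes the nodal combs whose boundary turns at self-intersection points of $\alpha_{im}$ (type~II punctures in the sense of Definition~\ref{Definition, type I type II node}); the finer question of which such combs actually occur, and the parity count of boundary degenerations needed for the differential, is postponed to Sections~\ref{Section, ends of moduli spaces of 0-p curves}--\ref{Subsection, ends of moduli space of 1P curves}.

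The one genuinely new step — and the part I expect to be the main obstacle — is the orbit-curve analysis underlying case~(2): one must verify that the interior puncture can degenerate \emph{only} by producing an orbit curve, that the truncated curve $u$ still satisfies (M-1)--(M-11) as a $0$-P curve with a length-four boundary Reeb chord and remains transversally cut out, and that the homology-class and relative-height bookkeeping across the node is consistent so that Proposition~\ref{Proposition, gluing holomorphic curves and east ends} applies. These are precisely the points worked out in \cite{Hanselman2022}, \cite[Errata]{LIPSHITZ2008}, and (for several interior punctures) \cite{Lipshitz2023}; the needed portions will be reproduced in Section~\ref{subsubsection, holomorphic curves in east infinty} and in the subsections on the ends of moduli spaces. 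The $\alpha$ curves being immersed does not interfere: the east-infinity pieces do not meet the $\alpha$ curves at all, and the transversality and index of the truncated curve $u$ are unaffected for the same pulled-back-bundle reason as in Propositions~\ref{Proposition, regularity of the moduli spaces} and~\ref{Proposition, first index formula}.
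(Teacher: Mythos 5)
Your proposal is correct and follows essentially the same route as the paper: the paper's proof simply observes that a non-nodal comb can only degenerate by level splitting or at east infinity and defers that analysis to the proof of Proposition 42 in \cite{Hanselman2022}, which is precisely the orbit-curve/two-story dichotomy you spell out (resting on the observation that a $1$-P source has no boundary east punctures), with nodal combs giving case (3).
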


\begin{proof}
Suppose a given holomorphic comb in $\partial\overline{\mathcal{M}}^{B}(\bm{x},\bm{y};S^{\triangleright};U)$ is not nodal, then it possibly has degeneration at the east infinity and level splittings. The form of such holomorphic combs is analyzed in the Proof of Proposition 42 in \cite{Hanselman2022}; the results are precisely item (1) and (2) in the above statement. 
\end{proof}

We will only be interested in those moduli spaces prescribed by an ordered discrete partition $\overrightarrow{P}$; this is a void requirement for 1-P holomorphic curves. This condition is also automatic for 0-P; it follows easily from boundary monotonicity and holomorphicity (see, e.g., Lemma 5.51 of \cite{LOT18}).

The strong boundary monotonicity imposes further constraints on the degeneration of one-dimensional moduli spaces. We first describe the constraints on nodal holomorphic curves. Recall there are three types of nodal holomorphic curves.

\begin{defn}\label{Definition, boundary degeneration}
A nodal holomorphic comb $u$ is called a \emph{boundary degeneration} if it has an irreducible component $S_0$ that contains no $\pm$-punctures and $\pi_{\Sigma}\circ u|_{S_0}$ is non-constant. 
\end{defn}
\begin{defn}
A \emph{boundary double point} is a holomorphic comb with a boundary node $p$ such that the projection to $[0,1]\times \mathbb{R}$ is not constant near either preimage point $p_1$ or $p_2$ of $p$ in the normalization of the nodal curve.
\end{defn}
\begin{defn}
A holomorphic comb $u$ is called \emph{haunted} if there is a component $S_0$ of the source such that $u|_{S_0}$ is constant.
\end{defn}

\begin{prop}\label{Proposition, no boundary double point or haunted curve}
Let $J$ be a generic almost complex structure. For one-dimensional moduli spaces $\mathcal{M}^B(\bm{x},\bm{y};S_0^\triangleright;\overrightarrow{P})$ of 0-P curves and for one-dimensional moduli spaces $\mathcal{M}^B(\bm{x},\bm{y};S_1^\triangleright;U)$ of 1-P curves, boundary double points and haunted curves do not appear in $\overline{\mathcal{M}}^B(\bm{x},\bm{y};S_0^\triangleright;\overrightarrow{P})$ and $\overline{\mathcal{M}}^B(\bm{x},\bm{y};S_1^\triangleright;U)$.
\end{prop}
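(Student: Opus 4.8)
The plan is to adapt the argument of \cite[\S5.6]{LOT18} (the part that excludes boundary double points and haunted curves from one-dimensional moduli spaces) to the torus-boundary, possibly-immersed setting here. The mechanism is a dimension count in the universal moduli space over a generic family of admissible almost complex structures: one shows that a comb containing a boundary double point or a haunted component lies in a stratum whose expected codimension is at least two, and moreover that near such a configuration the compactification carries no adjacent one-dimensional stratum, so that for generic $J$ it can neither appear as a limit of interior curves in $\mathcal{M}^B(\bm{x},\bm{y};S_0^\triangleright;\overrightarrow{P})$ (resp.\ $\mathcal{M}^B(\bm{x},\bm{y};S_1^\triangleright;U)$) nor sit inside the legitimate codimension-one boundary described in Proposition \ref{Proposition, first constraint on the boundary of 1-dimensional moduli space} and Proposition \ref{Proposition, first constraints on degeneration of 1-P curves}. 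The ingredients are the transversality statement of Proposition \ref{Proposition, regularity of the moduli spaces} (which, as noted there, is insensitive to the immersed boundary condition since one works with a pullback bundle over the source), the index formulas of Proposition \ref{Proposition, first index formula}, and the gluing and compactness results (Proposition \ref{Proposition, gluing holomorphic curves and east ends}, the two-story gluing result \cite[Proposition 5.30]{LOT18}, and Proposition \ref{Proposition, compactness}).

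First I would treat haunted curves. Suppose a comb in $\overline{\mathcal{M}}^B(\bm{x},\bm{y};S_0^\triangleright;\overrightarrow{P})$ (or in the $1$-P analog) has a component $S_c$ on which $\pi_\Sigma\circ u$ is constant. By stability $S_c$ carries at least three special points, and since $t\circ u$ is constant on $S_c$ while it tends to $\pm\infty$ at $\pm$-punctures, none of these special points is a $\pm$-puncture; so they are all nodes or boundary $e$-punctures. Passing to the union $u'$ of the non-constant components (each transversally cut out by Proposition \ref{Proposition, regularity of the moduli spaces}, hence of nonnegative reduced index by Proposition \ref{Proposition, first index formula}) and using additivity of the index under level-splitting and node degeneration, one sees that a constant stable component with $k\geq 3$ special points only ever contributes net positive codimension — pinning $k$ branches to a single point free to move in the $2$-manifold $\bm{\alpha}\times\{1\}\times\mathbb{R}$ costs codimension at least $2k-4\geq 2$ — so $u'$ would be forced into a moduli space of negative dimension, impossible for generic $J$. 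The immersed boundary condition plays no role here: since $u$ is constant on $S_c$ there is no corner actually being traced, and whether a node on $\partial S_c$ is type I or type II (Definition \ref{Definition, type I type II node}) does not enter the bookkeeping.

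Next I would treat boundary double points. Let $p$ be a boundary node of a comb with $\pi_{\mathbb{D}}\circ u$ non-constant near both preimages $p_1,p_2$. If $p$ is a type I node, then smoothing the node is a codimension-one operation and the argument of \cite[\S5.6]{LOT18} applies essentially verbatim: the stratum of such combs has expected dimension at most $\operatorname{ind}(B,S^\triangleright,\overrightarrow P)-2 = 0$, it carries an extra matching constraint of codimension two against the one-dimensional family one would need, and hence by the gluing results and compactness no sequence in the interior limits to it for generic $J$. If $p$ is a type II node, occurring at a self-intersection point $q$ of $\alpha_{im}$, the two local branches lie on distinct strands of $\alpha_{im}$ and smoothing is not available; instead one rules the configuration out directly by the dimension count, noting that the node being pinned to the locus $\{q\}\times\{1\}\times\mathbb{R}$ together with the matching of the two branches still imposes codimension at least two after accounting for the $\mathbb{R}$-translation freedom. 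The main obstacle is exactly this last case: verifying the codimension of a type~II boundary-double-point stratum is the one place the immersed setting genuinely departs from \cite{LOT18}, and it is an instance of the corner bookkeeping developed systematically in Sections \ref{Section, ends of moduli spaces of 0-p curves}--\ref{Subsection, ends of moduli space of 1P curves}; once that codimension is confirmed to be at least two, the proposition follows as in the embedded case.
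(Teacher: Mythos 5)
The paper's own proof of this proposition is a one-line citation of \cite[Lemmas 5.56 and 5.57]{LOT18}, so a blind writeup must in effect reproduce those arguments. Your treatment of haunted curves does this correctly in substance: deleting a ghost disk attached by $k\geq 3$ nodes raises the Euler characteristic of the source by $k-1$, hence drops the index of the remaining curve by at least $2$ and forces it into a negative-dimensional moduli space, which is empty for generic $J$. Your intermediate count ``$2k-4$'' is not the right bookkeeping (each of the $k$ boundary marked points contributes one dimension of freedom, so the evaluation-map count alone gives only $k-2$; it is the Euler-characteristic jump that supplies the needed $k-1\geq 2$), but the conclusion stands, and you are right that the immersed boundary condition is irrelevant for ghosts.

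The boundary-double-point half has a genuine gap. Your own count places the stratum of such combs at expected dimension $\text{ind}-2=0$; a zero-dimensional stratum is exactly what legitimate boundary points of a one-dimensional moduli space look like, so this excludes nothing, and the ``extra matching constraint of codimension two'' is never exhibited. (Normalizing the node raises $\chi$ by one, dropping the index by one, while the coincidence condition at the two new boundary marked points---two points required to agree in a two-dimensional Lagrangian---is codimension $2$ against $2$ dimensions of marked-point freedom: net expected dimension $0$, not negative.) The mechanism that actually kills boundary double points, in \cite{LOT18} and equally here, is not a dimension count but strong boundary monotonicity: if the node maps to $(x,1,t_0)$ with $x$ on a multicurve $c$, then since $\pi_{\mathbb{D}}\circ u$ is non-constant near both preimages, the nearby smooth curves $u_n$ have two distinct boundary points in $u_n^{-1}(c\times\{1\}\times\{t\})$ for all $t$ close to $t_0$, contradicting (M-10) (and similarly on the $\beta$ side). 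Because (M-10) is imposed on the image of the entire immersed multicurve $\alpha_{im}=\alpha_{g-1}$ at once, this disposes of type II nodes at self-intersection points of $\alpha_{im}$ with no new transversality input. In particular, the type II case is not ``an instance of the corner bookkeeping'' of Sections \ref{Section, ends of moduli spaces of 0-p curves}--\ref{Subsection, ends of moduli space of 1P curves}: those sections concern boundary degenerations, where the degenerate side of the node has \emph{constant} $\pi_{\mathbb{D}}$-projection (so such a node is not a boundary double point by definition), and they show that those ends genuinely occur and must be counted, not that they are excluded.
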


\begin{proof}
This is Lemma 5.56 and Lemma 5.57 of \cite{LOT18}.
\end{proof}

In summary, the only nodal holomorphic combs that could possibly appear are boundary degenerations. We defer the analysis of such degenerations to later subsections. Instead, we conclude this subsection with some further constraints in $\overline{\mathcal{M}}^B(\bm{x},\bm{y};S_0^\triangleright;\overrightarrow{P})$ obtained by combining the strong boundary monotonicity and the torus-boundary condition. 

\begin{prop}\label{Proposition, no join curve end}
For one-dimensional moduli spaces $\mathcal{M}^B(\bm{x},\bm{y};S^\triangleright;\overrightarrow{P})$ of 0-P curves, join curve ends do not appear in $\overline{\mathcal{M}}^B(\bm{x},\bm{y};S^\triangleright;\overrightarrow{P})$.
\end{prop}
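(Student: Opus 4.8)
The plan is to argue by contradiction, following the degeneration analysis of \cite[Section 5.6]{LOT18} but exploiting the torus-boundary hypothesis to shorten it. The key observation is that a join curve end would force the main component of the limiting comb to be a $0$-P holomorphic curve two of whose east punctures occur at the \emph{same} height, and this cannot happen: as recalled above (see, e.g., Lemma 5.51 of \cite{LOT18}), strong boundary monotonicity together with holomorphicity forces the east punctures of any $0$-P holomorphic curve satisfying (M-1)-(M-11) to occur at pairwise distinct heights, listed in a fixed order along $\mathbb{R}$ --- this is precisely the point at which the torus-boundary condition enters (via the coincidence of weak and strong boundary monotonicity).

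In detail, I would suppose $(u;v)$ is a join curve end in $\overline{\mathcal{M}}^B(\bm x,\bm y;S^\triangleright;\overrightarrow{P})$. Then $u$ is a non-nodal $0$-P holomorphic curve and $v$ is a join curve, so $v$ contains a join component whose boundary punctures consist of one east puncture labeled by a Reeb chord $\sigma$ and two west punctures labeled by Reeb chords $\sigma_1,\sigma_2$ with $\sigma=\sigma_1\uplus\sigma_2$. First I would note that these two west punctures occur at the same height: $\pi_{\mathbb{D}}$ is constant on each connected component of the source of a holomorphic curve in the east end $\mathbb{R}\times Z\times[0,1]\times\mathbb{R}$ (open mapping theorem, as recalled just before the definitions of join, split, and orbit curves), so in particular all three punctures of the join component share one value of the height coordinate. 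By the matching condition in the definition of a holomorphic story, the two east punctures of $u$ that are glued to $\sigma_1$ and $\sigma_2$ therefore also occur at the same height. This contradicts the automatic ordered-discreteness of the east punctures of the $0$-P curve $u$, so no join curve end occurs.

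I expect the only genuine care needed --- and it is bookkeeping rather than a real obstacle --- is to handle the case in which $v$ consists of a join component \emph{together with} one or more trivial components: a trivial component contributes only a pair of matched punctures sharing a single height and does not separate $\sigma_1$ from $\sigma_2$, so the conclusion that the two east punctures of $u$ glued to $\sigma_1$ and $\sigma_2$ coincide in height is unchanged. It is also worth remarking explicitly that the immersed curve $\alpha_{im}$ is irrelevant here: join curves and the boundary behavior near $\partial\bar{\Sigma}$ that feeds into strong boundary monotonicity involve only the properly embedded $\alpha$-arcs $\bm{\bar{\alpha}}^a$, which are disjoint from $\alpha_{im}$, so the relevant arguments of \cite{LOT18} transfer without change.
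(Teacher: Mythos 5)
Your proof is correct and follows essentially the same route as the paper's: both derive a contradiction by noting that the join component forces two east punctures of the main curve $u$ to occur at the same height, while strong boundary monotonicity in the torus-boundary case (only one component of the generator lies on an $\alpha$-arc) together with holomorphicity forces all east punctures of $u$ to have pairwise distinct heights. The extra remarks about trivial components and the irrelevance of $\alpha_{im}$ are harmless elaborations of the same argument.
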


\begin{proof}
Suppose this is not true. Write $\overrightarrow{P}=(\sigma_1,\ldots,\sigma_k)$ for some $k\geq 1$. The appearance of join curve end means there is a holomorphic comb $(u;v)$ such that $u\in \mathcal{M}^B(\bm{x},\bm{y};{S^\triangleright}';(\sigma_1,\ldots,\{\sigma_i',\sigma_i''\},\ldots,\sigma_k))$ where $\sigma_i=\sigma_i'\uplus\sigma_i''$. The strong boundary monotonicity condition implies $\bm{x}$ has one and only one component that lies on exactly one of the $\alpha$ arcs. Hence all the east punctures of $u$ are of different heights due to holomorphicity. This contradicts that the east punctures marked by $\sigma_i'$ and $\sigma_i''$ are of the same height. 
\end{proof}

\begin{prop}\label{Proposition, no collision of levels}
Let $\partial\overline{\mathcal{M}}^B(\bm{x},\bm{y};S^\triangleright;\overrightarrow{P})=\overline{\mathcal{M}}^B(\bm{x},\bm{y};S^\triangleright;\overrightarrow{P})-\mathcal{M}^B(\bm{x},\bm{y};S^\triangleright;\overrightarrow{P})$. Then $\partial\overline{\mathcal{M}}^B(\bm{x},\bm{y};S^\triangleright;\overrightarrow{P})\cap \mathcal{M}^B(\bm{x},\bm{y};S^\triangleright;P)=\emptyset$.
\end{prop}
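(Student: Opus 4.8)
The plan is to reduce the statement to two essentially formal facts: that the uncompactified moduli space of honest (non-nodal, single-story) holomorphic curves sits as an \emph{open} subset of its compactification, with the subspace topology agreeing with its usual one, and that on this open set the ordered discrete partition recording the relative heights of the east punctures is a \emph{locally constant} invariant. Granting these, the proposition becomes a point-set topology triviality. First I would record that since $\overrightarrow{P}$ is an ordered \emph{discrete} partition, its underlying partition $P$ is discrete as well, so $\Delta_P=\mathbb{R}^{|E(S^\triangleright)|}$ and hence $\mathcal{M}^B(\bm{x},\bm{y};S^\triangleright;P)=\mathcal{M}^B(\bm{x},\bm{y};S^\triangleright)$, the full reduced moduli space of honest holomorphic curves in the class $B$ with source $S^\triangleright$. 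By the compactness result (Proposition \ref{Proposition, compactness}, with the same proof as \cite[Proposition 5.24]{LOT18}), the locus $\mathcal{M}^B(\bm{x},\bm{y};S^\triangleright)$ of non-nodal, single-story curves is open in $\overline{\overline{\mathcal{M}}}^B(\bm{x},\bm{y};S^\triangleright)$, and the topology it inherits from the compactification coincides with its usual topology.

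Next I would invoke strong boundary monotonicity together with holomorphicity, in the torus-boundary setting, exactly as in Lemma 5.51 of \cite{LOT18}: for every $u\in\mathcal{M}^B(\bm{x},\bm{y};S^\triangleright)$ the east punctures of $u$ have pairwise distinct $t$-coordinates, so $u$ lies in $\mathcal{M}^B(\bm{x},\bm{y};S^\triangleright;\overrightarrow{Q})$ for a \emph{unique} ordered discrete partition $\overrightarrow{Q}$, and the assignment $u\mapsto\overrightarrow{Q}$ is locally constant, since the heights vary continuously with $u$ and no two of them can collide without leaving $\mathcal{M}^B(\bm{x},\bm{y};S^\triangleright)$. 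As with the transversality and index results (Propositions \ref{Proposition, regularity of the moduli spaces} and \ref{Proposition, first index formula}), having $\alpha_{im}$ immersed does not affect this argument, which is local on the source. Consequently $\mathcal{M}^B(\bm{x},\bm{y};S^\triangleright;\overrightarrow{P})$ is a union of connected components of $\mathcal{M}^B(\bm{x},\bm{y};S^\triangleright)$, in particular both open and closed in $\mathcal{M}^B(\bm{x},\bm{y};S^\triangleright)=\mathcal{M}^B(\bm{x},\bm{y};S^\triangleright;P)$.

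Finally I would conclude by contradiction. Suppose $u\in\partial\overline{\mathcal{M}}^B(\bm{x},\bm{y};S^\triangleright;\overrightarrow{P})\cap\mathcal{M}^B(\bm{x},\bm{y};S^\triangleright;P)$, and pick a sequence $u_n\in\mathcal{M}^B(\bm{x},\bm{y};S^\triangleright;\overrightarrow{P})$ with $u_n\to u$ in $\overline{\overline{\mathcal{M}}}^B(\bm{x},\bm{y};S^\triangleright)$. Since $u$ lies in the open subset $\mathcal{M}^B(\bm{x},\bm{y};S^\triangleright;P)$ and the compactification induces the usual topology there, we have $u_n\to u$ inside $\mathcal{M}^B(\bm{x},\bm{y};S^\triangleright;P)$; as $\mathcal{M}^B(\bm{x},\bm{y};S^\triangleright;\overrightarrow{P})$ is closed in $\mathcal{M}^B(\bm{x},\bm{y};S^\triangleright;P)$ by the previous step, the limit $u$ lies in $\mathcal{M}^B(\bm{x},\bm{y};S^\triangleright;\overrightarrow{P})$. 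This contradicts $u\in\partial\overline{\mathcal{M}}^B(\bm{x},\bm{y};S^\triangleright;\overrightarrow{P})=\overline{\mathcal{M}}^B(\bm{x},\bm{y};S^\triangleright;\overrightarrow{P})-\mathcal{M}^B(\bm{x},\bm{y};S^\triangleright;\overrightarrow{P})$, proving the proposition.

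I expect the only step requiring genuine care to be the first: verifying that a sequence of honest curves which converges in the compactification $\overline{\overline{\mathcal{M}}}^B$ to an honest curve already converges in the ordinary sense, with no concealed breaking or boundary bubbling. This is part of the content of the compactness theorem for these moduli spaces, and it holds for the same reasons as in \cite{LOT18}; the immersed component $\alpha_{im}$ is irrelevant here because the relevant local analysis takes place after pulling back to the source $S$, where the immersed boundary condition is invisible. Everything else is bookkeeping with open and closed sets.
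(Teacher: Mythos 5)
Your proof is correct and rests on the same key input as the paper's: strong boundary monotonicity in the torus-boundary case forces all east punctures of an honest curve to have distinct heights (the observation already made in the proof of Proposition \ref{Proposition, no join curve end}), so no collision of levels can occur in a limit that remains in $\mathcal{M}^B(\bm{x},\bm{y};S^\triangleright;P)$. The paper phrases this as the nonexistence of the would-be limit curve (indeed it notes that $\mathcal{M}^B(\bm{x},\bm{y};S^\triangleright;P)$ coincides with $\mathcal{M}^B(\bm{x},\bm{y};S^\triangleright;\overrightarrow{P})$ for the one order determined by $S^\triangleright$), while you package the same fact as openness-and-closedness of $\mathcal{M}^B(\bm{x},\bm{y};S^\triangleright;\overrightarrow{P})$ in $\mathcal{M}^B(\bm{x},\bm{y};S^\triangleright;P)$; the difference is purely expository.
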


\begin{proof}
If not, then collision of levels appears, i.e., there is a sequence of $u_i\in\mathcal{M}^B(\bm{x},\bm{y};S^\triangleright;\overrightarrow{P})$ converging to a holomorphic curve $u\in \mathcal{M}^B(\bm{x},\bm{y};S^\triangleright;P)$ such that at least two of the east punctures are of the same height. However, we already observed such $u$ does not exist in the proof of Proposition \ref{Proposition, no join curve end}. In other words, $\mathcal{M}^B(\bm{x},\bm{y};S^\triangleright;P)$ is equal to $\mathcal{M}^B(\bm{x},\bm{y};S^\triangleright;\overrightarrow{P})$ for a particular order determined by $S^\triangleright$ when we restrict to the torus-boundary case.
\end{proof}

We summarize the results above into the following proposition for convenience.
\begin{prop}\label{Proposition, summary of degeneration of moduli spaces}
Let $B\in \tilde{\pi}_2(\bm{x},\bm{y})$ and let $J$ be a generic almost complex structure. For a one-dimensional moduli space $\mathcal{M}^{B}(\bm{x},\bm{y};S^{\triangleright};\overrightarrow{P})$ of 0-P curves, where $\overrightarrow{P}$ is a discrete ordered partition of the east punctures of $S^\triangleright$, every holomorphic comb in $\partial\overline{\mathcal{M}}^{B}(\bm{x},\bm{y};S^{\triangleright};\overrightarrow{P})$ has one of the following forms:
\begin{itemize}
\item[(1)]a two-story holomorphic building $(u_1; u_2)$;
\item[(2)]a simple holomorphic comb $(u; v)$ where $v$ is a split curve with a single split component; 
\item[(3)]a boundary degeneration.
\end{itemize}

For a one-dimensional moduli space $\mathcal{M}^{B}(\bm{x},\bm{y};S^{\triangleright};U)$ of 1-P curves, every holomorphic comb in $\partial\overline{\mathcal{M}}^{B}(\bm{x},\bm{y};S^{\triangleright};U)$ has one of the following forms:
\begin{itemize}
\item[(1)]a two-story holomorphic building $(u_1,u_2)$;
\item[(2)]a simple holomorphic comb $(u; v)$ where $v$ is an orbit curve;
\item[(3)]a boundary degeneration.
\end{itemize}
\end{prop}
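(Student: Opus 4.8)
The plan is to treat this as a bookkeeping statement: every substantive input — compactness (Proposition \ref{Proposition, compactness}), transversality (Proposition \ref{Proposition, regularity of the moduli spaces}), the two index formulas (Proposition \ref{Proposition, first index formula}), and the torus-boundary simplifications — has already been established, so the proof consists of assembling Propositions \ref{Proposition, first constraint on the boundary of 1-dimensional moduli space}, \ref{Proposition, first constraints on degeneration of 1-P curves}, \ref{Proposition, no boundary double point or haunted curve}, \ref{Proposition, no join curve end}, and \ref{Proposition, no collision of levels} in the right order for each of the two cases. The proof itself has essentially no analytic content of its own.

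For the 0-P case, I would begin with Proposition \ref{Proposition, first constraint on the boundary of 1-dimensional moduli space}, which, when $\mathrm{Ind}(B,S^\triangleright,P)=2$, limits every comb in $\partial\overline{\mathcal{M}}^{B}(\bm{x},\bm{y};S^{\triangleright};P)$ to one of four types: a two-story building, a simple comb with a join curve, a simple comb with a split curve having a single split component, or a nodal comb. The first step is to pass from the unordered discrete partition $P$ to the ordered one $\overrightarrow{P}$: in the torus-boundary setting Proposition \ref{Proposition, no collision of levels}, together with the remark preceding it, identifies $\mathcal{M}^{B}(\bm{x},\bm{y};S^{\triangleright};\overrightarrow{P})$ with $\mathcal{M}^{B}(\bm{x},\bm{y};S^{\triangleright};P)$ for the order induced by $S^\triangleright$, so the same four-way list describes $\partial\overline{\mathcal{M}}^{B}(\bm{x},\bm{y};S^{\triangleright};\overrightarrow{P})$. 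Next I would delete the join-curve option using Proposition \ref{Proposition, no join curve end}. Finally, for a nodal comb I would invoke the fact, recalled in the discussion preceding Definition \ref{Definition, boundary degeneration}, that such a comb is a boundary degeneration, a boundary double point, or a haunted comb, together with Proposition \ref{Proposition, no boundary double point or haunted curve}, which excludes the last two; hence every nodal comb is a boundary degeneration. This leaves precisely the three types in the statement.

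The 1-P case is shorter: Proposition \ref{Proposition, first constraints on degeneration of 1-P curves} already produces only three options for combs in $\partial\overline{\mathcal{M}}^{B}(\bm{x},\bm{y};S^{\triangleright};U)$ — a two-story building, a simple comb with an orbit curve, or a nodal comb — and the ordered-partition issue is vacuous here since $S^\triangleright$ of type 1-P carries only a single interior $e$-puncture. Applying Proposition \ref{Proposition, no boundary double point or haunted curve} once more upgrades ``nodal comb'' to ``boundary degeneration'', which is exactly the claimed list.

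The only points requiring care are questions of exhaustiveness: that in the torus-boundary case the ordered and unordered discrete partitions determine the same moduli space (so that Proposition \ref{Proposition, first constraint on the boundary of 1-dimensional moduli space} can be transported to the $\overrightarrow{P}$-moduli space), and that the three named nodal degenerations account for every nodal comb that can arise in a one-dimensional moduli space. The latter is the place I would double-check against the general classification in \cite[Section 5.6]{LOT18}, although restricting to torus boundary makes it immediate; this is the closest thing to an obstacle in the argument, and it is minor.
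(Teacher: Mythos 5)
Your proposal is correct and matches the paper exactly: the paper offers no separate proof of this proposition, introducing it with ``We summarize the results above into the following proposition for convenience,'' so the intended argument is precisely the assembly of Propositions \ref{Proposition, first constraint on the boundary of 1-dimensional moduli space}, \ref{Proposition, first constraints on degeneration of 1-P curves}, \ref{Proposition, no boundary double point or haunted curve}, \ref{Proposition, no join curve end}, and \ref{Proposition, no collision of levels} that you describe. Your handling of the ordered-versus-unordered partition and the upgrade of ``nodal comb'' to ``boundary degeneration'' is exactly the paper's reading.
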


\begin{rmk}
The restriction to manifolds with torus boundary greatly simplifies the study of moduli spaces. In higher-genus cases, join ends, collision of levels, and split curve with many splitting components may appear. In particular, the latter prevents one from proving the compactified moduli spaces are manifolds with boundaries (as the gluing results fail to apply). In \cite{LOT18}, notions like smeared neighborhood, cropped moduli spaces, and formal ends were employed to circumvent this difficulty. The results in this subsection allow us to avoid introducing any of these terms. 
\end{rmk}
\subsection{Embedded holomorphic curves}\label{Section, embedded index formula}
We only use embedded holomorphic curves when defining (weakly extended) type D structures. When $\alpha$-curves are embedded, a holomorphic curve is embedded if and only if the Euler characteristic of its underlying source surface is equal to the one given by an \emph{embedded Euler characteristic formula}. This is still true in the current setup, but the embedded Euler characteristic formula needs to be generalized to take care of new phenomena caused by immersed $\alpha$-multicurves. 

The embedded Euler characteristic formula involves signs of self-intersections of oriented immersed arcs in our setup. Here we fix the sign convention. Let $f:(0,1)\rightarrow \Sigma$ be an immersed arc with transverse double self-intersection at $p=f(a)=f(b)$ with $0<a<b<1$. Then the sign of the intersection point at $p$ is positive if the orientation at $T_p\Sigma$ coincides with the one specified by the ordered pair $(f'(a),f'(b))$; otherwise, the sign of the intersection point is negative. Let $f\cdot f$ denote the signed count of self-intersection points of the arc $f$. Let $B\in\pi_2(\bm{x},\bm{y};\overrightarrow{P})$ be a domain. Then up to homotopy within $\alpha_{im}$, the boundary of $B$ at $\alpha_{im}$ gives rise to a curve $\partial_{\alpha_{im}}B$. We will restrict to those domains for which $\partial_{\alpha_{im}} B$ has a single component. We define $s(\partial_{\alpha_{im}}B)$ to be $(\partial_{\alpha_{im}} B) \cdot (\partial_{\alpha_{im}} B)$, the signed count of transverse double points in $\partial_{\alpha_{im}}B$.  

For convenience, we also define the \emph{length} of each elementary Reeb chord to be one and the \emph{length} $|\sigma|$ of a general Reeb chord $\sigma$ to be the number of elementary Reeb chords it consists of. Note if $\overrightarrow{\rho}$ is a sequence of Reeb chords appearing as the east boundary of a holomorphic curve, then by Condition (M-9) we know for any $\sigma \in \overrightarrow{\rho}$, $|\sigma|\leq 4$.

With these terminologies set, our proposition is the following. 

\begin{prop}\label{Proposition, embedded Euler characteristic}
A holomorphic curve $u\in \mathcal{M}^B(\bm{x},\bm{y};S^\triangleright,\overrightarrow{P})$ is embedded if and only if $\chi(S)=g+e(B)-n_{\bm{x}}(B)-n_{\bm{y}}(B)-\iota([\overrightarrow{P}])+s(\partial_{\alpha_{im}}B)$.
\end{prop}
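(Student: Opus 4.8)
The plan is to derive this embedded Euler characteristic formula from the corresponding embedded Euler characteristic formula in the embedded-$\alpha$-curve case (\cite[Proposition 5.62 and the surrounding discussion]{LOT18}, together with the $HF^-$-type correction) by tracking the extra contribution of self-intersection points of $\alpha_{im}$. The starting point is to recall the setup for recognizing embedded curves: a holomorphic curve $u$ with source $S$ is embedded if and only if $u$ has no double points, and by the positivity-of-intersection and ``doubling'' argument of Lipshitz this is controlled by comparing $\chi(S)$ with a quantity computed purely from the shadow $B = [u]$ in the Heegaard surface. Concretely, $u$ embeds iff the intersection number $n(u)$ between $u$ and a generic translate of $u$ (or the self-intersection of the doubled curve) vanishes, and the difference $\chi_{\mathrm{emb}}(B,\overrightarrow P)-\chi(S)$ equals precisely this obstruction. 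So the strategy is to write $\chi(S) = g + e(B) - n_{\bm x}(B) - n_{\bm y}(B) - \iota([\overrightarrow P]) + (\text{correction})$ and identify the correction term as $s(\partial_{\alpha_{im}} B)$.

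First I would set up the doubled surface: glue $\Sigma \times [0,1] \times \mathbb{R}$ to a mirror copy along the $\bm{\alpha}$-boundary (this is the standard device for converting boundary self-intersections of $u$ along the Lagrangian into interior double points). In the embedded-$\alpha$ case the doubled Lagrangian is embedded, so the only contributions to the self-intersection count come from the interior of the curve, and one recovers the LOT formula (with the $\iota([\overrightarrow P])$ term accounting for the Reeb chord asymptotics at east infinity, and the $1$ in Proposition \ref{Proposition, first index formula}(2) handled by viewing the interior puncture as a closed Reeb orbit $U$ as in the remark after that proposition — so that the $\iota$ bookkeeping absorbs the $1$-P case uniformly). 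When $\alpha_{im}$ is immersed, the doubled $\alpha$-curve acquires double points at (the doubles of) the self-intersection points of $\alpha_{im}$, and the boundary arc $\pi_\Sigma\circ u|_A$, which by (M-11) is stay-on-track, runs over some subarc of $\alpha_{im}$ whose homotopy class is $\partial_{\alpha_{im}} B$. The key local computation is that near each transverse double point of $\partial_{\alpha_{im}}B$, the doubled boundary of $u$ contributes exactly one extra double point to the self-intersection count of the doubled curve, \emph{with a sign equal to the sign of that self-intersection} as fixed in the sign convention above. Summing these local contributions over all transverse double points of $\partial_{\alpha_{im}}B$ yields the correction $+ s(\partial_{\alpha_{im}}B)$; the hypothesis that $\partial_{\alpha_{im}}B$ has a single component guarantees this is well-defined (independent of the homotopy representative) and avoids cross-terms between distinct boundary components on $\alpha_{im}$.

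The main obstacle I anticipate is the careful local sign analysis at a self-intersection point of $\alpha_{im}$: one must verify that the contribution of the doubled boundary to the self-intersection number is $+1$ when the double point is positive and $-1$ when it is negative (rather than, say, always $+1$ or always $|{\cdot}|$ in absolute value), and that this is consistent with the orientation conventions used in the $R$-action quotient and in the almost-complex-structure-dependent signs elsewhere in the paper. This requires choosing an explicit local model for $u$ near the boundary arc (a half-disk mapping to a neighborhood of $\alpha_{im}$ in $\Sigma$, times the constant-or-branched behavior in the $[0,1]\times\mathbb{R}$ factor from (M-5)) and literally computing the linking/intersection data of the two pushed-off copies; I would handle the two over/under crossing configurations at the double point separately and check that they give opposite signs. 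A secondary point to be careful about is the interaction of (M-9) — which bounds $|\sigma|\le 4$ for Reeb chords in $\overrightarrow P$ — with the $\iota([\overrightarrow P])$ term, ensuring the east-infinity contribution is unchanged from \cite{LOT18}; but since the closed $\alpha$-curves do not approach east infinity, this part should go through verbatim. Finally I would note that positivity of $B$ and of the branched cover $\pi_{\mathbb{D}}\circ u_{\bar e}$ (from (M-5)) are what make the ``iff'' direction work: given the Euler characteristic equality one runs the argument backwards to conclude the self-intersection obstruction vanishes, hence $u$ is embedded.
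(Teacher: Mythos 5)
Your overall strategy is essentially the paper's: the proof adapts \cite[Proposition 5.69]{LOT18} by (i) writing $\chi(S)=e(B)-br(u)+\tfrac{g}{2}+\sum_i\tfrac{|P_i|}{2}$ via Riemann--Hurwitz, (ii) relating $br(u)$ to the intersection number $\tau_\epsilon(u)\cdot u$ of $u$ with a small $\mathbb{R}$-translate, (iii) computing $\tau_R(u)\cdot u=n_{\bm x}(B)+n_{\bm y}(B)-\tfrac{g}{2}$ for large $R$ and tracking how $\tau_t(u)\cdot u$ jumps as $t$ varies; the only jumps beyond the embedded-$\alpha$ case occur at the times $t_p$ attached to self-intersection points $p$ of $\partial_{\alpha_{im}}u$, and their \emph{signed} count is $s(\partial_{\alpha_{im}}B)$ --- precisely the local sign verification you single out as the crux. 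Two corrections to your framing, though. First, your stated embeddedness criterion is wrong as written: the intersection number of $u$ with a generic translate does \emph{not} vanish when $u$ is embedded --- it equals $br(u)$ plus the boundary and east-infinity corrections --- so the difference $\chi(S)-\chi_{emb}$ is not that intersection number but the excess $2\,\mathrm{sing}(u)=\tau_\epsilon(u)\cdot u - br(u) - (\text{corrections})$, whose nonnegativity (positivity of intersections) is what yields $\chi(S)\ge\chi_{emb}$ and the ``if'' direction. If you tried to prove the proposition by showing the translate-intersection number vanishes, the argument would fail. Second, the paper does not double across the $\alpha$-Lagrangian; it works directly with $\tau_t(u)$, reading off the contribution of a double point $p$ of $\partial_{\alpha_{im}}u$ from the collision, at shift $t=t_p$, of the two boundary points of $u$ lying over $p$ at different $t$-coordinates, and checking whether the nearby interior intersections of $u$ with $\tau_t(u)$ appear or disappear as $t$ crosses $t_p$ according to the sign of $p$. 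The translate version avoids the anti-invariant almost complex structure a genuine doubling would require, but the local model you propose would compute the same signed contribution.
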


Here, $S^\triangleright$ can either be a 0-P source or a 1-P source. If $S^\triangleright$ is of 0-P, then $[\overrightarrow{P}]$ stands for the sequence of Reeb chords obtained from the labels of the east punctures, and the term $\iota([\overrightarrow{P}])$ is defined in Formula (5.65) of \cite{LOT18} (in the non-extended case) and Section 4.1 of \cite{Lipshitz2021} (for Reeb chords of length $4$). In particular, if $\overrightarrow{P}$ contains a Reeb chord of length $4$, then it consists of a single Reeb chord by (M-9), and hence the only new case we need to know is $\iota((\sigma))=-1$ when $|\sigma|=4$. If $S^\triangleright$ is of 1-P, then by Condition (M-9) $\overrightarrow{P}=\emptyset$ and the term $\iota([\overrightarrow{P}])$ vanishes.

\begin{proof}[Proof of Proposition \ref{Proposition, embedded Euler characteristic}]
The proof is adapted from the corresponding proof in the embedded alpha curve case, \cite[Proposition 5.69]{LOT18}. To keep it concise, we state the main steps and skip the details that can be found in \cite{LOT18}, but we give details on the modifications. The proof is divided into four steps; only the third step is signicantly different from the embedded case.

Let $u\in \mathcal{M}^B(\bm{x},\bm{y};S^\triangleright;\overrightarrow{P})$ be an embedded curve.

\textbf{Step 1.} Apply the Riemann-Hurwitz formula to express $\chi(S)$ in terms of $e(B)$ and $br(u)$, where $br(u)$ stands for the total number of branch points of the projection $\pi_\Sigma\circ u$, counted with multiplicities: 
\begin{equation}\label{Equation, Euler char of sources of embedded curve}
\chi(S)=e(S)+\frac{g}{2}+\sum_{i} \frac{|P_i|}{2}=e(B)-br(u)+\frac{g}{2}+\sum_{i} \frac{|P_i|}{2}.
\end{equation}

\textbf{Step 2.} Let $\tau_\epsilon(u)$ be the translation of $u$ by $\epsilon$ in the $\mathbb{R}$-direction. Since we are in the torus-boundary case, we may always assume $\overrightarrow{P}$ is discrete. There is only one case where there is a branch point escaped to east infinity: This occurs when $\overrightarrow{P}=\{\sigma\}$ for some $|\sigma|=4$. Therefore, 
\begin{equation}\label{Equation, branch points of holomorphic curve projection}
br(u)=\tau_\epsilon(u)\cdot u-\frac{\vert\{\sigma|{\sigma}\in \overrightarrow{P},\ |\sigma|=4\}\vert}{2} 
\end{equation}
when $\epsilon$ is sufficiently small. This is because when $u$ is embedded and $\epsilon$ is small, excluding those intersection points which escape to east infinity when $\epsilon\rightarrow0$, the remaining intersection points correspond to points at which $u$ is tangent to $\frac{\partial}{\partial t}$, which are exactly the branch points of $\pi_{\Sigma}\circ u$. 

\textbf{Step 3.} Compute $\tau_\epsilon(u)\cdot u$. For all sufficiently large $R$, 
\begin{equation}\label{Equation, intersection number, large R}
\tau_R(u)\cdot u=n_x(B)+n_y(B)-\frac{g}{2}.
\end{equation}

To see this, note that the intersection of $u$ and $\tau_R(u)$ looks like the intersection of $u$ with the trivial disk from the generator $\bm{x}$ to itself when $t$ is small and it looks like the intersection of $u$ with the trivial disk corresponding to the generator $\bm{y}$ when $t$ is large (see Proposition 4.2 of \cite{MR2240908} for the computation).

To recover $\tau_\epsilon(u)\cdot u$, we need to understand how $\tau_t(u)\cdot u$ changes as $t$ varies. In the closed-manifold and embedded-alpha-curve case, $\tau_t(u)\cdot u$ does not change. In the bordered-manifold case, there is a change in $\tau_\epsilon(u)\cdot u-\tau_R(u)\cdot u$ caused by the relative position change of $\partial u$ and $\partial \tau_t(u)$ at east infinity. This change is captured by the term $\iota([\overrightarrow{P}])$. More precisely, when the $\alpha$-curves are embedded, $$\tau_\epsilon(u)\cdot u-\tau_R(u)\cdot u=\iota([\overrightarrow{P}])+\frac{|\overrightarrow{P}|}{2}+\frac{\vert\{\sigma|{\sigma}\in \overrightarrow{P},\ |\sigma|=4\}\vert}{2}.$$ 

When the $\alpha$-curves are immersed, there is another source of change in $\tau_t(u)\cdot u$ corresponding to self-intersection points of $\partial_{\alpha_{im}}u$. We explain such change in the examples below. As one shall see, such phenomenon is local (i.e., depends only on the behavior of $u$ near the pre-images of self-intersection points of $\partial_{\alpha_{im}}u$). Therefore, the examples also speak for the general situation. 
\begin{figure}[htb!]
\centering{
\includegraphics[scale=0.5]{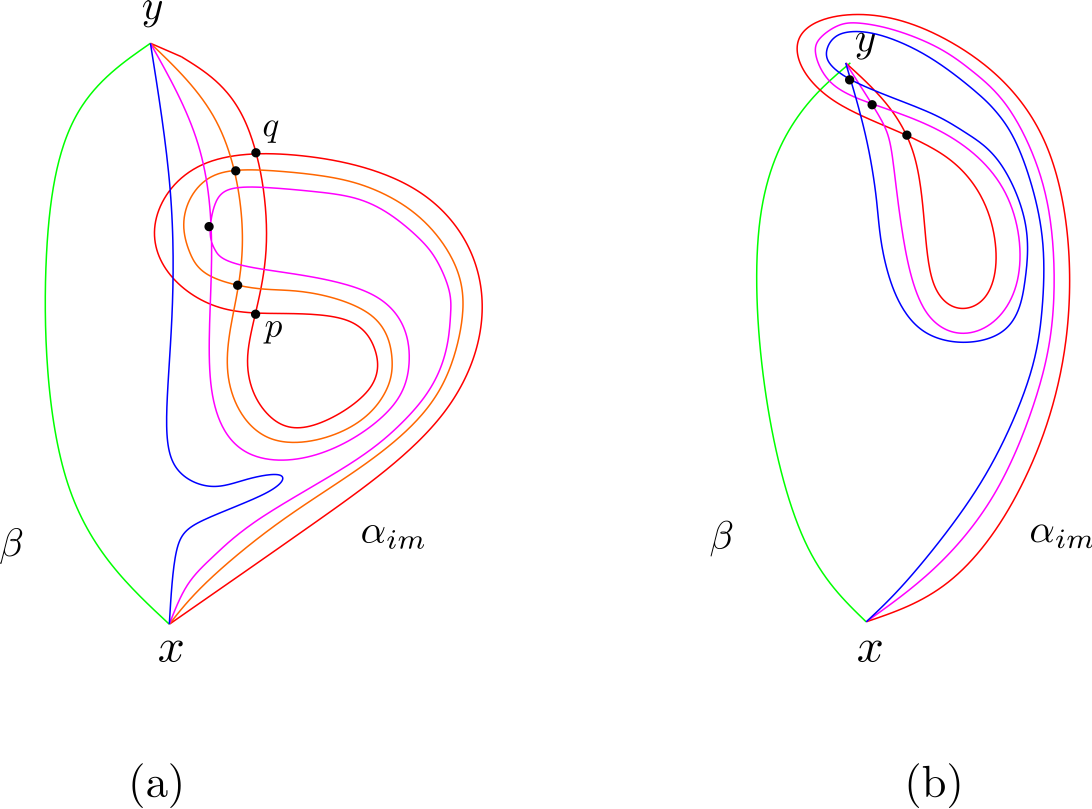}
\caption{Examples of embedded disks in $\Sigma\times[0,1]\times\mathbb{R}$. The $\alpha$-curve is drawn in red, and the $\beta$-curves are in green. We also depicted a few $s$-grid lines; points on a single $s$-grid line have constant $s$-coordinate.}
\label{Figure, example embedded_index}
}
\end{figure}
We spell out the example. Consider an embedded disk $u$ in $\Sigma\times [0,1] \times \mathbb{R}$ as shown in Figure \ref{Figure, example embedded_index} (a). In the figure, $\pi_\Sigma\circ u$ is shown by its domain, and we also introduced immersed $s$-grid lines to help visualize the $[0,1]\times\mathbb{R}$-coordinates: the points on a single immersed $s$-grid line are all mapped to the same value in $[0,1]$, and as we move from $x$ to $y$, the $t$-coordinate on an $s$-grid line increases from $-\infty$ to $\infty$. In the figure, we highlighted a few $s$-grid lines and self-intersection points on them. Despite having these self-intersection points on the $s$-grid lines, $u$ is still embedded as each such self-intersection point corresponds to two points in $\Sigma\times [0,1] \times \mathbb{R}$ with different $t$-coordinate. However, each self-intersection point gives rise to an intersection point in $\tau_{t}u\cdot u$ for an appropriate $t$. In our example, the projection to $\Sigma$ of $\partial_{\alpha} u$ has two self-intersection points $p$ and $q$. There are $t_i\in \mathbb{R}$ ($i=1,2,3,4$), $t_1<t_2<t_3<t_4$ so that $p$ has $t$-coordinate $t_2$ and $t_3$, and $q$ has coordinate $t_1$ and $t_4$. We first examine what is incurred by the negative intersection point $p$. Note $\tau_t u\cdot u$ does not change for $t<t_3-t_2$, and at $t=t_3-t_2$, $\tau_t u\cap u$ picks up a boundary intersection point $(p,1,t_3)$. Inspecting the example, we can see for $t\in[t_3-t_2-\epsilon,t_3-t_2+\epsilon]$ for a small $\epsilon$, a boundary intersection point appears and then enters to become an interior intersection point. So $\tau_{t_3-t_2+\epsilon}u \cdot u-\tau_{t_3-t_2-\epsilon}u \cdot u=1$. Similarly, for the positive self-intersection point $q$, for $t\in[t_4-t_1-\epsilon,t_4-t_1]$, we see an interior intersection point of $\tau_t u\cdot u$ hits the boundary and then disappears.

In general, an intersection point $p$ of $\partial_{\alpha_{im}} u$ contributes a boundary intersection of $u$ with $\tau_t u$ for some value of $t= t_p$, and intersection points of arcs for $s$-values just less than one contribute intersections of  $u$ with $\tau_t u$ for nearby values of $t$. These intersections occur at shifts $t > t_p$ if $p$ is a positive intersection point and at $t < t_p$ if $p$ is a negative intersection point, so positive intersection points always give rise to times $t$ at which an intersection point appears on the boundary and then moves into the interior, while negative intersection points give rise to times $t$ at which an interior intersection moves to the boundary and disappears. Thus the net change in $\tau_t u\cdot u$ caused by negative and positive boundary intersection points is given by $s(\partial_{\alpha_{im}}u)$. In the example above from Figure \ref{Figure, example embedded_index}(a), $s(\partial_{\alpha_{im}}u)=0$. An example with $s(\partial_{\alpha_{im}}u) = 1$ is shown in Figure \ref{Figure, example embedded_index} (b). 

Overall, taking into account the changes at the east infinity and at $\alpha_{im}\times\{1\}\times\mathbb{R}$, we can derive the following equation from Equation (\ref{Equation, intersection number, large R}):
\begin{equation}\label{Equation, self-intersection of embedded curves}
\tau_\epsilon(u)\cdot u=n_x(B)+n_y(B)-\frac{g}{2}+\iota([\overrightarrow{P}])+\frac{|\overrightarrow{P}|}{2}+\frac{\vert\{\sigma|{\sigma}\in \overrightarrow{P},\ |\sigma|=4\}\vert}{2}+s(\partial_{\alpha_{im}}B)
\end{equation}

\textbf{Step 4.} Synthesize the steps. Synthesizing Equation (\ref{Equation, Euler char of sources of embedded curve}), Equation (\ref{Equation, branch points of holomorphic curve projection}), and Equation (\ref{Equation, self-intersection of embedded curves}) gives the formula, proving the ``only if" direction. To see the ``if" direction, note if the holomorphic curve is not embedded, then in Step 2 we know $br(u)=\tau_{\epsilon}u\cdot u-2 \text{sigu}(u)$, where $\text{sigu}(u)>0$ is the order of singularity $u$. So, in this case we have $\chi(S)$ is strictly greater than $g+e(B)-n_x(B)-n_y(B)-\iota([\overrightarrow{P}])+s(\partial_{\alpha_{im}}B).$
\end{proof}

From now on, we use $\overrightarrow{\rho}$ to denote either a sequence of Reeb chords of length less than or equal to $4$ or the length one sequence containing a single closed Reeb orbit $U$ which is used for marking the interior puncture of a 1-P curve. Given such a $\overrightarrow{\rho}$, denote by $\overrightarrow{{\rho}_{\star}}$ the sub-sequence of non-closed Reeb chords, i.e., if $\overrightarrow{\rho}=(U)$ then $\overrightarrow{{\rho}_{\star}}=\emptyset$ and otherwise $\overrightarrow{{\rho}_{\star}} = \overrightarrow{\rho}$.  
A domain $B\in\tilde{\pi}_2(\bm{x},\bm{y})$ is said to be \emph{compatible} with a sequence of Reeb chords $\overrightarrow{\rho}$ if the \emph{homology class} induced by the east boundary $\partial^\partial B$ of $B$ agrees with that induced by $\overrightarrow{\rho}$, and $(\bm{x},\overrightarrow{{\rho}_{\star}})$ is strongly boundary monotone (in the sense of Definition 5.52 of \cite{LOT18}). In view of Proposition \ref{Proposition, embedded Euler characteristic}, we make the following definitions.
\begin{defn}\label{Definition, embedded Euler Char, index, and moduli space}
Given $B\in \tilde{\pi}_2(\bm{x},\bm{y})$ and a sequence of Reeb chords $\overrightarrow{\rho}$ such that $(B,\overrightarrow{\rho})$ is compatible. The \emph{embedded Euler characteristic} is defined to be $$\chi_{emb}(B,\overrightarrow{\rho}):=g+e(B)-n_x(B)-n_y(B)-\iota(\overrightarrow{{\rho}_{\star}})+s(\partial_{\alpha_{im}}B).$$
The \emph{embedded index} is defined to be $$\text{ind}(B,\overrightarrow{\rho}):=e(B)+n_x(B)+n_y(B)+|\overrightarrow{\rho}|+\iota(\overrightarrow{{\rho}_{\star}})-s(\partial_{\alpha_{im}}B).$$
The \emph{embedded moduli space} is defined to be $$\mathcal{M}^B(\bm{x},\bm{y};\overrightarrow{\rho}):=\bigcup_{\chi(S)=\chi_{emb}(B,\overrightarrow{\rho}),\  [\overrightarrow{P}]=\overrightarrow{{\rho}_{\star}}}\mathcal{M}^B(\bm{x},\bm{y};S^{\triangleright};\overrightarrow{P}).$$
\end{defn}

Clearly, the embedded moduli space $\mathcal{M}^B(\bm{x},\bm{y};\overrightarrow{\rho})$ has expected dimension $\text{ind}(B,\rho)-1$ by Proposition \ref{Proposition, embedded Euler characteristic} and Proposition \ref{Proposition, first index formula}.

\begin{prop}\label{Proposition, embedded index formula is additive}
The embedded index formula is additive, i.e., for compatible pairs $(B_i,\overrightarrow{\rho_i})$ where $B_1\in\pi_2(\bm{x},\bm{w})$ and $B_2\in \pi_2(\bm{w},\bm{y})$ we have $\text{ind}(B_1+B_2,(\overrightarrow{\rho_1},\overrightarrow{\rho_2}))=\text{ind}(B_1,\overrightarrow{\rho_1})+\text{ind}(B_2,\overrightarrow{\rho_2}).$
\end{prop}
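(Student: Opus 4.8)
The plan is to verify the additivity of the embedded index directly from its definition
$$\text{ind}(B,\overrightarrow{\rho})=e(B)+n_x(B)+n_y(B)+|\overrightarrow{\rho}|+\iota(\overrightarrow{{\rho}_{\star}})-s(\partial_{\alpha_{im}}B),$$
by checking that each of the six terms behaves additively (or controllably) under concatenation. First I would note that $B=B_1+B_2$ is a linear combination of regions, so the Euler measure $e(\cdot)$ is additive and the point measures satisfy $n_{\bm{w}}(B_1+B_2)=n_{\bm{w}}(B_1)+n_{\bm{w}}(B_2)$; but we need the mixed point-measure identity $n_x(B_1)+n_w(B_1)+n_w(B_2)+n_y(B_2)=n_x(B)+2n_w(B)+n_y(B)$, so the apparent extra $2n_{\bm w}(B)$ has to be absorbed. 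This is exactly the phenomenon already handled in \cite[Section 5]{LOT18}: the combination $e(B)+n_x(B)+n_y(B)$ is \emph{not} additive on its own, but its failure is compensated by the Euler-measure contribution together with the standard identity $n_{\bm w}(B)=\tfrac12(\text{something})$; the cleanest route is to recall the alternative expression $\text{ind}(B,\overrightarrow\rho)=g-\chi(S)+2e(B)+|\overrightarrow\rho|$ combined with the embedded Euler characteristic formula from Proposition \ref{Proposition, embedded Euler characteristic}, reducing additivity of $\text{ind}$ to additivity of $\chi_{emb}$. Since $g$ is a topological constant, one would argue that gluing the two embedded sources $S_1$ and $S_2$ along the generator $\bm w$ produces a source $S$ with $\chi(S)=\chi(S_1)+\chi(S_2)-|\bm w|=\chi(S_1)+\chi(S_2)-g$ (the generator $\bm w$ consists of $g$ intersection points, one on each $\beta$-circle), which contributes the correction needed to make $g-\chi$ additive.

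Next I would handle the term $|\overrightarrow{\rho}|$: since $\overrightarrow{\rho}=(\overrightarrow{\rho_1},\overrightarrow{\rho_2})$ is the concatenated sequence, $|\overrightarrow{\rho}|=|\overrightarrow{\rho_1}|+|\overrightarrow{\rho_2}|$ is immediate, and likewise $\iota(\overrightarrow{{\rho}_{\star}})$ is additive under concatenation of Reeb-chord sequences because $\iota$ (as defined in Formula (5.65) of \cite{LOT18}, and extended in \cite{Lipshitz2021} to length-$4$ chords) is defined term-by-term on the sequence plus a pairing term $L(\cdot,\cdot)$ of linking contributions, and for \emph{discrete} sequences—which is all we deal with in the torus-boundary case—there are no extra cross terms between the first and second blocks; I should check that the single length-$4$ value $\iota((\sigma))=-1$ poses no issue, which it does not since if a length-$4$ chord appears it is the only Reeb chord in that block by (M-9). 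I would also need to confirm compatibility composes correctly: if $(B_i,\overrightarrow{\rho_i})$ are compatible then strong boundary monotonicity of $(\bm x,\overrightarrow{\rho_{1\star}})$ and $(\bm w,\overrightarrow{\rho_{2\star}})$ implies strong boundary monotonicity of $(\bm x,(\overrightarrow{\rho_{1\star}},\overrightarrow{\rho_{2\star}}))$, so that $\text{ind}(B,\overrightarrow{\rho})$ is actually defined—this is part of the statement being well-posed rather than the computation.

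The genuinely new term, and the step I expect to be the main obstacle, is the self-intersection correction $s(\partial_{\alpha_{im}}B)$: I must show $s(\partial_{\alpha_{im}}(B_1+B_2))=s(\partial_{\alpha_{im}}B_1)+s(\partial_{\alpha_{im}}B_2)$, and this is \emph{not} obviously true because the signed self-intersection count of a concatenated immersed curve can pick up extra crossing terms between the $\alpha_{im}$-boundary of $B_1$ and that of $B_2$. The resolution should be that at the generator $\bm w$ the relevant intersection point on $\alpha_{im}$ (the distinguished component, or wherever the $(g-1)$-st coordinate of $\bm w$ lives) is a point through which both $\partial_{\alpha_{im}}B_1$ and $\partial_{\alpha_{im}}B_2$ pass in a matched way, so that gluing $\partial_{\alpha_{im}}B_1$ to $\partial_{\alpha_{im}}B_2$ at this point does not create a transverse double point there; since we have restricted to domains whose $\alpha_{im}$-boundary has a single component, one would argue that up to homotopy within $\alpha_{im}$ the concatenated boundary is the "sum" of the two boundary curves and its transverse double points are, after a homotopy making things generic and transverse, exactly the union of the double points of the pieces together with the double points coming from crossings between the two pieces—and the latter cancel in the signed count, or are absent, because the two pieces traverse the same immersed curve $\alpha_{im}$ and crossings between two arcs of the same immersed multicurve in minimal position contribute in canceling pairs when the concatenation point is handled correctly. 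I would make this precise by choosing representatives of $\partial_{\alpha_{im}}B_1$ and $\partial_{\alpha_{im}}B_2$ that agree near $\bm w$ and are otherwise in general position, computing $s$ of each and of the concatenation, and invoking the locality remark already made in the proof of Proposition \ref{Proposition, embedded Euler characteristic} (the change in $\tau_t u \cdot u$ at a self-intersection depends only on local data near the preimages) to conclude that no cross terms survive. If cross terms do survive in general, the fallback is that they are precisely compensated by a corresponding change in $s(\partial_{\alpha_{im}})$ versus the naive sum, which one can absorb into the statement; but I expect the clean statement holds once the single-component hypothesis and a careful choice of representatives are used.
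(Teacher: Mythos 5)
Your term-by-term strategy breaks down at exactly the two places where the content of the proposition lives, and it breaks down in a way you do not anticipate. First, the non-$s$ part of the index, $e(B)+n_{\bm{x}}(B)+n_{\bm{y}}(B)+|\overrightarrow{\rho}|+\iota(\overrightarrow{\rho_{\star}})$, is \emph{not} additive in the immersed setting. Running the Sarkar/Lipshitz--Ozsv\'ath--Thurston jittered-intersection argument behind Proposition 5.75 of \cite{LOT18}, the key identity relating $n_{\bm{x}}(B_2)+n_{\bm{y}}(B_1)$ to $n_{\bm{w}}(B_1)+n_{\bm{w}}(B_2)$ acquires a new defect equal to the jittered intersection number $\partial_{\alpha_{im}}B_1\cdot\partial_{\alpha_{im}}B_2$, precisely because the input ``$a\cdot a'=0$ for arcs $a,a'$ on the $\alpha$-curves'' (Lemma 5.73(2) of \cite{LOT18}) fails once $\alpha_{im}$ is immersed. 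Second, your hoped-for identity $s(\partial_{\alpha_{im}}(B_1+B_2))=s(\partial_{\alpha_{im}}B_1)+s(\partial_{\alpha_{im}}B_2)$ is false: concatenating the two boundary arcs at $\bm{w}$ produces genuine transverse crossings between the $B_1$-arc and the $B_2$-arc away from the concatenation point, and these do not cancel in signed pairs (two subarcs of an immersed curve in minimal position can cross once, or several times with all signs equal). The correct statement is $s(\partial_{\alpha_{im}}(B_1+B_2))=s(\partial_{\alpha_{im}}B_1)+s(\partial_{\alpha_{im}}B_2)+\partial_{\alpha_{im}}B_1\cdot\partial_{\alpha_{im}}B_2$. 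Additivity of the index holds because these two defects are the \emph{same} cross term and $s$ enters the index with a minus sign, so they cancel against each other --- not because each term is separately additive. Your ``fallback'' gestures toward a compensation but locates it in the wrong place (inside $s$ itself, rather than between $s$ and the point-measure computation).

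Two smaller problems. The proposed reduction to additivity of $\chi_{emb}$ via gluing sources does not help: the proposition is a purely combinatorial statement about arbitrary compatible domains, which need not admit embedded holomorphic representatives, and $\chi_{emb}$ itself contains the $s$ term, so its additivity is literally equivalent to what you are trying to prove. Also, $\iota$ is not additive under concatenation even in the embedded bordered setting: $\iota((\overrightarrow{\rho_1},\overrightarrow{\rho_2}))=\iota(\overrightarrow{\rho_1})+\iota(\overrightarrow{\rho_2})+L(\overrightarrow{\rho_1},\overrightarrow{\rho_2})$, and this linking cross term is needed to cancel the $a\cdot b'+b\cdot a'$ contributions in \cite{LOT18}; this is orthogonal to the new immersed phenomenon, but it is another place where ``every term is additive'' is not literally true.
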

\begin{proof}
The proof is a modification of the proof of Proposition 5.75 in \cite{LOT18}, which is adapted from the proof in the closed-manifold case in \cite{Sarkar2011}. We will only mention the modifications instead of going into the details. Given oriented arcs $a$ and $b$, the \emph{jittered intersection number} $a\cdot b$ is defined to be $\frac{1}{4}(a_{NE}+a_{NW}+a_{SE}+a_{SW})\cdot b$, where $a_{NE},\ldots ,a_{SW}$ are slight translations of the arc $a$ in directions suggested by the subscript (and we assume these translations intersect $b$ transversely). The proof of Proposition 5.75 of \cite{LOT18} uses that if $a$ and $a'$ are two arcs contained in the $\alpha$-curve, then $a\cdot a'=0$ (Lemma 5.73 of \cite{LOT18} (2)). This is no longer true in our setting. Overall\footnote{For careful readers: Specifically, Lemma 5.73 (5) of \cite{LOT18} needs to be changed to $a\cdot a'+a\cdot b'+b\cdot a'=0$ and Lemma 5.74 of \cite{LOT18} needs to be changed to $a\cdot a'+a\cdot b'+b\cdot a'=L(\partial^\partial B,\partial^\partial B')$ when $a\cdot a'$ can no longer be assumed to be zero.}, running the proof in \cite{LOT18} in our setting now gives 

\begin{equation}\label{Equation, first equation in the proof of additivity of index}
\begin{aligned}
&e(B_1+B_2)+n_{\bm{x}}(B_1+B_2)+n_{\bm{y}}(B_1+B_2)+|(\overrightarrow{\rho_1},\overrightarrow{\rho_2})|+\iota((\overrightarrow{\rho_1},\overrightarrow{\rho_2})_b)\\
=&e(B_1)+n_{\bm{x}}(B_1)+n_{\bm{y}}(B_1)+|\overrightarrow{\rho_1}|+\iota((\overrightarrow{\rho_1})_b)+e(B_2)+n_{\bm{x}}(B_2)\\
&+n_{\bm{y}}(B_2)+|\overrightarrow{\rho_2}|+\iota((\overrightarrow{\rho_2})_b)+\partial_{\alpha_{im}}B_1\cdot \partial_{\alpha_{im}}B_2.
\end{aligned}
\end{equation}
(If the $\alpha$-curves are embedded, then the term $\partial_{\alpha_{im}}(B_1)\cdot \partial_{\alpha_{im}}B_2$ vanishes, recovering the additivity of the index in that case.) And it is easy to see 
\begin{equation}\label{Equation, second equation in the proof of additivity of index}
s(\partial_{\alpha_{im}}(B_1+B_2))=s(\partial_{\alpha_{im}}B_1)+s(\partial_{\alpha_{im}}B_2)+\partial_{\alpha_{im}}B_1\cdot \partial_{\alpha_{im}}B_2.
\end{equation}
Now the additivity of the index follows readily from the definition of the embedded index, Equation (\ref{Equation, first equation in the proof of additivity of index}), and Equation (\ref{Equation, second equation in the proof of additivity of index}).
\end{proof}
 When proving the structure equations (e.g., $\partial^2=0$ for type D structures), one needs to relate the coefficient of the structural equation and the ends of 1-dimensional moduli spaces. A key result that allows us to do this is the following proposition, which is the counterpart of Lemma 5.76 of \cite{LOT18}. 

\begin{prop}\label{Proposition, levels of degeneration are embedded}
Given $(B,\overrightarrow{\rho})$ such that $\text{ind}(B,\overrightarrow{\rho})=2$. Then the two-story buildings that occur in the degeneration of $\overline{\mathcal{M}}^B(\bm{x},\bm{y};\overrightarrow{\rho})$ are embedded.  
\end{prop}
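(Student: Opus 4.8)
The plan is to mimic the proof of \cite[Lemma 5.76]{LOT18}, using the additivity of the embedded index (Proposition \ref{Proposition, embedded index formula is additive}) together with the degeneration classification (Proposition \ref{Proposition, summary of degeneration of moduli spaces}), but tracking the extra self-intersection term $s(\partial_{\alpha_{im}}B)$ throughout. Suppose $(u_1,u_2)$ is a two-story building appearing in $\partial\overline{\mathcal{M}}^B(\bm{x},\bm{y};\overrightarrow{\rho})$, with $u_1\in\mathcal{M}^{B_1}(\bm{x},\bm{w};S_1^\triangleright;\overrightarrow{P_1})$ and $u_2\in\mathcal{M}^{B_2}(\bm{w},\bm{y};S_2^\triangleright;\overrightarrow{P_2})$, where $B=B_1+B_2$, $\overrightarrow{\rho}=(\overrightarrow{\rho_1},\overrightarrow{\rho_2})$, and $\chi(S)=\chi(S_1)+\chi(S_2)$ for the preglued source. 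What I need to show is that each $u_i$ is embedded, i.e. $\chi(S_i)=\chi_{emb}(B_i,\overrightarrow{\rho_i})$ for $i=1,2$.

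First I would record two inequalities. Since these are actual holomorphic curves, each $u_i$ is either embedded or has positive singularity order; by the ``if''-direction argument in the proof of Proposition \ref{Proposition, embedded Euler characteristic} (Step 4), this gives $\chi(S_i)\ge \chi_{emb}(B_i,\overrightarrow{\rho_i})$, with equality if and only if $u_i$ is embedded. Summing these and using that the preglued source has $\chi(S)=\chi(S_1)+\chi(S_2)$, I get
\begin{equation*}
\chi(S)\ \ge\ \chi_{emb}(B_1,\overrightarrow{\rho_1})+\chi_{emb}(B_2,\overrightarrow{\rho_2}).
\end{equation*}
On the other hand, $(u_1,u_2)$ lies in $\overline{\mathcal{M}}^B(\bm{x},\bm{y};\overrightarrow{\rho})$, so by definition of the embedded moduli space $\chi(S)=\chi_{emb}(B,\overrightarrow{\rho})$. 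It therefore suffices to prove $\chi_{emb}(B,\overrightarrow{\rho})\le \chi_{emb}(B_1,\overrightarrow{\rho_1})+\chi_{emb}(B_2,\overrightarrow{\rho_2})$, which combined with the displayed inequality forces equality in each summand, hence embeddedness of both $u_i$.

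To compare the embedded Euler characteristics, I expand $\chi_{emb}(B,\overrightarrow{\rho})-\chi_{emb}(B_1,\overrightarrow{\rho_1})-\chi_{emb}(B_2,\overrightarrow{\rho_2})$ using the definition in Definition \ref{Definition, embedded Euler Char, index, and moduli space}. The terms $g+e(B)-n_{\bm{x}}-n_{\bm{y}}-\iota(\overrightarrow{\rho_\star})$ combine exactly as in \cite{LOT18} — this is precisely Equation (\ref{Equation, first equation in the proof of additivity of index}) from the proof of Proposition \ref{Proposition, embedded index formula is additive}, rearranged — leaving a cross term $-\partial_{\alpha_{im}}B_1\cdot\partial_{\alpha_{im}}B_2$ (jittered intersection number), while the $s(\partial_{\alpha_{im}}\cdot)$ terms combine via Equation (\ref{Equation, second equation in the proof of additivity of index}) to contribute $+\,\partial_{\alpha_{im}}B_1\cdot\partial_{\alpha_{im}}B_2$. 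These cancel, so in fact $\chi_{emb}(B,\overrightarrow{\rho})=\chi_{emb}(B_1,\overrightarrow{\rho_1})+\chi_{emb}(B_2,\overrightarrow{\rho_2})$ exactly. Combined with the two inequalities above, equality is forced, so both $u_1$ and $u_2$ are embedded. (The index statement $\text{ind}(B,\overrightarrow{\rho})=\text{ind}(B_1,\overrightarrow{\rho_1})+\text{ind}(B_2,\overrightarrow{\rho_2})=2$ with each summand $\ge 1$, which one gets from Proposition \ref{Proposition, embedded index formula is additive} and transversality forcing each nontrivial factor to have index $\ge 1$, is a consistency check but is not strictly needed for the embeddedness conclusion.)

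The step I expect to require the most care is making rigorous the claim $\chi(S_i)\ge\chi_{emb}(B_i,\overrightarrow{\rho_i})$ for the individual story curves: one must verify that the Euler-characteristic inequality established in the proof of Proposition \ref{Proposition, embedded Euler characteristic} for curves in a one-dimensional embedded moduli space applies verbatim to the factor curves $u_i$ of a broken configuration, including the bookkeeping of the jittered self-intersection term $s(\partial_{\alpha_{im}}B_i)$ and the possibility that a branch point of $\pi_\Sigma\circ u_i$ escaped to east infinity when $|\sigma|=4$ occurs in $\overrightarrow{\rho_i}$. A secondary subtlety is ensuring $\chi(S)=\chi(S_1)+\chi(S_2)$ for the preglued source — i.e. that pregluing at $\pm$ punctures and at nodes does not change the Euler characteristic in our setting, which holds because gluing at a puncture-pair identifies two punctured disks along a cylinder — and checking this is unaffected by type II punctures, since those only occur in nodal boundary degenerations, which Proposition \ref{Proposition, summary of degeneration of moduli spaces} already separates off from the two-story case.
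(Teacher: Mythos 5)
Your argument is the same squeeze the paper uses --- the ``embedded iff equality'' half of Proposition \ref{Proposition, embedded Euler characteristic} combined with the additivity of Proposition \ref{Proposition, embedded index formula is additive} --- but the paper runs it entirely at the level of the index, where it takes three lines: transversality gives $\text{ind}(u_i)\geq 1$, Proposition \ref{Proposition, embedded Euler characteristic} gives $\text{ind}(u_i)\leq \text{ind}(B_i,\overrightarrow{\rho_i})$ with equality precisely when $u_i$ is embedded, and $\text{ind}(B_1,\overrightarrow{\rho_1})+\text{ind}(B_2,\overrightarrow{\rho_2})=\text{ind}(B,\overrightarrow{\rho})=2$ forces equality in both slots. (The index argument you relegate to a parenthetical ``consistency check'' at the end is, in fact, the paper's entire proof.)

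Your dual formulation in terms of Euler characteristics contains two compensating bookkeeping errors, so while the conclusion survives, two of your displayed identities are false as stated. Pregluing the two stories identifies the $g$ pairs of $\pm$ punctures asymptotic to the intermediate generator, and each identification (a gluing along an arc) lowers the Euler characteristic by one, so $\chi(S)=\chi(S_1)+\chi(S_2)-g$, not $\chi(S_1)+\chi(S_2)$. Correspondingly, $\chi_{emb}$ is not additive: the constant $g$ occurs once in each of $\chi_{emb}(B_1,\overrightarrow{\rho_1})$ and $\chi_{emb}(B_2,\overrightarrow{\rho_2})$ but only once in $\chi_{emb}(B,\overrightarrow{\rho})$, and the cancellation of cross terms between Equations (\ref{Equation, first equation in the proof of additivity of index}) and (\ref{Equation, second equation in the proof of additivity of index}) that you correctly identify actually yields $\chi_{emb}(B,\overrightarrow{\rho})=\chi_{emb}(B_1,\overrightarrow{\rho_1})+\chi_{emb}(B_2,\overrightarrow{\rho_2})-g$. (You can cross-check against $\text{ind}=g-\chi_{emb}+2e+|\overrightarrow{\rho}|$: additivity of the index together with additivity of $e$ and $|\cdot|$ forces the $-g$.) Since the same offset of $g$ appears on both sides of your final comparison, the squeeze still closes and the conclusion is correct; but this is exactly the constant that the index formulation absorbs automatically, which is why the paper's phrasing is both shorter and safer.
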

\begin{proof}
Given a two-story building $(u_1,u_2)$, by transversality we know $\text{ind}(u_i)=1$, $i=1,2$. Let $(B_i,\overrightarrow{\rho_i})$ be the corresponding pair of domain and Reeb chords of $u_i$. In view of Proposition \ref{Proposition, embedded Euler characteristic}, $\text{ind}(u_i)\leq \text{ind}(B_i,\overrightarrow{\rho_i})$, and the equality is achieved if and only if $u_i$ is embedded. Now by Proposition \ref{Proposition, embedded index formula is additive}, if some $u_i$ is not embedded, we will have $\text{ind}(B,\overrightarrow{\rho})=\text{ind}(B_1,\overrightarrow{\rho_1})+\text{ind}(B_2,\overrightarrow{\rho_2})>2,$ which contradicts our assumption.
\end{proof}

\subsection{Ends of moduli spaces of 0-P curves}\label{Section, ends of moduli spaces of 0-p curves}
We analyze boundary degenerations of moduli spaces of 0-P curves in this subsection, which was left untouched in Section \ref{Section, degeneration of moduli space}. We will separate the discussion into two cases. The first case is when $n_z(B)=0$, needed for defining the hat-version type D structure. The second case is when $n_z(B)=1$, needed for defining extended type D structures. Readers who wish to skip the details are referred to Proposition \ref{Proposition, ends of 0-p curves, n_z=0}, Proposition \ref{Propostion, bondary degeneration do not occur when one of the Reeb chord is not covered}, and Proposition \ref{Proposition, ends of 1-dimensional moduli spaces of 0-P curves, Reeb element of length 4} for the statement of the results. 

\subsubsection{Ends of $\mathcal{M}^B$ when $n_z(B)=0$}
\begin{prop}\label{Proposition, boundary degeneration do not occur when n_z=0}
If $n_z(B)=0$, then $\overline{\mathcal{M}}^B(\bm{x},\bm{y};\overrightarrow{\rho})$ does not have boundary degeneration. 
\end{prop}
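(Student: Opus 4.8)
The plan is to rule out boundary degenerations by a positivity/area argument keyed on the multiplicity at $z$. Suppose $\overline{\mathcal{M}}^B(\bm{x},\bm{y};\overrightarrow{\rho})$ contains a boundary degeneration $u$. By Definition \ref{Definition, boundary degeneration}, $u$ has an irreducible component $S_0$ carrying no $\pm$-punctures with $\pi_\Sigma\circ u|_{S_0}$ non-constant; let $B_0$ be the (positive) domain of $u|_{S_0}$. I would first argue that the boundary of $S_0$ maps entirely into the $\bm{\bar\alpha}$-side: since $S_0$ has no $\pm$-punctures, the $t$-coordinate is bounded on $S_0$, so $\pi_{\mathbb{D}}\circ u|_{S_0}$ is a holomorphic map to $[0,1]\times\mathbb{R}$ with image in a bounded region and with boundary on $\{0\}\times\mathbb{R}\cup\{1\}\times\mathbb{R}$; the open mapping theorem forces this image to lie in one of the two boundary lines, and the standard argument (as in \cite{LOT18}, together with the fact that $\bm{\beta}$ consists of embedded homologically independent circles bounding no positive periodic domain) excludes the $\beta$-side, so $\partial B_0$ is supported on $\partial\bar\Sigma\cup\bm{\bar\alpha}$.

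Next I would identify $B_0$ as an $\alpha$-bounded domain in the sense preceding Definition \ref{Definition, unobstructedness}: its boundary consists of (possibly trivial) arcs/loops in $\partial\bar\Sigma\cup\bm{\bar\alpha}^a\cup\alpha_1^c\cup\cdots\cup\alpha_{g-2}^c$ together with a loop in $\alpha_{im}$ which, by the stay-on-track condition (M-11) together with Definition \ref{Definition, type I type II node}, is either zero-cornered or, if a type II node is present, one-cornered. (If more corners appeared, one would need to split off more boundary components, but an index count as in Proposition \ref{Proposition, summary of degeneration of moduli spaces} — only a single boundary degeneration splits off in a one-dimensional moduli space — bounds the number of corners by one.) Thus $B_0$ is a positive zero- or one-cornered $\alpha$-bounded domain. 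Now the key point: since $B_0$ is a summand of the positive domain $B$ and $n_z(B)=0$, we have $n_z(B_0)=0$ as well. But the unobstructedness hypothesis (Definition \ref{Definition, unobstructedness}, items (1)) says there are no positive zero- or one-cornered $\alpha$-bounded domains $B'$ with $n_z(B')=0$ at all. This contradicts the existence of $B_0$, hence of $u$.

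The main obstacle, and the step requiring the most care, is the second one: correctly matching the combinatorial possibilities for $\partial B_0$ at $\alpha_{im}$ (zero-cornered versus one-cornered, and ruling out higher-cornered pieces) against the list of $\alpha$-bounded domains appearing in Definition \ref{Definition, unobstructedness}. This needs the dimension count — in a one-dimensional embedded moduli space only a single extra boundary degeneration component can bubble off, and by Proposition \ref{Proposition, levels of degeneration are embedded}-type reasoning its index contribution is pinned down — to guarantee that $S_0$ has at most one type II puncture and hence $B_0$ has at most one corner on $\alpha_{im}$. One also must check that the remaining (embedded, $\pm$-punctured) part of $u$ still lies in a genuine embedded moduli space so that $B_0$ really is a positive domain with $n_z(B_0)=0$; this follows from positivity of domains of holomorphic curves and the additivity of $n_z$. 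Once these bookkeeping points are in place, the contradiction with unobstructedness (1) is immediate, and the same template (now invoking items (2) and (3) of Definition \ref{Definition, unobstructedness}) will handle the $n_z(B)=1$ case treated in the subsequent propositions.
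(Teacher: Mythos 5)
Your proposal is correct and follows essentially the same route as the paper: identify the domain of the puncture-free component as a positive zero- or one-cornered $\alpha$-bounded domain with $n_z=0$ and contradict condition (1) of Definition \ref{Definition, unobstructedness}. The paper pins down the corner count not by an index argument but by observing that each type II node arises from pinching a properly embedded arc with endpoints on the $\alpha_{im}$-boundary component of the source, so the degenerate part is a union of zero- or one-cornered pieces; otherwise the two arguments coincide.
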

\begin{proof}
Suppose boundary degeneration appears. Let $v$ denote the (union of the) component(s) of the nodal holomorphic curve with no $\pm$-punctures. The union of components that have $\pm$-punctures will be called the \textit{main component}. We say that the boundary degeneration has corners if at least one of the nodes connecting $v$ and the main component is of type II. Otherwise, a boundary degeneration is said to have no corners. (See Definition \ref{Definition, type I type II node} for type II nodes.)  

Let $B_v$ denote the domain corresponding to $v$. If the boundary degeneration has no corners, then $B_v$ is a positive zero-cornered $\alpha$-bounded domain. Such $B_v$ does not exist as $\mathcal{H}$ is unobstructed. This observation left us with the possibility of boundary degeneration with corners. If such degeneration appears, then each type II node connecting the main component and the degenerated components is obtained from pinching a properly embedded arc on the original source surface $S$, whose endpoints are on the component of $\partial S$ that is mapped to $\alpha_{im}\times[0,1]\times\mathbb{R}$. Therefore, $B_v$ is a (union of) positive one-cornered $\alpha$-bounded domain with $n_z=0$. Such domains do not exist either since $\mathcal{H}$ is unobstructed. Therefore, there is no boundary degeneration.
\end{proof}
\begin{prop}\label{Proposition, ends of 0-p curves, n_z=0}
For a generic almost complex structure $J$. Let $\mathcal{M}^B(\bm{x},\bm{y};\overrightarrow{\rho})$ be a one-dimensional moduli space with $n_z(B)=0$ and $a(-\overrightarrow{\rho})\neq 0$. Then $\overline{\mathcal{M}}^B(\bm{x},\bm{y};\overrightarrow{\rho})$ is a compact 1-manifold with boundary such that all the boundary points correspond to two-story embedded holomorphic buildings. 
\end{prop}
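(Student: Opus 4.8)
The plan is to assemble the statement from the results already established in Sections \ref{Section, degeneration of moduli space} and \ref{Section, ends of moduli spaces of 0-p curves}, reducing everything to a bookkeeping argument once boundary degenerations and split-curve ends have been ruled out. First I would invoke Proposition \ref{Proposition, summary of degeneration of moduli spaces}: since $\text{ind}(B,\overrightarrow{\rho})=2$ by hypothesis (as $\mathcal{M}^B(\bm{x},\bm{y};\overrightarrow{\rho})$ is one-dimensional and has expected dimension $\text{ind}(B,\overrightarrow{\rho})-1$), every holomorphic comb in $\partial\overline{\mathcal{M}}^B(\bm{x},\bm{y};S^\triangleright;\overrightarrow{P})$ is one of: a two-story building $(u_1;u_2)$, a simple comb $(u;v)$ with $v$ a split curve with a single split component, or a boundary degeneration. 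I would then eliminate the last option using Proposition \ref{Proposition, boundary degeneration do not occur when n_z=0}, which applies precisely because $n_z(B)=0$. This is the step where unobstructedness of $\mathcal{H}$ is used: it is what forbids both the no-corner and the one-cornered boundary degenerations.

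Next I would rule out the split-curve end. Here is where the hypothesis $a(-\overrightarrow{\rho})\neq 0$ enters. The idea is the standard bordered-Floer argument: a simple comb $(u;v)$ with $v$ a split curve at a single split component corresponds, under gluing, to an interior point of a moduli space $\mathcal{M}^{B}(\bm{x},\bm{y};\overrightarrow{\rho}')$ whose associated algebra element $a(-\overrightarrow{\rho}')$ differs from $a(-\overrightarrow{\rho})$ by resolving one splitting; but in the torus algebra, when the glued Reeb chord sequence $\overrightarrow{\rho}$ has $a(-\overrightarrow{\rho})\neq 0$, the only way a split-curve end can occur compatibly with strong boundary monotonicity forces the split pieces to realize a product that is already nonzero, and a dimension/compatibility count shows the relevant moduli space with the split sequence is empty — equivalently, one checks via Proposition \ref{Proposition, no join curve end} and the discreteness of $\overrightarrow{P}$ that no compatible domain exists. (More carefully: a split-curve degeneration at a single split component would, by the structure of $(-\partial\bar\Sigma, \partial\bar{\bm\alpha}^a)$, split some $-\rho_I$ in $\overrightarrow{\rho}$ into $-\rho_{I'}\uplus -\rho_{I''}$, and by (M-9) together with $a(-\overrightarrow{\rho})\neq 0$ the resulting sequence has $a(-\overrightarrow{\rho}')=0$, hence the moduli space on the other side is empty and the limit cannot lie in $\partial\overline{\mathcal{M}}$.) I would spell this out by tracking what the split-curve end contributes to the algebra element and showing it contradicts $a(-\overrightarrow{\rho})\neq 0$.

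With boundary degenerations and split curves both excluded, the only remaining boundary strata are two-story holomorphic buildings $(u_1;u_2)$. By Proposition \ref{Proposition, levels of degeneration are embedded}, since $\text{ind}(B,\overrightarrow{\rho})=2$, both $u_1$ and $u_2$ are embedded, which gives the "two-story embedded holomorphic buildings" conclusion. Finally, to see that $\overline{\mathcal{M}}^B(\bm{x},\bm{y};\overrightarrow{\rho})$ is a compact $1$-manifold with boundary: compactness is Proposition \ref{Proposition, compactness}; that it is a $1$-manifold in its interior follows from transversality (Proposition \ref{Proposition, regularity of the moduli spaces}) plus the index computation; and that the boundary strata just identified are genuine manifold boundary points (rather than higher-codimension phenomena) follows from the gluing result (the two-story gluing Proposition, Proposition 5.30 of \cite{LOT18} as quoted), which produces a half-open collar $U_1\times U_2\times[0,1)$ around each such building. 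One should also note, via Proposition \ref{Proposition, no boundary double point or haunted curve} and Proposition \ref{Proposition, no collision of levels}, that no boundary double points, haunted curves, or level collisions intrude, so the enumeration of boundary points is complete.

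The main obstacle I anticipate is the second paragraph — cleanly ruling out the single-split-component end using the hypothesis $a(-\overrightarrow{\rho})\neq 0$. The subtlety is that one must argue at the level of the torus algebra and the combinatorics of Reeb chords of length at most $4$ (using (M-9)), rather than appealing to a generic vanishing statement; getting the compatibility bookkeeping exactly right, and making sure the case $|\sigma|=4$ (where $\iota$ behaves specially) does not sneak in a split-curve end, is where the real care is needed. Everything else is an application of results already in hand.
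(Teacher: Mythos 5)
Your proposal follows the paper's proof essentially verbatim: Proposition \ref{Proposition, summary of degeneration of moduli spaces} to enumerate the possible ends, Proposition \ref{Proposition, boundary degeneration do not occur when n_z=0} to kill boundary degenerations via $n_z(B)=0$ and unobstructedness, an examination of the torus algebra to show that a split-curve end forces $a(-\overrightarrow{\rho})=0$, and gluing plus Proposition \ref{Proposition, levels of degeneration are embedded} to conclude. The only correction is in your parenthetical: the split component merges two adjacent concatenable chords of $\overrightarrow{\rho}$ into a single east puncture of the main component $u$ (not the reverse), so the argument is simply that the existence of such an adjacent concatenable pair already forces the product $a(-\sigma_i)a(-\sigma_{i+1})$, and hence $a(-\overrightarrow{\rho})$, to vanish in $\mathcal{A}$ --- no appeal to emptiness of another moduli space is needed.
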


\begin{proof}
It follows from Proposition \ref{Proposition, summary of degeneration of moduli spaces} and Proposition \ref{Proposition, boundary degeneration do not occur when n_z=0} that the only elements that can appear in $\partial\overline{\mathcal{M}}_0^B(\bm{x},\bm{y};\overrightarrow{P})$ are split curves or two-story buildings. However, if split curves appear in the torus-boundary case and $n_z(B)=0$, a simple examination of the torus algebra implies we will always have $a(-\overrightarrow{\rho})=0$. Therefore, the only degenerations that can appear are two-story buildings. The statement that $\overline{\mathcal{M}}_0^B(\bm{x},\bm{y};\overrightarrow{P})$ is a 1-manifold with boundary then follows from the gluing results.
\end{proof}

\subsubsection{Ends of $\mathcal{M}^B$ when $n_z(B)=1$}
We separate the discussion into two sub-cases. First, we consider the sub-case where $n_{-\rho_i}(B)=0$ for at least one of $\rho_i\in \{\rho_1,\rho_2,\rho_3\}$, in which we have the following proposition.
\begin{prop}\label{Propostion, bondary degeneration do not occur when one of the Reeb chord is not covered}
If $n_z(B)=1$ and $n_{-\rho_i}(B)=0$ for at least one $\rho_i\in \{\rho_1,\rho_2,\rho_3\}$, then $\overline{\mathcal{M}}^B(\bm{x},\bm{y};\overrightarrow{\rho})$ does not have boundary degeneration. In particular, if $a(-\overrightarrow{\rho})\neq 0$, then $\overline{\mathcal{M}}^B(\bm{x},\bm{y};\overrightarrow{\rho})$ is a compact 1-manifold with boundary such that all the boundary points correspond to two-story embedded holomorphic buildings. 
\end{prop}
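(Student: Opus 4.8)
The plan is to follow the proof of Proposition~\ref{Proposition, boundary degeneration do not occur when n_z=0} almost verbatim, the only genuinely new point being that $n_z(B)=1$ rather than $0$. Suppose toward a contradiction that $\overline{\mathcal{M}}^B(\bm{x},\bm{y};\overrightarrow{\rho})$ contains a boundary degeneration. By Proposition~\ref{Proposition, summary of degeneration of moduli spaces} this is a nodal holomorphic comb; I would write it as a bubble $v$ (the union of the components that carry no $\pm$-punctures and on which $\pi_\Sigma\circ u$ is non-constant) glued along nodes to a main component, with domains $B_v$ and $B-B_v$, both positive. Exactly as in Proposition~\ref{Proposition, boundary degeneration do not occur when n_z=0}: provincial admissibility rules out a bubble with $\beta$-bounded domain; no component of the comb is constant, by Proposition~\ref{Proposition, no boundary double point or haunted curve}, so each component of $v$ contributes a nonzero domain; and each type~II node arises from pinching a single properly embedded arc of $S$ with endpoints on the $\alpha_{im}$-boundary, so each connected component of $B_v$ is a nonzero positive zero- or one-cornered $\alpha$-bounded domain.

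The next step is to split on $n_z(B_v)\in\{0,1\}$, which is forced by $0\le B_v\le B$ and $n_z(B)=1$. If $n_z(B_v)=0$, then every component of $B_v$ is a nonzero positive zero- or one-cornered $\alpha$-bounded domain with $n_z=0$, which cannot exist because $\mathcal{H}$ is unobstructed (Definition~\ref{Definition, unobstructedness}(1)) --- this is precisely the argument of Proposition~\ref{Proposition, boundary degeneration do not occur when n_z=0}. If $n_z(B_v)=1$, then exactly one component of $B_v$ has $n_z=1$, and by (1) again there are no other components, so $B_v$ is connected; being a positive zero- or one-cornered $\alpha$-bounded domain with $n_z=1$, it must be $[\Sigma]$ or a stabilized teardrop by Definition~\ref{Definition, unobstructedness}(2)--(3). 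In either case $\partial\bar\Sigma$ (with its induced orientation) occurs in $\partial B_v$, so $n_{-\rho_j}(B_v)=1$ for every $j\in\{0,1,2,3\}$, and hence $n_{-\rho_j}(B)\ge n_{-\rho_j}(B_v)=1$ for all $j$. This contradicts the hypothesis that $n_{-\rho_i}(B)=0$ for some $\rho_i\in\{\rho_1,\rho_2,\rho_3\}$, so no boundary degeneration occurs.

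For the ``in particular'' clause, assume $a(-\overrightarrow{\rho})\neq 0$. By Proposition~\ref{Proposition, summary of degeneration of moduli spaces} the only elements of $\partial\overline{\mathcal{M}}^B(\bm{x},\bm{y};\overrightarrow{\rho})$ are two-story buildings, simple combs with a split curve having a single split component, and boundary degenerations. I have just excluded the last; and the torus-algebra computation in the proof of Proposition~\ref{Proposition, ends of 0-p curves, n_z=0} applies unchanged to show that a split-curve end would force $a(-\overrightarrow{\rho})=0$. Hence every boundary point is a two-story building, embedded by Proposition~\ref{Proposition, levels of degeneration are embedded}, and the gluing results identify $\overline{\mathcal{M}}^B(\bm{x},\bm{y};\overrightarrow{\rho})$ with a compact $1$-manifold with boundary.

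I expect the main obstacle to be justifying that each component of $B_v$ is zero- or one-cornered, so that the unobstructedness conditions (which are stated only for domains with at most two corners) actually apply: this rests on the ``each type~II node comes from pinching a single embedded arc'' claim imported from Proposition~\ref{Proposition, boundary degeneration do not occur when n_z=0}, which must be checked to still hold when $n_z(B)=1$, together with the verification that $[\Sigma]$ and every stabilized teardrop have local multiplicity one in each of the four regions adjacent to $-\rho_0,\ldots,-\rho_3$. The remaining ingredients --- excluding $\beta$-bounded bubbles via provincial admissibility, excluding split curves via the torus algebra, and embeddedness of the surviving strata via the additive index formula --- carry over essentially verbatim from the cited propositions.
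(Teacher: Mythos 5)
Your proof is correct and follows essentially the same route as the paper's: reduce to excluding positive zero- or one-cornered $\alpha$-bounded domains $B_v$, split on $n_z(B_v)\in\{0,1\}$, use unobstructedness conditions (1)--(3), and observe that $[\Sigma]$ and stabilized teardrops have multiplicity one at all four boundary regions, contradicting $n_{-\rho_i}(B)=0$; the ``in particular'' clause is handled exactly as in Proposition \ref{Proposition, ends of 0-p curves, n_z=0}. The verification you flag at the end (multiplicity one at each region adjacent to $-\rho_0,\ldots,-\rho_3$) is immediate from the definition of a stabilized teardrop, whose boundary contains $\partial\bar\Sigma$ exactly once with the induced orientation.
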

\begin{proof}
The proof for the first part of this is similar to that of Proposition \ref{Proposition, boundary degeneration do not occur when n_z=0}, which reduces to excluding the existence of positive zero- and one-cornered $\alpha$-bounded domains $B_v$. The existence of such a $B_v$ with $n_z(B)=0$ is excluded by unobstructedness; see Condition (1) of Definition \ref{Definition, unobstructedness}. The existence of such $B_v$ with $n_z(B)=1$ is excluded by Condition (2) and (3) of Definition \ref{Definition, unobstructedness}: If it exists, then the multiplicity of $B_v$ at all the Reeb chords are equal to one, which contradicts our assumption that $n_{-\rho_i}(B)=0$ for some $\rho_i$. The proof for the second part is identical to that of Proposition \ref{Proposition, ends of 0-p curves, n_z=0}.
\end{proof}

Next, we consider the case where the multiplicity of $B$ at all of the four regions next to the east puncture of $\Sigma$ is equal to one. Let $q$ be a self-intersection point of $\alpha_{im}$. We use $T(q)$ to denote the set of stabilized teardrops with an acute corner at $q$. We will use $\mathcal{M}^{B}(\bm{x},\bm{y};q)$ to denote the moduli space of embedded holomorphic curves whose $\alpha$-boundary projection to $\Sigma$ is allowed to take a single sharp turn at $q$; see also Definition \ref{Definition, moduli spaces with sharp turns} below.  

\begin{prop}\label{Proposition, ends of 1-dimensional moduli spaces of 0-P curves, Reeb element of length 4}
Given a compatible pair $(B,\overrightarrow{\bold{\rho}})$ with $B\in \pi_2(\bm{x},\bm{y})$, $\overrightarrow{\rho}\in\{(-\rho_0,-\rho_1,-\rho_2,-\rho_3),(-\rho_0,-\rho_{123}),(-\rho_{012},-\rho_3)\}$, and $\text{Ind}(B,\overrightarrow{\bold{\rho}})=2$, the compactified moduli space $\mathcal{M}^B(\bm{x},\bm{y};\overrightarrow{\bold{\rho}})$ is a compact 1-manifold with boundary. The ends are of the following four types:
\begin{itemize}
\item[(E-1)]Two-story ends;
\item[(E-2)]split curve ends;
\item[(E-3)]ends corresponding to boundary degeneration with corners;
\item[(E-4)]ends corresponding to boundary degeneration without corners.
\end{itemize}
Moreover, 
\begin{itemize}
\item[(a)]If split curve ends occur, then $\overrightarrow{\bold{\rho}}$ is either $(-\rho_0,-\rho_{123})$ or $(-\rho_{012},-\rho_3)$, and the number of such ends is $\#\mathcal{M}^B(\bm{x},\bm{y};-\rho_{1230})$ if $\overrightarrow{\bold{\rho}}=(-\rho_0,-\rho_{123})$; otherwise the number of ends is equal to $\#\mathcal{M}^B(\bm{x},\bm{y};-\rho_{3012})$;
\item[(b)]If ends corresponding to boundary degeneration with corners occur and $\overrightarrow{\rho}=(-\rho_0,-\rho_1,-\rho_2,-\rho_3)$, then the number of such ends is mod 2 congruent to $$\sum_{\{(B_1,\ q)|\exists B_2\in T(q),\ B_1+B_2=B\}}\#\mathcal{M}^{B_1}(\bm{x},\bm{y};q).$$
For the other two choices of $\overrightarrow{\rho}$, the numbers of ends corresponding to boundary degeneration with corners are both even;
\item[(c)]If (E-4) occurs, then $[B]=[\Sigma]$, $\bm{x}=\bm{y}$, and the number of such ends is odd.
\end{itemize}
\end{prop}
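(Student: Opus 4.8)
The plan is to run the standard compactness--transversality--gluing argument, using the unobstructedness of Definition~\ref{Definition, unobstructedness} to control the new immersed degenerations, and then to count the resulting ends mod $2$. First I would apply Proposition~\ref{Proposition, summary of degeneration of moduli spaces}: every comb in $\partial\overline{\mathcal{M}}^B(\bm{x},\bm{y};\overrightarrow{\rho})$ is a two-story building, a simple comb $(u;v)$ with $v$ a single-split-component split curve, or a nodal boundary degeneration; by Proposition~\ref{Proposition, levels of degeneration are embedded} the levels of a two-story building are embedded. A nodal boundary degeneration joins its degenerate component(s) to the main component along type~I and type~II nodes (Definition~\ref{Definition, type I type II node}); a codimension count shows that in a one-dimensional moduli space at most one type~II node appears, which separates case~(E-4) (no type~II node) from case~(E-3) (exactly one). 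Combining this with the transversality of Proposition~\ref{Proposition, regularity of the moduli spaces}, the compactness of Proposition~\ref{Proposition, compactness}, and the gluing results recalled above --- two-story gluing, split-curve gluing (Proposition~\ref{Proposition, gluing holomorphic curves and east ends}), the standard $\alpha$-boundary-degeneration gluing for~(E-4), and a teardrop gluing along a type~II node for~(E-3) (which I would establish by adapting the usual bubble-gluing argument to the immersed-Lagrangian corner) --- gives that $\overline{\mathcal{M}}^B(\bm{x},\bm{y};\overrightarrow{\rho})$ is a compact one-manifold with boundary, with boundary points of types (E-1)--(E-4). It then remains to decide which types occur for each of the three sequences $\overrightarrow{\rho}$ and to count them.

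For~(a): a split-curve end $(u;v)$ with one split component has a single east chord $\sigma$ on $u$, and $\overrightarrow{\rho}$ arises from the ordered partition of $u$ by splitting $\sigma=\sigma_1\uplus\sigma_2$ into two consecutive parts, so $(\sigma_1,\sigma_2)$ is a pair of consecutive entries of $\overrightarrow{\rho}$. For $\overrightarrow{\rho}=(-\rho_0,-\rho_1,-\rho_2,-\rho_3)$ no consecutive pair $-\rho_i\uplus(-\rho_{i+1})$ is composable in the torus algebra, so no split-curve end occurs. For $\overrightarrow{\rho}=(-\rho_0,-\rho_{123})$ the only option is $\sigma=(-\rho_0)\uplus(-\rho_{123})=-\rho_{1230}$; an index count shows $\mathcal{M}^B(\bm{x},\bm{y};-\rho_{1230})$ is zero-dimensional, and Proposition~\ref{Proposition, gluing holomorphic curves and east ends} matches these ends bijectively with its elements. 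The case $\overrightarrow{\rho}=(-\rho_{012},-\rho_3)$ is identical with $-\rho_{3012}$ in place of $-\rho_{1230}$.

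For~(b) and~(c), write a nodal boundary degeneration as $(u_0;v)$ with $v$ the union of components carrying no $\pm$-puncture and $B_v$ its domain; then $B_v$ is a positive $\alpha$-bounded domain with $n_z(B_v)\le n_z(B)=1$, zero-cornered in~(E-4) and one-cornered with corner at a self-intersection point $q$ of $\alpha_{im}$ in~(E-3). In case~(E-4), conditions~(1) and~(2) of Definition~\ref{Definition, unobstructedness} force $n_z(B_v)=1$ and $B_v=[\Sigma]$; since $[\Sigma]$ already carries each of the four Reeb regions with multiplicity one, the main component carries no Reeb chords and has domain $B-[\Sigma]$, and the embedded index formula (Proposition~\ref{Proposition, embedded Euler characteristic}) together with positivity forces it to be the constant curve, so $\bm{x}=\bm{y}$ and $B=[\Sigma]$, which is~(c); the number of such ends is the number of holomorphic representatives of $[\Sigma]$, which I would show is odd exactly as in the closed and embedded-bordered cases, reducing it to the $[\Sigma]$-boundary-degeneration count on the genus-$g$ surface together with an odd count on the $\alpha_{im}$-piece (using that $\alpha_{im}^0$ is embedded and the remaining components of $\alpha_{im}$ are null-homologous). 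In case~(E-3), conditions~(1) and~(3) of Definition~\ref{Definition, unobstructedness} force $n_z(B_v)=1$ and $B_v$ a stabilized teardrop with acute corner at $q$, hence $B_v\in T(q)$, and $u_0\in\mathcal{M}^{B_1}(\bm{x},\bm{y};q)$ with $B_1=B-B_v$ (Definition~\ref{Definition, moduli spaces with sharp turns}). When $\overrightarrow{\rho}=(-\rho_0,-\rho_1,-\rho_2,-\rho_3)$, the stabilized teardrop has an odd number of holomorphic representatives (odd on the genus-$(g-1)$ piece, where it is a $[\Sigma]$-degeneration, and odd on the genus-one teardrop), so each element of $\mathcal{M}^{B_1}(\bm{x},\bm{y};q)$ contributes an odd number of ends; summing over the pairs $(B_1,q)$ with $B-B_1\in T(q)$ yields the asserted mod-$2$ congruence. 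When $\overrightarrow{\rho}$ is $(-\rho_0,-\rho_{123})$ or $(-\rho_{012},-\rho_3)$, the east boundary of a stabilized teardrop runs once around all of $\partial\bar{\Sigma}$ and is broken at each of the four points $\partial\bm{\bar{\alpha}}^a$, hence presents the refined sequence $(-\rho_0,-\rho_1,-\rho_2,-\rho_3)$; passing from this to $(-\rho_0,-\rho_{123})$ or $(-\rho_{012},-\rho_3)$ would require a multi-component split curve, which does not appear in a one-dimensional moduli space in the torus-boundary case, so no~(E-3) end occurs and the count is $0$, hence even.

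The hard part will be case~(E-3): setting up the teardrop gluing along a type~II node, which has no counterpart in the embedded theory of \cite{LOT18}, confirming that a stabilized teardrop has an odd count of holomorphic representatives, and pinning down the mod-$2$ bookkeeping that separates $\overrightarrow{\rho}=(-\rho_0,-\rho_1,-\rho_2,-\rho_3)$ from the other two sequences. The odd-count assertion in~(c) in the presence of the immersed curve is the secondary technical point, although it runs closely parallel to the embedded case.
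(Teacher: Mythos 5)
Your overall architecture matches the paper's: reduce to the degeneration list of Proposition \ref{Proposition, summary of degeneration of moduli spaces}, show that corner degenerations are simple with a single stabilized-teardrop bubble, then glue and count mod $2$; parts (a) and (c) are handled essentially as in the paper. But there is a genuine gap in part (b), in your claim that no (E-3) ends occur when $\overrightarrow{\rho}=(-\rho_0,-\rho_{123})$ or $(-\rho_{012},-\rho_3)$. You argue that the east boundary of a stabilized teardrop ``presents the refined sequence $(-\rho_0,-\rho_1,-\rho_2,-\rho_3)$,'' but that is a statement about the domain, not about the asymptotics of the map: a degenerate disk may perfectly well be asymptotic to the long chord $-\rho_{123}$ at a single boundary puncture (its boundary passing over the endpoints of the $\alpha$-arcs without puncturing there), and no split curve at east infinity is involved in such an end. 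The paper does not claim these degenerate disks are absent; it defines the moduli spaces $\mathcal{N}_J^{B_2}(q;\overrightarrow{\rho})$ for all three sequences, shows each is a $(g-1)$-manifold, and proves that for the two non-cyclic sequences a generic fiber of the evaluation map $ev_J:\mathcal{N}_J^{B_2}(q;\overrightarrow{\rho})\to\alpha_1^a\times\alpha_1^c\times\cdots\times\alpha_{g-2}^c$ has \emph{even} cardinality (Proposition \ref{Proposition, generic fiber of the evaluationo map is even when rho is not cyclic}). That evenness comes from a neck-stretching/cobordism argument: only after pinching off the genus-one piece carrying $\alpha_{im}$ does one obtain a non-existence statement, namely that the $E_1$-projection of a limit would be a disk in the punctured torus with east asymptotics $(-\rho_0,-\rho_{123})$ or $(-\rho_{012},-\rho_3)$, which is checked directly not to exist. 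Your topological shortcut does not substitute for this.

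Two further points you should not gloss over. First, the number of (E-3) ends is a fiber product $\#\bigl(\mathcal{M}^{B_1}(\bm{x},\bm{y};q)\times_{ev}\mathcal{N}^{B_2}(q;\overrightarrow{\rho})\bigr)$, so the relevant parity is that of a generic fiber of the evaluation map, not merely ``the number of holomorphic representatives of the teardrop''; this fiber structure is exactly why the odd/even dichotomy falls where it does. Second, gluing at a type II node requires the degenerate-disk moduli spaces to be transversally cut out, and this cannot be achieved in the cylindrical formulation (the boundary-injectivity input fails for degenerate curves); the paper reformulates everything as holomorphic disks in $Sym^g(\Sigma)$ with immersed Lagrangian boundary precisely for this reason, and your proposal is silent on it.
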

\begin{rmk}
Proposition \ref{Proposition, ends of 1-dimensional moduli spaces of 0-P curves, Reeb element of length 4} considers $\overrightarrow{\bold{\rho}}$ with $a(-\overrightarrow{\bold{\rho}})=\rho_{0123}$. Corresponding propositions hold for $a(-\overrightarrow{\bold{\rho}})\in \{\rho_{1230},\rho_{2301},\rho_{3012}\}$ by cyclically permuting the subscripts in the above statement. 
\end{rmk}
The remainder of the subsection is devoted to proving Proposition \ref{Proposition, ends of 1-dimensional moduli spaces of 0-P curves, Reeb element of length 4}.

\subsubsection{Reformulation of the moduli spaces}\label{Subsection, moduli space of 0-P curves in Symmetric products}
To apply gluing results needed for studying the ends corresponding to boundary degeneration, we need to know the moduli spaces of the degenerate components are transversely cut out. However, this transversality result is not clear as a key lemma needed for the proof, Lemma 3.3 of \cite{MR2240908}, is not available for degenerate curves. In \cite{LOT18}, this difficulty is overcome by using the formulation of moduli spaces in terms of holomorphic disks in the symmetric product $Sym^g(\Sigma)$. Such moduli spaces are identified with the moduli spaces of holomorphic curves in $\Sigma\times[0,1]\times \mathbb{R}$ through a tautological correspondence (provided one uses appropriate almost complex structures). One can prove the desired transversality results of degenerate discs in the symmetric product in a standard way. We shall employ the same strategy here. The difference is that the Lagrangians holding the boundary of holomorphic disks are no longer embedded. To cater for this change, the definition of moduli spaces we give below corresponds to the one used in Floer theory for immersed Lagrangians with clean self-intersections \cite{Alston2018, Fukaya2017}.

Specifically, equip $\Sigma$ with a Kahler structure $(j,\eta)$, where $j$ denotes a complex structure and $\eta$ is a compatible symplectic form. Let $\Sigma_{\bar{e}}$ denote the closed Riemann surface obtained from $\Sigma$ by filling in the east puncture $e$. Then $Sym^g(\Sigma)$ can be viewed as the complement of $\{e\}\times Sym^{g-1}(\Sigma_{\bar{e}})$ in $Sym^g(\Sigma_{\bar{e}})$. It is a symplectic manifold with a cylindrical end modeled on the unit normal bundle of $\{e\}\times Sym^{g-1}(\Sigma_{\bar{e}})$. In particular, there is a Reeb-like vector field $\overrightarrow{R}$ tangent to the $S^1$-fibers of the unit normal bundle.

The products $\mathbb{T}_\beta=\beta_1\times\cdots\times\beta_g$ and $\mathbb{T}_{\alpha,i}=\alpha^a_i\times \alpha^c_1\times\cdots\times\alpha^c_{g-1}$, $i=1,2$, are Lagrangian submanifolds of $Sym^g(\Sigma)$. Note $\mathbb{T}_{\alpha,i}$ is immersed with self-intersections $\alpha^a_i\times\alpha_1^c\times\ldots\times\alpha^c_{g-2}\times q$, where $q$ is some self-intersection point of $\alpha_{im}=\alpha_{g-1}^c$. We identify $\mathbb{T}_{\alpha,i}$ with the image of a map $\iota_i:(0,1)\times \mathbb{T}^{g-2} \times (\amalg S^1)\rightarrow Sym^g(\Sigma)$. 

The immersed Lagrangian $\mathbb{T}_{\alpha,i}$ ($i=1,2$) intersects the ideal boundary of $Sym^2(\Sigma)$ at $\partial\overline{\alpha^a_i} \times\alpha_1^c\times\cdots\times \alpha_{g-1}^c$. Each Reeb chord $\rho$, which connects two (possibly the same) alpha arcs, now corresponds to a one-dimensional family of $\overrightarrow{R}$-chords $\rho \times \bm{x}$ that connects two (possibly the same) $\mathbb{T}_{\alpha,i}$, parametrized by $\bm{x}\in \alpha_1^c\times\cdots\times \alpha_{g-1}^c$.  

To define pseudo-holomorphic maps, we shall work with an appropriate class of almost complex structures called \emph{nearly-symmetric almost complex structures} that restrict to $Sym^g(j)$ on the cylindrical end. (The concrete definitions do not matter for our purpose, and we refer the interested readers to Definition 3.1 of \cite{OS04} and Definition 13.1 of \cite{MR2240908}). 

In this subsection, we only give definitions to the moduli spaces relevant to the case of 0-P curves; the 1-P counterparts are postponed to the next subsection. 

\begin{defn}\label{Definition, holomorphic disks in symmetric product}
Let $J_s$, $s\in[0,1]$, be a path of nearly-symmetric almost complex structures. Let $\bm{x},\bm{y}\in \amalg \mathbb{T}_{\alpha,i} \cap \mathbb{T}_{\beta}$, let $\overrightarrow{\rho}=(\sigma_1,\ldots,\sigma_n)$ be a sequence of Reeb chords, and let $B\in \tilde{\pi}_2(\bm{x},\bm{y})$. We define $\widetilde{\mathcal{M}}_{Sym,J_s}^B(\bm{x},\bm{y};\overrightarrow{\rho})$ as the set of maps $$u:([0,1]\times \mathbb{R}\backslash \{(1,t_1),\ldots,(1,t_n)\})\rightarrow Sym^g(\Sigma)$$ such that 
\begin{itemize}
\item[(1)]$t_1<\ldots<t_n$ and are allowed to vary;
\item[(2)]$u(\{0\}\times \mathbb{R})\subset \mathbb{T}_\beta$;
\item[(3)]$u(\{1\}\times (\mathbb{R}\backslash \{t_1,\ldots,t_n\}))\subset \mathbb{T}_{\alpha,1}\cup \mathbb{T}_{\alpha,2}$. Moreover, the restriction of $u$ to any connected components of $\{1\}\times (\mathbb{R}\backslash \{t_1,\ldots,t_n\})$ lifts through $\iota_i:(0,1)\times\mathbb{T}^{g-2}\times (\amalg S^1)\rightarrow Sym^g(\Sigma)$ for an appropriate $i\in\{1,2\}$;
\item[(4)]$\lim_{t\rightarrow\infty}u(s+it)=\bm{y}$, and $\lim_{t\rightarrow -\infty}u(s+it)=\bm{x}$;
\item[(5)]$\lim_{(s,t)\rightarrow (1,t_i)}u(s+it)$ is an $\overrightarrow{R}$-chord $\sigma_i\times \bm{a}$ for some $\bm{a}\in \alpha_1^c\times\cdots\times \alpha_{g-1}^c$;
\item[(6)]$\frac{du}{ds}+J_s\frac{du}{dt}=0$;
\item[(7)]$u$ is in the homology class specified by $B$.
\end{itemize}
\end{defn}
\begin{rmk}
The only difference between our setting and the setting of embedded Lagrangians is the lifting property stated in (3). This condition ensures that $\partial u$ does not have corners at self-intersection points of $\alpha_{im}$.
\end{rmk}
The tautological correspondence between the two moduli spaces defined using two different ambient symplectic manifolds holds in our setting as well. Roughly, holomorphic disks $u$ in the symmetric product are in one-to-one correspondence with pairs $(v,\pi)$, where $v$ is a stay-on-track 0-P holomorphic curves $S\rightarrow \Sigma$, and $\pi: S_{\bar{e}}\rightarrow [0,1]\times \mathbb{R}$ is a $g$-fold branched cover where the filled-in punctures are mapped to $(1,t_i)$, $i=1,\ldots,n$. The tautological correspondence was first proved in Section 13 of \cite{MR2240908}. The proof for our case follows the same line and is omitted. From now on, we will simply denote $\widetilde{\mathcal{M}}_{Sym,J_s}^B(\bm{x},\bm{y};\overrightarrow{\rho})$ by $\widetilde{\mathcal{M}}_{J_s}^B(\bm{x},\bm{y};\overrightarrow{\rho})$. The reduced moduli space ${\mathcal{M}}_{J_s}^B(\bm{x},\bm{y};\overrightarrow{\rho})$ is the quotient of $\widetilde{\mathcal{M}}_{J_s}^B(\bm{x},\bm{y};\overrightarrow{\rho})$ by the $\mathbb{R}$-action given by vertical translation.

The moduli spaces $\mathcal{M}^B(\bm{x},\bm{y};q)$ where $q$ is some self-intersection point of $\alpha_{im}$ can be similarly defined in the symmetric-product setup (and the tautological correspondence to curves in $\Sigma\times[0,1]\times\mathbb{R}$ holds).

\begin{defn}\label{Definition, moduli spaces with sharp turns}
$\mathcal{M}_{J_s}^B(\bm{x},\bm{y};q)$ is the space of $J_s$-holomorphic maps $$u:([0,1]\times \mathbb{R}\backslash \{(1,0)\})\rightarrow Sym^g(\Sigma)$$ satisfying conditions (2)(3)(4)(6)(7) of Definition \ref{Definition, holomorphic disks in symmetric product} and $\lim_{(s,t)\rightarrow(1,0)}u(s+it)=(q,\bm{a})$ for some $\bm{a}\in \alpha^a_i\times \alpha_1^c\times\cdots\times \alpha_{g-2}^c$ for an appropriate $i\in\{1,2\}$.
\end{defn}
Note there is a natural evaluation map $ev_{J_s}:\mathcal{M}^B_{J_s}(\bm{x},\bm{y};q)\rightarrow \alpha^a_i\times\alpha_1^c\times\cdots\times \alpha_{g-2}^c$ for an appropriate $i\in\{1,2\}$, given by $u\mapsto \bm{a}$ if $\lim_{(s,t)\rightarrow(1,0)}u(s+it)=(q,\bm{a})$. 

 We call a holomorphic disc \emph{degenerate} if its boundaries are in $\mathbb{T}_{\alpha,i}$ ($i=1,2$). A degenerate holomorphic disc may be viewed as a map from the upper-half plane $\mathbb{H}$ with boundary punctures to the symmetric product. We further divide such discs into degenerate discs with or without corners based on the behavior of asymptotics at the point at infinity, corresponding to Type I and Type II nodes in Definition \ref{Definition, type I type II node}. We spell out the definitions for completeness. 

\begin{defn}[Degenerate disks without corners]
Let $J$ be a nearly-symmetric almost complex structure. Let $\bm{x}\in \mathbb{T}_{\alpha}$ and $\overrightarrow{\rho}=(\sigma_1,\ldots,\sigma_n)$. $\mathcal{N}_J(\bm{x};\overrightarrow{\rho})$ is the set of maps $v:\mathbb{H}\backslash\{t_1,\ldots,t_n\}\rightarrow Sym^g(\Sigma)$ such that 
\begin{itemize}
\item[(1)]$0=t_1<\ldots<t_n$ and are allowed to vary;
\item[(2)]$v(\mathbb{R}\backslash \{t_1,\ldots,t_n\}))\subset \mathbb{T}_{\alpha,1}\cup \mathbb{T}_{\alpha,2}$. Moreover, the restriction of $v$ to any connected components of $\mathbb{R}\backslash \{t_1,\ldots,t_n\}$ lifts through $\iota_i:(0,1)\times \mathbb{T}^{g-2}\times (\amalg S^1)\rightarrow Sym^g(\Sigma)$ for an appropriate $i\in\{1,2\}$;
\item[(3)]$\lim_{z\rightarrow \infty}v(z)=\bm{x}$, and the path obtained from $v|_{(-\infty,t_1)\cup (t_n,\infty)}$ by continuous extension at $\infty$ lifts through $\iota_i$ for an appropriate $i\in\{1,2\}$;
\item[(4)]$\lim_{z\rightarrow t_i}v(z)$ is an $\overrightarrow{R}$-chords $\sigma_i\times a$ for some $a\in \alpha_1^c\times\cdots\times \alpha_{g-1}^c$;
\item[(5)]$\frac{du}{ds}+J\frac{du}{dt}=0$.
\end{itemize}
\end{defn}
\begin{defn}[Degenerate disks with corners]
Let $J$ be a nearly-symmetric almost complex structure. Let $q$ be a self-intersection point of $\alpha_{im}$. Let $\overrightarrow{\rho}=(\sigma_1,\ldots,\sigma_n)$. $\mathcal{N}_J(q;\overrightarrow{\rho})$ is the set of maps $v:\mathbb{H}\backslash\{t_1,\ldots,t_n\}\rightarrow Sym^g(\Sigma)$ such that 
\begin{itemize}
\item[(1)]$0=t_1<\ldots<t_n$ and are allowed to vary;
\item[(2)]$v(\mathbb{R}\backslash \{t_1,\ldots,t_n\}))\subset \mathbb{T}_{\alpha,1}\cup \mathbb{T}_{\alpha,2}$. Moreover, the restriction of $v$ to any connected components of $\mathbb{R}\backslash \{t_1,\ldots,t_n\}$ lifts through $\iota_i:(0,1)\times \mathbb{T}^{g-2} \times (\amalg S^1)\rightarrow Sym^g(\Sigma)$ for an appropriate $i\in\{1,2\}$;
\item[(3)]$\lim_{z\rightarrow \infty}v(z)=(q,\bm{p})$ for some $\bm{p}\in \alpha^a_i\times\alpha_1^c\times\cdots\times \alpha_{g-2}^c$ for an appropriate $i\in\{1,2\}$, and and the path from $v|_{(-\infty,t_1)\cup (t_n,\infty)}$ by continuous extension at $\infty$ does not lift through $\iota_i$;
\item[(4)]$\lim_{z\rightarrow t_i}v(z)$ is an $\overrightarrow{R}$-chords $\sigma_i\times a$ for some $a\in \alpha_1^c\times\cdots\times \alpha_{g-1}^c$;
\item[(5)]$\frac{du}{ds}+J\frac{du}{dt}=0$.
\end{itemize}
\end{defn}
We call $q$ the corner of such a degenerate disk. We also have an evaluation map $ev_J:\mathcal{N}_J(q;\overrightarrow{\rho})\rightarrow \alpha^a_i\times\alpha_1^c\times\cdots\times \alpha_{g-2}^c$ defined by $v\mapsto \bm{p}$ if $\lim_{z\rightarrow \infty}v(z)=(\bm{p},q)$.

\subsubsection{Boundary degeneration with corners}
\begin{defn}
A \emph{simple boundary degeneration} is a boundary degeneration of the form $u\vee v$, where $u$ is a (non-nodal) holomorphic curve, and $v$ is a degenerate disk. 
\end{defn}

\begin{prop}\label{Proposition, boundary degeneration with corner must be of simple form}
If a boundary degeneration with corners appears in a one-dimensional moduli space $\overline{\mathcal{M}}^B(\bm{x},\bm{y};\overrightarrow{\rho})$ where $(B,\overrightarrow{\rho} )$ is as in Proposition \ref{Proposition, ends of 1-dimensional moduli spaces of 0-P curves, Reeb element of length 4}, the boundary degeneration must be a simple boundary degeneration. Moreover, the domain for the degenerate disk must be a stabilized teardrop with an acute corner. 
\end{prop}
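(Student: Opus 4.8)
The plan is to work in the symmetric-product model of the moduli spaces from Section~\ref{Subsection, moduli space of 0-P curves in Symmetric products} --- this is what makes the degenerate disks transversally cut out, unlike degenerate curves in $\Sigma\times[0,1]\times\mathbb{R}$ --- and to combine a codimension count with the unobstructedness hypothesis (Definition~\ref{Definition, unobstructedness}); the almost complex structure $J$ is taken generic throughout. First I would prove the \emph{simple form} assertion. Since $\mathrm{ind}(B,\overrightarrow{\rho})=2$, the reduced open moduli space $\mathcal{M}^B(\bm{x},\bm{y};\overrightarrow{\rho})$ has expected dimension one, so a boundary degeneration $w$ with corners arising as a limit of curves in it lies in the codimension-one boundary of the compactification. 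By Proposition~\ref{Proposition, no boundary double point or haunted curve}, $w$ has no constant component and no boundary double point, so each node of $w$ is obtained by pinching an essential boundary arc with both endpoints on the $\alpha$-boundary of the source; in the symmetric-product model such a pinch is a disk bubble, and, by the gluing results (Proposition~\ref{Proposition, gluing holomorphic curves and east ends} and the two-story gluing statement) together with transversality, a pinch and an east-infinity breaking each raise the codimension by one --- a simplification afforded by the torus-boundary hypothesis. Hence $w$ is atomic: it has exactly one node and no east-infinity component. Writing $u$ for the single non-nodal component carrying both $\pm$-punctures and $v$ for the single component carrying none (cf.\ Definition~\ref{Definition, boundary degeneration}), we get $w=u\vee v$, and in the symmetric-product model $v$ is a holomorphic disk with boundary on $\mathbb{T}_{\alpha,1}\cup\mathbb{T}_{\alpha,2}$, i.e.\ a degenerate disk; so $w$ is a simple boundary degeneration.

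Next I would pin down the domain $B_v$ of $v$. Write $B=B_u+B_v$, where both summands are positive. Because the degeneration has corners, the single node is a type II node (Definition~\ref{Definition, type I type II node}), so $v$ lies in a corner-type moduli space $\mathcal{N}_J(q;\overrightarrow{\rho}_v)$ for some self-intersection point $q$ of $\alpha_{im}$; its defining conditions force $\partial_{\alpha_{im}}B_v$ to be a single one-cornered loop with corner at $q$, and $v$ has no $\beta$-boundary, so $B_v$ is a positive one-cornered $\alpha$-bounded domain. Now $a(-\overrightarrow{\rho})=\rho_{0123}$, so compatibility forces $n_{-\rho_i}(B)=1$ for $i=0,1,2,3$; in particular $n_z(B)=n_{-\rho_0}(B)=1$, whence $n_z(B_v)\in\{0,1\}$ by positivity. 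The case $n_z(B_v)=0$ is excluded by condition~(1) of Definition~\ref{Definition, unobstructedness}, so $n_z(B_v)=1$, and condition~(3) of Definition~\ref{Definition, unobstructedness} then says precisely that $B_v$ is a stabilized teardrop (consequently $B_u=B-B_v$ carries no Reeb chords, and $u$ lies in a moduli space $\mathcal{M}^{B_u}(\bm{x},\bm{y};q)$ of curves with a single sharp turn at $q$, cf.\ Definition~\ref{Definition, moduli spaces with sharp turns}). Finally, the \emph{acute corner} should come from the asymptotics of $v$ at its interior puncture: there $v$ converges to the self-intersection point $(q,\bm{p})$ of $\mathbb{T}_{\alpha,i}$ with no Reeb chord attached, so near $q$ the map $v$ is modeled on the standard holomorphic quadrant bounded by the two branches of $\alpha_{im}$ through $q$; this realizes only the teardrop subtending the acute angle at $q$, a concave corner being incompatible with finite energy and with the convergence at the puncture. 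Hence $B_v\in T(q)$.

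I expect the main obstacle to be the codimension bookkeeping in the first step: in the presence of the immersed Lagrangian one must verify that a boundary degeneration appearing at the boundary of a one-manifold is genuinely atomic --- a single disk bubble at one node, with no further nodes, no bubble on a bubble, and no east-infinity component, each of which would raise the codimension. This is exactly where passing to the symmetric product pays off, since the transversality underlying the gluing and codimension arguments for degenerate disks is unavailable in $\Sigma\times[0,1]\times\mathbb{R}$, and excluding the higher-codimension configurations also draws on the additivity of the embedded index (Proposition~\ref{Proposition, embedded index formula is additive}). By contrast, identifying $B_v$ as a stabilized teardrop is bookkeeping against Definition~\ref{Definition, unobstructedness}, and the acute-corner claim is a routine local analysis of holomorphic maps near a boundary corner.
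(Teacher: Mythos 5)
Your overall architecture (pass to the symmetric product, use a codimension/index count for simplicity, use unobstructedness to identify the bubble's domain) matches the paper's, and your identification of $B_v$ as a stabilized teardrop via conditions (1) and (3) of Definition \ref{Definition, unobstructedness} is essentially the paper's argument. However, there are two genuine gaps. First, the atomicity step is asserted rather than proved: ``each pinch raises the codimension by one'' is not automatic for disk bubbling on an immersed Lagrangian --- it holds only once one knows a lower bound on the virtual dimension of each bubble's moduli space, which in turn requires first identifying the bubbles' domains. The paper's actual route is: (a) unobstructedness kills all leaves except the distinguished one outright (a non-distinguished leaf would bound a positive one-cornered $\alpha$-bounded domain with $n_z=0$, which does not exist --- no codimension count is needed or available here); (b) each intermediate bubble in the remaining chain is a two-cornered degenerate disk whose domain is a \emph{bigon} by condition (4) of Definition \ref{Definition, unobstructedness}, giving $\mathrm{vdim}(\mathcal{N}^{D_i})\ge g-1$; and (c) the fiber-product dimension formula then forces $\mathrm{vdim}(\mathcal{M}^{D}(\bm{x},\bm{y};q))\le 1-n$, so genericity gives $n=1$. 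Your proposal never invokes condition (4) and does the unobstructedness analysis only after concluding atomicity, so the logical order cannot be made to work as written.

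Second, and more seriously, your acute-corner argument is wrong. A holomorphic disk whose boundary turns through the \emph{obtuse} (concave) angle at a transverse self-intersection point of $\alpha_{im}$ is a perfectly legitimate finite-energy object --- the paper explicitly allows teardrops with concave corners in the definition of a stabilized teardrop, and Proposition \ref{Proposition, boundary degeneration with corner must be of simple form}'s proof records that $\mathrm{vdim}(\mathcal{N}^{D_n})$ is $g-1$ for an acute corner and $g$ for an obtuse one. So there is no ``routine local analysis'' excluding the concave corner; the local model near $q$ admits both. The correct mechanism is again the dimension count: with $n=1$ all inequalities in $\mathrm{vdim}(\mathcal{M}^{D}(\bm{x},\bm{y};q))\le 1-n$ must be equalities, which pins $\mathrm{vdim}(\mathcal{N}^{D_n})=g-1$ and hence forces the corner to be acute. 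The same index bound ($\ge g-1$ for any cornered degenerate disk) is also what rules out corners occurring simultaneously with east-infinity breaking, multi-story splitting, or cornerless degenerations --- a point the proposition's proof must and does address, but which your first step simply assumes.
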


\begin{proof}
First, we consider the case where we assume degeneration at east infinity, multi-story splitting, and boundary degeneration without corners do not occur simultaneously with the boundary degeneration with corners. Also, note that sphere bubbles do not occur as $\Sigma$ is punctured at the east infinity. Hence we may assume the boundary degeneration with corners is a holomorphic map $u_{\infty}:\mathbb{B}\rightarrow Sym^g(\Sigma)$, where $\mathbb{B}$ is a disc bubble tree: $\mathbb{B}$ has one main component containing the $\pm$-puncture and some other components attached to the main component or each other such that the graph obtained by turning the components of $\mathbb{B}\backslash\{\text{nodes}\}$ into vertexes and nodes into edges is a tree. (See Figure \ref{Figure, bubbletree} (a).) The vertex corresponding to the main component will be called the \textit{root}. 
\begin{figure}[htb!]
\centering{
\includegraphics[scale=0.5]{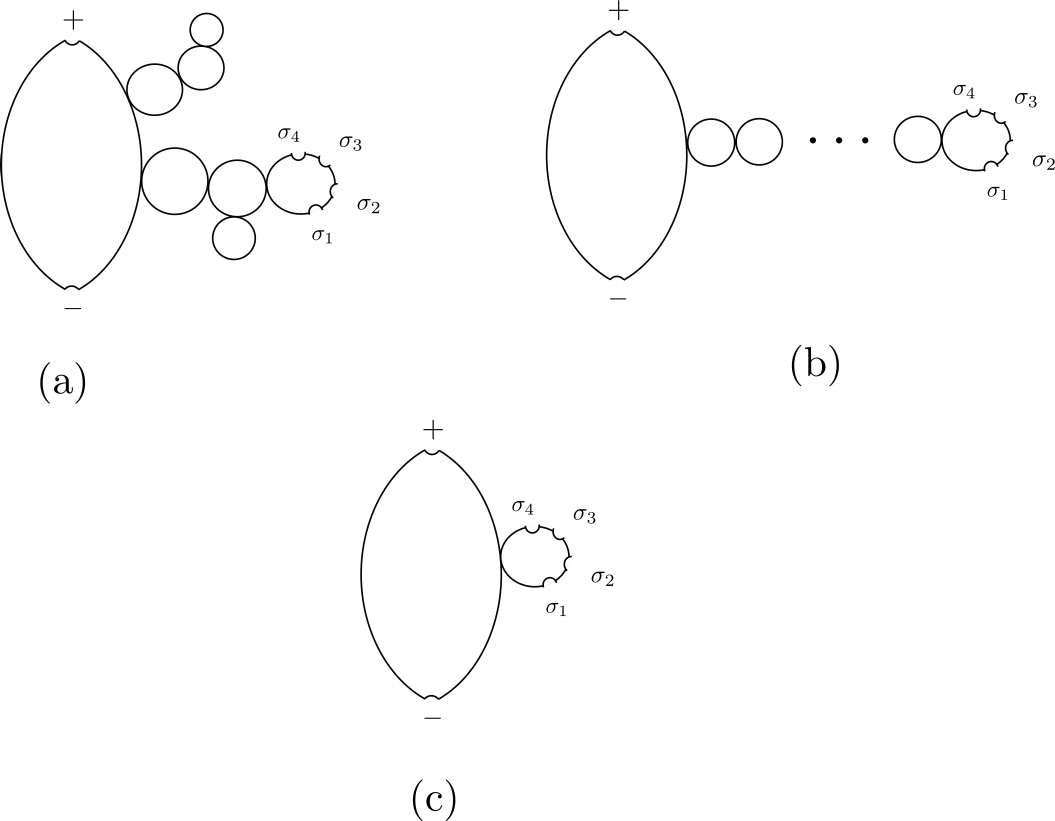}
\caption{The bubble tree $\mathbb{B}$; the $\sigma_i's$ are labels of the east punctures. Hypothetically, a bubble tree with many branches might appear, as shown in (a). In our case, we first prove the bubble tree must be of the form with a single branch as shown in (b), and then further show it must be of the simple form shown in (c).}
\label{Figure, bubbletree}
}
\end{figure}

Our first claim is that $\mathbb{B}$ must only have one leaf. (See Figure \ref{Figure, bubbletree} (b).) To see this, note a leaf corresponds to a degenerate disk whose domain is positive, and by homological consideration the domain is bounded by a one-cornered subloop of $\alpha_{im}$ and possibly $\partial \Sigma$. Note that at most one leaf would have a domain with boundary containing $\partial \Sigma$, as we assumed $a(-\overrightarrow{\rho})=\rho_{0123}$; call this the distinguished leaf. Therefore, all the other leaves, if they exist, would have positive one-cornered $\alpha$-bounded domains with $n_z=0$ (as $n_z(B)=1$ and the distinguished leaf already has multiplicity one at $z$); such domains do not exist since $\mathcal{H}$ is unobstructed. Therefore, only the distinguished leaf exists, and its domain is a stabilized teardrop since $\mathcal{H}$ is unobstructed. 

Now denote the map restricting to the main component by $u$. Let $n$ denote the number of components of $\mathbb{B}$ except the root. Denote the degenerate disc corresponding to the leaf by $v_n$, and denote those connecting the root and the leaf by $v_i$, $i=1,\ldots,n-1$. We want to prove $n=1$ and that the stabilized teardrop for the leaf has an acute corner. They follow from an index consideration as follows. 

Note that the domains $D_i$ corresponding to $v_i$, $i=1,\ldots,n-1$ are bigons: Such domains are two-cornered $\alpha$-bounded domains with $n_z=0$ and we know these domains are bigons by the assumption that $\mathcal{H}$ is unobstructed. Let $\mathcal{N}^{D_i}$ denote the reduced moduli space of holomorphic curves with domain $D_i$. Direct computation shows the virtual dimension $\text{vdim}(\mathcal{N}^{D_i})$ of the reduced moduli space satisfies $\text{vdim}(\mathcal{N}^{D_i})\geq g-1$, and the equality attained when both corners of $D_i$ are acute. Here the term $g-1$ comes from varying the constant value of the holomorphic map in $\alpha_i^a\times \alpha_1^c \times \cdots \times \alpha^{c}_{g-2}$ for some $i\in\{1,2\}$.

Now we move to consider $D_n$. We already know it is a stabilized teardrop. Depending on whether the corner of $D_n$ is acute or obtuse, the virtual dimension of the corresponding moduli space $\mathcal{N}^{D_n}$ is $g-1$ or $g$.

Let $D$ be the domain of $u$, and let $q$ denote the corner corresponding to the node. Then $$
\begin{aligned}
\text{vdim}({\mathcal{M}}^B(\bm{x},\bm{y};\overrightarrow{\rho}))=&\text{vdim}({\mathcal{M}}^D(\bm{x},\bm{y};q))+\sum_{i=1}^{n-1}\text{vdim}(\mathcal{N}^{D_i})+\text{vdim}(\mathcal{N}^{D_n})\\&-(g-1)n+n.
\end{aligned}
$$
Here, the term $-(g-1)n$ comes from the evaluation map, and the term $+n$ appears since we glued $n$ times. Note $\text{vdim}({\mathcal{M}}^B(\bm{x},\bm{y};\overrightarrow{\rho}))=1$, and hence we have $\text{vdim}({\mathcal{M}}^D(\bm{x},\bm{y};q))\leq 1-n$ since $\text{vdim}(\mathcal{N}^{D_i})\geq g-1$ for $1\leq i\leq n$. Therefore, as long as we fix a generic path of nearly-symmetric almost complex structure $\mathcal{J}_s$ so that $\mathcal{M}^D(\bm{x},\bm{y};q)$ is transversally cut out, it being non-empty implies $n=1$ (see Figure \ref{Figure, bubbletree} (c)). This also forces $\text{vdim}(\mathcal{N}^{D_n})=g-1$, which implies the corner of $D_n$ is acute. 

The above analysis shows the index of a degenerate disk with a corner is greater than or equal to $g-1$, and hence an index consideration rules out the possibility of several types of degeneration appearing simultaneously.
\end{proof}

\begin{prop}\label{Proposition, fiber of evaluation map of degenerate disks is odd when rho is cyclic}
Let $B\in T(q)$ be a stabilized teardrop with an acute corner at $q$, and let $\overrightarrow{\rho}=(-\rho_0,-\rho_1,-\rho_2,-\rho_3)$. For a generic nearly symmetric almost complex $J$, the moduli space of degenerate disks $\mathcal{N}_J^B(q;\overrightarrow{\rho})$ is a $(g-1)$-manifold, and a generic fiber of the evaluation map $ev_J:\mathcal{N}_J^B(q;\overrightarrow{\rho})\rightarrow \alpha^a_1\times \alpha^c_1\times\cdots\alpha^c_{g-2}$ consists of an odd number of points. 
\end{prop}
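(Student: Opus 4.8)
## Proof Proposal

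The plan is to reduce the statement to a genus-one teardrop count by exploiting the stabilized structure of $B$, and then to compute that genus-one count directly using the tautological correspondence with holomorphic triangles/teardrops in a disk. First I would observe that $\mathcal{N}_J^B(q;\overrightarrow{\rho})$ is transversally cut out for generic nearly-symmetric $J$: this follows from the standard argument (as in the regularity results cited from \cite{LOT18} and \cite{MR2240908}), and somewhere in the source the boundary lies on the immersed Lagrangians $\mathbb{T}_{\alpha,1}$ or $\mathbb{T}_{\alpha,2}$ — the lifting condition in the definition does not obstruct transversality since it is an open condition on an already-cut-out space. By the index formula (Proposition \ref{Proposition, first index formula}), combined with the embedded Euler characteristic formula (Proposition \ref{Proposition, embedded Euler characteristic}) applied to the stabilized teardrop $B$ with its single acute corner, the virtual dimension of $\mathcal{N}_J^B(q;\overrightarrow{\rho})$ is $g-1$; the $g-1$ degrees of freedom come entirely from the constant value of the disk in the factor $\alpha_1^a \times \alpha_1^c \times \cdots \times \alpha_{g-2}^c$, as already noted in the proof of Proposition \ref{Proposition, boundary degeneration with corner must be of simple form}.

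Next I would invoke the definition of a stabilized teardrop: there is a separating curve $C$ disjoint from $\bm{\bar\alpha}$ splitting $(\bar\Sigma, \bm{\bar\alpha})$ into $(E_1, \bm{\bar\alpha}^a, \alpha_1^c, \ldots, \alpha_{g-1}^c)$ and a genus-one piece $(E_2, \alpha_{im})$, with $B$ decomposing as $B_1$ with $[B_1] = [E_1]$ and $B_2$ an immersed teardrop in $E_2$ bounded by a one-cornered subloop of $\alpha_{im}$ with acute corner at $q$. The key step is a neck-stretching / product-region argument: because $B$ has local multiplicity exactly $1$ along the region containing $C$ and multiplicity $1$ at each of the four boundary regions (the hypothesis $\overrightarrow{\rho} = (-\rho_0,-\rho_1,-\rho_2,-\rho_3)$ forces $n_{-\rho_i}(B) = 1$ for all $i$), a degenerate disk in class $B$ decomposes, under a tautological-correspondence argument in $\Sigma \times [0,1] \times \mathbb{R}$, into a piece over $E_1$ that is rigid after fixing the evaluation data and a genuine holomorphic teardrop over the genus-one piece $E_2$ carrying the four Reeb chords around the boundary. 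Concretely, fixing a generic point $\bm{a} \in \alpha_1^a \times \alpha_1^c \times \cdots \times \alpha_{g-2}^c$ in the fiber of $ev_J$, the part of the curve mapping to the $E_1$-side is forced to be the constant-in-$\Sigma$ branched cover piece realizing $[E_1]$ (a region count shows it covers each region of $E_1$ once), which is rigid, so the fiber $ev_J^{-1}(\bm{a})$ is identified with the moduli space of holomorphic teardrops in a genus-one surface with one boundary component representing the class $B_2$ and wrapping the full boundary once.

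Finally I would compute that genus-one count. Here $E_2$ is a once-punctured torus with the immersed multicurve $\alpha_{im}$; the teardrop $B_2$ is bounded by a one-cornered subloop of $\alpha_{im}$ together with $\partial E_2$ and has an acute corner at $q$. Passing to the universal cover of the torus (or to a large disk), the teardrop becomes a genuine immersed polygonal region with one acute corner at (a lift of) $q$ and the boundary arc wrapping once around the puncture; by a Riemann-mapping / positivity-of-intersection argument — or by the same $\text{Gr} = SW$-style count used to establish that embedded teardrops are rigidly counted mod $2$ — the signed (here mod $2$) count of such holomorphic teardrops in a fixed positive class equals $1$. Thus $|ev_J^{-1}(\bm{a})| \equiv 1 \pmod 2$ for generic $\bm{a}$. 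The main obstacle I anticipate is making the product-region/degeneration argument rigorous in the symmetric-product formulation: one must ensure that the decomposition along $C$ is forced by the multiplicity-$1$ condition and that no extra branching escapes into the $E_1$-side, which requires a careful application of the tautological correspondence (Section \ref{Subsection, moduli space of 0-P curves in Symmetric products}) together with positivity of the domain and a dimension count ruling out any other configuration; once this rigidity is in place, the genus-one teardrop count is essentially classical.
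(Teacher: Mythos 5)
Your proposal has a genuine gap, and it stems from misplacing the east-infinity asymptotics in the splitting of the stabilized teardrop. In the definition of a stabilized teardrop, surgery along $C$ puts $\partial\bar\Sigma$, the $\alpha$-arcs, and hence the four Reeb chords $-\rho_0,\ldots,-\rho_3$ \emph{and} the evaluation target $\alpha_1^a\times\alpha_1^c\times\cdots\times\alpha_{g-2}^c$ all on the $E_1$ side, while $E_2$ is a closed genus-one piece containing only $\alpha_{im}$ and the cornerd teardrop $B_2$. Your reduction assigns the four Reeb chords to the $E_2$ teardrop and declares the $E_1$-side piece ``rigid,'' but the $E_1$-side piece is a boundary degeneration covering $[E_1]$ once with asymptotics $\overrightarrow{\rho}$ and passing through the evaluation point; its mod-$2$ count is the entire content of the proposition. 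That count is odd precisely for the discrete cyclic ordering $(-\rho_0,-\rho_1,-\rho_2,-\rho_3)$ and even for $(-\rho_0,-\rho_{123})$ and $(-\rho_{012},-\rho_3)$ (this is the dichotomy of Propositions \ref{Proposition, fiber of evaluation map of degenerate disks is odd when rho is cyclic} and \ref{Proposition, generic fiber of the evaluationo map is even when rho is not cyclic}, and the $E_1$-side input is essentially \cite[Proposition 11.35]{LOT18}). Your argument never uses the ordering of $\overrightarrow{\rho}$ — the same domain $B$ has $n_{-\rho_i}(B)=1$ for all $i$ in all three cases — so as written it would prove an odd count for the non-cyclic sequences as well, contradicting the companion proposition. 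The genus-one teardrop count on $E_2$ being $1$ by the Riemann mapping theorem is correct (the paper uses exactly this in the $1$-P analogue), but it is not where the parity is decided.

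For comparison, the paper's route for the base case $g=2$ does not split along $C$ at all: it works at the integrable structure $Sym^2(j)$, uses the tautological correspondence to identify a fiber $ev_{Sym^2(j)}^{-1}(p)$ with pairs $(\hat u,\pi)$ where $\hat u$ is the tautological map from a uniquely determined holomorphic \emph{annulus} $F$ (with one puncture $q$ on the outer boundary and five punctures $p,-\rho_0,\ldots,-\rho_3$ on the inner boundary) and $\pi$ is the $2$-fold branched cover induced by the unique involution of $F_{\bar e}$ swapping the two boundary components and the punctures $p$ and $q$ (\cite[Lemma 9.3]{OS04}); this gives exactly one element, hence odd. Generic $J$ and generic $p'$ are then reached by cobordism arguments, and $g>2$ by induction via the neck-stretching of \cite[Section 10]{OS04}. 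If you want to salvage a splitting-along-$C$ argument, you would need to supply the odd count of $\alpha$-boundary-degenerations of $E_1$ with ordered asymptotics $(-\rho_0,-\rho_1,-\rho_2,-\rho_3)$ through a generic point as a separate input; at that point the argument becomes a repackaging of \cite[Proposition 11.35]{LOT18} rather than an elementary Riemann-mapping count.
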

\begin{proof}
The argument for seeing the moduli space is smoothly cut out for a generic almost complex structure is standard, and in this case, it closely follows that of Proposition 3.14 of \cite{OS04}. 

We now study the parity of a generic fiber of the evaluation map. We first prove the relevant statements for the case where $g(\Sigma)=2$. Note that if we fix a point $p\in \alpha^a_1$, by standard arguments we may choose a generic almost complex structure $J$ so that the fiber $ev_J^{-1}(p)$ over $p$ is smoothly cut out as a $0$-manifold. Standard consideration of degeneration shows $ev_J^{-1}(p)$ is compact: limits of such maps cannot have further degenerate disks for index reasons, nor can a sequence of maps converge into a sphere bubbling as the domain is not $\Sigma$. We now claim that $|ev_J^{-1}(p)|$ is odd. A lemma is needed for this.
\begin{lem}\label{Lemma, fiber of degenerate disks over symmetric almost complex structures, rho is cyclic}
Assume $g(\Sigma)=2$. For a generic perturbation of the $\alpha$-curves, $ev_{Sym^2(j)}^{-1}(p)$ is smoothly cut out, and $|ev_{Sym^2(j)}^{-1}(p)|$ is odd. 
\end{lem}
\begin{proof}[Proof of Lemma \ref{Lemma, fiber of degenerate disks over symmetric almost complex structures, rho is cyclic}]
First note $ev_{Sym^2(j)}^{-1}(p)$ is smoothly cut out provided we perturb the $\alpha$-curves if necessary, as $B$ being a stabilized teardrop guarantees any holomorphic disks representing $[B]$ is somewhere boundary injective; see Proposition 3.9 of \cite{OS04}, or see Theorem I and Theorem 3.2 of \cite{Oh1996} for more details on the relation between boundary injectivity and regularity. 

For the second part of the statement, note by the tautological correspondence it suffices to find all the pairs $(\hat{u},\pi)$ of holomorphic maps $\hat{u}:F\rightarrow \Sigma$ and two-fold branched covers $\pi:F_{\bar{e}}\rightarrow D_{\bar{e}}$, where $F_{\bar{e}}$ stands for the surface obtained from $F$ by filling in the east punctures and $D_{\bar{e}}$ is the holomorphic disk with one boundary puncture. 
\begin{figure}[htb!]
\centering{
\includegraphics[scale=0.5]{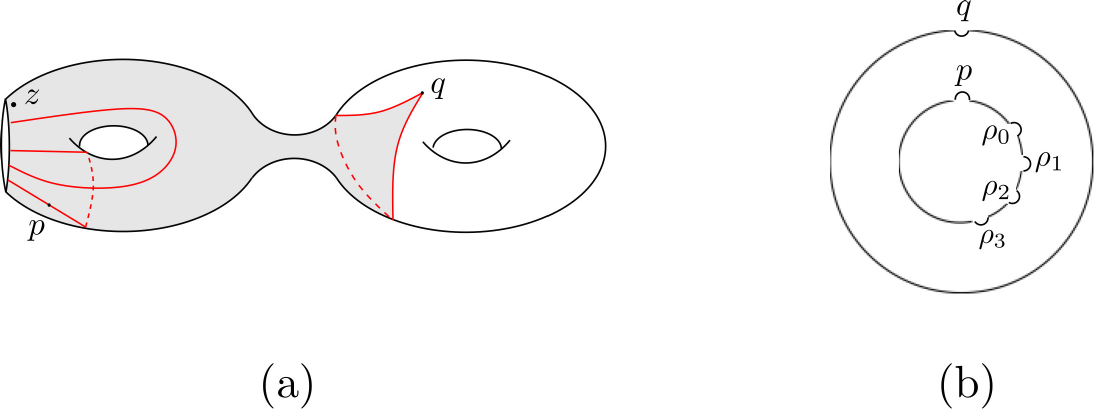}
\caption{(a) The domain $B$. (b) The annulus $F$ with punctures on the boundaries.}
\label{Figure, annulus}
}
\end{figure}
Examining the region $B$ shows that $\hat{u}$ is the obvious map from a unique holomorphic annulus $F$ with boundary punctures. (One may think $F$ is obtained by cutting $B$ open along the alpha arcs when $B$ is embedded; see Figure \ref{Figure, annulus}). Without loss of generality, we may regard $F$ is obtained from the annulus $\{z\in\mathbb{C}|\frac{1}{r}\leq |z|\leq r\}$ for some positive number $r$ by adding boundary punctures: we assume the outer boundary has only one boundary puncture and is asymptotic to $q$ under $\hat{u}$, and the inner boundary has five punctures, corresponding to $p$, $-\rho_0$, $-\rho_1$, $-\rho_2$, and $-\rho_3$ (see Figure \ref{Figure, annulus} (b)), whose relative positions depend on the complex structure induced from $j$ on $\Sigma$. There is only one involution $\iota$ on $F_{\bar{e}}$ interchanging the inner and outer boundary and swapping the boundary punctures labeled by $p$ and $q$; see \cite[Lemma 9.3]{OS04}. This involution induces $\pi:F_{\bar{e}}\rightarrow F_{\bar{e}}/\iota$, and $D$ is obtained from $F_{\bar{e}}/\iota$ by removing the boundary points corresponding to the filled-in east punctures. In summary, $ev_{Sym^2(j)}^{-1}(p)$ consists of a unique map, and hence it has odd cardinality. \end{proof}

Let $J$ be a generic almost complex structure such that both $\mathcal{N}_J^B(q;\overrightarrow{\rho})$ and $ev_{J}^{-1}(p)$ are smoothly cut out. Then a generic path $J_s$ of almost complex structures such that $J_0=J$ and $J_1=Sym^2(j)$ induces a cobordism between $ev_{J}^{-1}(p)$ and $ev_{Sym^2(j)}^{-1}(p)$. This shows $|ev_{J}^{-1}(p)|$ is odd. For a generic point $p'\in \alpha^a_1$, let $l$ be the sub-arc in $\alpha^a_1$ connecting $p$ and $p'$, then $ev^{-1}_J(l)$ is a cobordism from $ev_{J}^{-1}(p)$ to $ev_{J}^{-1}(p')$, implying $|ev_{J}^{-1}(p')|$ is odd as well. This finishes the proof of the theorem for $g(\Sigma)=2$.

The relevant statements for $g(\Sigma)>2$ can be proved inductively from the base case in which $g(\Sigma)=2$ using the neck-stretching argument in Section 10 of \cite{OS04}, which relates moduli spaces built from Heegaard diagrams differ by a stabilization.  
\end{proof}

\begin{prop}\label{Proposition, generic fiber of the evaluationo map is even when rho is not cyclic}
Let $B\in T(q)$ be a stabilized teardrop with an acute corner at $q$ and let $\overrightarrow{\rho}=(-\rho_0,-\rho_{123})$ or $(-\rho_{012},-\rho_{3})$. For a generic nearly symmetric almost complex $J$, the moduli space of degenerate disk $\mathcal{N}_J^B(q;\overrightarrow{\rho})$ is a $(g-1)$-manifold, and a generic fiber of the evaluation map $ev_J:\mathcal{N}^B(q;\overrightarrow{\rho})\rightarrow \alpha^a_1\times \alpha^c_1\times\cdots\alpha^c_{g-2}$ consists of an even number of points. 
\end{prop}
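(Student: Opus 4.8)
The plan is to run the argument of Proposition~\ref{Proposition, fiber of evaluation map of degenerate disks is odd when rho is cyclic} essentially verbatim, changing only the final parity count: the odd-count input (Lemma~\ref{Lemma, fiber of degenerate disks over symmetric almost complex structures, rho is cyclic}) is replaced by an even-count statement. First, the assertion that $\mathcal{N}_J^B(q;\overrightarrow{\rho})$ is a $(g-1)$-manifold for generic nearly-symmetric $J$ is proved exactly as in the cyclic case: the index of a stabilized teardrop with an acute corner is $g-1$ whether the four boundary regions next to $e$ are traversed by four length-one Reeb chords or by a length-one and a length-three chord, since $|\overrightarrow{\rho}|=4$ and the term $\iota(\overrightarrow{\rho})$ is unaffected. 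As before one then reduces to the base case $g(\Sigma)=2$ by the neck-stretching argument of \cite[Section~10]{OS04} relating moduli spaces for Heegaard diagrams that differ by a stabilization; this step does not see how the east boundary is organized, so it carries over without change, and from now on we take $g(\Sigma)=2$.

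For $g(\Sigma)=2$, fix $p\in\alpha^a_1$ and a generic nearly-symmetric $J$ so that $\mathcal{N}_J^B(q;\overrightarrow{\rho})$ and the fiber $ev_J^{-1}(p)$ are cut out transversally; the latter is a compact $0$-manifold by the same reasoning as in the cyclic case (no further degenerate disk can bubble off for index reasons, and no sphere bubble can form since $\Sigma$ is punctured at $e$). A generic path $J_s$ of nearly-symmetric almost complex structures with $J_0=J$ and $J_1=Sym^2(j)$ then gives a compact cobordism $ev_{J_s}^{-1}(p)$ from $ev_J^{-1}(p)$ to $ev_{Sym^2(j)}^{-1}(p)$ (after a further generic perturbation of the $\alpha$-curves, allowed because a stabilized teardrop forces any holomorphic representative to be somewhere boundary injective, cf.\ \cite[Proposition~3.9]{OS04}). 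Hence $|ev_J^{-1}(p)|\equiv|ev_{Sym^2(j)}^{-1}(p)|\pmod 2$, and varying $p$ along $\alpha^a_1$ shows the parity is independent of $p$, so it suffices to prove that $ev_{Sym^2(j)}^{-1}(p)$ has even cardinality.

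This last point is the heart of the argument. By the tautological correspondence it amounts to enumerating the pairs $(\hat{u},\pi)$ consisting of a holomorphic map $\hat{u}\colon F\to\Sigma$ realizing the domain $B$ and a two-fold branched cover $\pi\colon F_{\bar{e}}\to D_{\bar{e}}$ (equivalently, an admissible involution $\iota$ of $F_{\bar{e}}$) whose ramification data is compatible with the prescribed east punctures. The surface $F$ and the map $\hat{u}$ are exactly as in the cyclic case of Figure~\ref{Figure, annulus}; the only difference is that the three inner-boundary punctures labeled by the consecutive chords $\rho_1,\rho_2,\rho_3$ (respectively $\rho_0,\rho_1,\rho_2$) must now be grouped into a \emph{single} east puncture of $D$ carrying the length-three chord $\rho_{123}$ (respectively $\rho_{012}$), which forces $\pi$ to have an extra ramification point at east infinity adjacent to that puncture. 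I expect the outcome to be that no such compatible $(\hat{u},\pi)$ exists: an involution of $F_{\bar{e}}$ of the kind that descends to a branched double cover onto a disk cannot accommodate this east-infinity ramification, equivalently the length-three (hence odd-length) chord cannot be split symmetrically between the two sheets of $\pi$ near the separating curve $C$ of the stabilized teardrop. Should such pairs nonetheless exist, the involution of $F_{\bar{e}}$ interchanging the two sheets of $\pi$ in a neighborhood of $C$ acts freely on the set of them (a fixed point would require that single length-three chord to be carried symmetrically by both sheets, which is impossible), again forcing even cardinality. Either way $|ev_{Sym^2(j)}^{-1}(p)|$ is even.

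The main obstacle will be making this branched-cover bookkeeping at east infinity precise and verifying the absence of a ``symmetric'' configuration; this is a local analysis near the length-three east puncture in the same spirit as the computation $\iota((\sigma))=-1$ for $|\sigma|=4$ recorded after Proposition~\ref{Proposition, embedded Euler characteristic}, and is the only place where the proof genuinely departs from the cyclic case. Once that is settled, the neck-stretching reduction and the cobordism argument above yield the statement for all $g$, and, as in the cyclic case, the analogues for $a(-\overrightarrow{\rho})\in\{\rho_{1230},\rho_{2301},\rho_{3012}\}$ follow by cyclically permuting subscripts.
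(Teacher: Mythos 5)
Your first two paragraphs (regularity, dimension, reduction to $g=2$, the cobordism in $J$ and in $p$) match the paper's setup, but the heart of your argument --- the parity count at the split complex structure --- is where the proposal has a genuine gap, and it is also where the paper takes a different route. You propose to enumerate pairs $(\hat u,\pi)$ at $Sym^2(j)$ directly, as in Lemma \ref{Lemma, fiber of degenerate disks over symmetric almost complex structures, rho is cyclic}, and you offer two heuristics for evenness. Neither works as stated. First, grouping $\rho_1,\rho_2,\rho_3$ into the single chord $\rho_{123}$ at one east puncture does not force ``an extra ramification point at east infinity'': at a boundary puncture of $D$ asymptotic to an $\overrightarrow{R}$-chord $\sigma\times a$, the two sheets of $\pi$ separate (one carries the east puncture asymptotic to $\sigma$, the other carries a boundary point mapping to $a$), so there is no branching there; what changes is the number of boundary punctures of $F$ and the asymptotic constraints, not the ramification of $\pi$. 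Second, the ``involution interchanging the two sheets of $\pi$'' is the deck transformation of $\pi$, and precomposing $(\hat u,\pi)$ with it yields the same disk $v(z)=\hat u(\pi^{-1}(z))$ in $Sym^2(\Sigma)$; it therefore acts trivially on the fiber and cannot be used to pair off elements. So the claimed dichotomy ``either no configurations exist or they come in pairs'' is not established, and you acknowledge the remaining bookkeeping is unverified.

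The paper sidesteps this entirely by degenerating the complex structure rather than analyzing branched covers: writing $(\Sigma,\alpha_1^a,\alpha_2^a,\alpha_{im})=(E_1,\alpha_1^a,\alpha_2^a)\#(E_2,\alpha_{im})$ and stretching the neck along the connected-sum tube, Lemma \ref{Lemma, fiber of degenerate disk over sufficiently stretched complex structure, rho non-cyclic} shows that $ev^{-1}_{Sym^2(j_t)}(p)$ is \emph{empty} for $t$ large: a sequence of elements would Gromov-converge to a disk in $E_1\times E_2$, and its projection to $E_1$ would be a holomorphic disk in the once-punctured torus with east punctures prescribed by $(-\rho_0,-\rho_{123})$ or $(-\rho_{012},-\rho_3)$, which one checks directly does not exist (in contrast with the cyclic grouping $(-\rho_0,-\rho_1,-\rho_2,-\rho_3)$, which does). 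Evenness (indeed emptiness) for sufficiently stretched $j_t$ then transfers to a generic $J$ by the cobordism argument, together with the observation, as in \cite[Theorem 3.15]{OS04}, that the answer is independent of the stretching parameter. If you want to salvage your direct-enumeration route, you would need to actually carry out the branched-cover analysis for the non-cyclic asymptotics and exhibit either non-existence or a genuinely free involution on the fiber; as written, that step is missing.
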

\begin{proof}
The regularities and dimensions of the moduli spaces are proved the same way as in Proposition \ref{Proposition, fiber of evaluation map of degenerate disks is odd when rho is cyclic}. 

We study the parity of a generic fiber $ev^{-1}_J(p)$ of $\mathcal{N}_J^B(q;\overrightarrow{\rho})$. Again, as in the previous proposition, we only need to study the case in which $g(\Sigma)=2$. We first prove the following lemma. 
\begin{lem}\label{Lemma, fiber of degenerate disk over sufficiently stretched complex structure, rho non-cyclic}
Assume $g(\Sigma)=2$. View $(\Sigma,\alpha_1^a,\alpha^a_2,\alpha_{im})=(E_1,\alpha_1^a,\alpha_2^a)\# (E_2,\alpha_{im})$, where $E_1$ is a punctured Riemann surface of genus one and $E_2$ is a Riemann surface of genus one. If $j$ is a sufficiently stretched complex structure on $\Sigma$, then $ev^{-1}_{Sym^2(j)}(p)$ is empty.
\end{lem}  
\begin{proof}[Proof of Lemma \ref{Lemma, fiber of degenerate disk over sufficiently stretched complex structure, rho non-cyclic}]
The proof is similar to Proposition 3.16 of \cite{OS04}. We provide a sketch. Let $j_t$ denote the complex structure on $\Sigma$, corresponding to when the connected sum tube is isometric to $S^1\times[-t,t]$. If the statement is not true, then there exists a sequence of $u_t\in ev^{-1}_{Sym^2(j_t)}(p)$ which converges to a holomorphic disk $u_{\infty}$ in $Sym^2(E_1\vee E_2)$ by Gromov compactness. In particular, the main component of $u_\infty$ is a holomorphic disk in $E_1\times E_2$. Projecting this disk to $E_1$, we would have a holomorphic disk in $E_1$ with the east punctures prescribed by $\overrightarrow{\rho}=(-\rho_0,-\rho_{123})$ or $(-\rho_{012},-\rho_{3})$, which cannot exist by direct examination. 
\end{proof}
With the above lemma at hand, a cobordism argument as in Proposition \ref{Proposition, fiber of evaluation map of degenerate disks is odd when rho is cyclic} can be applied to conclude that when the stretching parameter $t$ is sufficiently large, for a generic nearly $j_t$-symmetric almost complex structure $J_t$, the parity of a generic fiber of $ev_{J_t}$ is even. Then one can show the statement is independent of the stretching parameter $t$ as in Theorem 3.15 of \cite{OS04}: When $t_1$ and $t_2$ are sufficiently close, and $J_{t_i}$ is a $j_{t_i}$-nearly symmetric almost complex structure sufficiently close to $Sym^2(j_{t_i})$, $i=1,2$, then the moduli spaces can be identified. 
\end{proof}

\subsubsection{Boundary degeneration without corners}
\begin{prop}\label{Proposition, boundary degeneration without corner}
Under the assumption of Proposition \ref{Proposition, ends of 1-dimensional moduli spaces of 0-P curves, Reeb element of length 4}, if a boundary degeneration without corners occurs, then:
\begin{itemize}
	\item[(1)] The degenerate disk has domain $[B]=[\Sigma]$.
	\item[(2)] $\bm{x}=\bm{y}$.
	\item[(3)] Such degenerate disks do not occur simultaneously with other types of degeneration.
	\item[(4)] The number of ends corresponding to such boundary degeneration is odd.
\end{itemize}
\end{prop}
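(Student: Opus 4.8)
The plan is to dispatch the four assertions in order, reducing (1)--(3) to the domain combinatorics and index bookkeeping already in place in this section, and isolating (4) as the one substantive computation.

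For (1) and (2), let $u\vee v$ be an uncornered boundary degeneration in $\overline{\mathcal{M}}^B(\bm{x},\bm{y};\overrightarrow{\rho})$, with $v$ the degenerate component and $u$ the main (i.e.\ $\pm$-puncture-carrying) component. Since there are no corners, every node joining $v$ to $u$ is of type~I, so --- exactly as in the proof of Proposition \ref{Proposition, boundary degeneration do not occur when n_z=0} --- the domain $B_v$ of $v$ is a positive zero-cornered $\alpha$-bounded domain, and it is nontrivial because $\pi_\Sigma\circ v$ is nonconstant. Since $a(-\overrightarrow{\rho})=\rho_{0123}$ we have $n_z(B)=n_{-\rho_0}(B)=1$, and as $B_v$ is a positive subdomain of $B$ this forces $n_z(B_v)\in\{0,1\}$; Condition~(1) of unobstructedness (Definition \ref{Definition, unobstructedness}) rules out $n_z(B_v)=0$ and Condition~(2) then pins $B_v=[\Sigma]$, proving (1). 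For (2), set $B_u:=B-[\Sigma]$: it is positive (it is the domain of the holomorphic main part of the configuration), has no east boundary, and satisfies $n_z(B_u)=0$ because $[\Sigma]$ has multiplicity one on every region. A direct evaluation of the formula in Definition \ref{Definition, embedded Euler Char, index, and moduli space} gives $\text{ind}([\Sigma],\overrightarrow{\rho})=2$ for each of the three admissible $\overrightarrow{\rho}$, so by additivity of the embedded index (Proposition \ref{Proposition, embedded index formula is additive}) we get $\text{ind}(B_u,\emptyset)=\text{ind}(B,\overrightarrow{\rho})-\text{ind}([\Sigma],\overrightarrow{\rho})=0$. Standard index considerations then force $B_u=0$ --- a nonzero positive domain of index $0$ would carry no holomorphic representative, even after further nodal degeneration (using Proposition \ref{Proposition, no boundary double point or haunted curve} to exclude constant components) --- so $\bm{x}=\bm{y}$ and $B=[\Sigma]$.

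For (3), I would run the same index count in the other direction. If the $[\Sigma]$-degeneration occurred together with any further configuration from Proposition \ref{Proposition, summary of degeneration of moduli spaces} (an extra level, an east-infinity split curve, or a cornered boundary degeneration), I would split the total domain into the $[\Sigma]$-piece and the remaining pieces and apply additivity of the embedded index together with nonnegativity of the index of each holomorphic piece; since the $[\Sigma]$-piece already accounts for index $2=\text{ind}(B,\overrightarrow{\rho})$, the remaining pieces would have indices summing to $0$ while at least one of them is a nontrivial positive piece of strictly positive index, a contradiction. Sphere bubbles do not occur since $\Sigma$ is punctured at east infinity, and haunted curves and boundary double points are excluded by Proposition \ref{Proposition, no boundary double point or haunted curve}. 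This is the same style of argument as the closing paragraph of the proof of Proposition \ref{Proposition, boundary degeneration with corner must be of simple form}.

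The heart of the matter, and the step I expect to be the main obstacle, is (4). By (1)--(3) the (E-4)-ends are in bijection with degenerate disks of domain $[\Sigma]$, east asymptotics $-\overrightarrow{\rho}$, boundary on $\mathbb{T}_\alpha$, passing through the generator $\bm{x}$, and the difficulty is that transversality for degenerate disks is not available directly in $\Sigma\times[0,1]\times\mathbb{R}$. The plan, as in Section \ref{Subsection, moduli space of 0-P curves in Symmetric products}, is to transfer the count to $\mathrm{Sym}^g(\Sigma)$, where the tautological correspondence holds and transversality for degenerate disks can be arranged, and then to compute. For $g=2$ the tautological correspondence presents a degenerate disk of domain $[\Sigma]$ as a pair $(\hat{u},\pi)$, with $\hat{u}$ the obvious map from the surface obtained by cutting $\Sigma$ along the $\alpha$-arcs and $\pi$ a two-fold branched cover; an involution argument in the spirit of Lemma \ref{Lemma, fiber of degenerate disks over symmetric almost complex structures, rho is cyclic} (compare \cite[Lemma 9.3]{OS04} and \cite[Section 3]{OS04}) then exhibits a unique such pair over $\mathrm{Sym}^2(j)$ compatible with the Reeb-chord asymptotics $-\overrightarrow{\rho}$ and the point constraint at $\bm{x}$, so the $\mathrm{Sym}^2(j)$-count is odd. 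A cobordism argument between $\mathrm{Sym}^2(j)$ and a generic nearly-symmetric $J$, as in the proof of Proposition \ref{Proposition, fiber of evaluation map of degenerate disks is odd when rho is cyclic}, transports this to generic $J$, and $g>2$ reduces to $g=2$ by the stabilization/neck-stretching argument of \cite[Section 10]{OS04} used there. The points requiring care are that the immersed curve $\alpha_{im}$ not interfere --- which it does not, since $[\Sigma]$ has equal multiplicities on the two sides of $\alpha_{im}$, so the degenerate disk has no boundary arc on $\alpha_{im}$ and no corner arises --- and that the explicit involution respect the asymptotics $-\overrightarrow{\rho}$ on the inner boundary.
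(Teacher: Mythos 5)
Your treatment of (1)--(3) tracks the paper's proof essentially step for step: reduce to a single uncornered leaf of the bubble tree, identify its domain as a positive zero-cornered $\alpha$-bounded domain with $n_z=1$, invoke condition (2) of Definition \ref{Definition, unobstructedness} to conclude $B_v=[\Sigma]$, and then use the fact that $[\Sigma]$ already carries index $2$ to force the main component to be constant (hence $\bm{x}=\bm{y}$) and to exclude simultaneous occurrence of any other degeneration. The real divergence is in (4). The paper does not redo this count: it appeals to a standard gluing argument together with Proposition 11.35 of \cite{LOT18}, which already supplies the parity of the moduli space of uncornered degenerate disks at $\bm{x}$ (the immersedness of $\alpha_{im}$ plays no role there because the boundary of such a disk is stay-on-track). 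Your plan to reprove this from scratch via the symmetric-product reformulation, the explicit $g=2$ involution computation, and neck-stretching is essentially the proof of that cited result, so it is a legitimate, if much longer, route.

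Two cautions on your (4). First, Proposition 11.35 of \cite{LOT18} gives an \emph{odd} count only for the cyclic discrete sequence $\overrightarrow{\rho}=(-\rho_0,-\rho_1,-\rho_2,-\rho_3)$ and an \emph{even} count for the other admissible sequences; this matches the dichotomy the paper itself establishes for cornered degenerations (Proposition \ref{Proposition, fiber of evaluation map of degenerate disks is odd when rho is cyclic} versus Proposition \ref{Proposition, generic fiber of the evaluationo map is even when rho is not cyclic}, the latter proved by showing that the projection to $E_1$ would have to be a disk with east asymptotics $(-\rho_0,-\rho_{123})$ or $(-\rho_{012},-\rho_3)$, which does not exist). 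Your assertion that the involution argument produces a unique representative ``for each of the three admissible $\overrightarrow{\rho}$,'' hence odd parity uniformly, is therefore too strong: for the two non-discrete sequences the count must come out even, and the correct argument there is an emptiness/neck-stretching argument rather than a uniqueness argument. This does not affect the downstream applications, which only use the total parity summed over the three sequences, but as written your step (4) proves more than is true. Second, a small imprecision: since $[\Sigma]$ has multiplicity one on both sides of $\alpha_{im}$, the cut-open source of the degenerate disk \emph{does} have boundary arcs mapping to $\alpha_{im}$; what saves the argument is not the absence of such arcs but that they form stay-on-track loops, so no corners at self-intersections of $\alpha_{im}$ arise and the embedded-case computation carries over.
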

\begin{proof}
We first prove (1),(2), and (3). Suppose the limit curve $u_\infty$ has a nodal source being a disc bubble tree $\mathbb{B}$. A schematic picture of $\mathbb{B}$ to keep in mind would be an analogue of Figure \ref{Figure, bubbletree} (a). The same analysis as in the proof of Proposition \ref{Proposition, boundary degeneration with corner must be of simple form} shows there must be only one leaf in the bubble tree $\mathbb{B}$, which is the one that contains the boundary puncture. (All the others are excluded as there are no corresponding positive domains.) The degenerate disk corresponding to the leaf cannot have a corner, for otherwise, we are back to the case considered in the previous subsection by index consideration. Denote the degenerate disk corresponding to the leaf by $v$. Then, the domain $B_v$ of $v$ is a zero-cornered positive $\alpha$-bounded domain with $n_z=1$. Hence $B_v=\Sigma$ as $\mathcal{H}$ is unobstructed. Note that a degenerate disk with domain $[\Sigma]$ has Maslov index $2$, which implies the nodal curve must be of the form $u\vee v$ with $u$ being a constant curve and $v$ being the degenerate disk. We finished proving (1), (2), and (3).

(4) follows from a standard gluing argument and Proposition 11.35 of \cite{LOT18}, which states for a generic almost complex structure, the moduli space of degenerate discs at $\bm{x}$ is transversally cut out, and it consists of an odd number of points if $\overrightarrow{\rho}=(-\rho_0,\ldots,-\rho_3)$ and an even number of points for other $\overrightarrow{\rho}$. 
\end{proof}

\subsubsection{Proof of Proposition \ref{Proposition, ends of 1-dimensional moduli spaces of 0-P curves, Reeb element of length 4}}
In this subsection, we synthesize the previous results to prove Proposition \ref{Proposition, ends of 1-dimensional moduli spaces of 0-P curves, Reeb element of length 4}.
\begin{proof}[Proof of Proposition \ref{Proposition, ends of 1-dimensional moduli spaces of 0-P curves, Reeb element of length 4}]
By Proposition \ref{Proposition, summary of degeneration of moduli spaces}, we know degenerations appearing in $\overline{\mathcal{M}}^B(\bm{x},\bm{y};\overrightarrow{\rho})$ are two-story splittings, simple holomorphic combs with a single split component, or boundary degenerations. 

For a simple holomorphic comb $(u,v)$ with a single split component to appear, there must be two consecutive Reeb chords in $\overrightarrow{\rho}$ such that the endpoint of the first one is equal to the start point of the second one, this excludes the possibility of $\overrightarrow{\rho}=(-\rho_0,\ldots,-\rho_3)$. Moreover, when such a degeneration appears, as the split curve's domain is a single disk, the moduli space of the split curve $\mathcal{N}(v)$ is a transversally cut-out 0-dimensional manifold (by Proposition \ref{Proposition, regularity for holomorphic curves in R times Z times [0,1] times R}), and it consists of a single point by the Riemann mapping theorem. Therefore, the gluing result, Proposition \ref{Proposition, gluing holomorphic curves and east ends}, shows there is a neighborhood of such a holomorphic comb in $\overline{\mathcal{M}}^B(\bm{x},\bm{y};\overrightarrow{\rho})$ diffeomorphic to $(0,1]$; the count of such ends is equal to $\#\mathcal{M}^B(\bm{x},\bm{y};-\rho_{1230})$ when $\overrightarrow{\rho}=(-\rho_0,-\rho_{123})$, and is equal to $\#\mathcal{M}^B(\bm{x},\bm{y};-\rho_{3012})$ when $\overrightarrow{\rho}=(-\rho_{012},-\rho_{3})$.

By Proposition \ref{Proposition, boundary degeneration without corner} and \ref{Proposition, boundary degeneration with corner must be of simple form}, boundary degenerations are further divided into boundary degenerations with or without corners. In particular, different types of degeneration do not appear simultaneously. When boundary degeneration without corners appear, the situation is covered in Proposition \ref{Proposition, boundary degeneration without corner}. When boundary degenerations with corners appear, Proposition \ref{Proposition, fiber of evaluation map of degenerate disks is odd when rho is cyclic} and Proposition \ref{Proposition, generic fiber of the evaluationo map is even when rho is not cyclic} show the moduli space of degenerate disks are smoothly cut out. In particular, the standard gluing results can be applied to show each boundary degeneration in $\overline{\mathcal{M}}^B(\bm{x},\bm{y};\overrightarrow{\rho})$ has a neighborhood diffeomorphic to $(0,1]$. The number of such ends is equal to $$\sum_{\{(q,B_1)|\exists B_2\in T(q),B_1+B_2=B\}}\#(\mathcal{M}^{B_1}(\bm{x},\bm{y};q)\times_{ev}\mathcal{N}^{B_2}(q;\overrightarrow{\rho}))$$ (We have suppressed the almost complex structure $J_s$, which can be chosen generically so that the evaluation maps are transversal to each other.) This quantity is even when $\overrightarrow{\rho}\neq (-\rho_0,\ldots,-\rho_3)$ in view of Proposition \ref{Proposition, generic fiber of the evaluationo map is even when rho is not cyclic}. Otherwise, it has the same parity as $$\sum_{\{(q,B_1)|\exists B_2\in T(q),B_1+B_2=B\}}\#\mathcal{M}^{B_1}(\bm{x},\bm{y};q)$$ in view of Proposition \ref{Proposition, fiber of evaluation map of degenerate disks is odd when rho is cyclic}.
\end{proof}

\subsection{Ends of moduli spaces of 1-P curves}\label{Subsection, ends of moduli space of 1P curves}

This subsection characterizes the ends of one-dimensional moduli spaces of 1-P holomorphic curves. Given a generator $\bm{x}$, we say $\iota(\bm{x})=\iota_1$ if and only if $\bm{x}$ is in $\mathbb{T}_{\alpha,1}$; otherwise $\iota(\bm{x})=\iota_0$. The main result is the following.  
\begin{prop}\label{Proposition, ends of moduli space of 1-P curves}
Let $B\in\tilde{\pi}_2(\bm{x},\bm{y})$ such that $\iota(\bm{x})=\iota_1$ and $\text{ind}(B;U)=2$. Then fixing a generic almost complex structure, the compactified moduli space $\overline{\mathcal{M}}^B(\bm{x},\bm{y};U)$ is a compact 1-manifold with boundary. The boundaries are of the following types:
\begin{itemize}
\item[(1)]Two-story building
\item[(2)]simple holomorphic combs $(u,v)$ with $v$ being an orbit curve
\item[(3)]boundary degeneration with corners
\item[(4)]boundary degeneration without corners
\end{itemize}
Moreover, 
\begin{itemize}
\item[(a)]The number of type (2) ends is $\#\mathcal{M}^B(\bm{x},\bm{y};-\rho_{1230})+\#\mathcal{M}^B(\bm{x},\bm{y};-\rho_{3012})$.
\item[(b)]The number of type (3) ends is mod 2 congruent to $$\sum_{\{(B_1,\ q)|B_2\in T(q),\ B_1+B_2=B\}}\#\mathcal{M}^{B_1}(\bm{x},\bm{y};q)$$
\item[(c)]The number of type (4) ends is even.
\end{itemize}
\end{prop}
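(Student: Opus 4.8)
The plan is to run the argument in parallel with the proof of Proposition \ref{Proposition, ends of 1-dimensional moduli spaces of 0-P curves, Reeb element of length 4}, importing the local analysis (transversality, gluing) essentially unchanged and redoing the global bookkeeping with the single interior puncture playing the role that the four elementary east punctures played there. First I would invoke the $1$-P part of Proposition \ref{Proposition, summary of degeneration of moduli spaces}: every holomorphic comb in $\partial\overline{\mathcal{M}}^B(\bm{x},\bm{y};U)$ is a two-story building, a simple comb $(u,v)$ with $v$ an orbit curve, or a boundary degeneration. For the two-story case, transversality gives $\text{ind}(u_i)=1$ on each level; expressing the levels by compatible pairs $(B_i,\overrightarrow{\rho_i})$, one of them of the form $(B_i,(U))$, the embedded Euler characteristic/index criterion (Proposition \ref{Proposition, embedded Euler characteristic}) together with its additivity (Proposition \ref{Proposition, embedded index formula is additive}) forces both levels to be embedded, exactly as in Proposition \ref{Proposition, levels of degeneration are embedded}; the two-story gluing result (Proposition 5.30 of \cite{LOT18}) then equips each such comb with a half-open collar. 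This disposes of the type $(1)$ ends.

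For the orbit-curve ends, let $(u,v)$ be a simple comb with $v$ an orbit curve. Then $v$ consists of a single orbit component and possibly trivial components, and its unique boundary $w$-puncture is labeled by a full loop; matching it to the east boundary of $u$ forces $u$ to be a $0$-P curve with a single east puncture labeled by that loop, and the hypothesis $\iota(\bm{x})=\iota_1$ together with strong boundary monotonicity restricts the loop to $-\rho_{1230}$ or $-\rho_{3012}$ (the two full loops compatible with a generator on $\alpha_1^a$). Hence $u\in\mathcal{M}^B(\bm{x},\bm{y};-\rho_{1230})$ or $u\in\mathcal{M}^B(\bm{x},\bm{y};-\rho_{3012})$; by the index bookkeeping of Definition \ref{Definition, embedded Euler Char, index, and moduli space} and Proposition \ref{Proposition, first index formula} (an orbit curve drops the index by one) these are $0$-dimensional, and the orbit-curve moduli space is transversally cut out (Proposition \ref{Proposition, regularity for holomorphic curves in R times Z times [0,1] times R}) and, by the Riemann mapping theorem, a single point. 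Applying the east-infinity gluing theorem --- the orbit-curve analogue of Proposition \ref{Proposition, gluing holomorphic curves and east ends}, carried out in \cite{Hanselman2022} --- gives a $(0,1]$-neighborhood of each such comb, hence the count in (a).

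The heart of the proof, and the step I expect to be the main obstacle, is the analysis of the boundary degenerations. I would first argue that the interior puncture must lie on the bubbled-off component $v$: otherwise the domain $B_v$ of $v$ would be a positive zero- or one-cornered $\alpha$-bounded domain with $n_z(B_v)=0$ (the main component still covers each region adjacent to the east puncture once), and no such domain exists by unobstructedness, Definition \ref{Definition, unobstructedness}(1). It follows that $v$ wraps once around the east puncture, so $n_{-\rho_i}(B_v)=1$ for $i=0,1,2,3$, and then Definition \ref{Definition, unobstructedness}(2)--(3) forces $B_v=[\Sigma]$ (no corners) or $B_v$ to be a stabilized teardrop with an acute corner. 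The disc-bubble-tree and index estimates of Proposition \ref{Proposition, boundary degeneration with corner must be of simple form} and Proposition \ref{Proposition, boundary degeneration without corner} then carry over --- with the interior puncture playing the role the elementary east punctures played there --- to show the degeneration is simple, of exactly one of those two types, and never simultaneous with any other degeneration. In the corner case the degenerate disc lies in a $(g-1)$-dimensional moduli space $\mathcal{N}^{B_2}(q;U)$ with an evaluation map to $\alpha_1^a\times\alpha_1^c\times\cdots\times\alpha_{g-2}^c$; I would compute the parity of a generic fiber by the symmetric-product argument of Lemma \ref{Lemma, fiber of degenerate disks over symmetric almost complex structures, rho is cyclic}, adapted to the configuration where the teardrop carries an interior puncture rather than four elementary east punctures, identifying the fiber via the tautological correspondence with the unique holomorphic annulus obtained by cutting $B_2$ along the $\alpha$-arcs together with the unique involution on its doubling --- since the teardrop still wraps all four regions adjacent to the east puncture this is governed by the ``cyclic'' picture, so the fiber has odd cardinality, in contrast to the even outcome of Proposition \ref{Proposition, generic fiber of the evaluationo map is even when rho is not cyclic}. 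Because a stabilized teardrop $B_2$ covers each region adjacent to the east puncture once, $B_1=B-B_2$ covers none of them, so the main component has neither east nor interior punctures, and $\mathcal{M}^{B_1}(\bm{x},\bm{y};q)$ is the $0$-P moduli space of Definition \ref{Definition, moduli spaces with sharp turns}, exactly as in Proposition \ref{Proposition, ends of 1-dimensional moduli spaces of 0-P curves, Reeb element of length 4}(b); gluing then produces $(0,1]$-collars and gives the number of corner ends as $\sum\#\bigl(\mathcal{M}^{B_1}(\bm{x},\bm{y};q)\times_{ev}\mathcal{N}^{B_2}(q;U)\bigr)\equiv\sum\#\mathcal{M}^{B_1}(\bm{x},\bm{y};q)\pmod 2$, which is (b). In the no-corner case a Maslov-index count forces the main component to be constant, so $\bm{x}=\bm{y}$ and $B=[\Sigma]$, and the number of such ends equals the number of index-$2$ degenerate discs at $\bm{x}$ with domain $[\Sigma]$ and one interior puncture, which is even by the analysis of \cite{Hanselman2022} (compare Proposition 11.35 of \cite{LOT18}); this is (c).

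Finally I would assemble the pieces: compactness of $\overline{\mathcal{M}}^B(\bm{x},\bm{y};U)$ is Proposition \ref{Proposition, compactness}, every boundary point has been given a $(0,1]$-collar by the gluing results above, and no collision-of-levels phenomena arise since $1$-P sources carry no east boundary punctures; hence $\overline{\mathcal{M}}^B(\bm{x},\bm{y};U)$ is a compact $1$-manifold with boundary, with boundary strata $(1)$--$(4)$ and counts $(a)$--$(c)$ as stated. The principal difficulty is the boundary-degeneration step: re-establishing the transversality and the parity of the evaluation-map fibers for degenerate discs that carry an interior puncture in place of east boundary punctures, i.e.\ adapting the symmetric-product arguments of \cite{OS04} and \cite{Hanselman2022} to that configuration, together with importing the east-infinity gluing for orbit curves.
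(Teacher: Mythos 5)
Your overall architecture matches the paper's: the proof is an assembly of Proposition \ref{Proposition, summary of degeneration of moduli spaces}, Proposition \ref{Proposition, one-punctured boundary degeneration with corneres must be of simple form}, Proposition \ref{Proposition, 1-P curve, boundary degeneration without corners}, the gluing results, and the Riemann-mapping count of orbit curves, and your treatment of the two-story, orbit-curve, and classification-of-degenerations steps is the same as the paper's. The one place you genuinely diverge is the parity input for (b) and (c). For (b), the paper's Proposition \ref{Proposition, fiber of evaluation map of 1-punctured degenerate disks is odd} does \emph{not} adapt the annulus-involution computation of Lemma \ref{Lemma, fiber of degenerate disks over symmetric almost complex structures, rho is cyclic}; instead (Lemma \ref{Lemma, fiber of a 1-punctured degenerate disks using sufficiently stretched almost complex structure}) it stretches the neck along the connected-sum circle separating $(E_1,\alpha_1^a,\alpha_2^a)$ from $(E_2,\alpha_{im})$, reduces the teardrop to a domain $B'$ in $E_2$ with $n_{z'}(B')=1$ handled by the Riemann mapping theorem, and splices with the unique one-punctured sphere in $Sym^2(E_1)$ through $(z',p)$; a cobordism argument then transfers the count to generic $J$. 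Your proposed direct adaptation of the annulus argument is not automatic: replacing the four boundary east punctures by a single interior puncture changes the source surface of the degenerate disk (the east end is now an interior cylindrical end rather than four boundary arcs alternating with $\alpha$-arcs), so the identification of the source with the cut-open annulus, the existence and uniqueness of the $2$-fold branched cover, and the role of the involution of \cite[Lemma 9.3]{OS04} would all have to be re-established in that configuration — this is precisely the work the paper's neck-stretching route avoids. Similarly, for (c) you cite \cite{Hanselman2022}, whereas the paper proves the evenness itself (Proposition \ref{Proposition, 1-P degenerate disks without corner appear in pairs} via Lemma \ref{Lemma, moduli space of 1-P degenerate disks without corners with symmetric almost complex structures}), again by neck-stretching to show the relevant moduli space is empty for a sufficiently stretched complex structure; note that the parity here is opposite to that of \cite[Proposition 11.35]{LOT18} in the cyclic $0$-P case, so the comparison you draw should be made with care. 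With the parity lemmas supplied by the paper's arguments (or with your annulus computation carried out in full), the rest of your proof goes through.
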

\begin{rmk}
A similar proposition holds in the case when $\iota(\bm{x})=\iota_0$. One simply needs to change the Reeb chords in (a) by a cyclic permutation of the digits in the subscript. 
\end{rmk}

\subsubsection{Reformulation of the moduli spaces}
We reformulate $\mathcal{M}^B(\bm{x},\bm{y};U)$ in terms of holomorphic disks in $Sym^g(\Sigma)$. Assume $\iota(\bm{x})=\iota_1$ throughout the rest of the section.

\begin{defn}
${\mathcal{M}}^B_{Sym}(\bm{x},\bm{y};U)$ is defined to be the space of holomorphic maps $u:[0,1]\times\mathbb{R}\backslash \{(s_0,0)\}\rightarrow Sym^g(\Sigma)$ such that:
\begin{itemize}
\item[(1)]$(s_0,0)$ is in the interior of $[0,1]\times\mathbb{R}$ and is allowed to vary;
\item[(2)]$u(\{0\}\times \mathbb{R})\subset \mathbb{T}_\beta$;
\item[(3)]$u(\{1\}\times \mathbb{R})\subset \mathbb{T}_{\alpha,1}$. Moreover, $u|_{\{1\}\times \mathbb{R}}$ lifts through $f_1:(0,1)\times\mathbb{T}^{g-2}\times (\amalg S^1)\rightarrow Sym^g(\Sigma)$;
\item[(4)]$\lim_{t\rightarrow\infty}u(s+it)=\bm{y}$, and $\lim_{t\rightarrow -\infty}u(s+it)=\bm{x}$;
\item[(5)]$\lim_{(s,t)\rightarrow (s_0,0)}u(s+it)$ is a closed $\overrightarrow{R}$-orbit $\sigma \times \bm{w}$, where $\bm{w}\in Sym^{g-1}(\Sigma)$ and $\sigma$ stands for a closed Reeb orbit that traverses $\partial \overline{\Sigma}$ once;
\item[(6)]$\frac{du}{ds}+J_s\frac{du}{dt}=0$;
\item[(7)]$u$ is in the homology class specified by $B$.
\end{itemize}
\end{defn}
Again, we have the tautological correspondence that identifies the moduli spaces defined here and the ones in Section \ref{subsection, defining moduli spaces}. Therefore, we shall no longer keep the subscript ``Sym" in the notation. 

We also define the moduli spaces of one-punctured degenerate disks (with or without corners).  

\begin{defn}[One-punctured degenerate disks without corners]
Let $J$ be a nearly-symmetric almost complex structure. Let $\bm{x}\in  \mathbb{T}_{\alpha}$. $\mathcal{N}_J(\bm{x};U)$ is the space of maps $v:\mathbb{H}\backslash\{i\}\rightarrow Sym^g(\Sigma)$ such that 
\begin{itemize}
\item[(1)]$v(\mathbb{R})\subset \mathbb{T}_{\alpha,1}$. Moreover, the restriction of $v|_{\mathbb{R}}$ lifts through $f_1:(0,1)\times \mathbb{T}^{g-2}\times(\amalg S^1)\rightarrow Sym^g(\Sigma)$;
\item[(2)]$\lim_{z\rightarrow \infty}v(z)=\bm{x}$, and the path obtained from $v|_{\partial \mathbb{H}}$ by continuous extension at $\infty$ lifts through $\iota_1$;
\item[(3)]$\lim_{z\rightarrow i}v(z)$ is some closed $\overrightarrow{R}$-orbit $\sigma \times \bm{w}$, where $\bm{w}\in Sym^{g-1}(\Sigma)$ and $\sigma$ stands for a closed Reeb orbit that traverses the $\partial \overline{\Sigma}$ once;
\item[(4)]$\frac{du}{ds}+J\frac{du}{dt}=0$.
\end{itemize}
\end{defn}
\begin{defn}[One-cornered one-punctured degenerate disks]
Let $J$ be a nearly-symmetric almost complex structure. Let $q$ be a self-intersection point of $\alpha_{im}$. $\mathcal{N}_J(q;U)$ is the space of maps $v:\mathbb{H}\backslash\{i\}\rightarrow Sym^g(\Sigma)$ such that 
\begin{itemize}
\item[(1)]$v(\mathbb{R})\subset \mathbb{T}_{\alpha,1}$. Moreover, the restriction of $v|_{\mathbb{R}}$ lifts through $f_1:(0,1)\times \mathbb{T}^{g-1}\times (\amalg S^1)\rightarrow Sym^g(\Sigma)$;
\item[(2)]$\lim_{z\rightarrow \infty}v(z)=(q,\bm{p})$ for some $p\in \alpha^a_1\times\alpha^c_1\times\cdots\times \alpha^c_{g-2}$, and and the path obtained from $v|_{\partial \mathbb{H}}$ by continuous extension at $\infty$ does not lift through $\iota_1$;
\item[(3)]$\lim_{z\rightarrow i}v(z)$ is some closed $\overrightarrow{R}$-orbit $\sigma \times \bm{w}$, where $\bm{w}\in Sym^{g-1}(\Sigma)$ and $\sigma$ stands for a closed Reeb orbit that traverses $\partial \overline{\Sigma}$ once;
\item[(4)]$\frac{du}{ds}+J\frac{du}{dt}=0$.
\end{itemize}
\end{defn}
We call $q$ the corner of such a degenerate disk. We also have an evaluation map $ev_J:\mathcal{N}_J(q;U)\rightarrow \alpha^a_i\times\alpha^c_1\times\cdots\times\alpha_{g-2}^c$ defined by $v\mapsto \bm{p}$ if $\lim_{z\rightarrow \infty}v(z)=(q,\bm{p})$.

\subsubsection{One-punctured boundary degeneration with corners}
\begin{prop}\label{Proposition, one-punctured boundary degeneration with corneres must be of simple form}
If a boundary degeneration with corners appears in the compactification of a one-dimensional moduli space $\mathcal{M}^B(\bm{x},\bm{y};U)$, then the nodal comb is of simple form, and the domain for the degenerate disk is a stabilized teardrop with an acute corner. 
\end{prop}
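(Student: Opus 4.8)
The plan is to closely mirror the proof of Proposition~\ref{Proposition, boundary degeneration with corner must be of simple form}, carrying along the extra bookkeeping needed to track the single interior puncture. First I would reduce to the sub-case in which degeneration at east infinity, multi-story splitting, and corner-free boundary degeneration do not occur simultaneously with the boundary degeneration with corners (sphere bubbles are automatically excluded since $\Sigma$ is punctured at east infinity), so that the limit is a holomorphic map from a disk bubble tree $\mathbb{B}$ with a distinguished main (root) component carrying the $\pm$-punctures together with a collection of degenerate-disk bubbles attached in a tree; the interior $e$-puncture labelled by $U$ then lies on exactly one irreducible component of $\mathbb{B}$.

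The first substantive step is to locate the interior puncture. Since $U$ traverses $\partial\bar\Sigma$ once, after filling in east infinity the domain $B$ has $n_{\bar e}(B)=1$; equivalently, $B$ covers each of the four regions adjacent to $\partial\bar\Sigma$ once, so $n_z(B)=1$. As every component of $\mathbb{B}$ carries a positive domain and these domains sum to $B$, exactly one component has positive local multiplicity at $z$, and — since the component carrying $U$ necessarily covers the region containing $z$ — it is precisely that component. The same homological-and-positivity analysis as in Proposition~\ref{Proposition, boundary degeneration with corner must be of simple form} then shows $\mathbb{B}$ has a single leaf whose domain is a positive one-cornered $\alpha$-bounded domain: any extra leaf would carry a positive one-cornered $\alpha$-bounded domain with $n_z=0$, excluded by Condition~(1) of Definition~\ref{Definition, unobstructedness}. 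The leaf's domain has $n_z\le n_z(B)=1$, and it cannot be $0$ (Condition~(1) again), so it equals $1$; this identifies the leaf as the $U$-carrying component — hence the main component and any intermediate bubbles are ordinary $0$-P objects — and by Condition~(3) of Definition~\ref{Definition, unobstructedness} forces its domain to be a stabilized teardrop.

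The second step is the dimension count that eliminates intermediate bubbles and pins down the corner. Each intermediate bubble carries a two-cornered $\alpha$-bounded domain with $n_z=0$, hence is a bigon (Condition~(4) of Definition~\ref{Definition, unobstructedness}) whose reduced moduli space has virtual dimension at least $g-1$, with equality exactly when both corners are acute; the main component is a $0$-P curve with a single sharp turn at $q$, with reduced moduli space of nonnegative virtual dimension; and the leaf is a $1$-P stabilized-teardrop degenerate disk. Because a stabilized teardrop already has multiplicity one at $\bar e$, the $1$-P leaf differs from the $0$-P stabilized teardrop only by the interior puncture on the source, and I would verify, using the $1$-P index formula of Proposition~\ref{Proposition, first index formula}(2), that its reduced moduli space still has virtual dimension $g-1$ for an acute corner at $q$ and $g$ for an obtuse one — the same dichotomy used in the $0$-P argument. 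Feeding these bounds into the identical gluing-dimension identity as in Proposition~\ref{Proposition, boundary degeneration with corner must be of simple form}, whose left side is the (one-dimensional) virtual dimension of $\mathcal{M}^B(\bm{x},\bm{y};U)$, then forces the number of bubbles to be $1$ — so the comb is of simple form $u\vee v$ — and forces the leaf's corner to be acute. As there, this same inequality simultaneously rules out the other degeneration types occurring alongside a cornered boundary degeneration, retroactively justifying the reduction made at the outset.

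The main obstacle I anticipate is exactly the $1$-P version of that dimension count: one must check that attaching the interior puncture labelled $U$ (equivalently, demanding the degenerate disk pass through $\bar e$ once) changes neither the expected dimension of the leaf's reduced moduli space nor the ``acute $\Leftrightarrow g-1$, obtuse $\Leftrightarrow g$'' dichotomy relative to the $0$-P case, and that the relevant moduli spaces of one-cornered one-punctured degenerate disks $\mathcal{N}_J(q;U)$ are transversally cut out for a generic nearly-symmetric almost complex structure — the $1$-P analogue of the transversality underlying Propositions~\ref{Proposition, fiber of evaluation map of degenerate disks is odd when rho is cyclic} and~\ref{Proposition, generic fiber of the evaluationo map is even when rho is not cyclic}.
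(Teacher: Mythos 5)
Your proof is correct and follows essentially the same route as the paper, which simply observes that the only modification to the zero-cornered argument of Proposition \ref{Proposition, boundary degeneration with corner must be of simple form} is replacing the east boundary punctures by the single interior puncture (which, as you note, forces $n_z=1$ on exactly one component and thereby identifies the distinguished leaf). The detail you flag as the main obstacle — that the $1$-P stabilized-teardrop moduli space $\mathcal{N}_J(q;U)$ is transversally cut out of dimension $g-1$ for an acute corner — is exactly what the paper supplies in Proposition \ref{Proposition, fiber of evaluation map of 1-punctured degenerate disks is odd}, so nothing is missing.
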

\begin{proof}
The proof is similar to the proof of Proposition \ref{Proposition, boundary degeneration with corner must be of simple form}. There is only one modification needed: We no longer have east boundary punctures when considering the bubble tree $\mathbb{B}$ of the nodal curve; instead, there is one and only one interior puncture. With this, the rest of the proof follows exactly as in Proposition \ref{Proposition, boundary degeneration with corner must be of simple form}. 
\end{proof}

\begin{prop}\label{Proposition, fiber of evaluation map of 1-punctured degenerate disks is odd}
Let $q$ be a self-intersection point of $\alpha_{im}$ and let $B\in T(q)$ be a stabilized teardrop with acute corner. For a generic nearly symmetric almost complex structure $J$, the moduli space of degenerate disks $\mathcal{N}_J^B(q;U)$ is a $(g-1)$-manifold, and a generic fiber of the evaluation map $ev_J:\mathcal{N}_J^B(q;U)\rightarrow \alpha^a_1\times\alpha^c_1\times\cdots\times\alpha_{g-2}^c$ is a compact 0-dimensional manifold consisting of an odd number of points. 
\end{prop}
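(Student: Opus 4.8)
The plan is to follow the proof of Proposition~\ref{Proposition, fiber of evaluation map of degenerate disks is odd when rho is cyclic} almost verbatim; the only genuine change is in the genus-two base case, where the single interior puncture labeled $U$ now plays the role that the four elementary Reeb chords $-\rho_0,\dots,-\rho_3$ played in the $0$-P cyclic case. Since $B$ is a stabilized teardrop, any holomorphic representative of $B$ is somewhere boundary injective (this is where $B\in T(q)$ enters; cf.\ Proposition~3.9 of \cite{OS04} and \cite{Oh1996}), so for a generic nearly symmetric $J$ the moduli space $\mathcal{N}_J^B(q;U)$ is transversally cut out, and a dimension count as in Proposition~\ref{Proposition, fiber of evaluation map of degenerate disks is odd when rho is cyclic} gives $\dim\mathcal{N}_J^B(q;U)=g-1$, so that a generic fiber of $ev_J$ is a $0$-manifold. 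Compactness of $ev_J^{-1}(p)$ follows exactly as there: a Gromov limit can carry no further degenerate-disk component for index reasons, and cannot bubble off a sphere since its domain is a teardrop rather than all of $\Sigma$; moreover the interior puncture cannot break (it is a simple closed orbit) nor escape to west infinity, so the limit stays in $\overline{\mathcal{N}}_J^B(q;U)$ with the same asymptotics. Finally, as at the end of the proof of Proposition~\ref{Proposition, fiber of evaluation map of degenerate disks is odd when rho is cyclic}, the general statement follows from the case $g(\Sigma)=2$ by the neck-stretching argument of Section~10 of \cite{OS04} relating Heegaard diagrams that differ by a stabilization.

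For $g(\Sigma)=2$ I would prove the analog of Lemma~\ref{Lemma, fiber of degenerate disks over symmetric almost complex structures, rho is cyclic}: for the symmetric almost complex structure $Sym^2(j)$ the fiber $ev_{Sym^2(j)}^{-1}(p)$ is a single point, with parity then transported to a generic $J$ and to a generic $p$ by the two cobordism arguments already used (a generic path $J_s$ from $J$ to $Sym^2(j)$, and sliding the evaluation point along $\alpha_1^a$). By the tautological correspondence it suffices to enumerate pairs $(\hat u,\pi)$ where $\hat u\colon F\to\Sigma$ is a stay-on-track holomorphic curve realizing the stabilized teardrop $B$ and $\pi\colon F_{\bar{e}}\to\mathbb{H}_{\bar{e}}$ is a two-fold branched cover of the base disk with its interior $e$-puncture filled in, the preimage of that interior puncture being a single branch point of $\pi$. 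Opening up $B$ along $\bm{\bar{\alpha}}$ determines $F$ uniquely, and near east infinity $F$ carries one interior puncture rather than a boundary arc with four Reeb-chord punctures. Once the conformal structure on $F_{\bar{e}}$ is pinned down by requiring $\hat u$ holomorphic, there is a unique involution of $F_{\bar{e}}$ having the filled-in interior puncture among its fixed points, interchanging the two boundary asymptotics at $q$ and at $p$, and compatible with the boundary combinatorics; it exhibits $F_{\bar{e}}$ as a double branched cover of $\mathbb{H}_{\bar{e}}$ and hence determines $\pi$. This is the $1$-P analog of Lemma~9.3 of \cite{OS04}, so the pair $(\hat u,\pi)$, and with it the degenerate disk, is unique.

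I expect the genus-two step to be the main obstacle: one must verify that opening up the stabilized teardrop genuinely produces a base surface for which the branched-cover/involution picture still forces uniqueness, now with the branching concentrated at an interior point rather than recorded by boundary Reeb-chord punctures, and in particular that moving the $U$-puncture into the interior of the source introduces no second holomorphic representative. Everything else --- transversality, compactness of fibers, the neck-stretching reduction, and the two cobordism arguments --- is formally identical to the $0$-P cyclic case and to the $1$-P degeneration analysis already carried out in \cite{Hanselman2022}, so it can be cited or paralleled with only cosmetic changes.
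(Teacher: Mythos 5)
Your overall architecture (transversality and fiber compactness as in Proposition \ref{Proposition, fiber of evaluation map of degenerate disks is odd when rho is cyclic}, a genus-two base case, then the cobordism arguments in $J$ and in $p$ and the neck-stretching reduction from higher genus to $g=2$) matches the paper, and you correctly flag the genus-two count as the crux. But your proposed base case has a genuine gap: the source of a $1$-P degenerate disk representing a stabilized teardrop is \emph{not} an annulus, so the involution argument of Lemma~9.3 of \cite{OS04} does not transfer. In the $0$-P cyclic case the boundary of the degenerate disk sweeps out both alpha arcs in full together with all four Reeb chords on $\partial\bar{\Sigma}$, so cutting the domain open along $\alpha^a_1\cup\alpha^a_2\cup\partial\bar{\Sigma}$ renders the part of the source over $E_1$ planar and the total source is an annulus. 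In the $1$-P case the boundary is constrained to lie in $\mathbb{T}_{\alpha,1}$ only and there are no boundary Reeb-chord asymptotics: the curve must still engulf all of $E_1$ (since $[B_1]=[E_1]$) while its boundary never reaches $\alpha^a_2$ or $\partial\bar{\Sigma}$. The portion of the source lying over $E_1$ therefore retains genus, its conformal structure is essentially forced to agree with that of $E_{1,\bar{e}}$, and for an arbitrary complex structure $j$ on $\Sigma$ there is no reason a compatible two-fold branched cover $\pi$ of the disk exists, let alone uniquely. So the parity cannot be computed at a generic $Sym^2(j)$ by the direct enumeration you propose; this is precisely why the paper does not argue as in Lemma \ref{Lemma, fiber of degenerate disks over symmetric almost complex structures, rho is cyclic} here.

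What the paper does instead (Lemma \ref{Lemma, fiber of a 1-punctured degenerate disks using sufficiently stretched almost complex structure}) is work with a \emph{sufficiently stretched} complex structure along the connected-sum circle separating $(E_1,\alpha^a_1,\alpha^a_2)$ from $(E_2,\alpha_{im})$: in that regime every element of $ev^{-1}_{Sym^2(j)}(p)$ is obtained by splicing the one-punctured holomorphic sphere in $Sym^2(E_1)$ passing through $(z',p)$ with a teardrop in $E_2$ having corner at $q$, and the latter moduli space is a single transversally cut out point by the Riemann mapping theorem. Hence the fiber is one point. If you replace your involution step with this neck-stretching/splicing argument, the rest of your proof goes through as written.
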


\begin{proof}
The regularity of $\mathcal{N}_J^B(q;U)$ and compactness of a generic fiber are proved in the same way as in Proposition \ref{Proposition, fiber of evaluation map of 1-punctured degenerate disks is odd}. 

The parity of the cardinality of a generic fiber follows from a similar neck-stretching and cobordism argument as in Proposition \ref{Proposition, fiber of evaluation map of 1-punctured degenerate disks is odd}, using Lemma \ref{Lemma, fiber of a 1-punctured degenerate disks using sufficiently stretched almost complex structure} below instead of Lemma \ref{Lemma, fiber of degenerate disks over symmetric almost complex structures, rho is cyclic}.
\end{proof}

\begin{lem}\label{Lemma, fiber of a 1-punctured degenerate disks using sufficiently stretched almost complex structure}
Assume $g(\Sigma)=2$. Fix some point $p\in \alpha^a_1$. For a sufficiently stretched almost complex structure $j$ on $\Sigma$, the fiber $ev_{Sym^2(j)}^{-1}(p)$ is transversally cut out and consists of one point. 
\end{lem}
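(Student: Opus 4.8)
The plan is to follow the proof of Lemma \ref{Lemma, fiber of degenerate disks over symmetric almost complex structures, rho is cyclic}, but to take advantage of the stretched complex structure so that the one-punctured degenerate disk decomposes into an easily understood piece on each side of the separating curve. First I would use the tautological correspondence to rephrase $ev^{-1}_{Sym^2(j)}(p)$ as the set of pairs $(\hat u\colon F\to \Sigma,\ \pi\colon F_{\bar e}\to D_{\bar e})$, where $D$ is the disk with a single interior puncture, $\pi$ is a two-fold branched cover taking the interior puncture of $F$ to that of $D$, and $\hat u$ is holomorphic with domain $[B]$, boundary on the $\alpha$-curves, asymptotic to $q$ at the outer boundary puncture and to $p$ on the corresponding second sheet. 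Since $B$ is a stabilized teardrop, I would then fix the decomposition $\Sigma = E_1 \# E_2$ coming from its definition: $E_1$ is the genus-one bordered piece carrying $\partial\bar\Sigma$ and the two $\alpha$-arcs, with $B|_{E_1}=[E_1]$; $E_2$ is the closed genus-one piece carrying $\alpha_{im}$, the corner $q$, and the immersed teardrop $B_2 = B|_{E_2}$. In particular the interior puncture, asymptotic to a closed $\overrightarrow R$-orbit wrapping $\partial\bar\Sigma\subset E_1$ once, is carried by the $E_1$-side after stretching.

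Next I would stretch the neck along the separating curve $C$, i.e.\ take $j=j_t$ with $t\to\infty$. Given any sequence of holomorphic representatives, Gromov compactness (as in Lemma \ref{Lemma, fiber of degenerate disk over sufficiently stretched complex structure, rho non-cyclic}) produces a limiting nodal curve in $Sym^2(E_1\vee E_2)$ whose components split according to which side of $C$ they map to. The component(s) over $E_2$ carry $B_2$; because $B_2$ is an immersed teardrop it is somewhere boundary injective (Proposition 3.9 of \cite{OS04}, exactly as used in Lemma \ref{Lemma, fiber of degenerate disks over symmetric almost complex structures, rho is cyclic}), so for the induced conformal structure the Riemann mapping theorem gives a unique holomorphic teardrop with an acute corner at $q$, and it is transversally cut out. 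The component(s) over $E_1$ carry $[E_1]$ together with the interior puncture and the constraint that the second sheet passes through $p\in\alpha^a_1$; I would identify this piece explicitly by cutting the domain $[E_1]$ open along the two $\alpha$-arcs — exactly as the annulus $F$ is produced in the proof of Lemma \ref{Lemma, fiber of degenerate disks over symmetric almost complex structures, rho is cyclic}, with the single closed $\overrightarrow R$-orbit now playing the role that the cyclic chord $(-\rho_0,-\rho_1,-\rho_2,-\rho_3)$ plays there (consistent with the unification remark after Proposition \ref{Proposition, first index formula}) — and show that the only holomorphic map realizing $[E_1]$ with these asymptotics is the obvious quotient of the resulting surface by its unique involution interchanging the two boundary circles and swapping the punctures labelled by $p$ and $q$, as in \cite[Lemma 9.3]{OS04}; this map is unique and automatically transverse. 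The matching across the node is then forced.

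Finally, for $t$ sufficiently large I would invoke the neck-gluing theorem in the symmetric-product setting (Section 10 of \cite{OS04}) to conclude that the unique split configuration above glues to a unique element of $ev^{-1}_{Sym^2(j_t)}(p)$, cut out transversally, and that conversely every element of the fiber lies near a split configuration of this shape for $t$ large. Hence $|ev^{-1}_{Sym^2(j)}(p)|=1$ with the fiber transverse, which is the lemma; this is the base case that then feeds the cobordism-and-induction argument of Proposition \ref{Proposition, fiber of evaluation map of 1-punctured degenerate disks is odd}.

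The hard part will be the analysis of the $E_1$-piece and its gluing: unlike the cyclic $0$-P case it carries the interior puncture, so the cut-open surface and its involution must be produced with the closed $\overrightarrow R$-orbit replacing the four boundary punctures, and one must check both that this piece is regular for $Sym^2(j_t)$ and that the symmetric-product gluing applies with an interior puncture and the immersed boundary condition present simultaneously. I do not expect new analytic input beyond \cite{OS04}, \cite{LOT18}, and the one-punctured curve analysis of \cite{Hanselman2022} already cited above, but keeping the bookkeeping of the $Sym^2$ degeneration straight is the delicate point.
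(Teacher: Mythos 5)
Your overall architecture---stretch the neck along the separating curve, show the $E_2$-side contributes the unique holomorphic teardrop with corner $q$ via the Riemann mapping theorem, show the $E_1$-side contributes a unique transversally cut out object, and then splice---is exactly the paper's. The $E_2$ analysis is fine. The problem is your identification of the $E_1$-component. You propose to cut the domain $[E_1]$ open along the two $\alpha$-arcs to obtain an annulus and then quotient by an involution ``interchanging the two boundary circles and swapping the punctures labelled by $p$ and $q$,'' transplanting the argument of Lemma \ref{Lemma, fiber of degenerate disks over symmetric almost complex structures, rho is cyclic}. This cannot work as stated: $q$ is a self-intersection point of $\alpha_{im}$, which lives entirely on $E_2$, so after splitting the $E_1$-component sees neither $q$ nor any corner, and the annulus whose outer boundary is asymptotic to $q$ only exists before the surface is degenerated. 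What actually happens on the $E_1$-side in the stretched limit is different in kind: since $\pi_2(E_{1,\bar e},\alpha^a_1\cup\{e\})=0$, the projection to $E_1$ of the main disk component (the one carrying the Lagrangian boundary condition in $E_1\times E_2$) is constant, equal to $p$, so the main disk is just $\{p\}$ times the teardrop in $E_2$; the class $[E_1]$ together with the interior puncture is carried by a closed, one-interior-punctured \emph{sphere} bubble in $Sym^2(E_1)$ attached along the connect-sum stratum and constrained to pass through $(z',p)$.

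The uniqueness and regularity you need on the $E_1$-side is therefore the statement that there is exactly one such sphere through $(z',p)$, and this is the standard splicing computation of Section 10 of \cite{OS04}: via the tautological correspondence the sphere is the quotient of the closed torus $E_{1,\bar e}$ by the hyperelliptic involution exchanging $z'$ and $p$. So an involution argument does appear, but it lives on the closed torus and swaps $z'$ with $p$, not on an annulus swapping $p$ with $q$. With that substitution your proof closes up and coincides with the paper's, which identifies $ev^{-1}_{Sym^2(j)}(p)$ with $\mathcal{N}^{B'}(q)$, a single transversally cut out point, by exactly this splicing.
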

\begin{proof}
View $\Sigma$ be the connected sum $(E_1,\alpha^a_1,\alpha^a_2)$ and $(E_2,\alpha_{im})$, where $E_1$ is the punctured Riemann surface of genus one and $E_2$ is a closed Riemann surface of genus one. Let $z'$ denote the points on $E_1$ and $E_2$ where the connected sum is performed. The domain $B$ gives rise to a teardrop domain $B'$ in $E_2$ with $n_{z'}(B')=1$. The Riemann mapping theorem implies that the moduli space $\mathcal{N}^{B'}(q)$ of holomorphic disks in $E_2$ with corner $q$ and domain $B'$ is smoothly cut out and has only one element. The gluing argument in Section 10 of \cite{OS04} shows for a sufficiently stretched almost complex structure, maps in $ev_{Sym^2(j)}^{-1}(p)$ are obtained by splicing the one-punctured holomorphic sphere in $Sym^2(E_1)$ passing through $(z',p)$ and the holomorphic disk in $\mathcal{N}^{B'}(q)$.\footnote{Strictly speaking, Ozsv\'ath and Szab\'o's argument concerns splicing closed holomorphic spheres while in our case the sphere is punctured, but this does not affect the argument applying to our case. Alternatively, we may treat one-punctured holomorophic disks or spheres as the corresponding object without interior punctures, but intersecting $\{e\}\times \Sigma_{\bar{e}}$ once in $Sym^2(\Sigma_{\bar{e}})$.} In particular, $ev_{Sym^2(j)}^{-1}(p)$ is identified with $\mathcal{N}^{B'}(q)$ and hence consists of only one element. 
\end{proof}

\subsubsection{One-punctured boundary degeneration without corners}
\begin{prop}\label{Proposition, 1-P curve, boundary degeneration without corners}
In the assumption of Proposition \ref{Proposition, ends of moduli space of 1-P curves}, if a boundary degeneration without corner occurs, then: 
\begin{itemize}
\item[(1)] There is only one degenerate disk, and its domain $[B]$ is $[\Sigma]$. 
\item[(2)] $\bm{x}=\bm{y}$. 
\item[(3)] Such degenerate disk do not occur simultaneously with other types of degeneration.
\item[(4)] The number of ends corresponding to such boundary degeneration is even.
\end{itemize}
\end{prop}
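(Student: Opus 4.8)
The plan is to follow the proof of Proposition~\ref{Proposition, boundary degeneration without corner}, replacing every appeal to a $0$-P degenerate disk by its $1$-P counterpart and adapting the index bookkeeping to the interior puncture. For parts (1)--(3), suppose a boundary degeneration without corners occurs; by Proposition~\ref{Proposition, one-punctured boundary degeneration with corneres must be of simple form} its nodal source is a disc bubble tree $\mathbb{B}$ carrying exactly one interior puncture. First I would rerun the leaf-counting argument from the proofs of Proposition~\ref{Proposition, boundary degeneration with corner must be of simple form} and Proposition~\ref{Proposition, boundary degeneration without corner}: any leaf other than a single distinguished one would bound a positive one-cornered $\alpha$-bounded domain with $n_z=0$ or a positive zero-cornered $\alpha$-bounded domain with $n_z=0$, neither of which exists by unobstructedness (Definition~\ref{Definition, unobstructedness}), so $\mathbb{B}$ has a single leaf, bounding a positive zero-cornered $\alpha$-bounded domain $B_v$ with $n_z(B_v)=1$, whence $B_v=[\Sigma]$ by Definition~\ref{Definition, unobstructedness}(2). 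The interior puncture must lie on this leaf: the closed Reeb orbit traverses $\partial\bar{\Sigma}$, and the $[\Sigma]$-leaf already carries multiplicity $1$ at all four boundary regions, so no other component---in particular not the main component---can carry the puncture. An index computation, parallel to the $0$-P case treated in the proof of Proposition~\ref{Proposition, boundary degeneration without corner} (see also Section~11 of \cite{LOT18}), shows that a $1$-P degenerate disk with domain $[\Sigma]$ has Maslov index $2$; by additivity of the index (Propositions~\ref{Proposition, embedded index formula is additive} and \ref{Proposition, levels of degeneration are embedded}) the main component then carries index $0$ and is therefore constant, forcing $\bm{x}=\bm{y}$. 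The same count forbids the simultaneous occurrence of any further degeneration, each of which would add positive index. This establishes (1), (2), (3).

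For part (4), a standard gluing argument---applying Proposition~\ref{Proposition, gluing holomorphic curves and east ends} at the interior puncture and its matching closed Reeb orbit, as in \cite{Hanselman2022}---shows that every such end has a neighborhood in $\overline{\mathcal{M}}^B(\bm{x},\bm{y};U)$ diffeomorphic to $(0,1]$, and that the number of these ends equals $\#\mathcal{N}^{[\Sigma]}_J(\bm{x};U)$, the count of $1$-P degenerate disks with domain $[\Sigma]$ at $\bm{x}$ for a generic nearly-symmetric $J$. As in the other cases this moduli space is transversally cut out (the disk representing $[\Sigma]$ is somewhere boundary injective near $\partial\bar{\Sigma}$, cf.\ Proposition~3.9 of \cite{OS04}) and compact: no sphere bubbles form since $Sym^g(\Sigma)$ carries no closed holomorphic curves once $\Sigma$ is punctured at east infinity, and no further breaking occurs since $[\Sigma]$ is, by Definition~\ref{Definition, unobstructedness}, the unique positive zero-cornered $\alpha$-bounded domain with $n_z\leq 1$. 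So (4) reduces to showing $\#\mathcal{N}^{[\Sigma]}_J(\bm{x};U)$ is even.

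For the parity I propose the strategy used for the ``non-cyclic, hence even'' case of Proposition~\ref{Proposition, generic fiber of the evaluationo map is even when rho is not cyclic}. Reduce to $g(\Sigma)=2$ by the neck-stretching-over-a-stabilization argument of Section~10 of \cite{OS04}, write $(\Sigma,\bm{\bar{\alpha}})=(E_1,\bar{\alpha}^a_1,\bar{\alpha}^a_2)\#(E_2,\alpha_{im})$ with $E_1$ the once-punctured genus-one piece carrying the east puncture, and stretch the connected-sum neck. Arguing as in Lemma~\ref{Lemma, fiber of degenerate disk over sufficiently stretched complex structure, rho non-cyclic} and Lemma~\ref{Lemma, fiber of a 1-punctured degenerate disks using sufficiently stretched almost complex structure}, I expect that for a sufficiently stretched $j$ the set $\mathcal{N}^{[\Sigma]}_{Sym^2(j)}(\bm{x};U)$ is empty: a limiting broken configuration would project in $E_1$ to a holomorphic object covering $[E_1]$ that carries the closed-orbit end but no admissible boundary Reeb chords on $E_1$, and no such object exists. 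Emptiness gives even (indeed zero) cardinality at the stretched end, which propagates to a nearby generic $J$ by the usual cobordism argument and is independent of the stretching parameter as in Theorem~3.15 of \cite{OS04}. The hard part will be precisely this last step---identifying the neck-stretched limit of a $1$-P degenerate disk with domain $[\Sigma]$ and checking that the $E_1$-model is empty. The subtlety, absent from Lemma~\ref{Lemma, fiber of degenerate disk over sufficiently stretched complex structure, rho non-cyclic}, is that here the interior puncture---rather than a tuple of split boundary Reeb chords---records the wrapping around $\partial\bar{\Sigma}$, so the bookkeeping at the neck and at the closed-orbit end must be done with care; this is exactly where the parity comes out ``even'' rather than ``odd'', the $1$-P analogue of the cyclic-versus-non-cyclic dichotomy in Proposition~\ref{Proposition, ends of 1-dimensional moduli spaces of 0-P curves, Reeb element of length 4}.
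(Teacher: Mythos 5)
Your proposal follows essentially the same route as the paper: parts (1)--(3) are obtained exactly as you describe, by rerunning the $0$-P argument of Proposition~\ref{Proposition, boundary degeneration without corner}, and part (4) is reduced via gluing to the parity of $\#\mathcal{N}^{[\Sigma]}_J(\bm{x};U)$, which the paper proves even (Proposition~\ref{Proposition, 1-P degenerate disks without corner appear in pairs}) by the same cobordism-plus-neck-stretching reduction to $g=2$ and emptiness of $\mathcal{N}^{[\Sigma]}_{Sym^2(j)}(\bm{x};U)$ for a stretched $j$ (Lemma~\ref{Lemma, moduli space of 1-P degenerate disks without corners with symmetric almost complex structures}). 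The only cosmetic difference is in the step you flag as hard: rather than arguing that a nonexistent $E_1$-covering object with a closed-orbit end would have to appear, the paper shows the Gromov limit is forced to be constant (using $\pi_2(E_{1,\bar e},\alpha_1^a\cup\{e\})=0$ for the $E_1$ factor and unobstructedness for the $E_2$ factor, which rules out sphere bubbles) and then contradicts $n_z=1$.
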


\begin{proof} 
The proof of (1), (2), and (3) are straightforward modifications of that of Proposition \ref{Proposition, boundary degeneration without corner} and are omitted. (4) follows from the standard gluing result and Proposition \ref{Proposition, 1-P degenerate disks without corner appear in pairs} below, which differs from the counterpart in the 0-P case.  
\end{proof}

\begin{prop}\label{Proposition, 1-P degenerate disks without corner appear in pairs}
For a generic almost complex structure $J$, $\mathcal{N}^{[\Sigma]}_J(\bm{x};U)$ is a compact, 0-dimensional manifold that consists of an even number of points. 
\end{prop}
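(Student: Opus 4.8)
The plan is to follow the template used for the $0$-P case --- transversality, compactness, and a model computation of the parity in the spirit of \cite[Proposition 11.35]{LOT18} and Lemma \ref{Lemma, fiber of a 1-punctured degenerate disks using sufficiently stretched almost complex structure} --- the novelty being that capping the interior puncture off to a single closed Reeb orbit changes the outcome from odd to even.

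First I would check that, for a generic almost complex structure $J$ (after a generic perturbation of $\alpha_{im}$ if needed), $\mathcal{N}^{[\Sigma]}_J(\bm{x};U)$ is transversally cut out of expected dimension $0$. Since the domain $[\Sigma]$ has multiplicity one at every region, the associated curve $\hat{u}\colon F\to\Sigma$ is degree one over generic points and hence somewhere injective; equivalently, by the removable singularity theorem the disk fills in to a disk in $Sym^g(\Sigma_{\bar{e}})$ meeting the divisor $\{e\}\times Sym^{g-1}(\Sigma_{\bar{e}})$ transversally once, so it has an interior injective point. The usual transversality argument (cf.\ Proposition 3.9 of \cite{OS04} and the discussion in the proof of Proposition \ref{Proposition, fiber of evaluation map of 1-punctured degenerate disks is odd}) then applies. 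Compactness is a positivity argument: a Gromov limit has total domain $[\Sigma]$, and by unobstructedness --- conditions (1)--(3) of Definition \ref{Definition, unobstructedness} --- the class $[\Sigma]$ does not split off a positive zero- or one-cornered $\alpha$-bounded sub-domain, so no disk or teardrop bubbles; sphere bubbles are ruled out as usual; and an orbit-curve (or split-curve) degeneration at east infinity is excluded on dimensional grounds, since it would exhibit a codimension-one stratum in a $0$-dimensional moduli space (cf.\ the analysis of \cite{Hanselman2022} recalled in Proposition \ref{Proposition, first constraints on degeneration of 1-P curves}). Hence $\mathcal{N}^{[\Sigma]}_J(\bm{x};U)$ is a compact $0$-manifold.

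It remains to show $\#\,\mathcal{N}^{[\Sigma]}_J(\bm{x};U)$ is even. Following Section 10 of \cite{OS04} and the proofs of Proposition \ref{Proposition, fiber of evaluation map of degenerate disks is odd when rho is cyclic} and Proposition \ref{Proposition, fiber of evaluation map of 1-punctured degenerate disks is odd}, I would reduce by a stabilization argument to the base case $g(\Sigma)=2$, write $\Sigma=(E_1,\alpha^a_1,\alpha^a_2)\#(E_2,\alpha_{im})$, and stretch the connect-sum neck. A degenerate disk in class $[\Sigma]$ has one boundary arc on $\alpha^a_1$ and the other on $\alpha_{im}$, so for a sufficiently stretched complex structure it must break along the neck into a piece supported near $E_1$ (which carries $\partial\overline{\Sigma}$ and the interior puncture, in a class restricting to $[E_1]$) and a piece supported near $E_2$ (with boundary on $\alpha_{im}$ in the class $[E_2]$). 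One of the two factors in the resulting count is even: unlike the $0$-P configuration $(-\rho_0,-\rho_1,-\rho_2,-\rho_3)$ of \cite[Proposition 11.35]{LOT18}, for which the ordered Reeb data rigidifies the disk and the count is $1$, a single closed Reeb orbit retains no cyclic ordering, and the branched covers (equivalently, the choices of ``cut point'' $-\rho_{0123},-\rho_{1230},-\rho_{2301},-\rho_{3012}$) come in pairs; this is the content of an explicit genus-one calculation parallel to Lemma \ref{Lemma, fiber of degenerate disks over symmetric almost complex structures, rho is cyclic}. The case $g(\Sigma)>2$ then follows from the base case by the inductive stabilization argument of Theorem 3.15 of \cite{OS04}.

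The main obstacle is exactly the last point --- proving the count is \emph{even} rather than odd. Everything else (transversality, compactness, the neck-stretching reduction, the induction on genus) is a routine adaptation of arguments already in Section \ref{Section, ends of moduli spaces of 0-p curves} and in \cite{OS04, LOT18}, but the sign of the parity is the single place where the $1$-P theory genuinely diverges from the $0$-P theory, and it must be nailed down by an explicit description of the genus-one model moduli space exhibiting the pairing of the relevant disks.
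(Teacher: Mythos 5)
Your overall skeleton (transversality and compactness as in \cite[Proposition 3.14]{OS04}, a cobordism argument reducing the parity to the base case $g(\Sigma)=2$, and a neck-stretching model computation there) matches the paper's proof. But the one step you yourself flag as the crux --- the mechanism forcing the count to be \emph{even} --- is misidentified, and as proposed it would not go through. You predict that after stretching the neck the disk breaks into an $E_1$-piece and an $E_2$-piece, and that the evenness comes from the $E_1$/Reeb-orbit side because ``the choices of cut point come in pairs.'' Neither half of this is what happens. The paper's Lemma \ref{Lemma, moduli space of 1-P degenerate disks without corners with symmetric almost complex structures} shows that for a sufficiently stretched $j$ the moduli space $\mathcal{N}^{[\Sigma]}_{Sym^2(j)}(\bm{x};U)$ is \emph{empty}: the main component of any Gromov limit is a disk $v$ in $E_1\times E_2$ whose boundary lies entirely in $\mathbb{T}_{\alpha,1}$ (there are no boundary punctures in the $1$-P setting, so the boundary never switches $\alpha$-arcs), hence $Pr_{E_1}\circ v$ is constant because $\pi_2(E_{1,\bar e},\alpha_1^a\cup\{e\})=0$, and $Pr_{E_2}\circ v$ is constant because unobstructedness forces its domain to be $[E_2]$, whose boundary is nullhomotopic in $\alpha_{im}$, contradicting $\pi_2(E_2)=0$. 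A constant limit has $n_z=0$, contradicting $n_z=1$, so no limit exists and the count is even because it is zero --- not because a nonempty moduli space carries a fixed-point-free involution. In particular there is no ``pairing of the relevant disks'' to exhibit, and an attempt to produce one would stall.

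The conceptual point you were reaching for is also located in the wrong place. The genuine divergence from the $0$-P computation of \cite[Proposition 11.35]{LOT18} is not that the closed Reeb orbit forgets a cyclic ordering of cut points; it is that in the $0$-P case the boundary of the degenerate disk jumps between $\alpha_1^a$ and $\alpha_2^a$ at the four boundary punctures, so the $E_1$-projection of the limit is a genuine degree-one disk and the model count is $1$, whereas in the $1$-P case the boundary stays on a single $\alpha$-arc and the limit is forced to be constant. (The four cut points $\rho_{0123},\dots,\rho_{3012}$ do appear, but as \emph{orbit-curve ends} of the one-dimensional moduli spaces in Proposition \ref{Proposition, ends of moduli space of 1-P curves}(a), which are a different boundary stratum from the boundary degenerations counted here.) Secondary quibble: your compactness discussion of ``orbit-curve or split-curve degeneration at east infinity excluded on dimensional grounds'' is not needed for a degenerate disk with no $\pm$-punctures; the paper simply invokes the closed-surface argument of \cite{OS04}.
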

\begin{proof}
The argument for compactness and transversality is the same as in \cite[Proposition 3.14]{OS04}, which is the counterpart of Proposition \ref{Proposition, 1-P degenerate disks without corner appear in pairs} when the Heegaard surface is closed; we will omit this part. By a similar cobordism argument used in Proposition \ref{Proposition, fiber of evaluation map of degenerate disks is odd when rho is cyclic}, we can reduce understanding the parity of the moduli space to the base case $g(\Sigma)=2$, which is addressed in Lemma \ref{Lemma, moduli space of 1-P degenerate disks without corners with symmetric almost complex structures} below.
\end{proof}

\begin{lem}\label{Lemma, moduli space of 1-P degenerate disks without corners with symmetric almost complex structures}
Assume $g(\Sigma)=2$. View $(\Sigma,\alpha_1^a,\alpha^a_2,\alpha_{im})=(E_1,\alpha_1^a,\alpha_2^a)\# (E_2,\alpha_{im})$, where $E_1$ is a punctured Riemann surface of genus one and $E_2$ is a Riemann surface of genus one. If $j$ is a sufficiently stretched complex structure on $\Sigma$, then $\mathcal{N}^{[\Sigma]}_{Sym^2(j)}(\bm{x};U)$ is empty. 
\end{lem}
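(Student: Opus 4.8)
The plan is to argue by contradiction via a neck-stretching (Gromov--SFT) compactness argument along the connect-sum tube, exactly parallel to the proof of Lemma \ref{Lemma, fiber of degenerate disk over sufficiently stretched complex structure, rho non-cyclic} (which itself follows Proposition 3.16 of \cite{OS04}). Suppose the lemma fails. Then there is a sequence $j_t$ of complex structures on $\Sigma$ for which the connect-sum tube is isometric to $S^1\times[-t,t]$, together with maps $u_t\in\mathcal{N}^{[\Sigma]}_{Sym^2(j_t)}(\bm x;U)$, $t\to\infty$. The domain class $[\Sigma]$ is fixed, so the $u_t$ have uniformly bounded energy, and after passing to a subsequence they converge to a holomorphic building whose main level maps to $Sym^2(E_1\vee E_2)$, with the intermediate levels supported in the cylindrical region around the connect-sum point $z'$.

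Next I would describe the main level. Since $[\Sigma]=[E_1]+[E_2]$ has multiplicity one over each of $E_1$ and $E_2$, the two points of a symmetric pair never lie on the same side, so the main level splits as a product $u_\infty=(u_{\infty,1},u_{\infty,2})$ with $u_{\infty,i}$ a holomorphic curve in $E_i$ realizing the domain $[E_i]$ and carrying one asymptotic at $z'$, the two $z'$-asymptotics matched through the cylindrical levels. The interior puncture of $u_t$ is asymptotic to a closed Reeb orbit traversing $\partial\bar\Sigma=\partial E_1$, which sits on the $E_1$ side, so this puncture persists on $u_{\infty,1}$. Projecting to $E_1$ thus yields a holomorphic curve in the genus-one surface $E_1$ with domain $[E_1]$, boundary on the single arc $\alpha_1^a$, one interior puncture asymptotic to $\partial\bar\Sigma$, and one puncture at $z'$.

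I would then rule out this curve by direct examination, just as in Lemma \ref{Lemma, fiber of degenerate disk over sufficiently stretched complex structure, rho non-cyclic}: cutting $E_1$ along $\alpha_1^a$ and deleting a neighbourhood of $z'$, the class $[E_1]$ is the fundamental class of a region whose Euler characteristic is too negative to be covered with multiplicity one by any source surface compatible with those asymptotics, so no such curve exists (and the analogous bookkeeping also rules out the $E_2$-side curve $u_{\infty,2}$, whose domain $[E_2]$ likewise cannot match its allowed source). This contradicts the existence of the $u_t$, so $\mathcal{N}^{[\Sigma]}_{Sym^2(j)}(\bm x;U)$ is empty once $j$ is sufficiently stretched.

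I expect the main obstacle to be the second step: justifying rigorously that the neck-stretching limit is a product with exactly the stated domains, asymptotics, and source topology — in particular that the interior puncture neither migrates into the cylindrical region nor lands on a side where $[E_i]$ degenerates onto the boundary. This requires combining the finite-energy constraints with the index and Euler-characteristic bookkeeping of Proposition \ref{Proposition, first index formula} and Proposition \ref{Proposition, embedded Euler characteristic} and with the tautological correspondence, much as in Section 10 of \cite{OS04}; once the limit is pinned down, the concluding direct examination is purely combinatorial.
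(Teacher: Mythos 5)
Your neck-stretching setup coincides with the paper's, but the endgame has a genuine gap. You assert that the main level of the limit splits as a product $(u_{\infty,1},u_{\infty,2})$ with each factor non-constant and realizing the domain $[E_i]$, and you then propose to rule each factor out by an Euler-characteristic count. Neither half of this is right. Gromov compactness does not force the main disk component to carry the domain $[\Sigma]$ --- the homology class can be absorbed by (possibly punctured) sphere bubbles in $Sym^2(E_i)$ --- and in fact the paper proves the opposite of your premise: the main component $v$ of the limit is \emph{constant}. Its $E_1$-projection is constant because $\pi_2(E_{1,\bar e},\alpha_1^a\cup\{e\})=0$, so any representative of that relative homotopy class has zero energy. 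Its $E_2$-projection is constant by a different argument that your outline never supplies: a non-constant projection would have as domain a positive zero-cornered $\alpha$-bounded domain in $E_2$, which after stabilizing by $E_1$ is forced by the \emph{unobstructedness} hypothesis (Definition \ref{Definition, unobstructedness}) to be all of $\Sigma$, hence $D=E_2$; its boundary is then null-homotopic in $\alpha_{im}$, so the disk would represent a nontrivial class in $\pi_2(E_2)=0$. Your ``Euler characteristic too negative'' heuristic substitutes for neither of these, and on the $E_2$ side the conclusion genuinely depends on unobstructedness of the immersed multicurve, which you never invoke.

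Once $v$ is known to be constant at $\bm{x}$, sphere bubbles are excluded because $\{\bm{x}\}$ meets neither $Sym^2(E_1)$ nor $Sym^2(E_2)$, so the entire limit is constant; the contradiction is then that a constant limit has $n_z=0$ while every curve in the approximating sequence has $n_z=1$. This is the step your proposal is missing: the contradiction does not come from the separate nonexistence of curves realizing $[E_1]$ and $[E_2]$, but from the fact that the limit is forced to be constant and therefore cannot carry the domain $[\Sigma]$ at all.
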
  
\begin{proof}
Otherwise, the same neck-stretching procedure as in Lemma \ref{Lemma, fiber of degenerate disk over sufficiently stretched complex structure, rho non-cyclic} produces a limit nodal holomorphic curve $u_\infty:\mathbb{B}\rightarrow Sym^2(E_1\vee E_2)$. It consists of a (possibly punctured) holomorphic disk $v$ that maps to $E_1\times E_2$ with boundary in $\mathbb{T}_{\alpha,1}$ and possibly some (possibly punctured) sphere bubbles in $Sym^2(E_i)$, $i=1,2$. We claim $v$ must be a constant map. It is clear that $Pr_{E_1}\circ v$ is constant, for $\pi_2(E_{1,\bar{e}},\alpha^a_1\cup \{e\})=0$, where $E_{1,\bar{e}}$ denote the Riemann surface obtained by filling in the east puncture. We move to see $Pr_{E_2}\circ v$ is constant. Suppose $Pr_{E_2}\circ v$ is not a constant map.  Note the domain of $ (Pr_{E_2}\circ v)$ is a zero-cornered $\alpha$-bounded domain $D$ in $E_2$. Stabilizing by $E_1$, this domain induces a zero-cornered $\alpha$-bounded domain $D'$ in $\Sigma$ with $n_z(D')\leq 1$. If $n_z(D')=0$, then $D'$ does not exist as $\mathcal{H}$ is unobstructed, and hence $D$ does not exist. So $n_z(D')=1$, and hence $D'=\Sigma$ since $\mathcal{H}$ is unobstructed. This implies $D=E_2$. Therefore, $\partial(Pr_{E_2}\circ v)$ is null-homotopic in $\alpha_{im}$. So $Pr_{E_2}\circ v$ induces a nontrivial element in $\pi_2(E_2)$. This, however, contradicts that $\pi_2(E_2)=0$. Therefore, $Pr_{E_2}\circ v$ is also constant, and hence $v$ is the constant map with image $\bm{x}$. Now $\{\bm{x}\}$ intersects neither $Sym^2(E_i)$, $i=1,2$, and hence there are no sphere bubbles in $u_{\infty}$. So the Gromov limit $u_{\infty}$ is a constant map. In particular, $n_z(u_\infty)=0$. However, $n_z(u_\infty)=1$ as it is the limit of a sequence of holomorphic maps whose multiplicity at $z$ is one. This is a contradiction. Therefore, $\mathcal{N}^{[\Sigma]}_{Sym^2(j)}(\bm{x};U)$ is empty provided $j$ is sufficiently stretched. 
\end{proof}

\subsubsection{Proof of Proposition \ref{Proposition, ends of moduli space of 1-P curves}}
\begin{proof}[Proof of Proposition \ref{Proposition, ends of moduli space of 1-P curves}]
In view of Proposition \ref{Proposition, summary of degeneration of moduli spaces}, Proposition \ref{Proposition, one-punctured boundary degeneration with corneres must be of simple form}, and Proposition \ref{Proposition, 1-P curve, boundary degeneration without corners} we know the degenerations that can appear in the boundary of the compactified moduli spaces are two-story curves, simple combs with orbit curve ends, or simple boundary degenerations with or without corners. In all cases, gluing arguments can be applied to see the compactified moduli space $\overline{\mathcal{M}}^B(\bm{x},\bm{y};U)$ is a one-manifold with boundary.

For conclusion (a), note that ends of type (2) correspond to pairs of curves $(u,v)$ where $u$ is in $\mathcal{M}^B(\bm{x},\bm{y};-\rho_{1230})$ or $\mathcal{M}^B(\bm{x},\bm{y};-\rho_{3012})$ and $v$ is an orbit curve, but the moduli space of orbit curves consists of a single element by the Riemann mapping theorem so the count of type (2) boundaries agrees with $\#\mathcal{M}^B(\bm{x},\bm{y};-\rho_{1230})+\#\mathcal{M}^B(\bm{x},\bm{y};-\rho_{3012})$. For conclusion (b), standard gluing results imply that the number of such ends is equal to $$\sum_{\{(q,B_1)|\exists B_2\in T(q),B_1+B_2=B\}}\#(\mathcal{M}^{B_1}(\bm{x},\bm{y};q)\times_{ev}\mathcal{N}^{B_2}(q;U))$$ This is mod 2 equal to $$\sum_{\{(q,B_1)|\exists B_2\in T(q),B_1+B_2=B\}}\#\mathcal{M}^{B_1}(\bm{x},\bm{y};q)$$ as a generic fiber of $ev$ in $\mathcal{N}^{B_2}(q;U)$ is odd by Proposition \ref{Proposition, fiber of evaluation map of 1-punctured degenerate disks is odd}. For (c), note by gluing results the number of such ends is equal to $\#\mathcal{N}^{[\Sigma]}(\bm{x};U)$. This is even by Proposition \ref{Proposition, 1-P degenerate disks without corner appear in pairs}.
\end{proof}
\subsection{Type D structures}\label{Subsection, definition of type D structure}
We define type D structures from an immersed bordered Heeggard diagram $\mathcal{H}=(\Sigma,\bm{\beta},\bm{\bar{\alpha}},z)$ in this subsection.
\begin{figure}[htb!]
	\includegraphics[scale=0.4]{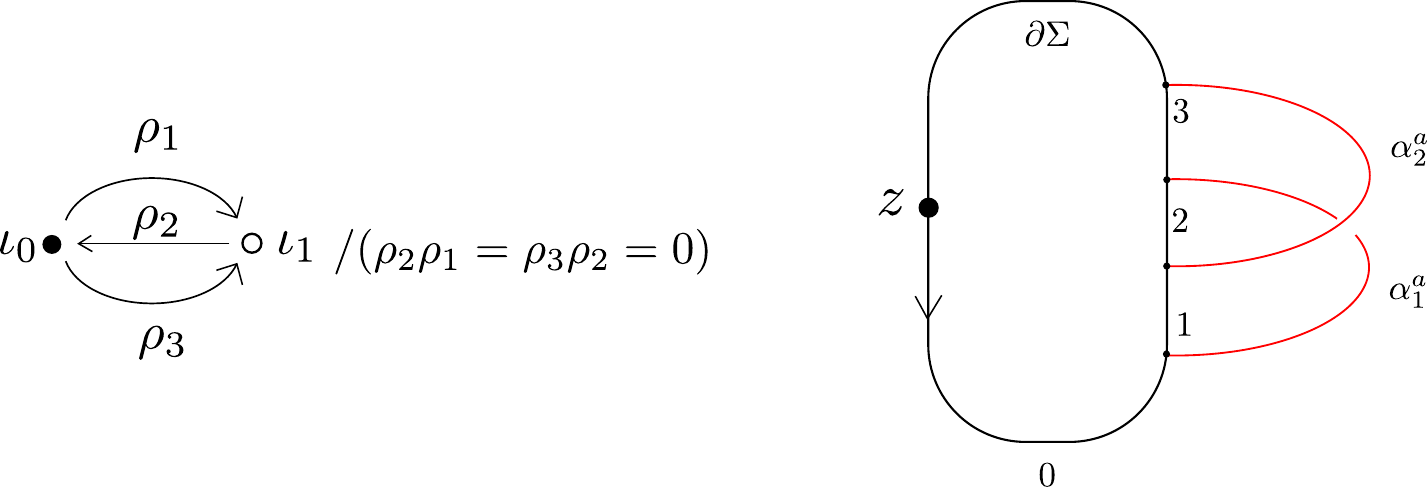}
	\caption{The quiver presentation of the torus algebra (left) and the pointed match circle of $\mathcal{H}$ with reversed boundary orientation (right).}\label{Figure, pointed match circle}
\end{figure} 
 Let $\mathcal{A}$ denote the \textit{torus algebra}, which is isomorphic to the quiver algebra of the quiver in Figure \ref{Figure, pointed match circle} (left). For $I\in\{1,2,3,12,23,123\}$, $\rho_I\in \mathcal{A}$ is understood as the product of the $\rho_i$'s for those $i$ appear in $I$. This algebra arises naturally in the context of bordered Heegaard diagrams, where $\mathcal{A}$ is associated to the pointed match circle determined by $\mathcal{H}$ with the reversed boundary orientation (Figure \ref{Figure, pointed match circle} (right)); we refer the readers to \cite[Chapter 11.1]{LOT18} for a detailed definition of the torus algebra in terms of pointed match circles, and we only point out that the element $\rho_I\in \mathcal{A}$ for $I\in\{1,2,3,12,23,123\}$ corresponds to the Reeb chord with the same label on the pointed match circle. Let $\mathcal{I}=\langle \iota_0\rangle\oplus \langle \iota_1 \rangle$ denote the ring of idempotents of $\mathcal{A}$. We recall the definition of a type D structure.
 
\begin{defn}
A type D structure over the torus algebra $\mathcal{A}$ is a left $\mathcal{I}$-module $N$ together with a linear map $\delta: N\rightarrow \mathcal{A}\otimes N$ such that the map $$\partial\coloneqq (\mu_\mathcal{A}\otimes \mathbb{I}_N)\circ(\mathbb{I}_{\mathcal{A}}\otimes \delta): \mathcal{A}\otimes N \rightarrow \mathcal{A}\otimes N$$ is a differential, i.e., $\partial^2=0$. The left differential $\mathcal{A}$-module $\mathcal{A}\otimes N$ is called the type D module of the type D structure $(N,\delta)$.
\end{defn}

Next, we spell out the construction of a type D structure from an immersed bordered Heegaard diagram. Recall $\mathbb{T}_{\beta}=\beta_1\times\cdots \times\beta_g$ and $\mathbb{T}_{\alpha,i}=\alpha^a_i\times \alpha_1^c\times\cdots\times\alpha_{g-1}^c$, $i=1,2$. Let $\mathbb{T}_{\alpha}=\mathbb{T}_{\alpha,1}\cup \mathbb{T}_{\alpha, 2}$. Let $\mathcal{G}(\mathcal{H})=\{\bm{x}|\bm{x}\in\mathbb{T}_{\alpha}\cap \mathbb{T}_{\beta}\}$. Denote the local system on $\alpha_{im}$ as a vector bundle $\mathcal{E}\rightarrow \alpha_{im}$ together with a parallel transport $\Phi$. Note that this induces a local system on $\mathbb{T}_{\alpha}$, the tensor product of $\mathcal{E}$ and the trivial local system on the other alpha curves (or arcs). Abusing notation, we still denote the local system on $\mathbb{T}_{\alpha}$ by $(\mathcal{E},\Phi)$. Now define an $\mathcal{I}$-module $X^\mathcal{E}(\mathcal{H})=\oplus_{\bm{x}\in \mathcal{G}(\mathcal{H})} \mathcal{E}|_{\bm{x}}$, where the $\mathcal{I}$-action on an element $\eta\in \mathcal{E}|_{\bm{x}}$ is specified by $$\iota_i\cdot \eta=\begin{cases} \eta, \ o(\bm{x})\equiv i \pmod 2\\ 0,\  \text{otherwise} \end{cases}$$
Here $o(\bm{x})=i$ if and only if $\bm{x}\in \mathbb{T}_{\alpha,i}$, $i=1,2$.

Given a sequence of Reeb chords $\overrightarrow{\sigma}=(\sigma_1,\ldots,\sigma_k)$ of a pointed match circle $\mathcal{Z}$, $a(-\overrightarrow{\sigma})$ is defined to be $(-\sigma_1)\cdot(-\sigma_2)\cdot \ldots (-\sigma_k)\in \mathcal{A}(-\mathcal{Z})$. Note given $B\in \pi_2(\bm{x},\bm{y})$, the parallel transport restricted to the arc $\partial_{\alpha_{im}}B\subset \alpha_{im}$ induces an isomorphism from $\mathcal{E}|_{\bm{x}}$ to $\mathcal{E}|_{\bm{y}}$, which we denote by $\Phi^{B}_{\bm{x},\bm{y}}$.

\begin{defn}\label{Definition, type D structure}
Let $\mathcal{H}$ be an unobstructed, provincially admissible, immersed bordered Heegaard diagram. Fix a generic almost complex structure on $\Sigma\times [0,1] \times \mathbb{R}$. The type D module $\widehat{CFD}(\mathcal{H})$ is defined to be the $\mathcal{A}$-module $$\mathcal{A}\otimes_{\mathcal{I}}X^\mathcal{E}(\mathcal{H})$$ together with a differential given by
$$\partial (a\otimes \eta)=a\cdot(\sum_{\bm{y}}\sum_{\{(B,\overrightarrow{\sigma})|\ n_z(B)=0,\ \text{ind}(B,\overrightarrow{\sigma})=1\}}\#\mathcal{M}^B(\bm{x},\bm{y};\overrightarrow{\sigma})a(-\overrightarrow{\sigma})\otimes \Phi^B_{\bm{x},\bm{y}}\eta),$$
where $a\in\mathcal{A}$, $\eta\in \mathcal{E}|_{\bm{x}}$, and the pairs $(B,\overrightarrow{\sigma})$ are compatible. The underlying type D structure is the pair $(X^\mathcal{E}(\mathcal{H}), \delta)$ where $\delta(\eta)\coloneqq \partial(1\otimes \eta)$ for any $\eta\in X^\mathcal{E}(\mathcal{H})$.
\end{defn}
Abusing notation, we will also use $\widehat{CFD}(\mathcal{H})$ to denote its underlying type D structure. 
\begin{rmk}
Note when the local system is trivial, we can identify $\bm{x}$ with $\mathcal{E}|_{\bm{x}}$, and the differential defined above can be more conveniently written as
$$\partial (a\otimes\bm{x})=a\cdot(\sum_{\bm{y}}\sum_{\{(B,\overrightarrow{\sigma})|\ n_z(B)=0,\ \text{ind}(B,\overrightarrow{\sigma})=1\}}\#\mathcal{M}^B(\bm{x},\bm{y};\overrightarrow{\sigma})a(-\overrightarrow{\sigma})\otimes \bm{y}).$$
\end{rmk}

\begin{prop}\label{Proposition, type D structure is well defined}
The operator $\partial$ in Definition \ref{Definition, type D structure} is well-defined and $\partial^2=0$. 
\end{prop}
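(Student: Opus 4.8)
The plan is to mirror the proof that $\widehat{CFD}$ is a well-defined type D structure in the embedded-curve case, cf.\ \cite[Chapter 6]{LOT18}, isolating the two places where the immersed setting intervenes: the modified additivity of the embedded index (Proposition \ref{Proposition, embedded index formula is additive}), and the classification of ends of one-dimensional $0$-P moduli spaces with $n_z=0$ (Proposition \ref{Proposition, ends of 0-p curves, n_z=0}), the latter being where unobstructedness of $\mathcal{H}$ is used to kill boundary degenerations.

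\textbf{Well-definedness.} First I would note that for a fixed compatible pair $(B,\overrightarrow{\sigma})$ with $n_z(B)=0$ and $\text{ind}(B,\overrightarrow{\sigma})=1$, the reduced moduli space $\mathcal{M}^B(\bm{x},\bm{y};\overrightarrow{\sigma})$ is a transversally cut out $0$-manifold by Propositions \ref{Proposition, regularity of the moduli spaces}, \ref{Proposition, first index formula}, and \ref{Proposition, embedded Euler characteristic}; it is moreover already compact, because a two-story degeneration would, by additivity of the index (Proposition \ref{Proposition, embedded index formula is additive}), force one story to have non-positive expected dimension and hence be empty for generic $J$. Finiteness of the whole sum defining $\delta$ then follows from provincial admissibility exactly as in \cite{LOT18}: two positive domains in $\pi_2(\bm{x},\bm{y})$ contributing the same algebra element differ by a periodic domain with $n_{-\rho_i}=0$ for all $i$ (the east multiplicities are determined by the algebra element, and $n_z=0$), i.e.\ by a provincial periodic domain, so an infinite family would produce a non-trivial positive provincial periodic domain. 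Finally $\Phi^B_{\bm{x},\bm{y}}$ depends only on the homotopy class rel endpoints of $\partial_{\alpha_{im}}B$ inside $\alpha_{im}$, so it is well defined; that $\partial$ is a left $\mathcal{A}$-module map is immediate from the definition, and $\partial(a\otimes\eta)=a\cdot\partial(1\otimes\eta)$ gives $\partial^2(a\otimes\eta)=a\cdot\partial^2(1\otimes\eta)$, so it remains to prove $\partial^2(1\otimes\eta)=0$ for all $\eta\in\mathcal{E}|_{\bm{x}}$ and all generators $\bm{x}$.

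\textbf{The identity $\partial^2=0$.} Fix generators $\bm{x},\bm{y}$ and a nonzero $a_0\in\mathcal{A}$; it suffices to show that the $\mathrm{Hom}(\mathcal{E}|_{\bm{x}},\mathcal{E}|_{\bm{y}})$-valued coefficient of $a_0$ in $\partial^2(1\otimes(\cdot))$ vanishes over $\mathbb{F}$. Unwinding the definitions, this coefficient is a sum over intermediate generators $\bm{w}$ and index-one compatible pairs $(B_1,\overrightarrow{\rho_1})\in\pi_2(\bm{x},\bm{w})$, $(B_2,\overrightarrow{\rho_2})\in\pi_2(\bm{w},\bm{y})$ with $n_z(B_i)=0$ and $a(-\overrightarrow{\rho_1})\,a(-\overrightarrow{\rho_2})=a_0$, of $\#\mathcal{M}^{B_1}(\bm{x},\bm{w};\overrightarrow{\rho_1})\cdot\#\mathcal{M}^{B_2}(\bm{w},\bm{y};\overrightarrow{\rho_2})$ weighted by the parallel transport $\Phi^{B_2}_{\bm{w},\bm{y}}\circ\Phi^{B_1}_{\bm{x},\bm{w}}$, which equals $\Phi^{B_1+B_2}_{\bm{x},\bm{y}}$. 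Each such pair determines a two-story embedded building that is a boundary point of $\overline{\mathcal{M}}^{B}(\bm{x},\bm{y};\overrightarrow{\rho})$ for $B=B_1+B_2$ and $\overrightarrow{\rho}=(\overrightarrow{\rho_1},\overrightarrow{\rho_2})$, and additivity of the index gives $\text{ind}(B,\overrightarrow{\rho})=2$, while $n_z(B)=0$ and $a(-\overrightarrow{\rho})=a_0\neq0$. Conversely, by Proposition \ref{Proposition, levels of degeneration are embedded} every two-story boundary point of such an $\overline{\mathcal{M}}^{B}(\bm{x},\bm{y};\overrightarrow{\rho})$ arises this way, and by Proposition \ref{Proposition, ends of 0-p curves, n_z=0} — using $a(-\overrightarrow{\rho})\neq0$ to exclude split-curve ends and $n_z(B)=0$ together with unobstructedness to exclude boundary degenerations — $\overline{\mathcal{M}}^{B}(\bm{x},\bm{y};\overrightarrow{\rho})$ is a compact $1$-manifold whose entire boundary consists of two-story buildings, hence has an even number of boundary points. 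Since all boundary points of a given $\overline{\mathcal{M}}^{B}(\bm{x},\bm{y};\overrightarrow{\rho})$ carry the common weight $\Phi^B_{\bm{x},\bm{y}}$, summing the relation "$\#\partial\overline{\mathcal{M}}\equiv0\pmod 2$" over all $(B,\overrightarrow{\rho})$ with $a(-\overrightarrow{\rho})=a_0$ shows the coefficient of $a_0$ is $0$.

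\textbf{Main obstacle.} I expect the delicate step to be the bookkeeping of the last paragraph rather than any analytic input: one must check that the correspondence between two-story degenerations and factorizations $a_0=a(-\overrightarrow{\rho_1})\,a(-\overrightarrow{\rho_2})$ in the torus algebra is exact — in particular that products which merge elementary chords at the junction are recorded by sequences $\overrightarrow{\rho}$ in which those chords occur at distinct heights, so that no pair is over- or under-counted — and that split-curve ends, which would break this correspondence, are genuinely absent whenever $a(-\overrightarrow{\rho})\neq0$, exactly as in the proof of Proposition \ref{Proposition, ends of 0-p curves, n_z=0}. The presence of a (possibly nontrivial, higher-rank) local system requires only the observation that a single moduli space contributes a single parallel-transport weight, so it introduces nothing essentially new.
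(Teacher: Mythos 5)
Your proposal is correct and follows essentially the same route as the paper: finiteness from provincial admissibility via positive provincial periodic domains, and $\partial^2=0$ by identifying the coefficient of a nonzero $a_0$ with the count of two-story ends of the index-two moduli spaces $\overline{\mathcal{M}}^{B}(\bm{x},\bm{y};\overrightarrow{\rho})$ with $n_z(B)=0$ and $a(-\overrightarrow{\rho})=a_0$, which by Proposition \ref{Proposition, ends of 0-p curves, n_z=0} exhaust the boundary, together with the observation that all such ends for a fixed $B$ carry the common weight $\Phi^{B}_{\bm{x},\bm{y}}$. The extra details you supply (compactness of the index-one spaces, exactness of the factorization bookkeeping) are consistent with, and slightly more explicit than, the paper's argument.
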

\begin{proof}
We first point out $\partial$ is well-defined, i.e., the sum defining $\partial$ is finite. This reduces to the provincial admissibility of $\mathcal{H}$, which implies there are only finitely many positive domains with prescribed Reeb chords connecting any given pair of generators. The proof is standard, and we do not repeat it here. 

We move to see $\partial^2(\bm{x})=0$. For ease of explanation, we begin with the case of trivial local systems. Let $a$ be a non-zero element of $\mathcal{A}$, and let $\langle \partial^2 \bm{x},a\bm{y}\rangle \in \mathbb{F}$ denote the coefficient of the term $a\bm{y}$ in $\partial^2 \bm{x}$. Then 
\begin{equation}\label{Equation,partial square of type D structure differential}
\langle \partial^2 \bm{x},a\bm{y}\rangle=\sum_{\bm{w}\in\mathcal{G}}\sum \#\mathcal{M}^{B_1}(\bm{x},\bm{w};\overrightarrow{\sigma_1})\#\mathcal{M}^{B_2}(\bm{w},\bm{y};\overrightarrow{\sigma_2}), 
\end{equation}

where the second sum is over all the index-one compatible pairs $(B_i,\overrightarrow{\sigma_i})$ ($i=1,2$) with $a(-\overrightarrow{\sigma_1})\cdot a(-\overrightarrow{\sigma_2})=a$. 
In view of Proposition \ref{Proposition, ends of 0-p curves, n_z=0} and the gluing result, the right-hand side of Equation (\ref{Equation,partial square of type D structure differential}) is $$\sum_{\{(B,\overrightarrow{\sigma})|\text{ind}(B,\overrightarrow{\sigma})=2,\ \ a(-\overrightarrow{\sigma})=a\}}\#\partial \overline{\mathcal{M}}^B(\bm{x},\bm{y};\overrightarrow{\sigma})\equiv 0 \pmod 2$$
This finishes the proof in the case of trivial local systems. For the case of non-trivial local systems, the proof is a slight modification of the above argument. One needs to note that given $B_1\in\pi_2(\bm{x},\bm{w})$ and $B_2\in \pi_2(\bm{w},\bm{y})$, we have $\Phi_{\bm{x},\bm{w}}^{B_1}\circ \Phi_{\bm{w},\bm{y}}^{B_2}=\Phi_{\bm{x},\bm{y}}^{B_1+B_2}$. Therefore, given an $\eta\in \mathcal{E}|_{\bm{x}}$, the terms in $\partial^2(\eta)$ corresponding to two-story ends of a one-dimensional moduli space $\mathcal{M}^B(\bm{x},\bm{y};\overrightarrow{\sigma})$ are multiples of the same element in $\mathcal{E}|_{\bm{y}}$, namely $\Phi_{\bm{x},\bm{y}}^B(\eta)$, and hence the coefficient is zero mod $2$. 
\end{proof}
 
\subsection{Weakly extended Type D structures}\label{subsec:weakly-extended-type-D-structures}
We define the weakly extended type D structure $\widetilde{CFD}(\mathcal{H})$ in this subsection. The weakly extended torus algebra $\tilde{\mathcal{A}}$ can be represented by the quiver with relations shown in Figure \ref{Figure, weakly extended torus algebra}.
\begin{figure}[htb!]
\includegraphics[scale=0.6]{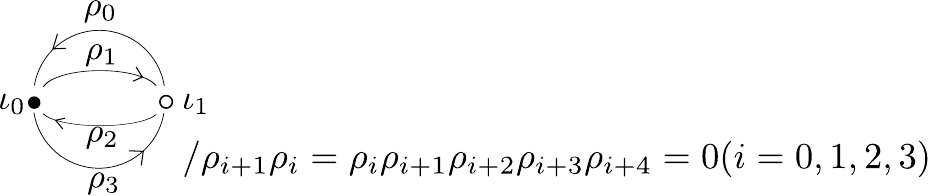}
\caption{The weakly extended torus algebra. The subscripts in the relation are understood mod $4$.}\label{Figure, weakly extended torus algebra}
\end{figure}
Note as in the torus algebra, we have the idempotent ring $\mathcal{I}=\langle \iota_0 \rangle \oplus \langle \iota_1 \rangle$. Let $\bm{U}$ be $ \rho_{0123}+\rho_{1230}+\rho_{2301}+\rho_{3012}$, which is a central element of $\tilde{\mathcal{A}}$. 
\begin{defn}
A weakly extended type D structure over $\tilde{\mathcal{A}}$ is a left $\mathcal{I}$-module $N$ together with a linear map $\tilde{\delta}: N\rightarrow \tilde{\mathcal{A}}\otimes N$ such that the map $$\tilde{\partial}\coloneqq (\mu_{\tilde{\mathcal{A}}}\otimes \mathbb{I}_N)\circ(\mathbb{I}_{\tilde{\mathcal{A}}} \otimes \tilde{\delta}): \tilde{\mathcal{A}}\otimes N \rightarrow \tilde{\mathcal{A}}\otimes N$$ squares to $\bm{U}$, i.e. $\tilde \partial^2=\bm{U}$. The curved left $\tilde{\mathcal{A}}$-module $\tilde{\mathcal{A}}\otimes N$ is called the weakly extended type D module of the weakly extended type D structure $(N,\tilde{\delta})$.
\end{defn}

Let $X^{\mathcal{E}}(\mathcal{H})$ be the $\mathcal{I}$-module defined the same way as in Section \ref{Subsection, definition of type D structure}.
\begin{defn}\label{Definition, weakly extended type D structure}
Let $\mathcal{H}$ be an unobstructed, provincially admissible, immersed bordered Heegaard diagram. Fix a generic admissible almost complex structure on $\Sigma\times [0,1] \times \mathbb{R}$. The weakly extended type D module $\widetilde{CFD}(\mathcal{H})$ is defined to be the $\tilde{\mathcal{A}}$-module $$\tilde{\mathcal{A}}\otimes_{\mathcal{I}}X^{\mathcal{E}}(\mathcal{H})$$ together with a differential given by 
$$\tilde{\partial} (a\otimes \eta)=a\cdot(\sum_{\bm{y}}\sum_{\{(B,\overrightarrow{\sigma})|\ \text{ind}(B,\overrightarrow{\sigma})=1\}}\#\mathcal{M}^B(\bm{x},\bm{y};\overrightarrow{\sigma})a(-\overrightarrow{\sigma})\otimes \Phi^B_{\bm{x},\bm{y}}\eta),$$
where $a\in\tilde{\mathcal{A}}$, $\eta\in \mathcal{E}|_{\bm{x}}$, $\overrightarrow{\sigma}$ is a sequence of Reeb chords that include the case of a single closed Reeb orbit $\{U\}$ (in which case the corresponding moduli space consists of 1-P holomorphic curves), and the pairs $(B,\overrightarrow{\sigma})$ are compatible. When $\overrightarrow{\sigma}=\{U\}$, we define $a(-U)=\bm{U}$. The underlying weakly extended type D structure is $(X^{\mathcal{E}}(\mathcal{H}),\tilde{\delta})$ where $\tilde{\delta}(\eta)\coloneqq \tilde{\partial}(1\otimes \eta)$.
\end{defn}
\begin{rmk}
Again, by abusing notation, we also use $\widetilde{CFD}(\mathcal{H})$ to denote the underlying weakly extended type D structure. When the local system is trivial, we have the following more familiar formula for the differential: 
$$\tilde{\partial} (a\otimes\bm{x})=a\cdot(\sum_{\bm{y}}\sum_{\{(B,\overrightarrow{\sigma})|\text{ind}(B,\overrightarrow{\sigma})=1\}}\#\mathcal{M}^B(\bm{x},\bm{y};\overrightarrow{\sigma})a(-\overrightarrow{\sigma})\otimes \bm{y}).$$
\end{rmk}

\begin{prop}\label{Proposition, weakly extended type D strucuture is well defined.}
The operator $\tilde{\partial}$ in Definition \ref{Definition, weakly extended type D structure} is well-defined and $\tilde{\partial}^2=\bm{U}$. 
\end{prop}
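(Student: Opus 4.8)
The plan is to adapt the proof of Proposition \ref{Proposition, type D structure is well defined}, now feeding in the analysis of ends of $0$-P and $1$-P moduli spaces from Sections \ref{Section, ends of moduli spaces of 0-p curves}--\ref{Subsection, ends of moduli space of 1P curves}. Well-definedness (finiteness of the defining sum) goes through exactly as in the hat case from provincial admissibility; the only new contributions are those with $n_z(B)=1$, including the closed-orbit term counted by $1$-P curves, but for a fixed choice of $\overrightarrow{\sigma}$ (or $\overrightarrow{\sigma}=\{U\}$) the difference of two positive domains contributing to a given coefficient is a provincial periodic domain, so the count remains finite, and only finitely many $\overrightarrow{\sigma}$ can occur once the index is fixed.

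For the identity $\tilde{\partial}^2=\bm{U}$ I would work first with trivial local systems. Fix generators $\bm{x},\bm{y}$ and a basis monomial $a$ of $\tilde{\mathcal{A}}$ and expand $\langle\tilde{\partial}^2\bm{x},a\bm{y}\rangle$ into a sum of products $\#\mathcal{M}^{B_1}(\bm{x},\bm{w};\overrightarrow{\sigma_1})\cdot\#\mathcal{M}^{B_2}(\bm{w},\bm{y};\overrightarrow{\sigma_2})$ over index-$1$ compatible pairs, where each $\overrightarrow{\sigma_i}$ is a Reeb-chord sequence or $\{U\}$ and $a(-\overrightarrow{\sigma_1})\,a(-\overrightarrow{\sigma_2})$ is $a$ (or $\bm{U}$, with $a$ among its four terms, when some $\overrightarrow{\sigma_i}=\{U\}$). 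By the gluing results and Proposition \ref{Proposition, levels of degeneration are embedded} these products count the embedded two-story ends of the one-dimensional embedded moduli spaces $\overline{\mathcal{M}}^{B}(\bm{x},\bm{y};\overrightarrow{\sigma})$ with $\mathrm{ind}(B,\overrightarrow{\sigma})=2$ and $B=B_1+B_2$, and every such two-story end arises this way. Since each such $\overline{\mathcal{M}}^{B}(\bm{x},\bm{y};\overrightarrow{\sigma})$ is a compact $1$-manifold with boundary (Propositions \ref{Proposition, ends of 0-p curves, n_z=0}, \ref{Propostion, bondary degeneration do not occur when one of the Reeb chord is not covered}, \ref{Proposition, ends of 1-dimensional moduli spaces of 0-P curves, Reeb element of length 4}, \ref{Proposition, ends of moduli space of 1-P curves}), its two-story ends are congruent mod $2$ to its remaining boundary points, so $\langle\tilde{\partial}^2\bm{x},a\bm{y}\rangle$ is computed by enumerating the non-two-story ends of all the relevant $1$-manifolds.

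The case analysis would then go: if the occurring $B$ have $n_z(B)=0$, Proposition \ref{Proposition, ends of 0-p curves, n_z=0} gives only two-story ends (split-curve ends would force $a(-\overrightarrow{\sigma})=0$), so the contribution is $0$; if $n_z(B)=1$ but $B$ misses one of the regions next to $-\rho_1,-\rho_2,-\rho_3$, Proposition \ref{Propostion, bondary degeneration do not occur when one of the Reeb chord is not covered} again leaves only two-story ends. Together these dispose of every monomial $a$ of length $\le 3$ and of the provincial case; moreover no composition involving a $1$-P story can yield such an $a$, as that story carries the central length-$4$ element $\bm{U}$, whose product with any chord of positive length vanishes in $\tilde{\mathcal{A}}$. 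The remaining case is $a\in\{\rho_{0123},\rho_{1230},\rho_{2301},\rho_{3012}\}$, where for fixed $B$ (covering all four boundary regions once) the relevant $1$-manifolds are the three $0$-P spaces of Proposition \ref{Proposition, ends of 1-dimensional moduli spaces of 0-P curves, Reeb element of length 4} and the $1$-P space $\overline{\mathcal{M}}^{B}(\bm{x},\bm{y};U)$ of Proposition \ref{Proposition, ends of moduli space of 1-P curves}. Here the split-curve ends of the two $0$-P spaces ($\#\mathcal{M}^{B}(\bm{x},\bm{y};-\rho_{1230})$ and $\#\mathcal{M}^{B}(\bm{x},\bm{y};-\rho_{3012})$) cancel mod $2$ against the orbit-curve ends of the $1$-P space ($\#\mathcal{M}^{B}(\bm{x},\bm{y};-\rho_{1230})+\#\mathcal{M}^{B}(\bm{x},\bm{y};-\rho_{3012})$), and the boundary degenerations with corners cancel similarly (each side contributing $\sum\#\mathcal{M}^{B_1}(\bm{x},\bm{y};q)$ mod $2$), leaving only the corner-free boundary degenerations, which by Propositions \ref{Proposition, ends of 1-dimensional moduli spaces of 0-P curves, Reeb element of length 4}(c) and \ref{Proposition, ends of moduli space of 1-P curves}(c) occur only for $\bm{x}=\bm{y}$, $B=[\Sigma]$, with odd total count. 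Thus $\langle\tilde{\partial}^2\bm{x},a\bm{y}\rangle$ equals the coefficient of $a\otimes\bm{y}$ in $\bm{U}\cdot(1\otimes\bm{x})$; by $\tilde{\mathcal{A}}$-linearity and centrality of $\bm{U}$ this gives $\tilde{\partial}^2=\bm{U}$. For arbitrary local systems one then makes the same parallel-transport modifications as in Proposition \ref{Proposition, type D structure is well defined}: the two-story contributions out of each $1$-manifold are multiples of a single vector and cancel in pairs, and for the surviving $\bm{U}$-terms $B=[\Sigma]$, whose $\alpha_{im}$-boundary lies in the distinguished component $\alpha_{im}^0$ with its trivial local system, so $\Phi^{[\Sigma]}_{\bm{x},\bm{x}}=\mathrm{id}$ and the coefficient is still $1$.

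I expect the main obstacle to be the last step: checking that, for each fixed domain $B$ covering all four boundary regions, the split-curve ends of the $0$-P moduli spaces match the orbit-curve ends of the $1$-P moduli space and the cornered boundary degenerations match on both sides, so that everything cancels except the corner-free degeneration producing $\bm{U}$. This is precisely the content that the parity statements of Propositions \ref{Proposition, ends of 1-dimensional moduli spaces of 0-P curves, Reeb element of length 4} and \ref{Proposition, ends of moduli space of 1-P curves} are set up to deliver, which in turn rely on the transversality and neck-stretching analysis of degenerate discs in Sections \ref{Section, ends of moduli spaces of 0-p curves}--\ref{Subsection, ends of moduli space of 1P curves}, in particular Propositions \ref{Proposition, boundary degeneration without corner} and \ref{Proposition, 1-P degenerate disks without corner appear in pairs}.
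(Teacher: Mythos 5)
Your proposal is correct and follows essentially the same route as the paper's proof: finiteness from provincial admissibility, reduction of $\langle\tilde{\partial}^2\bm{x},a\bm{y}\rangle$ to counting non-two-story ends of the index-two moduli spaces, disposal of length $\le 3$ monomials via Propositions \ref{Proposition, ends of 0-p curves, n_z=0} and \ref{Propostion, bondary degeneration do not occur when one of the Reeb chord is not covered}, and for length-four elements the mod-2 cancellation of split-curve ends against orbit-curve ends and of cornered boundary degenerations between the $0$-P and $1$-P moduli spaces, leaving the corner-free degenerations that produce $\bm{U}$. The local-system discussion also matches the paper's, up to a slightly different (but equally valid) justification that the surviving $\bm{U}$-term carries the identity transport.
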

\begin{proof}
A standard argument shows that the provincial admissibility of $\mathcal{H}$ implies the sum defining $\tilde{\partial}$ in Definition \ref{Definition, weakly extended type D structure} is finite, and hence $\tilde{\partial}$ is well-defined. 

Next, we show $\tilde{\partial}^2=\bm{U}$. Once again, we first give the proof when the local system is trivial for conciseness. Recall the length of an element $a\in\tilde{\mathcal{A}}$ is the number of factors $\rho_i\in\{\rho_0,\rho_1,\rho_2,\rho_3\}$ when we write $a$ as a product of the generators $\{\iota_0,\iota_1,\rho_0,\rho_1,\rho_2,\rho_3\}$. (For example, $\rho_{123}$ has length $3$ and $\iota_0$ has length $0$.) 

For an element $a\in\tilde{\mathcal{A}}$ whose length is less than or equal to $3$, the proof of Proposition \ref{Proposition, type D structure is well defined} carries over to show $\langle \tilde{\partial}^2\bm{x}, a\bm{y}\rangle =0$ for any $\bm{x}$ and $\bm{y}$ (by permuting the region we where we put the base point $z$). 

We are left to consider the case where the algebra element is of length $4$. We claim that for a generator $\bm{x}$ such that $\iota_1\cdot \bm{x}=\bm{x}$, we have $$\langle\tilde{\partial}^2\bm{x}, \rho_{0123}\bm{y}\rangle=\begin{cases}0,\ if\  \bm{x}\neq\bm{y},\\1,\ if\ \bm{x}=\bm{y}.
\end{cases} $$
Assuming this claim, by permuting the subscripts we also have that $\langle\tilde{\partial}^2\bm{x}, \rho_{2301}\bm{y}\rangle$ is $1$ if $\bm{x}=\bm{y}$ and $0$ otherwise, and an idempotent consideration shows $\langle\tilde{\partial}^2\bm{x}, a\bm{y}\rangle=0$ when $a\in\{\rho_{1230},\rho_{3012}\}$. These together imply $\tilde{\partial}^2\bm{x}=\bm{U}\cdot \bm{x}$ when $\iota(\bm{x})=\iota_1$. A similar consideration shows this is true for $\bm{x}$ with $\iota(\bm{x})=\iota_0$ as well. This finishes the proof of the proposition modulo the claim. 

Next, we prove the claim. Note
\begin{equation}\label{Equation, coefficient of extended partial^2}
\langle\tilde{\partial}^2\bm{x}, \rho_{0123}\bm{y}\rangle=\sum_{\bm{w}}\sum_{\substack{\text{ind}(B_i,\overrightarrow{\sigma_i})=1,\\ i=1,2}} \#\mathcal{M}^{B_1}(\bm{x},\bm{w};\overrightarrow{\sigma_1})\#\mathcal{M}^{B_2}(\bm{w},\bm{y};\overrightarrow{\sigma_2}),
\end{equation}
where $(B_i,\overrightarrow{\sigma_i})$ ($i=1,2$) is compatible and $a(-\overrightarrow{\sigma_1})a(-\overrightarrow{\sigma_1})=\rho_{0123}$ or $\bm{U}$; the possible pairs of $(\overrightarrow{\sigma_1},\overrightarrow{\sigma_2})$ are listed below:
\begin{align*}
\bigg\{(\emptyset,\{-\rho_0,-\rho_1,-\rho_2,-\rho_3\}),(\{-\rho_0\},\{-\rho_1,-\rho_2,-\rho_3\}),(\{-\rho_0,-\rho_1,-\rho_2\},\{-\rho_3\}),\\
(\{-\rho_0,-\rho_1\},\{-\rho_2,-\rho_3\}),(\{-\rho_0,-\rho_1,-\rho_2,-\rho_3\},\emptyset),(\emptyset,\{-\rho_0,-\rho_{123}\}),\\(\{-\rho_0\},\{-\rho_{123}\}),(\{-\rho_0,-\rho_{123}\},\emptyset),(\emptyset,\{-\rho_{012},-\rho_{3}\}),(\{-\rho_{012}\},\{-\rho_{3}\}),\\(\{-\rho_{012},-\rho_{3}\},\emptyset),(\emptyset,\{U\}),(\{U\},\emptyset)\bigg\}.
\end{align*}
Let $$\overline{\mathcal{M}}_0\coloneqq\cup_{\text{ind}(B,\overrightarrow{\sigma})=2}\overline{\mathcal{M}}^B(\bm{x},\bm{y};\overrightarrow{\sigma}),$$ where $\overrightarrow{\sigma}\in \{(-\rho_0,-\rho_1,-\rho_2,-\rho_3),(-\rho_0,-\rho_{123}),(-\rho_{012},-\rho_{3})\}$. Let $$\overline{\mathcal{M}}_1:=\cup_{\text{ind}(B,U)=2}\overline{\mathcal{M}}^B(\bm{x},\bm{y};U).$$

Equation \ref{Equation, coefficient of extended partial^2} and the gluing result imply that $\langle\tilde{\partial}^2\bm{x}, \rho_{0123}\bm{y}\rangle$ is equal to the number of two-story ends of the moduli space $\overline{\mathcal{M}}_0\cup \overline{\mathcal{M}}_1$. 

According to Proposition \ref{Proposition, ends of 1-dimensional moduli spaces of 0-P curves, Reeb element of length 4}, the other elements in $\partial \overline{\mathcal{M}}_0$ are: 
\begin{itemize}
\item[(A-1)]simple holomorphic combs with a single split component;
\item[(A-2)]simple boundary degenerations with one corner;
\item[(A-3)]simple boundary degenerations without corners.
\end{itemize}
Proposition \ref{Proposition, ends of moduli space of 1-P curves} shows the other boundary points in $\overline{\mathcal{M}}_1$ in addition to two-story ends are: 
\begin{itemize}
\item[(B-1)]simple holomorphic combs with an orbit curve;
\item[(B-2)]simple boundary degenerations with one corner;
\item[(B-3)]simple boundary degenerations without corners.
\end{itemize}
Note by Proposition \ref{Proposition, ends of 1-dimensional moduli spaces of 0-P curves, Reeb element of length 4} and Proposition \ref{Proposition, ends of moduli space of 1-P curves}, the number of boundary points of type (A-1) is equal to that of type (B-1), both of which is $$\sum_{\text{ind}(B,-\rho_{3012})=1} \#\mathcal{M}^B(\bm{x},\bm{y};-\rho_{3012})+\sum_{\text{ind}(B,-\rho_{1230})=1} \#\mathcal{M}^B(\bm{x},\bm{y};-\rho_{1230}).$$
The parity of the number of boundary points of type (A-2) is equal to that of type (B-2), which are both mod $2$ equal to $$\sum_ {q}\sum_{\{(B_1,B_2)|B_2\in T(q),\ \text{ind}(B_1+B_2;U)=2\}} \#\mathcal{M}^{B_1}(\bm{x},\bm{y};q),$$
where $q$ ranges over self-intersection points of $\alpha_{im}$, and $T(q)$ denotes the set of stabilized teardrops at $q$. 

The parity of the number of boundary points of type (B-3) is even according to Proposition \ref{Proposition, ends of moduli space of 1-P curves}. 

In summary, the parity of the number of boundary points of $\overline{\mathcal{M}}_0\cup \overline{\mathcal{M}}_1$ corresponding to two-story ends is equal to that of type (A-3), which is odd if and only if $\bm{x}=\bm{y}$ by Proposition \ref{Proposition, ends of 1-dimensional moduli spaces of 0-P curves, Reeb element of length 4}. Therefore, $\langle\tilde{\partial}^2\bm{x}, \rho_{0123}\bm{y}\rangle$ is odd if and only if $\bm{x}=\bm{y}$, finishing the proof of the claim. 

In the presence of non-trivial local systems, we simply need to consider the above argument for each domain. For a domain $B$, let $\overline{\mathcal{M}}_0^B$ be the subset of $\overline{\mathcal{M}}_0$ consisting of holomorphic curves with domain $B$, and similarly define $\overline{\mathcal{M}}_1^B$. The two-story ends in $\overline{\mathcal{M}}_0^B\cup \overline{\mathcal{M}}_1^B$ all correspond to the same parallel transport. When ends of type (A-3) do not occur, the two-story ends cancel in pairs by the same argument as above. When ends of type (A-3) appear, we have $B=[\Sigma]$ and $\sigma=(-\rho_0,-\rho_1,-\rho_2,-\rho_3)$. In particular, $\partial_{\alpha_{im}}B=\emptyset$, which induces the identity endomorphism of $\mathcal{E}|_{\bm{x}}$. Also, the number of two-story ends is odd as the number of (A-3) ends is odd. The claim follows from these. 
\end{proof}

There is a canonical quotient map $\pi:\tilde{\mathcal{A}}\rightarrow \mathcal{A}$. We say a weakly extended type D structure $(N,\tilde{\delta})$ extends a type D structure $(N',\delta)$ if $(N',\delta)$ is isomorphic to $(N,(\pi\otimes \mathbb{I}_N)\circ \tilde{\delta})$. Clearly, $\widetilde{CFD}(\mathcal{H})$ extends $\widehat{CFD}(\mathcal{H})$ when both are defined.

\subsection{Invariance}
In this subsection, we address the invariance of the (weakly extended) type D structures. 

\begin{prop}\label{Proposition, invariance of type D structure}
The homotopy type of the type D structure defined in Definition \ref{Definition, type D structure} is independent of the choice of the almost complex structure and is invariant under isotopy of the $\alpha$- or $\beta$-curves. 
\end{prop}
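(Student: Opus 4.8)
The plan is to follow the standard scheme for proving invariance of bordered Floer invariants from \cite[Chapter 6]{LOT18}, adapting each step to the immersed setting by invoking the moduli-space analysis developed in Sections \ref{Section, degeneration of moduli space}--\ref{Subsection, ends of moduli space of 1P curves}. Concretely, the homotopy type of $\widehat{CFD}(\mathcal{H})$ should be shown to be unchanged under (i) change of the generic admissible almost complex structure, and (ii) isotopy of the $\bm{\bar\alpha}$- or $\bm{\beta}$-curves. For a generic path $J_s$ connecting two generic admissible almost complex structures, one builds a chain map $F\colon \widehat{CFD}(\mathcal{H},J_0)\to \widehat{CFD}(\mathcal{H},J_1)$ by counting index-$0$ (after the $\mathbb{R}$-quotient) curves in the parametrized moduli spaces $\mathcal{M}^B_{J_s}(\bm{x},\bm{y};\overrightarrow{\sigma})$ with $n_z(B)=0$, and one shows $F$ is a homotopy equivalence of type $D$ structures by the usual argument: the ends of the corresponding one-dimensional parametrized moduli spaces give the identity $\partial' F + F\partial = 0$, and concatenating the chain maps for $J_s$ and for the reverse path $J_{1-s}$, together with a parametrized-by-a-square moduli count, produces the homotopy. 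The key input is that boundary degenerations do not contaminate these counts: by Proposition \ref{Proposition, boundary degeneration do not occur when n_z=0} there are no boundary degenerations when $n_z(B)=0$, so the compactness statement of Proposition \ref{Proposition, ends of 0-p curves, n_z=0} applies verbatim in the parametrized setting, and the only ends are two-story buildings (one story broken off, living in a fixed $J_{s_0}$) plus the codimension-one loci $\{s=0\}$ and $\{s=1\}$.

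For isotopy invariance, I would handle $\beta$-isotopies exactly as in \cite{LOT18}, since these do not interact with the immersed $\alpha$-curve at all. For $\alpha$-isotopies, I would separate into isotopies of the embedded $\alpha$-arcs/circles $\{\bar\alpha^a_1,\bar\alpha^a_2,\alpha_1^c,\dots,\alpha_{g-2}^c,\alpha^0_{im}\}$ and homotopies of the immersed multicurve $\alpha_{im}$ (possibly changing its number of self-intersections by finger moves and Reidemeister-II-type moves). The first is again as in \cite{LOT18}. For the second, I would note that any such homotopy can be decomposed into: isotopies keeping $\alpha_{im}$ embedded in a neighborhood (handled as in the embedded case), finger moves that push $\alpha_{im}$ across a region, and local moves creating or cancelling a pair of self-intersection points; in each case one must check that the diagram remains unobstructed and provincially admissible (this is where one restricts attention to isotopies through such diagrams, or where one first applies admissibility-preserving isotopies as in \cite{LOT18}), and then define a continuation-type chain map counting curves for an interpolating one-parameter family. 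The self-intersection points only enter through the term $s(\partial_{\alpha_{im}}B)$ in the embedded index formula (Proposition \ref{Proposition, embedded Euler characteristic}) and through the possible corners at self-intersections in nodal combs; since for $n_z(B)=0$ these corner-degenerations are already excluded by unobstructedness (Proposition \ref{Proposition, boundary degeneration do not occur when n_z=0}), the formal structure of the argument is identical to the embedded case.

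The main obstacle is verifying that the parametrized (continuation) moduli spaces have the expected compactified structure, i.e., that in a generic one-parameter family of diagrams or almost complex structures, the one-dimensional parametrized moduli spaces of $0$-P curves with $n_z(B)=0$ are compact one-manifolds whose ends are exactly two-story buildings and the fiber-ends over the two endpoints of the parameter interval. This requires re-running Propositions \ref{Proposition, first constraint on the boundary of 1-dimensional moduli space}--\ref{Proposition, summary of degeneration of moduli spaces} in the parametrized setting; the only genuinely new point compared to \cite{LOT18} is again the absence of boundary degenerations, which is guaranteed because $n_z(B)=0$ throughout (the chain maps never count curves covering $z$). A secondary technical point is bookkeeping the local systems: the continuation map must be defined using parallel transport along $\partial_{\alpha_{im}}$ of the relevant domains, exactly as in Definition \ref{Definition, type D structure}, and one checks $F\circ\Phi^{B_1} \circ \Phi^{B_2}=\Phi^{B_1+B_2}$ so that the homotopy identities hold coefficientwise, as in the proof of Proposition \ref{Proposition, type D structure is well defined}. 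I would state the proof of Proposition \ref{Proposition, invariance of type D structure} mostly by reference, emphasizing these two points, and defer the analogous invariance statement for $\widetilde{CFD}(\mathcal{H})$ to the next subsection, where the additional $n_z(B)=1$ and $1$-P contributions (governed by Propositions \ref{Proposition, ends of 1-dimensional moduli spaces of 0-P curves, Reeb element of length 4} and \ref{Proposition, ends of moduli space of 1-P curves}) must be incorporated into the chain homotopies.
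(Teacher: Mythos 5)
Your proposal is correct and takes essentially the same approach as the paper: the paper's proof simply cites Section 6.3 of Lipshitz--Ozsv\'ath--Thurston and observes that the only new complication, boundary degeneration, is excluded because the relevant curves have $n_z=0$ and the diagram is unobstructed. Your write-up fills in the same continuation-map scheme with the same key observation, so there is no substantive difference.
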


\begin{rmk}
We do not need the invariance under handleslides and stabilizations for our applications. We only need to prove invariance when perturbing diagrams to obtain nice diagrams, and this only requires isotopies.
\end{rmk}
\begin{proof}[Proof of Proposition \ref{Proposition, invariance of type D structure}]
The standard proof in Section 6.3 of \cite{LOT18} carries over. For instance, to prove independence of almost complex structures, one first constructs a continuation map by counting holomorphic curves in $\Sigma\times[0,1]\times \mathbb{R}$ for a generic almost complex structure $J$ that interpolates two admissible almost complex structures $J_0$ and $J_1$. Then, one proves the continuation map is a chain map by analyzing the ends of one-dimensional moduli spaces. The only possible complication comes from boundary degenerations since $\alpha_{im}$ is immersed. However, this does not happen as $\mathcal{H}$ is unobstructed and the holomorphic curves have $n_z=0$. Therefore, no new phenomenon appears in the degeneration of moduli spaces, and hence the proof stays the same.
\end{proof}
\begin{prop}\label{Proposition, invariance of weakly type D structures}
The homotopy type of the weakly extended type D structure defined in Definition \ref{Definition, weakly extended type D structure} is independent of the choice of the almost complex structure and is invariant under isotopy of the $\alpha$- and $\beta$-curves.  
\end{prop}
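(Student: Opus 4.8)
The plan is to mimic the proof of Proposition \ref{Proposition, invariance of type D structure}, following the template of Section 6.3 of \cite{LOT18}, but now keeping careful track of the two extra features present in the weakly extended setting: the length-$4$ Reeb chords together with the $1$-P curves contributing the $\bm{U}$-coefficient, and the possible boundary degenerations (which are what the unobstructedness hypothesis is designed to control). First I would reduce, as in \cite{LOT18}, to three statements: (i) a change of generic almost complex structure $J_0 \rightsquigarrow J_1$ induces a homotopy equivalence of weakly extended type D structures; (ii) an isotopy of the $\bm{\beta}$-curves (equivalently a Hamiltonian isotopy not crossing $z$, plus handleslide-free finger moves) induces a homotopy equivalence; (iii) an isotopy of the $\bm{\bar\alpha}$-curves — including an isotopy of the immersed multicurve $\alpha_{im}$ through immersed multicurves, which for our applications is all we need — induces a homotopy equivalence. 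In each case one builds a continuation-type map $\tilde F$ by counting index-$0$ embedded (and $1$-P) curves in a family, and a homotopy-equivalence-witnessing map, exactly as in \cite[Section 6.3]{LOT18}.

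The key steps, in order: (1) Set up the relevant parametrized moduli spaces. For a path $J_t$ of admissible almost complex structures, or for a one-parameter family of diagrams realizing the isotopy, define $\widetilde{\mathcal{M}}^B(\bm{x},\bm{y};\overrightarrow{\sigma})$ over the family and observe that Proposition \ref{Proposition, regularity of the moduli spaces} and the index formulas of Proposition \ref{Proposition, first index formula} and Proposition \ref{Proposition, embedded Euler characteristic} apply verbatim, since the immersed boundary condition is invisible after passing to pullback bundles over the source. (2) Define the continuation map $\tilde F\colon \widetilde{CFD}(\mathcal{H}_0)\to \widetilde{CFD}(\mathcal{H}_1)$ by counting the $0$-dimensional such moduli spaces, now including the $\{U\}$-labeled $1$-P moduli spaces with coefficient $\bm{U}$, exactly parallel to Definition \ref{Definition, weakly extended type D structure}. (3) Show $\tilde F$ is a map of weakly extended type D structures, i.e.\ that it intertwines $\tilde\delta$ in the appropriate curved sense ($\tilde\partial_1 \tilde F + \tilde F \tilde\partial_0 = 0$ on the level of the type D structure maps), by analyzing the ends of the $1$-dimensional parametrized moduli spaces. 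This is where the bulk of the work lies: the ends are two-story splittings (contributing the $\tilde\partial_1\tilde F + \tilde F\tilde\partial_0$ terms), split-curve and orbit-curve ends (which reorganize into higher Reeb-chord coefficients exactly as in the proof of Proposition \ref{Proposition, weakly extended type D strucuture is well defined.}), and boundary degenerations. For boundary degenerations we invoke exactly the analysis already carried out: Proposition \ref{Proposition, boundary degeneration do not occur when n_z=0} kills them when $n_z=0$, and for the length-$4$ coefficient the counts of Proposition \ref{Proposition, ends of 1-dimensional moduli spaces of 0-P curves, Reeb element of length 4} and Proposition \ref{Proposition, ends of moduli space of 1-P curves} show the teardrop-with-corner ends of the $0$-P and $1$-P families cancel in pairs against each other (the key being that their parities both equal $\sum_q\sum \#\mathcal{M}^{B_1}(\bm{x},\bm{y};q)$), and the cornerless $\Sigma$-degenerations occur only in the $1$-P family and in even number by Proposition \ref{Proposition, 1-P degenerate disks without corner appear in pairs}; in the parametrized setting the same parity bookkeeping goes through. (4) Build the reverse map $\tilde G$ from the reverse family and a homotopy $\tilde H$ by counting the corresponding moduli spaces over a two-parameter family (a square of almost complex structures / diagrams), showing $\tilde G\tilde F + \mathbb{I} = \tilde\partial \tilde H + \tilde H \tilde\partial$ again by an ends-of-moduli-spaces argument with the identical boundary-degeneration input. (5) Finally note the local-system bookkeeping: every cancellation above is between curves in the same homology class $B$, hence carrying the same parallel transport $\Phi^B_{\bm{x},\bm{y}}$, so the trivial-local-system argument upgrades verbatim, exactly as at the end of the proof of Proposition \ref{Proposition, weakly extended type D strucuture is well defined.}.

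The main obstacle I expect is step (3)'s treatment of boundary degenerations in the \emph{parametrized} moduli spaces, specifically confirming that the teardrop-with-corner and cornerless-$\Sigma$ degenerations behave in the one-parameter family the same way they do in the static case. One must check that unobstructedness is preserved along the isotopy (which is automatic for isotopies of $\alpha_{im}$ through immersed multicurves provided one never handleslides a homologically trivial component over an embedded $\alpha$-curve, consistent with the footnote in the definition of immersed bordered diagrams, and provided one uses the $z$-passable/$z$-adjacency hypotheses when needed — though for the bare invariance statement under generic isotopy the relevant domains with $n_z\le 1$ remain stabilized teardrops or $[\Sigma]$), and that the regularity results for degenerate disks in the symmetric product (Proposition \ref{Proposition, fiber of evaluation map of degenerate disks is odd when rho is cyclic}, Proposition \ref{Proposition, fiber of evaluation map of 1-punctured degenerate disks is odd}) hold for a generic \emph{path}. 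This is standard but tedious; I would handle it by the usual trick of first proving the claim for a generic path avoiding the (codimension $\ge 1$) bad locus where extra degenerations occur, then noting that the homotopy type is locally constant in the parameter, so the general case follows by subdividing the path. The remaining steps are routine adaptations of \cite[Section 6.3]{LOT18} and carry over without essential change.
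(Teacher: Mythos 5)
Your argument is correct in outline, but it takes a genuinely different --- and much longer --- route than the paper. The paper explicitly declines to run the direct continuation-map argument you propose, remarking that it would require generalizing the ends-of-moduli-spaces analysis of Proposition 6.20 of \cite{LOT18} to the parametrized setting and is therefore tedious to write down. Instead it gives a short algebraic proof: hat-version invariance is already established in Proposition \ref{Proposition, invariance of type D structure}; the weakly extended structure $\widetilde{CFD}(\mathcal{H})$ extends $\widehat{CFD}(\mathcal{H})$; and such extensions are unique up to homotopy by Proposition 38 of \cite{hanselman2016bordered}. Hence the homotopy type of $\widetilde{CFD}(\mathcal{H})$ is determined by that of the underlying hat-version structure, and invariance is inherited for free. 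What your approach buys is self-containedness --- it does not lean on the uniqueness-of-extensions result and it would produce explicit continuation maps --- but what it costs is exactly the work you flag as the main obstacle: parametrized analogues of Propositions \ref{Proposition, ends of 1-dimensional moduli spaces of 0-P curves, Reeb element of length 4} and \ref{Proposition, ends of moduli space of 1-P curves}, regularity of degenerate disks over generic paths of almost complex structures, and the parity bookkeeping for teardrop and $[\Sigma]$ degenerations in one-parameter families, none of which is carried out in the paper. Your route is viable (the authors say as much), but if you want the shortest complete argument, the algebraic one is the way to go.
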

\begin{proof}
One could prove this proposition similarly to the previous one. However, such an approach would require generalizing the analysis of the ends of moduli spaces in Proposition 6.20 of \cite{LOT18} and hence is slightly tedious to write down. Here we give a different approach. Let $\mathcal{H}$ denote the immersed bordered Heegaard diagram. By Proposition \ref{Proposition, invariance of type D structure}, we know the homotopy type of $\widehat{CFD}(\mathcal{H})$ is independent of the choice of almost complex structures and isotopy of the $\alpha$- or $\beta$-curves. Since $\widetilde{CFD}(\mathcal{H})$ extends $\widehat{CFD}(\mathcal{H})$ and that such extension is unique up to homotopy by Proposition 38 of \cite{hanselman2016bordered}, we know the homotopy type of $\widetilde{CFD}(\mathcal{H})$ is also independent of the choice of almost complex structures and isotopy of the $\alpha$- and $\beta$-curves.
\end{proof}
\section{Knot Floer homology of immersed Heegaard diagrams}\label{Section, knot Floer chain complex of immersed Heegaard diagrams}
This section defines knot Floer chain complexes of immersed Heegaard diagrams and proves the homotopy invariance under Heegaard moves.
\subsection{Immersed doubly-pointed Heeggard diagram}
\begin{defn}\label{Definition, generalized doubly pointed Heegaard diagram}
An \emph{immersed doubly-pointed Heegaard diagram} is a 5-tuple $\mathcal{H}_{w,z}=(\Sigma,\bm{\alpha},\bm{\beta},w,z)$ where 
\begin{itemize}
\item[(1)]$\Sigma$ is a closed oriented surface of genus $g$.
\item[(2)]$\bm{\alpha}=\{\alpha_1,\ldots,\alpha_{g-1},\alpha_g\}$, where $\alpha_1,\ldots,\alpha_{g-1}$ are embedded disjoint curves in $\Sigma$ and $\alpha_g=\{\alpha_g^1,\ldots,\alpha_g^n\}$ is a collection of immersed curves decorated with local systems. Moreover, $\alpha_i$ ($i=1,\ldots,g-1$) are disjoint from $\alpha_g$, $\alpha_g^1$ has the trivial local system, and $\{\alpha_1,\ldots,\alpha_{g-1},\alpha_{g}^1\}$ induce linearly independent elements in $H_1(\Sigma,\mathbb{Z})$. We also assume that $\alpha_g^i$ is trivial in $H_1(\Sigma, \mathbb{Z})/\langle \alpha_1, \ldots, \alpha_{g-1} \rangle$ for $i > 1$. For convenience, we also denote $\alpha_{g}$ by $\alpha_{im}$.
\item[(3)]$\bm{\beta}=\{\beta_1,\ldots,\beta_g\}$ are embedded disjoint curves in $\Sigma$ which induce linearly independent elements in $H_1(\Sigma,\mathbb{Z})$.
\item[(4)]$w$ and $z$ are base points such that they both lie in a single connected region in the complement of $\alpha$-curves as well as a single region in the complement of $\beta$-curves.  
\end{itemize}
\end{defn}
Domains, periodic domains, and $\alpha$-bounded domains are defined similarly in this setting as for bordered Heegaard diagrams (by ignoring $\alpha$-arcs and surface boundary). We make a similar but slightly different definition of unobstructedness and admissibility below. 
 
\begin{defn}\label{Definition, unobstructedness for doubly-pointed diagram}
Given an immersed doubly-pointed Heegaard diagram, $\bm{\alpha}$ is called \emph{unobstructed} if there are no nontrivial zero- or one-cornered $\alpha$-bounded domains $B$ with $n_z(B)=0$ (or equivalently $n_w(B)=0$). An immersed doubly-pointed Heegaard diagram is called unobstructed if $\bm{\alpha}$ is unobstructed. 
\end{defn}
\begin{defn}\label{Definition, admissibility for doubly-pointed diagram}
An immersed doubly-pointed Heegaard diagram is \emph{bi-admissible} if any nontrivial periodic domain $B$ with $n_z(B)=0$ or $n_w(B)=0$ has both positive and negative coefficients.
\end{defn}

We remark that the restriction to having only one immersed multicurve in the definition of immersed doubly-pointed Heegaard diagrams is not essential. 
\subsection{The knot Floer chain complex}
We define the knot Floer chain complex of an immersed Heegaard diagram similar to that in the ordinary setup. The only modification is that we only count stay-on-track holomorphic curves. The definition and analysis of moduli spaces in this setup is a straightforward modification of that in the previous section; it is even simpler as we do not need to care about east punctures. We hence do not repeat the moduli space theory but only mention the key properties when we need them. We will let $\mathcal{G}(\mathcal{H}_{w,z})$ denote the set of generators, which are $g$-tuples $(x_1,\ldots,x_g)$ such that $x_i\in \alpha_i\cap \beta_{\sigma(i)}$ $(i=1,\ldots,g)$ where $\sigma$ is a permutation of $\{1,\ldots,g\}$. Let $\mathcal{R}=\mathbb{F}[U,V]/(UV)$. Implicit in the definition below is that we choose a generic admissible almost complex structure $J$ on $\Sigma\times[0,1]\times \mathbb{R}$.
\begin{defn}\label{Definition, CFK_R}
Let $\mathcal{H}_{w,z}$ be an unobstructed and bi-admissible immersed doubly-pointed Heegaard diagram. $CFK_{\mathcal{R}}(\mathcal{H}_{w,z})$ is the free $\mathcal{R}$-module generated over $\mathcal{G}(\mathcal{H}_{w,z})$ with differential $\partial$ defined as $$\partial \bm{x} =\sum_{y}\sum_{B\in\pi_2(\bm{x},\bm{y}),\ \text{ind}(B)=1} \#\mathcal{M}^B(\bm{x},\bm{y})U^{n_w(B)}V^{n_z(B)}\bm{y},$$
where $\bm{x},\bm{y}\in\mathcal{G}$.  
\end{defn}  
\begin{rmk}
Here we only give the definition assuming the local system on $\alpha_{im}$ is trivial. The case in which the local system is non-trivial is only notationally more complicated, and we leave it for the interested readers to work out. See Definition \ref{Definition, type D structure} for an example.    
\end{rmk}
\begin{prop}\label{Proposition, well-definess of partial for CFK_R}
$(CFK_\mathcal{R}(\mathcal{H}_{w,z}),\partial)$ is a chain complex, i.e., $\partial^2=0$.
\end{prop}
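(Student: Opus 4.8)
The plan is to run the standard $\partial^2 = 0$ argument for knot Floer chain complexes, now in the immersed setting, using the moduli space package developed in Section \ref{Section, bordered invariants of immersed Heegaard diagrams} adapted to the closed doubly-pointed case. First I would observe that, since we are counting holomorphic curves in $\Sigma \times [0,1] \times \mathbb{R}$ with no east punctures, the analytic input is strictly simpler than in the bordered case: compactness, transversality, the embedded index formula (Proposition \ref{Proposition, embedded Euler characteristic}) and its additivity (Proposition \ref{Proposition, embedded index formula is additive}), and the gluing results all carry over verbatim, and the key fact that the two-story buildings appearing in the boundary of a one-dimensional moduli space are embedded (Proposition \ref{Proposition, levels of degeneration are embedded}) holds as stated. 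I would also note at the outset that bi-admissibility of $\mathcal{H}_{w,z}$ guarantees the sum defining $\partial$ is finite, so $\partial$ is well-defined.

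Next I would fix generators $\bm{x}, \bm{y}$ and a monomial $U^i V^j \in \mathcal{R}$ and write the coefficient of $U^i V^j \bm{y}$ in $\partial^2 \bm{x}$ as a sum over intermediate generators $\bm{w}$ and pairs of index-one domains $(B_1, B_2)$ with $B_1 \in \pi_2(\bm{x}, \bm{w})$, $B_2 \in \pi_2(\bm{w}, \bm{y})$, $B = B_1 + B_2$ satisfying $n_w(B) = i$, $n_z(B) = j$, of products $\#\mathcal{M}^{B_1}(\bm{x}, \bm{w}) \cdot \#\mathcal{M}^{B_2}(\bm{w}, \bm{y})$. By the gluing theorem this count equals the number of two-story ends of the one-dimensional moduli spaces $\overline{\mathcal{M}}^{B}(\bm{x}, \bm{y})$ ranging over domains $B$ with $\mathrm{ind}(B) = 2$, $n_w(B) = i$, $n_z(B) = j$. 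So it suffices to show that for each such $B$, the total number of boundary points of $\overline{\mathcal{M}}^B(\bm{x}, \bm{y})$ is even, and that the only boundary contributions are two-story buildings.

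The crucial point, and the place where the immersed hypothesis could cause trouble, is ruling out boundary degenerations — nodal curves with a component $S_0$ carrying no $\pm$-punctures but with $\pi_\Sigma \circ u|_{S_0}$ non-constant, possibly with corners at self-intersection points of $\alpha_{im}$. Here I would use the analogue of Proposition \ref{Proposition, boundary degeneration do not occur when n_z=0}: the domain $B_v$ of such a degenerate component is a positive zero- or one-cornered $\alpha$-bounded domain, and in the closed doubly-pointed setting the two basepoints $w, z$ lie in a common $\alpha$-region, so $n_w(B_v) = n_z(B_v)$; if this common value is $0$ then unobstructedness (Definition \ref{Definition, unobstructedness for doubly-pointed diagram}) forbids $B_v$ outright, and if it is positive then $B_v$ covers both $w$ and $z$, so $n_z(B) \geq 1$ and $n_w(B) \geq 1$ simultaneously. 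The latter case is exactly where one must be slightly careful about which monomials $U^i V^j$ can actually arise: since $\mathcal{R} = \mathbb{F}[U,V]/(UV)$, any domain with $n_w(B) \geq 1$ and $n_z(B) \geq 1$ contributes $U^{n_w} V^{n_z} = 0$ in $\mathcal{R}$, so such $B$ do not enter the computation of $\partial^2$ at all. Hence for every $B$ that contributes, $n_w(B) = 0$ or $n_z(B) = 0$, forcing the common $\alpha$-multiplicity $n_w(B_v) = n_z(B_v) = 0$ and excluding boundary degenerations by unobstructedness. I would also dispatch boundary double points and haunted curves by the analogue of Proposition \ref{Proposition, no boundary double point or haunted curve}. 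This leaves only two-story buildings, which by Proposition \ref{Proposition, levels of degeneration are embedded} are embedded and hence are exactly the terms counted in $\partial^2$; since $\overline{\mathcal{M}}^B(\bm{x}, \bm{y})$ is a compact one-manifold, its boundary has an even number of points, giving $\partial^2 = 0$.

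The main obstacle, then, is purely bookkeeping: making precise the claim that a genuine boundary degeneration forces both basepoints to be covered, which requires the observation that $w$ and $z$ are \emph{also} in a common $\alpha$-region (not just a common $\beta$-region), so that a degenerate disk's domain, being $\alpha$-bounded, has equal multiplicity at $w$ and $z$. Once this is set up, unobstructedness and the $UV = 0$ relation do all the work, and the rest of the argument is a routine transcription of the closed-surface case of \cite{LOT18} and \cite{MR2240908} with the stay-on-track condition imposed — which, as in Proposition \ref{Proposition, regularity of the moduli spaces} and Proposition \ref{Proposition, first index formula}, does not affect any of the local analysis. I would remark that the non-trivial local system case follows by the same modification as in the proof of Proposition \ref{Proposition, type D structure is well defined}, namely that two-story ends of a fixed one-dimensional moduli space $\overline{\mathcal{M}}^B(\bm{x},\bm{y})$ all contribute the same parallel transport $\Phi^B_{\bm{x},\bm{y}}$ and hence cancel in pairs.
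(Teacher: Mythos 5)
Your proposal is correct and follows essentially the same route as the paper: the paper's proof also reduces to the argument for $\widehat{CFD}$ (Proposition \ref{Proposition, type D structure is well defined}), observing that the $UV=0$ relation restricts attention to domains with $n_w(B)=0$ or $n_z(B)=0$, for which unobstructedness (whose definition already encodes your key point that $n_w=n_z$ on $\alpha$-bounded domains) rules out boundary degenerations, leaving only two-story ends that cancel in pairs. Your write-up merely makes explicit the bookkeeping that the paper leaves implicit.
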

\begin{proof}
The same proof for Proposition \ref{Proposition, type D structure is well defined} works here. Note we will only use moduli spaces with domains $B$ such that $n_w(B)=0$ or $n_z(B)=0$, and the unobstructedness of $\mathcal{H}_{w,z}$ excludes the possibility of boundary degeneration in the compactified 1-dimensional moduli space supported in such domains. Hence, an analogue version of Proposition \ref{Proposition, ends of 0-p curves, n_z=0} holds. With this observation, the proof of Proposition \ref{Proposition, type D structure is well defined} carries over.  
\end{proof}
\subsection{Bi-grading}
We would like to consider gradings on knot Floer chain complexes.

\begin{defn}\label{Definition, gradable doubly-pointed diagram}
A (possibly immersed) doubly-pointed Heegaard diagram is gradable if all non-trivial periodic domain $P$ satisfies $\text{ind}(P)-2n_z(P)=0$ and $\text{ind}(P)-2n_w(P)=0$, where $\text{ind}(-)$ is defined in Definition \ref{Definition, embedded Euler Char, index, and moduli space}. 
\end{defn}

If $\mathcal{H}_{w,z}$ is gradable then the knot Floer chain complex $(CFK_\mathcal{R}(\mathcal{H}_{w,z}),\partial)$ admits a relative $\mathbb{Z}\oplus\mathbb{Z}$-grading, as described below. We will be interested in diagrams  $\mathcal{H}_{w,z}$  for which $\widehat{HF}(\mathcal{H}_{w})\cong \widehat{HF}(\mathcal{H}_{z})\cong \mathbb{F}$, where $\widehat{HF}(\mathcal{H}_{w})$ and $\widehat{HF}(\mathcal{H}_{z})$ are homology groups of the chain complexes obtained from $CFK_\mathcal{R}(\mathcal{H}_{w,z})$ by setting $V=0$ and $U=1$ or $U=0$ and $V=1$, respectively. In this case we say that the horizontal and vertical homology has rank one. Gradable diagrams with this property can be given an absolute grading, as follows.

\begin{defn}\label{Definition, w- and z- gradings}
	Let $\bm{x},\bm{y}\in \mathcal{G}(\mathcal{H}_{w,z})$ be two generators. Let $B\in \tilde{\pi}_2(\bm{x},\bm{y})$ be a domain. Then the $w$-grading difference between $\bm{x}$ and $\bm{y}$ is given by $$gr_w(\bm{x})-gr_w(\bm{y})=\text{ind}(B)-2n_w(B),$$ and
	the $z$-grading difference between $\bm{x}$ and $\bm{y}$ is given by $$gr_z(\bm{x})-gr_z(\bm{y})=\text{ind}(B)-2n_z(B).$$
	If the horizontal and vertical homology of $\mathcal{H}_{w,z}$ is rank one, then the absolute $w$-grading is normalized so that $\widehat{HF}(\mathcal{H}_{w})$ is supported in $w$-grading $0$, and absolute $z$-grading is normalized so that $\widehat{HF}(\mathcal{H}_{z})$ is supported in $z$-grading $0$.
\end{defn}
Equivalently, one can equip $CFK_{\mathcal{R}}(\mathcal{H}_{w,z}(\alpha_{K}))$ with the \textit{Maslov grading} and the \textit{Alexander grading}. These two gradings can be expressed in terms of the $w$-grading and $z$-grading: The Maslov grading is equal to the $z$-grading, and the Alexander grading is given by $\frac{1}{2}(gr_w-gr_z)$.  

\begin{rmk}
The normalization conditions for the absolute gradings are chosen so that the bi-graded chain complexes model those associated to knots in the 3-sphere. 
\end{rmk}
\subsection{Invariance}
We will show knot Floer chain complexes defined over immersed Heegaard diagrams satisfy similar invariance properties when varying the almost complex structure or modifying the Heegaard diagram by isotopy, handleslides, and stabilizations. While the meaning of isotopy and stabilization are obvious for immersed Heegaard diagrams, we give a remark on handleslides. 
\begin{rmk}
When speaking of handleslides of an immersed Heegaard diagram $\mathcal{H}_{w,z}$, we only allow an $\alpha$-curve to slide over another \emph{embedded} $\alpha$-curve, not over an immersed $\alpha$-curve. Furthermore, we point out that handle-slides do not change the unobstructedness, bi-admissibility, and gradability of the diagram. To see this, note periodic domains of two Heegaard diagrams before and after a handleslide are related. A periodic domain in the old Heegaard diagram with boundary on the arc that moves in the handleslide give rise to a periodic domain in the new Heegaard diagram by boundary summing a thin annulus (whose multiplicity can be one or negative one). In particular, if we started from a somewhere negative domain $B$, then the new domain $B'$ after this procedure is still somewhere negative; it is also easy to see $\text{ind}(B)=\text{ind}(B')$, $n_z(B)=n_z(B')$, and $n_w(B)=n_w(B')$, which implies the gradability of two diagrams are the same as well.  
\end{rmk} 

\begin{prop}\label{Proposition, invariance of CFK_R} Let $\mathcal{H}_{w,z}$ be an unobstructed, bi-admissible, and gradable immersed doubly-pointed Heegaard diagram.
The bigraded chain homotopy type of $CFK_{\mathcal
R}(\mathcal{H}_{w,z})$ is invariant under varying the almost complex structure, isotopy of the $\alpha$- and $\beta$-curves, handleslides, and stabilization/destabilization.  
\end{prop}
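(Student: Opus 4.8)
The plan is to follow the standard invariance scheme for Heegaard Floer complexes as in \cite[Chapter 6 and Chapter 11]{LOT18} and \cite{MR2065507,MR2240908}, adapting each step to the immersed setting exactly as was done for the (weakly extended) type D structures in Section \ref{Section, bordered invariants of immersed Heegaard diagrams}. Concretely, invariance is proved by treating the four types of moves separately: (i) change of almost complex structure, (ii) isotopy of $\bm{\alpha}$ or $\bm{\beta}$, (iii) handleslides among embedded curves (and the allowed handleslides involving homologically essential $\alpha$ curves, but never sliding over or moving the immersed $\alpha_{im}$ in a way that destroys its defining properties), and (iv) stabilization/destabilization. For each move one constructs a chain map between the two complexes---a continuation map for (i) and (ii), a triangle-counting map for (iii), and the obvious near-identity map for (iv)---shows it is a bigraded chain map by analyzing the ends of the relevant $1$-dimensional moduli spaces, and then shows the composition of the map for a move with the map for its inverse is chain homotopic to the identity by the same degeneration analysis one dimension up.

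First I would record that all the hypotheses---unobstructedness, bi-admissibility, and gradability---are preserved under each of the four moves (for handleslides this is the content of the remark preceding the proposition; for isotopy and stabilization it is immediate; admissibility can always be restored by a further isotopy supported away from the basepoints, which does not affect the homotopy type). Next, for (i) and (ii) the continuation map is defined by counting index-zero stay-on-track holomorphic curves in $\Sigma\times[0,1]\times\mathbb{R}$ for a time-dependent (resp. isotopy-dependent) almost complex structure interpolating the two choices, weighted by $U^{n_w}V^{n_z}$; the proof that this is a chain map and a homotopy equivalence is the degeneration argument already used in Proposition \ref{Proposition, invariance of type D structure}, the only potential new phenomenon being boundary degeneration. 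But since $CFK_{\mathcal{R}}$ only ever sees domains $B$ with $n_w(B)=0$ or $n_z(B)=0$, the analogue of Proposition \ref{Proposition, boundary degeneration do not occur when n_z=0}---which follows directly from unobstructedness (Definition \ref{Definition, unobstructedness for doubly-pointed diagram})---rules these out, so the classical argument goes through verbatim. For (iii) one counts holomorphic triangles, establishes the triangle maps are chain maps via the standard count of ends of $1$-dimensional triangle moduli spaces, and proves the composition is homotopic to the identity using the associativity (pentagon) relation for holomorphic quadrilaterals; again boundary degenerations with corners at self-intersections of $\alpha_{im}$ are the only new feature and are excluded in the relevant $n_w=0$ or $n_z=0$ domains by unobstructedness exactly as before. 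For (iv), stabilization adds a canceling pair of intersection points and the near-identity quasi-isomorphism argument of \cite[Section 6.3]{LOT18} applies with no change since the stabilization can be performed in a region disjoint from $\alpha_{im}$.

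Finally, I would check that all these maps respect the bigrading. Gradability (Definition \ref{Definition, gradable doubly-pointed diagram}) guarantees the relative $\mathbb{Z}\oplus\mathbb{Z}$-grading is well defined, the index formula of Proposition \ref{Proposition, embedded Euler characteristic} and its additivity (Proposition \ref{Proposition, embedded index formula is additive}) ensure the continuation, triangle, and stabilization maps shift $gr_w$ and $gr_z$ in the expected (zero) way, and since the moves preserve the property that horizontal and vertical homology have rank one, the absolute normalization of Definition \ref{Definition, w- and z- gradings} is transported correctly. \textbf{The main obstacle} is the bookkeeping around boundary degenerations with corners at self-intersections of $\alpha_{im}$: one must verify, move by move, that every $1$- and $2$-dimensional moduli space entering the chain-map and chain-homotopy arguments is supported in a domain with $n_w=0$ or $n_z=0$, so that unobstructedness applies and no stabilized-teardrop or orbit-curve degeneration can appear---this is where the immersed case genuinely differs from \cite{LOT18}, and it is precisely the place where the unobstructedness hypothesis is used. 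Everything else is a routine transcription of the arguments recalled in Section \ref{Section, bordered invariants of immersed Heegaard diagrams}.
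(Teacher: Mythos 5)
Your overall scheme matches the paper's proof: for change of almost complex structure, isotopy, and stabilization the paper likewise says the embedded-curve arguments of \cite{MR2240908} carry over verbatim because unobstructedness rules out boundary degeneration in the $n_w=0$ or $n_z=0$ moduli spaces, and your identification of that as the only new phenomenon is exactly right for those three moves.

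For handleslides, however, you have missed the one point where the paper does genuinely new work, and it is not the boundary-degeneration bookkeeping. In the triangle-map argument one must feed in the top-graded cycle $\Theta$ of the Floer complex of $\bm{\alpha}'$ (a small Hamiltonian perturbation of $\bm{\alpha}$) with $\bm{\alpha}$. In the embedded case this generator is canonical: each $\alpha_i$ meets $\alpha_i'$ in exactly two points and the maximally graded generator is unique. In the immersed case $\alpha_{im}$ meets its perturbation $\alpha_{im}'$ in $2+2n$ points, where $n$ is the number of self-intersections of $\alpha_{im}$ (each self-intersection point $q_i$ contributes a pair $\xi_i^{\pm}$), so "the top generator" is no longer automatic. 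The paper explicitly selects $\Theta_{\alpha',\alpha}=(\theta_1^+,\ldots,\theta_g^+)$ using only the two intersection points coming from the Hamiltonian push-off, checks that this element is a cycle in $CFK_{\mathcal{R}}(\Sigma,\bm{\alpha}',\bm{\alpha},w,z)$, and then verifies by a small-triangle count that $F(\Theta_{\alpha',\alpha^H}\otimes\Theta_{\alpha^H,\alpha})=\Theta_{\alpha',\alpha}$ before invoking the area-filtration argument to see that $F(\Theta_{\alpha',\alpha}\otimes-)$ is an isomorphism. Your proposal invokes "the standard count of ends" and the quadrilateral associativity relation without specifying which class plays the role of $\Theta$ or why it is closed; as written, the composition-is-homotopic-to-identity step does not go through until this choice is made and justified. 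The fix is exactly the paper's: restrict attention to the $\theta^\pm$-type intersection points, discard the $\xi_i^\pm$, and verify the cycle condition and the small-triangle computation directly.
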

\begin{proof}
The proof of the bigraded homotopy invariance under the variation of the almost complex structure, isotopy, and stabilization is the same as the corresponding results in the embedded-$\alpha$-curve set-up in \cite{MR2240908}. In fact, changing the $\alpha$-curves from embedded to immersed can only complicate the arguments in that boundary degeneration might appear as ends of the moduli spaces involved, yet the unobstructedness dispels such worries.  
\begin{figure}[htb!]
\centering{
\includegraphics[scale=0.6]{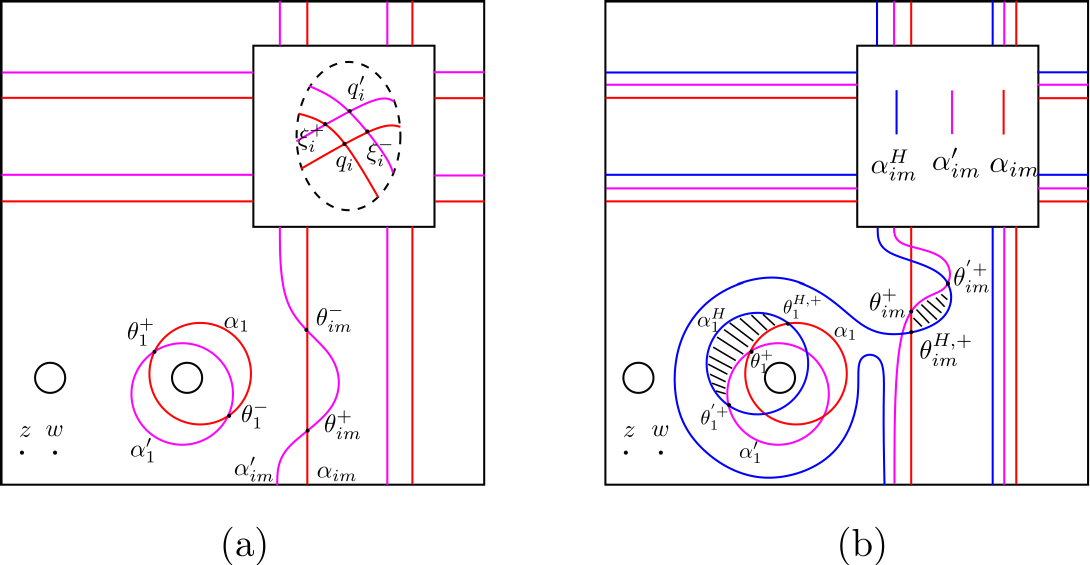}
\caption{The $\alpha$-curves in proving the handleslide invariance on a genus-two surface, which is represented as a torus obtained by identifying the edges of a square together with a handle attached to the two circles inside the square. The labels $\theta_{im}^{\pm}$ are used interchangeably with $\theta_2^{\pm}$. Similarly, $\theta_{im}^{H,\pm}$ and $\theta_{im}^{'\pm}$ are the same as $\theta_{2}^{H,\pm}$ and $\theta_{2}^{'\pm}$, respectively. (a) shows any self-intersection point $q_i$ of $\alpha_{im}$ induces two intersection points between $\alpha_{im}$ and its perturbation $\alpha'_{im}$. (b) shows the small triangles showing $F(\Theta_{\alpha',\alpha^H}\otimes\Theta_{\alpha^H,\alpha})=\Theta_{\alpha',\alpha}$}.
\label{Figure, handleslide}
}
\end{figure}

The handleslide invariance can also be proved using the same strategy as in the embedded-$\alpha$-curve case with slightly more caution. The main difference is that in the embedded-$\alpha$-curve case, there is a unique maximal graded generator in the Heegaard Floer homology of a Heegaard diagram where the set of $\alpha$-curves is a small Hamiltonian perturbation of the $\beta$-curves. In contrast, such a generator needs to be specified more carefully in our case. We spell this out in more detail.

Denote $\mathcal{H}=(\Sigma,\bm{\alpha},\bm{\beta},w,z)$. For clarity of exposition, assume $\alpha_{im}$ consists of a single component with a trivial local system and $n$ self-intersection points. We also restrict to the interesting case, in which the handleslide is sliding $\alpha_{im}$ over an embedded $\alpha$-curve. Let $\bm{\alpha'}$ denote a small Hamiltonian perturbation of $\bm{\alpha}$ so that $\alpha_i\cap \alpha_j=\emptyset$ for $i\neq j$; for $i=1,\ldots,g-1$, the embedded curves $\alpha_i$ and $\alpha_i'$ intersects exactly at two points $\{\theta_i^+,\theta_i^-\}$; $\alpha_{im}$ intersects $\alpha_{im}'$ at $2+2n$ points $\{\theta_g^+,\theta_g^-, \xi_1^+,\xi_1^-,\ldots,\xi_n^+,\xi_n^-\}$, where $\xi_i^{\pm}$ are intersection points corresponding to the self-intersection points of $\alpha_{im}$. We label the $\theta$-intersection points using the convention so that $(\theta_i^+,*)$ is of higher grading than $(\theta_i^-,*)$ in $CFK_\mathcal{R}(\Sigma,\bm{\alpha'},\bm{\alpha},w,z)$, $(i=1,\ldots,g)$ (see Figure \ref{Figure, handleslide} (a)). Let $\alpha_{im}^H$ denote the curve obtained by sliding $\alpha_{im}$ over, say, $\alpha_{g-1}$, so that $\alpha_{im}^H$ intersects each of $\alpha_{im}$ and $\alpha_{im}'$ in $2+2n$ points; denote the $\theta$-intersection points by $\{\theta_g^{H,+},\theta_g^{H,-}\}$ and $\{\theta_g^{'+},\theta_g^{'-}\}$, respectively. Let $\alpha_i^H$ ($i=1,\ldots, g-1$) be small Hamiltonian perturbations of $\alpha_i'$ so that $\alpha_i^H$ intersects each of $\alpha_i$ and $\alpha_i'$ at exactly two points, denoted by $\{\theta_i^{H,+},\theta_i^{H,-}\}$ and $\{\theta_i^{'+},\theta_i^{'-}\}$, respectively. Let $\Theta_{\alpha',\alpha}=(\theta_1^+,\ldots,\theta_g^+)$, $\Theta_{\alpha^H,\alpha}=(\theta_1^{H,+},\ldots,\theta_g^{H,+})$, and $\Theta_{\alpha',\alpha^H}=(\theta_1^{'+},\ldots,\theta_g^{'+})$. These correspond to the maximal graded intersection points used in the embedded case.\footnote{A straightforward computation would show $\Theta_{\alpha,\alpha'}$ are indeed cycles in the Floer chain complex associated to the immersed Heegaard diagram $(\Sigma, \bm{\alpha}$, $\bm{\alpha'},w,z)$; similar statements hold for $\Theta_{\alpha^H,\alpha}$ and $\Theta_{\alpha',\alpha^H}$.}

The rest of the proof is similar to the embedded case. We provide a sketch. Let $\mathcal{H}^H=(\Sigma,\bm{\alpha}^H,\bm{\beta},w,z)$ and $\mathcal{H}'=(\Sigma,\bm{\alpha'},\bm{\beta},w,z)$. By counting holomorphic triangles (with stay-on-track boundaries), one can define chain maps
$$F(\Theta_{\alpha^H,\alpha}\otimes -): CFK_\mathcal{R}(\mathcal{H})\rightarrow CFK_\mathcal{R}(\mathcal{H}^H)$$
and  
$$F(\Theta_{\alpha',\alpha^H}\otimes -): CFK_\mathcal{R}(\mathcal{H}^H)\rightarrow CFK_\mathcal{R}(\mathcal{H}')$$
Again, the usual proof which shows the above maps are chain maps carries through, as the unobstructedness excludes boundary degeneration when analyzing the ends of one-dimensional moduli spaces of holomorphic triangles. Similarly, by analyzing ends one-dimensional moduli spaces of holomorphic quadrilaterals, one can show the composition of these two maps is chain homotopic equivalent to $F(F(\Theta_{\alpha',\alpha^H}\otimes\Theta_{\alpha^H,\alpha})\otimes -)$. One can show this map is homotopic equivalent to $$F(\Theta_{\alpha',\alpha}\otimes -): CFK_\mathcal{R}(\mathcal{H})\rightarrow CFK_\mathcal{R}(\mathcal{H}')$$ by a standard computation which shows $F(\Theta_{\alpha',\alpha^H}\otimes\Theta_{\alpha^H,\alpha})=\Theta_{\alpha',\alpha}$ (see Figure \ref{Figure, handleslide} (b)). One can show that the map $F(\Theta_{\alpha',\alpha}\otimes -)$ is a chain isomorphism (using the area-filtration technique in \cite{OS04}, Proposition 9.8). 
\end{proof} 

\section{Paring theorems}\label{Section, pairing theorem section}
In Section \ref{Subsection, marked torus and immersed curves}--\ref{Subsection, pairing construction}, we introduce a pairing construction which merges a (non-immersed) bordered Heegaard diagram and an immersed multicurve to produce an immersed Heegaard diagram. After that, we establish the unobstructedness and admissibility of these pairing diagrams in Section \ref{Subsection, z-adjacency}--\ref{Subsection, unobstructeness and admissbility of pairing diagrams}, and then we prove the bordered invariant of such pairing diagrams admits a box-tensor product interpretation in Section \ref{Subsection, first pairing theorem}. Finally, in Section \ref{Section, the second pairing theorem} we prove a pairing theorem for gluing a particular type of doubly-pointed bordered Heegaard diagram and an immersed bordered Heegaard diagram; this theorem will be useful in Section \ref{Section, satellite knot pairing}. 

\subsection{Immersed curves in the marked torus}\label{Subsection, marked torus and immersed curves}
\begin{defn}
The \emph{marked torus} $T^2$ is the oriented surface $\mathbb{R}^2/\mathbb{Z}^2$ together with a base point $z$ located at $(1-\epsilon,1-\epsilon)$ for some sufficiently small $\epsilon>0$. The images of the positively oriented $x$-axis and $y$-axis are called the \emph{preferred longitude} and \emph{preferred meridian} respectively.  
\end{defn}
We will consider immersed multicurves with local systems in the marked torus. Two immersed multicurves are \textit{equivalent} if they are regularly homotopic in $T^2\backslash{z}$ and the local systems are isomorphic. Throughout this paper, we restrict to immersed multicurves $\alpha_{im}$ satisfying the following assumptions: 
\begin{itemize}
	\item[(C-1)]No component of $\alpha_{im}$ is a circle enclosing the base point $z$ once.
	\item[(C-2)]No component of the immersed multicurve is null-homotopic in $T^2\backslash\{z\}$, and the immersed multicurve is \emph{unobstructed} in the sense that it does not bound any teardrops in $T^2\backslash\{z\}$.
	\item[(C-3)]The immersed multicurve is \emph{reduced}, i.e., if we let $[0,1]\times[0,1]$ be the square obtained by cutting the marked torus open along the preferred meridian and longitude, then no sub-arcs of $\alpha_{im}$ contained in $[0,1]\times[0,1]$ have both ends on the same edge of the square.  
	\item[(C-4)]Let $\pi$ denote the projection map from $\mathbb{R}^2$ to $T^2$. Using regular homotopy, we assume all immersed curves in the marked torus are contained in the complement of $\pi([-\frac{1}{4},\frac{1}{4}]\times [-\frac{1}{4},\frac{1}{4}])$ in $T^2$, the strands contained in $\pi([-\frac{1}{4},\frac{1}{4}]\times [\frac{1}{4},\frac{3}{4}])$ are horizontal, and the strands contained in the image of $\pi([\frac{1}{4},\frac{3}{4}]\times [-\frac{1}{4},\frac{1}{4}])$ are vertical.
\end{itemize} 
An immersed multicurve in the marked torus determines a type D structure over the torus algebra as follows. First, we introduce some terminology.

\begin{figure}[htb!]
	\centering{
		\includegraphics[scale=0.35]{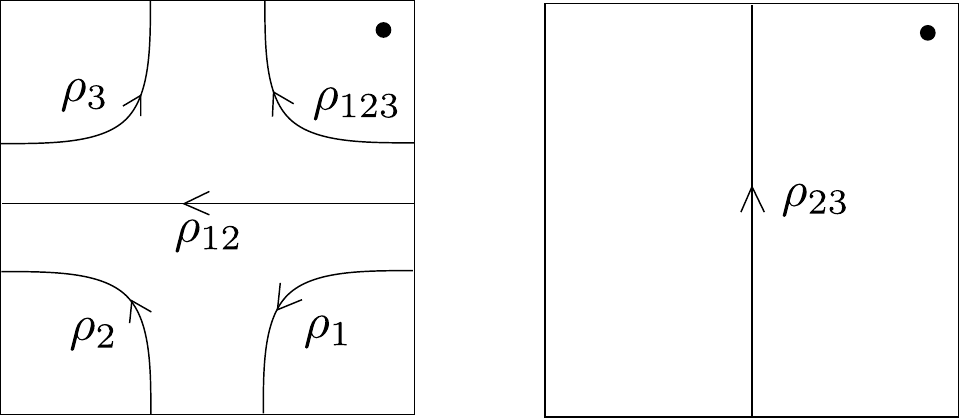}
		\caption{Six types of elementary arcs. The orientations are the so-called correct orientations.}
		\label{Figure, elemenatary arcs}
	}
\end{figure}
\begin{defn}
	An \emph{elementary arc} is an embedded arc in the marked torus ${T}^2$ such that it only intersects the preferred meridian or longitude at the endpoints. There are six types of elementary arc based on the position of the endpoints, each of which is labeled by a Reeb chord in $\{\rho_1,\rho_2,\rho_3,\rho_{12},\rho_{23},\rho_{123}\}$ as shown in Figure \ref{Figure, elemenatary arcs}.
\end{defn}
If we ignore the local systems, then any immersed multicurve is comprised of a collection of elementary arcs; one can see this by cutting $T^2$ open along the preferred longitude and meridian. Sometimes we also need to consider oriented elementary arcs.

\begin{defn}
An orientation of an elementary arc is called the correct orientation if it is the one shown in Figure \ref{Figure, elemenatary arcs}.
\end{defn}

Next, we describe how to obtain a type D structure from an immersed multicurve in terms of elementary arcs. Denote the local system on $\alpha_{im}$ by $(\mathcal{E},\Phi)$, where $\mathcal{E}$ is a vector bundle over $\alpha_{im}$ and $\Phi$ is a parallel transport. Let $\mathcal{G}(\alpha_{im})=\mathcal{G}_m\cup \mathcal{G}_l$, where $\mathcal{G}_m$ (respectively, $\mathcal{G}_l$)  is  the set of intersection points of $\alpha_{im}$ and the preferred meridian (respectively, longitude). Let $\mathcal{X}$ be the vector space $\oplus_{x\in \mathcal{G}(\alpha_{im})}\mathcal{E}|_x$. Next, we define an $\mathcal{I}$-action on $\mathcal{X}$, where $\mathcal{I}$ is the ring of idempotent of the torus algebra. If $x\in \mathcal{G}_m$, for any $\tilde{x}\in\mathcal{E}|x$, $\iota_0\cdot \tilde{x}=\tilde{x}$ and $\iota_1\cdot \tilde{x}=0$; if $x\in \mathcal{G}_l$, for any $\tilde{x}\in\mathcal{E}|x$, $\iota_0\cdot \tilde{x}=0$ and $\iota_1\cdot \tilde{x}=\tilde{x}$. The underlying $\mathcal{A}$-module for $\widehat{CFD}(\alpha_{im})$ is $\mathcal{A}\otimes_\mathcal{I}\mathcal{X}$. Finally, the differential on $\widehat{CFD}(\alpha_{im})$ decomposes linearly as maps between $\mathcal{E}|_x$ for $x\in \mathcal{G}(\alpha_{im})$. Given $x,y\in \mathcal{G}(\alpha_{im})$ and $\rho_I$ a Reeb element, there is a differential map $\mathcal{E}|_x\rightarrow \rho_{I}\otimes \mathcal{E}|_y$ if and only if $x$ and $y$ are connected by a $\rho_I$-elementary arc whose correct orientation goes from $x$ to $y$, in which case the differential is given by $\partial (\tilde{x})=\rho_{I}\otimes \Phi(\tilde{x})$ for $\tilde{x}\in \mathcal{E}|_x$. In particular, when the local system of $\alpha_{im}$ is trivial, then the generators of $\widehat{CFD}(\alpha_{im})$ are in one-to-one correspondence with the intersection points of $\alpha_{im}$ with the preferred longitude/meridian, and the differentials are in one-to-one correspondence with the elementary sub-arcs of $\alpha_{im}$.

The immersed-curve presentation of type D structures is empowered by the following result. 
\begin{thm}[\cite{hanselman2016bordered}]
Each Type D structure of a bordered 3-manifold with torus boundary is homotopic to a type D structure determined by some immersed multicurve (with local systems) in the marked torus.
\end{thm}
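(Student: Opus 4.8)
The plan is to prove this by a normal-form argument for type D structures over the torus algebra $\mathcal{A}$, turning the algebra into geometry. First I would put a given $(N,\delta)$ into \emph{reduced form}: by the cancellation/homological-perturbation lemma, repeatedly cancelling differential arrows whose coefficient lies in $\{\iota_0,\iota_1\}$, one obtains a homotopy-equivalent type D structure whose structure map has coefficients only in $\{\rho_1,\rho_2,\rho_3,\rho_{12},\rho_{23},\rho_{123}\}$. At this point the data is a finite directed graph $\Gamma$: one vertex per generator (lying in the $\iota_0$- or $\iota_1$-eigenspace), and one $I$-labelled edge for each arrow $x\xrightarrow{\rho_I}y$; over the field $\mathbb{F}$ the "multiplicity'' of such an arrow is the dimension of the relevant $\operatorname{Hom}$-space, and this will become the rank of the local system carried along that edge.

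Second, I would \emph{draw} $\Gamma$ in the punctured torus $T^2\setminus\{z\}$: place $\iota_0$-vertices on the preferred meridian and $\iota_1$-vertices on the preferred longitude, and realize each labelled edge by the corresponding elementary arc of Figure~\ref{Figure, elemenatary arcs}. A priori the result is only a labelled \emph{train track}, since a vertex may be an endpoint of more (or fewer) than two arc-ends. The key step is to show that $\delta^2=0$ together with $\iota$-compatibility forces $\Gamma$, after a change of basis, to be a \emph{balanced} (effectively $2$-regular) train track, which can then be smoothed at its switches. Concretely, $\delta^2=0$ constrains exactly the length-two compositions with nonzero product in $\mathcal{A}$ (those producing $\rho_{12},\rho_{23},\rho_{123}$); organising the arrows into "vertical'', "horizontal'', and one "unstable'' family in the spirit of the model computation of \cite{LOT18} and changing basis at each vertex, one identifies the incoming and outgoing arrow spaces there. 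Performing these identifications smooths each switch, the graph resolves to an immersed multicurve, and the accumulated changes of basis assemble into a local system along it; well-definedness is guaranteed because a type D structure has no higher $\delta_k$ to obstruct the gluing of the parallel-transport data.

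Third, I would normalize the resulting decorated multicurve to satisfy (C-1)--(C-4). A regular homotopy fixed near $z$ puts the strands into the standard horizontal/vertical position of (C-4) and makes the curve reduced as in (C-3); components that are null-homotopic in $T^2\setminus\{z\}$ contribute no generators, and components bounding a teardrop or equal to a meridian-parallel circle around $z$ correspond to summands that are either acyclic or can be simplified away by a further homotopy equivalence, so they may be assumed absent. Conversely, the construction recalled earlier in the excerpt shows that a decorated immersed multicurve determines a reduced type D structure via its elementary sub-arcs, so the normal form we produced does represent the homotopy type of the $\widehat{CFD}$ we started with.

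The main obstacle is the middle step: showing that the train track underlying an \emph{arbitrary} reduced type D structure can always, after change of basis, be made $2$-regular and then smoothed. This amounts to classifying the indecomposable type D structures over the torus algebra, and it is where the special multiplicative structure of $\mathcal{A}$ does the work — the vanishing of $\rho_2\rho_1$, $\rho_3\rho_2$, and all products except $\rho_1\rho_2=\rho_{12}$, $\rho_2\rho_3=\rho_{23}$, $\rho_{12}\rho_3=\rho_1\rho_{23}=\rho_{123}$ severely limits which length-two compositions can be nonzero, hence which local configurations $\delta^2=0$ permits. Carrying this out needs a finite but delicate case analysis of the admissible cyclic $\rho_I$-patterns, and the passage from the rank-one case (plain curves) to the general case (curves with nontrivial local systems) genuinely adds difficulty, since one must track parallel-transport matrices through every move and check that they glue consistently around each closed component.
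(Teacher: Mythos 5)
This theorem is not proved in the paper — it is imported verbatim from \cite{hanselman2016bordered} — and your outline (reduce the type D structure, draw it as a decorated train track/precurve in the punctured torus with $\iota_0$-generators on the meridian and $\iota_1$-generators on the longitude, then use $\delta^2=0$ and iterated change of basis to make every switch valence two and package the base changes into a local system) is essentially the strategy of the actual proof in that reference. You have also correctly located the hard step, namely the simplification of the train track to a $2$-regular one, which in \cite{hanselman2016bordered} is the "arrow-sliding" argument for precurves.
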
 

\begin{rmk}\label{Remark, no circle type immersed curve for knot complements}
	All immersed multicurves arising from 3-manifolds with torus boundary satisfies the assumptions (C-1)-(C-4): (C-4) is straightforward, (C-2) and (C-3) follows from the algorithm of converting type D structures to immersed multicurves in \cite{hanselman2016bordered}, and for (C-4) see the discussion around Figure 31 and 32 in \cite{Hanselman2022}. 
\end{rmk}
We will mainly be interested in the immersed multicurves corresponding to type D structures of knot complements for knots in the 3-sphere; these immersed multicurves satisfy some further properties that we specify in Definition \ref{Definition, immersed curve in the marked torus} below, and the proofs of these properties can be found in \cite[Section 4]{Hanselman2022}. 
\begin{defn}\label{Definition, immersed curve in the marked torus}
	An immersed multicurve $\alpha_{im}=\{\alpha_{im}^0,\ldots,\alpha_{im}^{n-1}\}$ of $n$ components (for some $n\geq 1$) with a local system is called knot-like if the local system restricted to ${\alpha_{im}^0}$ is trivial, $\alpha_{im}^0$ (with some orientation) is homologous to the preferred longitude in $T^2$, and $[\alpha_{im}^i]$ for $i\geq 1$ is trivial in $H_1(T^2,\mathbb{Z})$. 
\end{defn}
From now on, we assume all immersed multicurves are knot-like. 
\subsection{Pairing diagrams}\label{Subsection, pairing construction}
We introduce a class of immersed bordered Heegaard diagrams and doubly pointed Heegaard diagrams. They are respectively obtained from two types of pairing constructions that we will define:
\begin{itemize}
\item[(1)]Pairing an immersed multicurve in the marked torus and an \emph{arced bordered Heegaard diagram with two boundary components} to construct an immersed bordered Heegaard diagram.
\item[(2)]Paring an immersed multicurve in the marked torus with a doubly pointed bordered Heegaard diagram to construct a closed immersed doubly pointed Heegaard diagram.
\end{itemize}

We begin with the first type. For convenience, we first recall the definition of arced bordered Heegaard diagrams below (in the special case where both boundaries of the corresponding bordered manifold are tori).
\begin{defn}
An arced bordered Heegaard diagram with two boundary components is a quadruple $\mathcal{H}^a=(\bar{\Sigma},\bar{\bm{\alpha}},\bm{\beta},\bm{z})$ where 
\begin{itemize}
\item[(1)]$\bar{\Sigma}$ is a compact, oriented surface of genus $g$ with two boundary components $\partial\bar{\Sigma}=\partial_L\bar{\Sigma}\cup\partial_R\bar{\Sigma}$;
\item[(2)]$\bar{\bm{\alpha}}$ is a collection of pairwise disjoint properly embedded arcs and curves $\{\alpha^{a,L}_1,\alpha^{a,L}_2,\alpha^{a,R}_1,\alpha^{a,R}_2,\alpha^c_1,\ldots,\alpha^c_{g-2}\}$. Here, $\alpha^{a,L}_1$ and $\alpha^{a,L}_2$ are two arcs with endpoints on $\partial_L\bar{\Sigma}$, $\alpha^{a,R}_1$ and $\alpha^{a,R}_2$ are two arcs with endpoints on $\partial_R\bar{\Sigma}$, and the $\alpha^c_i$'s ($i=1,\ldots,g-2$) are embedded circles. Moreover, elements in $\bar{\bm{\alpha}}$ induce linearly independent elements in $H_1(\bar{\Sigma},\partial\bar{\Sigma};\mathbb{Z})$;
\item[(3)]$\bm{\beta}$ is a set of $g$ pairwise disjoint embedded circles $\{\beta_1,\ldots,\beta_g\}$ in the interior of $\bar{\Sigma}$ that are linearly independent as elements in $H_1(\bar{\Sigma},\partial\bar{\Sigma};\mathbb{Z})$;
\item[(4)]$\bm{z}$ is a properly embedded arc in $\bar{\Sigma}\backslash(\bar{\bm{\alpha}}\cup\bm{\beta})$ with one endpoint $z_L$ on $\partial_L\bar{\Sigma}$ and the other endpoint $z_R$ on $\partial_R\bar{\Sigma}$.
\end{itemize}
\end{defn}
Periodic and provincially period domains for arced bordered Heegaard diagrams with two boundary components are defined similarly to the case of a single boundary component. In the two boundary case we will also consider periodic domains that are adjacent to only one of the boundaries.

\begin{defn}
A domian is \emph{left provincial} if the multiplicity in the regions adjacent to $\partial_L \bar\Sigma$ are zero. We say an arced bordered Heegaard diagrams with two boundary components is \emph{left provincially admissible} if all left provincial periodic domains have both positive and negative multiplicities.
\end{defn}

The pairing construction is illustrated in Figure \ref{Figure, pairing bordered diagram}, and is spelled out in Definition \ref{Definition, pairing diagram}.  
\begin{figure}[htb!]
\centering{
\includegraphics[scale=0.5]{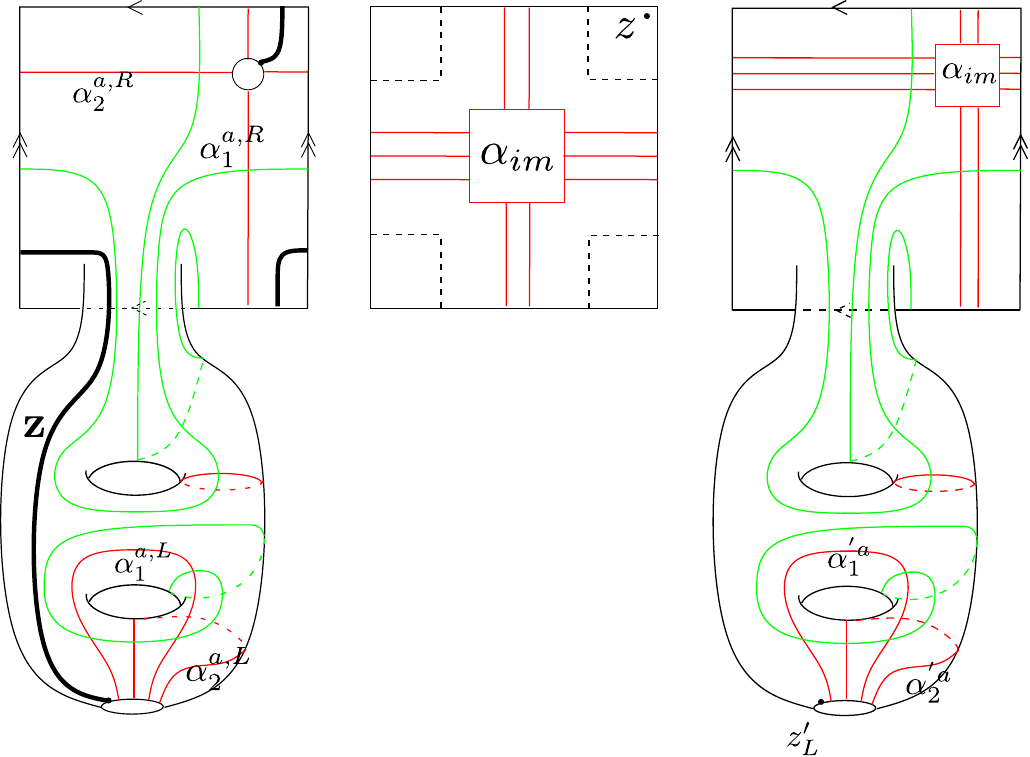}
\caption{Left: an arced bordered Heegaard diagram. Middle: an immersed multicurve in the marked torus. The dashed lines are the boundary of $\pi([-\frac{1}{4},\frac{1}{4}]\times [-\frac{1}{4},\frac{1}{4}])$. Right: a bordered Heegaard diagram obtained by the pairing construction.}
\label{Figure, pairing bordered diagram}
}
\end{figure}

\begin{defn}\label{Definition, pairing diagram}
Let $\mathcal{H}^a=(\bar{\Sigma},\bar{\bm{\alpha}},\bm{\beta},\bm{z})$ be an arced bordered Heegaard diagram with two boundary components and let $\alpha_{im}$ be an immersed multicurve in the marked torus $T^2$. The \emph{pairing diagram of $\mathcal{H}^a$ and $\alpha_{im}$}, denoted by $\mathcal{H}^a(\alpha_{im})$, is a bordered Heegaard diagram obtained through the following steps. 
\begin{itemize}
\item[(1)]Form $\bar{\Sigma}'$ from $\bar{\Sigma}$ by collapsing $\partial_R\bar{\Sigma}$. Let $\alpha'^{a}_i$ be the image of $\alpha^{a,L}_i$ ($i=1,2$), $\alpha'^{c}_i$ be the image of $\alpha^c_i$ ($i=1,\ldots,g-2$), $\bm{\beta}'$ be the image of $\bm{\beta}$, and $z'_L$ be the image of $z_L$. The images of $\alpha^{a,R}_i$ ($i=1,2$), denoted by $\tilde{\alpha}_i$, are two circles intersecting at a single point ${z}'_R$, the image of $z_R$.
\item[(2)]Take a neighborhood $U$ of $\tilde{\alpha}_1\cup\tilde{\alpha}_2$ which admits a homeomorphism $h:U\rightarrow T^2\backslash \pi([-\frac{1}{4},\frac{1}{4}]\times [-\frac{1}{4},\frac{1}{4}])$ such that $h(\tilde{\alpha}_1)=\pi(\{\frac{1}{2}\}\times[0,1])$, $h(\tilde{\alpha}_2)=\pi([0,1]\times\{\frac{1}{2}\})$, and each connected component of $h(\bm{\beta}'\cap U)$ is an arc of the form $\pi(\{x\}\times [\frac{1}{4},\frac{3}{4}])$ or $\pi([\frac{1}{4},\frac{3}{4}]\times\{y\})$ for some $x$ or $y$ in $(2\epsilon,\frac{1}{4})$.
\item[(3)]Let $\alpha_{im}'=h^{-1}(\alpha_{im})$. Let $\bar{\bm{\alpha}}'=\{\alpha'^{a}_1,\alpha'^{a}_2,\alpha'^c_1,\ldots,\alpha'^c_{g-1},\alpha_{im}'\}$.
\item[(4)]Let $\mathcal{H}^a(\alpha_{im})=(\bar{\Sigma}',\bar{\bm{\alpha}}',\bm{\beta}',z'_L)$.
\end{itemize} 
\end{defn}

Recall a \emph{doubly pointed bordered Heegaard diagram} is a bordered Heegaard diagram with an extra basepoint in the complement of the $\alpha$- and $\beta$-curves. It encodes a knot in a bordered 3-manifold. There is an entirely similar pairing construction for a doubly-pointed bordered Heegaard diagram and an immersed multicurve in the marked torus. We do not spell out the wordy definition and simply refer the readers to Figure \ref{Figure, pairing doubly pointed diagram} for an example. 

\begin{figure}[htb!]
	\centering{
		\includegraphics[scale=0.45]{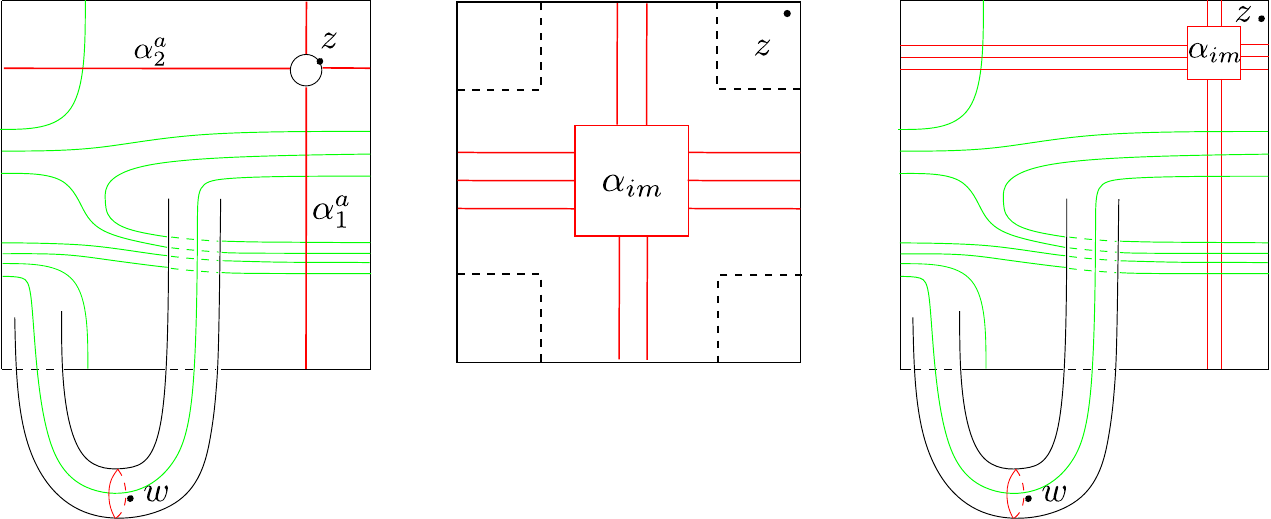}
		\caption{Pairing construction that gives rise to a doubly pointed Heegaard diagram.}
		\label{Figure, pairing doubly pointed diagram}
	}
\end{figure}
We want to establish the unobstructedness and admissibility of the immersed Heegaard diagrams obtained from pairing constructions. For that we need two tools, namely \emph{z-adjacency} and \emph{the collapsing map} introduced in the next two subsections.
\subsection{z-adjacency}\label{Subsection, z-adjacency}
We will consider a diagrammatic condition for immersed multicurves that guarantees the unobstructedness of the paring diagram; this condition can be achieved easily by finger moves. We begin by introducing some terminology for convenience. 

In the definition below, we orient the curves in $\alpha_{im}$ arbitrarily and orient the four edges of the cut-open torus using the boundary orientation. For each edge of the cut-pen torus, let $k_+$ and $k_-$ denote the number of elementary arcs intersecting a given edge positively and negatively, respectively.
\begin{figure}[htb!]
	\centering{
		\includegraphics[scale=0.6]{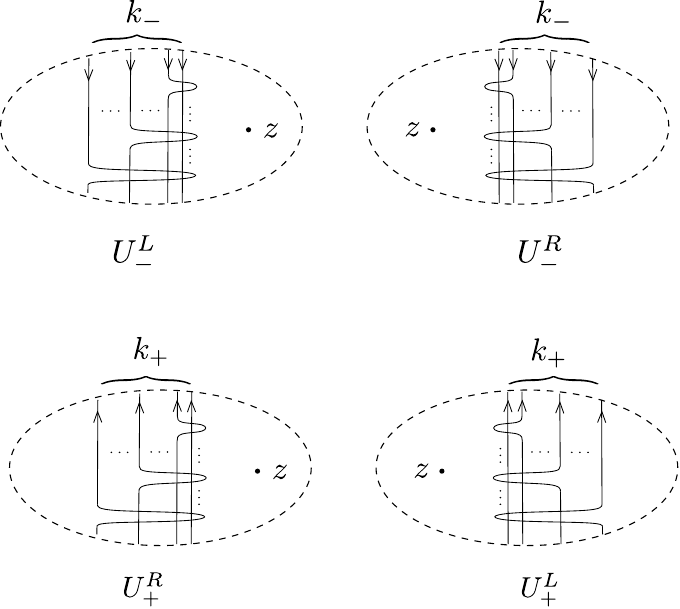}
		\caption{The disks $U^R_-$ and $U^L_-$. The superscript is chosen to suggest whether $z$ is one the left or on the right of the strands when we traverse an arc in the indicated direction.}
		\label{Figure, z adjacent neighbourhoods}
	}
\end{figure}
\begin{defn}\label{Definition, z-adjacency}
	Let $\alpha_{im}$ be an immersed multicurve in the marked torus. Then $\alpha_{im}$ is \emph{$z$-adjacent} if, for each of the four edges of the cut-open torus, there exist four open disks $U_{\pm}^R$ and $U_{\pm}^L$ in $T^2$ such that
	\begin{itemize}
		\item[(1)] $(U_-^L, U_-^L\cap(\alpha_{im}\cup \{{z}\})$, $(U_-^R, U_-^R\cap(\alpha_{im}\cup \{{z}\})$, $(U_+^L, U_+^L\cap(\alpha_{im}\cup \{{z}\})$ and $(U_+^R, U_+^R\cap(\alpha_{im}\cup \{{z}\})$ are homeomorphic to the corresponding disks in Figure \ref{Figure, z adjacent neighbourhoods}, where the arcs in the disks are sub-arcs on the $k_-$ distinct elementary arcs intersecting the given edge negatively for discs with subscript $-$ or sub-arcs on the $k_+$ distinct elementary arcs intersecting the given edge positively for discs with subscript $+$;
		\item[(2)]if the given edge is the top edge, then $U^L_-$ and $U^R_+$ are contained in $[0,1]\times [0,1]$;
		\item[(3)] if the given edge is the right edge, then $U^R_-$ and $U^L_+$ are contained in $[0,1]\times [0,1]$.
	\end{itemize}
\end{defn} 

\begin{prop}\label{Proposition, arranging a curve to be z-adjacent}
Every immersed multicurve in the marked torus is regularly homotopic to a $z$-adjacent multicurve. 
\end{prop}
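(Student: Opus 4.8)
\textbf{Proof proposal for Proposition \ref{Proposition, arranging a curve to be z-adjacent}.}

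The plan is to prove this by an explicit local finger-move argument, edge by edge. First I would fix an immersed multicurve $\alpha_{im}$ satisfying the standing assumptions (C-1)--(C-4), so that in particular near each edge of the cut-open square all the strands crossing that edge are already straight (horizontal near the top/bottom edges, vertical near the left/right edges) and disjoint from the base point $z$, which sits near the corner $(1,1)$. Fix one edge, say the top edge; let the strands crossing it be enumerated, each with its transverse sign $\pm$ relative to the chosen orientations, giving $k_+$ positive and $k_-$ negative elementary arcs. The goal is to produce, by a regular homotopy supported in a small neighborhood of that edge, four disjoint embedded disks $U^R_\pm, U^L_\pm$ matching the local models of Figure \ref{Figure, z adjacent neighbourhoods}, and to do this simultaneously and compatibly for all four edges.

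The key steps, in order: (1) Observe that the condition is purely local to a collar neighborhood of each edge together with a neighborhood of $z$, and that the four edges have disjoint such neighborhoods (after shrinking), so it suffices to treat one edge at a time. (2) For the fixed top edge, perform a finger move that pushes a sub-arc of each of the $k_-$ negatively-crossing strands up toward $z$ from the appropriate side, bundling these $k_-$ sub-arcs into a single thin ``broom'' of parallel strands whose tips approach $z$; choosing which side of $z$ the broom approaches realizes the disk $U^L_-$ (with $z$ on the left as one traverses the strands), and the requirement in clause (2) of Definition \ref{Definition, z-adjacency} that $U^L_-\subset [0,1]\times[0,1]$ just dictates from which side of the meridian/longitude the finger is pushed. (3) Do the analogous finger move for the $k_+$ positively-crossing strands to realize $U^R_+$, again inside $[0,1]\times[0,1]$. (4) Realize the remaining two disks $U^R_-$ and $U^L_+$ for this edge by pushing fingers toward $z$ from the opposite sides / opposite copies of $z$ in the cut-open picture (recall $z$ has four preimages, one near each corner, in the square model), so that all four disks are pairwise disjoint; disjointness is arranged by routing the four brooms along four disjoint thin corridors in the collar. (5) Repeat for the bottom, left, and right edges, using disjoint collars so the moves do not interfere; the roles of ``$[0,1]\times[0,1]$'' in clauses (2) and (3) rotate appropriately (clause (3) of the definition handles the right edge). (6) Finally, check that each such finger move is a regular homotopy in $T^2\setminus\{z\}$ — it never crosses $z$ by construction, it introduces no new obstruction since finger moves preserve unobstructedness up to the controlled local teardrops, which is exactly what $z$-adjacency is designed to track — and that after all the moves the curve still satisfies (C-1)--(C-4) (the strands near each edge remain straight since the fingers are pushed from the edge inward, parallel to the straight portions).

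The main obstacle I expect is the bookkeeping of \emph{simultaneous} compatibility: a single strand typically crosses several edges, so the finger moves performed for different edges must be shown not to conflict, and the broom for one edge must not accidentally wander into the collar of another edge or separate $z$ from the strands it needs at a different edge. The clean way around this is to shrink everything: choose the collars of the four edges to be very thin and the four corridors leading to the (four preimages of the) base point to be very thin and pairwise disjoint, and perform each finger move entirely within its designated collar-plus-corridor region. Since there are only finitely many strands and finitely many edges, such a disjoint arrangement exists, and each individual finger move is a standard regular homotopy, so their composition is too. A secondary, more cosmetic point is to verify the orientation conventions in Figure \ref{Figure, z adjacent neighbourhoods} (which side of $z$ corresponds to $L$ versus $R$, and how this interacts with the $\pm$ sign of the crossing) match up edge by edge — this is a finite case check, and the superscript convention in the statement (``$L$'' or ``$R$'' recording whether $z$ is on the left or right as one traverses the arc) makes it determined, not a genuine difficulty.
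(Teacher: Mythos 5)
Your overall strategy---finger moves supported near the edges that bundle the strands of each sign and push them toward $z$---is the same as the paper's, which first sorts the strands crossing a given edge into a $k_-$-group and a $k_+$-group by inserting a permutation and its inverse, and then zigzags each sorted group past $z$. But two concrete claims in your write-up are false, and they sit exactly where you construct half of the required disks. First, $z$ does not have four preimages near the corners of the square: it is the single point $(1-\epsilon,1-\epsilon)$, which lies in the \emph{interior} of the cut-open fundamental domain, near one corner. (The point with four preimages on the boundary of the square is the intersection of the meridian and longitude, which is not $z$.) Consequently your step (4), ``pushing fingers toward $z$ from \dots opposite copies of $z$,'' does not parse. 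The repair---and what the paper's finger move in Figure \ref{Figure, finger move on strands}(b) implements---is that the single bundle of $k_-$ strands must pass by the single point $z$ twice, once with $z$ immediately on its left and once with $z$ immediately on its right; that is, the finger zigzags around $z$. Second, each of the four disks $U^R_\pm$, $U^L_\pm$ for a given edge (hence all sixteen disks over the four edges) contains $z$, since the local models in Figure \ref{Figure, z adjacent neighbourhoods} contain $z$ and the proof of Lemma \ref{Lemma, constraints on positive domains bounded by one-cornered subloop} depends on this. So the disks cannot be made pairwise disjoint, and your ``four disjoint thin corridors'' cannot exist.

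This second point is not cosmetic: the genuine interaction between the constructions for different edges happens near $z$, not near the edges, and it is exactly where your proposal is silent. All eight bundles (four edges, two signs each) must poke into one small neighborhood of $z$ like petals around a common center, arranged so that the complementary region containing $z$ has every required bundle appearing on its boundary, approached from both sides, with no other strands in between---otherwise some required pair $(U,U\cap(\alpha_{im}\cup\{z\}))$ is forced to contain extraneous strands and fails to match its local model. A smaller omission: when the strands crossing an edge alternate in sign, bundling the $k_-$ strands into consecutive parallel position forces them to cross the $k_+$ strands, creating new self-intersection points of $\alpha_{im}$; this is harmless for a regular homotopy of an immersed multicurve, but it is a necessary part of the construction (the paper's $P$/$P^{-1}$ boxes) and should be stated rather than hidden inside the word ``broom.''
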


\begin{proof}
	Orient $\alpha_{im}$ arbitrarily. We first define an operation on a collection of oriented parallel arcs. Assume there are $k_{+}+k_-$ arcs, where $k_{+}$-many of the arcs are oriented in one direction, and the rest are oriented in the opposite direction. 
	\begin{figure}[htb!]
		\centering{
			\includegraphics[scale=0.5]{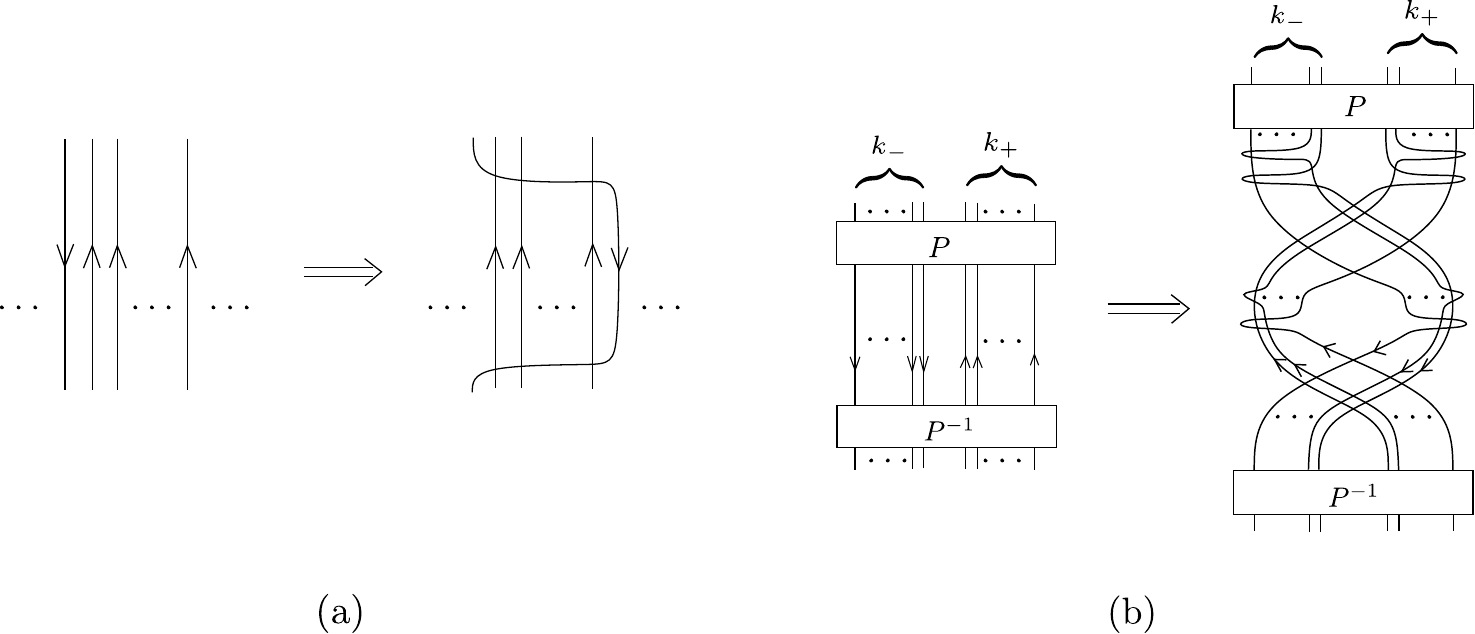}
			\caption{Finger moves on parallel strands.}
			\label{Figure, finger move on strands}
		}
	\end{figure}
	The operation is shown in Figure \ref{Figure, finger move on strands}: First, by performing the finger moves in Figure \ref{Figure, finger move on strands} (a) repeatedly, we can arrive at a collection of arcs as shown in the left of Figure \ref{Figure, finger move on strands} (b): the $P$- and $P^{-1}$-boxes indicate a pair of mutually inverse permutations, and between the $P$- and $P^{-1}$-boxes the arcs are arranged so that all $k_-$ arcs with parallel orientations are grouped on the left and all the other $k_+$ arcs with the opposite orientations are grouped on the right. Next, do a sequence of finger moves to the diagram on the left of Figure \ref{Figure, finger move on strands} (b) to arrive at the right-hand-side diagram of Figure \ref{Figure, finger move on strands} (b). Now perform this operation to the arcs of $\alpha_{im}$ near all four edges in the cut-open marked torus, then we have a z-adjacent immersed multicurve; see Figure \ref{Figure, z-adjacent immersed curve} for the desired open disks. Note that conditions $(2)$ and $(3)$ are obviously satisfied because $z$ is in the top right corner of the cut open torus. 
\end{proof}
\begin{figure}[htb!]
	\centering{
		\includegraphics[scale=0.55]{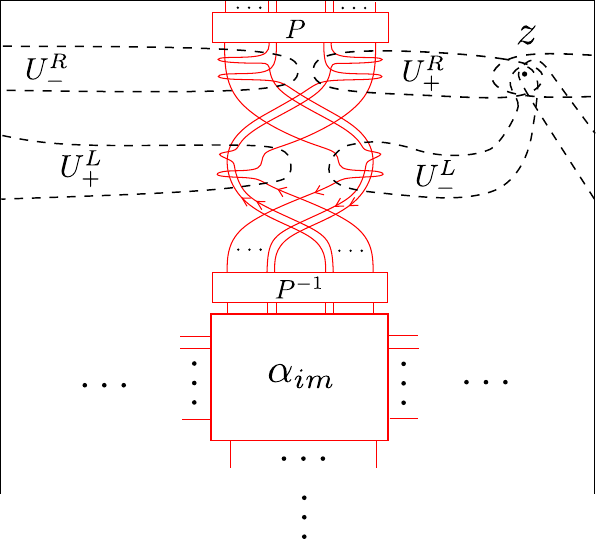}
		\caption{A $z$-adjacent immersed curve.}
		\label{Figure, z-adjacent immersed curve}
	}
\end{figure}

We shall need a technical lemma. Let $l$ be a one-cornered sub-loop of $\alpha_{im}$ with a corner $q$. If we traverse $l$ in either direction, we see it begins with an arc starting from $q$ to the meridian or longitude, then a sequence of elementary arcs, and finally, an arc starting from the meridian or longitude and ending at $q$. We call the starting and ending arcs the \emph{non-elementary sub-arcs of $l$}, and the other sub-arcs \emph{the elementary sub-arcs of $l$}. 

\begin{lem}\label{Lemma, constraints on positive domains bounded by one-cornered subloop}
Let $\alpha_{im}$ be a $z$-adjacent immersed curve. Let $D$ be a positive domain in ${T}^2$ bounded by a $k$-cornered (sub)loop of $\alpha_{im}$.
\begin{itemize}
	\item[(1)]If $n_z(D)=n$ for some $n\geq 0$ and $k=0$ or $1$, then for any side of the cut-open marked torus $[0,1]\times [0,1]$ and any sign, the number of elementary sub-arcs in $\partial D$ intersecting the given side with the given sign is less than or equal to $n$.
	\item[(2)]If $n_z(D)=0$, then for arbitrary $k\geq 0$, there are no elementary subarcs contained in $\partial D$. 
\end{itemize}
\end{lem}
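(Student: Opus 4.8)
\textbf{Proof proposal for Lemma \ref{Lemma, constraints on positive domains bounded by one-cornered subloop}.}

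The plan is to exploit the local picture near each edge of the cut-open torus guaranteed by $z$-adjacency. The key observation is that near a given edge, say the top edge, the $z$-adjacency condition provides the four model disks of Figure \ref{Figure, z adjacent neighbourhoods}, and in each such disk the basepoint $z$ sits on one designated side of every strand crossing the edge. This translates into a \emph{local multiplicity inequality}: each time an elementary sub-arc of $\partial D$ crosses an edge, positivity of $D$ forces the multiplicity of $D$ to jump in a controlled direction on the two sides of that crossing, and by $z$-adjacency the region adjacent to the edge on the ``$z$-side'' is the region containing $z$. First I would set up the bookkeeping: for a fixed edge and a fixed sign $\pm$, order the $k_\pm$ elementary arcs crossing that edge with that sign by their position along the edge, and track the multiplicity of $D$ in the regions of $[0,1]\times[0,1]$ immediately adjacent to the edge between consecutive crossings.

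For part (1): since $\partial D$ is $0$- or $1$-cornered, $\partial D$ has at most one corner of $\alpha_{im}$ and therefore (tracing the boundary as described before the lemma) the elementary sub-arcs of $\partial D$ crossing a given edge with a given sign are all distinct elementary arcs and do not ``turn around'' near that edge. Walking along the edge from the corner nearest $z$, each elementary sub-arc of $\partial D$ that crosses the edge with a fixed sign contributes $+1$ to the multiplicity of $D$ in the adjacent region as we move away from $z$ (because of the boundary orientation convention and positivity, combined with the concrete local picture of the appropriate model disk $U_\pm^L$ or $U_\pm^R$). The region adjacent to the edge on the far side from $z$ has multiplicity $n_z(D) = n$ minus the number of such crossings we have passed, but positivity forces all multiplicities to be $\geq 0$; hence the number of elementary sub-arcs crossing the edge with that sign is at most $n$. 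One runs this argument at all four edges and both signs, using conditions (2) and (3) of Definition \ref{Definition, z-adjacency} to know which model disks lie inside the fundamental square so the counting is unambiguous.

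For part (2): set $n = 0$ in the mechanism above. If any elementary sub-arc of $\partial D$ crossed an edge, then on the side of that crossing away from $z$ the multiplicity of $D$ would have to be strictly negative (it equals $0 = n_z(D)$ on the $z$-side and decreases), contradicting positivity. This rules out \emph{elementary} sub-arcs entirely regardless of the number of corners $k$, since elementary sub-arcs are by definition exactly those arcs of $\alpha_{im}$ that cross the preferred meridian or longitude, i.e.\ the edges of the cut-open torus. Thus $\partial D$ consists only of non-elementary sub-arcs, proving (2).

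The main obstacle I expect is making the ``multiplicity jumps by $+1$ in the direction away from $z$'' claim genuinely rigorous rather than picture-dependent: one must verify, using the precise homeomorphism types of the disks $U_\pm^{L}, U_\pm^{R}$ in Figure \ref{Figure, z adjacent neighbourhoods} and the boundary-orientation convention for $\partial D$, that the sign of the multiplicity change across an oriented elementary sub-arc is correctly correlated with which side of the edge contains $z$. This is a finite local check — four edges, two signs, the two relevant model disks each — but it is where all the content lies, and it is also the place where conditions (2) and (3) of $z$-adjacency are essential (they ensure the relevant model disk is the one sitting inside the fundamental square $[0,1]\times[0,1]$, so that ``moving away from $z$ along the edge'' is well-defined and the multiplicity accounting is monotone). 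Once that local lemma is in hand, both parts follow by the monotonicity/positivity argument sketched above.
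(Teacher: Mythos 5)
Your proposal is correct and follows essentially the same route as the paper: positivity of $D$ plus the model disks of Definition \ref{Definition, z-adjacency} force the local multiplicity to drop from $n_z(D)$ by one across each same-signed strand, giving $k_\pm\leq n$ in part (1). The only point needing care is part (2): for arbitrary $k$ the elementary sub-arcs of $\partial D$ may be traversed with either orientation, so "the multiplicity decreases as one moves away from $z$" is not automatic; the paper closes this by observing that $z$-adjacency places $z$ on \emph{both} sides of every elementary arc (it passes through both a $U^L$ and a $U^R$ disk), so whichever orientation the sub-arc carries, one of the two adjacent regions is forced to have multiplicity $-1$.
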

\begin{proof}
We prove (1) first. We will only consider the case in which the elementary sub-arcs intersect the given edge negatively and remark that the other case is similar. 

We prove by contradiction. Suppose there are $k_->n$ elementary sub-arcs contained in $\partial D$ intersecting the given edge negatively. Since $\partial D$ is $0$- or $1$-cornered it has an orientation induced by the orientation on $\alpha_{im}$. Examining the local diagram $(U_-^L, U_-^L\cap({\partial D}\cup \{\bm{z}\}))$ in Figure \ref{Figure, z adjacent neighbourhoods} one sees $D$ has negative multiplicity $n-k_-$ in the left-most region, which contradicts our assumption that $D$ is a positive domain. Therefore, $k_-\leq n$. 

Next, we prove (2). Assume there is an elementary sub-arc in $\partial D$. Then no matter how this sub-arc is oriented, $z$ is on both the left and right of it. As $n_z(D)=0$, there is a region with $-1$ multiplicity, which contradicts that $D$ is positive. 
\end{proof}
\subsection{The collapsing operation}
To relate the domains of the pairing diagram $\mathcal{H}^a(\alpha_{im})$ and the arced bordered diagram $\mathcal{H}^a$, we define the so-called \emph{collapsing operation}. This operation was previously defined in the case of paring genus one bordered Heegaard diagrams with immersed curves \cite{Chen2019}, and we give the general case here. The operation is pictorially shown in Figure \ref{Figure, collapsing operation}, and the definition is given below.

\begin{defn}\label{Definition, collapsing operation}
The collapsing operation on $\mathcal{H}^a(\alpha_{im})$ is defined to be the composition of the following modifications of the diagram: 
\begin{itemize}
\item[(Step 1)]Extend the map $h$ in Definition \ref{Definition, pairing diagram} to identify $T^2-\pi([-\frac{1}{4}+\epsilon,\frac{1}{4}-\epsilon]\times [-\frac{1}{4}+\epsilon,\frac{1}{4}-\epsilon])$ with a slightly larger neighborhood of $U=h^{-1}(T^2-\pi([-\frac{1}{4},\frac{1}{4}]\times [-\frac{1}{4},\frac{1}{4}]))$. Here $\epsilon$ is a sufficiently small positive number.  
\item[(Step 2)]Puncture $h^{-1}((\frac{3}{4},\frac{3}{4}))$, and enlarge it to a hole so that under the identification map $h$, the boundary of the hole is a square of side length $\frac{1}{2}+2\epsilon$ and with rounded corner modeled on a quarter of a circle of radius $\epsilon$.
While enlarging the hole, we push immersed curves it encountered along the way so that part of the immersed curves are squeezed to the boundary of the hole. 
\item[(Step 3)]Collapse $h^{-1}(\pi([-\frac{1}{4}+\epsilon,\frac{1}{4}-\epsilon]\times [\frac{1}{4},\frac{3}{4}]))$ to the core $h^{-1}(\pi([-\frac{1}{4}+\epsilon,\frac{1}{4}-\epsilon]\times\{\frac{1}{2}\}))$, which is denoted $a^{a,R}_1$. Collapse $h^{-1}(\pi([\frac{1}{4},\frac{3}{4}]\times[-\frac{1}{4}+\epsilon,\frac{1}{4}-\epsilon]))$ to the core $h^{-1}(\pi(\{\frac{1}{2}\}\times[-\frac{1}{4}+\epsilon,\frac{1}{4}-\epsilon]))$, which is denoted $a^{a,R}_2$. 
\end{itemize}
\end{defn}  

\begin{rmk}\label{Remark, collapsing map}
\begin{itemize}
\item[(1)]Clearly, the outcome of the collapsing operation on $\mathcal{H}^a(\alpha_{im})$ can be identified with $\mathcal{H}^a$.
\item[(2)]Each elementary arc in $\alpha_{im}$ standing for $\rho_I\in\{\rho_1,\rho_2,\rho_3,\rho_{12},\rho_{23},\rho_{123}\}$ is mapped under the collapsing map to an arc that passes the Reeb chord $\rho_I$ in $\mathcal{Z}^R$ of $\mathcal{H}^a$. Note that an oriented elementary sub-arc is \textit{correctly oriented} if it induces a Reeb chord $\mathcal{Z}^R$ under the collapsing map, i.e., the orientations coincide.  

\item[(3)]The intersection points in $\mathcal{G}({\mathcal{H}^a(\alpha_{im})})$ are of the form $\bm{x}\otimes a$ are in one-to-one correspondence with $\mathcal{G}({\mathcal{H}^a})\otimes_{\mathcal{I}_R} \mathcal{G}({\alpha_{im}})$, where the tensor product is taken over $\mathcal{I}_R\subset \mathcal{A}(\mathcal{Z}_R)$. Indeed, given an intersection point $\xi\in \mathcal{G}({\mathcal{H}^a(\alpha_{im})})$, its image under the collapsing map yields an intersection point $\bm{x}$ in $\mathcal{H}^a$. Also, the component of $\xi$ on $\alpha_{im}$ uniquely gives rise to an intersection point $a$ of $\alpha_{im}$ as follows. By the definition of the pairing operation, when we pull back the intersection point on $\alpha_{im}$ to the marked torus, it lies in a horizontal or vertical arc as described in assumption (C-4) on immersed multicurves, which uniquely corresponds to an intersection point of $\alpha_{im}$ with the longitude or meridian. Therefore, every intersection point $\xi$ in $\mathcal{H}^a(\alpha_{im})$ can be written as $\bm{x}\otimes y$. It is easy to see this induces a one-to-one correspondence between $\mathcal{G}({\mathcal{H}^a(\alpha_{im})})$ and $\mathcal{G}({\mathcal{H}^a})\otimes_{\mathcal{I}_R} \mathcal{G}({\alpha_{im}})$.
\end{itemize}
\end{rmk}

\begin{figure}[htb!]
\includegraphics[scale=0.5]{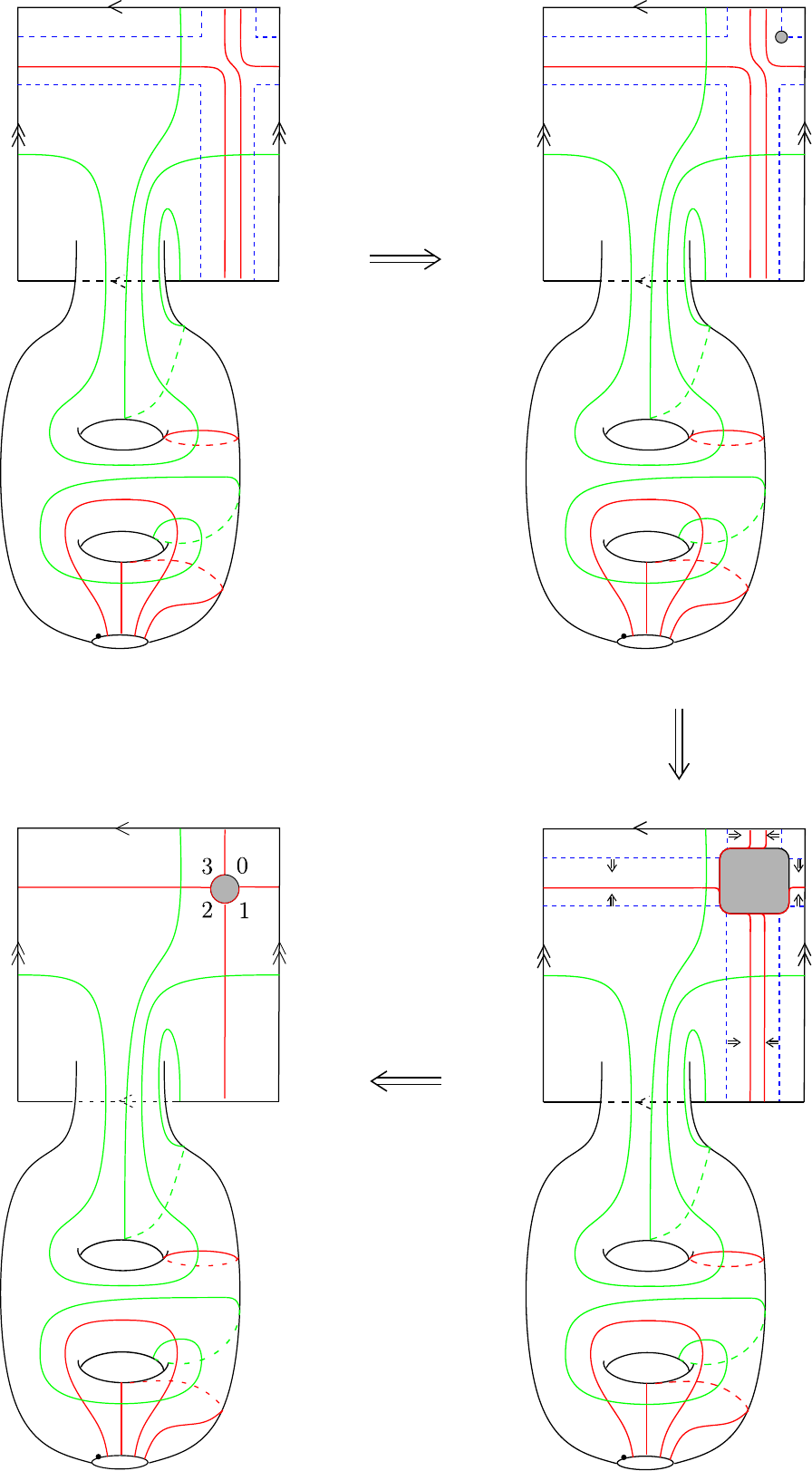}
\caption{The collapsing operation.}\label{Figure, collapsing operation}
\end{figure}

We will give a proposition relating the domains of $\mathcal{H}^a(\alpha_{im})$ and $\mathcal{H}^a$. Let $l$ be an oriented arc $l$ on $\alpha_{im}$ such that all the elementary sub-arcs are oriented correctly. We use $\overrightarrow{\rho}(l)$ to denote the sequence of Reeb chords determined by $l$.
\begin{prop}\label{Proposition, relating domains in two different Heegaard diagrams}
Assume the immersed multicurve $\alpha_{im}$ is $z$-adjacent. Let $B$ be a positive domain in $\mathcal{H}^a(\alpha_{im})$ corresponding to a homology class in $\pi_2(\bm{x}\otimes a, \bm{y}\otimes b, \overrightarrow{\sigma})$ with $n_z(B)=0$. Then the image of $B$ under the collapsing map is a positive domain $B'$ in $\mathcal{H}^a$ corresponding to a homology class $\pi_2(\bm{x},\bm{y},\overrightarrow{\rho}(\partial_{\alpha_{im}}B),\overrightarrow{\sigma})$ with $n_z(B')=0$. Here, $\partial_{\alpha_{im}}B$ refers to the arc on $\alpha_{im}$ connecting the corresponding components of $\bm{x}\otimes a$ and $\bm{y}\otimes b$. Moreover, $$e(B')=e(B)-\frac{|\overrightarrow{\rho}(\partial_{\alpha_{im}}B)|}{2}.$$    
\end{prop}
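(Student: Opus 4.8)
\textbf{Proof proposal for Proposition \ref{Proposition, relating domains in two different Heegaard diagrams}.}

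The plan is to track carefully what the collapsing operation does to each of the three ingredients in the homology class: the underlying $2$-chain, its boundary on $\bm{\bar\alpha}$ (in particular on $\alpha_{im}$), and the Euler measure. First I would observe that the collapsing operation of Definition \ref{Definition, collapsing operation} is, away from a small neighborhood of the collapsed annuli $h^{-1}(\pi([-\tfrac14+\epsilon,\tfrac14-\epsilon]\times[\tfrac14,\tfrac34]))$ and $h^{-1}(\pi([\tfrac14,\tfrac34]\times[-\tfrac14+\epsilon,\tfrac14-\epsilon]))$, a diffeomorphism; inside those annuli it is the obvious retraction onto the core arcs $\alpha^{a,R}_1$, $\alpha^{a,R}_2$. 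A positive domain $B$ with $n_z(B)=0$ therefore pushes forward to a well-defined $2$-chain $B'$ on $\bar\Sigma'$ retracted along the collapsed annuli, and positivity is preserved since the retraction is multiplicity-preserving on regions not meeting the collapsed annuli and, by the $z$-adjacency input, the regions inside the annuli that $B$ occupies have the right multiplicities — here is where Lemma \ref{Lemma, constraints on positive domains bounded by one-cornered subloop} (in the form used in Section \ref{Subsection, z-adjacency}) and the structure of $z$-adjacent curves enter, guaranteeing no negative multiplicities are created. The key bookkeeping is that the parts of $\partial_{\alpha_{im}} B$ lying in the horizontal/vertical strands of assumption (C-4) are exactly the elementary sub-arcs, and by Remark \ref{Remark, collapsing map}(2) each such elementary sub-arc labeled $\rho_I$ maps to an arc of $\partial B'$ passing through the Reeb chord $\rho_I$ on $\mathcal{Z}^R$; concatenating these in order along $l=\partial_{\alpha_{im}}B$ yields the sequence $\overrightarrow{\rho}(\partial_{\alpha_{im}}B)$. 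This identifies the homology class of $B'$ as an element of $\pi_2(\bm{x},\bm{y},\overrightarrow{\rho}(\partial_{\alpha_{im}}B),\overrightarrow{\sigma})$, using the generator correspondence $\xi = \bm{x}\otimes a$ of Remark \ref{Remark, collapsing map}(3), and $n_z(B')=0$ is immediate since $z=z'_L$ is untouched.

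Next I would compute the change in Euler measure. The only place $e$ can change is in the collapsed annular neighborhoods, since elsewhere the collapsing map is a diffeomorphism preserving the $\alpha$- and $\beta$-arcs (hence the corner data) up to isotopy. So the computation is purely local: I would decompose the portion of $B$ lying inside each annulus $A$ (say $A\cong S^1\times[\tfrac14,\tfrac34]$ with core collapsing to a single $\alpha$-arc) into standard pieces bounded by sub-arcs of $\alpha_{im}$ and of the vertical $\beta$-segments, and for each elementary sub-arc of $\partial_{\alpha_{im}}B$ that crosses $A$ determine how much Euler measure is destroyed when $A$ is crushed to its core. A single elementary sub-arc sweeps out a region whose Euler measure is $\tfrac14$ for a $\rho_i$ (a length-one chord) — more precisely the half-strip it cuts off has Euler measure $\tfrac14$ and collapsing removes a rectangle; summing the four corner contributions the net loss per length-one chord is $\tfrac12$ — and in general a length-$|\sigma|$ chord contributes loss $\tfrac{|\sigma|}{2}$. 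Adding over all elementary sub-arcs of $\partial_{\alpha_{im}}B$ gives $e(B')=e(B)-\tfrac12|\overrightarrow{\rho}(\partial_{\alpha_{im}}B)|$. This is essentially the same local Euler-measure accounting as in \cite[Chapter 5 and 11.4]{LOT18} for the (embedded) collapse, and in the genus-one special case it was carried out in \cite{Chen2019}; I would cite those and only spell out the new feature, namely that an elementary sub-arc of an \emph{immersed} curve behaves exactly as one of an embedded curve for the purpose of this local count, since the collapse is performed in a neighborhood disjoint from the self-intersection points of $\alpha_{im}$ (the curve having been put in the normalized position of (C-4)).

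The main obstacle I anticipate is not the Euler-measure arithmetic but verifying that $B'$ is genuinely a \emph{positive} domain and that its boundary really is the stated Reeb-chord sequence without spurious extra crossings — i.e. that the collapsing map does not fold the domain over itself in the annular regions in a way that either cancels multiplicities or reorders the Reeb chords. This is exactly where the $z$-adjacency hypothesis does its work: the local models $U^{L/R}_\pm$ of Definition \ref{Definition, z-adjacency} and Lemma \ref{Lemma, constraints on positive domains bounded by one-cornered subloop} control the signed counts of elementary sub-arcs meeting each edge, forcing the elementary sub-arcs of $\partial_{\alpha_{im}}B$ with $n_z(B)=0$ to be correctly oriented and to lie in "parallel'' position, so that the collapse is a clean retraction. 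I would therefore organize the proof as: (i) reduce to the local annular picture; (ii) invoke $z$-adjacency to pin down the local configuration of $\partial_{\alpha_{im}}B$; (iii) conclude positivity and read off $\overrightarrow{\rho}(\partial_{\alpha_{im}}B)$ and $n_z(B')=0$; (iv) do the local Euler-measure tally to get the displayed formula.
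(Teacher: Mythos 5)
Your overall strategy coincides with the paper's: push $B$ forward under the collapsing map, use $z$-adjacency together with positivity of $B$ to show the elementary sub-arcs of $\partial_{\alpha_{im}}B$ are correctly oriented (so that they read off, in order, as the monotone Reeb-chord sequence $\overrightarrow{\rho}(\partial_{\alpha_{im}}B)$), and account for the change in Euler measure by a local analysis near the collapsed region. However, the Euler-measure tally is wrong as written. In the formula $e(B')=e(B)-\frac{|\overrightarrow{\rho}(\partial_{\alpha_{im}}B)|}{2}$ the quantity $|\overrightarrow{\rho}(\partial_{\alpha_{im}}B)|$ is the number of Reeb chords in the sequence, i.e.\ the number of elementary sub-arcs of $\partial_{\alpha_{im}}B$, not the total length $\sum_{\sigma}|\sigma|$. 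Each elementary sub-arc, whether labeled $\rho_1$ or $\rho_{123}$, contributes a loss of exactly $\frac12$: together with the arc of the boundary of the enlarged hole onto which it is pushed, it bounds a bigon with two corners, of Euler measure $1-2\cdot\frac14=\frac12$; these bigons are the simple domains that disappear in Step 2 of the collapsing operation. Your rule that ``a length-$|\sigma|$ chord contributes loss $\frac{|\sigma|}{2}$'' would assign $\frac32$ to a $\rho_{123}$ arc and yield a different formula; it would also break the application in the proof of Theorem~\ref{Theorem, paring theorem via nice diagrams}, where $e(B)=\frac12$ must still permit a single elementary arc whose Reeb chord has length up to three. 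Relatedly, the loss is not located in the collapsed annuli: the annular pieces of the domain are rectangles of Euler measure zero, so crushing them onto the core arcs changes nothing (this is exactly why $e(B_1)=e(B')$ in the decomposition $B=B_1+B_2$ of the actual proof); the Euler measure is destroyed at the enlarged hole.

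A second, smaller gap: your orientation argument only covers elementary arcs meeting the top or right edge of the cut-open torus, where conditions (2) and (3) of Definition~\ref{Definition, z-adjacency} plus positivity apply directly. A $\rho_2$-type elementary arc meets neither edge, so its orientation is not forced by the local models $U^{L/R}_{\pm}$; one must propagate the correct orientation from an adjacent elementary arc, or treat separately the case where $\partial_{\alpha_{im}}B$ contains a single $\rho_2$ arc. Also, Lemma~\ref{Lemma, constraints on positive domains bounded by one-cornered subloop} is stated for $\alpha$-bounded domains and does not directly apply to a general class $B\in\pi_2(\bm{x}\otimes a,\bm{y}\otimes b,\overrightarrow{\sigma})$ whose boundary involves $\beta$-curves; the argument should run through positivity and the local pictures of $z$-adjacency directly, as the paper does.
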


\begin{proof}
It is clear that $B'$ is positive and $n_z(B')=0$. It is also clear that $B'$ give rise to a domain connecting $\bm{x}$ and $\bm{y}$. We need to show that $B'$ has the Reeb chords $\overrightarrow{\rho}(\partial_{\alpha_{im}}B)$ at the east infinity. We claim all the elementary arcs appear in $\partial_{\alpha_{im}}B$ are correctly oriented, and hence $\partial_{\alpha_{im}}B$ gives rise to a monotonic arc (in the sense that all the Reeb chords appearing on the arc respect the boundary orientation) connecting (the components on the $\alpha$ arc of) $\bm{x}$ to $\bm{y}$ under the collapsing map. The sequence of Reeb chords appearing in this arc are exactly $\overrightarrow{\rho}(\partial_{\alpha_{im}}B)$ in view of Remark \ref{Remark, collapsing map} (2). To see the claim, note $\alpha_{im}$ is $z$-adjacent and $B$ is positive with $n_z(B)=0$. Therefore, if an elementary arc on $\partial_{im} B$ intersects the top edge or the right edge, then its orientation is forced by the positivity of domains and condition (2) and (3) in Definition \ref{Definition, z-adjacency}, and the orientation is the correct orientation. The only type of elementary arcs that intersects neither the top edge nor the right edge corresponds to $\rho_2$. If an elementary arc corresponding to $\rho_2$ on $\partial_{\alpha_{im}}B$ has a successor or precursor, then correct orientation on the successor or the precursor would induce the correct orientation on it. Otherwise, $\partial_{\alpha_{im}}B$ has only one elementary arc corresponding to $\rho_2$, in which case it is clear that the elementary arc is correctly oriented.      

Next, we compare the Euler measures. Divide the domain $B$ into two parts $B_1$ and $B_2$, along the square with rounded corners, which is the boundary of the hole in Step 2 of the collapsing operation. (See Figure \ref{Figure, dividing the domain into two parts}.) This time, we do not puncture the interior of the square. Let $B_1$ denote the part of $B$ outside of the square, and let $B_2$ denote the part inside the square (which is pushed onto the boundary circle under the collapsing map). Then $e(B_1)=e(B')$ since these two domains differ by a bunch of rectangles whose Euler measure are zero; these rectangles are collapsed in Step 3 of the collapsing operation. As $\alpha_{im}$ is $z$-adjacent, $B_2$ is positive, and $n_z(B)=0$,  we see $B_2$ can be further expressed as a sum of of simple domains determined by the elementary arcs appearing in $\partial_{\alpha_{im}}B$ (counted with multiplicity). (See Figure \ref{Figure, simple domains}.) Each simple domain of multiplicity one has Euler measure $\frac{1}{2}$, and there are $|\overrightarrow{\rho}(\partial_{\alpha_{im}}B)|$ many of them being collapsed (in Step 2 of the collapsing operation) in order to obtain $B'$. Therefore, $e(B')=e(B)-\frac{|\overrightarrow{\rho}(\partial_{\alpha_{im}}B)|}{2}.$ 
\end{proof}

\begin{figure}[htb!]
\includegraphics[scale=0.5]{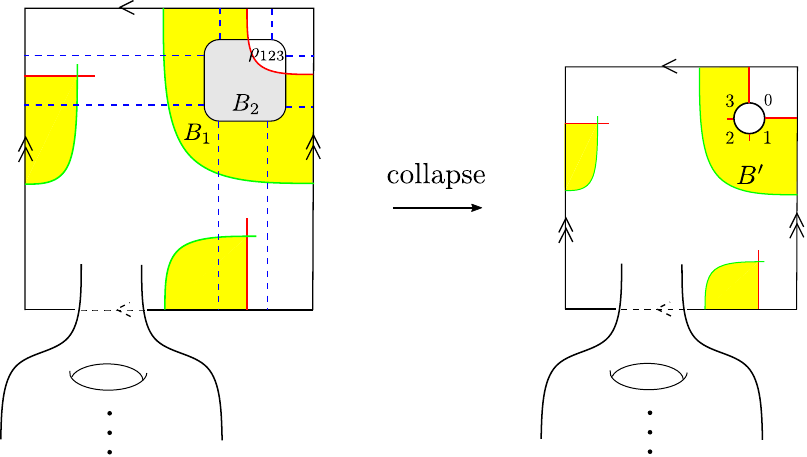}
\caption{Left: $B=B_1+B_2$. Right: $B'$.}\label{Figure, dividing the domain into two parts}
\end{figure}

\begin{figure}[htb!]
\includegraphics[scale=0.5]{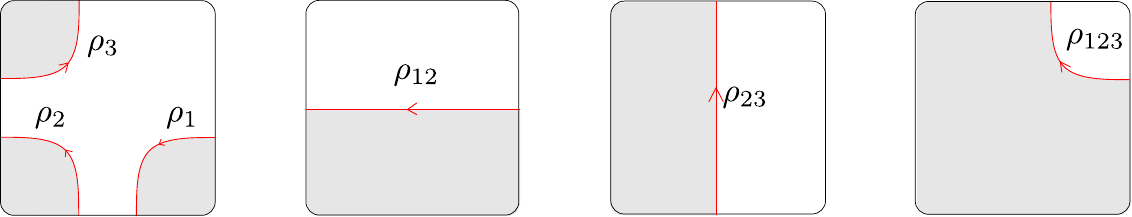}
\caption{Simple domains corresponding to Reeb elements in $\mathcal{A}$.}\label{Figure, simple domains}
\end{figure}

\subsection{Unobstructedness and admissibility of paring diagrams}\label{Subsection, unobstructeness and admissbility of pairing diagrams}
\begin{prop}\label{Proposition, paring diagrams are admissible}
Let $\alpha_{im}\subset T^2$ be a $z$-adjacent immersed multicurve. Then the pairing diagram $\mathcal{H}^a(\alpha_{im})$ of an arced bordered Heegaard diagram $\mathcal{H}^a$ and $\alpha_{im}$ is unobstructed. Furthermore, $\mathcal{H}^a(\alpha_{im})$ is provincially admissible provided $\mathcal{H}^a$ is left provincially admissible. (See Definition \ref{Definition, unobstructedness} and Definition \ref{Definition, provinical admissibility}.)
\end{prop}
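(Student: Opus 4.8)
The plan is to prove the two assertions — unobstructedness and provincial admissibility — separately, in both cases by relating domains on $\mathcal{H}^a(\alpha_{im})$ to domains on $\mathcal{H}^a$ via the collapsing operation of Definition \ref{Definition, collapsing operation}, together with the structure of $z$-adjacent curves established in Lemma \ref{Lemma, constraints on positive domains bounded by one-cornered subloop} and Proposition \ref{Proposition, relating domains in two different Heegaard diagrams}.

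\textbf{Unobstructedness.} I need to verify the four conditions of Definition \ref{Definition, unobstructedness}. The key observation is that an $n$-cornered $\alpha$-bounded domain $B$ on $\mathcal{H}^a(\alpha_{im})$ with $n_z(B)=0$ is, by construction of the pairing diagram, supported away from $\partial_L\bar\Sigma'$ and lives near the paired-in torus region; its $\alpha_{im}$-boundary is an $n$-cornered subloop of $\alpha_{im}$. First I would handle conditions (1) and (4): for a positive zero- or one-cornered $\alpha$-bounded domain $B$ with $n_z(B)=0$, Lemma \ref{Lemma, constraints on positive domains bounded by one-cornered subloop}(2) says $\partial_{\alpha_{im}}B$ contains no elementary sub-arcs, which forces the subloop to be contained in the horizontal/vertical strands of assumption (C-4); a short homological/area argument then shows no positive such $B$ exists (using that $\alpha_{im}$ is unobstructed in $T^2\setminus\{z\}$, assumption (C-2), so it bounds no teardrop, and that embedded $\alpha$-circles and arcs are homologically independent). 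For condition (4), a positive two-cornered $\alpha$-bounded domain with $n_z=0$ similarly has no elementary sub-arcs on its $\alpha_{im}$-boundary by Lemma \ref{Lemma, constraints on positive domains bounded by one-cornered subloop}(2), so it is a bigon between two strands near a self-intersection point; one checks directly that the only such positive domain is an embedded bigon. For conditions (2) and (3), where $n_z(B)=1$: a positive zero-cornered $\alpha$-bounded domain $B$ with $n_z(B)=1$ collapses (via the collapsing operation, or just by examining the torus region) to a zero-cornered $\alpha$-bounded domain on $\mathcal{H}^a$, which must be a multiple of $[\bar\Sigma']$-type class; matching multiplicities forces $B=[\Sigma]$. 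For a positive one-cornered $\alpha$-bounded domain $B$ with $n_z(B)=1$, Lemma \ref{Lemma, constraints on positive domains bounded by one-cornered subloop}(1) (with $n=1$) bounds the number of elementary sub-arcs on each side of the cut-open torus by one; combining this with the separating-curve structure coming from the collapsed $\partial_R$ region (which gives the separating curve $C$ of Definition of stabilized teardrop) shows $B$ decomposes as the required $B_1$ with $[B_1]=[E_1]$ and an immersed teardrop $B_2$ in the genus-one piece $E_2$, i.e.\ $B$ is a stabilized teardrop.

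\textbf{Provincial admissibility.} Suppose $\mathcal{H}^a$ is left provincially admissible; I must show every non-trivial provincial periodic domain $P$ of $\mathcal{H}^a(\alpha_{im})$ has both positive and negative multiplicities. By definition $n_{-\rho_i}(P)=0$ for $i=0,1,2,3$; in particular $n_z(P)=0$, so by Lemma \ref{Lemma, constraints on positive domains bounded by one-cornered subloop}(2) the $\alpha_{im}$-part of $\partial P$ contains no elementary sub-arcs — and since a periodic domain has $\alpha$-boundary consisting of full $\alpha$-circles together with at most one component of $\alpha_{im}$, the $\alpha_{im}$-component, if present, must actually be a (multiple of a) closed component of $\alpha_{im}$ lying in the horizontal/vertical strand region. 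Now apply the collapsing operation: $P$ maps to a domain $P'$ on $\mathcal{H}^a$ whose $\partial_R$-multiplicities are all zero (these come precisely from the $n_{-\rho_i}(P)$) and which is a left provincial periodic domain of $\mathcal{H}^a$ (its $\partial_L$-multiplicities agree with those of $P$, which vanish since $n_z(P)=0$ — here I use that the collapsing is the identity away from the torus region, combined with Proposition \ref{Proposition, relating domains in two different Heegaard diagrams}-style bookkeeping). If $P'\ne 0$, left provincial admissibility of $\mathcal{H}^a$ gives $P'$ both signs, hence $P$ does too (the collapsing operation preserves multiplicities in the non-torus region). If $P'=0$, then $P$ is supported entirely in the torus region and is a periodic combination of the paired-in curves; such a domain with $n_z=0$ is homologically a multiple of a closed component of $\alpha_{im}$ relative to the strands, and one checks it cannot be positive (again using assumption (C-2)), so it has both signs. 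Finally, a short admissibility-via-area argument (as in \cite{LOT18}) shows finiteness of the relevant domain counts is automatic once both-signs holds.

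\textbf{Main obstacle.} The routine parts are the sign/multiplicity bookkeeping under collapsing; the genuinely delicate step is condition (3) of unobstructedness — showing that a positive one-cornered $\alpha$-bounded domain with $n_z=1$ is \emph{exactly} a stabilized teardrop, meaning one must produce the separating curve $C$, verify $[B_1]=[E_1]$ on the nose, and confirm $B_2$ is an honest (possibly concave-cornered) immersed teardrop in $E_2$. This requires carefully tracking how the collapsed $\partial_R\bar\Sigma$ region sits inside $\mathcal{H}^a(\alpha_{im})$ and combining the $z$-adjacency bound from Lemma \ref{Lemma, constraints on positive domains bounded by one-cornered subloop}(1) with the genus-one structure of the paired-in piece; I expect this to be where most of the real work lies.
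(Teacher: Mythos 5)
Your overall strategy — reduce to the genus-one piece via the separating curve coming from the collapsed $\partial_R$ region, control elementary sub-arcs with Lemma \ref{Lemma, constraints on positive domains bounded by one-cornered subloop}, and push periodic domains through the collapsing map to invoke left provincial admissibility — is the same as the paper's, and your treatment of conditions (1) and (4) of Definition \ref{Definition, unobstructedness} and of provincial admissibility is essentially sound (the paper's admissibility argument is cleaner: a positive provincial periodic domain collapses to a positive left provincial periodic domain, which must vanish, so the original domain has no $\beta$-boundary and is a zero-cornered $\alpha$-bounded domain with $n_z=0$, already excluded).

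However, the two $n_z=1$ cases are not actually established. For condition (2), after surgery the domain $D$ in the marked torus could a priori have $\partial D$ equal to a nonempty zero-cornered loop: by Lemma \ref{Lemma, constraints on positive domains bounded by one-cornered subloop}(1) with $n=1$ it would consist of exactly four elementary sub-arcs, i.e.\ be a component of $\alpha_{im}$ encircling $z$ once. This is ruled out only by the standing assumption (C-1) on immersed multicurves, not by "matching multiplicities"; your argument as written does not see this case, and moreover Proposition \ref{Proposition, relating domains in two different Heegaard diagrams} (which you lean on) is only stated for domains with $n_z=0$. For condition (3), you explicitly defer the central step. The paper closes it with a short counting argument you should supply: by Lemma \ref{Lemma, constraints on positive domains bounded by one-cornered subloop}(1), at most one elementary sub-arc of $\partial D$ meets each edge with each sign, so at most two elementary sub-arcs begin on the meridian and at most two on the longitude; since every crossing except possibly the last begins an elementary sub-arc, $\partial D$ crosses each of the meridian and longitude at most three times, and homological triviality forces these intersection numbers to be even, hence at most two. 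This pins $\partial D$ down as a one-cornered loop encircling $z$ once, so $D$ is a teardrop and $B$ is a stabilized teardrop. Without these two arguments the proof of unobstructedness is incomplete at precisely the conditions (2) and (3) that the rest of the paper (the boundary-degeneration analysis) depends on.
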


\begin{proof}[Proof of Proposition \ref{Proposition, paring diagrams are admissible}] Consider the bordered Heegaard diagram $\mathcal{H}^a(\alpha_{im})=(\bar{\Sigma}',\bar{\bm{\alpha}}',\bm{\beta}',z')$ obtained from pairing an arced bordered Heegaard diagram $\mathcal{H}^a=(\bar{\Sigma},\bar{\bm{\alpha}},\bm{\beta},\bm{z})$ and a $z$-adjacent immersed multicurve $\alpha_{im}$. 

We begin by showing $\bar{\bm{\alpha}}'$ is unobstructed in the sense of Definition \ref{Definition, unobstructedness}. Let $B$ be a zero- or one-cornered $\alpha$-bounded domain. Since the curves $\{\bar{\alpha_1^a},\bar{\alpha_2^a},\alpha_1,\ldots,\alpha_{g-2},\alpha_{im}^0\}$ are pairwise disjoint and homologically independent in $H_1(\bar{\Sigma},\partial)$, $[\partial B]$ (as a homology class) is equal to a linear combination of at most one copy of $\partial\bar{\Sigma}$ and some homologically trivial zero- or one-cornered loop contained in a single connected component of $\alpha_{im}$. 

We first show there are no positive zero- or one-cornered $\alpha$-bounded domains $B$ with $n_{z'}(B)=0$. In this case, $\partial B$ is a homologically trivial zero- or one-cornered loop contained in a single connected component of $\alpha_{im}$, i.e., $\partial\bar{\Sigma}$ does not appear in $\partial B$. As the $\bm{\beta}$-curves are irrelevant to our consideration, we may assume the $\alpha$-curves of $\mathcal{H}^a$ are in standard position. Therefore, there is an obvious circle $C\subset\bar{\Sigma}$ that splits $\bar{\Sigma}$ into a genus-$(g-1)$ surface $E_1$ containing $\{\alpha^{a,L}_1,\alpha^{a,L}_2,\alpha^c_1,\ldots,\alpha^c_{g-2}\}$ and a genus-one surface $E_2$ containing $\alpha^{a,R}_1$ and $\alpha^{a,R}_2$. Let $C'$ be the corresponding curve on $\bar{\Sigma}'$. Then after surgery along $C'$, $B$ induces a positive domain $D$ in the marked torus $T^2$ (obtained from $E_2$ in an obvious way), and $D$ is bounded by a zero- or one-cornered (sub)loop of $\alpha_{im}$. According to Lemma \ref{Lemma, constraints on positive domains bounded by one-cornered subloop}, $\partial D$ contains no elementary sub-arcs, so $D$ cannot exist.

Next we show that if $n_{z'}(B)=1$, $B$ is a stabilized teardrop or $[\Sigma']$ depending on whether $\partial B$ is one-cornered or zero-cornered. In this case, after performing surgery along the same $C'$ as in the previous paragraph, $B$ gives rise to two domains: one is $[E_1']$, where $E'_1$ is the genus-$(g-1)$ surface, and the other is a positive domain $D$ contained in the marked torus $T^2$ with $n_z(D)=1$. 
We first consider the case in which $\partial D$ is zero-cornered. If $\partial D=\emptyset$, then $D=E_2$ and hence $B=[\Sigma']$. If $\partial D\neq \emptyset$, then according to Lemma \ref{Lemma, constraints on positive domains bounded by one-cornered subloop}, it consists of at most (and at least) $4$ elementary sub-arcs, and hence is a circle enclosing the $z$-basepoint once. However, such circles are assumed not to exist. When $\partial D$ is one-cornered, we claim $D$ is a teardrop. To see this, note that Lemma \ref{Lemma, constraints on positive domains bounded by one-cornered subloop} implies that $\partial D$ crosses the meridian at most three times and the longitude at most three times since each time the meridian or the longitude is crossed (except possibly the last time) the intersection is the beginning of an elementary sub-arc and there are at most two elementary sub-arcs starting on each. Because $\partial D$ is homologically trivial in $H_1({T}^2)$ it crosses both of the meridian and the longitude and even number of times, so it crosses each at most twice. It follows that $\partial D$ must circle once around $z$ and $D$ is a teardrop with $n_z(D)=1$.

Now we show any two-cornered positive $\alpha$-bounded domain $B$ with $n_z(B)=0$ is a bigon. To see, we may split $\Sigma'$ as $E_1\#E_2$ as before and regard $B$ as a domain in $E_2$ with $n_{z'}=0$. Note by Lemma \ref{Lemma, constraints on positive domains bounded by one-cornered subloop} (2), we know $\partial B$ consists of no elementary subarcs, and hence $B$ must be of the form shown in Figure \ref{Figure, two cornered domains} (up to rotation), which is a bigon. (Note we do not require the corners of the bigon $B$ to be convex.) 
  
\begin{figure}[htb!]
\centering{
\includegraphics[scale=0.35]{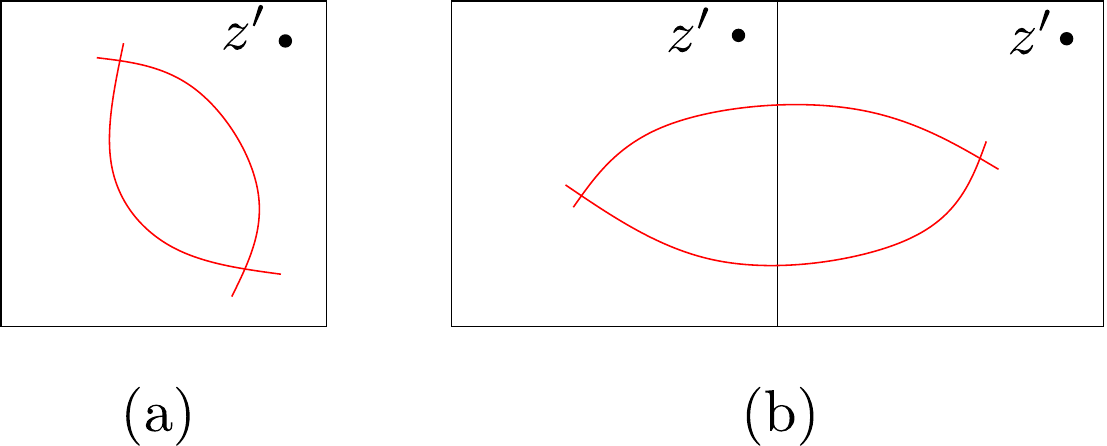}
\caption{Two-cornered positive $\alpha$-bounded domains.}
\label{Figure, two cornered domains}
}
\end{figure}

So far, we have proved $\bar{\bm{\alpha}}'$ is unobstructed. We now show there are no non-trivial positive provincial periodic domains. If not, let $B$ be a positive provincial periodic domain for $\mathcal{H}^a(\alpha_{im})$. Then by Proposition \ref{Proposition, relating domains in two different Heegaard diagrams}, $\Psi(B)$ is a positive periodic domain for $\mathcal{H}^a$, where $\Psi$ denotes the collapsing map. Note $\Psi(B)$ is left provincial. As $\mathcal{H}^a$ is left provincially admissible, we have $\Psi(B)=0$, and hence $\partial B$ has no $\beta$-curves. So, $B$ is a positive zero-cornered $\alpha$-bounded domain with $n_z(B)=0$, but such domains are already excluded by unobstructedness. 
\end{proof}

\subsection{The first paring theorem}\label{Subsection, first pairing theorem}
Recall a bordered Heegaard diagram is \emph{nice} if every connected region in the complement of the $\alpha$- and $\beta$-curves is a disk with at most four corners except for the region containing $z$. Any bordered Heegaard diagram can be turned into a nice diagram via isotopy and handleslides of the $\beta$-curves (Proposition 8.2 of \cite{LOT18}). The key property of nice Heegaard diagrams is that the Euler measure of any region away from the base point is non-negative. This property imposes great constraints on domains supporting holomorphic representatives via the index formula. Hence it opens up the combinatorial vein for proving the pairing theorem.

\FirstPairingTheorem*
\begin{proof}
In view of the homotopy equivalence of the relevant invariants under isotopy of $\beta$ curves (Proposition \ref{Proposition, invariance of type D structure}), we may assume $\mathcal{H}^a$ is a nice arced bordered Heegaard diagram. Note nice arced bordered Heegaard diagrams are automatically left provincially admissible. Therefore, $\widehat{CFDA}(\mathcal{H}^a)$ and $\widehat{CFD}(\mathcal{H}^a({\alpha_{im}}))$ are defined. In fact, a stronger admissibility condition holds for $\mathcal{H}^a$: any periodic domains with $n_z=0$ has both positive and negative local multiplicities. This implies $\widehat{CFDA}(\mathcal{H}^a)$ is bounded, and hence the box-tensor product is expressed as a finite sum. 
 
Implicit in the proof is that we will be using split almost complex structures for defining $\widehat{CFD}(\mathcal{H}^a({\alpha_{im}}))$ and $\widehat{CFDA}(\mathcal{H}^a)$. A split almost complex structure is sufficient for defining $\widehat{CFD}(\mathcal{H}^a({\alpha_{im}}))$, since all the domains involved will be bigons and rectangles. In this setting, up to a generic perturbation of the $\alpha$ and $\beta$ curves, moduli spaces defined using a split almost complex structure are transverse (c.f. \cite[Proposition 3.9]{MR2240908}).

We will call the two punctures in $\mathcal{H}^a$ the $\sigma$-puncture and the $\rho$-puncture, where the $\rho$-puncture is the one that gets capped off in the pairing diagram. For now, we assume the local systems on $\alpha_{im}$ are trivial, and we will indicate the modifications needed for dealing with nontrivial local system later on. First, the generators $\mathcal{G}(\mathcal{H}^a(\alpha_{im}))$ and $\mathcal{G}(\mathcal{H}^a)\otimes_{\mathcal{I}_R}\mathcal{G}(\alpha_{im})$ are identified as pointed out in Remark \ref{Remark, collapsing map} (3).  Next, we prove the differentials have a one-to-one correspondence.

We first show any differential incurred by the box-tensor product has a corresponding differential in $\widehat{CFD}(\mathcal{H}^a(\alpha_{im}))$. A differential arising from the box tensor product comes in two types, depending on whether it involves nontrivial differentials in $\widehat{CFD}(\alpha_{im})$. If it does not involve non-trivial differential in $\widehat{CFD}(\alpha_{im})$, then the input from $\widehat{CFDA}(\mathcal{H}^a)$ counts curves with the domain being a provincial bigon, a provincial rectangle, or a bigon with a single Reeb chord on the $\sigma$-puncture; see \cite[Proposition 8.4]{LOT18}. Such bigons or rectangles clearly have their counterparts in $\mathcal{H}^a(\alpha_{im})$, giving the corresponding differentials in $\widehat{CFD}(\mathcal{H}^a(\alpha_{im}))$. If the box-tensor differential involves differentials in $\widehat{CFD}(\alpha_{im})$, then the corresponding input from  $\widehat{CFDA}(\mathcal{H}^a)$ counts curves with the domain being a bigon with a single Reeb chord on the $\rho$-puncture \cite[Proposition 8.4]{LOT18}. As it pairs with a differential in $\widehat{CFD}(\alpha_{im})$, this bigon gives rise to a bigon in $\mathcal{H}^a(\alpha_{im})$ (which is a pre-image of the collapsing map), giving the corresponding differential in $\widehat{CFD}(\mathcal{H}^a(\alpha_{im}))$.

Next, we show that every differential in $\widehat{CFD}(\mathcal{H}^a(\alpha_{im}))$ corresponds to a differential incurred by the box-tensor product. Suppose $u\in \pi_2(\bm{x}\otimes a, \bm{y}\otimes b)$ admits a holomorphic representative contributing to a differential for $\widehat{CFD}(\mathcal{H}^a(\alpha_{im}))$. Let $B$ be the domain of $u$, and let $B'$ denote the image of $B$ under the collapsing operation. By Proposition \ref{Proposition, relating domains in two different Heegaard diagrams}, $e(B)=e(B')+\frac{|\overrightarrow{\rho}(\partial_{\alpha_{im}}B)|}{2}$. As $B'$ is a positive domain with $n_z(B')=0$ and $\mathcal{H}^a$ is a nice Heegaard diagram, we have $e(B)\geq e(B')\geq 0$. By the index formula, denoting the source surface of $u$ by $S$, we have $$\text{Ind}(u)=g-\chi(S)+2e(B)+|\overrightarrow{\sigma}|.$$ 
As $\text{Ind}(u)=1$ and $2e(B)+|\overrightarrow{\sigma}|\geq 0$, we have $\chi(S)=g$ or $g-1$. 

When $\chi(S)=g$, $S$ consists of $g$ topological disks; each disk has a $+$ and a $-$ puncture, and there is at most one $\sigma$-puncture overall since $2e(B)+|\overrightarrow{\sigma}|=1$. We separate the discussion according to the number of $\sigma$-punctures. First, if there is a $\sigma$-puncture, then the corresponding domain $B$ in $\mathcal{H}^a(\alpha_{im})$ is a bigon with a single Reeb chord on the $\sigma$-puncture and does not involve $\alpha_{im}$. This domain clearly has its counterpart in $\mathcal{H}^a$ under the collapsing map, giving rise to an operation in $\widehat{CFDA}(\mathcal{H}^a)$; the corresponding differential in the box-tensor product is obtained by pairing this $DA$-operation with an element in $\widehat{CFD}(\alpha_{im})$. Secondly, if there is no $\sigma$-puncture, then the domain $B$ is a provincial bigon in $\mathcal{H}^a(\alpha_{im})$. There are two sub-cases to consider depending on whether the $\alpha$-boundary of $B$ overlaps with $\alpha_{im}$. If the $\alpha$ boundary of $B$ is not on $\alpha_{im}$, then we argue as in the first case to see that $B$ gives a corresponding differential in the box-tensor product. If, on the other hand, the boundary of $B$ is on $\alpha_{im}$, since $1/2=e(B)=e(B')+|\overrightarrow{\rho}(\partial_{\alpha_{im}}B)|/{2}$ we have $|\overrightarrow{\rho}(\partial_{\alpha_{im}}B)|$ is either $0$ or $1$. If $|\overrightarrow{\rho}(\partial_{\alpha_{im}}B)|=0$, then $B'$ is a provincial domain, giving the type-DA operation for the corresponding differential obtained by the box-tensor product.  If $|\overrightarrow{\rho}(\partial_{\alpha_{im}}B)|=1$, then $B'$ is obtained from $B$ subtracting a simple region (as in the proof of Proposition \ref{Proposition, relating domains in two different Heegaard diagrams}) and then applying the collapsing map. We can see $B'$ is a bigon with a single Reeb chord corresponding to the Reeb chord specified by $\partial_{\alpha_{im}}B$ on the $\rho$-puncture. The DA-operation given by $B'$ and the type D operation given by $\partial_{\alpha_{im}}B$ pair up to give the corresponding differential in the box-tensor product.

When $\chi(S)=g-1$, then $S$ consists of $g-1$ topological disks; $g-2$ of the disks are bigon, while the remaining one is a rectangle. As $e(B)=0$, the bigons are mapped trivially to $\Sigma$. Therefore, the domain $B$ is a rectangle. Again, since $e(B)=e(B')+|\overrightarrow{\rho}(\partial_{\alpha_{im}}B)|/{2}$ and $e(B')\geq 0$, we have $|\overrightarrow{\rho}(\partial_{\alpha_{im}}B)|=0$. Then $B'$ is a provincial rectangular domain in $\mathcal{H}^a$, giving rise to a DA-operation that pairs with a trivial type-D operation to give the corresponding differential in the box-tensor product. We have finished the proof when the local system is trivial.

Next, we consider the case where $\alpha_{im}$ admits a non-trivial local system $(\mathcal{E},\Phi)$. The local system induces a local system on the $\alpha$ curves in the pairing diagram $H^a(\alpha_{im})$. First, the discussion above identifies the generators at the vector space level: let $\bm{x}\otimes y$ be an intersection point in $\mathcal{G}(\mathcal{H}^a(\alpha_{im}))$, where $\bm{x}\in\mathcal{G}(H^a)$ and $y\in\mathcal{G}(\alpha_{im})$; then $\bm{x}\otimes y$ corresponds to a direct summand $\mathcal{E}|_{\bm{x}\otimes y}$ of $\widehat{CFD}(\mathcal{H}^a(\alpha_{im}))$ as a vector space, and $\mathcal{E}|_{\bm{x}\otimes y}$ can be naturally identified with $\bm{x}\otimes \mathcal{E}|_y$, a summand of $\widehat{CFDA}(\mathcal{H}^a)\boxtimes \widehat{CFD}(\alpha_{im})$. Secondly, the discussion in the trivial-local-system case shows that $\widehat{CFD}(\mathcal{H}^a(\alpha_{im}))$ has a differential map between the summands $\mathcal{E}|_{\bm{x}\otimes y}\rightarrow \sigma_I\otimes \mathcal{E}|_{\bm{x'}\otimes y'}$ if and only if the box-tensor product has a differential map between the corresponding summands $\bm{x}\otimes\mathcal{E}|_{y}\rightarrow \sigma_I\otimes (\bm{x'}\otimes \mathcal{E}|_{ y'})$ in the box-tensor product, and under the natural identification between these summands both differential maps are induced by the same parallel transport from $\mathcal{E}|_{y}$ to $\mathcal{E}|_{y'}$.

\end{proof}

\subsection{The second pairing theorem}\label{Section, the second pairing theorem}
We are interested in computing knot Floer chain complexes over $\mathcal{R}=\mathbb{F}[U,V]/(UV)$ using bordered Floer homology. We have already defined an extended type-D structure, and we want to pair it with an extended type-A structure to get a bi-graded chain complex over $\mathcal{R}=\mathbb{F}[U,V]/(UV)$. Here we will only restrict to a specific extended type-A structure associated to the doubly-pointed bordered Heegaard diagram $\mathcal{H}_{id}$ given in Figure \ref{Figure, H_st}. The diagram $\mathcal{H}_{id}$ corresponds to the pattern knot given by the core of a solid torus, which is the \textit{identity pattern}. 
\begin{figure}[htb!]
\centering{
\includegraphics[scale=0.35]{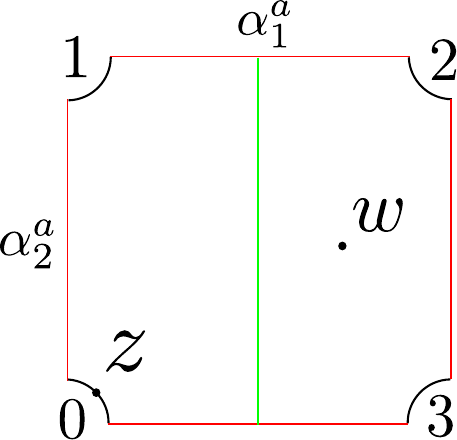}
\caption{The bordered diagram $\mathcal{H}_{id}$.}
\label{Figure, H_st}
}
\end{figure}

Recall $\tilde{\mathcal{A}}$ denotes the weakly extended torus algebra, and $\mathcal{I}\subset\tilde{\mathcal{A}}$ is the ring of idempotents (see Figure \ref{Figure, weakly extended torus algebra}). 
\begin{defn}
The extended type-A structure $\widetilde{CFA}(\mathcal{H}_{id})$ is a free $\mathcal{R}$-module generated by the single intersection point $x$ in $\mathcal{H}_{id}$. It is equipped with an $\mathcal{I}$-action given by $x\cdot \iota_0=x$ and $x\cdot \iota_1=0$, together with a family of $\mathcal{R}$-linear maps $m_{i+1}: \widetilde{CFA}(\mathcal{H}_{id})\otimes_{\mathcal{I}} \tilde{A}^{\otimes i}\rightarrow \widetilde{CFA}(\mathcal{H}_{id})$ ($i\in \mathbb{N}$), where up to $\mathcal{R}$-linearity the only non-zero relations are:
$$
\begin{aligned}
&m_2(x,1)=x,\\
&m_{3+i}(x,\rho_3,\overbrace{\rho_{23},\ldots,\rho_{23}}^{i},\rho_2)=U^ix, \quad i\in\mathbb{N},\\
&m_{3+i}(x,\rho_1,\overbrace{\rho_{01},\ldots,\rho_{01}}^{i},\rho_0)=V^ix, \quad i\in\mathbb{N}.
\end{aligned}
$$
\end{defn}
\begin{rmk}
This extends the hat-version type A-structure $\widehat{CFA}(\mathcal{H}_{id})$ by allowing Reeb chords crossing the base point $z$. 
\end{rmk}
Straightforwardly, the hat-version box-tensor product can be generalized to be an operation between the extended type A structure $\widetilde{\mathcal{M}}:=\widetilde{CFA}(\mathcal{H}_{id})$ and a weakly extended type D structure $(\widetilde{\mathcal{N}},\delta^i)$: It is the $\mathcal{R}$-module $\widetilde{\mathcal{M}}\otimes_\mathcal{I}\widetilde{\mathcal{N}}$ together with a differential $\partial_\boxtimes:=\sum_{i\geq 0} (m_{i+1}\otimes \mathbb{I}_{\widetilde{\mathcal{N}}})\circ(\mathbb{I}_{\widetilde{\mathcal{M}}}\otimes \delta^i)$; the finiteness of the sum can be guaranteed for type D structures defined using bi-admissible diagrams (see the proof of Theorem \ref{Theorem, 2nd pairing theorem} below). One may verify $\partial_\boxtimes^2=0$ algebraically using the structure equations defining the (weakly) extended type D and type A structures. We omit such computation and instead content ourselves with Theorem \ref{Theorem, 2nd pairing theorem} below, which implies that the $\partial_{\boxtimes}$ induced by gluing bordered Heegaard diagrams is indeed a differential. We further remark that $\widetilde{\mathcal{M}}\boxtimes_\mathcal{I}\widetilde{\mathcal{N}_1}$ is chain homotopic to $\widetilde{\mathcal{M}}\boxtimes_\mathcal{I}\widetilde{\mathcal{N}_2}$ provided $\widetilde{\mathcal{N}_1}$ is homotopic to $\widetilde{\mathcal{N}_2}$. The proof of this is similar to that in the hat version and is omitted.

\SecondPairingTheorem*
(See Definition \ref{Definition, unobstructedness} and \ref{Definition, bi-admissibility} for the unobstructedness and bi-admissibility for $\mathcal{H}$.)
\begin{proof}[Proof of Theorem \ref{Theorem, 2nd pairing theorem}]
Note periodic domains of $\mathcal{H}_{id}\cup \mathcal{H}_{im}$ with $n_z=0$ (respectively $n_w=0$) corresponds to periodic domains of $\mathcal{H}_{im}$ with $n_{\rho_0}=n_{\rho_1}=0$ (respectively $n_{\rho_2}=n_{\rho_3}=0$). Therefore,   since $\mathcal{H}_{im}$ is bi-admissible, $\mathcal{H}_{id}\cup \mathcal{H}_{im}$ is bi-admissible in the sense of Definition \ref{Definition, admissibility for doubly-pointed diagram}. Also, zero- or one-cornered $\alpha$-bounded domains in $\mathcal{H}_{id}\cup \mathcal{H}_{im}$ with $n_z=n_w=0$ must lie in $\mathcal{H}_{im}$. So, unobstructedness of $\mathcal{H}_{im}$ implies the unobstructedness of $\mathcal{H}_{id}\cup \mathcal{H}_{im}$. In summary, $\mathcal{H}_{id}\cup \mathcal{H}_{im}$ is bi-admissible and unobstructed, and hence $CFK_{\mathcal{R}}(\mathcal{H}_{id}\cup \mathcal{H}_{im})$ is defined. The bi-admissibility of $\mathcal{H}_{im}$ also implies $\partial_\boxtimes$ is expressed as a finite sum and hence is well-defined. To see this, note for any $\bm{x},\bm{y}\in\mathcal{G}(\mathcal{H}_{im})$, bi-admissibility implies there are only finitely many positive domains connecting $\bm{x}$ and $\bm{y}$ with a prescribed Reeb-chord sequence of the form $\rho_1,\rho_{01},\ldots\rho_{01},\rho_{0}$ and $\rho_3,\rho_{23},\ldots\rho_{23},\rho_{2}$.

Recall that the differential in $CFK_{\mathcal{R}}(\mathcal{H}_{id}\cup \mathcal{H}_{im})$ counts holomorphic curves that can only cross at most one of the $w$- and $z$-base points. Note also that both of the base points in $\mathcal{H}_{id}$ are adjacent to the east boundary. Therefore, by the symmetry of the base points $w$ and $z$, it suffices to prove the theorem for the hat version knot Floer homology, i.e., $$\widehat{CFK}(\mathcal{H}_{id}\cup \mathcal{H}_{im})\cong \widehat{CFA}(\mathcal{H}_{id})\boxtimes \widehat{CFD}(\mathcal{H}_{im}).$$
Though our Heegaard diagrams have immersed $\alpha$-multicurves, given that no boundary degeneration can occur, the proof for the embedded-$\alpha$-curves case, which uses neck stretching and time dilation, carries over without changes; see Chapter 9 of \cite{LOT18} for detail or Section 3 of \cite{Lipshitz2014} for an exposition.
\end{proof}

\section{Knot Floer homology of satellite knots}\label{Section, satellite knot pairing}
We apply the machinery developed in the previous sections to study the knot Floer homology of satellite knots. First, we introduce a gentler condition on the immersed curves than the z-adjacency condition.
\begin{defn}\label{Definition, admissibility of immersed curves}
	An immersed multicurve $\alpha_{im}$ in the marked torus $(T^2,z)$ is \textit{admissible} if there are no nontrivial zero- and one-cornered $\alpha$-bounded positive domains $B$ with $n_z(B)=0$.   
\end{defn}
\noindent Note any $z$-adjacent immersed multicurve is admissible in view of Lemma \ref{Lemma, constraints on positive domains bounded by one-cornered subloop}.

Let $\mathcal{H}_{w,z}$ be a doubly pointed bordered Heegaard diagram for a pattern knot $(S^1\times D^2,P)$. Recall that we can construct a doubly pointed immersed diagram $\mathcal{H}_{w,z}(\alpha_{im})$. The admissibility condition guarantees that $CFK_{\mathcal{R}}(\mathcal{H}_{w,z}(\alpha_{im}))$ is defined in view of the following proposition.
\begin{prop}\label{Proposition, H_w,z(alpha_im) is bi-admissible and unobstructed}
 If $\alpha_{im}$ is an admissible immersed multicurve, then $\mathcal{H}_{w,z}(\alpha_{im})$ is bi-admissible and unobstructed.  
\end{prop}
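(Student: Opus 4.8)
The plan is to reduce the two properties to statements about $\alpha$-bounded and periodic domains that live entirely in the summand of the Heegaard surface obtained from the filled-in neighborhood of the $\alpha$-arcs, where the immersed curve $\alpha_{im}$ sits, and to use the admissibility hypothesis there. First I would recall that $\mathcal{H}_{w,z}(\alpha_{im})$ is built from a genus $g$ doubly pointed bordered diagram $\mathcal{H}_{w,z}$ by filling in the boundary, deleting the two $\alpha$-arcs, and gluing in the curve $\alpha_{im}$; as in the setup of the pairing construction (Definition \ref{Definition, pairing diagram} and Figure \ref{Figure, pairing doubly pointed diagram}) there is a separating curve $C$ of $\Sigma$, disjoint from all $\alpha$-curves, splitting $\Sigma$ into a genus $g-1$ piece $E_1$ carrying the embedded $\alpha$-curves and a genus-one piece $E_2$ carrying $\alpha_{im}$. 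The key point is that since $\{\alpha_1,\ldots,\alpha_{g-1},\alpha_g^1\}$ and the filling data are homologically independent, the $\alpha$-part of the boundary of any $\alpha$-bounded or periodic domain $B$ is forced (as a homology class) to be a linear combination of at most one homologically nontrivial loop supported in $E_1$ and one homologically trivial (zero- or one-cornered) loop supported in a single component of $\alpha_{im}$ inside $E_2$; this is exactly the bookkeeping already carried out in the proof of Proposition \ref{Proposition, paring diagrams are admissible}.

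Next I would verify unobstructedness in the sense of Definition \ref{Definition, unobstructedness for doubly-pointed diagram}: there are no nontrivial zero- or one-cornered $\alpha$-bounded domains $B$ with $n_z(B)=0$. Given such a $B$, surgering along $C'$ (the image of $C$) produces a domain $[E_1]$-part supported on $E_1$ and a domain $D$ in the marked torus picture of $E_2$ bounded by a zero- or one-cornered subloop of $\alpha_{im}$ with $n_z(D)=0$; but the admissibility hypothesis on $\alpha_{im}$ (Definition \ref{Definition, admissibility of immersed curves}) says precisely that no such nontrivial $D$ exists. Since the $E_1$-part is an honest periodic or $\alpha$-bounded domain on an embedded diagram (no immersed curves), and $n_z=n_w=0$ there forces it to be trivial by the ordinary theory, we conclude $B=0$. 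The one subtlety is to be careful that the corner, if present, is genuinely one of the self-intersection points of $\alpha_{im}$ — i.e., lives in $E_2$ — rather than an intersection of an embedded $\alpha$-curve with $\beta$; but $\alpha$-bounded domains by definition only involve $\alpha$-curves on their $\alpha$-boundary, so no $\beta$-corners occur, and corners at crossings of two embedded $\alpha$-curves are ruled out since those are disjoint. The same argument with $n_w(B)=0$ in place of $n_z(B)=0$ is identical by the $w$–$z$ symmetry noted after Definition \ref{Definition, unobstructedness for doubly-pointed diagram}.

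For bi-admissibility (Definition \ref{Definition, admissibility for doubly-pointed diagram}), I would take a nontrivial periodic domain $B$ with $n_z(B)=0$ (the case $n_w(B)=0$ being symmetric) and argue it has both positive and negative coefficients. Again split along $C'$: $B$ restricts to a periodic domain on $E_1$ (together with possibly a multiple of $\partial\bar\Sigma$, which has been filled in, hence contributes nothing here) and to a domain $D$ on the $E_2$-torus bounded by at most one (zero- or one-cornered) loop of $\alpha_{im}$ with $n_z(D)=0$. If the $\alpha_{im}$-part is nonempty, then $D$ is a nontrivial zero- or one-cornered $\alpha$-bounded domain with $n_z=0$; admissibility of $\alpha_{im}$ says $D$ cannot be positive, so $B$ has a negative coefficient, and since $B$ is a periodic domain its coefficients also sum appropriately so it has a positive coefficient too (or $B$ is not identically of one sign — more carefully, a periodic domain that is somewhere negative is somewhere positive since it is not positive and not the negative of a positive domain either, the negative case being excluded symmetrically). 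If the $\alpha_{im}$-part is empty, $B$ descends to a periodic domain entirely supported on $E_1$ with $n_z=0$, which has both signs because $\mathcal{H}_{w,z}$, and hence its filling, is provincially admissible by construction (every doubly pointed bordered diagram for a pattern can be taken provincially admissible, and filling in the boundary preserves this for domains with $n_z=0$). I expect the main obstacle to be making the surgery-along-$C'$ decomposition of \emph{periodic} (as opposed to merely $\alpha$-bounded) domains fully rigorous — in particular, correctly accounting for the $\beta$-curves that cross $C'$ and the possible copies of the filled-in boundary circle — and confirming that a somewhere-negative domain argument, rather than a literal "both signs" argument, is what the definition requires; but this is precisely parallel to the reasoning already executed in the proof of Proposition \ref{Proposition, paring diagrams are admissible}, so it should go through with only notational changes.
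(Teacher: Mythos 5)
Your unobstructedness argument is essentially the paper's: split the surface along the separating curve disjoint from the $\alpha$-curves, transfer the zero- or one-cornered $\alpha$-bounded domain to the marked torus, and invoke admissibility of $\alpha_{im}$. One correction there: the conclusion is not "$B=0$" but "$B$ is somewhere negative." Nontrivial zero-cornered $\alpha$-bounded domains with $n_z=0$ do exist (e.g.\ the domain bounded by a null-homologous component of $\alpha_{im}$); Definition \ref{Definition, admissibility of immersed curves} only excludes \emph{positive} ones, and that is all that unobstructedness in the sense of Definition \ref{Definition, unobstructedness for doubly-pointed diagram} requires. Also note that $n_w(B)=n_z(B)$ automatically for $\alpha$-bounded domains since $w$ and $z$ lie in the same region of the complement of the $\alpha$-curves, so there is no separate "$n_w=0$" case to run.

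The bi-admissibility half has a genuine gap. Your two cases both lean on tools that do not apply as stated: (a) the surgery-along-$C'$ decomposition of a periodic domain is only valid when $\partial B$ has no $\beta$-part, since the $\beta$-curves cross the separating curve --- you flag this yourself as "the main obstacle," and it does not go away with notational changes; and (b) the appeal to provincial admissibility of $\mathcal{H}_{w,z}$ invokes a hypothesis that the proposition does not assume. The missing idea, which the paper uses, is homological: because $\mathcal{H}_{w,z}$ presents a pattern in the solid torus and $\alpha_{im}$ is knot-like, the classes $[\alpha_1],\ldots,[\alpha_{g-1}],[\alpha_{im}^1],[\beta_1],\ldots,[\beta_g]$ are linearly independent in $H_1(\Sigma;\mathbb{Q})$, exactly as for the attaching curves of a Heegaard diagram of $S^3$. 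Hence \emph{any} periodic domain of $\mathcal{H}_{w,z}(\alpha_{im})$ is bounded by a multiple of a single homologically trivial component of $\alpha_{im}$; in particular its boundary contains no $\beta$-curves and no embedded $\alpha$-curves at all. This makes every periodic domain a zero-cornered $\alpha$-bounded domain, so bi-admissibility follows immediately from the unobstructedness you already proved (applied to both $B$ and $-B$ to get both signs). With this observation your case (a) becomes legitimate because no $\beta$-curves ever appear, and your case (b) is vacuous: a periodic domain with empty $\alpha_{im}$-boundary is a multiple of $[\Sigma]$, which is excluded by $n_z(B)=0$.
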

\begin{proof}
	First, we show the diagram $\mathcal{H}_{w,z}(\alpha_{im})$ is unobstructed (in the sense of Definition \ref{Definition, unobstructedness for doubly-pointed diagram}). Let $B$ be a zero- or one-cornered $\alpha$-bounded domain for $\mathcal{H}_{w,z}(\alpha_{im})$. Note $n_w(B)=n_z(B)$ since the base points $w$ and $z$ are in the same region in the complement of the $\alpha$-curves. We restrict to the domains with $n_z(B)=n_w(B)=0$. Recall we want to prove $B$ must be somewhere negative. Splitting the Heegaard surface as in the proof of Proposition \ref{Proposition, paring diagrams are admissible},  we see $B$ corresponds to a zero- or one-cornered $\alpha$-bounded domain $B'$ in the marked torus with $n_z(B')=0$. Since $\alpha_{im}$ is admissible, $B'$ is somewhere negative. Therefore, $B$ is somewhere negative. 
	
	Next we show the pairing diagram $\mathcal{H}_{w,z}(\alpha_{im})$ is bi-admissible. Recall that bi-admissibility means any nontrivial periodic domains $B$ with $n_w(B)=0$ or $n_z(B)=0$ must have both positive and negative coefficients. To see this, we first claim any given periodic domain $B$ is bounded by some multiple of a homologically trivial component of $\alpha_{im}$. (We warn the reader that the claim is no longer true if one further performs a handleslide of such a component over an embedded alpha curve.) To see the claim, note that as homology classes, the curves $[\alpha_i]$ ($i=1,\ldots,g-1$), $[\alpha^1_{im}]$, and $[\beta_i]$ ($i=1,\ldots, g$) are linearly independent, just as the attaching curves in a Heegaard diagram for $S^3$. Now, the claim implies $B$ is a zero-cornered $\alpha$-bounded domain. In view of the unobstructedness established above, $B$ is somewhere negative. 
\end{proof}

\begin{defn}
	Let $\alpha_{im}$ and $\alpha'_{im}$ be two admissible immersed multicurves. They are said to be admissibly equivalent if there exists a finite sequence of admissible immersed curves $\alpha^i_{im}$, $i=1,\ldots,n$ such that 
	\begin{itemize}
		\item[(1)]$\alpha^1_{im}=\alpha_{im}$ and $\alpha^n_{im}=\alpha'_{im}$,
		\item[(2)]For $i=1,\ldots, n-1$, $\alpha^i_{im}$ and $\alpha^{i+1}_{im}$ are related by a finger move that creates/cancels a pair of self-intersection points of the immersed curves. 
	\end{itemize}
\end{defn}

\begin{prop}\label{Proposition, finger move independence}
	Let $\alpha_{im}$ and $\alpha_{im}'$ be two admissibly equivalent immersed multicurves, and let $\mathcal{H}_{w,z}$ be a doubly pointed bordered Heegaard diagram. Then $$CFK_{\mathcal{R}}(\mathcal{H}_{w,z}(\alpha_{im}))\cong CFK_{\mathcal{R}}(\mathcal{H}_{w,z}(\alpha'_{im})).$$
\end{prop}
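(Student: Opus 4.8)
Since admissible equivalence is generated by single finger moves creating or cancelling a pair of self-intersection points of $\alpha_{im}$, it suffices to treat the case where $\alpha'_{im}$ is obtained from $\alpha_{im}$ by one such finger move; the general statement then follows by induction on the length $n$ of the sequence of intermediate admissible curves. So I fix $\mathcal{H}_{w,z}$ and suppose $\alpha'_{im}$ differs from $\alpha_{im}$ by pushing a short finger of one strand across another, introducing two new intersection points $p_1,p_2$ of $\alpha'_{im}$; both $\mathcal{H}_{w,z}(\alpha_{im})$ and $\mathcal{H}_{w,z}(\alpha'_{im})$ are bi-admissible and unobstructed by Proposition \ref{Proposition, H_w,z(alpha_im) is bi-admissible and unobstructed}, so both knot Floer complexes over $\mathcal{R}$ are defined.

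\textbf{Two approaches.} The first and most direct route is to recognize a finger move of an immersed $\alpha$-curve as an isotopy of $\alpha$-curves in the ambient Heegaard surface (it is literally a regular homotopy of $\alpha_{im}$ supported in a small disk that happens to pass through a double point), and then invoke the isotopy invariance established in Proposition \ref{Proposition, invariance of CFK_R}. The catch is that Proposition \ref{Proposition, invariance of CFK_R} is stated for \emph{unobstructed, bi-admissible, and gradable} immersed doubly-pointed diagrams, and it presumes the diagram stays unobstructed/bi-admissible throughout; a generic regular homotopy from $\alpha_{im}$ to $\alpha'_{im}$ may pass through intermediate curves that fail admissibility, so one cannot blindly apply the isotopy-invariance statement along an arbitrary path. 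The definition of admissible equivalence is precisely tailored to avoid this: we are \emph{given} that there is a path of admissible curves, and along each elementary step only two self-intersections appear or disappear. The plan is therefore to verify that the standard isotopy-invariance argument (counting holomorphic triangles/continuation maps, as in \cite{MR2240908} and in the proof of Proposition \ref{Proposition, invariance of CFK_R}) applies to the one-parameter family interpolating $\alpha_{im}$ and $\alpha'_{im}$ through the finger move. Concretely: realize the finger move as a compactly supported regular homotopy $\{\alpha^s_{im}\}_{s\in[0,1]}$ inside a disk $D\subset\Sigma$ disjoint from the basepoints, $w$, $z$, all $\beta$-curves (we may arrange the finger to be small and local since finger moves are local), and all other $\alpha$-curves. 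One then defines a continuation map $CFK_{\mathcal{R}}(\mathcal{H}_{w,z}(\alpha_{im}))\to CFK_{\mathcal{R}}(\mathcal{H}_{w,z}(\alpha'_{im}))$ by counting index-zero stay-on-track holomorphic curves in $\Sigma\times[0,1]\times\mathbb{R}$ with the moving Lagrangian boundary condition $\alpha^s_{im}$, and shows it is a bigraded chain homotopy equivalence by the usual analysis of ends of one-dimensional moduli spaces. The key point, exactly as in Proposition \ref{Proposition, invariance of CFK_R}, is that no boundary degeneration appears in these moduli spaces: since every domain involved has $n_z=n_w=0$ and admissibility of each $\alpha^s_{im}$ rules out positive zero- or one-cornered $\alpha$-bounded domains with $n_z=0$, the argument of Proposition \ref{Proposition, ends of 0-p curves, n_z=0} (adapted to the closed doubly-pointed setting as in Proposition \ref{Proposition, well-definess of partial for CFK_R}) shows that only two-story buildings occur. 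One must also check that the intermediate curves in the one-parameter family — not merely $\alpha_{im}$ and $\alpha'_{im}$ — remain admissible; this is where one uses that a finger move passes through exactly one non-generic curve (with one tangency or triple-point) and that on either side the curves are admissible by hypothesis, together with the observation that admissibility is an open condition among curves with $n_z$-zero one-cornered domains controlled by the local picture near the finger.

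\textbf{The main obstacle.} The delicate part is the transversality and compactness bookkeeping in the presence of the immersed $\alpha$-multicurve during the homotopy: one needs the moduli spaces of stay-on-track curves with the moving boundary condition to be transversally cut out for a generic perturbation, and one needs to confirm that the only new phenomenon relative to the embedded case is the \emph{potential} appearance of boundary degenerations, which admissibility then excludes. All of the necessary local analytic input (transversality of moduli of stay-on-track curves, Proposition \ref{Proposition, regularity of the moduli spaces}; index formula, Proposition \ref{Proposition, first index formula}; compactness, Proposition \ref{Proposition, compactness}; gluing) is already established in Section \ref{Section, bordered invariants of immersed Heegaard diagrams} and carries over verbatim to the closed doubly-pointed setting, so the proof amounts to assembling these pieces exactly as in the proof of Proposition \ref{Proposition, invariance of CFK_R}, restricting attention to domains with $n_z=n_w=0$ and invoking admissibility of each $\alpha^s_{im}$ to kill boundary degenerations. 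The grading statement follows because the continuation map is constructed from index-zero curves and hence respects both the $w$- and $z$-gradings, so it is automatically bigraded; one then concludes that $CFK_{\mathcal{R}}(\mathcal{H}_{w,z}(\alpha_{im}))$ and $CFK_{\mathcal{R}}(\mathcal{H}_{w,z}(\alpha'_{im}))$ are bigraded chain homotopy equivalent, and iterating over the sequence $\alpha^1_{im},\dots,\alpha^n_{im}$ gives the proposition.
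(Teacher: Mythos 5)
Your proposal is correct and follows essentially the same route as the paper: reduce to a single finger move, model it as a locally supported moving family of $\alpha$-curves giving an immersed totally real boundary condition in $\Sigma\times[0,1]\times\mathbb{R}$, define continuation maps by counting index-zero stay-on-track curves, and use admissibility/unobstructedness of the (intermediate) curves to rule out boundary degenerations so that the standard ends-of-moduli-spaces argument yields mutually inverse chain homotopy equivalences. The paper's proof is equally brief on the analytic bookkeeping, simply remarking that compactness and gluing carry over and that bi-admissibility obstructs boundary degeneration.
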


\begin{proof}
	The proof follows the same strategy as the usual proof of isotopy invariance. Let $\bm{\alpha_0}$ and $\bm{\alpha_1}$ be the two sets of $\alpha$-curves. For simplicity, assume they are related by a single finger move. We model the finger move using a locally supported exact Hamiltonian isotopy on $\Sigma$. The isotopy induces a family of $\alpha$-curves, $\bm{\alpha}_t$, on $\Sigma$ ($t\in \mathbb{R}$); for $t\ll 0$ (resp. $t\gg 0$), $\bm{\alpha}_t$ is constant with respect to $t$ and is identified with $\bm{\alpha}_0$ (resp.\ $\bm{\alpha}_1$). $\bm{\alpha}_t$ induces an immersed totally real submanifold $C_{\alpha}=\bm{\alpha}_t\times\{1\}\times\{t\}$ in $\Sigma\times[0,1]\times \mathbb{R}$. $C_{\alpha}$ can be realized as an immersion $\Psi_t: (\amalg_{i=1}^g S^1)\times \mathbb{R}\rightarrow \Sigma\times\{1\}\times \mathbb{R}.$ Let $C_{\beta}$ be the Lagrangian induced by the $\beta$ curves. For $\bm{x}\in \mathbb{T}_{\bm{\alpha}_0}\cap \mathbb{T}_{\bm{\beta}}$ and $\bm{y} \in\mathbb{T}_{\bm{\alpha}_1}\cap \mathbb{T}_{\bm{\beta}}$, one then define $\mathcal{M}_{\Psi_t}(\bm{x},\bm{y})$ to be the moduli space of holomorphic curves in $\Sigma\times[0,1]\times \mathbb{R}$ with boundary on $C_{\alpha}\cup C_{\beta}$ such that the $\alpha$-boundary can be lifted through $\Psi_{t}$. With this, one can define a map $\Phi_0: CFK_{\mathcal{R}}(\mathcal{H}_{w,z}(\alpha_{im}))\rightarrow CFK_{\mathcal{R}}(\mathcal{H}_{w,z}(\alpha'_{im}))$ by $$\bm{x}\mapsto \sum_{\bm{y}}\sum_{\phi \in \pi_2(\bm{x},\bm{y})}\#\mathcal{M}_{\Psi_t}(\bm{x},\bm{y})U^{n_w(\phi)}V^{n_z(\phi)}\bm{y},$$
	where $\mathcal{M}_{\Psi_t}(\bm{x},\bm{y})$ has dimension zero. Define $\Phi_1:CFK_{\mathcal{R}}(\mathcal{H}_{w,z}(\alpha'_{im})) \rightarrow CFK_{\mathcal{R}}(\mathcal{H}_{w,z}(\alpha_{im}))$ similarly. We remark that the compactness and gluing results still apply to this setup. The bi-admissibility of the diagrams obstructs the appearance of boundary degeneration in the compactification of one-dimensional moduli spaces, and hence we can still apply the usual argument to show (1) $\Phi_0$ and $\Phi_1$ are chain maps, and (2) $\Phi_0\circ \Phi_1$ and $\Phi_1\circ \Phi_0$ are homotopy equivalent to the identity map. Therefore, $\Phi_0$ and $\Phi_1$ are chain homotopy equivalences. 
\end{proof}

\begin{defn}\label{Definition, z passable}
An immersed multicurve is called $z$-passable if it is admissibly equivalent to a $z$-adjacent multicurve.
\end{defn}

\begin{rmk}
 We can easily arrange $\alpha_K$ to be a $z$-passable multicurve; see Example \ref{Example, obtaning immersed curve admissibly equivalent to z-adj curve} below. Moreover, when the pattern knot admits a genus-one doubly pointed Heegaard diagram, we can even drop the admissibility condition; see Section \ref{Subsection, 1 1 pattern knots}.
\end{rmk}

\begin{exam}\label{Example, obtaning immersed curve admissibly equivalent to z-adj curve} We give a simple way to arrange an immersed multicurve $\alpha_K$ to be $z$-passable. 
	Without loss of generality, we consider a single component $\gamma$ of $\alpha_K$ each time, and we orient $\gamma$ arbitrarily. We view the torus $T^2$ as a square as usual and position $\gamma$ such that the elementary arcs hitting the top edge are separated into two groups of arcs where the arcs in a single group intersect the top edge in the same direction; see Figure \ref{Figure, admissible immersed curve} (1). Next, we perform a Reidemeister-II-like move to the two groups as in Figure \ref{Figure, admissible immersed curve} (2). Perform the above modification for every component of $\alpha_K$. We claim the resulting multicurve, which we denote $\alpha_K'$, is a $z$-passable multicurve. 
	\begin{figure}[htb!]
		\centering{
			\includegraphics[scale=0.4]{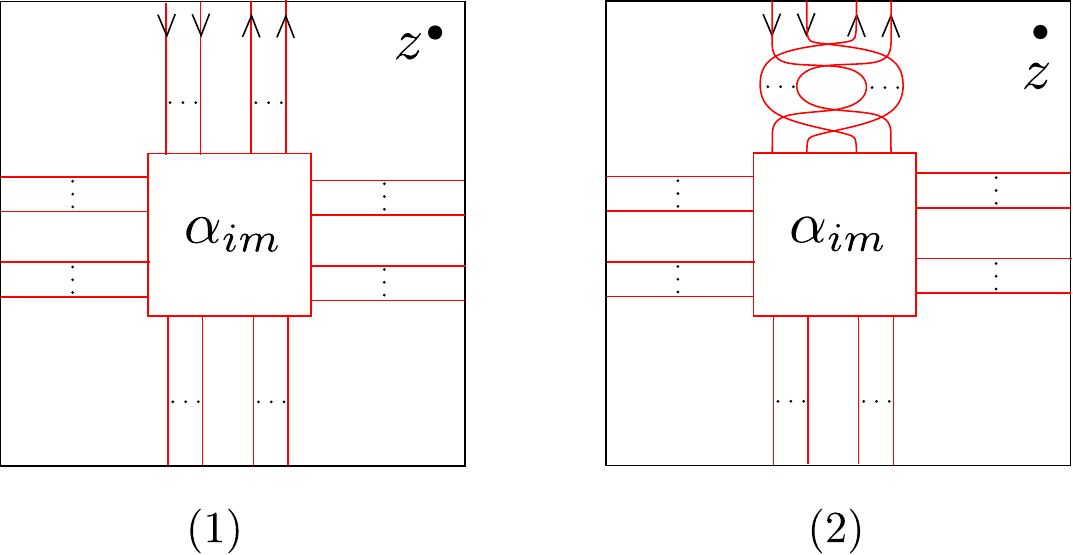}
			\caption{(2) is an $z$-passable immersed curve obtained from (1).}
			\label{Figure, admissible immersed curve}
		}
	\end{figure}
	
	We justify the claim when $\gamma$ is homologically trivial; the case where $\gamma$ is homologically essential is similar. We first check that $\alpha_K'$ is admissible by checking that there are no zero- or one-cornered $\alpha$ bounded domains $B$ with $n_z(B) = 0$. First note that for any  zero- or one-cornered $\alpha$ bounded domains $B$, $\partial B$ must include an elementary arc meeting the top edge of the square. To see this, note that $\partial B$ is a nullhomologous curve in the torus and thus lifts to a closed path in the universal cover. Cutting along (lifts of) the meridian (i.e., $\mathbb{Z}\times \mathbb{R}$) breaks $\partial B$ into pieces, with at least two of these (the leftmost and rightmost piece) forming bigons with the meridian. At least one of those two pieces has no corners (since $B$ is zero- or one-cornered). The cornerless piece must intersect the longitude because $\alpha_K'$ is reduced, and the subarc of $\partial B$ directly below this intersection with the longitude gives an elementary arc meeting the top edge of the square. Next we observe that the elementary arcs near the top edge of the square are arranged such that each arc has the base point $z$ both on its left and on its right, in each case without oppositely oriented arcs in between the arc and $z$, and this implies that no domain whose boundary includes one of these elementary arcs can have $n_z(B) = 0$.
	Having shown the immersed curve $\alpha_K'$ is admissible, it remains to check that it is $z$-passable. Recall from Proposition \ref{Proposition, arranging a curve to be z-adjacent} that we can perform a sequence of finger moves to achieve a $z$-adjacent position. Note that all the intermediate diagrams are admissible by exactly the same argument above.  
\end{exam}

\subsection{Proof of the main theorem, ungraded version}\label{Subsection, proof of the main theorem, ungraded version}
This subsection is devoted to proving the ungraded version of Theorem \ref{Theorem, main theorem}.   

A satellite knot is constructed via the so-called satellite operation that requires a pattern knot and a companion knot as input. A pattern knot is an oriented knot $P$ in an oriented solid torus $S^1\times D^2$, where an oriented meridian $\mu$ and an oriented longitude $\lambda$ are chosen for $\partial (S^1\times D^2)$ so that the orientation determined by $(\mu,\lambda)$ coincides with the induced boundary orientation. A companion knot is an oriented knot $K$ in the 3-sphere. We orient any Seifert longitude of $K$ using the parallel orientation, and orient any meridian $m$ of $K$ so that $lk(m,K)=1$. The satellite knot $P(K)$ is obtained by gluing $(S^1\times D^2, P)$ to the companion knot complement $S^3\backslash \nu(K)$ so that the chosen meridian $\mu$ is identified with a meridian of $K$ and that the chosen longitude $\lambda$ is identified with the Seifert longitude of $K$; $P(K)$ is given by viewing $P$ as a knot in the glued-up 3-sphere $(S^1\times D^2)\cup (S^3\backslash\nu(K))$.   
 
We state the main theorem again below for the readers' convenience. Recall that any pattern knot can be represented by a doubly-pointed bordered Heegaard diagram \cite[Section 11.4]{LOT18}.
\MainTheorem*

Given a doubly pointed bordered Heegaard diagram $\mathcal{H}_{w,z}$ for the pattern knot, we will construct an arced bordered Heegaard diagram $\mathcal{H}_{X(P)}$; the Heegaard diagram $\mathcal{H}_{X(P)}$ specifies a bordered 3-manifold $X(P)$ with two boundary components\footnote{Strictly speaking, an arced bordered Heegaard diagram specifies a strongly bordered 3-manifold in the sense of Definition 5.1 in \cite{Lipshitz2015}, where there is also a framed arc in addition to the underlying bordered 3-manifold. This extra structure will not be relevant to us, so we will not specify it. }, where (1) the underlying 3-manifold is $S^1\times D^2\backslash \nu(P)$, (2) the parametrization of $\partial (S^1\times D^2)$ is the standard meridian-longitude parametrization, and (3) the parametrization of interior boundary $\partial (\nu(P))$ is given by a meridian of $P$ and some longitude of $P$. (The choice of the longitude of $P$ does not matter).

We describe how to obtain $\mathcal{H}_{X(P)}$ from the doubly pointed bordered Heegaard diagram $\mathcal{H}_{w,z}$. This is a standard construction, similar to the one appearing in \cite{LOT18} Section 11.7; the reader familiar with it may skip this paragraph and consult Figure \ref{Figure, obtaining H_P} for an overview.
\begin{figure}[htb!]
	\centering{
		\includegraphics[scale=0.48]{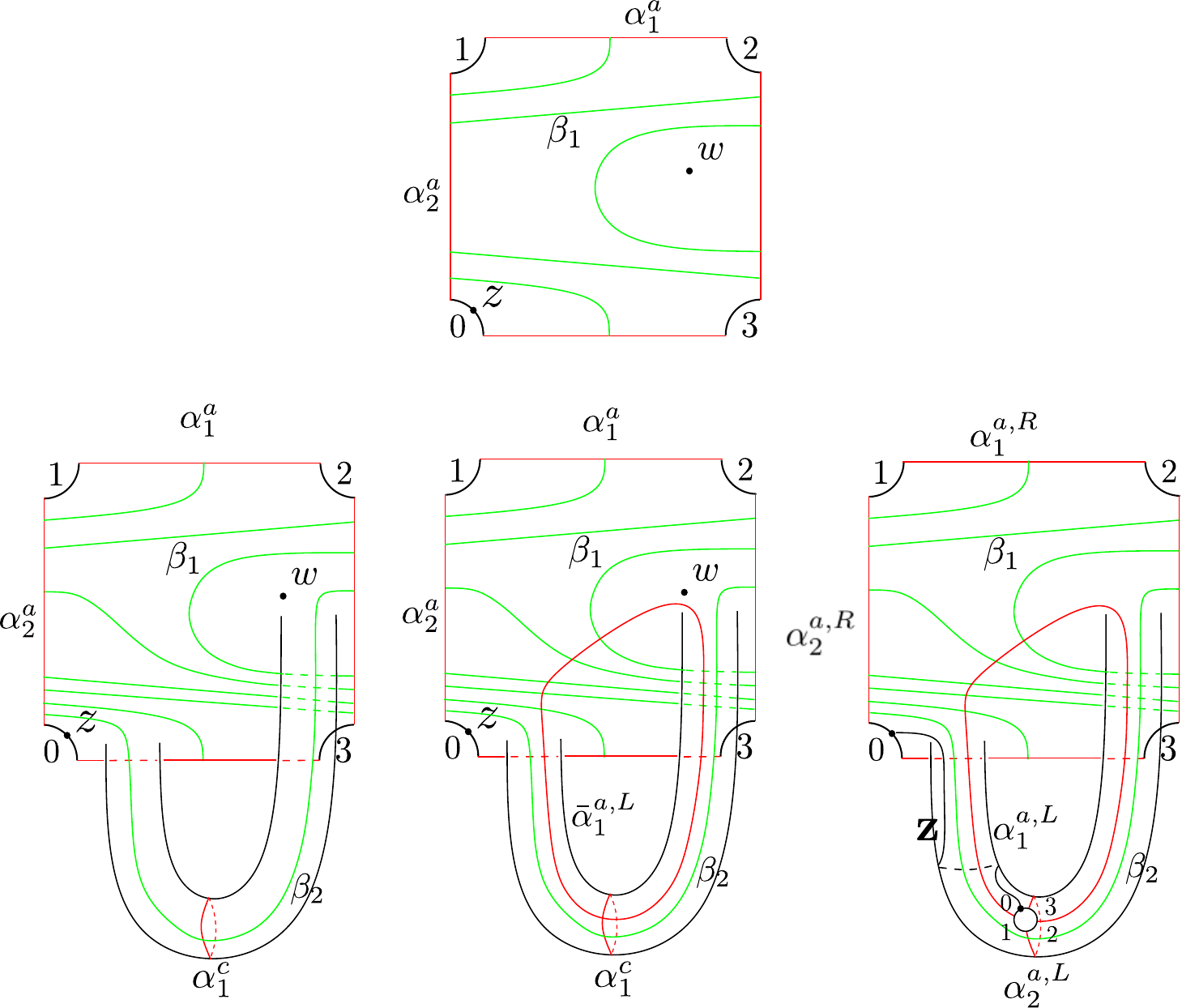}
		\caption{An example of obtaining $\mathcal{H}_{X(P)}$ from $\mathcal{H}_{w,z}$. Here, $\mathcal{H}_{w,z}$ is showed on the top row; it is a genus-one Heegaard diagram for the $(3,1)$-cable pattern. $\mathcal{H}_{X(P)}$ is the rightmost diagram on the second row.  }
		\label{Figure, obtaining H_P}
	}
\end{figure}
Assume $\mathcal{H}_{w,z}$ is of genus $g$. First, we stabilize $\mathcal{H}_{w,z}=(\bar{\Sigma},\bm{\bar{\alpha}},\bm{\beta},w,z)$ to get a new doubly pointed bordered Heegaard diagram $\mathcal{H}'_{w,z}=(\bar{\Sigma}',\bm{\bar{\alpha}}\cup\{\alpha^c_{g}\},\bm{\beta}\cup\{\beta_{g+1}\},w,z)$. More concretely, $\bar{\Sigma}'$ is obtained from $\bar{\Sigma}$ by attaching a two-dimensional one-handle, with feet near the base points $w$ and $z$. Parametrize the new one-handle by $S^1\times[0,1]$, where $S^1\times\{0\}$ is the feet circle near $z$, and $S^1\times\{1\}$ is the feet circle near $w$. We also parametrize $S^1$ by $[0,2\pi]/(0\sim 2\pi)$. The new $\alpha$-circle $\alpha^c_{g}$ is the belt circle $S^1\times\{1/2\}$ of the new one-handle. Let $p_1=(0,0)$ and $p_2=(0,1)$ be two points on the two feet circles of the one-handle. The new $\beta$ circle $\beta_{g+1}$ is the union of two arcs $l_1$ and $l_2$ connecting $p_1$ and $p_2$, where $l_1$ is an arc in $\bar{\Sigma}\backslash \bm{\beta}$ and $l_2$ is the arc $\{(0,t)|t\in[0,1]\}$ in new one-handle. Next, introduce a new curve $\bar{\alpha}_{1}^{a,L}$ as follows. Let $l_z$ be an arc from $z$ to the point $(-1,0)\in S^1\times\{0\}$ does not intersect any of the $\alpha$- and $\beta$-curves. Let $l_2'$ be the arc $\{(1,t)|t\in[0,1]\}$ in the one-handle; denote the endpoints of $l_2'$ by $p_1'$ and $p_2'$.  Let $l_1'$ be an arc connecting $p_1'$ and $p_2'$ in $\bar{\Sigma}\backslash \{\bm{\bar{\alpha}}\cup l_z\}$. Let $\bar{\alpha}_{1}^{a,L}=l_1'\cup l_2'$. Then $\bar{\alpha}_{1}^{a,L}$ intersects $\alpha^c_{g}$ geometrically once at a point $p$. Note $\alpha_g^c$ is the meridian of $P$, and $\bar{\alpha}_1^{a,L}$ is a longitude of $P$. Let $\bar{\Sigma}''$ be the circle compactification of $\bar{\Sigma}'\backslash\{p\}$. Denote the new boundary circle by $\partial_L\bar{\Sigma}''$, and denote the boundary circle inherited from $\partial\bar{\Sigma}$ by $\partial_R\bar{\Sigma}''$. Let ${\alpha}_{1}^{a,L}=\bar{\alpha}_{1}^{a,L}\backslash\{p\}$, and let ${\alpha}_{2}^{a,L}=\alpha^c_{g}\backslash\{p\}$. Let $\alpha_1^{a,R}=\alpha_1^a$, and let $\alpha_2^{a,R}=\alpha_2^a$. Let $\bm{\bar{\alpha}}''=\{\alpha^{a,L}_1,\alpha^{a,L}_2,\alpha^{a,R}_1,\alpha^{a,R}_2,\alpha^c_1,\ldots,\alpha^c_{g-1}\}$. Let $\bm{\beta}''=\bm{\beta}\cup\{\beta_{g+1}\}$. Label the Reeb chords corresponding to the new boundary circle $\partial_L\bar{\Sigma}''$ by $\sigma_i$ ($i=0,1,2,3$) so that $\sigma_2$ and $\sigma_3$ lie on the side attached to the feet near $w$, and $\sigma_0$ and $\sigma_1$ lie on the side attached to the feet near $z$. Let $z_R=z$, and let $z_L$ be a point on $\sigma_0$.  Let $\bm{z}$ be an arc connecting $z_R$ and $z_L$ in the complement of $\bm{\bar{\alpha}}'' \cup \bm{\beta}''$; $\bm{z}$ exists since we can obtain such an arc by extending $l_z$. Finally, we let $\mathcal{H}_{X(P)}=(\bar{\Sigma}'',\bm{\bar{\alpha}}'',\bm{\beta}'',\bm{z})$. See Figure \ref{Figure, obtaining H_P}.

\begin{lem}\label{Lemma, bi-admissibility of H_P(alpha_im)}
Let $\mathcal{H}_{X(P)}$ be the arced bordered Heegaard diagram obtained from $\mathcal{H}_{w,z}$ via the above procedure. Let $\alpha_{im}$ be a $z$-adjacent multicurve. Then $\mathcal{H}_{X(P)}(\alpha_{im})$ is unobstructed and bi-admissible.
\end{lem}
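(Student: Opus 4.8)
The plan is to bootstrap Lemma~\ref{Lemma, bi-admissibility of H_P(alpha_im)} from Proposition~\ref{Proposition, paring diagrams are admissible} together with the bi-admissibility portion of Proposition~\ref{Proposition, H_w,z(alpha_im) is bi-admissible and unobstructed}. The key point is that $\mathcal{H}_{X(P)}(\alpha_{im})$ is, by construction, the pairing diagram of the arced bordered Heegaard diagram $\mathcal{H}_{X(P)}$ with the $z$-adjacent multicurve $\alpha_{im}$, so the unobstructedness half is almost immediate once one checks the hypotheses of Proposition~\ref{Proposition, paring diagrams are admissible} are met. First I would verify that $\mathcal{H}_{X(P)}$ is left provincially admissible: this follows from the stabilization-and-drilling construction, since one may first put $\mathcal{H}_{w,z}$ (and hence $\mathcal{H}_{X(P)}$) in a nice position by isotoping and handlesliding the $\beta$-curves (the drilling and the new handle are performed away from where this matters), and nice arced bordered diagrams are automatically left provincially admissible. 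Invoking Proposition~\ref{Proposition, paring diagrams are admissible} then gives that $\mathcal{H}_{X(P)}(\alpha_{im})$ is unobstructed and provincially admissible.

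\textbf{Upgrading to bi-admissibility.} The remaining work is to promote provincial admissibility to bi-admissibility, i.e.\ to show that every nontrivial periodic domain $B$ with $n_{-\rho_0}(B)=n_{-\rho_1}(B)=0$ or $n_{-\rho_2}(B)=n_{-\rho_3}(B)=0$ has both signs of local multiplicity. Here I would exploit the special geometry coming from the fact that $\mathcal{H}_{X(P)}$ was built by stabilizing a \emph{doubly-pointed} diagram: the two Reeb chords $\sigma_0,\sigma_1$ on $\partial_L\bar\Sigma''$ lie on the handle foot near $z$, and $\sigma_2,\sigma_3$ lie on the foot near $w$. After collapsing $\partial_R\bar\Sigma''$ to form the pairing diagram, the regions adjacent to $-\rho_0,-\rho_1$ (the ``$z$-side'') and the regions adjacent to $-\rho_2,-\rho_3$ (the ``$w$-side'') sit in a neighborhood of the belt circle $\alpha^c_g$ and the new $\beta$-curve $\beta_{g+1}$. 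The point is that a periodic domain with $n_{-\rho_0}=n_{-\rho_1}=0$ is, on the level of the pattern side, a periodic domain that avoids the $z$-basepoint of $\mathcal{H}_{w,z}$, and similarly one avoiding $-\rho_2,-\rho_3$ avoids $w$; by the symmetry of $w$ and $z$ in the construction, both cases reduce to the analysis of periodic domains of the original doubly-pointed pairing diagram $\mathcal{H}_{w,z}(\alpha_{im})$ missing one of the two basepoints, which is exactly the content handled in the proof of Proposition~\ref{Proposition, H_w,z(alpha_im) is bi-admissible and unobstructed}. Concretely I would: (i) use the collapsing map to push such a $B$ forward to a periodic domain $\Psi(B)$ on $\mathcal{H}_{X(P)}$; (ii) argue that the hypothesis $n_{-\rho_i}(B)=0$ forces $\Psi(B)$ to avoid the corresponding base point of $\mathcal{H}_{w,z}$ and to be left provincial; (iii) conclude from left provincial admissibility of $\mathcal{H}_{X(P)}$ and the compatibility of Euler measures (Proposition~\ref{Proposition, relating domains in two different Heegaard diagrams}) that $\Psi(B)=0$, so $\partial B$ contains no $\beta$-curves and $B$ is a zero-cornered $\alpha$-bounded domain, which is then excluded (or forced to be somewhere-negative) by the already-established unobstructedness.

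\textbf{Main obstacle.} The delicate step will be (ii): carefully matching the normal-crossing data of the new boundary $\partial_L\bar\Sigma''$ against the two base points $w,z$ of $\mathcal{H}_{w,z}$, so that the condition ``$n_{-\rho_0}(B)=n_{-\rho_1}(B)=0$'' really does translate into a multiplicity-zero condition at $z$ on the pattern side (and symmetrically for $w$). This requires tracking exactly how the stabilization one-handle, the auxiliary arc $\bar\alpha_1^{a,L}$, and the arc $l_z$ sit relative to the feet circles, and checking that the periodic-domain multiplicities near the belt circle $\alpha^c_g$ are governed by the basepoint multiplicities as claimed. Once this bookkeeping is in place, everything else is an application of results already proved: Proposition~\ref{Proposition, paring diagrams are admissible} for unobstructedness, and the collapsing-map argument (as in the proofs of Propositions~\ref{Proposition, paring diagrams are admissible} and~\ref{Proposition, H_w,z(alpha_im) is bi-admissible and unobstructed}) for bi-admissibility. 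I would also remark that, because $\alpha_{im}$ is assumed $z$-adjacent, Lemma~\ref{Lemma, constraints on positive domains bounded by one-cornered subloop} is available throughout to control the elementary sub-arcs appearing in $\partial B$, which is what makes the reduction to the zero-cornered case go through.
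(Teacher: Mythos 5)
Your high-level reduction is the right one, and in fact the sentence in your middle paragraph --- that a periodic domain of $\mathcal{H}_{X(P)}(\alpha_{im})$ with $n_{\sigma_0}=n_{\sigma_1}=0$ (resp.\ $n_{\sigma_2}=n_{\sigma_3}=0$) corresponds under the stabilization to a periodic domain of $\mathcal{H}_{w,z}(\alpha_{im})$ with $n_z=0$ (resp.\ $n_w=0$), so that the bi-admissibility of Proposition \ref{Proposition, H_w,z(alpha_im) is bi-admissible and unobstructed} finishes the job --- is exactly the paper's proof. The problem is that your ``concretely'' steps (i)--(iii) then abandon this route for a different mechanism that does not work. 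In step (ii) you claim that $n_{\sigma_0}(B)=n_{\sigma_1}(B)=0$ forces the collapsed domain $\Psi(B)$ to be left provincial. It does not: vanishing of the multiplicities in the two regions adjacent to $\sigma_0,\sigma_1$ only forces the $\alpha$-arc separating them to be absent from $\partial B$; the multiplicities at $\sigma_2$ and $\sigma_3$ can still be any common nonzero value (this is precisely what happens for the Seifert-surface periodic domain, whose boundary contains one longitudinal $\alpha$-arc). So left provincial admissibility of $\mathcal{H}_{X(P)}$ cannot be invoked to conclude $\Psi(B)=0$, and step (iii) collapses with it. Note also that the lemma makes no left-provincial-admissibility hypothesis on $\mathcal{H}_{X(P)}$, and the paper's proof never needs one; arranging it by making the diagram nice would require isotoping $\beta$, i.e.\ changing the diagram the lemma is about. (Your detour through left provincial admissibility is likewise unnecessary for the unobstructedness half: the unobstructedness conclusion of Proposition \ref{Proposition, paring diagrams are admissible} needs only $z$-adjacency of $\alpha_{im}$; left provincial admissibility is a hypothesis only for the provincial-admissibility conclusion.)

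To repair the argument, simply carry out the reduction you already stated: the handle of the stabilization has feet at $w$ and $z$, so filling in the $\sigma$-puncture and destabilizing identifies periodic domains of $\mathcal{H}_{X(P)}(\alpha_{im})$ satisfying $n_{\sigma_0}=n_{\sigma_1}=0$ (resp.\ $n_{\sigma_2}=n_{\sigma_3}=0$) with periodic domains of the closed diagram $\mathcal{H}_{w,z}(\alpha_{im})$ satisfying $n_z=0$ (resp.\ $n_w=0$), positivity being preserved. Proposition \ref{Proposition, H_w,z(alpha_im) is bi-admissible and unobstructed} (whose own proof shows every such periodic domain is a zero-cornered $\alpha$-bounded domain, hence somewhere negative by admissibility of $\alpha_{im}$) then gives bi-admissibility in the sense of Definition \ref{Definition, bi-admissibility}. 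No appeal to the collapsing map onto $\mathcal{H}_{X(P)}$, to Proposition \ref{Proposition, relating domains in two different Heegaard diagrams}, or to left provincial admissibility is needed.
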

\begin{proof}
The unobstructedness follows from Proposition \ref{Proposition, paring diagrams are admissible}. We move to see bi-admissibility in the sense of Definition \ref{Definition, bi-admissibility}. Note that periodic domains $B$ for $\mathcal{H}_{X(P)}(\alpha_{im})$ with $n_{\sigma_0}(B)=n_{\sigma_1}(B)=0$ (respectively $n_{\sigma_2}(B)=n_{\sigma_3}(B)=0$) correspond to periodic domains $B'$ for $\mathcal{H}_{w,z}(\alpha_{im})$ with $n_z(B')=0$ (respectively $n_w(B')=0$). Therefore, the bi-admissibility of $\mathcal{H}_{w,z}(\alpha_{im})$, which was shown in Proposition \ref{Proposition, H_w,z(alpha_im) is bi-admissible and unobstructed}, implies the bi-admissibility of $\mathcal{H}_{X(P)}$.  
\end{proof}

Recall $\mathcal{H}_{id}$ is the standard doubly pointed bordered Heegaard diagram for the identity pattern knot. 

\begin{lem}\label{Lemma, handleslides and destabilization diagrams}
$CFK_{\mathcal{R}}(\mathcal{H}_{w,z}(\alpha_{im}))$ is chain homotopy equivalent to $CFK_{\mathcal{R}}(\mathcal{H}_{id}\cup\mathcal{H}_{X(P)}(\alpha_{im}))$.
\end{lem}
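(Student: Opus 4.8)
The plan is to show that $\mathcal{H}_{id}\cup\mathcal{H}_{X(P)}(\alpha_{im})$ can be transformed into $\mathcal{H}_{w,z}(\alpha_{im})$ by a sequence of Heegaard moves — specifically handleslides and a destabilization — and then invoke Proposition \ref{Proposition, invariance of CFK_R}, which guarantees that the bigraded chain homotopy type of $CFK_{\mathcal{R}}$ is preserved under these moves. The only subtlety is that Proposition \ref{Proposition, invariance of CFK_R} requires the diagrams to be unobstructed, bi-admissible, and gradable; unobstructedness and bi-admissibility of $\mathcal{H}_{X(P)}(\alpha_{im})$ (hence of its glued-up version with $\mathcal{H}_{id}$, by the argument in the proof of Theorem \ref{Theorem, 2nd pairing theorem}) are supplied by Lemma \ref{Lemma, bi-admissibility of H_P(alpha_im)}, and one checks that handleslides and destabilizations preserve these properties, as recorded in the remark preceding Proposition \ref{Proposition, invariance of CFK_R}. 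Gradability holds because we are assuming throughout that the relevant diagrams are gradable (or this can be checked directly for the pairing diagrams from the linear independence of the attaching curves).

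First I would set up the combined diagram $\mathcal{H}_{id}\cup\mathcal{H}_{X(P)}(\alpha_{im})$ explicitly. Recall $\mathcal{H}_{X(P)}$ was obtained from $\mathcal{H}_{w,z}$ by a stabilization (adding a one-handle with the new belt circle $\alpha_g^c$, which is the meridian of $P$, and a new $\beta$-circle $\beta_{g+1}$) followed by drilling out a point $p$ on $\alpha_g^c\cap\bar\alpha_1^{a,L}$ to create the left boundary $\partial_L\bar\Sigma''$. Gluing $\mathcal{H}_{id}$ along $\partial_L$ caps off this boundary: the two $\alpha$-arcs $\alpha_1^{a,L},\alpha_2^{a,L}$ get completed by the arcs of $\mathcal{H}_{id}$ into closed curves, and $\mathcal{H}_{id}$ contributes one new generator-carrying $\beta$-curve and the basepoints $w,z$ in the standard position. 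After this gluing, the curve $\alpha_2^{a,L}$ (which was $\alpha_g^c$ punctured at $p$) is reunited with the complementary arc from $\mathcal{H}_{id}$ to form a closed curve intersecting the $\mathcal{H}_{id}$ $\beta$-curve once, and $\alpha_1^{a,L}$ likewise closes up to a longitude-type curve.

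Next I would identify the concrete sequence of moves. The curve from $\mathcal{H}_{id}$ that closes up $\alpha_1^{a,L}$, together with the piece of $\beta_{g+1}$ running through the stabilization one-handle, sets up a cancelling pair: there is a generator (an intersection point of the capped-off $\alpha_2^{a,L}=$ meridian-of-$P$ curve with the $\mathcal{H}_{id}$ $\beta$-curve) which, after handlesliding the other $\beta$-curves off the region near the one-handle, can be destabilized. The effect of this destabilization is precisely to undo the stabilization that produced $\mathcal{H}_{X(P)}$ from $\mathcal{H}_{w,z}$ in the first place, and the capping by $\mathcal{H}_{id}$ precisely reverses the drilling. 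What remains after the destabilization is the diagram obtained from $\mathcal{H}_{w,z}$ by filling in its boundary and replacing the alpha arcs by the immersed curve $\alpha_{im}$ glued in near the filled-in boundary region — that is, $\mathcal{H}_{w,z}(\alpha_{im})$. A possible subtlety: the handleslides needed to clear $\beta$-curves away from the one-handle must be handleslides over \emph{embedded} $\alpha$-curves (or $\beta$-curves), never over $\alpha_{im}$, but this is automatic here since the cancelling pair involves only the embedded meridian curve; I would remark on this explicitly. I would also note, following the remark before Proposition \ref{Proposition, invariance of CFK_R}, that each intermediate diagram remains unobstructed, bi-admissible, and gradable, since these properties are preserved by handleslides over embedded curves and by (de)stabilization.

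The main obstacle I anticipate is purely bookkeeping: carefully verifying that the composite of the stabilization-drilling construction of $\mathcal{H}_{X(P)}$ and the capping by $\mathcal{H}_{id}$ is literally inverse to a single destabilization (preceded by harmless handleslides), i.e., matching up the belt circles, the arcs $l_1,l_2,l_1',l_2'$, the basepoint arc $\bm z$ extending $l_z$, and the position of the immersed curve $\alpha_{im}$ (which sits in a neighborhood of the now-filled boundary in exactly the position prescribed by the pairing construction of Definition \ref{Definition, pairing diagram}). This is the kind of verification best done by tracking the diagram through Figure \ref{Figure, obtaining H_P} and Figure \ref{Figure, proof strategy} rather than by formula; I would present it as a step-by-step picture argument. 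Once the diagrams are matched, the conclusion is immediate from Proposition \ref{Proposition, invariance of CFK_R}.
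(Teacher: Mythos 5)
Your overall strategy---relate the two diagrams by handleslides and destabilizations that never slide a curve over $\alpha_{im}$, then invoke Proposition \ref{Proposition, invariance of CFK_R}---is exactly the paper's, and the inputs you cite for admissibility (Lemma \ref{Lemma, bi-admissibility of H_P(alpha_im)} together with the observation that these Heegaard moves preserve unobstructedness, bi-admissibility and gradability) are the same ones the paper uses.

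However, your count of the moves is off, and as written the claimed sequence cannot work. If $\mathcal{H}_{w,z}$ has genus $g$, the surface of $\mathcal{H}_{id}\cup\mathcal{H}_{X(P)}(\alpha_{im})$ has genus $g+2$: the stabilization in the construction of $\mathcal{H}_{X(P)}$ adds one, and gluing the genus-one bordered diagram $\mathcal{H}_{id}$ along $\partial_L$ adds one more---capping with $\mathcal{H}_{id}$ is not the same as filling the drilled puncture with a disk, since $\mathcal{H}_{id}$ carries its own handle, $\beta$-circle and basepoints. Since $\mathcal{H}_{w,z}(\alpha_{im})$ has genus $g$, a single destabilization cannot connect the two diagrams; two are required, and the paper performs them in sequence. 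First one cancels the $\beta$-circle coming from $\mathcal{H}_{id}$ (the one lying between $w$ and $z$) against the $\alpha$-circle it meets geometrically once, after sliding the other $\beta$-curves over it; this lands on an intermediate diagram that is a single stabilization of $\mathcal{H}_{w,z}(\alpha_{im})$. Then one cancels the pair $(\alpha_{g+1},\beta_{g+1})$ created in the stabilization step of the construction of $\mathcal{H}_{X(P)}$, after sliding the other $\alpha$-curves (all embedded, so this is legitimate) over $\alpha_{g+1}$. With that correction the rest of your argument goes through unchanged.
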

\begin{proof}
Note the doubly pointed Heegaard diagram $\mathcal{H}_{id}\cup \mathcal{H}_{X(P)}(\alpha_{im})$ is obtained from $\mathcal{H}_{w,z}(\alpha_{im})$ by two stabilizations; see Figure \ref{Figure, double stabilization}. 
\begin{figure}[htb!]
	\centering{
		\includegraphics[scale=0.40]{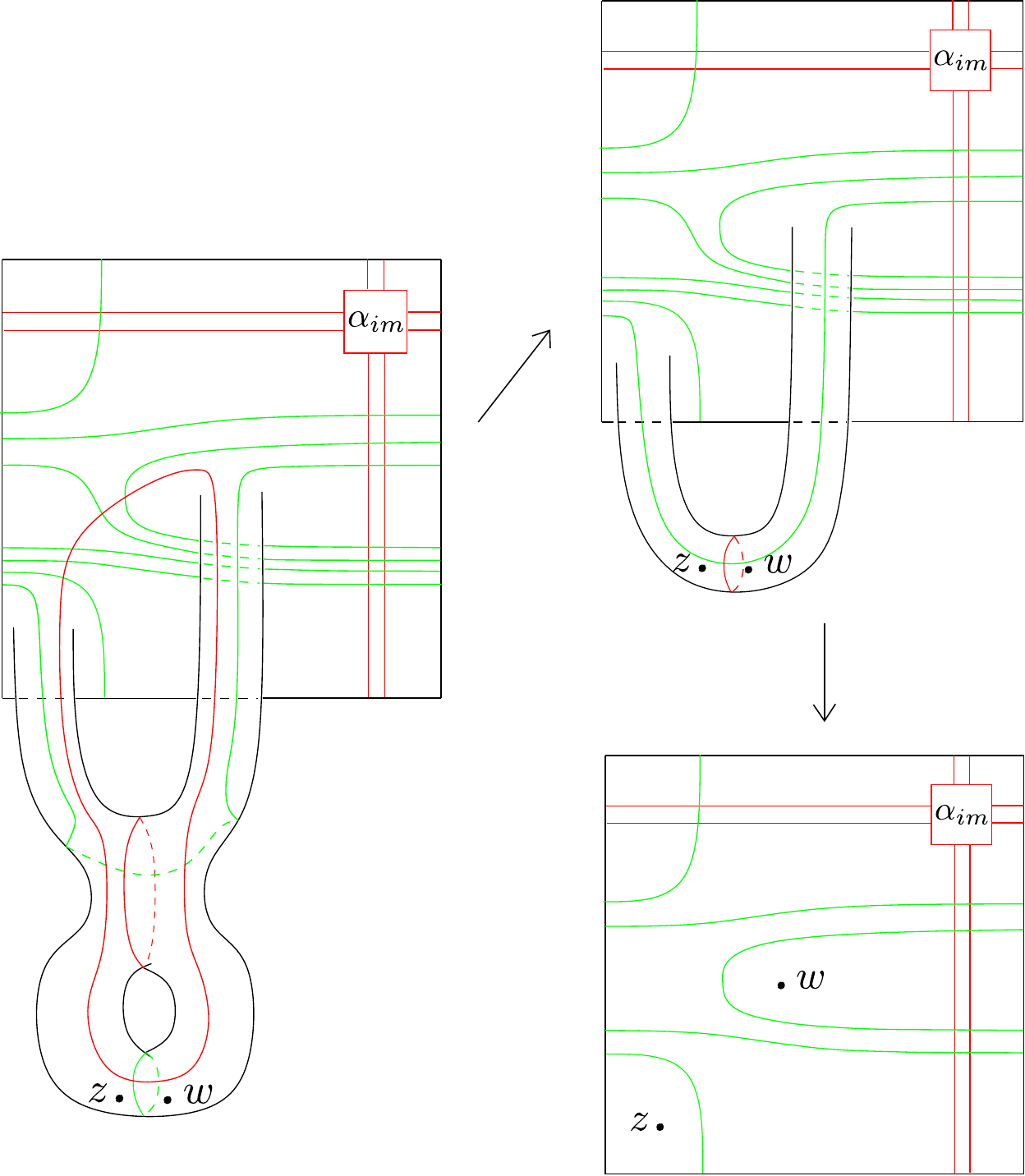}
		\caption{An example of $\mathcal{H}_{id}\cup\mathcal{H}_{X(P)}(\alpha_{im})$ (left) and $\mathcal{H}_{w,z}(\alpha_{im})$ (lower right). Here, $P$ is the $(3,1)$-cable. These two diagrams are related via handleslides and destabilizations, where the handleslides do not involve sliding over the immersed $\alpha$-curve.}
		\label{Figure, double stabilization}
	}
\end{figure}
In particular, it is also bi-admissible, and hence one can define
$CFK_{\mathcal{R}}(\mathcal{H}_{id}\cup\mathcal{H}_{X(P)}(\alpha_{im}))$. We claim there is a sequence of Heegaard moves relating $\mathcal{H}_{id}\cup\mathcal{H}_{X(P)}(\alpha_{im})$ and $\mathcal{H}_{w,z}(\alpha_{im})$ which do not involve sliding $\alpha$ curves over $\alpha_{im}$. To see this, note that on $\mathcal{H}_{id}\cup\mathcal{H}_{X(P)}(\alpha_{im})$ there is a $\beta$-circle between the $w$ and $z$ base points that intersects an $\alpha$-circle geometrically once; denote these curves by $\beta_{g+2}$ and $\alpha_{g+2}$ respectively. After sliding other beta curves over $\beta_{g+2}$ if necessary, we may assume $\alpha_{g+2}$ does not intersect other beta curves, and hence we can destabilize $\mathcal{H}_{id}\cup\mathcal{H}_{X(P)}(\alpha_{im})$ along $\alpha_{g+2}$ and $\beta_{g+2}$. Now we arrive at an intermediate Heegaard diagram; see Figure \ref{Figure, double stabilization} (upper right). It is a stabilization of $\mathcal{H}_{w,z}(\alpha_{im})$. On this intermediate Heegaard diagram, there is an $\alpha$-circle $\alpha_{g+1}$ that intersects only one $\beta$-circle $\beta_{g+1}$, and the geometric intersection number is one. So, we may slide other $\alpha$-curves over $\alpha_{g+1}$ if necessary so that $\beta_{g+1}$ do not intersect other $\alpha$-curves. After this, we destabilize the Heegaard diagram along $\alpha_{g+1}$ and $\beta_{g+1}$, and the resulting Heegaard diagram is $\mathcal{H}_{w,z}(\alpha_{im})$. The homotopy equivalence between $CFK_{\mathcal{R}}(\mathcal{H}_{w,z}(\alpha_{im}))$ and $CFK_{\mathcal{R}}(\mathcal{H}_{id}\cup\mathcal{H}_{X(P)}(\alpha_{im}))$ follows from the homotopy invariance of knot Floer chain complexes established in Proposition \ref{Proposition, invariance of CFK_R}.
\end{proof}

With these lemmas at hand, we now prove the ungraded version of Theorem \ref{Theorem, main theorem}.
\begin{proof}[Proof of Theorem \ref{Theorem, main theorem}, ungraded version]
In view of Proposition \ref{Proposition, finger move independence}, we may assume the immersed multicurve $\alpha_K$ for the knot complement of $K$ is z-adjacent. Let $\mathcal{H}_{X(P)}$ be the arced bordered Heegaard diagram obtained from $\mathcal{H}_{w,z}$ via the ``punctured-stabilization procedure". Throughout, when referring to the type D structure of a knot complement, we use the meridian and Seifert longitude to parametrize the boundary. By standard arguments, we can arrange that  $\mathcal{H}_{X(P)}$ is left provincially admissible at the cost of isotopy of the $\beta$ curves. By Theorem \ref{Theorem, paring theorem via nice diagrams}, we have 
\begin{displaymath}
\begin{aligned}
\widehat{CFD}(\mathcal{H}_{X(P)}(\alpha_K))&\cong \widehat{CFDA}(\mathcal{H}_{X(P)})\boxtimes \widehat{CFD}(\alpha_K)\\
&\cong\widehat{CFDA}(\mathcal{H}_{X(P)})\boxtimes \widehat{CFD}(S^3 \backslash \nu(K))\\
&\cong \widehat{CFD}(S^3\backslash \nu(P(K)))
\end{aligned}
\end{displaymath}

Therefore, up to homotopy equivalence, the extended type D structure $\widetilde{CFD}(\mathcal{H}_{X(P)}(\alpha_K))$ extends $\widehat{CFD}(S^3\backslash \nu(P(K)))$. Consequently, we have the following:
\begin{displaymath}
\begin{aligned}
CFK_{\mathcal{R}}(P(K))&\cong \widetilde{CFA}(\mathcal{H}_{id}){\boxtimes}\widetilde{CFD}(S^3\backslash \nu(P(K)))\\
&\cong \widetilde{CFA}(\mathcal{H}_{id}){\boxtimes}\widetilde{CFD}(\mathcal{H}_{X(P)}(\alpha_K))\\
&\cong CFK_{\mathcal{R}}(\mathcal{H}_{id}\cup \mathcal{H}_{X(P)}(\alpha_K))
\end{aligned}
\end{displaymath}
Here, the last equality follows from applying Theorem \ref{Theorem, 2nd pairing theorem}.
Note ${\boxtimes}$ in the above equation is well-defined since $\mathcal{H}_{X(P)}(\alpha_K)$ is bi-admissible by Lemma \ref{Lemma, bi-admissibility of H_P(alpha_im)}. Now, by Lemma \ref{Lemma, handleslides and destabilization diagrams}, $CFK_{\mathcal{R}}(P(K))$ is chain homotopy equivalent to $CFK_{\mathcal{R}}(\mathcal{H}_{w,z}(\alpha_K))$. 
\end{proof}

\subsection{$\mathcal{H}_{w,z}(\alpha_{K})$ is gradable}
We want to show that the chain homotopy equivalence established in the previous subsection preserves the $w$-grading and $z$-grading of knot Floer chain complexes. As the first step, we need to show that $\mathcal{H}_{w,z}(\alpha_{K})$ is gradable (in the sense of Definition \ref{Definition, gradable doubly-pointed diagram}).

\begin{prop}\label{Proposition, Maslov index of periodic domain}
The diagram $\mathcal{H}_{w,z}(\alpha_{K})$ is gradable.
\end{prop}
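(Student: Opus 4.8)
The plan is to reduce gradability of $\mathcal{H}_{w,z}(\alpha_K)$ to properties of the constituent pieces, exploiting the fact that periodic domains in a pairing diagram are severely constrained. First I would recall, as in the proof of Proposition \ref{Proposition, H_w,z(alpha_im) is bi-admissible and unobstructed}, that because the homology classes $[\alpha_1],\ldots,[\alpha_{g-1}],[\alpha_{im}^0],[\beta_1],\ldots,[\beta_g]$ are linearly independent in $H_1(\Sigma;\mathbb{Z})$ (just as for a Heegaard diagram of $S^3$), any periodic domain $P$ of $\mathcal{H}_{w,z}(\alpha_K)$ must have $\partial P$ equal to a multiple of a single homologically-trivial component $\alpha_{im}^i$ of $\alpha_{im}$; in particular $\partial P$ contains no $\beta$-curves and no embedded $\alpha$-curves. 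So every periodic domain is a zero-cornered $\alpha$-bounded domain, and since $w$ and $z$ lie in the same $\alpha$-region, $n_w(P)=n_z(P)$; hence it suffices to prove $\mathrm{ind}(P)-2n_z(P)=0$.

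Next I would compute $\mathrm{ind}(P)$ directly from Definition \ref{Definition, embedded Euler Char, index, and moduli space}. Since $\partial P$ is a closed loop in $\alpha_{im}$ with no east boundary and no corners, $\overrightarrow{\rho}$ is empty, $\iota(\overrightarrow{\rho_\star})=0$, $n_{\bm{x}}(P)=n_{\bm{y}}(P)=n_{\bm{x}}(P)$ for the relevant generator data is irrelevant (periodic domains connect a generator to itself, so $n_{\bm{x}}(P)+n_{\bm{y}}(P)=2n_{\bm{x}}(P)$), and $s(\partial_{\alpha_{im}}P)$ is the signed self-intersection count of the loop $\partial_{\alpha_{im}}P$. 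Thus $\mathrm{ind}(P)=e(P)+2n_{\bm{x}}(P)-s(\partial_{\alpha_{im}}P)$, where $\bm{x}$ is any generator. The crucial point is then that such a periodic domain, by the structure analysis above, lives entirely in (a genus-one piece isomorphic to) the marked torus after splitting $\Sigma = E_1 \# E_2$ along the separating circle $C$ as in the proof of Proposition \ref{Proposition, paring diagrams are admissible}, since it avoids all embedded $\alpha$-curves and $\beta$-curves. So one reduces to the statement: for a periodic domain $D$ in the marked torus bounded by a multiple of a homologically-trivial curve $\alpha_{im}^i$, one has $e(D) - s(\partial_{\alpha_{im}}D) = 2 n_z(D)$, where $n_z$ is with respect to the base point (equivalently, the multiplicity that would be $n_w = n_z$ on the closed diagram).

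To establish this local identity I would argue that such a $D$ is, up to sign, $k\cdot D_0$ where $D_0$ is the unique periodic domain bounded by $\alpha_{im}^i$ (nullhomologous in $T^2$, so it bounds), and then check the identity for $D_0$ and observe both sides are quadratic in the natural sense under passing to multiples: $e(k D_0) = k\, e(D_0)$ is linear, while $s(\partial_{\alpha_{im}}(k D_0))$ picks up $k^2$-type behavior from the jittered self-intersection count—here I would invoke Equation (\ref{Equation, second equation in the proof of additivity of index}) iteratively, or simply note that for a genuinely embedded representative $k=1$ and $s(\partial_{\alpha_{im}}D_0)$ counts the immersed double points of the boundary curve. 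The cleanest route, which I expect to be the main obstacle to write carefully, is to compare with a known gradable diagram: the diagram $\mathcal{H}_{id}\cup \mathcal{H}_{X(P)}(\alpha_K)$ is obtained from a genuine Heegaard diagram for the satellite knot complement (whose type D structure computes $CFK_\mathcal{R}(P(K))$, which is gradable by \cite{LOT18}), and by Lemma \ref{Lemma, handleslides and destabilization diagrams} it is related to $\mathcal{H}_{w,z}(\alpha_K)$ by handleslides and (de)stabilizations not involving $\alpha_{im}$, all of which preserve $\mathrm{ind}(P)$, $n_z(P)$, $n_w(P)$ as noted in the remark preceding Proposition \ref{Proposition, invariance of CFK_R}. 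Thus gradability of $\mathcal{H}_{w,z}(\alpha_K)$ would follow once gradability of $\mathcal{H}_{id}\cup\mathcal{H}_{X(P)}(\alpha_K)$ is known, and the latter reduces—via the correspondence between its periodic domains and those of $\mathcal{H}_{X(P)}(\alpha_K)$—to a statement about the pairing bordered diagram, where the embedded index formula Proposition \ref{Proposition, embedded Euler characteristic} and the additivity Proposition \ref{Proposition, embedded index formula is additive} together with the grading conventions for the type D side (matching the immersed-curve picture of \cite{hanselman2016bordered}) give $\mathrm{ind}(P) = 2 n_z(P)$ for provincial periodic domains. I would carry out the direct torus computation as the primary argument and mention the comparison with the closed satellite diagram as a consistency check; the genuinely delicate bookkeeping is tracking the $s(\partial_{\alpha_{im}})$ term and confirming it exactly absorbs the discrepancy between $e(P)$ and $2n_z(P)$ that is present precisely because $\alpha_{im}$ is immersed.
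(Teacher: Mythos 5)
Your opening reduction is the same as the paper's: homological independence of the attaching curves forces $\partial P$ to be a multiple of a single nullhomologous component of $\alpha_K$ (or $P=k[\Sigma]$), so $P$ is a stabilization of a domain $P'$ in the genus-one summand, $n_w(P)=n_z(P)$, and everything comes down to showing $\mathrm{ind}(P')-2n_z(P')=0$ in the marked torus. Where you diverge is in how that torus identity is established, and this is where the proposal has a genuine gap. The paper does not attack $e$, $n_{\bm x}$, and $s$ separately; it first proves (Proposition \ref{Proposition, identify two ways to compute the Maslov index for genus one diagram}, via Proposition \ref{Proposition, index equals to net rotation} and the Reidemeister-move analysis of cornered immersed loops) that $\mathrm{ind}(B)$ equals the net rotation number of $\partial B$, so that $\mathrm{ind}(B)-2n_z(B)$ is exactly the Maslov grading difference $m(x)-m(y)$ of \cite{Hanselman2022}; for a periodic domain $x=y$ and the difference is $0$. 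You never invoke this identification, and without it the ``delicate bookkeeping'' you defer is exactly where the argument breaks.

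Concretely: your target identity $e(D)-s(\partial_{\alpha_{im}}D)=2n_z(D)$ drops the $2n_{\bm x}(D)$ term that you yourself included two sentences earlier when unwinding Definition \ref{Definition, embedded Euler Char, index, and moduli space}, and, more seriously, your plan for multiples $kD_0$ is internally inconsistent. You note that $e(kD_0)$ and $n_z(kD_0)$ are linear in $k$ while, by iterating Equation (\ref{Equation, second equation in the proof of additivity of index}), $s$ picks up cross terms $\partial D_0\cdot\partial D_0$ and grows quadratically; if that were the right count, your identity could not hold for all $k$ unless $s(\partial D_0)=0$, which fails for any component with self-intersections. The resolution is that $\partial_{\alpha_{im}}(kD_0)$ must be treated as a single $k$-fold traversal (the index is only defined for single-component $\alpha_{im}$-boundaries), for which the self-intersection count is again affine-linear in $k$ by a Whitney-type rotation-number formula --- and that is precisely the content of the net-rotation identification you omit. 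Your fallback route via $\mathcal{H}_{id}\cup\mathcal{H}_{X(P)}(\alpha_K)$ and the type-D gradings is also not available at this stage: the grading comparison with the bordered side (Theorem \ref{Theorem, graded isomorphism of gCFKminus}) presupposes gradability so that the gradings of Definition \ref{Definition, w- and z- gradings} are well defined, its proof in any case bottoms out in the same genus-one computation, and Proposition \ref{Proposition, relating domains in two different Heegaard diagrams} applies only to positive domains with $n_z=0$, which periodic domains bounded by nullhomologous components are not.
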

\noindent In addition to being gradable, note that the results in the previous subsection also imply that $\widehat{HF}(\mathcal{H}_{w}(\alpha_{K}))\cong \widehat{HF}(\mathcal{H}_{z}(\alpha_{K}))\cong \mathbb{F}$. Therefore we can define an absolute bigrading on $CFK_{\mathcal{R}}(\mathcal{H}_{w,z}(\alpha_{K}))$.

We will reduce the proof of Proposition \ref{Proposition, Maslov index of periodic domain} to the case where $\mathcal{H}_{w,z}$ is of genus one. If $\mathcal{H}_{w,z}$ is a genus-one bordered Heegaard diagram, then one can define a Maslov grading $m(-)$ on $CFK_{\mathcal{R}}(\mathcal{H}_{w,z}(\alpha_{K}))$ as follows. Given any two generators $x$ and $y$, let $p_0$ and $p_1$ be two paths from $x$ to $y$ in $\alpha_K$ and $\beta$ respectively such that $p_0-p_1$ lifts to a closed path $\gamma$ in the universal cover $\mathbb{R}^2$ of the genus-one Heegaard surface. Up to perturbing the curves, we may assume that $p_0$ and $p_1$ intersect in right angles at $x$ and $y$. Then $m(x)-m(y)$ is equal to $\frac{1}{\pi}$ times the total counterclockwise rotation along the smooth segments of $\gamma$ minus twice the number of the (lifts of) base point $z$ enclosed by $\gamma$; see \cite[Definition 35]{Hanselman2022}. This Maslov grading is also defined (by the same definition) when the $\beta$ curve is only immersed. In \cite{Hanselman2022}, it is shown that the Maslov grading thus defined on a pairing diagram of two immersed curves agrees with the Maslov grading computed using the grading package of bordered Heegaard Floer homology. Next, we show this Maslov grading can be equivalently defined in terms of the index of domains. 

\begin{prop}\label{Proposition, identify two ways to compute the Maslov index for genus one diagram}
	Let $\mathcal{H}_{w,z}$ be a genus-one bordered Heegaard diagram and let $m(-)$ be the Maslov grading on $\mathcal{G}(\mathcal{H}(\alpha_{K}))$ mentioned above. Let $B\in \pi_2(x,y)$ be a domain connecting $x$ and $y$ with $\partial B= p_0-p_1$. Then $m(x)-m(y)=\text{ind}(B)-2n_z(B)$. Moreover, this result extends to the case where the $\beta$ is immersed, in which we define the index of $B$ by $$\text{ind}(B)=e(B)+n_{x}(B)+n_{y}(B)-s(\partial _{\alpha_K} B)-s(\partial_{\beta}B).$$ 
	(Here $s(-)$ denotes the self-intersection number of an oriented immersed arc as defined in Section \ref{Section, embedded index formula})
\end{prop}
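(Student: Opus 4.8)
\textbf{Proof plan for Proposition \ref{Proposition, identify two ways to compute the Maslov index for genus one diagram}.}

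The plan is to reduce the statement to a purely local computation by comparing how each of the two quantities, $m(x)-m(y)$ and $\text{ind}(B)-2n_z(B)$, changes under elementary modifications of the data, and then to check the identity on a single convenient base case. First I would observe that both sides are additive under concatenation of domains: if $B_1\in\pi_2(x,y)$ and $B_2\in\pi_2(y,w)$ then $\text{ind}(B_1+B_2)-2n_z(B_1+B_2)=(\text{ind}(B_1)-2n_z(B_1))+(\text{ind}(B_2)-2n_z(B_2))$ (this follows from the additivity of the index, Proposition \ref{Proposition, embedded index formula is additive}, together with additivity of $s(\partial_\beta B)$ modulo a jittered-intersection correction term exactly as in Equation (\ref{Equation, second equation in the proof of additivity of index}); on a genus-one surface the correction cancels because the relevant arcs close up), while $m(x)-m(y)$ is additive by the definition via total rotation of the lifted path $\gamma$. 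By additivity it suffices to verify the identity for any one domain connecting each pair of generators in a given linking class, and in particular it suffices to check it on small domains: bigons, thin rectangles, and the ``teardrop-like'' annular pieces cut out by the periodic domain $[\Sigma]$ (i.e. the contribution of the basepoint $z$).

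Next I would pin down the genus-one index formula. On the torus $\Sigma$ with $\alpha_K$ (and possibly $\beta$) immersed, for a domain $B\in\pi_2(x,y)$ with $\partial B=p_0-p_1$, the Lipshitz-style index reads $\text{ind}(B)=e(B)+n_x(B)+n_y(B)-s(\partial_{\alpha_K}B)-s(\partial_\beta B)$ as stated; here the Euler measure $e(B)$ of a region with $k$ corners is $1-k/4$ times its count, and $n_x,n_y$ are the average local multiplicities at the generators. The ``counterclockwise rotation'' description of $m(x)-m(y)$ is, by the Gauss-Bonnet theorem applied to the immersed polygon bounding a lift of $B$, precisely $e(B)+n_x(B)+n_y(B)$ minus the corner contributions, once one accounts for the fact that each self-intersection of $\partial_{\alpha_K}B$ or $\partial_\beta B$ is a place where the boundary path of the lifted polygon fails to be embedded and must be resolved, each resolution changing the turning number by a controlled amount equal to $\pm s$. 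So the core of the argument is a Gauss-Bonnet/turning-number bookkeeping identifying $\frac1\pi(\text{total rotation of }\gamma)$ with $e(B)+n_x(B)+n_y(B)-s(\partial_{\alpha_K}B)-s(\partial_\beta B)$, and separately identifying $-2\cdot(\#\text{lifts of }z\text{ enclosed})$ with $-2n_z(B)$, which is immediate from the definition of $n_z(B)$ as the multiplicity of $B$ at the region containing $z$ (equivalently, the signed count of lifts of $z$ inside a lift of $\partial B$).

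Concretely the steps are: (1) state and prove additivity of $\text{ind}(B)-2n_z(B)$ in the (possibly immersed-$\beta$) genus-one setting, citing Proposition \ref{Proposition, embedded index formula is additive} and handling the $s(\partial_\beta B)$ cross-term; (2) reduce to checking the identity on a generating set of small domains using additivity plus the fact that $\pi_2(x,y)$ is an affine copy of $H_2(\Sigma)=\Z\cdot[\Sigma]$; (3) for an embedded bigon or rectangle verify directly that the total counterclockwise boundary rotation equals $2\pi$ or $0$ respectively, matching $\text{ind}=1$ and $\text{ind}=0$, with $n_z=0$; (4) for the domain $[\Sigma]$ (the torus itself) check $\text{ind}([\Sigma])=2=2n_z([\Sigma])$ against $m(x)-m(x)=0$, which is the statement that the turning number around the torus is $2\pi$ times $2$ and exactly two lifts of $z$ are enclosed, consistent with the ``minus twice the enclosed $z$'' normalization; (5) feed these base cases through additivity. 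I expect the main obstacle to be step (1) together with the careful Gauss-Bonnet accounting in the presence of an immersed $\beta$ curve: one must check that the self-intersection correction terms $s(\partial_{\alpha_K}B)$ and $s(\partial_\beta B)$ enter the turning-number computation with exactly the sign and coefficient dictated by the sign convention fixed in Section \ref{Section, embedded index formula}, and that nothing is double-counted at a point where $\partial_{\alpha_K}B$ and $\partial_\beta B$ meet (i.e. at a generator $x$ or $y$); this is precisely the kind of jittered-intersection bookkeeping carried out in \cite{Hanselman2022}, and I would lean on the computation there, checking only that the immersed-$\beta$ generalization introduces no new phenomena beyond the extra term $-s(\partial_\beta B)$ already written into the index formula.
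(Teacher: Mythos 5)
Your plan takes a genuinely different route from the paper's (additivity plus verification on a generating set, versus the paper's deformation of the boundary loop to a normal form), but as written it has a gap at step (2). On a genus-one diagram there is exactly one $\alpha$-multicurve and one $\beta$-curve, so a decomposition $B=B_1+B_2$ with $B_1\in\pi_2(x,w)$, $B_2\in\pi_2(w,y)$ requires an intermediate generator $w$, and you give no argument that an arbitrary domain in $\pi_2(x,y)$ lies in the subgroupoid generated by bigons, rectangles and $[\Sigma]$. It is not clear that it does: for generators $x,y$ that are adjacent along $\beta$ but joined by a long, self-intersecting arc of $\alpha_K$, the domain in $\pi_2(x,y)$ (unique up to $[\Sigma]$) is typically not a bigon and admits no obvious splitting through intermediate generators, so such ``primitive'' domains would have to sit inside your generating set — at which point you are back to proving the identity for a general immersed domain. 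Relatedly, the ``Gauss--Bonnet/turning-number bookkeeping'' identifying $nr(p_0-p_1)$ with $e(B)+n_x(B)+n_y(B)-s(\partial_{\alpha_K}B)-s(\partial_\beta B)$ cannot be outsourced to \cite{Hanselman2022}: that reference supplies the definition of $m$ via rotation numbers and its agreement with the bordered-Floer grading, not the identification of the rotation number with the Lipshitz index corrected by self-intersection numbers. That identification is the actual content of the proposition, and it is where essentially all of the work lies.

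The paper supplies exactly the two ingredients your plan is missing. First, Lemma \ref{Lemma, syncing net rotation and index} shows that $\text{ind}$ and $nr$ change by equal amounts under Reidemeister I--III moves and under isotopies of the cornered immersed loop $p_0-p_1$, including isotopies that cross the corners, where the change in $n_x(B)$, $n_y(B)$ cancels against the change in $s(\partial_{p_i}B)$; this is the local case analysis you would otherwise have to do, carried out move by move rather than domain by domain. Second, Lemma \ref{Lemma, Reidemeister Moves Of Immersed Loop} and Proposition \ref{Proposition, index equals to net rotation} show that any cornered immersed loop can be deformed by such moves to a near-parallel push-off configuration bounding an immersed disk, where both quantities are computed directly (giving $1$ or $2$ depending on whether the corners lie on the same side). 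Your base cases (3)--(4) and the observation that $n_z(B)$ equals the number of enclosed lifts of $z$ are correct and consistent with the paper, but to close the argument you should replace the decomposition of the domain in step (2) with a deformation of its boundary along these lines.
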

Before proving Proposition \ref{Proposition, identify two ways to compute the Maslov index for genus one diagram} we introduce some terminology. It will be clear later that we can assume $p_0-p_1$ is immersed and only has discrete double points. 

\begin{defn}
A \textit{cornered immersed loop} in $T^2$ is the union of two oriented immersed arcs $p_0$ and $p_1$ with at most discrete double points such that 
\begin{itemize}
\item[(1)] $p_0$ and $p_1$ share common endpoints,
\item[(2)] the interior of $p_0$ and $p_1$ intersect transversally, 
\item[(3)] $p_0-p_1$ is an oriented loop which is null-homologous,
\item[(4)] $p_0$ and $p_1$ intersect transversally at the endpoints if $p_0$ and $p_1$ are non-degenerate (i.e., not a point), and
\item[(5)] if one of $p_0$ and $p_1$ is degenerate, the remaining arc forms a smooth loop after identifying the endpoints. 
\end{itemize}
The endpoints of $p_0$ (or equivalently, $p_1$) are called \textit{corners} of the cornered immersed loop.
\end{defn}

\begin{defn}
Two cornered immersed loops $p_0-p_1$ and $p_0'-p_1'$ in $T^2$ are called \textit{cornered identical} if they share the same set of corners $\{x,y\}$ (or $\{x\}$ if the loops have degenerate arcs) and there are arbitrarily small neighborhoods $N_x$ and $N_y$ of $x$ and $y$ respectively such that $(p_0-p_1)|_{N_x}=(p'_0-p'_1)|_{N_x}$ and $(p_0-p_1)|_{N_y}=(p'_0-p'_1)|_{N_y}$.
\end{defn}
	\begin{figure}[htb!]
	\centering{
		\includegraphics[scale=0.6]{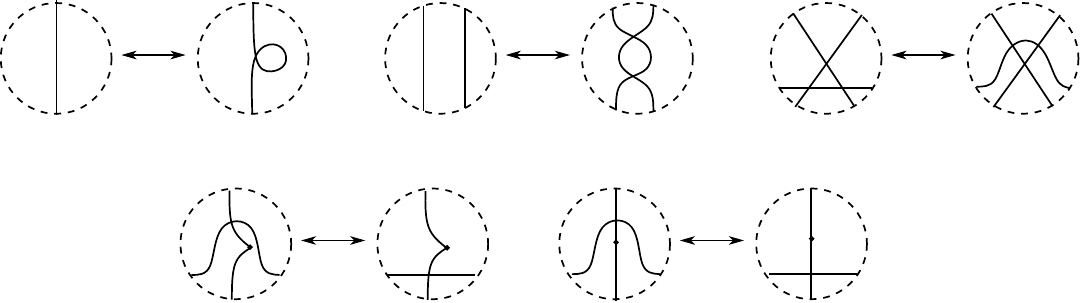}
		\caption{Upper row from left to right: Reidemeister I, II, and III move. Lower row from left to right: an isotopy that crosses a non-degenerate corner and a degenerate corner.}
		\label{Figure, Reidemeister moves and istopy}
	}
\end{figure}
\begin{lem}\label{Lemma, Reidemeister Moves Of Immersed Loop}
If two cornered immersed loops $p_0-p_1$ and $p_0'-p_1'$ are cornered identical, then they are related by a finite sequence of moves of the following types:
\begin{itemize}
\item[(1)]Reidemeister moves that do not involve the corners and
\item[(2)]isotopy that possibly cross the corners.
\end{itemize} 
(See Figure \ref{Figure, Reidemeister moves and istopy}) Here, we require $(p_0-p_1)|_{N_x}$ and $(p_0-p_1)|_{N_y}$ are fixed throughout the modification for some sufficiently small neighborhoods 
${N_x}$ and ${N_y}$ of the corners. 
\end{lem}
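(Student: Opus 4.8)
The statement is a ``two presentations of the same cornered loop are related by a sequence of local moves'' result, and the natural strategy is a standard general-position / regular-homotopy argument, tracking what happens near the fixed corner neighborhoods. First I would set up the basic framework: since $p_0-p_1$ and $p_0'-p_1'$ agree on small neighborhoods $N_x, N_y$ of the corners and are both null-homologous immersed loops in $T^2$ with only discrete double points, I can regard the two loops as (based, or rather corner-fixed) immersions of the circle $S^1$ (or of two arcs) into $T^2$ which agree near the corners. The key classical input is that any two immersed loops in a surface that are homotopic rel the fixed arcs near the corners are connected by a generic regular homotopy, and a generic regular homotopy of immersed curves in a surface decomposes into finitely many elementary events: finger moves/isotopies (which move strands around without changing crossing data away from the corners) and the three Reidemeister-type moves (birth/death of a monogon = Reidemeister I, birth/death of a bigon = Reidemeister II, and a triangle move = Reidemeister III). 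So the heart of the matter is to promote ``homotopic rel corners'' to ``regularly homotopic rel corners'' and then invoke the generic-position decomposition.

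The main steps, in order, would be: (1) Observe that $p_0-p_1$ and $p_0'-p_1'$ represent the same class in $\pi_1$ (indeed they are null-homotopic rel endpoints after cutting, since both are null-homologous in $T^2$ and $T^2$ has abelian $\pi_1$, so null-homologous implies null-homotopic for the loop $p_0-p_1$); combined with the fact that they agree near the corners, this gives a homotopy rel $N_x \cup N_y$ between the two loops. (2) Upgrade this to a regular homotopy: since $T^2$ has a nowhere-zero vector field, the Whitney--Graustein-type obstruction (rotation number) is the only obstruction to a homotopy being deformable to a regular homotopy, and here there is no obstruction because we can first adjust by Reidemeister I moves (adding or removing small kinks away from the corners) to match rotation numbers — this is exactly the role of move type (1) including Reidemeister I. (3) Put the regular homotopy in general position so that at all but finitely many times the curve is an immersion with only transverse double points, the curve near $N_x$ and $N_y$ is held fixed, and the exceptional times are isolated elementary events. (4) Identify each elementary event: a tangency between two strands or a strand and itself that does not involve the corner regions is precisely a Reidemeister I or II move; a triple point is a Reidemeister III move; and the events where a strand passes through one of the corner points $x$ or $y$ (allowed since we only fix a neighborhood, and the isotopy may push a strand across the corner) are exactly the ``isotopy that crosses a corner'' moves pictured in Figure \ref{Figure, Reidemeister moves and istopy}, in both the non-degenerate and degenerate cases. (5) Since $N_x$ and $N_y$ can be taken arbitrarily small and the homotopy is fixed on them, all the listed constraints are met, and the finite list of events gives the desired finite sequence of moves.

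The step I expect to be the main obstacle is step (2)–(3): carefully arranging that the homotopy can be taken to be a regular homotopy rel the corner neighborhoods \emph{and} that crossing a corner is itself an admissible ``elementary'' move rather than something that must be forbidden. One has to be a little careful because fixing the curve on $N_x$ pins down the two tangent directions of $p_0$ and $p_1$ at the corner, so the ``rotation number'' bookkeeping must account for the contribution of the corner angles; the cleanest way around this is to note that the corner contributions are the same for $p_0-p_1$ and $p_0'-p_1'$ (they are cornered identical), so they cancel in the comparison, leaving only the smooth rotation along the strands, which can be matched by Reidemeister I moves away from the corners. A secondary subtlety is the degenerate case (one of $p_0$, $p_1$ a point): here ``crossing a corner'' can merge or split the two strand-germs at the single corner, and one must check this matches the degenerate picture in Figure \ref{Figure, Reidemeister moves and istopy}; but this is a purely local check once the general-position framework is in place. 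Apart from these points the argument is the routine surface-curves general position machinery, so I would keep the exposition brief and cite the standard facts about generic regular homotopies of immersed curves in surfaces.
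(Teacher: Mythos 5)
Your approach is genuinely different from the paper's: the paper disposes of the lemma in one line by viewing both loops as knot diagrams (imposing crossing data) and invoking the Reidemeister-move equivalence of diagrams, then trading any Reidemeister move that involves a corner for a corner-crossing isotopy plus a corner-free Reidemeister move; you instead run the direct general-position argument, decomposing a generic (regular) homotopy rel the corner germs into isolated elementary events (cusps, tangencies, triple points, and passages of a strand through a corner point). Your route is more self-contained and has the virtue of making explicit the input that the paper's citation quietly assumes, namely a homotopy between the two loops that is fixed near the corners. Your steps (2)--(5) are sound: matching rotation numbers by Reidemeister I moves away from the corners, putting the homotopy in general position, and identifying corner passages with the pictured isotopies are all standard (one small imprecision: a Reidemeister I event is a cusp of the homotopy, not a tangency).

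The genuine gap is in your step (1). Being cornered identical and null-homologous does \emph{not} yield a homotopy rel $N_x\cup N_y$. Such a homotopy restricts to homotopies $p_0\simeq p_0'$ and $p_1\simeq p_1'$ rel endpoints, and these rel-endpoint classes are not controlled by the hypotheses: setting $\delta_i=[p_i\ast\overline{p_i'}]\in\pi_1(T^2)$, the two null-homology conditions only force $\delta_0=\delta_1$, not $\delta_0=\delta_1=0$. Concretely, insert a loop representing $(1,0)\in\pi_1(T^2)$ into the interior of both $p_0$ and $p_1$ (keeping the corner germs fixed and everything immersed and transverse); the result is cornered identical to $p_0-p_1$ and still null-homologous, but it is not homotopic to it rel the corners, and since every allowed move (a local Reidemeister move away from the corners, or an isotopy sweeping a strand across a corner while fixing the germs) preserves the rel-endpoint homotopy class of each arc, the two loops cannot be related by the listed moves. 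So your argument needs the additional hypothesis $\delta_0=\delta_1=0$, and you must verify it where the lemma is used: in Proposition \ref{Proposition, index equals to net rotation} one has $p_0'=p_0$ and $p_1'$ a push-off of $p_0$, so $[p_1\ast\overline{p_1'}]=[p_1\ast\overline{p_0}]=0$ by null-homology and the application is safe. This defect is present in the lemma statement itself and is glossed over by the paper's one-line proof as well, but your write-up asserts the false implication explicitly, so it must be repaired rather than left implicit.
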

\begin{proof}
One can prove this applying the usual Reidemeister-move equivalence of knot diagrams (by treating both immersed loops as diagrams for the unknot via imposing proper crossing information); note that any Reidemeister move involving a corner can be traded by an isotopy crossing the corner and a Reidemeister move that does not involve the corner.
\end{proof}

\begin{defn}\label{Definition, index and net rotaiont number of cornered immersed loop}
Given a cornered immersed loop $p_0-p_1$ in $T^2$. Let $\tilde{p}_0-\tilde{p}_1$ be a lift of $p_0-p_1$ in $\mathbb{R}^2$ and let $\tilde{B}$ be the bounded domain in $\mathbb{R}^2$ such that $\partial \tilde{B}=\tilde{p}_0-\tilde{p}_1$. Let $B$ be the domain in $T^2$ obtained from $\tilde{B}$ by applying the covering projection. Define the \textit{index of the cornered immersed loop} as $$\text{ind}(p_0-p_1)=e(B)+n_{x}(B)+n_{y}(B)-s(\partial _{p_0} B)-s(\partial_{p_1}B),$$  
where $x$ and $y$ are the corners. 

Define the \textit{net rotation number} $nr(p_0-p_1)$ to be
$\frac{1}{\pi}$ times the counterclockwise net rotation along the smooth segments $p_0$ and $p_1$.
\end{defn}

\begin{lem}\label{Lemma, syncing net rotation and index}
Suppose $p_0-p_1$ and $p_0'-p_1'$ are cornered immersed loops differ by an isotopy or a Reidemeister move. Then 
$$\text{ind}(p_0-p_1)-\text{ind}(p'_0-p'_1)=nr(p_0-p_1)-nr(p'_0-p'_1).$$
\end{lem}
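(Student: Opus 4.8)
\textbf{Proof strategy for Lemma \ref{Lemma, syncing net rotation and index}.}
The plan is to verify the identity locally, move by move, exploiting the fact that both sides of the claimed equation are purely local quantities: $nr(p_0-p_1) - nr(p_0'-p_1')$ only records the change in total turning of the smooth arcs inside the region where the move happens, and $\text{ind}(p_0-p_1) - \text{ind}(p_0'-p_1')$ only records the change in the Euler measure $e(B)$, the vertex multiplicities $n_x(B), n_y(B)$, and the self-intersection counts $s(\partial_{p_0}B), s(\partial_{p_1}B)$, all of which are insensitive to the part of the diagram far from the move. So I would first reduce to checking a finite list of local models: the three Reidemeister moves (I, II, III) away from corners, an isotopy that slides an arc across a non-degenerate corner, and an isotopy that slides across a degenerate corner (Figure \ref{Figure, Reidemeister moves and istopy}). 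By Lemma \ref{Lemma, Reidemeister Moves Of Immersed Loop}, together with the obvious additivity of both sides under concatenation of local modifications, establishing the identity for each of these models proves it in general.

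Next I would carry out the bookkeeping for each model. For a generic isotopy not crossing any vertex or crossing, neither $nr$ nor $\text{ind}$ changes, so the identity is trivially $0 = 0$; the content is concentrated at the Reidemeister moves and the corner-crossings. For Reidemeister I one checks that creating a small monogon changes the net rotation by $\pm 2$ (a full extra loop contributes $\pm 2$ in the $\frac{1}{\pi}\times(\text{turning})$ normalization) while simultaneously changing $e(B)$ by $\pm\tfrac12$ twice-over and $s(\partial_{p_i}B)$ by $\mp 1$ in a way that, after combining $e(B) - s(\partial_{p_0}B) - s(\partial_{p_1}B)$, also yields $\pm 2$; for Reidemeister II the two new crossings contribute oppositely to $s$ and the new bigon contributes to $e(B)$ so that the net change in $\text{ind}$ matches the net change in $nr$ (which is $0$); Reidemeister III is an isotopy of the region with no new crossings and one verifies both sides are unchanged. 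For the corner-crossing moves one uses that a small isotopy sliding $p_0$ (say) past the corner $x$ changes $n_x(B)$ by $\pm 1$ and changes the turning of $p_0$ at $x$ (the "exterior angle" contribution picked up by the smooth segments meeting at the corner) by the compensating amount; here the convention that $p_0$ and $p_1$ meet transversally at corners, and the precise normalization of $nr$ as turning of the \emph{smooth} segments, is exactly what makes the two changes cancel. This is where the Gauss--Bonnet-type identity relating Euler measure, exterior angles at the two corners, and total geodesic curvature of the boundary is doing the real work — indeed $\text{ind}(p_0-p_1) = nr(p_0-p_1)$ should hold on the nose for any single cornered immersed loop, and the lemma is just the statement that both sides transform identically.

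I expect the main obstacle to be purely a matter of sign-chasing and getting the normalizations consistent: keeping straight the orientation conventions for $p_0 - p_1$, the sign convention for $s(\partial_{p_i}B)$ from Section \ref{Section, embedded index formula}, the contribution of each corner (which in the index formula enters through $n_x(B) + n_y(B)$ but in $nr$ enters through the exterior angles at $x$ and $y$), and the factor-of-two interplay between "a crossing" and "a half-unit of Euler measure". None of the individual verifications is hard, but there are enough cases and enough opportunities for an off-by-a-sign error that I would organize the computation around the single clean assertion $\text{ind}(p_0-p_1) = nr(p_0-p_1)$ for one loop, prove \emph{that} by a Gauss--Bonnet argument on the disk $B$ (counting branch points of a developing map, i.e., writing $e(B) = \chi(B) - \tfrac{1}{2\pi}\int_{\partial B}\kappa$ and relating the boundary integral to turning plus exterior angles, then correcting for self-intersections via $s$), and finally note that Lemma \ref{Lemma, syncing net rotation and index} follows immediately by subtracting this identity for $p_0'-p_1'$ from the one for $p_0-p_1$ — making the move-by-move check unnecessary except as a sanity check. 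If the direct Gauss--Bonnet route turns out to be delicate because of the corners and self-intersections, I would fall back on the move-by-move verification outlined above, which is elementary if tedious.
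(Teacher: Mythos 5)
Your primary route --- reduce, via Lemma \ref{Lemma, Reidemeister Moves Of Immersed Loop}, to a finite list of local models and check that $\text{ind}$ and $nr$ change by equal amounts under each --- is exactly the paper's proof, and your accounting for the Reidemeister I, II, and III moves matches the paper's. However, your description of the corner-crossing isotopy gets the compensation mechanism wrong. You assert that sliding $p_0$ past the corner $x$ changes $n_x(B)$ by $\pm 1$ and that this is balanced by a change in the turning of $p_0$ near $x$, i.e.\ by a change on the $nr$ side. But $nr$ records only the turning along the smooth segments and the isotopy is fixed near the corners outside the small sliding region, so $nr$ is unchanged under \emph{every} isotopy, corner-crossing or not; the balance is entirely internal to $\text{ind}$. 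What actually happens is that the isotopy across the corner creates or destroys a self-intersection of $p_0$, so the change in $n_x(B)$ is cancelled by the change in $s(\partial_{p_0}B)$ (this is the content of Figure \ref{Figure, local isotpy}, in both the degenerate and non-degenerate cases). If you tried to balance $n_x$ against an ``exterior angle'' contribution to $nr$ the books would not close, since no such term appears in Definition \ref{Definition, index and net rotaiont number of cornered immersed loop}.

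On your preferred reorganization: proving $\text{ind}(p_0-p_1)=nr(p_0-p_1)$ outright and subtracting would indeed make the lemma immediate, but in the paper that identity is Proposition \ref{Proposition, index equals to net rotation}, whose proof \emph{uses} the present lemma to reduce to a cornered-identical model loop where both sides are computed by hand; you cannot invoke it here without circularity. A genuinely independent Gauss--Bonnet proof would have to handle the corners, the fractional vertex multiplicities $n_x(B), n_y(B)$, and above all the self-intersection corrections $s(\partial_{p_i}B)$ --- precisely the bookkeeping the move-by-move argument localizes --- so the paper's order of deduction (this lemma first, then the global identity on a standard representative) is the cleaner path, and your fallback plan is the one to execute.
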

\begin{figure}[htb!]
	\centering{
		\includegraphics[scale=0.9]{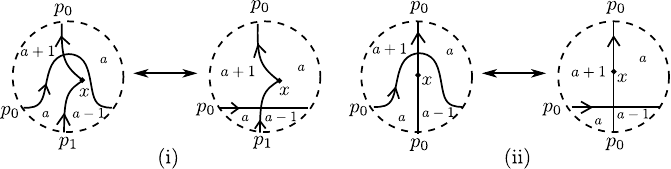}
		\caption{Local diagrams for isotopies that cross a corner. The numbers $a$, $a-1$, and $a+1$ indicate the multiplicities of the regions.}
		\label{Figure, local isotpy}
	}
\end{figure}
\begin{proof}
First, we examine the effect of an isotopy on both quantities. Clearly, the net rotation number is unchanged. We separate the discussion of the index into two cases according to whether the isotopy crosses corners or not. If the isotopy does not cross the corners, it clearly does not change the index as well whence we are done. If the isotopy crosses a corner, then we claim the local multiplicity and the self-intersection numbers change in a way that cancel each other, leaving the index unchanged. This claim can be seen by examining local diagrams, which are further divided into two cases according to whether the corner is degenerate or not. When the corner is non-degenerate, the local diagram of one case is shown in Figure \ref{Figure, local isotpy} (i); all the other cases can be obtained from this case by swapping the labels and orientations of the arcs, and the analysis of all cases are similar. In the case shown in Figure \ref{Figure, local isotpy} (i), only $n_x(B)$ and $s(\partial_{p_0}B)$ change: the diagram on the left has $n_x(B)=\frac{a+(a-1)+(a-1)+(a-1)}{4}$ and the local self-intersection of $p_0$ contributes $s_{p_0}=-1$; the diagram on the right has $n_x(B)=\frac{(a+1)+a+a+a}{4}$ and there are no self-intersections of the arcs in the local diagram so the local contribution $s_{p_0}=0$. In both diagrams we have $n_x(B)-s_{p_0}=\frac{4a+1}{4}$, and hence the index is unchanged. When the corner is degenerate, one of the cases is shown in Figure \ref{Figure, local isotpy} (ii). In this case, only $n_x$ and the self-intersection of $p_0$ change: the diagram on the left has $n_x=\frac{a+a+(a-1)+(a-1)}{4}$ and a local contribution of the self-intersection of $p_0$ given by $s_{p_0}=-1$; the diagram on the right has $n_x=\frac{(a+1)+(a+1)+a+a}{4}$ and a local contribution of the self-intersection of $p_0$ given by $s_{p_0}=1$. In both local diagrams we have $n_x(B)+n_x(B)-s_{p_0}=2a$, and hence the index is unchanged. All other cases can be obtained from this case by swapping the labels and orientations of the arcs, and the analysis of all cases are similar.

\begin{figure}[htb!]
	\centering{
		\includegraphics[scale=0.5]{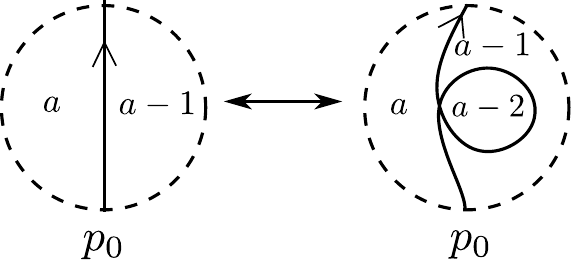}
		\caption{The local diagram for Reidemeister I move. The numbers $a$, $a-1$, and $a-2$ indicate the multiplicities of the regions.}
		\label{Figure, Reidemesiter One}
	}
\end{figure}
Next, we examine the effect of Reidemeister I move. Up to swapping orientations and the labels, we may assume the local diagram is as shown in Figure \ref{Figure, Reidemesiter One} The net rotation number of the diagram on the right is 2 less than that of the diagram on the left. For the index comparison, the Euler measure of the local domain on the right is $1$ less than that of the left diagram and the self-intersection number $s(\partial_{p_0}B)$ of the right diagram is $1$ more than that of the left diagram; in total the index of the diagram on the right is 2 less than that of the diagram on the left. Therefore, the changes in the net rotation and in the index are the same after doing a Reidemeister I move. 

Next, we examine the effect of Reidemeister II moves. It does not change the net rotation number. Also, it does not affect the Euler measure and the local multiplicities at the corners. A Reidemeister II move creates/annihilates a pair of self-intersection points whose signs cancel each other if both arcs involved are on $p_0$ or $p_1$, and otherwise does not involve self-intersections; in both cases the self-intersection numbers are unchanged. So, the index does not change as well.

Finally, it is easy to see that a Reidemeister III move does not change the net rotation number. It is also easy to see a Reidemeister III move does not change the Euler measure, local multiplicities at the corners, or self-intersections, and hence it does not change the index either.   
 \end{proof}

\begin{prop}\label{Proposition, index equals to net rotation}
Let $p_0-p_1$ be a cornered immersed loop. Then $\text{ind}(p_0-p_1)=nr(p_0-p_1)$.
\end{prop}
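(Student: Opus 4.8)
The plan is to reduce the statement to a single, easily verified base case by using the invariance of both sides under the moves of Lemma \ref{Lemma, Reidemeister Moves Of Immersed Loop}. First I would observe that a cornered immersed loop $p_0-p_1$ can be assumed, after a small isotopy, to be immersed with only discrete double points, so both $\text{ind}(p_0-p_1)$ and $nr(p_0-p_1)$ are well-defined; this is the reduction alluded to just before the definitions. Next I would note that any two cornered immersed loops with the same pair of corners $\{x,y\}$ (and the same germ near each corner) are \emph{cornered identical} up to shrinking the corner neighborhoods, and hence by Lemma \ref{Lemma, Reidemeister Moves Of Immersed Loop} are connected by a finite sequence of Reidemeister moves away from the corners and isotopies possibly crossing the corners. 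By Lemma \ref{Lemma, syncing net rotation and index}, each such move changes $\text{ind}$ and $nr$ by the same amount, so the difference $\text{ind}(p_0-p_1)-nr(p_0-p_1)$ depends only on the germs of $p_0$ and $p_1$ at the two corners $x$ and $y$ — i.e. only on the (cyclic) combinatorial data of how the four (or two) arc-ends meet at the corners.

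Given this, the proof reduces to checking $\text{ind}=nr$ on one representative for each corner-germ type. Here I would split into the non-degenerate case (both $p_0$ and $p_1$ are honest arcs meeting transversally at $x$ and $y$) and the degenerate case (one of $p_0,p_1$ is a point, so the other is a smooth closed loop through the single corner). In the degenerate case, the canonical model is a small embedded smooth circle bounding an embedded disk $B$ with $n_x(B)=1$, zero self-intersections, and $e(B)=1$ (Euler measure of a disk with one corner of angle $2\pi$, or more simply an honest smooth disk with Euler measure $1$ and no corner); one computes $\text{ind}=e(B)+n_x(B)+n_x(B)-0-0$... — actually the cleanest normalization is the embedded smooth bigon or monogon for which both quantities manifestly equal the same small integer, and then Lemma \ref{Lemma, syncing net rotation and index} propagates the equality to all loops with that germ. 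In the non-degenerate case the canonical model is a small embedded bigon with convex corners: $B$ is an embedded disk, $e(B)=\tfrac12+\tfrac12=1$... wait, more carefully $e(B)=1-\tfrac14\cdot(\text{number of convex corners})\cdot 2 = 1 - 1 = 0$ for a bigon with two convex corners... and $n_x(B)=n_y(B)=\tfrac14$, $s(\partial_{p_0}B)=s(\partial_{p_1}B)=0$, so $\text{ind}=0+\tfrac14+\tfrac14=\tfrac12$; meanwhile the net rotation of a convex bigon is $\tfrac1\pi$ times $(\pi-\theta_x)+(\pi-\theta_y)$ along the two smooth arcs, which for the standard thin bigon limits to... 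I would compute this directly for the explicit model and confirm both sides agree, then invoke the propagation lemma.

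The step I expect to be the main obstacle is pinning down the correct normalization constants in the base cases and making sure the corner contributions to $e(B)$, to $n_x(B)$, and to the net rotation are bookkept with consistent conventions — in particular the jittered/average local multiplicity $n_x(B)=\tfrac14(\cdots)$ at a corner versus the angle deficit entering the Euler measure versus the $\tfrac1\pi(\text{turning})$ convention for $nr$. Once a single convex-bigon model (non-degenerate case) and a single smooth-monogon or embedded-disk model (degenerate case) are checked to satisfy $\text{ind}=nr$, Lemma \ref{Lemma, syncing net rotation and index} together with Lemma \ref{Lemma, Reidemeister Moves Of Immersed Loop} immediately gives the general statement: any cornered immersed loop is connected to the appropriate model by moves under which $\text{ind}-nr$ is invariant. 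I would also remark that the extension to the case where $p_1$ lies on an immersed $\beta$ curve requires no change, since the formula for $\text{ind}$ in Definition \ref{Definition, index and net rotaiont number of cornered immersed loop} already treats $p_0$ and $p_1$ symmetrically and the self-intersection term $s(\partial_{p_1}B)$ is exactly what is needed.
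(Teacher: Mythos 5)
Your proposal follows essentially the same route as the paper: reduce via Lemma \ref{Lemma, Reidemeister Moves Of Immersed Loop} and Lemma \ref{Lemma, syncing net rotation and index} to a model loop that is cornered identical to the given one, then verify $\text{ind}=nr$ on the model; the paper's model in the non-degenerate case is exactly the thin push-off bigon from $x$ to $y$ you have in mind, with two sub-cases according to whether the germs of $p_1$ at the corners lie on the same or opposite sides of $p_0$ (giving two acute corners, or one acute and one obtuse). Your sketched arithmetic in the convex sub-case is off, though you flag it: the Euler measure of a bigon with two acute corners is $1-\tfrac14-\tfrac14=\tfrac12$, not $0$, so $\text{ind}=\tfrac12+\tfrac14+\tfrac14=1$, which matches $nr=1$ when the transverse corners are taken to be right angles; the opposite-side sub-case gives $\text{ind}=nr=2$.
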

\begin{proof}
By Lemma \ref{Lemma, syncing net rotation and index} and Lemma \ref{Lemma, Reidemeister Moves Of Immersed Loop}, it suffices to show that $p_0-p_1$ is cornered identical with some cornered immersed loop whose index coincides with the net rotation number.

If at least one of $p_0$ and $p_1$ is degenerate, $p_0-p_1$ is cornered identical with an embedded circle that passes the corner, and it is easy to see the index and the net rotation number coincide on an embedded circle with a degenerate corner. 

\begin{figure}[htb!]
	\centering{
		\includegraphics[scale=0.55]{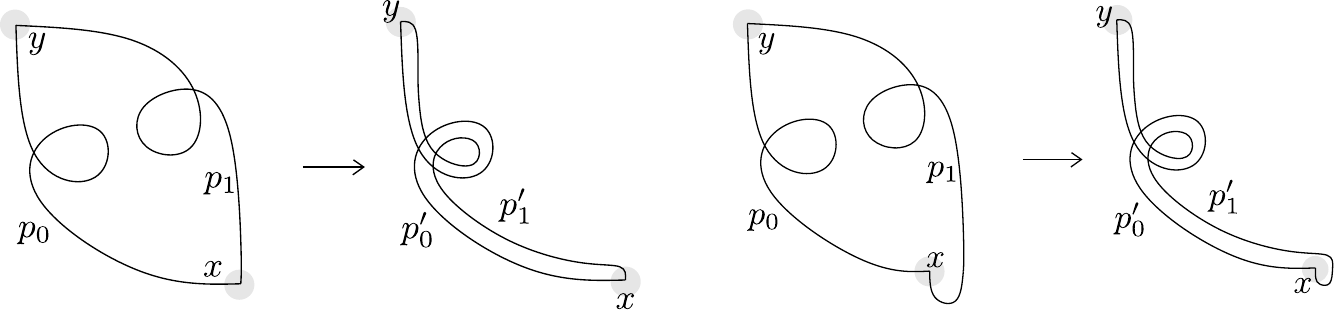}
		\caption{Deforming $p_0-p_1$.}
		\label{Figure, corner identical loops}
	}
\end{figure}
 Next, we discuss the case where $p_0-p_1$ is non-degenerate. We first construct a cornered immersed loop $p_0'-p_1'$ that is cornered identical to $p_0-p_1$ as follows. Let $p_0'=p_0$. We shall construct $p_1'$ to be a path which is almost a parallel push-off of $p_0$. (See Figure \ref{Figure, corner identical loops} for examples.) To spell out the construction, let $f_0:[0,1]\rightarrow T^2$ be an immersion such that $f_0([0,1])=p_0$.  Let $\hat{N}$ be a sufficiently small tubular neighborhood of $p_0$ such that it can realized as the image of an extension of $f_0$, i.e., there exits an immersion $\tilde{f_0}:[0,1]\times[-\epsilon,\epsilon]\rightarrow T^2$ such that $\tilde{f_0}|_{[0,1]\times\{0\}}=f_0$ and $\tilde{f_0}([0,1]\times \{pt\})$ is a parallel push-off of $p_0$ for any $pt\in[-\epsilon,0)\cup (0,\epsilon]$. We can further assume near the two corners $x=f_0(0)$ and $y=f_0(1)$, the other arc $p_1$ is contained in $\tilde{f_0}(\{0,1\}\times [-\epsilon,\epsilon])$; denote these two arcs on $p_1$ near $x$ and $y$ by $p_x$ and $p_y$ respectively. We construct $p_1'$ in two different cases. In the first case, both $p_x$ and $p_y$ are on the same side of $p_0$, say, $p_x=\tilde{f_0}(\{0\}\times [0,\epsilon])$ and $p_y=\tilde{f_0}(\{1\}\times [0,\epsilon])$. Then we let $p_1'$ be the path obtained from $p_x\cup p_y \cup \tilde{f_0}([0,1]\times\{\epsilon\})$ by smoothing the corners; see Figure \ref{Figure, corner identical loops} (left) for an example. In the second case, $p_x$ and $p_y$ are on different sides of $p_0$, say, $p_x=\tilde{f_0}(\{0\}\times [0,\epsilon])$ and $p_y=\tilde{f_0}(\{1\}\times [-\epsilon,0])$. In this case, we extend $\tilde{f_0}$ near $x$ slightly to an immersion $F_0:[-\delta,1]\times[-\epsilon,\epsilon]\rightarrow T^2$ for some $\delta>0$ such that $F_0|_{([-\delta,0]\times [-\epsilon,\epsilon])}$ is an embedding and its image intersects $N$ at $\tilde{f_0}(\{0\}\times[-\epsilon,\epsilon])$. We will let $p_1'$ be the path obtained from $p_x\cup F_0([-\delta,0]\times \{\epsilon\})\cup F_0(\{-\delta\}\times[-\epsilon,\epsilon])\cup F_0([-\delta,1]\times \{-\epsilon\})\cup p_y$ by smoothing the corners; see Figure \ref{Figure, corner identical loops} (right) for an example. Note that in both cases, $p_0'-p_1'$ bounds an immersed disk $B$ in $T^2$, and $s(\partial_{p_0'}B)+s(\partial_{p_1'}B)=0$ since self-intersections of $p_0'$ and $p_1'$ are in one-to-one correspondence and have opposite signs. In the case where $p_x$ and $p_y$ are on the same side of $p_0$, both corners of $B$ are acute, and hence we have $e(B)=1-\frac{1}{4}-\frac{1}{4}=\frac{1}{2}$ and  $n_x=n_y=\frac{1}{4}$. Therefore, $\text{ind}(p_0'-p_1')=\frac{1}{2}+\frac{1}{4}+\frac{1}{4}=1$, which is equal to the net rotation number that can be easily computed. In the case where here $p_x$ and $p_y$ are on different sides of $p_0$, one of the corners of $B$ is obtuse and the other one is acute, and we have $e(B)=1-\frac{1}{4}+\frac{1}{4}=1$, $n_x=\frac{3}{4}$, and $n_y=\frac{1}{4}$. Therefore, $\text{ind}(p_0'-p_1')=1+\frac{3}{4}+\frac{1}{4}=2$, which is again equal to the net rotation number that can be computed easily. So, $\text{ind}(p_0'-p_1')=nr(p_0'-p_1')$. 
\end{proof}

We are ready to prove Proposition \ref{Proposition, identify two ways to compute the Maslov index for genus one diagram}.
\begin{proof}[Proof of Proposition \ref{Proposition, identify two ways to compute the Maslov index for genus one diagram}]
Throughout the discussion, we may assume $p_0-p_1$ is an immersed loop with only discrete double points; if not, we can perturb $p_0-p_1$ slightly to achieve such a loop and keep both $m(x)-m(y)$ and $\text{ind}(B)-2n_z(B)$ unchanged. 

Note that it does not matter which domain $B$ we use to compute $\text{ind}(B)-2n_z(B)$ since any two such domains differ by multiples of $[T^2]$, which does not change the quantity. 
For convenience, we take $B$ to be the domain as specified in Definition \ref{Definition, index and net rotaiont number of cornered immersed loop}. It is clear that $n_z(B)$ is equal to the number of base point $z$ enclosed by a lift of $p_0-p_1$ in $\mathbb{R}^2$. By Proposition \ref{Proposition, index equals to net rotation}, we also have $\text{ind}(B)=nr(p_0-p_1)$. So, $m(x)-m(y)=\text{ind}(B)-2n_z(B)$. 
\end{proof}
Next, we prove Proposition \ref{Proposition, Maslov index of periodic domain}.
\begin{proof}[Proof of Proposition \ref{Proposition, Maslov index of periodic domain}]
Let $\bm{x}$ be a generator in $\mathcal{H}_{w,z}(\alpha_{K})$ and let $P\in \tilde{\pi}_2(\bm{x},\bm{x})$ be a non-trivial periodic domain; we need to show that  $\text{ind}(P)-2n_z(P)=0$ and $\text{ind}(P)-2n_w(P)=0$, where $\text{ind}(-)$ is defined in Definition \ref{Definition, embedded Euler Char, index, and moduli space}. Note that $\partial _{\alpha_{K}}P$ sits on a single connected component of $\alpha_{K}$. If it sits on the distinguished component, then since the $\alpha$-curves and $\beta$-curves are homologically independent in $H_1(\Sigma,\mathbb{Q})$ we know $\partial P=\emptyset$, which means $P$ is a multiple of $\Sigma$.  In this case it is celar that $\text{ind}(P) - 2 n_z(P) = 0$. Otherwise, $\partial_{\alpha_{K}}P$ must sit in some null-homologous component of $\alpha_{K}$, and by homological considerations $\partial P$ must be some non-zero multiple of the this component. 
	
	Note that when $\Sigma$ has genus greater than or equal to $2$, the domain $P$ can be viewed as a stabilization of a domain $P'$ in the marked torus $(T^2,z)$ bounded by the null-homologous component. In particular, $n_z(P)=n_w(P)$, and hence $\text{ind}(P)-2n_z(P)=\text{ind}(P)-2n_w(P)$. Moreover, a straightforward computation using the definition of the index shows $\text{ind}(P)-2n_z(P)=0$ if and only if $\text{ind}(P')-2n_z(P')=0$. The latter follows from Proposition \ref{Proposition, identify two ways to compute the Maslov index for genus one diagram} since $\text{ind}(P')-2n_z(P')=m(x)-m(x)=0$, where $x$ is the component of $\bm{x}$ in the genus-one diagram.	
\end{proof}

\subsection{Gradings in the main theorem}\label{Subsection, gradings in the main theorem} We now show the chain homotopy equivalence established in the main theorem preserves the $w$-grading and $z$-grading of knot Floer chain complexes. To do so, it suffices to consider a simpler version of knot Floer chain complexes. 

According to \cite[Theorem 11.19]{LOT18}, ${CFA}^-(\mathcal{H}_{w,z})\boxtimes \widehat{CFD}(\alpha_K)$ is a bi-graded chain complex over $\mathbb{F}[U]$ representing ${gCFK}^-(P(K))$; here ${gCFK}^-(-)$ refers to the version of knot Floer chain complex whose differential only count holomorphic disks that do not cross the $z$ base point. We shall prove the following theorem. 

\begin{thm}\label{Theorem, graded isomorphism of gCFKminus}
$gCFK^-(\mathcal{H}_{w,z}(\alpha_{K}))$ is isomorphic to ${CFA}^-(\mathcal{H}_{w,z})\boxtimes \widehat{CFD}(\alpha_K)$ as a bi-graded chain complex over $\mathbb{F}[U]$.
\end{thm}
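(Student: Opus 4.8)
The plan is to reduce the graded comparison in Theorem \ref{Theorem, graded isomorphism of gCFKminus} to facts that have already been assembled in the excerpt, so that only the bookkeeping of the two gradings $gr_w$ and $gr_z$ remains. First I would recall that Theorem \ref{Theorem, main theorem} (ungraded version, proved in Section \ref{Subsection, proof of the main theorem, ungraded version}) and the pairing theorems already give an ungraded isomorphism of $\mathbb{F}[U]$-modules; equivalently, setting $V = 0$ in the chain of isomorphisms $CFK_\mathcal{R}(P(K)) \cong \widetilde{CFA}(\mathcal{H}_{id}) \boxtimes \widetilde{CFD}(\mathcal{H}_{X(P)}(\alpha_K)) \cong CFK_\mathcal{R}(\mathcal{H}_{id} \cup \mathcal{H}_{X(P)}(\alpha_K)) \cong CFK_\mathcal{R}(\mathcal{H}_{w,z}(\alpha_K))$ recovers an ungraded isomorphism between $gCFK^-(\mathcal{H}_{w,z}(\alpha_K))$ and ${CFA}^-(\mathcal{H}_{w,z}) \boxtimes \widehat{CFD}(\alpha_K)$. (Here I would use that the underlying weakly extended structures pushed forward to the hat/minus setting agree, by the extension-uniqueness argument already invoked in the proof of Theorem \ref{Theorem, main theorem}.) So the content of the theorem is that this isomorphism, or some isomorphism of the same modules, intertwines the bigradings.

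The key step is to observe that both sides carry a relative $\mathbb{Z} \oplus \mathbb{Z}$-grading and that such a grading is determined by very little data once we know the modules are abstractly isomorphic. Concretely: (1) Proposition \ref{Proposition, Maslov index of periodic domain} shows $\mathcal{H}_{w,z}(\alpha_K)$ is gradable, so $gCFK^-(\mathcal{H}_{w,z}(\alpha_K))$ has a well-defined relative $(gr_w, gr_z)$-bigrading given by $gr_w(\bm x) - gr_w(\bm y) = \mathrm{ind}(B) - 2n_w(B)$ and $gr_z(\bm x) - gr_z(\bm y) = \mathrm{ind}(B) - 2n_z(B)$ for any $B \in \tilde\pi_2(\bm x, \bm y)$ (Definition \ref{Definition, w- and z- gradings}); (2) the bordered side ${CFA}^-(\mathcal{H}_{w,z}) \boxtimes \widehat{CFD}(\alpha_K)$ inherits its bigrading from the grading package of \cite{LOT18}, which on the pairing is computed by a formula of exactly the same shape — a Maslov-type term minus $2n_w$ or $2n_z$. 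I would then match the two formulas by tracking a generator $\bm x \otimes a$ of $\mathcal{H}_{w,z}(\alpha_K)$ through the collapsing map (Remark \ref{Remark, collapsing map}(3)) to the corresponding generator of ${CFA}^-(\mathcal{H}_{w,z}) \otimes_{\mathcal I} \widehat{CFD}(\alpha_K)$, and by using Proposition \ref{Proposition, relating domains in two different Heegaard diagrams} together with the embedded index formula Proposition \ref{Proposition, embedded Euler characteristic} and its additivity Proposition \ref{Proposition, embedded index formula is additive} to show that the grading difference computed from a domain $B$ in the pairing diagram equals the grading difference computed from the pushed-forward domain $B'$ in $\mathcal{H}_{X(P)}$ paired with the elementary arcs of $\alpha_K$. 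The reduction to genus one via stabilization, exactly as in the proof of Proposition \ref{Proposition, Maslov index of periodic domain}, lets me invoke Proposition \ref{Proposition, identify two ways to compute the Maslov index for genus one diagram}, which is the precise statement that the domain-theoretic index computes the Maslov-type grading used in the bordered package (following \cite{Hanselman2022}).

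With the relative bigradings matched, I would fix the absolute normalization. Since the ungraded isomorphism already shows $\widehat{HF}(\mathcal{H}_w(\alpha_K)) \cong \widehat{HF}(\mathcal{H}_z(\alpha_K)) \cong \mathbb{F}$ (the vertical and horizontal homologies have rank one — this was noted right after the statement of Proposition \ref{Proposition, Maslov index of periodic domain}), Definition \ref{Definition, w- and z- gradings} applies: the absolute $w$-grading is pinned by requiring $\widehat{HF}(\mathcal{H}_w)$ to sit in grading $0$, and similarly for $z$. On the bordered side the same rank-one statement holds and the absolute grading is normalized identically (this is the content of \cite[Section 11.4--11.5]{LOT18} / \cite[Theorem 11.19]{LOT18}), so the two absolute bigradings agree as well. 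Putting the relative and absolute statements together gives a bigraded isomorphism, which is Theorem \ref{Theorem, graded isomorphism of gCFKminus}; the graded part of Theorem \ref{Theorem, main theorem} then follows by the symmetry of $w$ and $z$ and the chain-homotopy equivalences of Lemma \ref{Lemma, handleslides and destabilization diagrams} and Lemma \ref{Lemma, bi-admissibility of H_P(alpha_im)}, each of which preserves gradings because every Heegaard move involved preserves $\mathrm{ind}$, $n_w$, and $n_z$ (as already observed in the handleslide remark preceding Proposition \ref{Proposition, invariance of CFK_R}).

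The main obstacle I anticipate is Step (2): carefully identifying the grading formula coming from the bordered Floer grading package — which is phrased in terms of the noncommutative grading group of the torus algebra and the refined grading on $\widehat{CFD}(\alpha_K)$ — with the naive domain formula $\mathrm{ind}(B) - 2n_\ast(B)$ on the pairing diagram. The bridge is Proposition \ref{Proposition, identify two ways to compute the Maslov index for genus one diagram}, but applying it requires the reduction to genus one and a compatibility check that the grading shift recorded by each elementary arc of $\alpha_K$ (i.e., by each differential of $\widehat{CFD}(\alpha_K)$) matches the Euler-measure contribution $\tfrac{1}{2}|\overrightarrow{\rho}(\partial_{\alpha_{im}}B)|$ from Proposition \ref{Proposition, relating domains in two different Heegaard diagrams} together with the $|\overrightarrow\rho|$ term in the embedded index. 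This is essentially a finite check over the six types of elementary arcs of Figure \ref{Figure, elemenatary arcs}, but it must be done with the correct conventions for how the grading package assigns gradings to Reeb chords, and getting all the signs and the $n_w$-versus-$n_z$ asymmetry right is where the care is needed.
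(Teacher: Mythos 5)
Your skeleton matches the paper's: establish the ungraded isomorphism, reduce the graded statement to an identity comparing the domain formula $\mathrm{ind}(B)-2n_{*}(B)$ with the grading package of \cite{LOT18}, push the identity down to the genus-one case where Proposition \ref{Proposition, identify two ways to compute the Maslov index for genus one diagram} applies, and then fix the absolute normalizations. However, the step where you pass from a general higher-genus domain to the genus-one case is exactly where the content of the proof lies, and the mechanism you propose does not work. You say the reduction goes ``via stabilization, exactly as in the proof of Proposition \ref{Proposition, Maslov index of periodic domain}.'' In that proposition the domain is a \emph{periodic} domain bounded by a nullhomologous component of $\alpha_K$: it has no $\beta$-boundary, lives entirely in the genus-one summand after splitting the Heegaard surface, and therefore literally destabilizes. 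A general class $B\in\pi_2(\bm{x}_1\otimes y_1,\bm{x}_2\otimes y_2)$ has boundary on all $g$ of the $\beta$-curves and on the embedded $\alpha$-curves, and does not destabilize to a genus-one domain in any comparable sense.

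What the paper actually does is crop $B$ and its image $B'$ to a neighborhood $N$ of $\alpha_K$ and the $\alpha$-arcs, decompose the cropped piece as $B_{main}+B_{sub}$ where $B_{main}$ is an immersed disk carrying all of $\partial_{\alpha_K}B$ (Lemma \ref{Lemma, main sub decomposion of the cropped region}, which requires realizing $B$ by a branched-cover representative to extract $B_{main}$), verify that $B_{sub}$ contributes equally to both sides of the target identity (Lemma \ref{Lemma, grading comparison reduction}), and then \emph{extend} $(B_{main},N)$ to a genus-one diagram whose $\beta$-curve may be immersed (Lemma \ref{Lemma, extending B main}); only then can the genus-one grading identification be invoked, and even then one needs the immersed-$\beta$ extension of Proposition \ref{Proposition, identify two ways to compute the Maslov index for genus one diagram} and of the identification in \cite{Chen2019}. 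None of this appears in your outline, and your fallback of ``a finite check over the six types of elementary arcs'' is not a substitute: the terms that must be matched include $n_{x_i^a\otimes y_i}(B)$ versus $n_{x_i^a}(B')$ and the Maslov component $m(\partial_{\alpha_K}B)$ versus the self-intersection count $s(\partial_{\alpha_K}B)$, which depend on the geometry of the whole domain near $\alpha_K$ and not merely on which elementary arcs occur in its boundary. (A secondary point: the paper treats the $w$-grading separately and more simply via the Alexander/$\mathrm{Spin}^c$ components of the grading functions, rather than by the $w$--$z$ symmetry you invoke; $CFA^-$ is not symmetric in $w$ and $z$.)
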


 When $\mathcal{H}_{w,z}$ is a genus-one Heegaard diagram, Theorem \ref{Theorem, graded isomorphism of gCFKminus} is true by Proposition \ref{Proposition, identify two ways to compute the Maslov index for genus one diagram} and \cite[Theorem 1.2]{Chen2019}: Generalizing the corresponding argument in \cite{Hanselman2022},  \cite[Theorem 1.2]{Chen2019} shows the Maslov grading on ${CFA}^-(\mathcal{H}_{w,z})\boxtimes {\widehat{CFD}(\alpha_K)}$ identifies with the Maslov grading on $gCFK^-(\mathcal{H}_{w,z}(\alpha_{K}))$ defined via the net rotation numbers. (Strictly speaking, \cite{Chen2019} works with $\widehat{CFK}(-)$ instead of $gCFK^{-}$, but these two versions of knot Floer complexes are equivalent to each other.) Our strategy for proving Theorem \ref{Theorem, graded isomorphism of gCFKminus} is to reduce the higher-genus case to the genus-one case. 
 
\begin{proof}[Proof of Theorem \ref{Theorem, graded isomorphism of gCFKminus}]

First, we claim that $gCFK^-(\mathcal{H}_{w,z}(\alpha_{K}))$ is isomorphic to ${CFA}^-(\mathcal{H}_{w,z})\boxtimes \widehat{CFD}(\alpha_K)$ as ungraded chain complexes; this follows from the ungraded version of Theorem \ref{Theorem, main theorem}. Next, we will show that the $z$-gradings are identified. Since the $z$-gradings on both chain complexes are normalized by the same rule, it suffices to prove the relative $z$-gradings are the same. 

We set up some notation. Let $\Sigma$ and $\Sigma'$ denote the Heegaard surface for $\mathcal{H}_{w,z}(\alpha_{K})$ and $\mathcal{H}_{w,z}$ respectively. Let $\bm{x}_i$ (for $i\in \{1,2\}$) denote a generator of $CFA^-(\mathcal{H}_{w,z})$ and let $y_i$ (for $i\in \{1,2\}$) denote a generator of $\widehat{CFD}(\alpha_K)$; correspondingly, we use $\bm{x}_i\otimes y_i$ to denote a generator of $gCFK^-(\mathcal{H}_{w,z}(\alpha_{K}))$ (where we assume the relevant idempotents match up). Let $B\in \tilde{\pi}_2(\bm{x}_1\otimes y_1,\bm{x}_2\otimes y_2)$ be a domain. For a technical reason, we will assume $B$ is a positive domain; this can always be achieved by adding sufficiently many copies of the Heegaard surface $[\Sigma]$. We will use $B$ to compute the relative $z$-gradings of  $\bm{x}_1\otimes y_1$ and $\bm{x}_2\otimes y_2$ in two ways and compare them: one in Definition \ref{Definition, w- and z- gradings} and one via the grading package in bordered Floer homology; we refer the readers to \cite[Section 2]{Chen2019} for a brief summary of this grading package.

Let $k=n_z(B)$ and write $B=B_0+k[\Sigma]$. Let $B_0'=\Phi(B_0)$ where $\Phi$ denotes the collapsing map, and let $B'=B_0'+k[\Sigma']$. Let $\partial_{\alpha_K} B$ denote the portion of the boundary of $B$ lying on $\alpha_K$. Note $\partial_{\alpha_{K}}(B)=\partial_{\alpha_K}(B_0)$.

We compute the $z$-grading difference using Definition \ref{Definition, w- and z- gradings}.
By Definition \ref{Definition, embedded Euler Char, index, and moduli space}, $\text{ind}(B)=e(B)+n_{\bm{x}_1\otimes y_1}(B)+n_{\bm{x}_2\otimes y_2}(B)-s(\partial_{\alpha_{K}}B)$. Therefore, we have the following equation. 
\begin{equation}\label{Equation, grading difference via domains}
	gr_z(\bm{x}_2\otimes y_2)-gr_z(\bm{x}_1\otimes y_1)=-e(B)-n_{\bm{x}_1\otimes y_1}(B)-n_{\bm{x}_2\otimes y_2}(B)+s(\partial_{\alpha_{K}}B)+2n_z(B).
\end{equation}

We now compute the $z$-grading obtained from the box-tensor product, and we will use $gr_z^{\boxtimes}$ to distinguish it with the $z$-grading computed above. Note $B'_0$ is a domain in $\Sigma'$ with $n_z(B'_0)=0$ and it connects $\bm{x}_1$ to $\bm{x}_2$. Let $gr_A$ denote the grading function for $CFA^-$ that consists of the Maslov component and the $Spin^c$-component. Then 
$$
gr_A(\bm{x}_2)=gr_A(\bm{x}_1)\cdot(-e(B'_0)-n_{\bm{x}_1}(B'_0)-n_{\bm{x}_2}(B'_0),[\partial^\partial B'_0]).
$$
Here, $\partial^\partial B_0'$ denotes the portion of the oriented boundary of $B_0'$ on $\partial \overline{\Sigma'}$, and $[\partial^\partial B_0']$ denotes the $Spin^c$- component of $gr_A$ determined by the homology class of $\partial^\partial B_0'$. 

Note $\partial_{\alpha_K}(B)$ determines a sequence of type D operations connecting $y_1$ to $y_2$, giving rise to 
$$
gr_D(y_2)=(m(\partial_{\alpha_{K}}B),-[\partial_{\alpha_{K}}B])\cdot gr_D(y_1)
$$
Here $gr_D$ denotes the grading function on $\widehat{CFD}(\alpha_{K})$ and $m(\partial_{\alpha_{K}}B)$ denotes the Maslov component of the grading; we will not need a specific formula for $m(\partial_{\alpha_{K}}B)$.

Note $[\partial^\partial B_0]=[\partial_{\alpha_{K}}B]$ in view of the definition of the collapsing map. Therefore,
$$gr_A(\bm{x}_2)gr_D(y_2)=(-e(B'_0)-n_{\bm{x}_1}(B'_0)-n_{\bm{x}_2}(B'_0)+m(\partial_{\alpha_K}B),0)\cdot gr_A(\bm{x}_1)gr_D(y_1).$$
Hence, 
$$
gr^\boxtimes_z(\bm{x}_2\otimes y_2)-gr^\boxtimes_z(\bm{x}_1\otimes y_1)=-e(B'_0)-n_{\bm{x}_1}(B'_0)-n_{\bm{x}_2}(B'_0)+m(\partial_{\alpha_K}B).
$$
Since $B'=B_0'+k[\Sigma']$, the above equation is equivalent to 
\begin{equation}\label{Equation, grading difference from boxtensor}
gr^\boxtimes_z(\bm{x}_2\otimes y_2)-gr^\boxtimes_z(\bm{x}_1\otimes y_1)=-e(B')-n_{\bm{x}_1}(B')-n_{\bm{x}_2}(B')+m(\partial_{\alpha_K}B)+n_z(B)
\end{equation}
Comparing Equation \ref{Equation, grading difference from boxtensor} and \ref{Equation, grading difference via domains}, identifying both $z$-gradings is equivalent to proving the following equation:
\begin{equation}\label{Equation, grading comparison target equation}
	\begin{aligned}
		-e(B')-n_{\bm{x}_1}(B')-&n_{\bm{x}_2}(B')+m(\partial_{\alpha_K}B)\\&=-e(B)-n_{\bm{x}_1\otimes y_1}(B)-n_{\bm{x}_2\otimes y_2}(B)+n_z(B)+s(\partial_{\alpha_{K}}B).
	\end{aligned}
\end{equation}
Equation \ref{Equation, grading comparison target equation} is true when $\mathcal{H}_{w,z}$ is a genus-one Heegaard diagram by \cite{Chen2019} and Proposition \ref{Proposition, identify two ways to compute the Maslov index for genus one diagram}; see the discussion before the proof. We will reduce the proof of the higher-genus case to the genus-one case. To do so, we will first crop both $B$ and $B'$, leaving only a portion of the domains near $\alpha_K$ and the $\alpha$-arcs respectively; we can reduce the proof of the grading identification to a claim involving some quantities of the cropped domains; later on, we will extend the cropped domains to domains in genus-one Heegaard diagrams, where we can use the grading identification to derive the desired claim on the cropped domains. (See Figure \ref{Figure, CropExtend} for an illustration of the cropping and extending procedures.) 

	\begin{figure}[htb!]
	\centering{
		\includegraphics[scale=0.5]{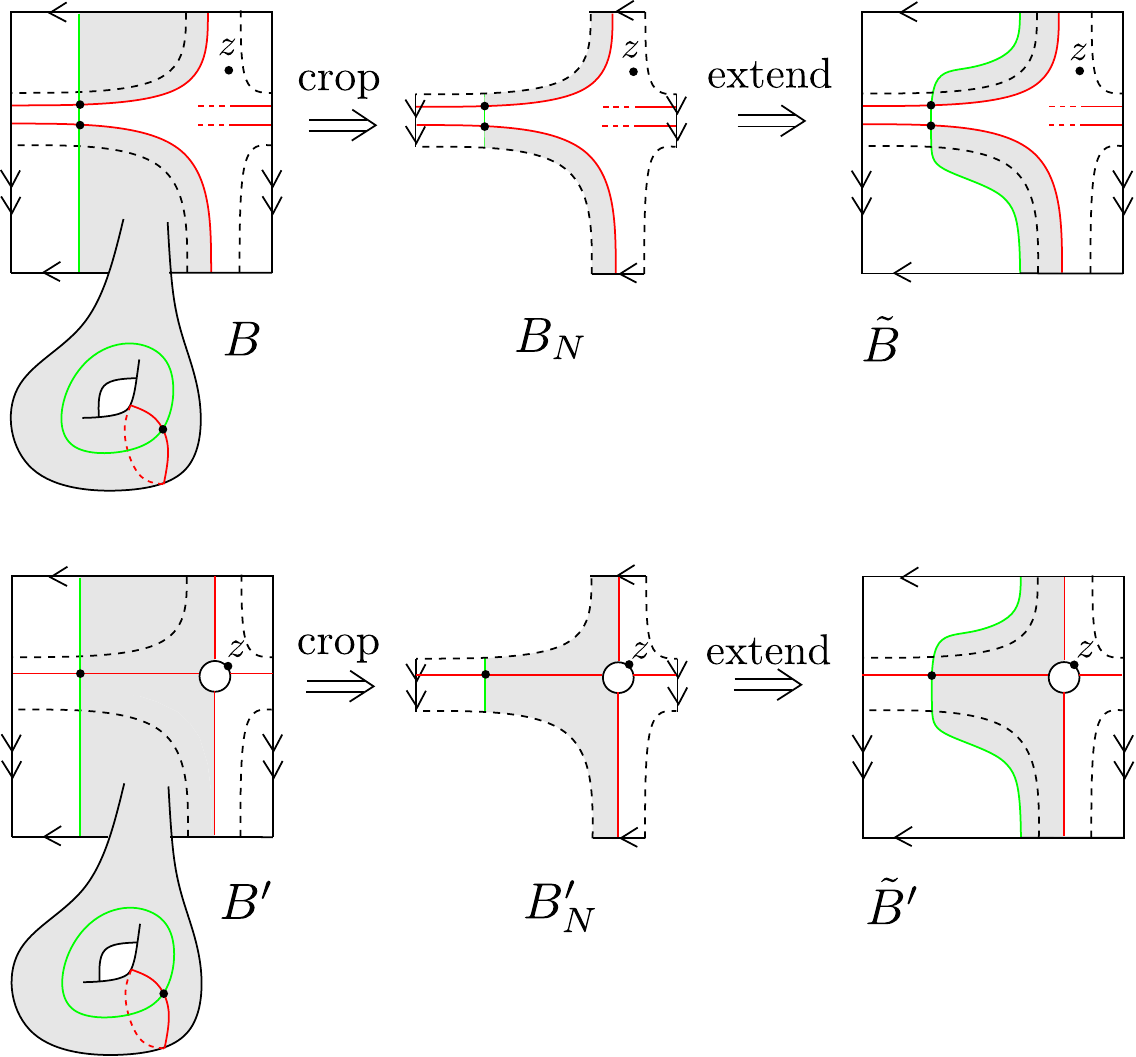}
		\caption{The cropping and extending procedures. The lower row is obtained from the upper row by the collapsing operation.}
		\label{Figure, CropExtend}
	}
\end{figure}

We spell out the cropping procedure. Let $N$ be a closed subset in $\overline{\Sigma'}$ given by the union of three subsets $R\cup S_1\cup S_2$ satisfying the following requirements: 
\begin{itemize}
\item[(1)] $R$ is a collar neighborhood of $\partial\overline{\Sigma'}$, and $S_i$ ($i=1,2$) is a neighborhood of $\alpha^a_i$ homeomorphic to $[0,1]\times[0,1]$ where $\alpha_i^a$ is identified with $[0,1]\times\{0\}$;
\item[(2)] $\beta$-curves do not intersect $R$;
\item[(3)] if a $\beta$-curve intersects some $S_i$ ($i=1,2$), the intersections are arcs of the form $\{p\}\times[0,1]$ for some $p\in[0,1]$. 
\end{itemize} 

One can think about $N$ as the image under the collapsing map of a slightly larger neighborhood than the one specified in Definition \ref{Definition, collapsing operation} (Step 1). Abusing the notation, we will also use $N$ to denote the inverse image of $N$ in $\Sigma$ under the collapsing map.

Let $B_N=B\cap N$ and let $B_{N^c}=\overline{B-B_N}$. Then $B=B_N+B_{N^c}$. Similarly we can define $B'_N$ and $B'_{N^c}$ and have $B'=B'_N+B'_{N^c}$. Let $x_i^a$ $(i=1,2)$ be the component of $\bm{x}_i$ on the $\alpha$-arcs and let $\hat{\bm{x}}_i$ denote the remaining components; we have $\bm{x}_i=\hat{\bm{x}}_i\cup \{x_i^a\}$. Similarly, we have $\bm{x}_i\otimes y_i=\hat{\bm{x}}_i\cup \{x_i^a\otimes y_i\}$. Now we claim that to prove Equation \ref{Equation, grading comparison target equation}, we only need to prove it over $N$, i.e., proving the following equation:
\begin{equation}\label{Equation, grading comparsion equation cropped}
\begin{aligned}
	-e(B_N')-&n_{x_1^a}(B_N')-n_{x_2^a}(B_N')+m(\partial_{\alpha_K}B_N)\\&=-e(B_N)-n_{x_1^a \otimes y_1}(B_N)-n_{x_2^a \otimes y_2}(B_N)+n_z(B_N)+s(\partial_{\alpha_{K}}B_N).
\end{aligned}
\end{equation}
The claim follows from Equation \ref{Equation, helper 1}--\ref{Equation, helper 3} below.
\begin{equation}\label{Equation, helper 1}
	\begin{aligned}
-e(B')-n_{\bm{x}_1}(B')-n_{\bm{x}_2}(B')+&m(\partial_{\alpha_K}B)=-e(B_{N^c}')-n_{\hat{\bm{x}}_1}(B_{N^c}')-n_{\hat{\bm{x}}_2}(B_{N^c}')\\
&-e(B_N')-n_{x_1^a}(B_N')-n_{x_2^a}(B_N')+m(\partial_{\alpha_K}B_N)
	\end{aligned}
\end{equation}
Here we used $\partial_{\alpha_K}B_N= \partial_{\alpha_K}B$ and the additivity of the other terms.

\begin{equation}\label{Equation, helper 2}
\begin{aligned}
	-&e(B)-n_{\bm{x}_1\otimes y_1}(B)-n_{\bm{x}_2\otimes y_2}(B)+n_z(B)+s(\partial_{\alpha_{K}}B)=-e(B_{N^c})-n_{\hat{\bm{x}}_1}(B_{N^c})\\
	&-n_{\hat{\bm{x}}_2}(B_{N^c})-e(B_N)-n_{x_1^a \otimes y_1}(B_N)-n_{x_2^a \otimes y_2}(B_N)+n_z(B_N)+s(\partial_{\alpha_{K}}B_N)
\end{aligned}
\end{equation}
Here we used $n_z(B)=n_z(B_N)$ and  $\partial_{\alpha_K}B_N= \partial_{\alpha_K}B$.

\begin{equation}\label{Equation, helper 3}
-e(B_{N^c}')-n_{\hat{\bm{x}}_1}(B_{N^c}')-n_{\hat{\bm{x}}_2}(B_{N^c}')=-e(B_{N^c})-n_{\hat{\bm{x}}_1}(B_{N^c})-n_{\hat{\bm{x}}_2}(B_{N^c})
\end{equation}
Here we used the identification of $B_{N^c}$ and $B_{N^c}'$ under the collapsing map. 

With Equation \ref{Equation, helper 1}--\ref{Equation, helper 3}, it is clear that Equation \ref{Equation, grading comparison target equation} is equivalent to Equation \ref{Equation, grading comparsion equation cropped}.

We need to further process the domains $B_N$ and $B_N'$ before we can appeal to the genus-one case. 

\begin{lem}\label{Lemma, main sub decomposion of the cropped region}
	$B_N=B_{main}+B_{sub}$ where $B_{main}$ is an immersed disk in $\Sigma$ such that $\partial_{\alpha_{K}}B_{main}=\partial_{\alpha_{K}}B$ and $B_{sub}$ is a domain whose boundaries consist of arcs in $\partial N$ or connected components of $(\beta\text{-curves} \cap N)$. In particular, the above decomposition also induces a decomposition $B'_N=B'_{main}+B'_{sub}$ via applying the collapsing map.
\end{lem}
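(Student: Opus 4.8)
\textbf{Proof proposal for Lemma \ref{Lemma, main sub decomposion of the cropped region}.}

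The plan is to build $B_{main}$ directly from the $\alpha_K$-boundary data of $B$ inside $N$ and then define $B_{sub}$ as the complement. First I would recall the structure of $N = R \cup S_1 \cup S_2$ and note that on the pairing diagram the portion of $\partial B$ lying on $\alpha_K$ is contained in the "collar-and-boxes" region: the $\alpha_K$-boundary $\partial_{\alpha_K}B$ decomposes (after the collapsing identification) into a sequence of elementary sub-arcs, each of which traverses one of the boxes $S_i$ or runs near $\partial\bar\Sigma'$ as prescribed by assumption (C-4) on the immersed multicurve and by Step 2 of the collapsing operation. I would then construct $B_{main}$ as the unique positive immersed sub-disk of $B_N$ whose boundary consists of $\partial_{\alpha_K}B$ together with the obvious "closing up" arcs along $\partial N$ and along the short pieces of $\beta$-curves that connect the endpoints $x_1^a$ and $x_2^a$ of $\partial_{\alpha_K}B$; concretely, $B_{main}$ is the sum over the elementary sub-arcs of $\partial_{\alpha_K}B$ of the simple domains of Figure \ref{Figure, simple domains} together with one "core" disk carrying the generators. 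This is the same local picture used in the proof of Proposition \ref{Proposition, relating domains in two different Heegaard diagrams}, so the existence and positivity of $B_{main}$ follow from $z$-adjacency of $\alpha_K$ exactly as there. Setting $B_{sub} := B_N - B_{main}$, it remains to check that $\partial B_{sub}$ contains no arc of $\alpha_K$ (by construction $B_{main}$ absorbs all of $\partial_{\alpha_K}B = \partial_{\alpha_K}B_N$, and the two domains share the connecting $\beta$- and $\partial N$-arcs with opposite multiplicities), which gives the claimed form of $B_{sub}$.

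The key steps, in order, are: (i) make precise the decomposition of $\partial_{\alpha_K}B$ into elementary sub-arcs inside $N$, using $z$-adjacency and the already-established fact (Proposition \ref{Proposition, relating domains in two different Heegaard diagrams}) that all these elementary arcs are correctly oriented since $B$ is positive with $n_z(B)=0$; (ii) define $B_{main}$ as the associated sum of simple domains plus a core disk and verify it is a genuine immersed disk with $\partial_{\alpha_K}B_{main} = \partial_{\alpha_K}B$; (iii) define $B_{sub} = B_N - B_{main}$ and verify, region by region, that its boundary lies in $\partial N \cup (\bm\beta \cap N)$; (iv) observe that the collapsing map $\Phi$ is defined on all of $N$ and carries $B_{main}$, $B_{sub}$ to domains $B'_{main}$, $B'_{sub}$ in $\mathcal{H}_{w,z}$ with $\Phi(B_{main}) + \Phi(B_{sub}) = \Phi(B_N) = B'_N$, since $\Phi$ is additive on domains.

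I expect the main obstacle to be step (ii)–(iii): namely, checking that after subtracting the "teardrop/rectangle" simple domains from $B_N$ what is left genuinely has no $\alpha_K$-boundary, and in particular that the core disk can be chosen so that $B_{main}$ is an honest immersed disk rather than a more complicated immersed surface. This is where one must use that $\partial_{\alpha_K}B$ is a single (zero- or one-cornered, here the relevant domains connecting generators produce a single-component $\alpha_K$-boundary) arc together with $z$-adjacency and assumption (C-4), so that the simple domains attach along $\partial N$ in a linear, non-overlapping fashion — essentially the content already extracted in the proof of Proposition \ref{Proposition, relating domains in two different Heegaard diagrams}, applied now to the cropped piece $B_N$ instead of the full domain. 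Once this local analysis is in hand, the rest (additivity under $\Phi$, the form of $\partial B_{sub}$) is routine bookkeeping.
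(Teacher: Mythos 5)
Your construction does not match the paper's, and the difference is exactly where the real content of the lemma lies. The paper does not build $B_{main}$ combinatorially out of simple domains at all: it invokes \cite[Lemma $4.1'$]{Lipshitz2014a} to produce a smooth representative $u\colon S\to \Sigma\times[0,1]\times\mathbb{R}$ of the positive class $B$ for which $\pi_\Sigma\circ u$ is a branched covering and $u^{-1}(\alpha_K\times\{1\}\times\mathbb{R})$ is a \emph{single} boundary arc of $S$; then $B_{main}$ is defined as the image of the connected component of $(\pi_\Sigma\circ u)^{-1}(N)$ containing that arc, which (after shrinking $N$) is a disk. The immersed-disk structure of $B_{main}$ — the one property that is actually used later, in Lemma \ref{Lemma, extending B main}, where $B_{main}$ is treated as a map $f\colon D_{main}\to N$ whose boundary splits as $b_1\cup b_2$ — comes for free from this construction. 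Your proposal identifies this as "the main obstacle" but does not overcome it: a sum of simple domains plus a "core disk" is only a $2$-chain, and Proposition \ref{Proposition, relating domains in two different Heegaard diagrams}, which you lean on, is a statement about multiplicities and Euler measures of $2$-chains, not about realizing a domain as the image of an immersed disk. Without an argument that your $B_{main}$ is the pushforward of a disk, the lemma as stated (and its downstream use) is not proved.

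There is a second, more technical problem with step (i). The correct-orientation statement you import from Proposition \ref{Proposition, relating domains in two different Heegaard diagrams} requires $B$ positive \emph{and} $n_z(B)=0$ (and $z$-adjacency). In the grading argument where this lemma is applied, $B$ has been made positive by adding $k=n_z(B)$ copies of $[\Sigma]$, so positivity and $n_z=0$ do not hold simultaneously for any domain in play; consequently $\partial_{\alpha_K}B$ may backtrack and your simple domains would have to enter with mixed signs, breaking the "linear, non-overlapping" attachment you describe. The branched-cover representative sidesteps both issues at once, which is why the paper's proof is short. If you want to keep a hands-on construction, the fix is to start from a smooth representative of the homology class with a single $\alpha_K$-boundary arc and crop it, rather than assembling $B_{main}$ from local model domains. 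Your step (iv) (additivity of the collapsing map) is fine.
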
 

\begin{proof}[Proof of Lemma \ref{Lemma, main sub decomposion of the cropped region}]
Recall that $B$ is a positive domain. By \cite[Lemma $4.1'$]{Lipshitz2014a}\footnote{Strictly speaking, we need a version of \cite[Lemma $4.1'$]{Lipshitz2014a} where the $\alpha$ curves are only immersed, but this can be proved the same way as if the $\alpha$ curves are embedded.}, there a smooth map $u:S\rightarrow \Sigma\times[0,1]\times \mathbb{R}$ representing the homology class $B$ such that $u^{-1}(\alpha_K\times\{1\}\times \mathbb{R})$ consists of a single arc in $\partial S$ and the map $\pi_{\Sigma}\circ u$ is a branched covering. Let $D_{main}$ be the connected component of $(\pi_{\Sigma}\circ u)^{-1}(N)$ that contains the boundary arc of $S$ that maps to $\alpha_{K}$. Then up to shrinking $N$, we may assume $D_{main}$ is homeomorphic to a disk. Let the domain $B_{main}$ be the image of $\pi_{\Sigma}\circ u$ restricted to $D_{main}$. Let $B_{sub}=B-B_{main}$. By construction, $\partial_{\alpha_{K}}B_{main}=\partial_{\alpha_{K}}B$. Therefore, the boundaries of the regions in $B_{sub}$ do not involve $\alpha_{K}$ and hence must consist of arcs on $\partial N$ and the $\beta$-curves. 
\end{proof}

\begin{lem}\label{Lemma, grading comparison reduction}
Equation \ref{Equation, grading comparsion equation cropped} is equivalent to the following equation:
\begin{equation}\label{Equation, grading comparision reduction}
	\begin{aligned}
	-&e(B_{main}')-n_{x_1^a}(B_{main}')-n_{x_2^a}(B_{main}')+m(\partial_{\alpha_K}B_{main})\\&=-e(B_{main})-n_{x_1^a \otimes y_1}(B_{main})-n_{x_2^a \otimes y_2}(B_{main})+n_z(B_{main})+ s(\partial_{\alpha_{K}}B_{main}).
	\end{aligned}
\end{equation}
\end{lem}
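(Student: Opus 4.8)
\textbf{Proof proposal for Lemma \ref{Lemma, grading comparison reduction}.}

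The plan is to show that the passage from Equation \ref{Equation, grading comparsion equation cropped} to Equation \ref{Equation, grading comparision reduction} amounts to verifying that the $B_{sub}$-contributions to both sides cancel. Using the decomposition $B_N = B_{main} + B_{sub}$ of Lemma \ref{Lemma, main sub decomposion of the cropped region} (and the corresponding $B_N' = B_{main}' + B_{sub}'$ obtained by applying the collapsing map), all the terms appearing in Equation \ref{Equation, grading comparsion equation cropped} are additive under the sum of domains: the Euler measure $e(-)$ is additive, the local multiplicities $n_p(-)$ at any point $p$ are additive, $n_z(-)$ is additive, and $m(\partial_{\alpha_K}(-))$ depends only on $\partial_{\alpha_K}$, which is entirely carried by $B_{main}$ (so $m(\partial_{\alpha_K}B_N) = m(\partial_{\alpha_K}B_{main})$ and likewise $s(\partial_{\alpha_K}B_N) = s(\partial_{\alpha_K}B_{main})$). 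Thus, subtracting Equation \ref{Equation, grading comparision reduction} from Equation \ref{Equation, grading comparsion equation cropped}, what remains to be shown is precisely
$$
-e(B_{sub}') - n_{x_1^a}(B_{sub}') - n_{x_2^a}(B_{sub}') = -e(B_{sub}) - n_{x_1^a\otimes y_1}(B_{sub}) - n_{x_2^a\otimes y_2}(B_{sub}) + n_z(B_{sub}).
$$
So the heart of the lemma is this identity for the ``subordinate'' domain.

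First I would record that $B_{sub}$ (and hence $B_{sub}'$) has all of its boundary on $\partial N$ and on the $\beta$-curves inside $N$, by Lemma \ref{Lemma, main sub decomposion of the cropped region}. In particular $B_{sub}$ avoids $z$ except possibly in the collar region $R$; but by the construction of $N$ in Definition \ref{Definition, collapsing operation} (Step 1) and the placement of $z$, one checks $n_z(B_{sub}) = 0$, so the $n_z$ term drops out. Next I would analyze how the collapsing map relates $e(B_{sub})$ to $e(B_{sub}')$: the collapsing operation collapses the rectangular strips $h^{-1}(\pi([-\tfrac14+\epsilon,\tfrac14-\epsilon]\times[\tfrac14,\tfrac34]))$ and the analogous vertical strip onto their cores $\alpha^{a,R}_1$ and $\alpha^{a,R}_2$. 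Since those strips are unions of rectangles (Euler measure zero), and since the $\beta$-arcs inside $S_i$ meet $\alpha^a_i$ transversely in the standard way, collapsing does not change the Euler measure of a domain supported there — exactly as in the proof of Proposition \ref{Proposition, relating domains in two different Heegaard diagrams}, where the discrepancy between $e(B)$ and $e(B')$ was entirely accounted for by the $\tfrac12$-measure simple domains attached to elementary sub-arcs of $\partial_{\alpha_{im}}B$, and $B_{sub}$ has no such sub-arcs. Hence $e(B_{sub}') = e(B_{sub})$. Finally, the local multiplicities at $x_i^a$ versus $x_i^a\otimes y_i$: the generator component $x_i^a\otimes y_i$ of $B_N$ lives on $\alpha_{im}$ inside a horizontal or vertical strip, and under the collapsing map it maps to $x_i^a$ on the core arc; since collapsing a strip to its core preserves the local multiplicity of a domain at the image point (the strip being swept out by arcs of constant multiplicity), we get $n_{x_i^a}(B_{sub}') = n_{x_i^a\otimes y_i}(B_{sub})$ for $i = 1,2$. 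Assembling these three equalities gives the displayed identity, completing the reduction.

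I expect the main obstacle to be the bookkeeping around the local multiplicities at the generator points under the collapsing map — specifically making precise that, for a domain whose boundary is confined to $\partial N$ and the $\beta$-arcs, collapsing a foliated strip onto its core arc preserves $n_p$ at the image point, and that the point $x_i^a \otimes y_i$ indeed maps to $x_i^a$. This is morally clear from the structure of the pairing construction and the description of generators in Remark \ref{Remark, collapsing map} (3), but it requires carefully tracking that no correction terms (of the kind appearing in Proposition \ref{Proposition, relating domains in two different Heegaard diagrams}) are hidden in $B_{sub}$. Once one is confident that $B_{sub}$ contains no elementary sub-arcs on $\alpha_{im}$ — which is guaranteed since $\partial_{\alpha_K}B_{main} = \partial_{\alpha_K}B$ absorbs the entire $\alpha_K$-boundary — the argument is a routine additivity computation, and the rest of the proof of Theorem \ref{Theorem, graded isomorphism of gCFKminus} will then proceed by extending $B_{main}$ to a domain in a genus-one diagram and invoking the genus-one case together with Proposition \ref{Proposition, identify two ways to compute the Maslov index for genus one diagram}.
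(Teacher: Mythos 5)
Your overall reduction is the same as the paper's: use additivity of all the terms over $B_N = B_{main} + B_{sub}$, note that $m(\partial_{\alpha_K}\cdot)$ and $s(\partial_{\alpha_K}\cdot)$ see only $B_{main}$ since $\partial_{\alpha_K}B_{main} = \partial_{\alpha_K}B_N$, and then check that the $B_{sub}$-contributions to the two sides of Equation \ref{Equation, grading comparsion equation cropped} agree. Your displayed target identity for $B_{sub}$ is exactly the right thing to prove. The gap is in how you prove it: both of your supporting claims, $n_z(B_{sub}) = 0$ and $e(B_{sub}') = e(B_{sub})$, are false in general. The basepoint $z$ lies \emph{in} the collar region $R \subset N$ (it is adjacent to $\partial\overline{\Sigma'}$ in the doubly pointed bordered diagram), and $B$ was arranged to be positive by adding $k = n_z(B)$ copies of $[\Sigma]$; that multiplicity near $z$ is split between $B_{main}$ and $B_{sub}$, and there is no reason for $B_{sub}$'s share to vanish — indeed $k$ can be arbitrarily large while $B_{main}$ is a single immersed disk. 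Your appeal to Proposition \ref{Proposition, relating domains in two different Heegaard diagrams} for the Euler measure also does not apply here, because that proposition assumes $n_z = 0$, whereas the whole point of this part of the argument is that the domains are allowed to cover $z$.

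The correct statement, which is what the paper uses, is the single relation $e(B_{sub}') = e(B_{sub}) - n_z(B_{sub})$: passing from $B_{sub}$ to $B_{sub}'$ removes $n_z(B_{sub})$ disks near $z$ (Step 2 of the collapsing operation enlarges a hole there, equivalently $k[\Sigma]$ is replaced by $k[\Sigma']$), and each removed disk of multiplicity one lowers the Euler measure by one. This is precisely the mechanism that cancels the $+\,n_z(B_{sub})$ term on the right-hand side of your target identity. So your two errors are correlated in a way that happens to leave the final identity true, but as written the justification is wrong; replacing your two claims by the single Euler-measure relation above (together with your correct observation that $n_{x_i^a}(B_{sub}') = n_{x_i^a\otimes y_i}(B_{sub})$) repairs the proof and recovers the paper's argument.
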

\begin{proof}[Proof of Lemma \ref{Lemma, grading comparison reduction}]
The lemma will follow from verifying that $B_{sub}'$ and $B_{sub}$ contribute equally to the left- and right-hand side of Equation \ref{Equation, grading comparsion equation cropped} respectively. 

First, $-e(B_{sub}')=-e(B_{sub})+n_z(B_{sub})$ since $B_{sub}'$ is obtained from $B_{sub}$ by removing $n_z(B_{sub})$ many disks near $z$ (and doing collapses that do not affect the Euler measures). Secondly, $n_{x_i^a}(B_{sub}')=n_{x_i^a\otimes y_i}(B_{sub})$ for $i=1,2$. To see this, note that $x_i^a\otimes y_i$ is either in its interior or on some beta arc that appears on $\partial B_{sub}$; the collapsing map sends $x_i^a\otimes y_i$ to $x_i$ which lies in the interior of $B_{sub}'$ or on some beta-arc boundary correspondingly; the local multiplicities of $B_{sub}$ at $x_i^a\otimes y_i$ and local multiplicities of $B'_{sub}$ at $x_i$ are the same. Finally, $m(\partial_{\alpha_K}B_{main})=m(\partial_{\alpha_K}B_{N})$ since $\partial_{\alpha_K}B_{main}=\partial_{\alpha_K}B_{N}$. Lemma \ref{Lemma, grading comparison reduction} follows readily from combining these observations and the following two equations:

\begin{equation}
\begin{aligned}
-e(B_N')-&n_{x_1^a}(B_N')-n_{x_2^a}(B_N')+m(\partial_{\alpha_K}B_N)\\&=-e(B_{main}')-n_{x_1^a}(B_{main}')-n_{x_2^a}(B_{main}')+m(\partial_{\alpha_K}B_{main})\\&-e(B_{sub}')-n_{x_1^a}(B_{sub}')-n_{x_2^a}(B_{sub}')
\end{aligned}
\end{equation}

\begin{equation}
\begin{aligned}
-&e(B_N)-n_{x_1^a \otimes y_1}(B_N)-n_{x_2^a \otimes y_2}(B_N)+n_z(B_N)+s(\partial_{\alpha_{K}}B_N)\\&=-e(B_{main})-n_{x_1^a \otimes y_1}(B_{main})-n_{x_2^a \otimes y_2}(B_{main})+n_z(B_{main})+ s(\partial_{\alpha_{K}}B_{main})\\&-e(B_{sub})-n_{x_1^a \otimes y_1}(B_{sub})-n_{x_2^a \otimes y_2}(B_{sub})+n_z(B_{sub})
\end{aligned}
\end{equation}
\end{proof}

Next, we prove Equation \ref{Equation, grading comparision reduction}. We say $(B_{main},N)$ is \textit{extendable to a genus-one Heegaard diagram} if there exists a genus-one bordered Heegaard $\mathcal{H}_1$ and a domain $\tilde{B}$ in $\mathcal{H}_1(\alpha_{K})$ connecting a pair of intersection points such that the cropped domain $\tilde{B}_N$ can be identified with $B_{main}$ for some suitable chosen region $N$ in $\mathcal{H}_1(\alpha_{K})$. In this case, $B_{main}'$ can be identified with the image $\tilde{B}'_N$ of $\tilde{B}_N$ under the collapsing map. Moreover, the $\beta$ curve in $\mathcal{H}_1$ is allowed to be immersed, in which case we require $\beta$ and $\alpha_K^1$ induce linearly independent homology class so that one can define a $z$-grading on $CFK_\mathcal{R}(\mathcal{H}_1(\alpha_{K}))$ as in Definition \ref{Definition, w- and z- gradings}.

 Equation \ref{Equation, grading comparision reduction} holds as long as $(B_{main}, N)$ is extendable to a genus-one Heegaard diagram: Equation \ref{Equation, grading comparison target equation} holds for $\tilde{B}$ and $\tilde{B}'$ since this is in the genus-one case and hence Equation \ref{Equation, grading comparsion equation cropped} holds for $\tilde{B}_N$ and $\tilde{B}'_N$; as $\tilde{B}_N$ and $\tilde{B}'_N$ are identified with $B_{main}$ and $B'_{main}$ respectively, Equation \ref{Equation, grading comparision reduction} is true\footnote{Strictly speaking, when the $\beta$ curve is immersed, we need a version of Equation \ref{Equation, grading comparison target equation} that takes the $s(\partial_\beta B)$ terms into account, but this is a straightforward modification; the validity of this modified equation follows from Proposition \ref{Proposition, identify two ways to compute the Maslov index for genus one diagram} and an extension of grading identification in \cite{Chen2019} to include the case where the $\beta$-curve is immersed, which uses the same proof when $\beta$ is embedded.}. Therefore, we are left to prove the following lemma to show the $z$-gradings on $gCFK^-(\mathcal{H}_{w,z}(\alpha_{K}))$ and ${CFA}^-(\mathcal{H}_{w,z})\boxtimes \widehat{CFD}(\alpha_K)$ are the same.  

\begin{lem}\label{Lemma, extending B main}
$(B_{main},N)$ is extendable to a genus-one Heegaard diagram.
\end{lem}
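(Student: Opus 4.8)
The plan is to reverse-engineer a genus-one bordered Heegaard diagram from the local picture carried by $B_{main}$. Recall that $B_{main}$ is an immersed disk in $\Sigma$ whose $\alpha$-boundary is exactly $\partial_{\alpha_K}B$, a one-cornered (or zero-cornered) arc on $\alpha_K$, and whose remaining boundary consists of arcs on $\partial N = \partial(R\cup S_1\cup S_2)$ together with subarcs of the $\beta$-curves meeting $S_1$ and $S_2$ in the standard product form $\{p\}\times[0,1]$. The key observation is that $N$ itself, together with $\alpha_K$ and the restrictions of the $\beta$-curves, is essentially the data of a genus-one pairing diagram near the $\alpha$-arcs: $R$ is a collar of the boundary circle, and $S_1,S_2$ are neighborhoods of the two $\alpha$-arcs $\alpha_1^a,\alpha_2^a$. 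So I would first record, for the domain $B_{main}$, the finite combinatorial data it determines: which elementary arcs of $\alpha_K$ appear in $\partial_{\alpha_K}B$, their multiplicities, and the cyclic pattern of $\beta$-arc strands of $B_{main}$ entering and leaving $S_1$ and $S_2$ (each such strand being a copy of $\{p\}\times[0,1]$). Because $B_{main}$ is a disk with $n_z(B_{main})$ bounded and the multiplicities are finite, this is a finite amount of data.

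Next I would construct a genus-one bordered Heegaard diagram $\mathcal{H}_1 = (\bar{\Sigma}_1, \bar{\bm\alpha}_1, \beta_1, z)$ realizing this data. Concretely: take $\bar\Sigma_1$ to be a once-punctured torus, with the two standard $\alpha$-arcs $\alpha_1^a,\alpha_2^a$, base point $z$ on the boundary in the standard position. Pair this with $\alpha_K$ (arranged admissibly as in the main construction) to get $\mathcal{H}_1(\alpha_K)$; the neighborhood $N$ of the arcs then sits canonically inside $\mathcal{H}_1(\alpha_K)$ after a homeomorphism matching $R,S_1,S_2$ with the corresponding pieces. It remains to choose the single $\beta$-curve $\beta_1$ so that, inside $N$, it restricts to the prescribed collection of product strands of $B_{main}$, and so that $\beta_1$ (possibly immersed) together with $\alpha_K^1$ is homologically independent in $H_1$ — this last condition is arranged by, if necessary, adding a full transversal to the longitude component, which does not affect the part of $\beta_1$ inside $N$. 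Outside $N$, $\beta_1$ can be completed arbitrarily (immersed is allowed), and the cropped domain $\tilde B$ supported on the strands of $B_{main}$ extends to an honest domain $\tilde B$ in $\mathcal{H}_1(\alpha_K)$ connecting a pair of generators, with $\tilde B_N = B_{main}$ by construction and hence $\tilde B'_N = B'_{main}$ under the collapsing map. Since $\mathcal{H}_1$ is genus one, Proposition \ref{Proposition, identify two ways to compute the Maslov index for genus one diagram} (in its extension to immersed $\beta$, using the $s(\partial_\beta B)$ term) and \cite{Chen2019} give Equation \ref{Equation, grading comparison target equation} for $\tilde B,\tilde B'$, whence Equation \ref{Equation, grading comparision reduction} for $B_{main},B'_{main}$.

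I expect the main obstacle to be the \emph{realizability} step: verifying that the abstract combinatorial data extracted from $B_{main}$ — the multiplicities along $\partial_{\alpha_K}B$ and the pattern of $\beta$-strands in $S_1,S_2$ — can always be completed to a globally consistent single $\beta$-curve on a once-punctured torus (rather than needing several $\beta$-circles, or a higher-genus surface). The point is that $B_{main}$ is a disk and its $\beta$-boundary strands each run straight across one of the two product neighborhoods, so their endpoints on $\partial S_i$ come in a matched cyclic order; one must check that matching these endpoints up along $R$ and the rest of the torus can be done by a single embedded-or-immersed arc, using that the local multiplicity pattern is that of an actual positive domain. A clean way to handle this is to invoke \cite[Lemma $4.1'$]{Lipshitz2014a} already used in Lemma \ref{Lemma, main sub decomposion of the cropped region}: the holomorphic-curve source $S$ restricted to $D_{main}$ is a disk mapping to $\Sigma\times[0,1]\times\mathbb R$ with $\beta$-boundary a union of arcs, and one simply reads off $\beta_1$ inside $N$ from $\partial D_{main}$, then caps off outside $N$; the homological independence of $\beta_1$ and $\alpha_K^1$ is a codimension-one condition that can be enforced by a further small modification of $\beta_1$ away from $N$. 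Once realizability is in hand, the remaining bookkeeping (that $e$, $n_x$, $n_z$, $s(\partial_{\alpha_K})$, and $m(\partial_{\alpha_K})$ of $\tilde B$ restricted to $N$ agree with those of $B_{main}$, and that $\tilde B'$ restricted to $N$ agrees with $B'_{main}$) is immediate from the identification of the cropped regions, since all these quantities are local and additive.
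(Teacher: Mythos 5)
Your proposal is correct and, in its final form, is essentially the paper's argument: embed $N$ in a genus-one surface, read off the $\beta$-curve from the non-$\alpha_K$ part of $\partial D_{main}$, perturb it to an immersed arc, and close it up to a (possibly immersed) $\beta$-curve with $\tilde{B}=\tilde{f}(D_{main})$. The ``realizability'' obstacle you flag dissolves immediately because $D_{main}$ is a disk whose $\alpha_K$-boundary $b_1$ is a single connected arc, so its complement $b_2$ (alternating between $\beta$-arcs and $\partial N$) is also a single connected arc and automatically yields one immersed $\beta$-curve with no combinatorial matching needed.
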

\begin{proof}[Proof of Lemma \ref{Lemma, extending B main}]
We can embed $N$ in a genus-one doubly-pointed Riemann surface $\widetilde{\Sigma}$. (The $w$ base point can be placed arbitrarily and will not affect our discussion since we are dealing with the $z$-grading.) Recall $B_{main}$ is an immersed disk $f:D_{main}\rightarrow N$. In particular, $\partial D_{main}$ can be decomposed into the union of two connected sub-arcs $b_1\cup b_2$, where $b_1$ is mapped to $\alpha_K$ and $b_2$ is mapped to $\beta$ arcs and $\partial N$ alternatively. We simply perturb $f$ near the portion of $\partial D_{main}$ that is mapped to $\partial N$ to obtain a new map $\tilde{f}: D_{main}\rightarrow \widetilde{\Sigma}$ so that $\tilde{f}({b_2})$ is an immersed arc and that $\tilde{f}(D_{main})$ contains $f(D_{main})$ as a subdomain. Then, we extend $\tilde{f}({b_2})$ to a closed $\beta$-curve (which is possibly immersed). Now, the doubly-pointed genus-one Riemann surface, the newly constructed $\beta$-curve, and $\alpha_{K}$ constitute a genus-one Heegaard diagram and we can take $\tilde{B}$ to be $\tilde{f}(D_{main})$.    
\end{proof}

The above discussion finishes the identification of the $z$-gradings. Next, we show the $w$-gradings can also be identified. Equivalently, we show the Alexander gradings are identified since $A=\frac{1}{2}(gr_w-gr_z)$ and we already know the $z$-gradings are the same. Again, we will only need to show the relative gradings are the same since we will normalize both gradings via the same rule.

The corresponding proof in \cite{Chen2019} can be adapted to the current setting even though our Heegaard diagram might have a higher genus. More specifically, using the previous notations, let $\bm{x}_1\otimes y_1$ and $\bm{x}_2\otimes y_2$ be two generators of $gCFK^-(\mathcal{H}_{w,z}(\alpha_{K}))$ and let $B\in {\pi}_2(\bm{x}_1\otimes y_1,\bm{x}_2\otimes y_2)$ be a domain; we no longer require $B$ to be positive, but for convenience we assume $n_z(B)=0$. Then the Alexander grading difference of $\bm{x}_1\otimes y_1$ and $\bm{x}_2\otimes y_2$ in $gCFK^-(\mathcal{H}_{w,z}(\alpha_{K}))$ is:

$$A(\bm{x}_2\otimes y_2)-A(\bm{x}_1\otimes y_1)=n_w(B)$$ 
Next we show the corresponding Alexander grading difference in ${CFA}^-(\mathcal{H}_{w,z})\boxtimes \widehat{CFD}(\alpha_K)$ is also equal to $n_w(B)$.
Let $B'\in \pi_2(\bm{x}_1,\bm{x}_2)$ be the domain obtained from $B$ by applying the collapsing map. We will use $\tilde{gr}_A$ denote the grading function on $CFA^-(H_{w,z})$ that consists of the $Spin^c$ component and the Alexander component. (See \cite[Section 11.4]{LOT18} for details on the grading function.) Then 
\begin{equation}\label{Equation, type A Alexander grading difference}
\tilde{gr}_A(\bm{x}_2)=\tilde{gr}_A({\bm{x}}_1)\cdot([\partial^\partial B'], n_w(B'))
\end{equation}
  
Let $\tilde{gr}_D$ denote the grading function on $\widehat{CFD}(\alpha_{K})$ consisting of the $Spin^c$ component and the Alexander component; note that in this case the value the Alexander component of $\tilde{gr}_D$ is always zero. 
The boundary $\partial_{\alpha_K}(B)$ determines a sequence of type D operations connecting $y_1$ to $y_2$, giving rise to 
\begin{equation}\label{Equation, type D Alexander grading difference}
\tilde{gr}_D(y_2)=(-[\partial_{\alpha_{K}}B],0)\cdot gr_D(y_1)
\end{equation}
As before, we have $[\partial_{\alpha_{K}}B]=[\partial^\partial B']$. So, combining Equation \ref{Equation, type A Alexander grading difference} and \ref{Equation, type D Alexander grading difference}, we have 
$$\tilde{gr}_A(\bm{x}_2)\tilde{gr}_D(y_2)=\tilde{gr}_A(\bm{x}_1)\tilde{gr}_D(y_1)\cdot(0,n_w(B'))$$

This implies the Alexander grading difference computed using the box-tensor product is equal to $n_w(B')$, which is equal to $n_w(B)$ since the collapsing map preserves the multiplicity of the domain at $w$.
\end{proof}

In establishing the grading correspondence, the version of the knot chain complex is not important. In particular, we have the following.
\begin{proof}[Proof of the main theorem, with gradings]
	The chain homotopy equivalence in Theorem \ref{Theorem, main theorem} established in the Subsection \ref{Subsection, proof of the main theorem, ungraded version} also preserve the $w$- and $z$-gradings by Theorem \ref{Theorem, graded isomorphism of gCFKminus}. 
\end{proof}

\section{(1,1) Patterns}\label{Section, examples}

\subsection{$(1,1)$ diagrams }\label{Subsection, 1 1 diagrams}
Theorem \ref{Theorem, main theorem} is particularly useful for patterns admitting a genus one Heegaard diagram $\mathcal{H}_{w,z}$ because in this case the paired diagram $\mathcal{H}_{w,z}(\alpha_K)$ associated to a satellite knot is genus one and computing the Floer chain complex from this diagram is combinatorial. We will give examples involving patterns of this form in Section \ref{subsec:one-bridge-braids}, but first we will review notation for diagrams for these patterns and show that one of our hypotheses may be dropped in this setting.

By definition, a $(1,1)$ pattern is a pattern admitting a genus-one doubly-pointed bordered Heegaard diagram. Equivalently, a pattern is $(1,1)$ if it admits a so-called $(1,1)$ diagram defined in \cite{Chen2019}. This is a more flexible object to work with compared to a genus-one bordered Heegaard diagram, and we now recall the definition. A $(1,1)$ diagram is a six-tuple $(T^2,\lambda,\mu,\beta,w,z)$
 consisting of a torus $T^2$, three closed curves $\mu$, $\lambda$, and $\beta$ embedded on $T^2$, and two base points $w,z\in T^2$ such that:
 \begin{itemize}
 	\item[(1)] $\mu$ and $\lambda$ intersect geometrically once;
 	\item[(2)] $\beta$ is isotopic to $\mu$ in $T^2$;
 	\item[(3)] $w$ and $z$ are in the complement of $\mu$, $\lambda$, and $\beta$.
 \end{itemize}
A $(1,1)$ diagram encodes a pattern knot $P$ as follows. Attaching a two-handle to $T^2\times[0,1]$ along $\beta\times\{1\}\subset T^2\times \{1\}$ and filling in the resulting $S^2$ boundary-component by a 3-ball produces a solid torus $S^1\times D^2$. The boundary $\partial (S^1\times D^2)$ is parametrized by $(\mu, \lambda)$. Let $l_\beta$ be an oriented arc on $\partial (S^1\times D^2)$ connecting $z$ to $w$ in the complement of $\beta$, and let $l_\alpha$ be an oriented arc on $\partial (S^1\times D^2)$ connecting $w$ to $z$ in the complement of $\mu$ and $\lambda$. Then $P$ is obtained by pushing $l_\beta$ into the interior of the solid torus.  We remark that our convention in this paper is that (1,1) diagram gives the boundary of the solid torus as viewed from inside the solid torus, so pushing into the solid torus means pushing out of the page. 

Any doubly pointed genus-one bordered diagram determines a $(1,1)$-diagram by filing in the boundary with a disk and extending the $\alpha$ arcs across that disk to form the intersecting closed curves $\mu$ and $\lambda$. Conversely it is shown in \cite[Section 2.4]{Chen2019} that one can construct a genus-one bordered Heegaard diagram from a $(1,1)$ diagram by reversing this process, possibly after isotoping $\beta$. Just as a doubly pointed Heegaard diagram can be paired with an immersed curve, one can pair a $(1,1)$ diagram with an immersed curve by identifying the punctured torus containing the immersed curve with a neighborhood of $\mu \cup \lambda$. For a $(1,1)$ diagram obtained directly from a doubly pointed genus-one bordered diagram it is clear that pairing a given immersed curve with either diagram yields the same result. Moreover, if a $(1,1)$ diagram is isotopic to one coming from a bordered diagram, the diagram obtained by pairing an immersed curve with the $(1,1)$ diagram is isotopic to the diagram obtained by pairing the immersed curve with the bordered diagram (we can perform the same isotopy of $\beta$ in the paired diagram). It follows that we can use pairing diagrams of $(1,1)$ diagrams with immersed multicurves, in place of the pairing diagrams of bordered diagrams with immersed curves, to compute the knot Floer chain complex of satellite knots. The corresponding statement for knot Floer chain complexes over $\mathbb{F}[U]$ is in \cite[Theorem 1.2]{Chen2019}, and it holds for $\mathbb{F}[U,V]/UV$ in view of the result in the present paper.

\subsection{Removing the $z$-passable assumption}\label{Subsection, 1 1 pattern knots}

One additional advantage of (1,1) patterns is that the admissibility assumption on $\alpha_{im}$ in Theorem \ref{Theorem, main theorem} can be relaxed. 

\begin{thm}\label{Theorem, pairing theorem restricted to genus-one patterns}
	Let $P$ be a pattern knot admitting a genus-one doubly pointed bordered Heegaard diagram $\mathcal{H}_{w,z}$, and let $\alpha_K$ denote an immersed multicurve for the knot complement of a knot $K$ in the 3-sphere. Then $CFK_{\mathcal{R}}(\mathcal{H}_{w,z}(\alpha_K))$ is bigraded chain homotopy equivalent to a knot Floer chain complex of $P(K)$.
\end{thm}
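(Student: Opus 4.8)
The plan is to remove the $z$-passable (admissibility) assumption by showing that, in the genus-one setting, \emph{any} immersed multicurve $\alpha_K$ coming from a knot complement can be replaced by an admissibly equivalent $z$-passable multicurve \emph{without changing the homotopy type of $CFK_{\mathcal{R}}(\mathcal{H}_{w,z}(\alpha_K))$}. The key point is that in a genus-one diagram the paired diagram $\mathcal{H}_{w,z}(\alpha_K)$ is genus one, so $CFK_{\mathcal{R}}$ is defined combinatorially by counting bigons, and there is no need for the unobstructedness of $\mathcal{H}_{w,z}(\alpha_K)$ as an input — bigons in a genus-one surface are automatically embedded holomorphic disks (after a generic perturbation, as in \cite[Proposition 3.9]{MR2240908}), and $\partial^2=0$ holds because the ends of one-dimensional moduli spaces of bigons are genuinely just broken bigons: no boundary degeneration can occur since there is no boundary. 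So the first step is to observe that Definition \ref{Definition, CFK_R} and Proposition \ref{Proposition, well-definess of partial for CFK_R} go through for $\mathcal{H}_{w,z}(\alpha_K)$ for an arbitrary (not necessarily admissible) knot-like immersed multicurve, provided the diagram is bi-admissible; and bi-admissibility of $\mathcal{H}_{w,z}(\alpha_K)$ always holds by the homological argument in the proof of Proposition \ref{Proposition, H_w,z(alpha_im) is bi-admissible and unobstructed} (periodic domains are multiples of a homologically-trivial component of $\alpha_K$ or of $[\Sigma]$), which does not use admissibility.

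Next I would prove a finger-move invariance statement in the genus-one setting that is stronger than Proposition \ref{Proposition, finger move independence}: namely that for a genus-one $\mathcal{H}_{w,z}$, if $\alpha_K$ and $\alpha_K'$ differ by a single finger move (creating or cancelling a pair of self-intersection points), then $CFK_{\mathcal{R}}(\mathcal{H}_{w,z}(\alpha_K))\cong CFK_{\mathcal{R}}(\mathcal{H}_{w,z}(\alpha_K'))$, \emph{without} assuming either curve is admissible. The proof is identical in form to that of Proposition \ref{Proposition, finger move independence} — model the finger move by a locally supported exact Hamiltonian isotopy, define the continuation maps $\Phi_0,\Phi_1$ by counting index-zero bigons with moving boundary, and check they are mutually inverse chain homotopy equivalences by the usual analysis of one-dimensional moduli spaces — but now the argument is even simpler because in a genus-one surface there are no east punctures and no boundary degenerations whatsoever, so the only ends are broken configurations and the standard argument applies verbatim. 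In particular, the intermediate diagrams need not be admissible. Since any immersed multicurve in the marked torus is regularly homotopic to a $z$-adjacent one by Proposition \ref{Proposition, arranging a curve to be z-adjacent}, and a regular homotopy can be realized by a finite sequence of finger moves (together with Reidemeister-type moves that can themselves be decomposed into finger moves and genuine isotopies), it follows that $CFK_{\mathcal{R}}(\mathcal{H}_{w,z}(\alpha_K))\cong CFK_{\mathcal{R}}(\mathcal{H}_{w,z}(\alpha_K^{\mathrm{zadj}}))$ for some $z$-adjacent representative $\alpha_K^{\mathrm{zadj}}$.

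Having reduced to a $z$-adjacent (hence admissible, hence $z$-passable) multicurve, I would then invoke Theorem \ref{Theorem, main theorem} directly: $CFK_{\mathcal{R}}(\mathcal{H}_{w,z}(\alpha_K^{\mathrm{zadj}}))$ is bigraded homotopy equivalent to the knot Floer chain complex of $P(K)$ over $\mathcal{R}$. Composing with the isomorphism from the previous paragraph gives the statement of Theorem \ref{Theorem, pairing theorem restricted to genus-one patterns}. For the bigrading, I would note that $\mathcal{H}_{w,z}(\alpha_K)$ is gradable by the same argument as Proposition \ref{Proposition, Maslov index of periodic domain} (which is intrinsically a genus-one computation, via Proposition \ref{Proposition, identify two ways to compute the Maslov index for genus one diagram}), that the horizontal and vertical homologies have rank one (from the identification of homology types already established), so the absolute bigrading is defined; and the finger-move chain homotopy equivalences $\Phi_0,\Phi_1$ preserve the $w$- and $z$-gradings because they are induced by domains, exactly as in the grading discussion in Section \ref{Subsection, gradings in the main theorem}.

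The main obstacle I anticipate is making precise the claim that a regular homotopy between two immersed multicurves in the marked torus can be factored, up to isotopy, into a finite sequence of finger moves of the type appearing in the definition of admissible equivalence — i.e., that the Reidemeister-I and Reidemeister-III type moves and the cancellation of a self-intersection against a genuine isotopy can all be absorbed. This is morally standard (it is the statement that regularly homotopic immersed curves are related by finitely many "finger move" births/deaths of double points plus ambient isotopy, the curve analogue of Reidemeister's theorem), but one must be a little careful that each intermediate step stays within the class of knot-like immersed multicurves satisfying (C-1)–(C-4) and that the finger moves used are exactly of the form covered by the genus-one finger-move invariance lemma. A secondary point to handle carefully is confirming that the genus-one finger-move invariance proof really does not need admissibility of \emph{any} diagram in the sequence — this is where it is essential that the Floer theory of a genus-one doubly pointed immersed diagram counts only bigons with no east infinity and no boundary, so that the compactness/gluing package reduces to the purely combinatorial one and $\partial^2=0$ follows from counting ends of one-dimensional spaces of bigons, with bi-admissibility (which always holds) the only finiteness input required.
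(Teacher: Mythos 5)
Your overall route matches the paper's: reduce to a $z$-adjacent representative via the finger moves of Proposition \ref{Proposition, arranging a curve to be z-adjacent}, apply Theorem \ref{Theorem, main theorem} to that representative, and then show the finger moves do not change the complex. But the way you justify the finger-move step has a genuine gap. You assert that in a genus-one diagram ``no boundary degeneration can occur since there is no boundary.'' This is a misunderstanding of what boundary degeneration means here: it is not about the Heegaard surface having boundary, but about a disk (a teardrop, i.e.\ a positive one-cornered $\alpha$-bounded domain) bubbling off onto the immersed Lagrangian. Such teardrops can perfectly well exist on a closed genus-one surface -- their absence for $n_z=n_w=0$ is exactly the admissibility condition of Definition \ref{Definition, admissibility of immersed curves} that you are trying to dispense with. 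So your proposed ``admissibility-free'' continuation-map invariance for arbitrary finger moves is not established: if an intermediate curve in your sequence bounds a teardrop with $n_z=0$, the ends of the one-dimensional moduli spaces in the continuation-map argument are not only broken configurations, and both $\partial^2=0$ for the intermediate complexes and the chain-map property of $\Phi_0,\Phi_1$ are in doubt. Relatedly, the obstacle you flag -- factoring an arbitrary regular homotopy into finger moves while staying inside the class of curves satisfying (C-1)--(C-4) -- is a real one, and you do not resolve it.

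The paper avoids both issues by being much less ambitious. It does not prove invariance under arbitrary finger moves; it only uses the specific finger moves of Proposition \ref{Proposition, arranging a curve to be z-adjacent}, and it observes two things about them: they are supported away from the $\beta$-curve and away from the intersection points (so generators of $CFK_{\mathcal{R}}(\mathcal{H}_{w,z}(\alpha_K))$ and $CFK_{\mathcal{R}}(\mathcal{H}_{w,z}(\alpha_K'))$ are literally in bijection), and they create no teardrops with $n_z=n_w=0$ (so the immersed bigons computing the differentials deform to one another, giving a bijection on differentials). This yields a direct chain \emph{isomorphism} with no continuation maps, no moduli-space compactness analysis, and no need to factor a general regular homotopy. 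If you want to salvage your argument, you should either restrict to exactly these finger moves and make the combinatorial bigon correspondence explicit, or else prove that every intermediate curve in your sequence is admissible (which is essentially what the notion of $z$-passability in Definition \ref{Definition, z passable} and Proposition \ref{Proposition, finger move independence} are designed to package) rather than claiming boundary degenerations are impossible outright.
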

\begin{proof}
	If $\alpha_K$ is $z$-passable, this follows from Theorem \ref{Theorem, main theorem}. If $\alpha_K$ is not $z$-passable, there is a $z$-passable multicurve $\alpha'_K$ obtained from $\alpha_K$ via the finger moves described in Proposition \ref{Proposition, arranging a curve to be z-adjacent}. Then by Theorem \ref{Theorem, main theorem}, $CFK_{\mathcal{R}}(\mathcal{H}_{w,z}(\alpha'_K))$ is chain homotopy equivalent to $CFK_{\mathcal{R}}(P(K))$. We claim there is a chain isomorphism between $CFK_{\mathcal{R}}(\mathcal{H}_{w,z}(\alpha'_K))$ and $CFK_{\mathcal{R}}(\mathcal{H}_{w,z}(\alpha_K))$, which proves the proposition. To prove the claim, first note that the generators of both chain complexes are in one-to-one correspondence; this is because the finger moves that transform $\alpha_K$ to $\alpha'_K$ are supported away from the intersection points. For genus-one Heegaard diagrams, the differentials of the Floer chain complexes count immersed bigons. Note the finger moves are actually supported away from the $\beta$-curve and do not create any teardrops with $n_z=0$ or $n_w=0$, and hence they deform immersed bigons on $\mathcal{H}_{w,z}(\alpha_K)$ to those on $\mathcal{H}_{w,z}(\alpha'_K)$, setting up a one-to-one correspondence. The claimed isomorphism now follows from the one-to-one correspondences on the generators and differentials. \end{proof}

\subsection{One-bridge braids}\label{subsec:one-bridge-braids}
 One-bridge braids are $(1,1)$ patterns, and they include cables and Berge-Gabai knots as special cases. They were first studied by Gabai \cite{Gabai1990} and Berge\cite{Berge1991} from the perspective of Dehn surgeries. The knot Floer homology of cable knots were studied extensively in the literature \cite{Hedden2005,MR2511910,VanCott2010,MR3134023,MR3217622,Wu2016,Chen2021,Tange2023,Hanselman2023}. The knot Floer homology of Berge-Gaibai satellite knots were studied by Hom-Lidman-Vafaee in \cite{HLV2014}, where they gave a sufficient and necessary condition for such satellite knots to be L-space knots. In this subsection, we apply the main theorem to study the knot Floer chain complexes of one-bridge-braid satellite knots.
\begin{defn}\label{def:one-bridge-braid}
	A knot $P\subset S^1\times D^2$ is called a one-bridge braid if it is isotopic to a union of two arcs $\delta \cup \gamma$ such that (1) $\delta$ is embedded in $\partial S^1\times D^2$ and is transverse to every meridian $\star\times \partial D^2$, and (2) $\gamma$ is properly embedded in a meridian disk $\star\times D^2$.
\end{defn}
We regard two one-bridge braids equivalent if they are isotopic within the solid torus. Our convention follows that in \cite{HLV2014}.

Every one-bridge braid is isotopic to a braid $B(p,q,b)$ specified as follows. Let $p$, $q$, $b$ be integers such that $0\leq b\leq p-1$. Let $B_p$ be the braid group with $p$ strands and let $\{\sigma_i\}_{i=1}^{p-1}$ denote the generators of $B_p$. Then $B(p,q,b)$ is defined to be the braid closure of $(\sigma_{p-1}\sigma_{p-2}\cdots\sigma_1)^q(\sigma_b\cdots\sigma_2\sigma_1)$. We only consider those $p$, $q$, and $b$ such that $B(p,q,b)$ is a knot. See Figure \ref{Figure, example of 1-bridge braid} (left) for an example. Note that we could restrict the value of $b$ to be strictly less than $p-1$, since $B(p,q,p-1)$ is isotopic to $B(p,q+1,0)$; however, we find it convenient to allow presentations with $b=p-1$ so that it will be easier to introduce a different way of describing one-bridge braids that will be useful for us later.  
	\begin{figure}[htb!]
	\centering{
		\includegraphics[scale=0.5]{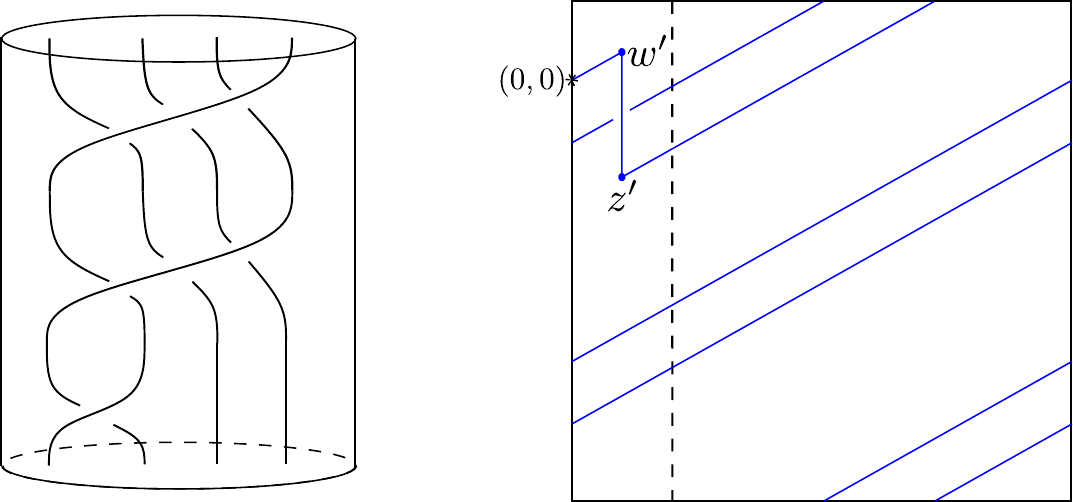}
		\caption{Left: the one-bridge braid $B(4,2,1)$. Right: A projection of $B(4; \frac{9}{16})$ to the boundary torus determined by $p=4$ and the slope $m=\frac{9}{16}$. Note that the torus is drawn as a square such that the vertical direction gives the meridian of the solid torus, and we take the interior of the solid torus to be above the page. From this figure we can observe that $B(4; \frac{9}{16}) = B(4, 2, 1)$.}
		\label{Figure, example of 1-bridge braid}
	}
\end{figure}

Instead of specifying a one-bridge braid by a triple $(p,q,b)$, a one-bridge braid with a single component can also be specified (non-uniquely) using the winding number $p$ and a slope $m$, as we now describe. For each $p$ we exclude slopes in the set $\mathcal{X}_p = \{\frac a b | a,b \in \Z, 1\le b < p\}$; for any other slope $m$ we define a knot $B(p; m)$ in the solid torus by first describing its projection to the boundary torus, which we identify with $(\R/\Z)^2$ such that $\{0\}\times(\R/\Z)$ is a meridian. The projection consists of two embedded segments connecting the points $w' = (\epsilon, m\epsilon)$ and $z' = (\epsilon, m\epsilon - pm)$, a vertical segment $\gamma_{p;m}$ along the curve $\{\epsilon\}\times(\R/\Z)$ that has $w'$ as its top endpoint and $z'$ as its bottom endpoint, and a curve segment $\delta_{p;m}$ of slope $m$ that wraps around the torus in the horizontal direction $p$ times and has $w'$ as its right endpoint and $z'$ as its left endpoint. The knot $B(p; m)$ is obtained by pushing the arc $\gamma_{p;m}$ into the solid torus; it is immediate from Definition \ref{def:one-bridge-braid} that the result is a one-bridge braid. See Figure \ref{Figure, example of 1-bridge braid} (right) for an example. Note that the slopes in $\mathcal{X}_p$ are excluded because they are precisely the slopes for which the curve segment $\delta_{p;m}$ defined above returns to its starting point before wrapping $p$ full times around the torus and is thus not embedded.

To relate the two descriptions of a one-component 1-bridge braid, we can divide the torus with the projection described above into two pieces. The strip $[0,2\epsilon]\times (\R/\Z)$ is called the bridge region and contains a projection of the braid $\sigma_b \cdots \sigma_2 \sigma_1$, and the rest of the torus is called the twist region and contains the projection of $(\sigma_{p-1} \sigma_{p-2} \cdots \sigma_1)^q$. We will define $q(p,m)$ and $b(p,m)$ to be the values of $q$ and $b$ associated with the one-bridge braid $B(p; m)$; that is, they are defined so that $B(p; m) = B(p, q(p,m), b(p,m))$. It is straightforward to check that $q(p,m) = \lfloor pm \rfloor$. We obtain $b(p,m)$ by counting the intersections of $\delta_{p;m}$ with the interior of $\gamma_{p;m}$. 

It is clear that a pair $(p,m)$ for any positive $p$ and any slope $m$ in $\R \setminus \mathcal{X}_p$ gives rise to a 1-bridge braid with one component. It is less obvious that any single-component 1-bridge braid can be represented in this way, but this is indeed true as we now show.

\begin{lem}\label{lem:slope-for-every-b}
For any $p>0, q\in \Z$, and $0\le b\le p-1$ for which $B(p,q,b)$ is a knot, there exists a slope $m$ such that $q(p,m) = q$ and $b(p,m) = b$.
\end{lem}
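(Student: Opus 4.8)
The statement is essentially a combinatorial bookkeeping fact about how the integers $q(p,m)$ and $b(p,m)$ vary as the slope $m$ ranges over $\R\setminus\mathcal{X}_p$, so the plan is to understand this dependence explicitly and then show every admissible pair $(q,b)$ is attained. First I would fix $p$ and analyze the function $m\mapsto q(p,m)=\lfloor pm\rfloor$: it is a step function that takes every integer value $q$ on the interval $m\in[q/p,(q+1)/p)$. So on each such interval the value of $q$ is pinned, and the task reduces to showing that, as $m$ ranges over the allowed slopes inside $[q/p,(q+1)/p)$ (i.e., excluding the finitely many points of $\mathcal{X}_p$ lying in that interval), the invariant $b(p,m)$ runs through every value in $\{0,1,\dots,p-1\}$ for which $B(p,q,b)$ is a knot — and moreover that the excluded slopes do not obstruct this.

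Next I would give a formula, or at least a clean recursive description, for $b(p,m)$ in terms of the continued-fraction / Euclidean data of $m$. The count of intersections of $\delta_{p;m}$ with the interior of the vertical arc $\gamma_{p;m}$ is the number of times the line of slope $m$, started just to the right of the meridian $\{\epsilon\}\times(\R/\Z)$, crosses that meridian before completing $p$ horizontal wraps, recorded with the appropriate endpoint bookkeeping. Concretely, writing $pm = q + \theta$ with $\theta\in(0,1)$, the successive heights at which $\delta$ meets the meridian are $\{km \bmod 1\}$ for $k=1,\dots,p-1$ (up to the usual $\epsilon$-perturbation), and $b(p,m)$ is the rank of $z'$ among these points — equivalently it is determined by the position of $\theta$ (or of $-pm \bmod 1$) in the ordering of the fractional parts $\{km\}_{k=1}^{p-1}$. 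The key point is that this ordering, hence $b(p,m)$, depends only on the order type of $\{0,m,2m,\dots,pm\}$ modulo $1$, which is locally constant in $m$ and changes by a transposition each time $m$ crosses a point of $\mathcal{X}_p$. I would then check that as $m$ sweeps across the interval $[q/p,(q+1)/p)$, passing through the finitely many points of $\mathcal{X}_p$ in it, the value of $b(p,m)$ takes all $p$ residues (or all residues compatible with the knot condition), using that the endpoints $q/p$ and $(q+1)/p$ of the interval are themselves in $\mathcal{X}_p$ and correspond to the degenerate limiting configurations $b=0$ and $b=p-1$ respectively.

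An alternative and perhaps cleaner route, which I would pursue if the direct count gets messy: argue by a monotonicity/induction on $p$, peeling off the outermost strand. Reducing $B(p;m)$ in the bridge region corresponds to a relation of the form $B(p,q,b)\sim B(p,q',b')$ with smaller parameters (this is the standard destabilization of 1-bridge braids used by Gabai and Berge), and one can set up an induction so that the inductive hypothesis for $p-1$ supplies the needed slope and then one adjusts. Either way, the knot-versus-link condition on $(p,q,b)$ is exactly the condition $\gcd$-type condition that makes $\delta_{p;m}$ return to $z'$ only after the full $p$ wraps, which is automatic once $m\notin\mathcal{X}_p$, so the two notions of ``this triple gives a knot'' match up and the existence of a valid $m$ is unobstructed.

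\textbf{Main obstacle.} I expect the real work is purely in the explicit combinatorics of $b(p,m)$: writing down the correct formula for how many times the slope-$m$ line crosses the vertical arc, getting the endpoint conventions (the $\epsilon$-perturbation placing $w'$ and $z'$ just to the right of the meridian) exactly right so that $b$ lands in $\{0,\dots,p-1\}$ rather than off by one, and verifying that crossing each wall of $\mathcal{X}_p$ changes $b$ in a controlled enough way to guarantee surjectivity onto the admissible set. None of this is deep, but it is the kind of argument where sign/offset errors hide, so I would organize it around the order type of the orbit $\{km\bmod 1\}$ and prove the surjectivity as a clean consequence of the three-distance (Steinhaus) structure of that orbit rather than by brute force.
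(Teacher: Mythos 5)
Your overall framework agrees with the paper's: restrict to $m\in[\tfrac{q}{p},\tfrac{q+1}{p})$ using $q(p,m)=\lfloor pm\rfloor$, and study $b(p,m)$ as a locally constant function of $m$ that jumps only at the excluded slopes $\mathcal{X}_p$. But there is a genuine gap at the surjectivity step, and your closing claim that ``the existence of a valid $m$ is unobstructed'' papers over exactly the hard part.

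The issue is that $b(p,m)$ does \emph{not} run through all admissible values as $m$ sweeps the interval in the naive sense you describe. When $m$ crosses a wall $\frac{n}{k}\in\mathcal{X}_p$, the points $x_i$ and $x_j$ coincide for all $i\equiv j\pmod k$, so the order type changes by many simultaneous coincidences, not ``by a transposition''; one checks that $b(p,m)$ jumps by an \emph{even} number at each wall (if $x_k$ reaches $x_0$ then $x_{p-k}$ simultaneously reaches $x_p$). Consequently $b(p,m)$ is constant mod $2$ on the whole interval, so roughly half the values in $\{0,\dots,p-1\}$ are never attained by any honest slope. The content of the lemma is precisely that the skipped values are exactly those $b$ for which $B(p,q,b)$ is a multi-component link. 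Your proposal gives no mechanism for proving this: you would need to show that for each wall crossing where $b$ jumps from $b_-$ to $b_-+2\ell$, every intermediate triple $B(p,q,b_-+1),\dots,B(p,q,b_-+2\ell-1)$ fails to be a knot. The paper does this by constructing, at each degenerate slope, an explicit interpolating family of projections (perturbing the non-embedded arc $\delta_{p;m}$ by resolving the coincident points one at a time), in which each step increases $b$ by one and first increases, then decreases, the number of link components — so only the first and last members of the family are knots. Your suggested ``three-distance / order type'' packaging handles the locally-constant and wall-crossing structure cleanly, but it cannot by itself see the link-component count, which is the decisive input. Your claim that the endpoints of the interval give $b=0$ and $b=p-1$ is also incorrect when $\gcd(p,q)>1$ (the leftmost value is $\gcd(p,q)-1$, reached through the degenerate torus-link configurations at $m=\tfrac{q}{p}$). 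The alternative inductive route via destabilization is not developed enough to evaluate, but as written the main argument does not close.
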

\begin{proof}
Because $q = q(p,m) = \lfloor pm \rfloor$ we can restrict to slopes $m$ in $[\frac{q}{p}, \frac{q+1}{p})$. We must find a slope in this interval for which $b(p,m) = b$. We will describe how $b(p,m)$ changes as $m$ increases from $q(p,m) = q$, showing that it attains every value $b$ for which $B(p,q,b)$ is a knot.

We first observe that $b(p,m)$ is a locally constant function which jumps at points in $\mathcal{X}_p$. We will label the intersections of $\{\epsilon\}\times \mathbb{R}/\mathbb{Z}$ with $\delta_{p,m}$ by $x_0, x_1, \ldots, x_p$, moving leftward along $\delta_{p,m}$ from  $x_0 = w'$ and $x_p = z'$. The function $b(p,m)$ counts the number of $0 < i < p$ for which $x_i$ lies on the vertical segment between $x_0$ and $x_p$. As $m$ varies continuously the $x_i$ all move continuously, so the number of $x_i$ between $x_0$ and $x_p$ can only change at a slope for which some $x_k$ coincides with either $x_0$ or $x_p$. This in turn happens exactly when either $km$ or $(p-k)m$ is an integer, which means that $m$ is in $\mathcal{X}_p$. We can also see that $b(p,m)$ is non-decreasing on the interval $[\frac{q}{p}, \frac{q+1}{p})$ by observing that when the slope increases by $\Delta m>0$ the point $x_i$ moves downward by $i\Delta m$. Since any given $x_i$ moves downwards it will never leave the interval between $x_0$ and $x_p$ by passing $x_0$, and because $x_i$ moves downward slower than $x_p$ it will never leave the interval by passing $p$. In other words, as $m$ increases from $\frac{q}{p}$ to $\frac{q+1}{p}$ the vertical segment from $x_0$ to $x_p$ grows and swallows more $x_i$, and once in the interval between $x_0$ and $x_p$ no $x_i$ ever escapes this growing interval before $x_p$ returns to $x_0$ at slope $m = \frac{q+1}{p}$. At each slope $n/k$ in $\mathcal{X}_p$ the count $b(p,m)$ must increase since $x_k$ coincides with $x_0$ and thus $x_k$ enters the vertical segment at this slope. In fact, $b(p,m)$ increases by an even number at these points since if $x_k$ coincides with $x_0$ then $x_{p-k}$ coincides with $x_p$. Thus $b(p,m)$ is a non-decreasing locally constant function on $[\frac{q}{p}, \frac{q+1}{p})\setminus \mathcal{X}_p$ that is constant mod 2. The function may jump by more than two; in fact it is straightforward to check that at $m \in \mathcal{X}_p$ it increases by twice the number of pairs $(a,b)$ with $a,b \in \Z$ and $1 \le b < p$ for which $m = \frac{a}{b}$.

To see that $b(p,m)$ realizes every value for which the corresponding one-bridge braid is a knot, we must consider the degenerate projections arising from slopes $m$ in $\mathcal{X}_p$. For each such slope $m$, by taking different perturbations of the the non-embedded arc $\delta_{p,m}$ we can construct a sequence of one-bridge braid projections that realizes all values of $b$ between $b(p, m-\epsilon)$ and $b(p, m+\epsilon)$ for small $\epsilon$. Consider a slope $m = n/k$ in $\mathcal{X}_p$ (with $n$ and $k$ relatively prime), and let $\ell = \lfloor \frac p k \rfloor$. With $x_i$ defined as above we have that $x_i$ coincides with $x_j$ if and only iff $i \equiv j \pmod{k}$. We first assume that $m$ is not a multiple of $\frac 1 p$, so that $x_0$ and $x_p$ do not coincide. On one extreme we will perturb $\delta_{p,m}$ by sliding the $x_i$ points off each other so that within each group of neaby points $x_j$ is above $x_i$ if $j > i$ (see the leftmost projection in Figure \ref{fig:degeneratre-slope-projections} for an example). This is the ordering of the $x_i$ that would arise from a slope slightly smaller than $m$, so the knot arising from this projection is isotopic to $B(p; m-\epsilon)$, in particular it is $B(p,q(p,m),b(p,m-\epsilon))$. For the next projection we swap $x_0$ with $x_k$, extending the vertical arc $\gamma_{p,q}$ upward to reach the new $x_0$ and perturbing the portion of $\delta_{p,m}$ leaving $x_k$ to the right downward with $x_k$ as shown in the second projection in Figure \ref{fig:degeneratre-slope-projections} (the pieces of $\delta_{p,m}$ to the left of $x_0$ and $x_k$ are unaffected). This clearly adds one new crossing with $\gamma_{p,m}$, so the value of $b$ for the corresponding one-bridge braid increases by one. We also observe that this change splits off a closed curve containing a portion of $\delta_{p,m}$. More precisely, if we keep the labels on the remaining $x_i$ the same, the perturbation of $\delta_{p,m}$ now has a closed component that connects $x_1, x_2, \cdots, x_k$ and a component that connects $x_0$, $x_{k+1}$, $x_{k+2}, \ldots, x_p$. It follows that $B(p, q(p,m), b(p,m-\epsilon)+1)$ is a two component link. If $\ell > 1$ we can repeat this procedure, creating a new projection by swapping $x_0$ with the point directly above it (now $x_{2k}$), increasing $b$ by one and splitting off one more closed component that passes through the points $x_{k+1}, x_{k+2}, \ldots, x_{2k}$. We continue in this way, adding one to $b$ and increasing the number of link components by one at each step, until $x_0$ has moved above all other $x_i$ with $i \equiv 0 \pmod k$; at this point we have a projection of $B(p,q(p,m),b(p,m-\epsilon)+\ell)$ which has $\ell$+1 components. We now continue our sequence of projections by sliding $x_p$ downward, interchanging $x_p$ with one other point at a time (first $x_{p-k}$, then $x_{p-2k}$, etc.). Each new projections adds a crossing that increases $b$ by one and decreases the number of link components by one. At the last step $x_p$ is below all other $x_i$ with $i \equiv p \pmod k$ and the link has single component; see the rightmost projection in Figure \ref{fig:degeneratre-slope-projections} for an example. It is clear that this projection is isotopic to the one obtained from a slope slightly larger than $m$, so the last projection in the family arising from $m$ is a projection of $B(p, q(p,m), b(p,m-\epsilon) + 2\ell) = B( p, q(p,m), b(p,m+\epsilon))$. When $m = \frac q p$ is in $\mathcal{X}_p$ we construct a similar family but we start with a projection with the maximum number of components. If $\ell = \gcd(p,q)$ we construct a projection of an $\ell$-component one-bridge braid link by following a line of slope $m$ and closing up the curve and starting a new component each time a curve returns to its starting point. Perturbing the components off each other, it is easy to see that this projection gives the torus link $B(p,q,0) = T(p,q)$. From this projection we can introduce a vertical segment between the first and last point on one of the closed components and expand this, adding crossings with other components. Each time a crossing is added we get a projection of a one-bridge braid where $b$ has increased by one and the number of components has decreased by one, and when we arrive at the single component link $B(p,q,\ell-1)$ it is clear that this the projection is isotopic to that arising from a slope slightly larger than $m = \frac q p$.

	\begin{figure}[htb!]
	\centering{
		\includegraphics[scale=0.4]{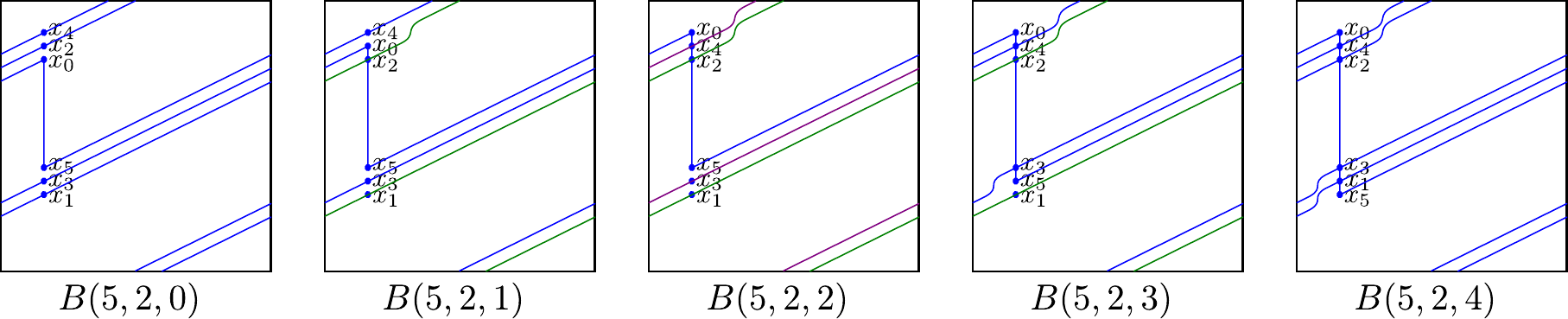}
		\caption{A family of one-bridge braids associated with the degenerate slope $m = \frac 1 2 = \frac 2 4$ and winding number $p = 5$. The leftmost is the projection of a knot that is isotopic to $B(p; m-\epsilon)$ for a sufficiently small $\epsilon > 0$ and the rightmost is the projection of a knot that is isotopic to $B(p;m+\epsilon)$. The value of $b$ associated with the one-bridge braid increases by one at each step, and the number of link components first increases and then decreases so that only the first and last projection give knots.}
		\label{fig:degeneratre-slope-projections}
	}
\end{figure}

To summarize, by varying the slope $m$ from $\frac{q}{p}$ to $\frac{q+1}{p}$ and considering the families of projections described above at the degenerate slopes in $\mathcal{X}_p$ we obtain a projection of $B(p,q,b)$ for every values of $b$ with $0 \le b < p$, and moreover the one-bridge braids $B(p,q,b)$ that only occur at degenerate slopes are precisely those with more than one component. It follows that for any knot $B(p,q,b)$ there is an interval of slopes $m$ for which $b(p,m) = b$.
\end{proof}

\begin{rmk}
While we will not give an closed formula to find an appropriate slope $m$ given any $p$, $q$, and $b$, it is clear from the proof of Lemma \ref{lem:slope-for-every-b} that there is a simple procedure to find the interval of slopes for each value of $b$. We remove any slopes in $\mathcal{X}_p$ from the interval $[\frac{q}{p}, \frac{q+1}{p})$ and then determine $b(p,m)$ on the remaining intervals as follows: the value in the leftmost region is the number of pairs in $\{(a,b) | a,b \in \Z, 0<b<p\}$ for which $\frac a b = \frac q p$ and at each slope in $m \in \mathcal{X}_p$ the value increases by twice the number of pairs in $\{(a,b) | a,b \in \Z, 0<b<p\}$ for which $\frac a b = m$. 
\end{rmk}

 Any single component one-bridge braid in $S^1 \times D^2$ is a (1,1) pattern. To prove Theorem \ref{Theorem, satellite knots with one-bridge braid patterns} in the following section, we will make use of a particular (1,1) diagram for such a one-bridge braid pattern $B(p,q,b)$. The construction of this diagram uses a choice of $m$ for which $B(p; m) = B(p,q,b)$, and we will denote this diagram $\mathcal{H}_{p;m}$. This diagram is isotopic to another diagram $\mathcal{H}'_{p;m}$, which we will describe first.
 
 The diagram $\mathcal{H}'_{p;m} = (T^2, \lambda', \mu', \beta', w', z')$ is defined using the projection for $B(p;m)$ described earlier in this section. We need to choose $\beta'$ and $\mu'$ homologous to meridians of the solid torus while $\lambda'$ will be homologous to a longitude. The basepoints $w'$ and $z'$ are the points defined above. Our goal is to choose $\beta'$ to be disjoint from the arc $\gamma_{p,m}$ and to choose $\lambda'$ and $\mu'$ to be disjoint from the arc $\delta_{p,m}$. We can take $\beta'$ to be the curve $\{0\}\times (\R/\Z)$, for it is in the complement of $\gamma_{p,q}$ and is isotopic to the meridian of the solid torus. We now consider the curves $\mu = \{\frac 1 2\}\times(\R/\Z)$ and $\lambda = (\R/\Z)\times\{\frac 1 2\}$; these are isotopic to the meridian and the longitude of the solid torus, respectively, as desired but they intersect the arc $\delta_{p,m}$. We modify these curves by performing finger moves along $\delta_{p,m}$ in order to eliminate all intersections with $\delta_{p,m}$. More concretely, whenever there is an intersection between $\mu$ or $\lambda$ and $\delta_{p,m}$, we slide that intersection to the left along $\delta_{p,m}$ until it passes over the endpoint $z'$ of $\delta_{p,m}$. We define $\mu'$ and $\lambda'$ to be the curves resulting from these finger moves. It is clear that the diagram $\mathcal{H}'_{p;m}$ encodes $B(p;m)$ since the knot determined by the diagram is the union of $\gamma_{p,m}$ and $\delta_{p,m}$ with $\gamma_{p,m}$ pushed into the interior of the solid torus. An example of a diagram of this form is given on the left of Figure \ref{fig:example-(1,1)-diagrams}.

In $\mathcal{H}'_{p;m}$ the curve $\beta'$ is simple but the curves $\mu'$ and $\lambda'$ are complicated. We can isotope this diagram to produce a second diagram $\mathcal{H}_{p;m} = (T^2, \lambda, \mu, \beta, w, z)$ in which the opposite is true. This is accomplished by sliding $z'$ to the right along the arc $\delta_{p,m}$ until it reaches the point $z$ at = $(-\epsilon, -m\epsilon)$. It is clear from the way $\lambda'$ and $\mu'$ were constructed that this transformation takes these curves back to $\lambda$ and $\mu$. Let $\beta$ denote the image of $\beta'$ under this transformation; that is, $\beta$ is the result of modifying $\beta'$ by finger moves pushing every intersection of $\beta'$ with $\delta_{p,m}$ rightward along $\delta_{p,m}$ until the intersection occurs in a $(2\epsilon)$-neighborhood of the right endpoint of $l'_\alpha$, which is the point $w = w' = (\epsilon, m\epsilon)$. Note that in $\mathcal{H}_{p;m}$ the basepoints $z$ and $w$ are near each other and the midpoint between them is $(0,0)$. An example of a diagram $\mathcal{H}_{p;m}$ is shown on the right of Figure \ref{fig:example-(1,1)-diagrams}. Since $\mathcal{H}_{p;m}$ is isotopic to $\mathcal{H'}_{p;m}$, it is still a (1,1) diagram for $B(p,q,r)$.

\begin{figure}[htb]
\includegraphics[scale=0.6]{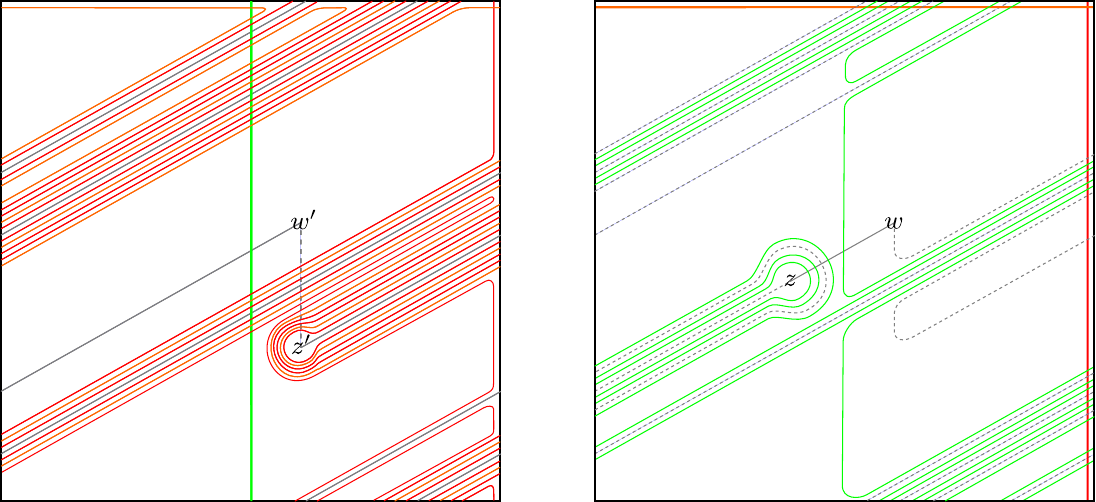}

\caption{Left: The (1,1) diagram $\mathcal{H}'_{4; 9/16}$ associated to the one-bridge braid $B(4;\frac{9}{16}) = B(4,2,1)$. Here the torus is identified with the square $\left[-\frac 1 2, \frac 1 2\right]^2$. $\beta'$ is green, $\mu'$ is red, and $\lambda'$ is orange. The arcs $\delta_{p,m}$ (solid gray) and $\gamma_{p,m}$ (dotted gray) are also shown; these are paths connecting $w'$ and $z'$ in the complement of $\mu'\cup\lambda'$ or of $\beta'$, respectively. Right: The diagram $\mathcal{H}_{4;9/16}$ obtained from $\mathcal{H}'_{4;9/16}$ by an isotopy sliding $z$ along $\delta_{p,m}$. }
			\label{fig:example-(1,1)-diagrams}
\end{figure}

\subsection{Immersed curves for 1-bridge braid satellites}\label{sec:1-bridge-braids-curves}

Given a knot $K$ in the 3-sphere, we use $K_{p,q,b}$ to denote the satellite knot whose pattern knot is $B(p,q,b)$ and whose companion knot is $K$. Let $\gamma_K$ and $\gamma_{K_{p,q,b}}$ be the immersed multicurve associated with $K$ and $K_{p,q,b}$ respectively. Our goal is to describe a way to obtain $\gamma_{K_{p,q,b}}$ from $\gamma_K$. To do so, we will pass to the lifts\footnote{Strictly speaking, the multicurves we consider in $\R^2$ are not lifts of the multicurves in $T^2$ but rather preimages of lifts to the intermediate covering space $(\R/\Z)\times \R$ under the projection map $\R^2 \to (\R/\Z)\times \R$. In particular a nullhomologous curve in $T^2$ lifts to infinitely many curves in $\R^2$. By abuse of terminology we will refer to the resulting multicurves as lifts.} of immersed curves in the universal covering space $\widetilde{T}_{\bullet}$ of the marked torus ${T}_{\bullet}$;  $\widetilde{T}_{\bullet}$ is $\R^2$ with marked points at the integer points $\mathbb{Z}^2$. We require the lifts of the immersed curves to be symmetric in the sense that they are invariant under $180^\circ$ rotation about $(0,\frac{1}{2})$. The lifts of the immersed curves of $K$ and ${K_{p,q,b}}$ are related by a planar transform $f_{p,q,b}:\widetilde{T}_{\bullet} \rightarrow \widetilde{T}_{\bullet}$, defined up to isotopy, which we will now construct. In fact it is most convenient to construct a planar transformation $f_{p; m}: \widetilde{T}_{\bullet} \rightarrow \widetilde{T}$ determined by $p$ and a slope $m \in \R \setminus \mathcal{X}_p$, but we will see that up to isotopy $f_{p; m}$ depends only on $p$, $q(p,m)$ and $b(p,m)$ so we can define $f_{p,q,b}$ to be $f_{p;m}$ for any $m$ such that $q(p,m) = q$ and $b(p,m) = b$.
 
To define $f_{p;m}: \widetilde{T}_{\bullet} \rightarrow \widetilde{T}$, we first define a map $g_{p;m}:\R^2 \to \R^2$ that does not fix the integer lattice. The map $g_{p;m}$ is periodic with horizontal period $p$, by which we mean that if $g_{p;m}(x, y) = (x',y')$ then $g_{p;m}(x+p, y) = (x'+p,y')$, so it suffices to define $g_{p;m}$ on the strip $[-\frac 1 2, p-\frac 1 2] \times \R$. On this strip $g_{p;m}$ is given by an isotopy that fixes the boundary of the strip and slides each integer lattice point $(a,b)$ with $a\neq 0$ to the left along a line of slope $m$ until it hits the vertical line $\{0\}\times \R$, i.e, it stops at $(0,b-am)$. Note that in general the points along the vertical line $\{0\}\times \R$ that are the images of the integer lattice points in $[-\frac 1 2, p-\frac 1 2] \times \R$ are not evenly spaced. We now define $f_{p;m}$ by composing $g_{p;m}$ with three isotopies that together take image of the lattice back to the lattice:
 \begin{enumerate}
	 \item An isotopy that shifts the images of the lattice points under $g_{p;q}$ vertically, preserving the vertical ordering of these points and fixing the points at $(0,n)$ for $n\in \Z$, so that they are evenly spaced---that is, so that they lie in $\{ (0, \frac{n}{p} ) \}_{n\in\Z}$;
	 \item A horizontal compression by a factor of $p$ taking the line the point $(x,y)$ to $(\frac x p, y)$; and
	 \item A vertical stretching by a factor of $p$ taking the point $(x,y)$ to the point $(x, py)$.
 \end{enumerate}
Note that $f_{p;m}$ defines a bijection on the integer lattice even though it takes strips of width $p$ to strips of width $1$, and $f_{p;m}$ is periodic in the sense that if $f_{p;m}(x, y) = (x',y')$ then $f_{p;m}(x+p, y) = (x'+1,y')$. Because of the periodicity of $f_{p;m}$ and of $\gamma_K$ and $\gamma_{K_{p,q,b}}$ it is sufficient to consider the restriction of $f_{p;m}$ to one strip of width $p$, which we may view as a map 
$$f_{p;m}: [-\frac 1 2, p-\frac 1 2] \times \R \to [-\frac{1}{2p}, 1-\frac{1}{2p}] \times \R. $$

Although we chose a slope $m$ to define the map $f_{p;m}$, we observe that different choices of $m$ within the same component of $\R \setminus \mathcal{X}_p$ determine isotopic maps. Given two slopes $m_0$ and $m_1$ in the same component we can vary the slope $m$ from $m_0$ to $m_1$ and note that the vertical ordering on the images of the lattice points under $g_{p;m}$ never changes; if it did there must be some slope for which two lattice points map to the same point, but this only happens for slopes in $\mathcal{X}_p$. Thus we can define $f_{p,q,b}$ to be $f_{p;m}$ for any $m$ for which $B(p;m) = B(p,q,b)$.

We can now restate the method for obtaining $\gamma_{K_{p,q,b}}$ from $\gamma_K$ mentioned in the introduction:

 \PlanarTransformTheorem*

	\begin{figure}[htb!]
		\centering{
			\includegraphics[scale=0.25]{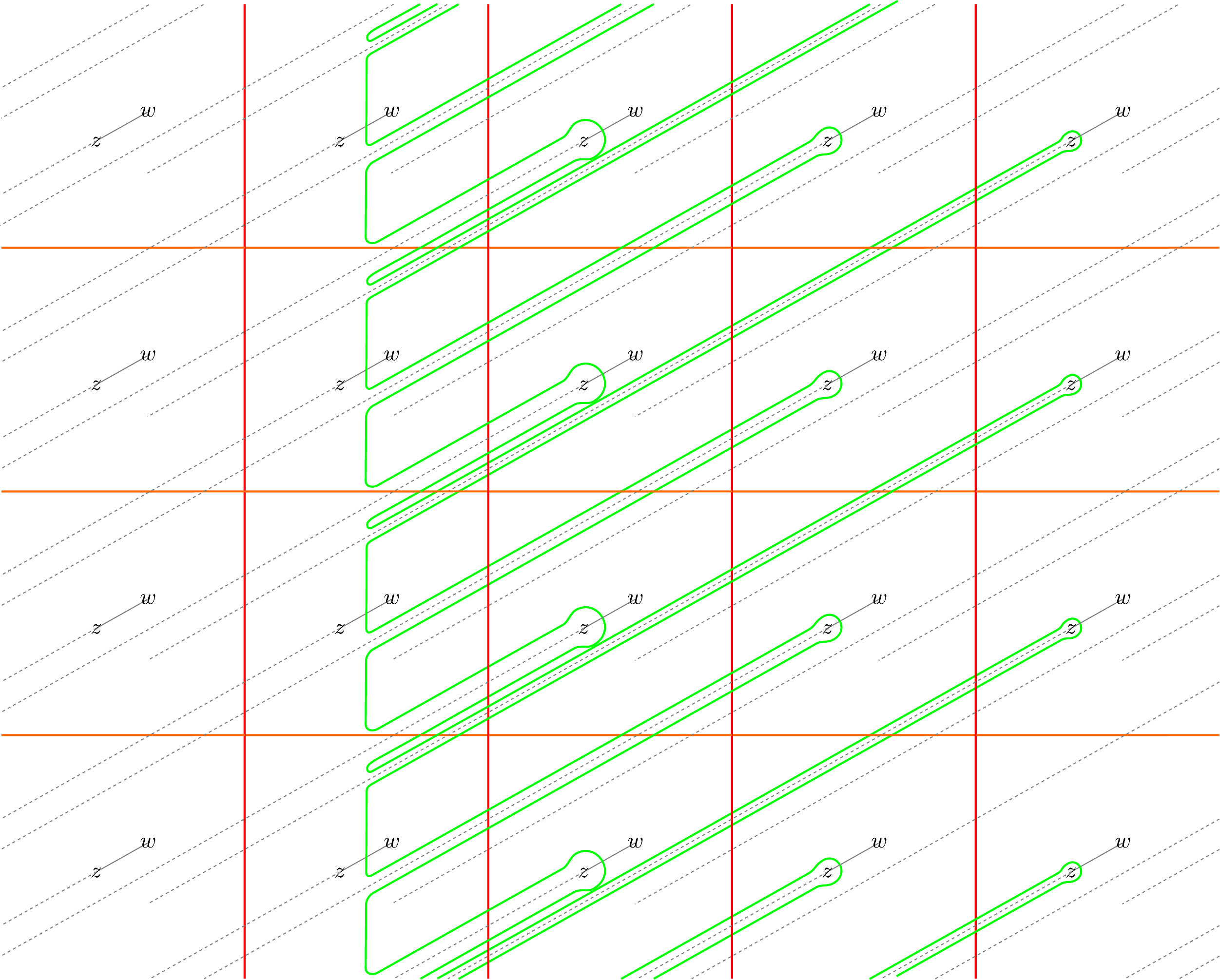}
			\caption{A lift of the $(1,1)$ diagram $\mathcal{H}_{4;9/16}$ for $B(4,2,1)$. The green curve is the lift $\tilde \beta$ of $\beta$. The red vertical lines are lifts of $\mu$ and the orange horizontal lines are lifts of $\lambda$. Lifts of $l'_\alpha$ are shown in gray (the portion over which $z$ slides when constructing $\mathcal{H}_{4;9/16}$ from $\mathcal{H}_{4;9/16}$ is dashed).}
			\label{fig:lift-of-(1,1)-diagram}
		}
	\end{figure}
\begin{proof}
Fixing a slope $m$ for which $B(p;m) = B(p,q,b)$, we consider the (1,1) diagram $\mathcal{H}_{p;m}$ defined in the previous section. Consider the doubly pointed diagram $\mathcal{H}_{p;m}(\gamma_K)=(T^2,\gamma_K,\beta,w,z)$ obtained by pairing $\mathcal{H}_{p;m}$ and $\gamma_K$. By Theorem \ref{Theorem, pairing theorem restricted to genus-one patterns}, the knot Floer chain complex $CFK_\mathcal{R}(\mathcal{H}_{p;m}(\gamma_K))$ is chain homotopy equivalent to $CFK_{\mathcal{R}}(K_{p,q,b})$. 
	
	The complex $CFK_\mathcal{R}(\mathcal{H}_{p;m}(\gamma_K))$ can be computed in the universal cover $\mathbb{R}^2$ of $T^2$ (marked with lifts of $w$ and $z$) by taking the Floer complex of $\tilde{\gamma}_K$ and $\tilde{\beta}$ where $\tilde{\beta}$ and $\tilde\gamma_K$ are lifts of $\beta$ and $\gamma_K$ to $\widetilde T_\bullet$. We pick the lift $\tilde\beta$ of $\beta$ so that it is isotopic to $\{0\}\times \R$ if we ignore the lifts of $z$. An example of the lift of $\mathcal{H}_{p;m}$ to $\mathbb{R}^2$, with the single lift $\tilde{\beta}$ of $\beta$ and all lifts of $\lambda$ and $\mu$, is shown in Figure \ref{fig:lift-of-(1,1)-diagram}.  Lifts of $\delta_{p;m}$ are also shown in the figure. For any $K$, $\tilde{\gamma}_K$ will be a horizontally periodic curve lying in a neighborhood of the lifts of $\lambda$ and $\mu$. Note that from the construction of $\beta$ it is clear that $\tilde\beta$ will never pass the vertical line $\{p-1\}\times \R$, so all intersection points lie in the strip $\left[ -\frac 1 2, p - \frac 1 2\right] \times \R$.
	
	If we were to isotope the lift of $\mathcal{H}_{p;m}$ by sliding each lift of $z$ leftward along the lifts of $\delta_{p;m}$ to the left endpoints of those arcs we would recover a lift of $\mathcal{H'}_{p;m}$. Instead we will consider a different transformation of the plane that slides each pair of nearby lifts of $w$ and $z$ together leftward along the lifts of $\delta_{p;m}$ until the midpoint between them reaches a vertical line $\{ np\} \times \R$ for some integer $n$. This transformation agrees with the map $g_{p;m}$ up to isotopy. By applying appropriate vertical shifts and horizontal and vertical scaling as well, we realize the map $f_{p;m}$. Clearly this transformation of the plane, if applied to both $\tilde \gamma_K$ and $\tilde \beta$, does not affect the complex computed from the diagram. The image of $\tilde \beta$ under this transformation is $\tilde \beta' = \{0\}\times \R$. We thus have that the complex obtained from $f_{p;m}(\tilde \gamma_K)$ by pairing with $\{0\}\times \R$ agrees with the complex $CFK_{\mathcal{R}}(K_{p,q,b})$. Note that $f_{p;m}(\tilde \gamma_K)$ is periodic and, possibly after a homotopy, intersects each line $\{n + \tfrac 1 2\} \times \R$ exactly once. But pairing with $\{0\}\times \R$ gives a bijection between homotopy classes of such curves and homotopy equivalence classes of complexes over $\mathcal{R}$ (this follows from \cite[Theorem 1.2]{Hanselman:CFK}, or from \cite[Theorem 4.11]{hanselman2016bordered} and the well understood relationship between complexes over $\mathcal{R}$ with rank one horizontal and vertical homology and type D structures over the torus algebra). Thus since $f_{p;m}(\tilde \gamma_K)$ and $\tilde{\gamma}_{K_{p,q,b}}$ determine the same complex we must have that $f_{p;m}(\tilde{\gamma}_K)$ is homotopic to $\tilde{\gamma}_{K_{p,q,b}}$.	
\end{proof}

	\begin{figure}[htb!]
	\centering{
		\includegraphics[scale=0.75]{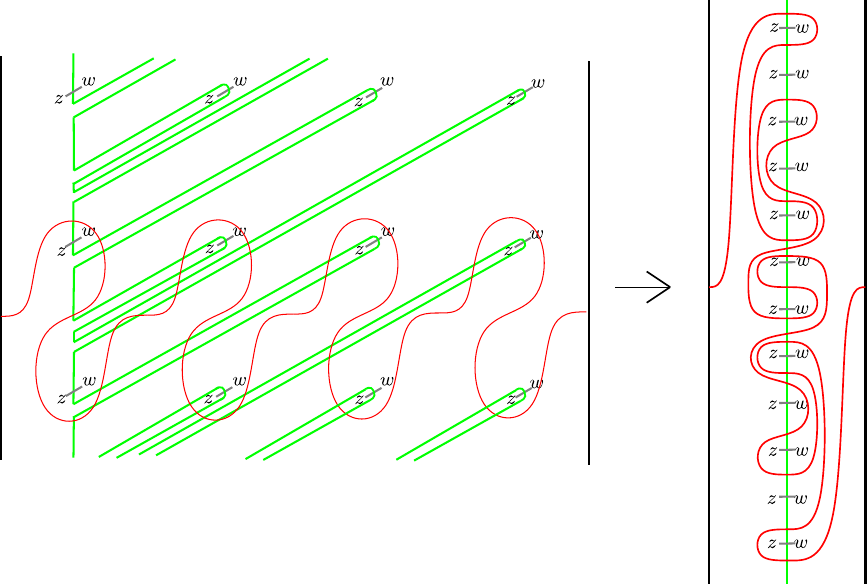}
		\caption{The planar transform $f_{4,2,1}$ acting on the immersed curve of the right-handed trefoil $T_{2,3}$. On the left is a lift of the pairing diagram $\mathcal{H}_{4;9/16}(\gamma_K)$ to $\R^2$, restricted to $\left[-\frac 1 2, 4- \frac 1 2\right]\times \R$. Applying $f_{4,2,1}$ pulls the green curve straight and the image of $\tilde \gamma_K$ is homotopic to the curve on the right.  This curve (repeated horizontally) is $\tilde{\gamma}_{K_{4,2,1}}$.}
		\label{fig:K_4,2,1-example}
	}
\end{figure}

An example illustrating Theorem \ref{Theorem, satellite knots with one-bridge braid patterns} is shown in Figure \ref{fig:K_4,2,1-example}. Let $K$ be the right handed trefoil $T_{2,3}$ and consider the satellite knot $K_{4,2,1}$; we fix the slope $m = \frac{9}{16}$, which is consistent with this value of $q$ and $b$. The figure shows a lift of the pairing diagram $\mathcal{H}_{4;9/16}(\gamma_K)$. The planar transformation $f_{4,2,1} = f_{4;9/16}$ takes $\tilde \beta$ to a vertical line with $z$'s on the left and $w$'s on the right and the image of $\tilde\gamma_K$ under this transformation, one period of which is shown on the right in the figure, is homotopic to $\tilde\gamma_{K_{4,2,1}}$ as a curve in $\R^2 \setminus \Z^2$.

\subsection{L-space slopes, $\tau$, and $\epsilon$ for one-bridge braid satellites}\label{sec:one-bridge-tau}

Theorem \ref{Theorem, satellite knots with one-bridge braid patterns} gives uniform access to several results in the literature. It generalizes the cabling transformation of immersed curves by the second author and Watson \cite{Hanselman2023}, which correspond to the case when $b=0$. This cabling transformation gave simple reproofs of earlier cabling formulas for the $\tau$-invariant and the $\epsilon$-invariant \cite[Theorems 1 and 2]{MR3217622} and of an L-space knot criterion for cables \cite{Hom2011}. We can now extend these results, with essentially the same proofs, to all one-bridge braid satellites; see Theorem \ref{Theorem, L space knot criterion} below for the L-space gluing criterion, Theorem \ref{thm:epsilon-for-one-bridge-braids} for the $\epsilon$ formula, and Theorem \ref{thm:tau-for-one-bridge-braids} for the $\tau$ formula. In addition to cables, other special cases of one-bridge braid patterns have been studied previously. In \cite{HLV2014}, Hom-Lidman-Vafaee proved the aforementioned L-space criterion for Berge-Gaibai knots, which is a proper subset of one-bridge braids. In Example 1.4 of \cite{Hom2016}, Hom gave a sufficient condition for satellite knots with 1-bridge-braid patterns to be L-space knots. Theorem \ref{Theorem, L space knot criterion} can be viewed as a generalization of both these results. Theorem \ref{thm:tau-for-one-bridge-braids} also recovers a recent formula of Bodish for $\tau$ of a family of (1,1) satellites \cite[Theorem 1.1]{Bodish}; note that the pattern denoted $P_{p,1}$ there is the one-bridge braid $B(p,1,2)$.

We first state the L-space knot criterion for one-bridge braid satellites.
 \begin{thm}\label{Theorem, L space knot criterion}
 	$K_{p,q,b}$ is an $L$-space knot if and only if $K$ is an $L$-space knot and $\frac{q}{p}\geq 2g(K)-1$.
 \end{thm}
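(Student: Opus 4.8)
The plan is to use Theorem \ref{Theorem, satellite knots with one-bridge braid patterns}, which computes $\tilde\gamma_{K_{p,q,b}}$ as $f_{p,q,b}(\tilde\gamma_K)$, together with the characterization of L-space knots in terms of immersed curves. Recall that a knot $J$ in $S^3$ is an L-space knot if and only if its immersed curve $\gamma_J$ is (up to homotopy) embedded and consists of a single component which is a nontrivial ``staircase-like'' curve that never wraps around the marked torus in a way forcing extra local minima; more precisely, $\gamma_J$ is embedded with trivial local system and, when lifted to $\widetilde T_\bullet$ and pulled tight, the lift $\tilde\gamma_J$ is a graph over the horizontal axis (i.e., it intersects each vertical line $\{t\}\times\mathbb R$ in at most one point) whose slope has the appropriate sign away from the turning regions near the punctures. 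The key numerical input is that for an L-space knot $J$, the lift $\tilde\gamma_J$ meets each vertical line at most once and its ``width'' is governed by $g(J)$: the homological longitude component must pass to the right of the lattice points $(0,k)$ for $|k|$ up to roughly $g(J)$ before turning back.

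First I would fix a slope $m$ with $B(p;m)=B(p,q,b)$ and recall the explicit structure of $f_{p;m}$: it shears lattice points leftward along slope-$m$ lines onto a vertical line, evenly spaces them, then rescales horizontally by $1/p$ and vertically by $p$. The crucial observation is that $f_{p;m}$ takes a curve that is a graph over the horizontal axis to another such curve precisely when the horizontal ordering of intersection points is preserved, and this depends on whether the shearing causes strands to cross. Second, assuming $K$ is an L-space knot with $\tilde\gamma_K$ a tight staircase of ``height'' determined by $2g(K)-1$, I would compute $f_{p;m}(\tilde\gamma_K)$ and show: if $\frac qp\ge 2g(K)-1$, the shear does not introduce any crossings among strands of $\tilde\gamma_K$ (because the curve climbs slower than the shearing lines), so $f_{p,q,b}(\tilde\gamma_K)$ is again an embedded graph-like curve, hence $K_{p,q,b}$ is an L-space knot; conversely, if $\frac qp<2g(K)-1$ or if $K$ is not an L-space knot, I would exhibit either a figure-eight/clasp-type feature in $\tilde\gamma_K$ (coming from a local system or a non-monotone strand of $\gamma_K$, which persists under the diffeomorphism $f_{p;m}$) or a forced crossing in $f_{p;m}(\tilde\gamma_K)$, in either case producing a curve that is not a tight embedded staircase, so $K_{p,q,b}$ is not an L-space knot.

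For the converse direction I would argue contrapositively in two cases. If $K$ is not an L-space knot then $\gamma_K$ either has a nontrivial local system or a component that is not the ``distinguished'' longitude-homologous one, or its distinguished component is not monotone; since $f_{p;m}$ is a diffeomorphism of $\R^2$ taking $\Z^2$ to $\Z^2$, it carries local systems isomorphically and cannot make a non-monotone curve monotone in a way that also collapses extra components, so $\gamma_{K_{p,q,b}}$ inherits the obstruction to being an L-space knot. If $K$ is an L-space knot but $\frac qp<2g(K)-1$, I would track the two extreme lattice points that the distinguished component of $\tilde\gamma_K$ separates — roughly $(0,g(K))$ and $(0,-g(K)+1)$ up to the symmetry convention — and show that under the shear by slope $m=\frac qp$ (adjusted by the bridge parameter $b$, which only affects the fine positioning near the punctures and the evenly-spacing isotopy) the images of these strands cross, forcing $f_{p;m}(\tilde\gamma_K)$ to have a local minimum/maximum away from the punctures after tightening, which is exactly the failure of the L-space knot condition.

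The main obstacle I expect is the careful bookkeeping in the converse direction: verifying that the bridge parameter $b$ (equivalently, the freedom in choosing $m$ within a component of $\R\setminus\mathcal X_p$) genuinely does not affect whether crossings are introduced, and pinning down the precise threshold $\frac qp\ge 2g(K)-1$ rather than an off-by-one variant. This amounts to a precise analysis of the geometry of $f_{p;m}$ acting on the minimal-position staircase representative of $\tilde\gamma_K$ near its extreme turning points, and I would handle it by the same local picture used in \cite{Hanselman2023} for cables, checking that the extra leftward finger-move data encoded in $b$ lies in the ``bridge region'' $[0,2\epsilon]\times(\R/\Z)$ and hence is swept away harmlessly by the horizontal compression. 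Once this local analysis is in place, the equivalence follows by combining it with Theorem \ref{Theorem, satellite knots with one-bridge braid patterns} and the immersed-curve characterization of L-space knots.
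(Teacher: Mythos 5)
Your overall strategy --- combine Theorem \ref{Theorem, satellite knots with one-bridge braid patterns} with the immersed-curve characterization of L-space knots and analyze how the shear $f_{p;m}$ interacts with the tight staircase representative of $\tilde\gamma_K$ --- is exactly the route the paper takes (its proof is a short sketch deferring to the cabling case, Theorem 9 of \cite{Hanselman2023}). However, the characterization of L-space knots you start from is incorrect, and this propagates into the criterion you propose to track through $f_{p;m}$. The lift $\tilde\gamma_J$ for an L-space knot $J$ is \emph{not} a graph over the horizontal axis and does not meet each vertical line at most once: it meets the vertical line $\{0\}\times\R$ through the punctures several times (three times already for the right-handed trefoil), making horizontal excursions around the punctures between consecutive crossings. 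The correct condition is \emph{vertical} monotonicity: between its first and last crossing of $\{0\}\times\R$ the curve descends monotonically, i.e.\ there is no vertical backtracking. Relatedly, ``no crossings are introduced'' is the wrong invariant to verify: embeddedness of the curve is necessary but far from sufficient for being an L-space knot (the figure-eight knot's curve is embedded yet backtracks vertically), so showing that $f_{p;m}(\tilde\gamma_K)$ remains embedded does not prove $K_{p,q,b}$ is an L-space knot, and conversely the failure mode when $\frac qp<2g(K)-1$ is the appearance of vertical backtracking, not necessarily of a self-intersection.

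With the correct criterion the argument is the paper's: $f_{p;m}$ slides points leftward along lines of slope $m$, so it preserves the relative heights of crossings coming from a single period of $\tilde\gamma_K$ (hence backtracking in $\gamma_K$ persists, which gives the part of the converse where $K$ is not an L-space knot), and it interleaves the crossings from the $p$ consecutive periods compressed into one column; the result is monotone iff the highest crossing of each period lands below the lowest crossing of the preceding one, i.e.\ iff $m>2g(K)-1$. Your worry about the role of $b$ is legitimate but is settled by arithmetic rather than by a local analysis of the bridge region: since $2g(K)-1$ is an integer, every admissible slope $m\in[\frac qp,\frac{q+1}p)\setminus\mathcal X_p$ satisfies $m>2g(K)-1$ if and only if $\frac qp\ge 2g(K)-1$ (in the boundary case $\frac qp=2g(K)-1$ the slope $m=\frac qp$ is itself excluded from the admissible set), so the threshold depends only on $p$ and $q$ and not on $b$.
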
   
 \begin{proof}
 The proof is a straightforward generalization of the proof of Theorem 9 in \cite{Hanselman2023}, using Theorem \ref{Theorem, satellite knots with one-bridge braid patterns} instead of the cabling transformation. The key idea is that an L-space knot is one for which the immersed curve moves monotonically downward from the first to the last time it intersects the vertical line through the punctures, i.e. there is no vertical backtracking. The planar transformation $f_{p;m}$ compresses multiple periods of the periodic curve $\tilde \gamma_K$ by sliding along lines of slope $m$, and there will be no vertical backtracking in the result if and only if $\gamma_K$ has no vertical backtracking and the highest point of $\tilde \gamma_K$ along one vertical line ends up below the lowest point of $\gamma_K$ on the  previous vertical line; this last condition occurs exactly when $m > 2g(K) - 1$. Finally, we observe that $m \in [\frac{q}{p}, \frac{q+1}{p})$ is greater than $2g(K)-1$ if and only if $\frac{q}{p} \geq 2g(K) - 1$. (Note that when $\frac{q}{p} = 2g(K) - 1$, $m$ must be greater than $\frac{q}{p}$ in order for $B(p,q,b)$ to be a knot.)
 \end{proof}
 
 We next derive a formula for $\epsilon$ of one-bridge braid satellites. Recall that $\tilde\gamma_K$ has one non-compact component which is homotopic to $\R\times\{0\}$ if the punctures are ignored and that we view as being oriented left to right. Also recall that $\epsilon$ records whether this component turns downward ($\epsilon = 1$), turns upward ($\epsilon = -1$), or continues straight ($\epsilon = 0$) after the first time (moving left to right) it crosses the vertical line $\{0\}\times \R$.

 \begin{thm}\label{thm:epsilon-for-one-bridge-braids}
 If $\epsilon(K) = \pm 1$ then $\epsilon(K_{p,q,b}) = \epsilon(K)$. If $\epsilon(K) = 0$ then
 $$\epsilon(K_{p,q,b}) = \begin{cases}
 1 &  \textnormal{ if $q > 1$  or if $q = 1$ and $b > 0$}\\
 0 & \textnormal{ if $q \in \{0,-1\}$ or if $(q,b) \in \{(1,0), (-2,p-1) \}$}\\
 -1 & \textnormal{ if $q < -2$ or if $q = -2$  and $b < p-1$}
 \end{cases} $$
 \end{thm}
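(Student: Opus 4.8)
The plan is to reduce this entirely to the behavior of the planar transformation $f_{p,q,b}$ on the non-compact component of $\tilde\gamma_K$, using Theorem \ref{Theorem, satellite knots with one-bridge braid patterns}. Recall that $\epsilon(J)$ is read off the distinguished non-compact component $\tilde\gamma_J^0$ of $\tilde\gamma_J$: it is $0$ precisely when $\tilde\gamma_J^0$ is homotopic (rel the punctures) to a horizontal line, and it is $\pm 1$ according to whether $\tilde\gamma_J^0$ turns down or up immediately after its first crossing of the vertical line through the punctures. Since $\tilde\gamma_{K_{p,q,b}}$ is homotopic to $f_{p,q,b}(\tilde\gamma_K)$, it suffices to understand the image under $f_{p,q,b}$ of $\tilde\gamma_K^0$, and in particular whether that image is horizontally-isotopic to a line or has a definite first turn.

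First I would handle the case $\epsilon(K) = \pm 1$. Here $\tilde\gamma_K^0$ is \emph{not} homotopic to a horizontal line; it has a genuine vertical excursion, and in fact the standard picture of knot-like curves shows $\tilde\gamma_K^0$ drops by $\pm 1$ (for $\epsilon(K)=\pm 1$ respectively) in vertical height across one period $[-\tfrac12, p-\tfrac12]\times\R$ while returning to the same height only at the far ends — more precisely, the first time it crosses $\{0\}\times\R$ it turns in the direction dictated by $\epsilon(K)$, and the homotopy class of this turn is preserved by the compression-along-lines-of-slope-$m$ operation. The map $f_{p,q,b}$ acts on $\tilde\gamma_K^0$ by first applying $g_{p;m}$ (sliding lattice points left along slope-$m$ lines, which only reparametrizes the strip and slides the curve along, not altering which side of a puncture a strand passes), then rescaling; none of these operations can create or destroy a first turn of the distinguished component, nor reverse its direction, because they are isotopies of the plane (away from the lattice) that fix the relevant combinatorial data near the first puncture crossing. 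Hence $\epsilon(K_{p,q,b}) = \epsilon(K)$ when $\epsilon(K) = \pm 1$. I expect this part to be mostly a careful bookkeeping argument with the pairing diagram $\mathcal{H}_{p;m}(\gamma_K)$ of Section \ref{sec:1-bridge-braids-curves}.

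The substantive case is $\epsilon(K) = 0$: now $\tilde\gamma_K^0$ \emph{is} a horizontal line (up to homotopy rel punctures), so its image under $f_{p,q,b}$ is obtained by slicing $p$ consecutive copies of this horizontal line (carrying the puncture pattern) and re-stacking them via $g_{p;m}$ followed by the rescalings. The resulting curve $f_{p,q,b}(\tilde\gamma_K^0)$ typically is \emph{not} horizontal: as the horizontal strand is slid left along the slope-$m$ guide lines, it acquires a staircase shape determined by the relative vertical positions of the lattice points $(a, -am)$ for $a = 0, \dots, p-1$, i.e.\ by the fractional parts of $am$, and the combinatorics of the bridge region. I would compute directly, for a representative slope $m \in [\tfrac qp, \tfrac{q+1}{p})$ realizing $(q,b)$ via Lemma \ref{lem:slope-for-every-b}, the first turn of this staircase after it crosses $\{0\}\times\R$. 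The sign of that first turn is governed by whether the first puncture the compressed strand must route around lies above or below it, which in turn is controlled by the sign of $q$ (the overall downward versus upward drift of the slope-$m$ guide line over one full period) together with a boundary correction when $q \in \{1, -2\}$ coming from the bridge parameter $b$: when $q \ge 1$ the guide lines drop, forcing a downward first turn ($\epsilon = 1$), except that at $q = 1$ the drop is exactly one unit so the strand stays level and $b$ decides — $b=0$ gives the degenerate ``straight'' case ($\epsilon = 0$) while $b > 0$ tips it downward; symmetrically for $q \le -2$, with the mirror-symmetric boundary case at $q = -2$; and for $q \in \{0, -1\}$ there is no net drift and the compressed strand remains horizontal, giving $\epsilon = 0$. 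The main obstacle I anticipate is making the ``first turn'' analysis rigorous rather than picture-based: I would pin it down by working in the lift of the specific $(1,1)$ diagram $\mathcal{H}_{p;m}$ from Section \ref{subsec:one-bridge-braids} (Figure \ref{fig:lift-of-(1,1)-diagram}), tracking exactly how the distinguished strand of $\tilde\gamma_K$ interleaves with the lifts of $\mu$, $\lambda$ and the basepoints, and appealing to the symmetry (invariance under $180^\circ$ rotation about $(0,\tfrac12)$) to reduce the case analysis. Once the shape of $f_{p,q,b}(\tilde\gamma_K^0)$ near its first vertical crossing is determined, the claimed three-way split for $\epsilon(K_{p,q,b})$ follows immediately, and the boundary subtleties at $q=1, q=-2$ are exactly where Lemma \ref{lem:slope-for-every-b} (and the requirement that $B(p,q,b)$ be a knot, excluding $m = \tfrac qp \in \mathcal{X}_p$ when $\gcd(p,q) > 1$) is needed to justify the dependence on $b$.
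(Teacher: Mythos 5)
Your proposal follows the paper's proof in its essentials: both cases are reduced via Theorem \ref{Theorem, satellite knots with one-bridge braid patterns} to tracking the non-compact component of $\tilde\gamma_K$ under $f_{p;m}$, with the $\epsilon(K)=\pm1$ case handled by noting the first turn is preserved and the $\epsilon(K)=0$ case by analyzing the image of a horizontal line. The one simplification you miss is that the paper avoids any first-turn/staircase bookkeeping in the $\epsilon(K)=0$ case by observing that the image stays homotopic to a horizontal line exactly when no lattice point crosses it, and since the points moving farthest are those on $\{p-1\}\times\R$ (displaced by $(p-1)m$) this reduces to the single inequality $-\tfrac{1}{p-1}<m<\tfrac{1}{p-1}$, after which the $(q,b)$ case split is just reading off $(q(p,m),b(p,m))$ on the subintervals of $(-\tfrac{1}{p-1},\tfrac{1}{p-1})$ cut out by $\mathcal{X}_p$.
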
   
\begin{proof}
The proof is essentially the same as the proof of Theorem 3 in \cite{Hanselman2023}. If the non-compact component of $\tilde \gamma_K$ turns either upward or downward, it is clear that this property is preserved by the operation $f_{p;m}$ for any $m$. If $\epsilon(K) = 0$, then the relevant component of $\tilde \gamma_K$ is a horizontal line. In this case, $\epsilon(K_{p,q,b}) = 0$ if and only if all the lattice points initially above $\tilde \gamma_K$ remain above all lattice points initially below $\tilde \gamma_K$ after applying $f_{p;m}$; since the lattice points moving the most are those on the line $\{p-1\}\times\R$, which move vertically by $(p-1)m$, this means that $-\frac{1}{p-1} < m < \frac{1}{p-1}$. For other slopes $\tilde \gamma_{K_{p,q,b}}$ turns downward or upward depending on whether $m$ is positive or negative. Finally, since the only point of $\mathcal{X}_p$ in $(-\frac{1}{p-1}, \frac{1}{p-1})$ is 0, it is simple to check that $\left(q(p,m), b(p,m)\right)$ is $(-2, p-1)$ on  $(-\frac{1}{p-1}, -\frac{1}{p})$, $(-1, 0)$ on $(-\frac{1}{p}, 0)$, $(0, p-1)$ on $(0, \frac 1 p)$, and $(1, 0)$ on $(\frac{1}{p}, \frac{1}{p-1})$.
\end{proof}

Note that $B(p,1,0)$ and $B(p,0,p-1)$ are both isotopic to the torus knot $T(p,1)$ in the boundary of the solid torus and $B(p,-1,0)$ and $B(p,-2,p-1)$ are both isotopic to $T(p,-1)$, so the only satellites with one bridge braid patterns for which $\epsilon = 0$ are $(p, \pm 1)$-cables.

Finally, we compute $\tau$ for one-bridge braid satellites.  Recall that  $\tau(K)$ measures the height of the first intersection of the non-trivial component of $\tilde\gamma_K$ with the vertical line $\{0\}\times\R$; the first intersection occurs between heights $\tau(K)$ and $\tau(K)+1$, while by symmetry the last intersection with $\{0\}\times\R$ occurs between heights $-\tau(K)$ and $-\tau(K) +1$. It follows that $2\tau-1$ is the difference between the height of the lattice point immediately below the first intersection and the height of the lattice point immediately above the last intersection. It is not difficult to identify points on $f_{p;m}(\tilde \gamma_K)$ that give the first and last intersection with $\{0\}\times \R$. The only step that requires some care is computing the height difference between these points; the following lemma will be helpful. We use $y(p)$ to denote the $y$-coordinate of a point $p$ in $\R^2$.

\begin{lem}\label{lem:parallelogram-lattice-points}
For any $p>0$, $q\ge 0$ and $b \in \{0, \ldots, p-1\}$ for which $B(p,q,b)$ is a knot, if $A = (0,0)$ and $B = (p-1,0)$ then
$$y(f_{p,q,b}(A)) - y(f_{p,q,b}(B)) = pq - q + b.$$
\end{lem}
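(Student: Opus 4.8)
\textbf{Proof proposal for Lemma \ref{lem:parallelogram-lattice-points}.}

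The plan is to track exactly where the two lattice points $A=(0,0)$ and $B=(p-1,0)$ go under the transformation $f_{p,q,b}=f_{p;m}$, for a fixed slope $m$ with $q(p,m)=q$ and $b(p,m)=b$. Recall from Section \ref{sec:1-bridge-braids-curves} that $f_{p;m}$ is the composite of $g_{p;m}$ (which slides each lattice point $(a,c)$ with $a\neq 0$ leftward along a line of slope $m$ to $(0,c-am)$) with three isotopies: a vertical shift that respaces the images so they lie at heights $\{n/p\}_{n\in\Z}$ while fixing the heights of the points $(0,n)$ with $n\in\Z$, then a horizontal compression by $p$, and finally a vertical stretch by $p$. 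Since the final vertical stretch multiplies all heights by $p$, it suffices to compute the height difference of the images of $A$ and $B$ after the first two steps (i.e.\ after $g_{p;m}$ and the respacing isotopy), then multiply by $p$. First I would note that $A=(0,0)$ is fixed by $g_{p;m}$ and its height is fixed by the respacing isotopy, so the image of $A$ has height $0$ throughout; thus $y(f_{p,q,b}(B))$ is what must be computed, and the claimed identity becomes $-y(f_{p,q,b}(B)) = pq-q+b$, i.e.\ the pre-stretch height of the image of $B$ is $-(pq-q+b)/p = -(q + \tfrac{b-q}{p})$.

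The key step is to count, among all the images $g_{p;m}(a,c)=(0,c-am)$ of the lattice points in the strip $[-\tfrac12, p-\tfrac12]\times\R$ (together with the fixed points $(0,n)$, $n\in\Z$), how many lie strictly between the image of $A$ at height $0$ and the image of $B$ at height $(p-1)(-m) = -(p-1)m$; this count, call it $N$, determines the respaced height of the image of $B$ as $-\tfrac{N+1}{p}$ (there are $N+1$ ``gaps'' of size $1/p$ between the image of $A$ and the image of $B$ once everything is respaced, since the respaced heights form the set $\{n/p\}$). So the identity to prove reduces to $N+1 = pq-q+b+1$, i.e.\ $N = pq - q + b = q(p-1) + b$. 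Now $N$ naturally splits into two parts. The points of the form $g_{p;m}(a,c) = (0, c-am)$ with $a\in\{0,1,\dots,p-1\}$ and $c\in\Z$ (including $a=0$, which contributes the integer heights) that land in the open interval $(-(p-1)m, 0)$: for each $a$, the number of integers $c$ with $-(p-1)m < c - am < 0$ is $\lceil am \rceil - \lceil (p-1)m \rceil + $ (a correction depending on whether endpoints are hit). The heart of the computation is the identity $\sum_{a=0}^{p-1}\bigl(\text{count for }a\bigr) = q(p-1)+b$, which I would establish by relating it directly to the defining data of $B(p,q,b)$: recall $q = q(p,m) = \lfloor pm\rfloor$ and $b = b(p,m)$ is the number of intersections of $\delta_{p;m}$ with the interior of $\gamma_{p;m}$, equivalently (in the notation of the proof of Lemma \ref{lem:slope-for-every-b}) the number of $0<i<p$ for which the point $x_i$ lies on the vertical segment between $x_0$ and $x_p$. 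I would translate this intersection count into exactly the lattice-point count $N$ above, using that $x_i$ has height $-im$ relative to $x_0$ and that $x_0$ sits at height $0$ while $x_p$ sits at height $-pm$: the $x_i$ lying between $x_0$ and $x_p$ on the vertical line correspond to the lattice points $(i, c)$ whose $g_{p;m}$-images land in the appropriate vertical interval, and a careful bookkeeping of which multiples of $m$ fall in $(-(p-1)m,0)$ versus the full interval $(-pm,0)$ yields $N = q(p-1)+b$.

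The main obstacle, and the step I would be most careful with, is the endpoint bookkeeping: exactly which images $g_{p;m}(a,c)$ coincide with $0$ or with $-(p-1)m$ or with each other, and hence whether they are counted. Because $m\notin\mathcal{X}_p$, the slope avoids the degenerate values $\tfrac\alpha\beta$ with $1\le\beta<p$, which is precisely the condition guaranteeing that the points $x_1,\dots,x_{p-1}$ are distinct from each other and from $x_0$ and $x_p$ — this is what makes the count well-defined and is the reason the hypothesis ``$B(p,q,b)$ is a knot'' (equivalently, the slope is admissible) appears. I would handle this by working on the half-open slope interval $[\tfrac qp, \tfrac{q+1}{p})$ as in the proof of Lemma \ref{lem:slope-for-every-b}, where $b(p,m)$ is locally constant and jumps only at points of $\mathcal{X}_p$, and verify the identity $N = q(p-1)+b$ on each constant sub-interval by checking how both sides change (by the same amount) across each jump, with the base case being a slope just above $\tfrac qp$. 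Once the count $N=q(p-1)+b$ is confirmed, the height of $f_{p,q,b}(B)$ is $-p\cdot\tfrac{N+1}{p} = -(N+1)+\text{(constant)}$; matching the normalization $y(f_{p,q,b}(A))=0$ gives $y(f_{p,q,b}(A)) - y(f_{p,q,b}(B)) = N = pq-q+b$, as claimed. (The restriction $q\ge 0$ in the statement means we need not worry about sign subtleties in $\lfloor pm\rfloor$, simplifying the endpoint analysis.)
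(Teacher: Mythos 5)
Your overall strategy is the same as the paper's: reduce the height difference to a count of image points on the vertical line $\{0\}\times\R$ lying between $g_{p;m}(A)$ and $g_{p;m}(B)$ (the paper phrases this as counting lattice points in the parallelogram with vertices $A$, $g_{p;m}(B)$, $B$, $C$, which is the same count). However, there are two problems. First, your reduction is internally inconsistent by one: from your own setup, if $N$ is the number of image points \emph{strictly} between heights $0$ and $-(p-1)m$, then the respaced height of the image of $B$ is $-\tfrac{N+1}{p}$ and hence $y(f_{p,q,b}(A)) - y(f_{p,q,b}(B)) = N+1$, not $N$ as you conclude at the end. Correspondingly, the identity you need is $N = q(p-1)+b-1$, not $N = q(p-1)+b$ as you state. (A sanity check with $p=2$, $m\in(0,\tfrac12)$, so $(q,b)=(0,1)$ and $B(2,0,1)=T(2,1)$: there are no image points strictly in $(-m,0)$, so $N=0$ and the difference is $N+1=1=pq-q+b$.)

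Second, and more seriously, the identity $N+1 = q(p-1)+b$ is the entire content of the lemma, and you leave it as ``careful bookkeeping.'' Relating $N$ to $b(p,m)$ is not routine: $b(p,m)$ counts the $x_i$ lying between $x_0$ and $x_p$ on the torus, i.e.\ it is a count over the interval of length $pm$ (all $p$ progressions over the segment from $w'$ to $z'$), whereas $N$ involves the interval of length $(p-1)m$, and the two differ by boundary terms that must be tracked exactly where your off-by-one arose. The paper resolves this by a double count: it first counts lattice points in an \emph{enlarged} parallelogram with corners $A'=(-1,0)$, $B'=(-1,-pm)$, $B$, $C'=(p-1,pm)$, where the count $pq+q+b+2$ can be read off directly from $q=\lfloor pm\rfloor$ and the definition of $b(p,m)$ (because the shifted map $g'_{p;m}$ sends all its lattice points to the left edge), and then subtracts the $q+1$ points on the segment $A'B'$ and the $q$ points in the trapezoid $A'ACC'$. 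You would need to supply an argument of comparable precision; as written, the step that would fail is exactly the unproved count, and the inconsistency in your two statements of it shows it has not yet been carried out.
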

\begin{proof}
Choose a slope $m$ in $\R\setminus \mathcal{X}_p$ such that $f_{p,q,b} = f_{p;m}$; note that $m>0$ since $q \ge 0$. The difference in height between $f_{p;m}(A)$ and $f_{p;m}(B)$ is one less than the number of points of $\{0\}\times \Z$ between $f_{p;m}(A)$ and $f_{p;m}(B)$, inclusive. This latter quantity is the same as the number of integer lattice points contained in the parallelogram (with boundary included) with vertices at $g_{p;m}(A) = A = (0,0)$, $g_{p;m}(B) = (0, -m(p-1))$, $B = (p-1,0)$, and $C = (p-1, m(p-1))$, since the lattice points in this parallelogram are precisely those that map to either $g_{p;m}(A)$, $g_{p;m}(B)$, or the interval between them under $g_{p;m}$.

\begin{figure}
	\includegraphics[scale=0.60]{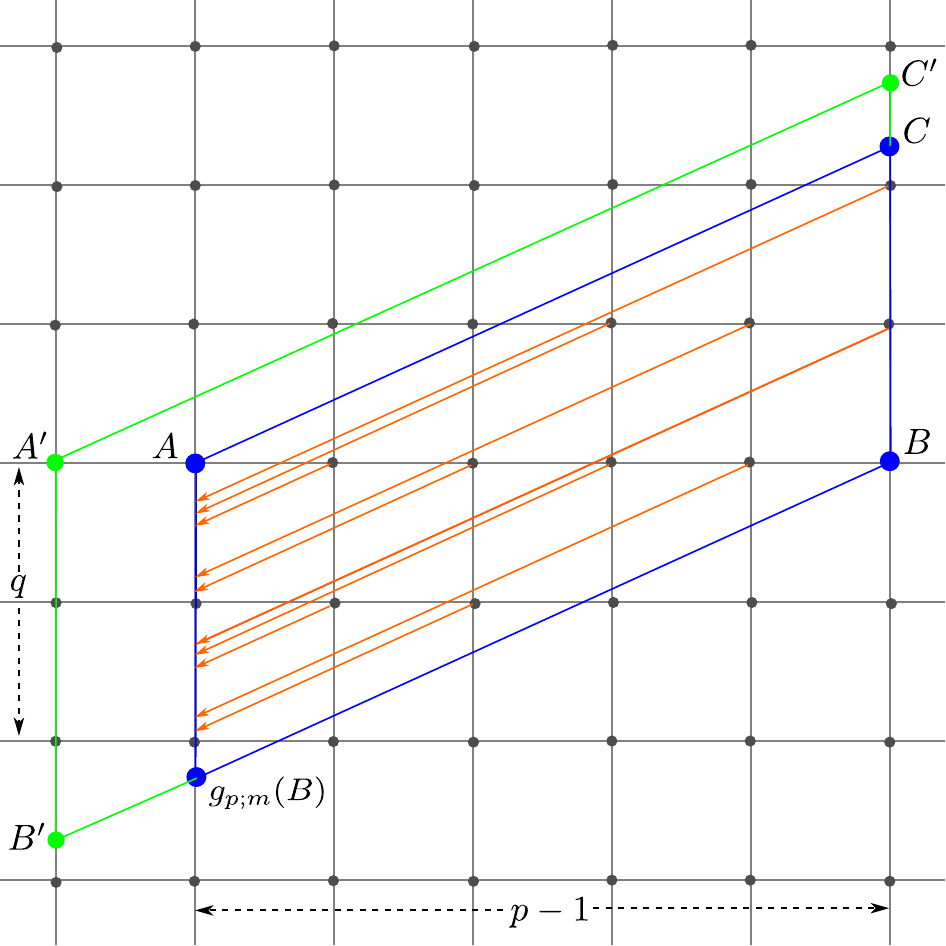}
	\caption{The number of lattice points in the blue parallelogram is the same as the number of lattice points between $f_{p;m}(A)$ and $f_{p;m}(B)$, inclusive. We count these lattice points by finding the number in the larger green parallelogram, which can be more simply stated in terms of $p$, $q$, and $b$, and subtracting the excess. }
	\label{fig:tau-parallelogram}
\end{figure}

To count lattice points in the the parallelogram with corners at $A$, $g_{p;m}(B)$, $B$, and $C$, we first count lattice points in the larger parallelogram with corners at $A' = (-1, 0)$, $B' = (-1, -mp)$, B, and $C' = (p-1, pm)$ (see Figure \ref{fig:tau-parallelogram}). We claim that this closed region contains $q(p+1) + b + 2$ lattice points. To see this, note that if we apply the transformation $g_{p;m}$ translated leftward by one unit (that is, the transfomation $g'_{p;m}$ defined such that $g'_{p;m}(x,y)$ is one unit to the left of $g_{p;m}(x+1,y)$) then no lattice points enter or leave this parallelogram and all lattice points in the parallelogram apart from those on its right edge end up on the left edge of the parallelogram. Since $m \in [\frac q p, \frac{q+1}{p}]$, $B'$ lies between the lattice points at $(-1, -q)$ and $(-1, -q-1)$. From the method of computing $b(p,m)$ described in Section \ref{subsec:one-bridge-braids}, the number of these points ending up between $B'$ and $(-1, -q)$ is $b$. In addition, for each $-q \le i \le -1$ there are $p$ lattice points that end up on the half open segment $\{(-1, i+t)\}_{t\in[0,1)}$. Combining these along with the point $A'$, we see that there are $qp + b + 1$ lattice points taken to the left edge of the parallelogram by $g'_{p;m}$. Adding the $q+1$ lattice points on the right edge gives $pq + q + b +2$. To obtain the number of lattice points in the smaller parallelogram, we remove from this the $q+1$ lattice points points on the the left edge from $A'$ to $B'$. We also subtract the number of lattice points in the trapezoid with corners $A'$, $A$, $C$, and $C'$, not counting $A'$ or any along the edge $AC$; this trapezoid intersects the $q$ horizontal lines $\R\times\{i\}$ for $1\le i \le q$ and intersects each in a half open interval of length one that must contain exactly one lattice point, so the trapezoid contains $q$ lattice points. Thus the smaller parallelogram contains $pq - q + b + 1$ lattice points, and subtracting one gives the height difference.
\end{proof}
Note that the vertical translation the formula in Lemma \ref{lem:parallelogram-lattice-points} also give the height difference between $f_{p,q,b}( (0, n) )$ and $f_{p,q,b}( (p-1, n) )$ for any integer $n$.

 \begin{thm}\label{thm:tau-for-one-bridge-braids}
 If $\epsilon(K) = \pm 1$ then $\tau(K_{p,q,b}) = p\tau(K) + \frac{(p-1)(q \mp 1) + b}{2}$. If $\epsilon(K) = 0$ then
 $$\tau(K_{p,q,b}) = \begin{cases}
 \frac{(p-1)(q-1)+b}{2} &  \textnormal{ if $q > 1$  or if $q = 1$ and $b > 0$}\\
 0 & \textnormal{ if $q \in \{0,-1\}$ or if $(q,b) \in \{(1,0), (-2,p-1) \}$}\\
 \frac{(p-1)(q+1)+b}{2} & \textnormal{ if $q < -2$ or if $q = -2$  and $b < p-1$}
 \end{cases} $$
 \end{thm}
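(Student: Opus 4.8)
\textbf{Proof strategy for Theorem \ref{thm:tau-for-one-bridge-braids}.} The plan is to read off $\tau(K_{p,q,b})$ directly from the immersed curve $\tilde\gamma_{K_{p,q,b}} = f_{p,q,b}(\tilde\gamma_K)$, exactly as in the proof of Theorem \ref{thm:epsilon-for-one-bridge-braids}, using the characterization that $2\tau(J) - 1$ equals the height difference between the lattice point immediately below the first (leftmost) intersection of the distinguished component of $\tilde\gamma_J$ with the line $\{0\}\times\R$ and the lattice point immediately above the last (rightmost) such intersection. So the first step is to fix a slope $m \in \R\setminus\mathcal{X}_p$ with $B(p;m) = B(p,q,b)$ and to locate, on $\tilde\gamma_K$, the points that become the first and last intersections of $f_{p;m}(\tilde\gamma_K)$ with $\{0\}\times\R$; by the symmetry of $\tilde\gamma_K$ and the periodicity of $f_{p;m}$, these two points differ by a horizontal translation by $(p-1, 0)$ combined with a vertical symmetry, so the height difference between their images can be computed by applying $f_{p;m}$ to the two endpoints of a horizontal segment of length $p-1$.

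Next, I would split into the three cases according to $\epsilon(K)$. When $\epsilon(K) = \pm 1$, the distinguished component of $\tilde\gamma_K$ turns down or up, and (after a homotopy) its first and last crossings of a vertical line sit just below and just above the lattice points at heights $\tau(K)$ and $-\tau(K)$ on that line. Translating by $p$ periods and then applying the scaling built into $f_{p;m}$, together with Lemma \ref{lem:parallelogram-lattice-points} (applied with an appropriate vertical shift, which is legitimate since the lemma's height difference is translation-invariant in the integer lattice), converts the quantity $2\tau(K) - 1$ into $2\tau(K_{p,q,b}) - 1 = p(2\tau(K) - 1) + \bigl(pq - q + b\bigr) \mp (p-1)$ or something algebraically equivalent; solving for $\tau(K_{p,q,b})$ yields $p\tau(K) + \tfrac{(p-1)(q\mp 1) + b}{2}$. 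The $\mp$ keeps track of whether the distinguished strand turns toward or away from the lattice point we are measuring against, i.e.\ of the sign of $\epsilon(K)$; I expect the bookkeeping of which lattice point lies ``immediately above'' versus ``immediately below'' in the turned-up versus turned-down case to be the place where signs are most easily dropped, so this is the step I would write out most carefully. When $\epsilon(K) = 0$ the distinguished strand is a horizontal line, and the first and last crossings of $\{0\}\times\R$ by $f_{p;m}(\tilde\gamma_K)$ are governed purely by how far the lattice points above and below that line get dragged vertically; by the analysis already carried out in the proof of Theorem \ref{thm:epsilon-for-one-bridge-braids} this depends only on the sign of $m$ (equivalently, on which of the three regimes $(q,b)$ falls into), and Lemma \ref{lem:parallelogram-lattice-points} again supplies the precise height difference, giving $\tfrac{(p-1)(q\mp 1)+b}{2}$ in the turned cases and $0$ in the borderline cases where $\tilde\gamma_{K_{p,q,b}}$ is still a horizontal line.

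The remaining care is that Lemma \ref{lem:parallelogram-lattice-points} is stated only for $q \ge 0$, so in the regimes $q \le -2$ (and in the $\epsilon(K) = \pm 1$ case with $q$ negative) I would either reprove the analogous lattice-point count for negative $q$ by the same parallelogram argument, or reduce to the $q \ge 0$ case by the mirror symmetry $B(p,q,b) \leftrightarrow$ its reverse together with the behavior of $\tau$ and $\gamma$ under mirroring. Finally, I would note that the borderline one-bridge braids $B(p,1,0)$, $B(p,0,p-1)$, $B(p,-1,0)$, $B(p,-2,p-1)$ are the $(p,\pm 1)$-cables of the unknot pattern (torus knots in $\partial(S^1\times D^2)$), for which the satellite is $K$ itself up to a trivial reparametrization, consistently giving $\tau(K_{p,q,b}) = \tau(K)$ when $\epsilon(K) = \pm 1$ and $\tau(K_{p,q,b}) = 0$ when $\epsilon(K) = 0$; this matches both displayed formulas since $(p-1)(q\mp 1) + b = 0$ for exactly these $(q,b)$ with the sign corresponding to $\epsilon(K)$, so no separate argument is needed beyond observing the algebra works out. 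The main obstacle, as indicated, is not any single computation but the careful synchronization of the ``lattice point immediately above/below'' conventions with the direction in which $f_{p;m}$ shears the plane, so that the $\mp$ signs and the parity of $(p-1)(q\mp 1) + b$ come out correctly in every case.
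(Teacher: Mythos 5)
Your proposal follows essentially the same route as the paper's proof: locate the first and last intersections of $f_{p;m}(\tilde\gamma_K)$ with $\{0\}\times\R$ via lattice points at heights $\pm\tau(K)$ (shifted according to the sign of $\epsilon(K)$), decompose the height difference into the shear contribution computed by Lemma \ref{lem:parallelogram-lattice-points} plus the vertical rescaling by $p$, treat $\epsilon(K)=0$ by the slope analysis from Theorem \ref{thm:epsilon-for-one-bridge-braids}, and handle $q<0$ by mirroring. One caution: your displayed intermediate identity $2\tau(K_{p,q,b}) - 1 = p(2\tau(K)-1) + (pq-q+b) \mp (p-1)$ does not solve to the stated final formula (the correct relation uses $2\tau-1$ with no correction when $\epsilon(K)=1$ and $2\tau+1$ together with $p(2\tau(K)+1)$ when $\epsilon(K)=-1$), but you explicitly flag this sign bookkeeping as the step to write out carefully and your final formulas are correct.
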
   
\begin{proof}
The proof is similar to that of Theorem 4 in \cite{Hanselman2023}. We first assume that $q \ge 0$; the case of $q < 0$ follows from this by taking mirrors. We consider cases based on the value of $\epsilon(K)$.

If $\epsilon(K) > 0$, consider the points $A = (0, \tau(K))$, $B = (p-1, \tau(K))$, and $B' = (p-1, 1-\tau(K))$. When the non-compact component of $\tilde \gamma_K$ is pulled tight the first intersection with $\{0\}\times\R$ occurs just above $A$ and the last intersection with $\{p-1\}\times \R$ occurs just below $B'$; it is clear that after applying $f_{p,q,b}$ the first intersection with $\{0\}\times\R$ occurs just above $f_{p,q,b}(A)$ and the last intersection with $\{0\}\times\R$ occurs just below $f_{p,q,b}(B')$; see Figure \ref{fig:tau-cases}. We now have that
\begin{align*}
2\tau(K_{p,q,b}) - 1 &= y( f_{p,q,b}(A) ) - y( f_{p,q,b}(B') )  \\
&= \left[ y( f_{p,q,b}(A) ) - y( f_{p,q,b}(B) ) \right] + \left[ y( f_{p,q,b}(B) ) - y( f_{p,q,b}(B') )  \right].
\end{align*}
The first term in this sum is $pq - q + b$ by Lemma \ref{lem:parallelogram-lattice-points} and the second term is $p(2\tau(K)-1)$ since $f_{p,q,b}$ scales the distance between lattice points in the same column by a factor of $p$. Solving for $\tau(K_{p,q,b})$ gives
$$\tau(K_{p,q,b}) = 2 p \tau(K) + \frac{(p-1)(q-1)+b}{2}.$$

If $\epsilon(K) < 0$, we instead consider the points $A = (0, \tau(K)+1)$, $B = (p-1, \tau(K)+1)$, and $B' = (p-1, -\tau(K))$. When the non-compact component of $\tilde \gamma_K$ is pulled tight the first intersection with $\{0\}\times\R$ occurs just \emph{below} $A$ and the last intersection with $\{p-1\}\times \R$ occurs just \emph{above} $B'$; it is clear that after applying $f_{p,q,b}$ the first intersection with $\{0\}\times\R$ occurs just below $f_{p,q,b}(A)$ and the last intersection with $\{0\}\times\R$ occurs just above $f_{p,q,b}(B')$; see Figure \ref{fig:tau-cases}. It follows that
\begin{align*}
2\tau(K_{p,q,b}) + 1 &= y( f_{p,q,b}(A) ) - y( f_{p,q,b}(B') )  \\
&= \left[ y( f_{p,q,b}(A) ) - y( f_{p,q,b}(B) ) \right] + \left[ y( f_{p,q,b}(B) ) - y( f_{p,q,b}(B') )  \right].
\end{align*}
Once again the first term in this sum is $pq - q + b$ by Lemma \ref{lem:parallelogram-lattice-points} and the second term is now $p(2\tau(K)+1)$. Solving for $\tau(K_{p,q,b})$ gives
$$\tau(K_{p,q,b}) = 2 p \tau(K) + \frac{(p-1)(q+1)+b}{2}.$$

If $\epsilon(K) = 0$, consider the points $A = (0, 0)$, $B = (p-1, 0)$, and $B' = (p-1,1)$. The non-compact component of $\tilde \gamma_K$ is homotopic to the horizontal line that passes just above $A$ and $B$. If $q = 0$ or $(q,b) = (1,0)$ then $f_{p,q,b} = f_{p;m}$ for some slope $m$ with $0 < m < \frac{1}{p-1}$; in this case it is clear that no lattice points cross the horizontal line just above $A$ when $g_{p;m}$ is applied, so the image under $f_{p;m}$ of this line is still homotopic to a horizontal line and $\tau(K_{p,q,b}) = 0$. If $q > 1$ or if $q =1$ and $b>0$ then $f_{p,q,b} = f_{p;m}$ for some $m > \frac{1}{p-1}$. We can homotope the non-compact component of $\tilde \gamma_K$ so that it passes just above $A$ and just below $B'$, as in Figure \ref{fig:tau-cases}, and it is clear that the first intersection with $\{0\}\times \R$ of the image under $f_{p;m}$ occurs just above $f_{p;q}(A)$ and the last occurs just below $f_{p;q}(B')$, so 
\begin{align*}
2\tau(K_{p,q,b}) - 1 &= y( f_{p,q,b}(A) ) - y( f_{p,q,b}(B') )  \\
&= \left[ y( f_{p,q,b}(A) ) - y( f_{p,q,b}(B) ) \right] + \left[ y( f_{p,q,b}(B) ) - y( f_{p,q,b}(B') )  \right] \\
&= [pq - q + b] + [-p].
\end{align*}
From this we find that $\tau(K_{p,q,b}) = \frac{(p-1)(q-1) + b}{2}$.

Finally, we check the case of $q\le 0$ using the fact that the mirror of $B(p,q,b)$ is $B(p, -q-1, p-b-1)$ and the fact that mirroring flips the sign of $\tau$ and $\epsilon$. If $\epsilon(K) = \pm 1$ then $\epsilon(\overline{K}) = \mp 1$ and

\begin{figure}[h!]
	\includegraphics[scale=0.50]{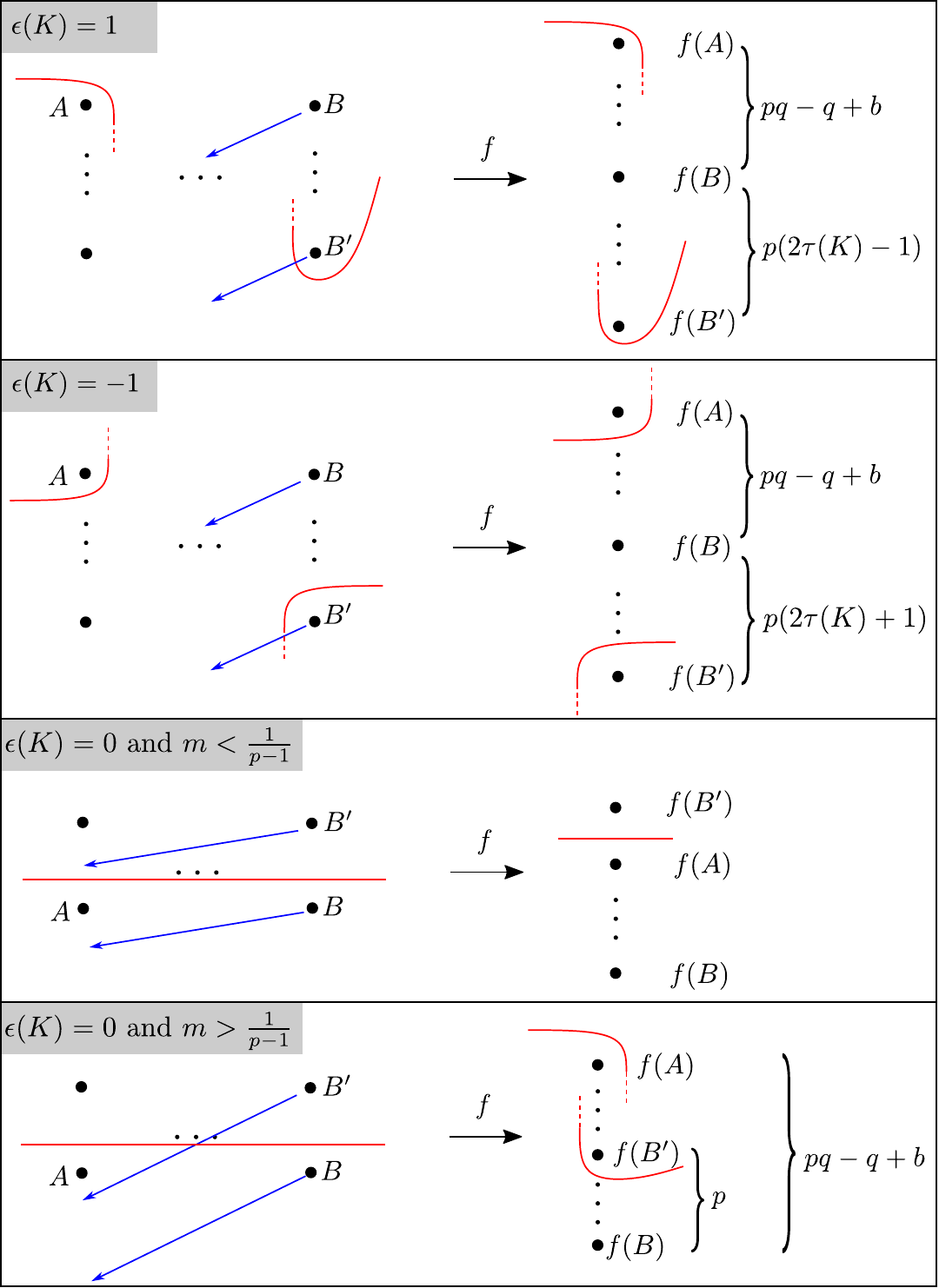}
	\caption{The first intersection of the non-compact component of $\tilde\gamma_K$ with $\{0\}\times\R$ and the last intersection with $\{p-1\}\times\R$ give rise to the first and last intersection with $\{0\}\times\R$ after applying $f = f_{p,q,b}$}
	\label{fig:tau-cases}
\end{figure}

\begin{align*}
\tau(K_{p,q,b}) &= -\tau( \overline{K}_{p, -q-1, p-b-1} ) = - p\tau(\overline{K}) - \frac{ (p-1)( (-q-1) \pm 1 ) + (p-b-1)}{2} \\
&= p\tau(K) - \frac{ (p-1)( -q \pm 1 ) -b}{2} =  p\tau(K) + \frac{ (p-1)( q \mp 1 ) + b}{2} \\
\end{align*}
as desired. A similar computation, which we omit, checks the case that $\epsilon(K) = 0$ when $q < 0$.
\end{proof}

\subsection{Mazur satellites}\label{sec:mazur}
We have seen that for one-bridge braid patterns, the immersed curve invariant for a satellite knot can be obtained from that of the companion knot by performing a diffeomorphism in a covering space. Unfortunately this is not always the case, even for $(1,1)$-satellites. Consider the Mazur pattern $M$, pictured along with its genus one doubly pointed Heegaard diagram in Figure \ref{Figure:MazurPattern}. We can use Theorem \ref{Theorem, main theorem} to compute the complex $CFK_{\mathcal{R}}$ associated with $M(T_{2,3}$, the Mazur satellite of the right handed trefoil; the resulting complex is shown in Figure \ref{fig:mazur-of-trefoil}. The immersed multicurves representing this complex, also shown in the figure, can be obtained from the complex following the algorithm in \cite{Hanselman:CFK}. Note that the resulting curve has more then one component, even though the curve for the trefoil is connected, indicating that there is no hope of recovering this curve by a plane transformation. It is an interesting question whether there is some more complicated geometric operation to recover the immersed multicurve for a Mazur satellite directly from the immersed multicurve for the companion, although we do not pursue this in the present paper. 

\begin{figure}[htb!]
	\begin{center}
		\includegraphics[scale=0.40]{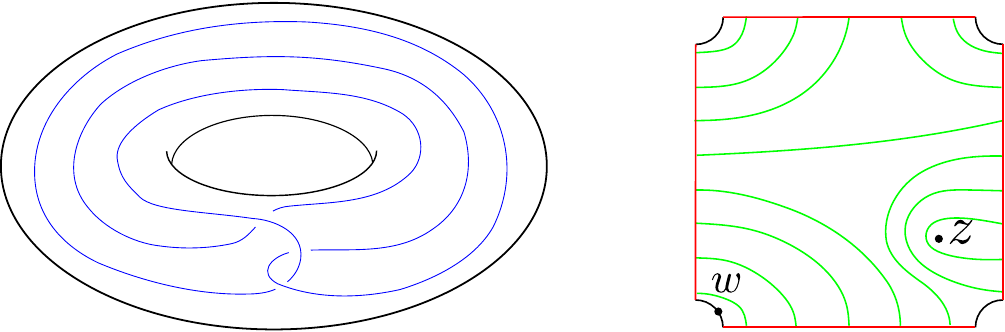}
		\caption{A doubly pointed bordered Heegaard diagram (right) for the Mazur pattern (left).}
		\label{Figure:MazurPattern}
	\end{center}
\end{figure}

\begin{figure}[htb!]
	\begin{center}
		\includegraphics[scale=0.65]{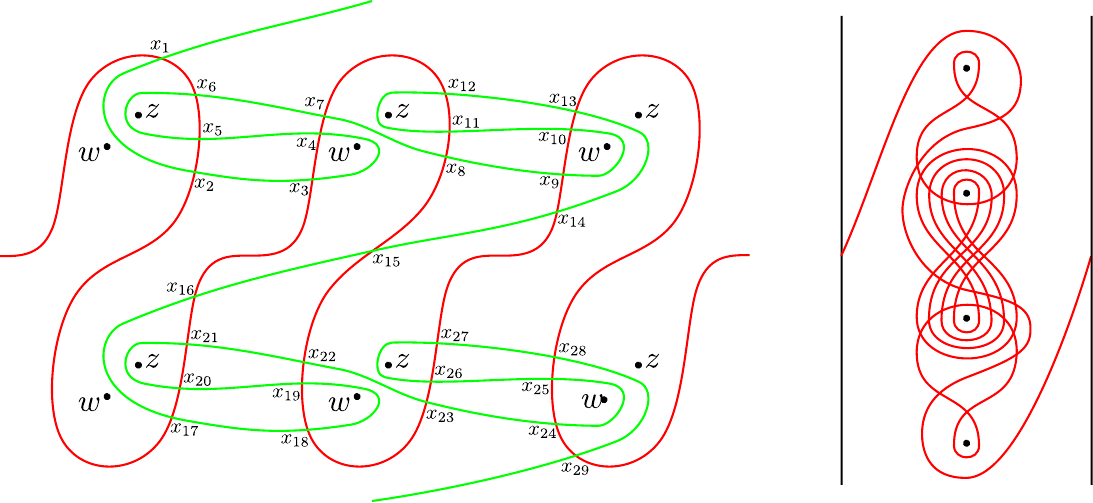}
		
\phantom{line}

\begin{tabular}{lll}
	\hline
	$\partial(x_1) = 0$ 	\hspace{25mm}	& $\partial(x_{11}) = Ux_8 +  Vx_{12}$ 	& $\partial(x_{21}) = Ux_{16}$			\\
	$\partial(x_2) = Vx_1$ 			& $\partial(x_{12}) = Ux_7 + Ux_{15}$	& $\partial(x_{22}) = Ux_{15} + Ux_{23}$	\\
	$\partial(x_3) = V^2 x_{16}$		& $\partial(x_{13}) = Ux_{14}$			& $\partial(x_{23}) = 0$				\\
	$\partial(x_4) = Ux_3 + Vx_7$ 		& $\partial(x_{14}) = 0$	\hspace{25mm}	& $\partial(x_{24}) = Vx_{29}$			\\
	$\partial(x_5) = Ux_2 + Vx_6$ 		& $\partial(x_{15}) = 0$				& $\partial(x_{25}) = Ux_{24}+Vx_{28}$	\\
	$\partial(x_6) = Ux_1$			& $\partial(x_{16}) = 0$				& $\partial(x_{26}) = Ux_{23}+Vx_{27}$	\\
	$\partial(x_7) = 0$				& $\partial(x_{17}) = Vx_{16}$			& $\partial(x_{27}) = U^2 x_{14}$		\\
	$\partial(x_8) = Vx_7 + V x_{15}$ 	& $\partial(x_{18}) = Vx_{15} + Vx_{23}$	& $\partial(x_{28}) = Ux_{29}$			\\
	$\partial(x_9) = Vx_{14}$			& $\partial(x_{19}) = Ux_{18}+Vx_{22}$	& $\partial(x_{29}) = 0$				\\
	$\partial(x_{10}) = Ux_{9} + Vx_{13}$ & $\partial(x_{20}) = Ux_{17}+Vx_{21}$	& 								\\
	\hline
\end{tabular}
	
		\caption{(Top left): A lift of the immersed genus one Heegaard diagram obtained by pairing a (1,1)-diagram for the Mazur pattern with the immersed curve for the right handed trefoil. (Bottom): The chain complex $CFK_{\mathcal{R}}$ computed from this diagram. (Top right): The immersed multicurve representing this complex.}
		\label{fig:mazur-of-trefoil}
	\end{center}
\end{figure}

Despite this difficulty, Theorem \ref{Theorem, main theorem} can be useful in peforming computations with the Mazur pattern and other (1,1) patterns. We will demonstrate this by reproving formulas for the $\epsilon$-invariant and a $\tau$-invariant formula of Mazur satellite knots derived by Levine \cite{MR3589337}. Levine derived these formulas by first computing the bimodule $\widehat{CFDD}(X_M)$ of the exterior $X_M$ of the Mazur pattern using the arc-slides algorithm that is developed in \cite{Lipshitz2014b} and is implemented in a Python script by \cite{Zhan}. In theory, it suffices to analyze $\widehat{CFDD}(X_M)\boxtimes \widehat{CFA}(S^3\backslash \nu(K))$ to derive both formulas. However, this approach is hindered by its computational complexity since $\widehat{CFDD}(X_M)$ is large. Instead, by taking box-tensor products of the bimodule with appropriate bordered invariants, Levine partially computed the hat-version knot Floer chain complexes of Mazur satellite knots $M(K)$ to obtain the $\tau$-invariant formula, and then further deduced the $\epsilon$-invariant of $M(K)$ by computing and examining the hat-version knot Floer chain complexes of $(2,\pm 1)$-cables of the Mazur satellite knots. In this subsection, we present an alternate proof for both formulas using the immersed-curve technique. While our approach is ultimately built on computing $\widehat{CFDD}(X_M)\boxtimes \widehat{CFA}(S^3\backslash \nu(K))$ too, we circumvent the complexity in obtaining the bimodule and in performing and simplifying the box-tensor product. Instead we use \ref{Theorem, main theorem} to analyzes the $\mathbb{F}[UV]/UV$-version knot Floer chain complex of Mazur satellite knots using immersed curves diagrams.

\begin{thm}[Theorem 1.4 of \cite{MR3589337}]\label{Theorem: epsilon_of Mazur_satellites}
	Let $M$ be the Mazur pattern. Then $$
	\epsilon(M(K))=
	\begin{cases}
		0 \ \ \ \epsilon(K)=0\\
		1 \ \ \ \text{otherwise}, 
	\end{cases}
	$$
	and 
	$$
	\tau(M(K))=
	\begin{cases}
		\tau(K)+1 \ \ \ \tau(K)>0\ \text{or}\ \epsilon(K)=-1\\
		\tau(K) \ \ \ \ \ \ \ \ \text{otherwise}. 
	\end{cases}
	$$
\end{thm}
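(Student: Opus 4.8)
The plan is to follow the same immersed-curve strategy used for one-bridge braids in Section \ref{sec:one-bridge-tau}, but adapted to the Mazur pattern, where no planar transformation is available. First I would fix a genus-one doubly-pointed bordered Heegaard diagram $\mathcal{H}_{w,z}$ for the Mazur pattern $M$ (as in Figure \ref{Figure:MazurPattern}) and, for a companion knot $K$, form the paired immersed doubly-pointed diagram $\mathcal{H}_{w,z}(\gamma_K)$. By Theorem \ref{Theorem, pairing theorem restricted to genus-one patterns}, $CFK_{\mathcal{R}}(\mathcal{H}_{w,z}(\gamma_K))$ is bigraded homotopy equivalent to $CFK_{\mathcal{R}}(M(K))$, and since the diagram is genus one, the differential counts immersed bigons that can be enumerated combinatorially after lifting $\gamma_K$ and $\beta$ to a suitable covering space of the marked torus. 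Because $\tau$ and $\epsilon$ depend only on the behavior of the distinguished (non-compact) component $\gamma_K^0$ of $\gamma_K$ — indeed only on its homotopy class relative to the punctures, recorded by the integer $\tau(K)$ together with the turning direction $\epsilon(K)$ at the first crossing of the vertical axis — it suffices to understand how the pairing operation with $\mathcal{H}_{w,z}$ transforms the distinguished component, ignoring the compact components (which can only contribute acyclic summands and hence do not affect $\tau$ or $\epsilon$).

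The key steps, in order, are: (1) describe the local picture of $\mathcal{H}_{w,z}(\gamma_K)$ near the lift of $\mu\cup\lambda$, identifying how many times $\beta$ wraps and where the basepoints sit; (2) track the image of $\gamma_K^0$ through the pairing in each of the three cases $\epsilon(K)=1$, $\epsilon(K)=-1$, $\epsilon(K)=0$, pulling $\beta$ straight to a vertical line exactly as in the proof of Theorem \ref{Theorem, satellite knots with one-bridge braid patterns} (here the analogue of $f_{p;m}$ is not a global diffeomorphism of the plane, so the compact components genuinely break off, but the distinguished component is still transformed in a controlled way); (3) read off from the resulting curve the height of its first intersection with the vertical axis to get $\tau(M(K))$, and its turning direction to get $\epsilon(M(K))$. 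Concretely, when $\tau(K)>0$ or $\epsilon(K)=-1$ the distinguished strand of $\gamma_K$ passes above the relevant lattice point in a way that, after pairing with the Mazur $\beta$, forces one additional downward step, giving $\tau(M(K))=\tau(K)+1$ and $\epsilon(M(K))=1$; when $\tau(K)<0$ or ($\tau(K)=0$ and $\epsilon(K)=1$) the strand passes below and no extra step is created, giving $\tau(M(K))=\tau(K)$; and when $\epsilon(K)=0$ the distinguished strand is a horizontal line which, because the Mazur pattern has winding number $0$, stays homotopic to a horizontal line after pairing, so $\epsilon(M(K))=0$ and $\tau(M(K))=0=\tau(K)$. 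Assembling these cases yields the stated formulas, and one can cross-check the computation against the explicit complex $CFK_{\mathcal{R}}(M(T_{2,3}))$ displayed in Figure \ref{fig:mazur-of-trefoil}, for which $\tau(T_{2,3})=1$, $\epsilon(T_{2,3})=1$, so the formula predicts $\tau(M(T_{2,3}))=2$, $\epsilon(M(T_{2,3}))=1$.

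The main obstacle I expect is Step (2): unlike the one-bridge braid case, the transformation induced by pairing with $\mathcal{H}_{w,z}$ does not extend to a diffeomorphism of the covering plane, so the immersed curve for $M(K)$ genuinely acquires new compact components (as Figure \ref{fig:mazur-of-trefoil} illustrates), and I must argue carefully that (a) these extra components contribute only box-summands that are null-homotopic as curves, hence acyclic as complexes over $\mathcal{R}$, and therefore affect neither $\tau$ nor $\epsilon$; and (b) the distinguished component's image is correctly identified in each case. A clean way to handle (a) is to note that $\tau$ and $\epsilon$ are invariants of the local system and homotopy class of the distinguished component alone, together with the fact that $\gamma_K^0$ is the unique component homologous to the longitude and this property is preserved under the pairing (its image is still the unique non-nullhomologous component). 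For (b), the safest route is to do the finger-move/pull-tight argument in the covering space $(\R/\Z)\times\R$ explicitly for a neighborhood of $\gamma_K^0$, which only depends on $\tau(K)$ and $\epsilon(K)$, reducing the verification to a finite check in each of the three cases. A subtlety worth flagging is the boundary case $\tau(K)=0$ with $\epsilon(K)=\pm1$: here one must determine whether the first crossing occurs just above or just below the lattice point, which is exactly what $\epsilon(K)$ records, and the formula's split into "$\tau(K)>0$ or $\epsilon(K)=-1$" versus "otherwise" is precisely calibrated to this.
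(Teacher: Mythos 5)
Your starting point matches the paper's (pair the genus-one diagram for $M$ with $\gamma_K$, invoke Theorem \ref{Theorem, pairing theorem restricted to genus-one patterns}, and observe that only the unstable segment of $\gamma_K$ and its neighbors matter), but the heart of your argument --- step (2) --- has a genuine gap. You propose to ``pull $\beta$ straight to a vertical line exactly as in the proof of Theorem \ref{Theorem, satellite knots with one-bridge braid patterns}'' and track the image of the distinguished component. For the Mazur pattern no such straightening exists: the $\beta$ curve of its $(1,1)$ diagram backtracks in the covering space, and this is precisely why Section \ref{sec:mazur} observes that the satellite curve acquires new closed components and that ``there is no hope of recovering this curve by a plane transformation.'' Your parenthetical concession that the map ``is not a global diffeomorphism'' but the distinguished component ``is still transformed in a controlled way'' is exactly the assertion that needs proof, and no mechanism is offered. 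The paper's proof does something different at this point: it enumerates the bigons in the pairing diagram directly (Figures \ref{Figure:MazurSatellite0}--\ref{Figure:MazurSatellite2}), exhibits an explicit subcomplex containing the distinguished element of a vertically simplified basis, determines $\epsilon$ by checking whether that element is a boundary for the horizontal differential (Hom's algebraic criterion), and computes $\tau$ as the Alexander grading of that element via the intersection count $l_{c,x}\cdot\delta_{w,z}$. That is a finite combinatorial computation with the complex, not a geometric transformation of the curve, and you would need to supply it.

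Two secondary problems. First, your justification (a) is wrong as stated: the extra compact components of $\gamma_{M(K)}$ are nullhomologous but not null-homotopic in the punctured torus (a null-homotopic component is excluded by unobstructedness), and they are not acyclic over $\mathcal{R}$; the correct reason they are irrelevant is that $\tau$ and $\epsilon$ are read from the unique component carrying the generator of the vertical homology $\widehat{HF}(S^3)\cong\mathbb{F}$. Second, the Mazur pattern has winding number $1$, not $0$, so your argument for the $\epsilon(K)=0$ case (horizontal line stays horizontal because the winding number vanishes) does not apply. The paper handles this case cleanly and purely formally: $\epsilon(K)=0$ makes $K$ $\epsilon$-equivalent to the unknot, $\tau$ and $\epsilon$ of satellites are invariants of the $\epsilon$-equivalence class, and $M(U)$ is unknotted.
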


In the proof of Theorem \ref{Theorem: epsilon_of Mazur_satellites}, we will examine the knot Floer chain complex defined from the pairing diagram of the bordered Heegaard diagram $\mathcal{H}_M$ for $M$ shown in Figure \ref{Figure:MazurPattern} and the immersed curve $\alpha_K$ of the companion knot $K$. For convenience, we will be working with the lifts of the curves in the universal cover of the doubly-marked torus; the curves will be denoted by $\tilde{\beta}_M$ and $\tilde{\alpha}_K$ throughout the proof. 

Moreover, we only need the portion of $\alpha_K$ that indicates the value of $\tau(K)$ and $\epsilon(K)$. Specifically, in the universal cover $\mathbb{R}^2$, the lines $\mathbb{Z}\times \mathbb{R}$ divides $\tilde{\alpha}_K$ into segments, and there is only one segment connecting $\{i\}\times\mathbb{R}$ and $\{i+1\}\times \mathbb{R}$ for each $i$; we call it \textit{the unstable segment} as it corresponds to the unstable chain (defined in \cite[Theorem 11.26]{LOT18}). The sign of the slope of the unstable segment is equal to the sign of $\tau(K)$, and the minimum number of intersection points between the unstable segment and the horizontal grid lines $\mathbb{R}\times \mathbb{Z}$ is equal to $2|\tau(K)|$. The invariant $\epsilon(K)$ can be read off from the subsequent segment of the unstable segment as we traverse the unstable segment from left to right: If the next segment is a cap that turns right, $\epsilon(K)=1$; if the next segment is a cap that turns left, then $\epsilon(K)=-1$; otherwise, $\epsilon(K)=0$. Note that by the symmetry of knot Floer chain complexes, the segment preceding the unstable segment is determined by the segment after the unstable segment. Apart from the unstable segment and the segments preceding and trailing it, the rest of ${\alpha}_K$ will not affect the proof of Theorem \ref{Theorem: epsilon_of Mazur_satellites}.   

\begin{proof}[Proof of Theorem \ref{Theorem: epsilon_of Mazur_satellites}]
	When $\epsilon(K)=0$, $K$ is $\epsilon$-equivalent to the unknot $U$, and hence the $\tau$- and $\epsilon$-invariant of $M(K)$ coincide with those of $M(U)$ \cite[Definition 1 and Proposition 4]{Hom2014}. Since $M(U)$ is isotopic to the unknot,  $\epsilon(M(K))=0$ and $\tau(M(K))=0$.
	
	\begin{figure}[!]
		\begin{center}
			\includegraphics[scale=0.55]{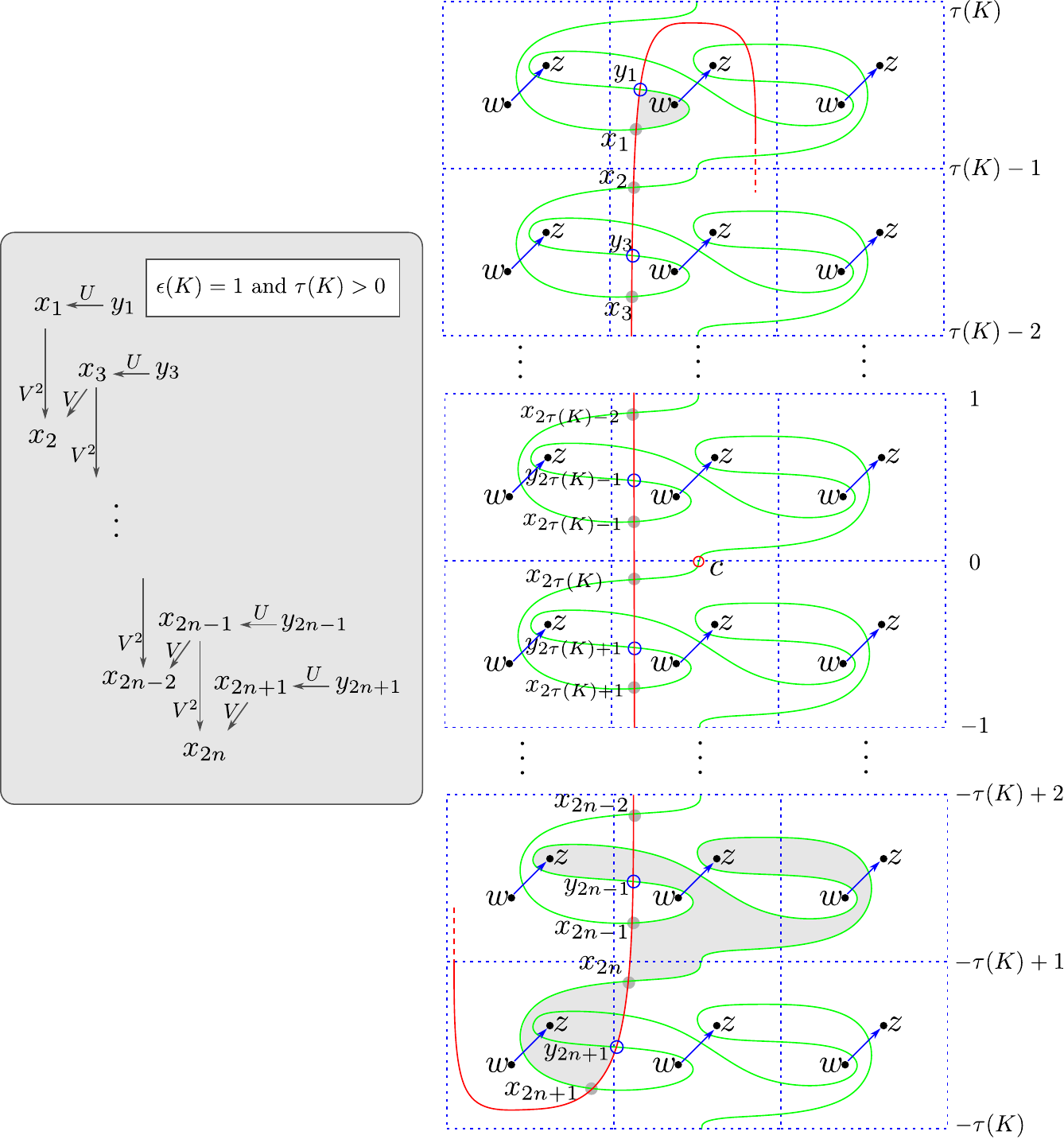}
			\caption{The pairing diagram and relevant differentials for $M(K)$ when $\epsilon(K)=1$ and $\tau(K)>0$. As an illustration for finding the differentials, some bigons contributing to the listed differentials are shaded. The y-coordinates of the horizontal grid lines are labeled.}
			\label{Figure:MazurSatellite0}
		\end{center}
	\end{figure} 
	When $\epsilon(K)\neq 0$, we will discuss the case $\epsilon(K)=1$ and the case $\epsilon(K)=-1$ separately. Within each case, we will further separate the discussion into two sub-cases depending on the value of $\tau(K)$. We shall inspect the chain complex defined from the pairing diagram, and at the cost of an isotopy of the curves in the pairing diagram, we may assume every chain complex is reduced, i.e., each bigon in the pairing diagram contributing to a differential will cross some base point. The differentials obtained from bigons crossing $z$ are called the \textit{vertical differentials}, and the arrows are labeled by a power of $V$ specifying the multiplicity of $z$. Likewise, \textit{horizontal differentials} refer to those obtained from bigons crossing $w$ and the arrows are labeled by an appropriate power of $U$. 
	
	\begin{figure}[!]
		\begin{center}
			\includegraphics[scale=0.52]{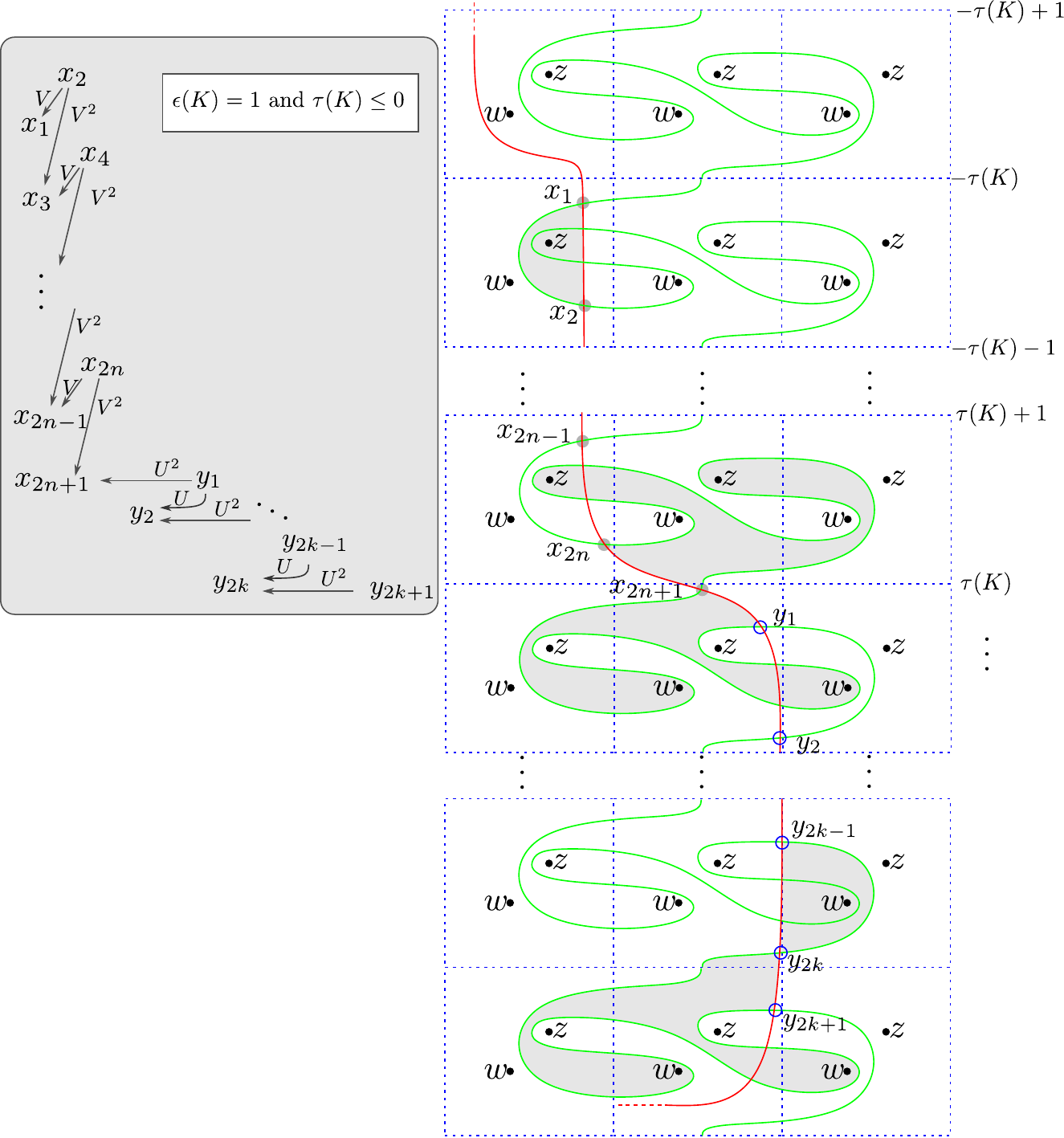}
			\caption{The pairing diagram and relevant differentials for $M(K)$ when $\epsilon(K)=1$ and $\tau(K)\leq 0$.}
			\label{Figure:MazurSatellite1}
		\end{center}
	\end{figure} 

	\begin{figure}[h]
	\begin{center}
		\includegraphics[scale=0.53]{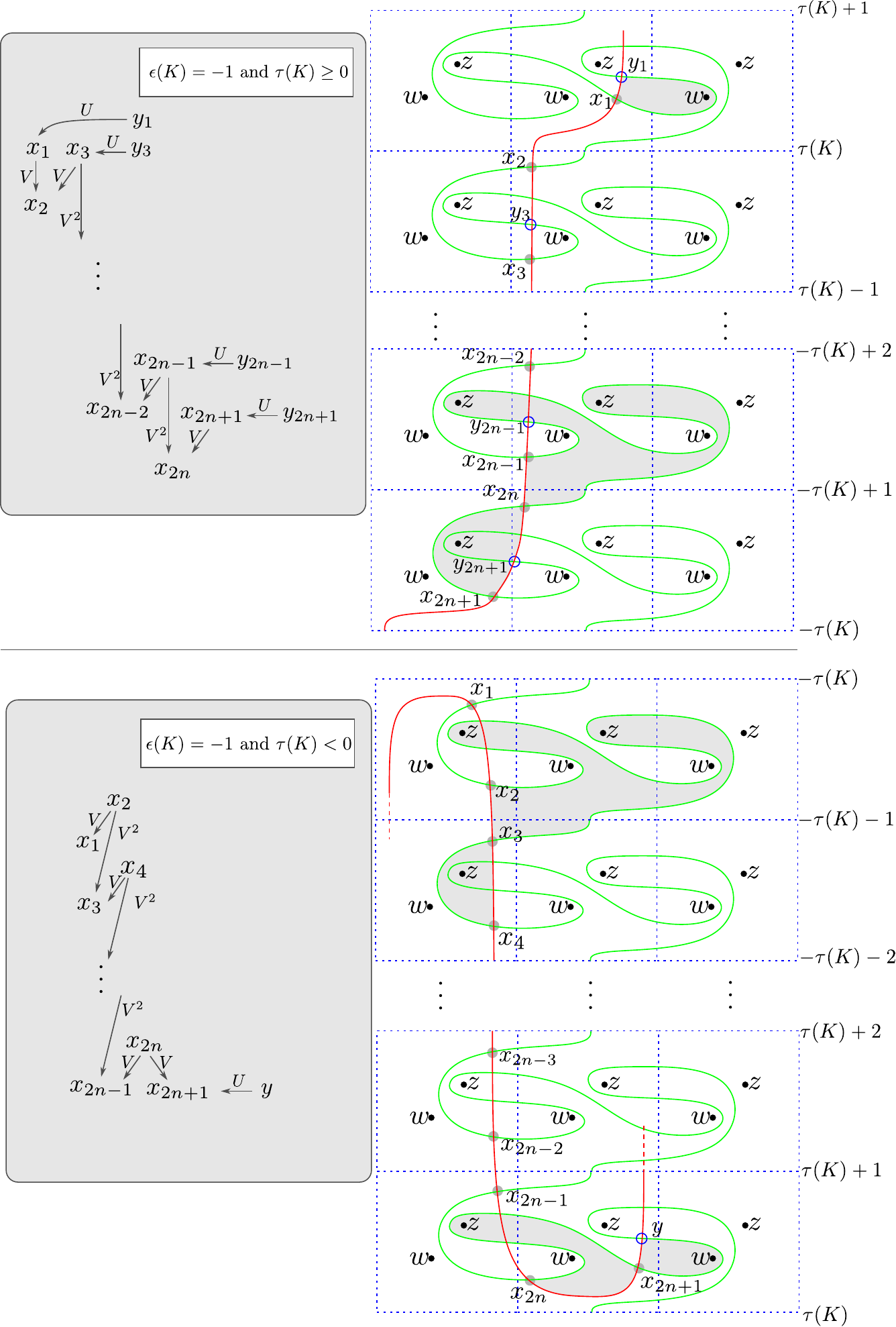}
		\caption{The pairing diagrams and relevant differentials for $M(K)$ when $\epsilon(K)=-1$, separated into two cases by $\tau(K)$.}
		\label{Figure:MazurSatellite2}
	\end{center}
	\end{figure}
	We begin with the case $\epsilon(K)=1$. When $\tau(K)>0$, the pairing diagram is shown in Figure \ref{Figure:MazurSatellite0}. By direct inspection, the intersection points $\{x_i\}_{i=1}^{2n+1}$ (with the vertical differentials) form a non-acyclic subcomplex of the hat-version knot Floer chain complex $\widehat{CFK}(\mathcal{H}_M(\alpha_K))$: the cycle $\sum_{i=0}^n x_{2i+1}$ survives in the homology $\widehat{HF}(S^3)$, and this cycle is the distinguished element of some vertically simplified basis in the sense of \cite[Section 2.3]{MR3217622}. Note that there is a horizontal arrow from $y_i$ to $x_i$ for each odd $i$, which we write $\partial^{horz}(y_i)=U x_i$. Since $U(\sum_{i=0}^n x_{2i+1})=\partial^{horz}(\sum_{i=0}^n y_{2i+1})$, the distinguished element $\sum_{i=0}^n x_{2i+1}$ is a boundary with respect to the horizontal differential. Therefore, $\epsilon(M(K))=1$ by \cite[Lemma 3.2 and Definition 3.4]{MR3217622}. Also, from the aforementioned sub-complex, it is standard to see that the $\tau$-invariant of $M(K)$ is equal to the Alexander grading of $x_1$, which we denote by $A(x_1)$. To get $A(x_1)$, we apply an algorithm given in \cite[Lemma 4.1]{Chen2019}. Specifically, up to an isotopy, the planar pairing diagram admits an involution (which swaps the $w$'s and $z$'s) given by a 180-degree rotation about a center point $c\in \tilde{\beta}_M\cap \tilde{\alpha}_K$. Let $l_{c,x_1}$ denote the path on $\tilde{\beta}_M$ from $c$ to $x_1$, and let $\delta_{w,z}$ denote the set of equivariant short arcs in the complement of $\tilde{\alpha}_K$ connecting $w$ to $z$ within each fundamental region. Then $A(x_1)$ is equal to the algebraic intersection number $l_{c,x_1}\cdot \delta_{w,z}$. To express $l_{c,x_1}\cdot \delta_{w,z}$ in terms of $\tau(K)$, note that the unstable segment stretches across $2\tau(K)$ fundamental regions vertically, with its midpoint sharing the same height with $c$. For a clearer discussion, we parametrize the plane so that $c$ is the origin $(0,0)$, the side length of each fundamental region is $1$; see Figure \ref{Figure:MazurSatellite0}. Observe that $l_{c,x_1}$ consists of $\tau(K)-1$ copies of lifts of $\beta_M$, together with a path from the point $(0,\tau(K)-1)$ to $x_1$. Each copy of $\beta_M$ intersects the $\delta_{w,z}$'s algebraically once, and the path from the point $(0,\tau(K)-1)$ to $x_1$ intersects the $\delta_{w,z}$'s algebraically twice; in sum, $l_{c,x_1}\cdot \delta_{w,z}=\tau(K)+1$. So, $\tau(M(K))=\tau(K)+1$. When $\tau(K)\leq 0$, the corresponding pairing diagram is shown in Figure \ref{Figure:MazurSatellite1}. The intersection points $\{x_i\}_{i=1}^{2n+1}$ generate a sub-complex of $\widehat{CFK}(\mathcal{H}_M(\alpha_K))$: the cycle represented by $x_{2n+1}$ survives in $\widehat{HF}(S^3)$ and is the distinguished element of a vertically simplified basis. One can see that $x_{2n+1}$ is a boundary with respect to the horizontal differential since $\partial^{horz}(\sum_{i=1}^{k}y_i)=U^2x_{2n+1}$, implying $\epsilon(M(K))=1$ by \cite[Lemma 3.2 and Definition 3.4]{MR3217622}. Using a similar argument as in the previous case, one may show $\tau(M(K))=A(x_{2n+1})=l_{c,x_{2n+1}}\cdot \delta_{w,z}=\tau(K)$.
	
	When $\epsilon(K)=-1$, the proof is similar to the case when $\epsilon(K)=1$. The pairing diagrams are given in Figure \ref{Figure:MazurSatellite2}.
	When $\tau(K)\geq 0$, $\sum_{i=0}^{n}x_{2i+1}$ is the distinguished element of some vertically simplified basis. Since $\partial^{horz}(\sum_{i=0}^{n}y_{2i+1})=U\sum_{i=0}^{n}x_{2i+1}$, we have $\epsilon(M(K))=1$. For the $\tau$-invariant, we have $\tau(M(K))=A(x_1)=l_{c,x_1}\cdot\delta_{w,z}=\tau(K)+1$.
	When $\tau(K)< 0$, $x_{2n+1}$ is the distinguished element of some vertically simplified basis. As $x_{2n+1}=U \partial^{horz}(y) $, we know $\epsilon(M(K))=1$. Finally, $\tau(M(K))=A(x_{2n+1})=l_{c,x_{2n+1}}\cdot\delta_{w,z}=\tau(K)+1$.
	
\end{proof}
\bibliography{satellite}
\bibliographystyle{alpha}
\end{document}